\documentclass[11pt]{article}
\usepackage{hyphenat}
\usepackage[kerning, tracking, spacing]{microtype}
\usepackage[margin=1in]{geometry}
\usepackage{amsmath, amsthm,amssymb}
\usepackage{mathrsfs}
\usepackage{enumitem}
\usepackage{graphicx}
\usepackage{mathdots}
\usepackage{hyperref,url}
\usepackage{tikz, tikz-cd}
\usetikzlibrary{matrix}
\usetikzlibrary{shapes}
\usetikzlibrary{arrows,arrows.meta,decorations.markings, tikzmark}
\usepackage{mathtools}
\usepackage{ stmaryrd }
\usepackage[normalem]{ulem}
\usepackage[utf8]{inputenc}

\usepackage{xcolor}
\hypersetup{
    colorlinks,
    linkcolor={red!50!black},
    citecolor={blue!50!black},
    urlcolor={blue!80!black}
}

\pgfarrowsdeclare{bad to}{bad to}
{
  \pgfarrowsleftextend{-2\pgflinewidth}
  \pgfarrowsrightextend{\pgflinewidth}
}
{
  \pgfsetlinewidth{0.8\pgflinewidth}
  \pgfsetdash{}{0pt}
  \pgfsetroundcap
  \pgfsetroundjoin
  \pgfpathmoveto{\pgfpoint{-3\pgflinewidth}{4\pgflinewidth}}
  \pgfpathcurveto
  {\pgfpoint{-2.75\pgflinewidth}{2.5\pgflinewidth}}
  {\pgfpoint{0pt}{0.25\pgflinewidth}}
  {\pgfpoint{0.75\pgflinewidth}{0pt}}
  \pgfpathcurveto
  {\pgfpoint{0pt}{-0.25\pgflinewidth}}
  {\pgfpoint{-2.75\pgflinewidth}{-2.5\pgflinewidth}}
  {\pgfpoint{-3\pgflinewidth}{-4\pgflinewidth}}
  \pgfusepathqstroke
}

\DeclareMathAlphabet{\mymathbb}{U}{BOONDOX-ds}{m}{n}

\usepackage{quiver}

\def\Mbar{\overline{\mathcal{M}}}

\def\Mc{\mathcal{M}}

\def\CH{\mathsf{CH}}
\def\CHop{\mathsf{CH}_{\mathsf{op}}}

\def\R{\mathsf{R}}
\def\CO{\mathcal{O}}
\def\A{\mathcal{A}}

\def\P{\mathcal{P}}
\def\X{\mathcal{X}}
\def\id{\mathrm{id}}

\def\EE{\mathbb{E}}

\def\PP{\mathbb{P}}
\def\QQ{\mathbb{Q}}

\def\ZZ{\mathbb{Z}}
\def\LL{\mathbb{L}}

\def\GL{\mathrm{GL}}

\def\RPC{\mathrm{RPC}}

\def\PL{\mathrm{PL}}
\def\Aut{\mathrm{Aut}}

\def\hom{\mathcal{H}om}
\DeclareMathOperator{\Hom}{Hom}
\DeclareMathOperator{\lcm}{lcm}

\def\Gm{\mathbb{G}_m}
\def\Glog{\mathbb{G}_{m,\log}}
\def\sR{\mathsf{sR}}

\def\codim{\mathrm{codim}}
\def\vir{\mathrm{vir}}

\def\ch{\mathrm{ch}}

\def\dx{\frac{d}{dx}}

\def\im{\mathrm{Im}\,}

\def\Jbar{\overline{J}}

\def\Td{\mathrm{td}}
\def\fm{\mathfrak{F}}
\def\dbcoh{\mathrm{D}^b_{\mathrm{coh}}}

\def\Pbar{\overline{\mathcal{P}}}

\def\uniDR{\mathsf{uniDR}}

\def\cR{\mathcal{R}}

\def\gp{\mathrm{gp}}

\def\Abar{\overline{\mathcal{A}}}
\def\Xbar{\overline{\mathcal{X}}}

\def\Coh{\mathrm{Coh}}
\def\FM{\mathsf{FM}}

\newcommand{\RR}{\mathbb{R}}
\DeclareMathOperator{\Span}{Span}

\newcommand{\trop}{\mathrm{trop}}
\renewcommand{\log}{\mathrm{log}}

\newcommand{\CHM}{\mathrm{CHM}}
\newcommand{\p}{\mathfrak{p}}
\newcommand{\barA}{\overline{A}}

\newcommand{\sfpp}{\mathsf{PP}}

\newcommand{\sfS}{\mathsf{S}}
\newcommand{\logA}{{A^{\log}}}
\newcommand{\tropA}{{A^{\trop}}}
\newcommand{\fB}{\mathfrak{B}}
\newcommand{\ext}{\mathcal{E}xt}

\newcommand{\sbar}{\overline{s}}
\newcommand{\sh}{\sigma}
\newcommand{\shb}{\overline{\sh}}
\newcommand{\corr}{\mathrm{Corr}}
\newcommand{\w}{\mathfrak{w}}
\newcommand{\St}{\widetilde{\mathsf{S}}}
\newcommand{\sPP}{\mathsf{sPP}}
\newcommand{\spp}{\sPP}

\newcommand{\K}{\mathcal{K}}
\newcommand{\supp}{\mathrm{Supp}\,}
\newcommand{\Pic}{\operatorname{Pic}}
\newcommand{\diff}{\operatorname{dif}}

\theoremstyle{definition}
\newtheorem{definition}{Definition}[section]
\newtheorem{theorem}[definition]{Theorem}
\newtheorem{example}[definition]{Example}
\newtheorem{proposition}[definition]{Proposition}
\newtheorem{corollary}[definition]{Corollary}
\newtheorem{lemma}[definition]{Lemma}

\newtheorem{remark}[definition]{Remark}

\newtheorem{assumption}[definition]{Assumption}

\newtheorem{conjecture*}{Conjecture}
\newtheorem{question*}[conjecture*]{Question}
\newtheorem{theoremintro}{Theorem}

\title{Weight decomposition for toroidal abelian fibrations}

\author{Younghan Bae, Jeremy Feusi, Aitor Iribar L\'opez, Sam Molcho}
\date{}
\begin{document}

\maketitle

\begin{abstract}
     We study the action of the rational multiplication by $N$ map on the Chow groups of degenerations of principally polarized abelian varieties of torus rank at most one, as well as its interaction with the Fourier transform. As applications, we compute the class of the unit section, prove a generalized weight decomposition of the relative Chow motive of the universal family of such degenerations, and completely determine its tautological ring.
   \end{abstract}

\tableofcontents

\section{Introduction}

Let $\pi : \X_g \to \A_g$ be the universal abelian scheme over the moduli stack of principally polarized abelian varieties of dimension $g$ over a field $\Bbbk$. There are two distinguished features of algebraic cycles on $\X_g$:
\begin{enumerate}
\item[(i)] The rational Chow ring $\CH^*(\X_g)$ admits a multiplicative splitting whose direct summands are eigenspaces of the endomorphism $N^*$ induced by the multiplication by $N$ map (\cite{Beauville86,DM91}).

\item[(ii)] The tautological ring $\R^*(\X_g)\subseteq \CH^*(\X_g)$ generated by the Chern classes $\lambda_i = c_i(\EE)$ of the Hodge bundle and the Theta divisor has a complete description (\cite{vdG99}).
\end{enumerate}
Similar results hold on the $s$-fold self-fiber products $\X_g^s \to \A_g$. Some of the key ingredients in proving these statements are the relative group structure and the Fourier--Mukai transform.

The purpose of this paper is to extend the above results to the simplest partial toroidal compactification of $\X_g$, which was introduced by Mumford in \cite{mumford_kodaira}. Let $\A_g\subset \A_g'$ be the canonical partial compactification parametrizing degenerations of torus rank at most one, and let $\pi : \X_g' \to \A_g'$ be the universal family. This family is no longer a relative group scheme, but it is a birational model of a log abelian scheme in the sense of Kajiwara--Kato--Nakayama \cite{KKN1, KKN2}. We prove generalizations of both (i) and (ii) using the Fourier transform and logarithmic geometry. Along the way, we develop the theory of weights for these toroidal abelian fibrations.

\subsection{The class of the unit section}

Our first result is a closed formula for the fundamental class of the unit
section $e: \A_g' \to \X_g'$ in terms of tautological classes. As in the
case of abelian schemes, this result is the key input for our study of
algebraic cycles on $\X_g'$.

We introduce natural classes on $\X_g'$ by extending tautological classes from the interior and utilizing the toroidal structure of the boundary:
\begin{itemize}
    \item Let $\theta$ be the unique extension of the Theta divisor on $\X_g$ to $\X_g'$ which is trivialized along the unit section.
    \item The boundary $\partial \mathcal X_g'\subset \X_g'$ is a divisor with self-intersection. Let $i: Y\to \X_g'$ denote the normalization of a certain \'etale double cover of the boundary $\partial \X_g' \subset \X_g'$, and let $N_i$ be the normal bundle of $i$.
    \item Let $C=\mathcal X_{g-1}\times_{\mathcal A_{g-1}}\mathcal X_{g-1}$. There exists a degree-two morphism \(j: C \to \X_g'\), whose image is the singular locus of $\partial \X_g'$. Let $N_j$ be the normal bundle to $j$.
\end{itemize}
Consider the following formula
\begin{align}
\sfS_g^c
=
\Bigg[
\exp(\theta) \cup
&\Bigg(
1
+
\frac{1}{2}\,
i_* \Bigg(\sum_{k\ge 1}
\frac{(-1)^k B_{2k}}{2^k k!}
\,c_1(N_i)^{k-1}\Bigg)\label{eq:S} \\[4pt]
&+
\frac{1}{2}\,
j_* \Bigg(
\sum_{k_1,k_2\ge 1}
\frac{(-1)^{k_1+k_2}}{2^{k_1+k_2}}
\sum_{m=0}^{2k_2}
\binom{2k_2}{m}
B_{2k_1+m}
\frac{\alpha_1(N_j)^{k_1-1}\cup\alpha_2(N_j)^{k_2-1}}{k_1!k_2!}
\Bigg)
\Bigg)
\Bigg]_{\codim = c}\,,\nonumber
\end{align}
Here $\alpha_1(N_j)$ and $\alpha_2(N_j)$ are the Chern roots of $N_j$, $B_k$ is the $k$-th Bernoulli number, and $[-]_{\codim =c}$ denotes the codimension $c$ part of the class.

\begin{theoremintro}\label{thm:unit}
	The fundamental class $[e]$ of the unit section $e:\A_g'\to \X_g'$ is given by the formula
    \[
    [e]= \sfS^g_g \in\CH^g(\X_g')\,.
    \]
\end{theoremintro}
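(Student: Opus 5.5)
We would follow the strategy used for abelian schemes (Deninger--Murre, Beauville), where $[e]=\theta^g/g!$ comes from a Fourier transform computation. Here we adapt it to the rational multiplication-by-$N$ correspondence on $\X_g'$.

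\textbf{Step 1: eigenvalue splitting.} The first step is to show that $[e]$ is an eigenvector, or more precisely a sum of eigenvectors, of the endomorphism $N^*$ induced by the rational multiplication by $N$ map $[N]:\X_g'\dashrightarrow \X_g'$. We would resolve $[N]$ by a toroidal blowup $\widetilde{\X}\to\X_g'$ and use the log abelian structure of Kajiwara--Kato--Nakayama. In the interior $[N]^*[e]$ equals $N^{2g}[e]$ modulo boundary contributions. We would then show that $\CH^g(\X_g')$ splits into generalized eigenspaces for $N^*$, and that the classes $\theta$, $i_*c_1(N_i)^{k-1}$ and $j_*(\alpha_1^{a}\alpha_2^{b})$ transform under $N^*$ by explicit rules: $\theta\mapsto N^2\theta$, while the boundary classes pick up factors determined by the combinatorics of the toric degeneration. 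Since the boundary fibers are $\PP^1$-bundles over $\X_{g-1}$, glued along the $0$- and $\infty$-sections, multiplication by $N$ acts on them like $z\mapsto z^N$ on the $\PP^1$ fibers.

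\textbf{Step 2: reduction to the boundary.} Restricted to $\X_g$, the formula reduces to the classical identity $[e]=\theta^g/g!$. Hence the difference $[e]-\sfS_g^g$ is supported on $\partial\X_g'$, and so it is pushed forward from $Y$. We would then determine this difference by excision together with the self-intersection formula. Concretely, we restrict both sides to $Y$ and to $C$ via $i^*$ and $j^*$, and use that $Y$ is a $\PP^1$-bundle over $\X_{g-1}$, whose cohomology we understand via the projective bundle formula and induction on $g$. The restriction of $\exp(\theta)$ to a fiber of the $\PP^1$-bundle is governed by the degree of $\theta$ along the glued rational curve. Computing the ``Todd-like'' correction needed to match $e^*$ then produces generating functions in $c_1(N_i)$ of the form $x/(e^{x}-1)$ type expansions, which explains the Bernoulli numbers $B_{2k}$. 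The double-point locus $C$ produces the convolution $\sum_m\binom{2k_2}{m}B_{2k_1+m}$. The factor $\tfrac12$ accounts for the double cover and the degree-two map $j$.

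\textbf{Step 3: uniqueness via eigenvalues.} To pin down the answer without computing every boundary intersection directly, we would use the splitting from Step 1. The class $[e]$ must lie in the top-weight eigenspace, with eigenvalue $N^{2g}$, up to the boundary part. The ansatz $\sfS_g^g$ is constructed precisely so that each codimension-$c$ piece $\sfS^c_g$ is an $N^{2c}$-eigenvector. One checks this using Faulhaber-type identities, since sums of $k^{m}$ over $k=0,\dots,N-1$ are expressed via Bernoulli numbers. A class supported on the boundary with eigenvalue $N^{2g}$ that vanishes after restriction to the interior is then forced to vanish, by injectivity of $i^*$ on the relevant eigenspace.

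\textbf{Main obstacle.} We expect the hardest part to be the explicit computation of $N^*$ on the boundary classes $j_*(\alpha_1^a\alpha_2^b)$. This requires controlling the toroidal resolution of $[N]$ near the self-intersection locus $C$, where two branches of the boundary meet. Our first attempt would be to work on the universal cover $\widetilde{\X}$ with its $\ZZ$-action, which is Mumford's infinite chain of $\PP^1$-bundles. There the map $[N]$ is regular, and we would compute fiberwise on the $\PP^1$-chain before descending.
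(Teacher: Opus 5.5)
\textbf{There is a genuine gap: your Steps 2--3 cannot pin down $[e]$.} Step~2 is the excision argument of Grushevsky--Zakharov, and the paper shows it is insufficient (Remark~\ref{rmk:gz}). Knowing that $[e]-\sfS^g_g$ vanishes on $\X_g$ only tells you it equals $i_*\beta$ for some $\beta\in\CH^{g-1}(Y)$ about which nothing is known. Restricting along $i^*$ and $j^*$, even with induction on $g$, does not detect such classes. A concrete witness is $D^g$:
\begin{itemize}
\item it is nonzero in $\CH^g(\X_g')$, being a nonzero multiple of $\pi^*\lambda_g$ (Corollary~\ref{cor:non-zero});
\item it enters $\sfS^g_g$ through the term $\tfrac12 i_*(c_1(N_i)^{g-1})$ with nonzero coefficient $(-1)^gB_{2g}/(2^gg!)$;
\item yet it restricts to zero on $\X_g$, on $Y$ (since $i^*D=-2\theta_2^C$ and $(\theta_2^C)^g=0$), and on $C$.
\end{itemize}
So changing the coefficient of $D^g$ gives a different class with identical restrictions.

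Step~3 does not repair this, for three reasons.
\begin{itemize}
\item The splitting of $\CH^*(\X_g')$ into $[N]^*$-eigenspaces is Theorem~\ref{thm:dec}. Its proof needs polynomiality of $[\overline{\tau_N}]=[N\times 1]^*\diff^*[e]$, which the paper gets from $[e]$ being tautological, i.e.\ from the statement you are proving. You give no independent argument, so this is circular.
\item Your eigenvector claims are false. Already for $g=1$, $\sfS^1_1=[e]=\theta-\tfrac1{12}\pi^*D$. Since $[N]^*\theta=N^2\theta$ and $[N]^*\pi^*D=\pi^*D$, we get $[N]^*[e]=N^2\theta-\tfrac1{12}\pi^*D\neq N^2[e]$. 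The pure-weight pieces of $\theta^c/c!$ are the different classes $\St^c_g$, and $[e]$ is an eigenvector for $[N]_*$, not for $[N]^*$.
\item The ``injectivity of $i^*$ on the relevant eigenspace'' is asserted with no mechanism.
\end{itemize}
Separately, $[N]$ is not regular on Mumford's infinite chain either: $\mathsf{x}\mapsto N\mathsf{x}$ carries one cone across $N$ cones. This is why the paper subdivides to $\Sigma^{(s)}_N$ and uses Brion's formula.

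\textbf{The missing idea is the Fourier transform.} It is what forces the boundary correction into a space where restriction is injective.
\begin{itemize}
\item From $\FM^{-1}_n(\CO(n\Theta))\cong\CO(-n\Theta)\otimes\pi^*\mathcal M_n\otimes\det\EE[g]$ one gets $\fm(\exp(-\theta)\cup\Td^\vee(\cR_\pi)^{-1})=(-1)^g\exp(\theta-D/8)$ (Corollary~\ref{cor:F(exp_theta)}).
\item Combine this with $[e]=\fm^{-1}(1)$ and the weight bounds of Theorem~\ref{thm:wt}, applied only to tautological classes and never as a global decomposition. This isolates $\fm_{2g}(1)\equiv(-\theta)^g/g!$ and yields $[e]=\sfS^g_g+j_*\alpha$ with $\alpha$ in the small tautological ring $\sR^{g-2}(C)$. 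In particular, this fixes the $D^g$ coefficient.
\item Only now does restriction to $C$ work. We have $j^*[e]=0$ and $j^*j_*\alpha=2e(N_j)\cup\alpha$.
\item Cup product with $e(N_j)$ is an isomorphism $\sR^{g-2}(C)\to\sR^g(C)$ (Lemma~\ref{lem:einvert}). This is proved only on the tautological subring, which is why $\alpha\in\sR^{g-2}(C)$ is the essential point.
\item Finally, $j^*\sfS^g_g=0$ by Pixton's Bernoulli computation (Proposition~\ref{pro:jS0}), so $\alpha=0$.
\end{itemize}
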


Theorem \ref{thm:unit} generalizes the formula on the semi-abelian locus $\X_g^\circ \subset \X_g'$ from \cite{BMP}; the formula itself is closely related to \cite{BHPSS}.
In \cite{GZ}, Grushevsky--Zakharov proposed a formula $\mathsf{GZ}_g$ for $[e]$; see Remark \ref{rmk:gz} for a discussion of an oversight in the proof of this formula in \cite{GZ}. After combinatorial manipulation, we show that $\sfS_g^g = \mathsf{GZ}_g$, thereby proving their proposed formula.

\subsection{Motivic decomposition and multiplicativity}

Our second result gives a motivic decomposition of $\pi : \X_g' \to \A_g'$ in the category of relative Chow motives constructed by Corti--Hanamura \cite{CortiHanamura}.

The starting point is a generalization of the ``multiplication by $N$'' map on abelian schemes. For a positive integer $N$, consider the locally closed locus
\[
\tau_N := \{(x,y) \in \X_g\times_{\A_g}\X_g : Nx-y=0\} \subset \X_g'\times_{\A_g'} \X_g'\,.
\]
Let $[\overline{\tau_N}] \in \CH^g(\X_g' \times_{\A_g'}\X_g')$ be the fundamental class of its Zariski closure, and $[\overline{\tau_N}]^t$ be its transpose. As a relative correspondence, the class $[\overline{\tau_N}]^t$ defines a (rational) ``multiplication by $N$'' map
\begin{equation}\label{eq:intro_n}
    [N]^* : \CH^*(\X_g') \to \CH^*(\X_g')\,.
\end{equation}

\begin{theoremintro}\label{thm:dec}
    \begin{enumerate}[label = (\alph*)]
        \item The class $[\overline{\tau_N}]^t$ is polynomial in $N$ of degree $2g$. That is, we can write
        \[
        [\overline{\tau_N}]^t = \w_0 + \cdots + N^{2g}\w_{2g}, \quad \w_k \in \CH_*(\X_g'\times_{\A_g'}\X_g')\,.
        \]
        \item For $0\leq k \leq 2g$, let $h_k(\X_g') := (\X_g', \w_k,0)$. Then there exists a decomposition of motives
        \[
        h(\X_g') = h_0(\X_g') \oplus \cdots \oplus h_{2g}(\X_g') \in \CHM(\A_g')\,.
        \]
		Moreover, when the base field is algebraically closed of characteristic zero, this decomposition gives a motivic lift of the decomposition of $R\pi_* \QQ$ into shifted semisimple perverse sheaves.
        \item Let $W_k \CH^*(\X_g'):= \bigoplus_{w\leq k} \CH^*(h_w(\X_g'))$ be the filtration by weights for $[N]^*$. The filtration $W_\bullet\CH^*(\X_g')$ is multiplicative.
    \end{enumerate}
\end{theoremintro}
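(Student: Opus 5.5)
Following Deninger--Murre and Künnemann, the plan is to import the abelian-scheme argument, with the Fourier transform on $\X_g'$ in place of the group structure. The key input will be Theorem~\ref{thm:unit} together with a log/toroidal Fourier--Mukai formalism: on a suitable log modification, $\X_g'$ is a birational model of a log abelian scheme. On that model the Poincaré class $\ell$ extends, and $\FM = \exp(\ell)$ acts as a relative correspondence $\CH^*(\X_g')\to\CH^*(\X_g')$. I would first show that $\FM$ is invertible as a relative correspondence, i.e.\ $\FM\circ\FM = (-1)^g[-1]^*$ up to the appropriate twist. The composite $\FM\circ\FM$ is computed by pushing $\exp(\ell_{13}+\ell_{23}\cdots)$ forward along the middle factor; this reduces to the statement that $\pi_*\exp(\ell)$ is the class of the zero section, with the correct sign. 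That statement is exactly what Theorem~\ref{thm:unit} provides, once we verify that $\sfS_g$ equals the pushforward of the exponential of the extended theta class.

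For (a), I would compute how $\FM$ intertwines $[N]^*$. On the open part $\X_g$ we have $(1\times N)^*\ell=(N\times 1)^*\ell = N\ell$. After taking closures and correcting by boundary terms, the claim is that $\FM\circ[\overline{\tau_N}]^t = (N\ell)$-twisted kernel $\exp(N\ell)$. The codimension-$k$ piece of $\exp(N\ell)$ is $N^k\ell^k/k!$, which is visibly polynomial in $N$. Conjugating back by $\FM^{-1}$ then gives
\[
[\overline{\tau_N}]^t=\sum_{k=0}^{2g}N^k\,\w_k,\qquad \w_k=\FM^{-1}\circ(\ell^k/k!)\circ\cdots,
\]
with degree at most $2g$ because $\ell^k=0$ in the relevant range for relative dimension reasons. \textbf{The main obstacle is this step.} The closure $\overline{\tau_N}$ is not the graph of a morphism, so proving $\FM\circ[\overline{\tau_N}]^t=\exp(N\ell)$ in $\CH^*$ needs an explicit description of $\overline{\tau_N}$ near the boundary. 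My first attempt would work on the log modification where multiplication by $N$ becomes a genuine (log) morphism $\widetilde{[N]}$, identify $[\overline{\tau_N}]$ with the pushforward of its graph, and then use the projection formula together with $\widetilde{[N]}^*\ell = N\ell$, which holds there because it is bilinear in the log Picard sense.

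For (b), I would use Vandermonde: polynomiality in $N$ for all $N$, combined with the composition law $[\overline{\tau_N}]^t\circ[\overline{\tau_M}]^t=[\overline{\tau_{NM}}]^t$ (again checked on the log model), gives $\sum_{a,b}N^aM^b\,\w_a\circ\w_b=\sum_k (NM)^k\w_k$. Hence $\w_a\circ\w_b=\delta_{ab}\w_a$. Evaluating at $N=1$ gives $\sum\w_k=\Delta$, so the $\w_k$ are orthogonal projectors summing to the identity. For the perverse statement, realization sends $\w_k$ to the projector of $R\pi_*\QQ$ onto the $N^k$-eigenpiece. Via the decomposition theorem and the known structure of ${}^pR^i\pi_*\QQ$ for $\X_g'$, this eigenpiece matches the perverse summand. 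To check this it suffices to compare eigenvalues of $[N]^*$ on the summands, namely weight $k$ on the $k$-th one, which can be verified over $\A_g$ and along the boundary stratum using $\X_{g-1}$.

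For (c), I would show that the product, the correspondence $\Delta_{123}$, satisfies $\Delta_{123}\circ([\overline{\tau_N}]^t\otimes[\overline{\tau_N}]^t)=[\overline{\tau_N}]^t\circ\Delta_{123}$ modulo lower-weight terms. Equivalently, in the Fourier picture multiplication becomes convolution, and convolution is compatible with the $N$-grading because the Poincaré class is additive in the log group law. Expanding both sides in $N$ then yields $\w_c\circ\Delta\circ(\w_a\otimes\w_b)=0$ for $c>a+b$, which is exactly the statement $W_a\cdot W_b\subseteq W_{a+b}$.
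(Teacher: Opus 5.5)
Your argument for (b), Vandermonde plus the composition law $[\overline{\tau_N}]\circ[\overline{\tau_M}]=[\overline{\tau_{NM}}]$, is the paper's. (The paper makes the composition law rigorous in Proposition \ref{pro:nm}, using $\barA_N'\cong\barA_N$ and a Gysin computation.) Your arguments for (a) and (c), however, rest on an identity that is false. You claim that on a log modification $\widetilde{[N]}^*\ell=N\ell$, so that the kernel becomes $\exp(N\ell)$ and polynomiality is visible. Bilinearity does hold for $\mathcal P^{\log}$. But $\ell$ is the Chern class of a line-bundle representative, and that representative is not compatible with $(1\times N)^*$. Corollary \ref{cor:pp_corr_poincare_precise} gives $(1\times N)^*\ell=N\,b^*\ell+\Phi(c_{\mathcal P,N})$, where $c_{\mathcal P,N}$ is a nonzero piecewise linear function on $\Sigma_N^{(2)}$. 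After $b_*$, powers of this correction are neither visibly polynomial in $N$ nor visibly of degree $\le k$. Proving both is the technical heart of the paper (Theorems \ref{thm:polynomiality} and \ref{thm:wt}: Brion's formula on the non-uniform subdivisions $\Sigma_N^{(s)}$ plus a telescoping cancellation), and your plan has nothing in its place.

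The same flaw affects your invertibility preamble. Reducing $\FM\circ\FM$ to a single pushforward needs the Chow-level biextension identity, which fails for the same reason. The kernel is also not $\exp(\ell)$: it carries residue-sheaf Todd corrections (Lemma \ref{lem:Fchow}). And Theorem \ref{thm:unit} does not say that $\sfS^g_g$ is such a pushforward. Invertibility should instead be taken from the derived statement, Theorem \ref{thm:daf}(a), i.e.\ \eqref{eq:fourier_is_iso}.

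If you substitute Corollary \ref{cor:Fwt} for the false identity, your conjugation idea does prove (a):
\[
[\overline{\tau_N}]^t=[\overline{\tau_N}]^t\circ\fm\circ\fm^{-1}=\sum_{k=0}^{2g}\bigl([1\times N]^*\fm_k\bigr)\circ\fm^{-1}_{2g-k},
\]
which has degree $\le 2g$ in $N$. This is a genuinely different route. The paper instead writes $[\overline{\tau_N}]=[N\times 1]^*\diff^*(\sfS^g_g)$, using Theorem \ref{thm:unit} and the flatness of $\phi_N$ (Lemma \ref{lem:flat}). Your version avoids the unit-section formula, but both routes need Theorem \ref{thm:wt}.

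For (c), ``compatible with the $N$-grading modulo lower-weight terms'' is exactly the assertion $W_a\cdot W_b\subseteq W_{a+b}$, so it cannot serve as input. The justification you give, additivity of the Poincar\'e class, is the same Chow-level identity that fails. The exact intertwining is false: $[N]^*\theta=N^2\theta$, yet for $g\ge 2$ the class $\theta^2$ has a nonzero weight-$2$ part (Proposition \ref{pro:thetawt} and the remark after Corollary \ref{cor:pure}), because $[N]^*=b_*N^*$ is not a ring map.

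The paper instead proves $P_k=W_k=\widehat W_k$ (Proposition \ref{pro:P=W}), where $P$ is defined by the images of the $\fm_w$. The inclusion $P\subseteq W$ uses Theorem \ref{thm:wt} applied to $\fm_k$. The inclusion $W\subseteq P$ needs weight bounds for the pushforward $[N]_*$ (Theorem \ref{thm:wt_lower}, which requires root stacks and level structures because the tropical multiplication map is not separated), together with Fourier vanishing (Theorem \ref{thm:fv}). Multiplicativity of $P$ is then imported from Maulik--Shen--Yin, whose convolution input is the kernel $\K$ of Theorem \ref{thm:daf}(b), supported in codimension $\ge g$. Your convolution heuristic points in that direction, but without the $P=W$ comparison you have no way to transport it to $W$.

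Finally, for the perverse statement in (b), the ingredient you are missing is full support (Remark \ref{rmk:full}). It reduces the comparison to $\A_g$, where the $\w_k$ are the Deninger--Murre projectors, so no check along the boundary stratum is needed.
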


Therefore, $\CH^*(\X_g')$ additively decomposes into eigenspaces for the operator \eqref{eq:intro_n}, generalizing results by Beauville \cite{Beauville86} and Deninger--Murre \cite{DM91}. Point~(b) proves the Corti--Hanamura conjecture \cite{CortiHanamura} for $\X_g' \to \A_g'$.

\subsection{The tautological ring of \texorpdfstring{$\X_g'$}{Xg'}}\label{sec:intro_taut}

Our third result computes the tautological ring $\R^*(\X_g') \subseteq \CH^*(\X_g')$. Let $\spp^{\ast}(\X_g')$ denote the ring of strict piecewise polynomials on the cone over the dual intersection complex of the normal crossings pair $(\X_g',\partial \X_g')$. There is a natural map
\begin{equation}\label{eq:xi}
    \Xi : \spp^{\ast}(\X_g')[\lambda_1,\ldots,\lambda_g,\theta] \to \CH^*(\X_g'),
\end{equation}
sending $\lambda_i$ to the $i$-th Chern class of the Hodge bundle $\EE$ and $\theta$ to the Theta divisor. We let $\R^*(\X_g')$ be the image of $\Xi$. An example of a piecewise polynomial is the fundamental class of the boundary $\partial \mathcal X_g'$, which we denote by $D \in \CH^1(\X_g')$. See \S \ref{sec:taut_def} for details.

The elements (relations) in the kernel of $\Xi$ come in three flavors:
\begin{itemize}
    \item[(a)] Relations coming from the base:
    \begin{itemize}
    \item[$\bullet$] $\lambda_{g-1}\cup\gamma$ (if $g > 1$) and $\lambda_g\cup\gamma$, for all
    $\gamma \in \spp^{>0}(\X_g')$.

    \item[$\bullet$] The homogeneous components of Mumford's relation
    \[
    c(\mathbb E \oplus \mathbb E^\vee)-1
    =
    (1+\lambda_1+\cdots+\lambda_g)\cup
    (1-\lambda_1+\cdots+(-1)^g\lambda_g)-1.
    \]
    \item[$\bullet$] The class
    \begin{equation}\label{eq:Pixtonclass}
        \lambda_g - \frac{B_{2g}}{2^g g!}D^g.
    \end{equation}
    \end{itemize}
\end{itemize}
These were proved by van der Geer \cite{vdG99}, Esnault--Viehweg \cite{EV04}, and Ekedahl--van der Geer \cite{EvdG05}, respectively.

\begin{itemize}
    \item[(b)] Relations coming from the boundary: The boundary of the toroidal compactifications has a recursive structure and is a union of toroidal compactifications of torus bundles over $\mathcal X_h^r$ for different $h$ and $r$. In our case, the normalization of the boundary $Y$ is a $\mathbb P^1$-bundle over the twofold product of the universal family in dimension $g-1$:
    $$
    \pi_{\mathbb P} : Y \to \X_{g-1}^2.
    $$
    The relations in the tautological ring of $\X_{g-1}^2$ were determined in \cite{GZ} and will be reviewed in \S\ref{sec:proofA}. Pulling back these relations along $\pi_{\mathbb P}$ and pushing forward along $i$ yields relations in $\R^*(\mathcal{X}_g')$.
    \item[(c)] Weight vanishing relations: Consider the tautological expression:
\end{itemize}
    \begin{equation}\label{eq:St}
    \widetilde{\sfS}_g^c := \left[
    \exp(\theta)\cup
    \left(
    1-
    \frac{1}{2}j_*
    \sum_{k_1,k_2\ge 1} \frac{(-1)^{k_1+k_2}(2k_1)!(2k_2)!}{2^{k_1+k_2}(2k_1+2k_2)!}
    \frac{\alpha_1(N_j)^{k_1-1}\cup\alpha_{2}(N_j)^{k_2-1}}{k_1!k_2!}
    \right)
    \right]_{\codim = c}.
    \end{equation}
    \begin{itemize}
        \item[] We show that the following holds:
    \end{itemize}

\begin{theoremintro}\label{thm:rel}
    If $c > g$, then $\widetilde{\sfS}_g^c = 0$ in $\CH^c(\X_g')$.
\end{theoremintro}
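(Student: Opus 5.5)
The plan is to realize $\widetilde{\sfS}_g^c$ as a weight projection of a class of known, small weight. The model is the abelian-scheme case: there $\exp(\theta)$ satisfies $[N]^*\theta = N^2\theta$, and the Fourier transform of $[e]$ is $\exp(\theta)$ up to sign conventions. On the interior $\X_g$ one has $\FM([e]) = \exp(\theta)$, so $\theta^c = 0$ for $c>g$ follows from degree reasons. Concretely, the relation comes from the fact that $\FM$ of the unit section has only components of codimension at most $g$, equivalently $\FM(\exp(\theta)) = (-1)^g[e]$.

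On $\X_g'$ I would proceed in three steps.

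\begin{enumerate}
\item Using Theorem~\ref{thm:unit}, write $[e] = \sfS_g^g$. Then apply the Fourier transform $\FM$ defined via the Poincaré class on $\X_g'\times_{\A_g'}\X_g'$. This lives in the same log/birational framework used to define $[\overline{\tau_N}]$.
\item Compute $\FM(\sfS_g^g)$ term by term, using its compatibility with $i_*$ and $j_*$. Along the boundary strata $Y$ and $C$, the Fourier transform restricts to the Fourier transform of the lower-dimensional abelian fibrations $\X_{g-1}^2$ and $\X_{g-1}\times_{\A_{g-1}}\X_{g-1}$, twisted by the $\PP^1$-bundle and normal-bundle data.
\item Identify the result. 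The $\exp(\theta)$ factor transforms to essentially $[e]$ again, which is concentrated in codimension $g$. The expected conclusion is that $\FM([e])$ equals the full class $\widetilde{\sfS}_g = \sum_c \widetilde{\sfS}_g^c$, up to a sign and lower-order corrections. Since $\FM$ shifts codimension in a controlled way and $[e]$ is pure of codimension $g$, its transform must satisfy a fixed codimension bound.
\end{enumerate}

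An alternative that avoids a full computation of $\FM$ uses Theorem~\ref{thm:dec}. I would show that $\widetilde{\sfS}_g$ is a specific combination of weight-graded pieces. First compute $[N]^*$ on each summand:
\begin{itemize}
\item $[N]^*\theta = N^2\theta$;
\item $[N]^*$ acts on $j_*(\alpha_1^{a}\alpha_2^{b})$ by explicit powers of $N$, which can be read off from the polynomiality of $[\overline{\tau_N}]^t$ restricted to $C$.
\end{itemize}
Then argue that the codimension-$c$ part of $\widetilde{\sfS}_g$ lies in weight $2c$, i.e. it is an eigenvector with eigenvalue $N^{2c}$. For $c>g$ this weight exceeds the maximal weight $2g$ allowed in Theorem~\ref{thm:dec}(a)/(b), so the class vanishes.

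The coefficients $\frac{(2k_1)!(2k_2)!}{(2k_1+2k_2)!}$ are strongly suggestive of this. They are exactly what arises when one replaces the Bernoulli sums $\sum_m\binom{2k_2}{m}B_{2k_1+m}$ in $\sfS_g$ by their top-weight projection. Beta-integral identities like $\int_0^1 t^{2k_1}(1-t)^{2k_2}\,dt$ appear naturally from integrating over the tropical segment. In other words, I expect $\widetilde{\sfS}_g$ to be the pure weight-$2c$ projection $\mathfrak{w}_{2c}$ of $\exp(\theta)$-type classes. The $i_*$ terms disappear because, after projection, their weight lies strictly below the top weight.

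The main obstacle is the computation of $[N]^*$ (or $\FM$) on the classes $j_*(\alpha_1^{k_1-1}\alpha_2^{k_2-1})$. The closure $\overline{\tau_N}$ is not a graph of a morphism near $C$, so the pullback is governed by the piecewise-linear subdivision of the cone over $C$: the fan of the codimension-two stratum gets refined by $N$. I would first compute the answer tropically, as piecewise polynomials on the two-dimensional cone, with $[N]$ acting by scaling. I would then average over the refinement, which should produce the Beta-function coefficients. Finally, I would check the projection identity $\mathfrak{w}_{2c}(\cdot)=\widetilde{\sfS}_g^c$ by comparing with the unit section formula, where the Bernoulli numbers come from Euler–Maclaurin summation over lattice points, whose leading (integral) term gives exactly the factorial ratio.
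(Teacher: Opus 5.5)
Your second route is the paper's proof. The paper identifies $\widetilde{\sfS}_g^c=[\theta^c]^{(2c)}/c!$, the coefficient of $N^{2c}$ in $[N]^*(\theta^c/c!)$ (Proposition~\ref{pro:thetawt}(c)), and then applies Theorem~\ref{thm:dec}. By Lemma~\ref{lem:mapvscorr}, $[N]^*$ is induced by $[\overline{\tau_N}]^t=\sum_{k=0}^{2g}N^k\w_k$. Hence $[N]^*\alpha$ has degree at most $2g$ in $N$ for every $\alpha$, and the coefficient of $N^{2c}$ vanishes once $c>g$; the paper phrases this step through Corollary~\ref{cor:pure}. So you do not need a separate check that $\widetilde{\sfS}_g^c$ is an $N^{2c}$-eigenvector. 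The whole content is the identification.

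Your primary route (steps 1--3) does not work. Since $[e]=\fm^{-1}(1)$ and $\fm\circ\fm^{-1}=\id$, one has $\fm([e])=1$, not $\widetilde{\sfS}_g$ up to corrections. Your motivating formulas on $\X_g$ are also off: there $\fm([e])=1$ and $\fm(\exp(-\theta))=(-1)^g\exp(\theta)$. Neither $\FM([e])=\exp(\theta)$ nor $\FM(\exp(\theta))=(-1)^g[e]$ holds. There is also no codimension bound to exploit, since $\fm_k([e])$ lies in codimension $k$ for every $k$. On abelian schemes the Fourier proof of $\theta^{g+1}=0$ rests on Beauville's bigrading. On $\X_g'$ that second grading is exactly what the weight theory must supply.

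Within the second route, your sketch of the identification contains claims that would fail.
\begin{itemize}
\item $[N]^*=b_*N^*$ is not multiplicative. So $[N]^*\theta=N^2\theta$, combined with separate computations on $j_*$-classes, does not give $[N]^*(\theta^c)$. For instance, $[N]^*(\theta^2)=N^4\theta^2-\tfrac{N^4-N^2}{24}\,j_*(1)$.
\item The $j_*$-classes are not eigenvectors in general. Knowing that $[\overline{\tau_N}]^t$ is polynomial does not compute their weight pieces.
\end{itemize}
The missing input is the pullback of $\theta$ along the resolved multiplication map, $N^*\theta=N^2b^*\theta+\Phi(c_{\theta,N})$, with the explicit piecewise linear correction of Corollary~\ref{cor:pp_corr_theta_precise}. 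Brion's formula for $b_*$ then gives the closed formula $\mathfrak N(a,b,k,N)$ of \S\ref{sec:expl_calc}. Faulhaber sums and the Beta integral then give \eqref{eq:goalll} directly, with no comparison to $\sfS_g^g$.

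Your Euler--Maclaurin intuition is right: in top weight only the $n=0$ term, with $B_0(\tfrac12)=1$, survives in Lemma~\ref{lem:weight-pieces-xytheta}. Your explanation for the absence of $i_*$-terms is not valid, however. By Theorem~\ref{thm:wt}(c), every codimension-$c$ term $\theta^a\Phi(\varphi)$ with $\deg\varphi\ge 1$ has weight $<2c$. That includes the $j_*$-terms that do occur, e.g.\ $[\theta^2]^{(4)}=\theta^2-\tfrac{1}{24}j_*(1)$, where $j_*(1)$ is pure of weight $2$. Which boundary terms survive in top weight is an outcome of the cancellation in the proof of Proposition~\ref{pro:thetawt}(c), not a consequence of weight bounds.
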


The formula \eqref{eq:St} is the coefficient of $N^{2c}$ in the polynomial $[N]^*(\theta^c/c!)$. Theorem \ref{thm:rel} then follows from Theorem \ref{thm:dec}~(b), as it implies that every class of weight greater than $2g$ vanishes.

Theorem \ref{thm:rel} is a generalization of the relation $\theta^{g+1}=0$ on $\mathcal X_g$ and is the analogue of the \emph{universal DR} relations \cite[Theorem 0.8]{BHPSS} for compactified Jacobians.

There are no further relations between tautological classes. More specifically, let
$$
\mathfrak I_g\subseteq \sPP(\X_g')[\lambda_1, \ldots , \lambda_g, \theta]
$$
be the ideal generated by the relations in (a), (b), (c) above.

\begin{theoremintro}\label{thm:taut}
    The kernel of the map $\Xi$ in \eqref{eq:xi} is $\mathfrak{I}_g$. Hence the tautological ring has the presentation
    \[
    \R^*(\X_g') \cong \spp^{\ast}(\X_g')[\lambda_1,\cdots, \lambda_g,\theta]/\mathfrak{I}_g\,.
    \]
\end{theoremintro}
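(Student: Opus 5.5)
The proof has two halves. First, $\mathfrak I_g\subseteq\ker\Xi$; then the reverse inclusion, which is the substantial part. The containment $\mathfrak I_g\subseteq\ker\Xi$ is essentially already established. The relations in (a) are the results of van der Geer, Esnault--Viehweg and Ekedahl--van der Geer, pulled back along $\pi$. The relations in (b) vanish because they are $i_*\pi_{\mathbb P}^*$ of relations on $\X_{g-1}^2$, which hold there by \cite{GZ}. Here we use that $\theta|_Y$, $\EE|_Y$, $c_1(N_i)$ and the restrictions of piecewise polynomials to $Y$ are tautological classes on the $\mathbb P^1$-bundle $Y$, so that $\Xi$ is compatible with $i_*$. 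The relations in (c) are Theorem \ref{thm:rel}.

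For $\ker\Xi\subseteq\mathfrak I_g$, I would use the weight decomposition of Theorem \ref{thm:dec} together with the unit section formula of Theorem \ref{thm:unit}, mimicking the proof for $\X_g$ via Fourier duality.

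\textbf{Step 1: filter by boundary depth.} Every element of $\spp^*(\X_g')[\lambda,\theta]$ can be written as $P(\lambda,\theta)+ i_*(\ldots)$ or $j_*(\ldots)$, since strict piecewise polynomials of positive degree are supported on the boundary. Modulo the relations in (a) and (b), the quotient $\spp^*/\mathfrak I_g$ is spanned by three kinds of classes:
\begin{itemize}
\item[] $\theta^a\lambda_I$, with $\lambda_I$ ranging over products of $\lambda$'s allowed on $\A_g$ (the van der Geer basis);
\item[] $i_*$ of tautological classes on $Y$, spanned by $\lambda$-monomials in dimension $g-1$, $\theta_1,\theta_2,\theta_{12}$ and the $\mathbb P^1$-hyperplane class;
\item[] $j_*$ of tautological classes on $C$.
\end{itemize}
The relation $\lambda_g-\tfrac{B_{2g}}{2^gg!}D^g$ together with $\lambda_{g-1}\gamma=0$ and $\lambda_g\gamma=0$ controls how boundary terms interact with the top $\lambda$'s.

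\textbf{Step 2: decompose into weights and pair.} Apply $[N]^*$, using Theorem \ref{thm:dec}(a) and the explicit polynomial formula for $[N]^*\theta^c$ from which $\widetilde{\sfS}_g^c$ arises. This decomposes the candidate spanning set into weight components. Relations (c) kill exactly the components of weight $>2g$, so the remaining spanning classes have controlled weight. Then exhibit a perfect pairing that detects every remaining class. Pair a class of weight $2k$ against classes of complementary weight, push forward via $\pi_*$ to $\A_g'$, and land in $\R^*(\A_g')$, whose structure is known (the $\lambda$-ring with the $D^g$ relation). The formula $[e]=\sfS_g^g$ lets us compute $\pi_*$ of products with $\theta$ through the Fourier transform, exactly as $\pi_*(\theta^g/g!)=1$ on $\X_g$. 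Boundary contributions go via $\pi_*i_*$, reducing to pushforwards on $\X_{g-1}^2$, which are handled by induction on $g$ using the completeness of \cite{GZ}.

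\textbf{Main obstacle.} The hard part is proving non-degeneracy: that the images in $\CH^*$ of the spanning set from Step 1 (after imposing (c)) are linearly independent. A dimension count alone is insufficient. I would first try restricting to $\X_g$ and to $Y$, arguing by a triangular argument in boundary depth. A class vanishing on $\X_g$ lies in the image of $i_*$ by the localization sequence. Its pullback to $Y$, computed by the excess intersection formula $i^*i_*=c_1(N_i)\cdot$ plus a double-point correction through $C$, must then vanish in $\R^*(Y)$, which is known by induction. The delicate point is injectivity of $i_*$ on the relevant part of $\R^*(Y)$. I would address this by showing that the weight-$2k$ part of $\ker i_*$ is generated by relations of type (b) and those $i_*$-images forced by (c). Concretely, I would compare the coefficient of $N^{2c}$ on both sides, using that the restriction of $[\overline{\tau_N}]$ to the boundary is the multiplication-by-$N$ correspondence on the $\mathbb P^1$-bundle, twisted along the fibers.
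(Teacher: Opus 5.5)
Your argument for $\mathfrak I_g\subseteq\ker\Xi$ is fine. Your instinct to dispose of interior terms by restricting to $\X_g$, and to detect the rest by pairing and pushing forward to the base, is also sound. But the reverse inclusion, which is the real content of the theorem, is not proved. Steps~1--2 and the ``main obstacle'' paragraph state what would have to be shown, and the mechanism you propose is not the right one.

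You replace the boundary part by $i_*\R^*(Y)$ and hope that $\ker i_*$ is generated by relations of type (b) and $i_*$-images ``forced by (c)''. However, $\ker\bigl(i_*|_{\sR^*(Y)}\bigr)$ contains the gluing relations $\mathfrak K$. These are spanned by $\ell^C\cup\beta+U\cup\zeta(\beta)$ with $(1+\sh^*)\zeta(\beta)=(\sh^*-1)\beta$, and they come from $i\circ s_0=i\circ\sbar_\infty$ and the shift $\sh$ (Lemma~\ref{lem:K-in-ker}). They are neither of type (b) nor (c). They are absent from $\mathfrak I_g$ only because the piecewise-polynomial presentation already encodes them, and your Step~1 throws that information away.

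Conversely, (c) has nothing to do with $\ker i_*$. The paper's dimension count shows that $i_*$ induces an injection $\sR^*(Y)/\mathfrak K\hookrightarrow\sR^*(\X_g')$; (c) is used only to rewrite $\theta^c$, $c>g$, through boundary classes. Weights also play no role in the independence argument. The weight decomposition is not a multiplicative splitting, and comparing $N^{2c}$-coefficients of $[\overline{\tau_N}]$ along the boundary yields no non-degeneracy statement.

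The missing idea is a Gorenstein argument for the small ring. Since $\sR^*(C)$ is Gorenstein by \cite{GZ}, the intersection pairing on $\sR^*(Y)$ is perfect. The image of $i^*\colon\sR^*(\X_g')\to\sR^*(Y)$ is exactly the space of shift-invariant classes, namely $\mathfrak N=\mathfrak K^\perp$ (Lemma~\ref{lem:normalization-pairing}). A dimension count in the basis $E_{a,b,m}$, combined with this surjectivity and a single non-vanishing, gives the exact sequences of Lemma~\ref{lem:two-exact-sequences}. These show that the quotient $\mathfrak A^*$ of $\spp^*(\X_g')[\theta]$ by (b) and (c) has a perfect pairing into $\mathfrak A^{2g}=\QQ\,\theta^gD^g$ and equals $\sR^*(\X_g')$ (Corollary~\ref{thm:Gor}). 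So a dimension count \emph{does} suffice once combined with these inputs.

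The non-vanishing needed is $\theta^gD^g\lambda_1\cdots\lambda_{g-2}\neq0$. Via $\pi_*$ and $\lambda_g=\frac{B_{2g}}{2^gg!}D^g$ it reduces to $\lambda_g\lambda_{g-2}\cdots\lambda_1\neq0$ in $\CH^*(\A_g')$. This is not off the shelf, contrary to your assertion that $\R^*(\A_g')$ is known. The paper proves $\Lambda^*(\A_g')\cong\QQ[\lambda_1,\ldots,\lambda_g]/\bigl(c(\EE\oplus\EE^\vee)-1,\ \lambda_g\lambda_{g-1}\bigr)$ (Theorem~\ref{thm:lambdarels}). The proof uses a smooth toroidal compactification $\Abar_g$, van der Geer's presentation of $\Lambda^*(\Abar_g)$, and the vanishing of $\lambda_{g-1},\lambda_g$ on the deeper boundary strata (Lemma~\ref{lem:lambdacomp}).

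With these in hand the conclusion is short. Restricting to $\X_g$ kills the coefficients of the terms $\theta^k\lambda_I\lambda_{g-1}$. Every remaining term $\lambda_I\alpha$, with $\alpha$ in a basis of $\sR^*(\X_g')$ and $\lambda_I$ a square-free monomial in $\lambda_1,\ldots,\lambda_{g-2}$, is detected by pairing against a dual of $\alpha$ in $\mathfrak A^*$ times a dual of $\lambda_I$ in $\Lambda^*(\Abar_g)/(\lambda_g,\lambda_{g-1})$. No induction on $g$ and no Fourier-theoretic computation of $\pi_*$ is required.
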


Finally, we prove that the Fourier transform preserves the tautological ring:
By \cite[Theorem 1.5]{BMP}, there exists an extension $\Pbar$ of the
Poincar\'e line bundle which induces an autoequivalence
\begin{equation}\label{eq:FM_intro}
    \FM : \dbcoh(\X_g') \xrightarrow{\cong} \dbcoh(\X_g').
\end{equation}
Consider the Chow-theoretic realization of $\FM$,
\begin{equation}\label{eq:F_intro}
    \fm : \CH^*(\X_g') \xrightarrow{\cong} \CH^*(\X_g'),
\end{equation}
with respect to an appropriate Todd class normalization, and let $\fm^{-1}$
denote its inverse.

\begin{theoremintro}\label{thm:FourierIso}
    The maps $\fm$ and $\fm^{-1}$ preserve the tautological subring
    $\R^*(\X_g')\subset \CH^*(\X_g')$.
\end{theoremintro}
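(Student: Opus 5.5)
The plan is to show that $\fm$ sends a set of additive generators of $\R^*(\X_g')$ into $\R^*(\X_g')$. Since $\fm^{-1}$ agrees with $\fm$ up to $[-1]^*$, a sign and a twist by a power of the relative dimension, the statement for $\fm^{-1}$ will then follow. Indeed, $[-1]^*$ preserves $\theta$, the $\lambda_i$, $D$, and the strata $i$ and $j$ (up to the involution swapping the two factors of $C$), so it preserves $\R^*$.

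By Theorem \ref{thm:taut} and the description of $\spp^*$, the ring $\R^*(\X_g')$ is spanned as a module over $\QQ[\lambda_1,\dots,\lambda_g]$ by three families of classes:
\[
\theta^a,\qquad i_*\bigl(\pi_{\mathbb P}^*\beta\cdot c_1(N_i)^b\bigr),\qquad j_*\bigl(\gamma\cdot\alpha_1(N_j)^{b_1}\alpha_2(N_j)^{b_2}\bigr).
\]
Here $\beta$ runs over tautological classes on $\X_{g-1}^2$, and $\gamma$ over tautological classes on $C=\X_{g-1}^2$. Because $\fm$ is linear over $\CH^*(\A_g')$ by the projection formula, it suffices to treat each family separately.

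\textbf{Interior classes.} I will use that $\fm$ is a correspondence given by $\ch(\Pbar)\cdot\Td$, and that $c_1(\Pbar)=m^*\theta-p_1^*\theta-p_2^*\theta$ up to boundary corrections. As on abelian schemes, $\fm(\exp(\theta))$ is then computed to equal $\exp(-\theta)\cdot[e]$ up to a normalizing factor. Expanding in $N$ via Theorem \ref{thm:dec}(a) isolates each $\fm(\theta^a)$ as a weight component of a class built from $\theta$ and $[e]$. Theorem \ref{thm:unit} shows $[e]=\sfS_g^g$ is tautological, so each such component is tautological. More precisely, $\fm$ exchanges weight $k$ with weight $2g-k$ (twisted by codimension), and $\fm(\theta^a/a!)$ is a multiple of $\theta^{g-a}/(g-a)!\cdot$ (the weight-appropriate piece of $\sfS_g$). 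These pieces are explicit tautological expressions of the form \eqref{eq:S}, \eqref{eq:St}.

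\textbf{Boundary classes.} I will use the compatibility of $\Pbar$ with the boundary stratification. The restriction of $\Pbar$ to $Y\times_{\A_g'}\X_g'$ (respectively $C\times_{\A_g'}\X_g'$) should factor through the Poincaré bundle on $\X_{g-1}$ tensored with a line bundle pulled back from the $\mathbb P^1$-direction, or the torus direction. Base change and Grothendieck–Riemann–Roch then express $\fm\circ i_*$ as a composition of:
\begin{itemize}
\item[(1)] the Fourier transform on $\X_{g-1}^2$ in one factor, which preserves tautological classes by the classical result of \cite{GZ} and Beauville;
\item[(2)] pushforward and pullback along $\pi_{\mathbb P}$, with twisting by powers of $c_1(N_i)$;
\item[(3)] pushforward along $i$ or $j$ of tautological classes.
\end{itemize}
For $j_*$ the same argument applies, using the rank-one degeneration along the two branches.

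\textbf{Main obstacle.} I expect the hardest step to be the boundary classes, namely making the restriction of $\Pbar$ and its Todd correction explicit on $Y$ and $C$. The difficulty is that $\Pbar$ is only defined on a log/birational modification of $\X_g'\times\X_g'$. My first approach is to pass to the log-smooth model from \cite{BMP}, where $\Pbar$ is a genuine line bundle, and compute there. I would then push the result back down, checking that the pushforward of piecewise-polynomial classes along the modification is again strict piecewise polynomial, and hence tautological.
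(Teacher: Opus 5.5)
\textbf{The gap is in the interior step.} The classes $\theta^a$ are where the real content of the theorem lies, and your treatment of them does not work.

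First, the formula $\fm(\exp(\theta))=\exp(-\theta)\cup[e]$ (up to normalization) is wrong. Since $e^*\theta=0$, one has $\exp(-\theta)\cup[e]=[e]$, a class supported on the unit section. But already over $\A_g$ the transform of $\exp(\theta)$ is, up to sign, $\exp(-\theta)$, whose codimension-$0$ part is nonzero. The unit section enters only through $[e]=\fm^{-1}(1)$, and Theorem \ref{thm:unit} plays no role in this statement. The correct input is Corollary \ref{cor:F(exp_theta)}, $\fm(\exp(-\theta)\cup\Td^\vee(\cR_\pi)^{-1})=(-1)^g\exp(\theta-D/8)$. It is deduced from the derived statement Theorem \ref{thm:FM-1Db}, via Mumford's formula \eqref{eq:Mumform} with the regular difference map, Proposition \ref{prop:FM-theta}, Baum--Fulton--MacPherson, polynomiality in $n$, and Corollary \ref{cor:pushThet}.

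Second, and more seriously, even the correct formula only determines, in each output codimension $c$, the single mixed combination $\sum_{a+b=c+g}(-1)^b\,p_{2,*}(\ell^a\cup\theta_1^b)/(a!\,b!)$ modulo boundary terms. Your proposal has no mechanism to separate its summands. Theorem \ref{thm:dec}(a) is only the polynomiality of $[\overline{\tau_N}]$ and says nothing about how $\fm$ interacts with multiplication by $N$. The claim that $\fm$ exchanges weights $k$ and $2g-k$ is asserted, not proved. It is also not automatic here, because $(1\times N)^*\P$ and $(N\times 1)^*\P$ no longer agree on a common model; they differ by distinct piecewise linear corrections.

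\textbf{The missing idea} is the $N$-scaling argument in the proof of Theorem \ref{thm:small}. Start from $p_{2,*}(\exp(\ell-\theta_1))$, which is tautological by Corollary \ref{cor:F(exp_theta)} once boundary contributions are controlled. Apply $[N]^*$ and use Lemma \ref{lem:Npush}(a) in the form $[N]^*\circ p_{2,*}=p_{2,*}\circ[1\times N]^*$. Now $[1\times N]^*$ fixes $\theta_1$, and by Corollary \ref{cor:pp_corr_poincare_precise} and Theorem \ref{thm:wt}(a) it sends $\ell^a\cup\theta_1^b$ to $N^a\ell^a\cup\theta_1^b$ plus a polynomial in $N$ whose coefficients are tautological classes supported on the boundary. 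Moving those boundary terms across gives $p_{2,*}(\sum_{a,b}(-1)^bN^a\ell^a\cup\theta_1^b/(a!\,b!))\in\sR^*(\barA)$. Separating by degree in $N$ and by codimension then yields $p_{2,*}(\ell^a\cup\theta_1^b)\in\sR^*(\barA)$ for all $a,b$.

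Your boundary step is essentially the first paragraph of that proof: blowup, $\mathbb P^1$-bundle, abelian scheme, with \cite[Theorem 9.10]{BMP} for the last. However, your item (3) silently uses Corollary \ref{cor:i_* in sR}. That $i_*$ of an arbitrary class in $\sR^*(Y)$ lies in $\R^*(\barA)$ is not formal; it rests on the kernel analysis of Lemma \ref{lem:K-in-ker}.

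Once $p_{2,*}$ is known to preserve tautological rings, the paper concludes in one line, with no generator-by-generator computation. One has $\fm(\alpha)=p_{2,*}(p_1^*\alpha\cup\fm)$, and the kernel $\fm$ is tautological on $\barA^{(2)}$ by Lemma \ref{lem:Fchow}. The same argument applies to $\fm^{-1}$.
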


In fact, we provide an algorithm for computing the Fourier transform of tautological classes and
use this to prove closure of the tautological ring under pushforward maps from the $s$-fold relative
fiber product of $\X_g'$ (Theorem \ref{thm:small} and Remark \ref{rmk:pushforward}).

\subsection{Weights of tautological classes}

We consider a small resolution of the $s$-fold relative fiber product
\[
\X_g'^{(s)} \to
\X_g' \times_{\A_g'} \cdots \times_{\A_g'} \X_g'\,,
\]
generalizing \cite[Section 3]{BMP}. A direct generalization of \eqref{eq:intro_n} defines
the relative multiplication by $N$ map
\[
[1 \times \cdots \times N]^* : \CH^*(\X_g'^{(s)}) \to \CH^*(\X_g'^{(s)})\,.
\]

The main input for studying algebraic cycles on $\X_g'$ and proving Theorems \ref{thm:unit}, \ref{thm:dec}, \ref{thm:rel} and \ref{thm:FourierIso} is a method for controlling the action of the multiplication by $N$ maps on tautological classes on these small resolutions of relative self-products of $\X_g'$.

We introduce some divisorial classes on $\X_g'^{(s)}$: Let $p_i : \X_g'^{(s)} \to \X_g'$ be the projection onto the $i$-th component, and let
$p_{i,j} : \X_g'^{(s)} \to \X_g'^{(2)}$ be the projection onto the $(i,j)$-th
components. Set
\[
\theta_i := p_i^*\theta,\quad
\ell_{i,j} := p_{i,j}^*\ell
\in \CH^1(\X_g'^{(s)})\,,
\]
where $\theta$ is the Theta divisor and $\ell$ is the first Chern class of the
extended Poincar\'e line bundle.

Over $\mathcal A_g$, we have
$$
[1\times \ldots \times N]^*(\theta_s^m
    \cup \ell_{1,s}^{k_1}
    \cup \cdots
    \cup \ell_{s-1,s}^{k_{s-1}}) = N^{2m+\sum_{i=1}^{s-1}k_i} ( \theta_s^m
    \cup \ell_{1,s}^{k_1}
    \cup \cdots
    \cup \ell_{s-1,s}^{k_{s-1}})\,.
$$
After passing to the partial compactification, these classes are no longer of pure weight, but we can locate them in the filtration by weights $W_\bullet\CH^*((\X_g')^{(s)})$:

\begin{theoremintro}\label{thm:weight_main}
    Let $m \geq 0$, $k_1,\ldots,k_{s-1} \geq 0$, and
    $\varphi \in \spp^{d}(\X_g'^{(s)})$. For
    \[
    \alpha
    =
    \theta_s^m
    \cup \ell_{1,s}^{k_1}
    \cup \cdots
    \cup \ell_{s-1,s}^{k_{s-1}}
    \cup \varphi,
    \]
    the class $[1 \times \cdots \times N]^*(\alpha)$ is polynomial in $N$ of
    degree at most $2m+\sum_{i=1}^{s-1} k_i+d$.
\end{theoremintro}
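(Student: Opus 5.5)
The plan is to reduce the statement to the behaviour of each factor of $\alpha$ under $[1\times\cdots\times N]^*$, and then control how these factors interact in a product.

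\textbf{Step 1: what pullback looks like on each factor.} On the locus where the correspondence $\overline{\tau_N}$ is a genuine morphism, pullback is a ring homomorphism. One such locus is the semi-abelian locus, or more precisely a log modification of $\X_g'^{(s)}$ on which multiplication by $N$ in the last factor extends as a morphism of log schemes; for that I would use the log abelian structure of Kajiwara--Kato--Nakayama. On that locus I expect the following.
\begin{itemize}
\item $[1\times\cdots\times N]^*\theta_s = N^2\theta_s + (\text{strict piecewise polynomial of degree }1)$.
\item $[1\times\cdots\times N]^*\ell_{i,s} = N\ell_{i,s} + (\text{piecewise linear correction})$.
\end{itemize}
The corrections come from the fact that the extended $\theta$ and $\Pbar$ are not quadratic and bilinear on the boundary. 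Their tropicalizations are piecewise quadratic functions (quadratic with respect to the tropical torus of rank one), so the correction terms are piecewise polynomial with coefficients polynomial in $N$, of degree at most $2$ and $1$ respectively. A strict piecewise polynomial $\varphi$ of degree $d$ pulls back to the piecewise polynomial $\varphi\circ[N]^{\trop}$, which is polynomial in $N$ of degree at most $d$, since $[N]^{\trop}$ is linear on the cones.

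\textbf{Step 2: the key principle.} The pulled-back product is a piecewise polynomial on a subdivision of the cone complex, cupped with $\theta_s^{m'}$ and powers of $\ell_{i,s}$, and the $N$-degree is additive. The principle is that any monomial $N^a\theta_s^{m'}\prod\ell_{i,s}^{k_i'}\psi$ appearing in the expansion satisfies
\[
a \le 2m'+\textstyle\sum k_i' + \deg\psi .
\]
This holds because each unit of $N$-degree is accompanied by one unit of total weighted degree, with $\theta$ counting as $2$, $\ell$ as $1$, and piecewise-polynomial degree as $1$.

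\textbf{Step 3: pushing down to $\X_g'^{(s)}$.} The final step is to push forward from the subdivision, that is, the log modification or the modification resolving $\overline{\tau_N}$, down to $\X_g'^{(s)}$. I would show that this descent sends piecewise polynomials on the refinement to strict piecewise polynomials on $\X_g'^{(s)}$ of degree at most the original degree. The descent also involves classes like $j_*$ of Chern roots of normal bundles, which express non-strict piecewise functions; these should be counted with their codimension as degree.

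\textbf{Main obstacle.} The hard part is exactly this descent. The refinement used to make $[N]$ a morphism depends on $N$, since the subdivision grows with $N$. The pushforward of piecewise polynomials therefore requires showing that the resulting $N$-dependence is still polynomial. I would try an explicit Ehrhart or Bernoulli-type summation over the rank-one tropical fibers, which are segments subdivided into $N$ pieces. The Bernoulli numbers appearing in \eqref{eq:S} and \eqref{eq:St} suggest this is how the polynomiality arises, via Faulhaber's formula giving polynomials in $N$ of degree one higher than the summand, compensated by the codimension-one drop of the boundary stratum.
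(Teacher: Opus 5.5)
\textbf{There is a genuine gap.} The architecture of your outline matches the paper: resolve $1\times\cdots\times N$ by a log modification $b$, treat the failure of $\theta_s,\ell_{i,s}$ to be quadratic/bilinear as piecewise linear corrections, push forward tropically, and sum over the $\sim N$ new cones. But the two steps that carry the proof are missing, and your Step 2 has no content as stated.

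\textbf{The corrections have no well-defined $N$-degree.} The corrections live on the refinement $\Sigma_N^{(s)}$. Its cones over a given cone of $\Sigma^{(s)}$ are indexed by $i\in\{1,\dots,N\}$, namely the strips $0\le \mathsf{z}_i^{(N)}=N\mathsf{x}_s-(i-1)\mathsf{t}\le\mathsf{t}$. On each such cone, both the corrections and $\varphi\circ(1\times\cdots\times N)$ are polynomials jointly in $(i,N)$. They are not ``piecewise polynomials with coefficients polynomial in $N$''. Since the complexes $\Sigma_N^{(s)}$ for different $N$ do not map to one another, a class on $\barA_N^{(s)}$ has no $N$-degree to which your bound $a\le 2m'+\sum k_i'+\deg\psi$ could apply.

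The idea you need is Lemma \ref{lem:cPi} and Corollary \ref{cor:pp_corr_poincare_precise}. Since $(1\times N)^*\mathcal P^{\log}\cong(\mathcal P^{\log})^{\otimes N}$, and the tropical Poincar\'e torsor is trivialized on the universal cover by a fixed, $N$-independent, non-periodic PL function $\Psi(c_{\mathcal P})$, the correction is exactly $(1\times N)^*\Psi(c_{\mathcal P})-Np_N^*\Psi(c_{\mathcal P})$. The possible extra term $(1-N)\lambda\mathsf{t}$ is ruled out by pulling back along the unit section, and the analogous statement holds for $\theta$ (Corollary \ref{cor:pp_corr_theta_precise}). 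The projection formula for $p_{N,*}$ then removes the factor $\exp(-Na_i\pi_i^*\Psi(c_{\mathcal P}))$. Everything reduces to the single operator $p_{N,*}(1\times\cdots\times N)^*$ applied to the $N$-independent function $\prod_i\exp(a_i\pi_i^*\Psi(c_{\mathcal P}))\,\Psi(g)$. This function is not periodic, and arbitrary piecewise polynomials on the universal cover are not polynomial in $N$ under this operator. That is why one works in $\spp^{\ast}(\tilde\Sigma^{(s)})[m]^{\mathsf{gl}}$ and counts degree jointly in the sheet index $m$ and $\mathsf{x}_s$, using $\deg_{m,\mathsf{x}_2}c_{\mathcal P}=1$.

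\textbf{The pushforward needs cancellations, not just Faulhaber.} You correctly locate the obstacle in the pushforward but do not overcome it, and the mechanism you propose is not the one that works. Brion's formula writes $p_{N,*}$ as a sum over the cones $\sigma_{\pi,q,i}$, weighted by ratios of support functions. These weights are rational in $(i,N)$, with denominators $\mathsf{z}_i^{(N)}$ and $\mathsf{t}-\mathsf{z}_i^{(N)}=-\mathsf{z}_{i+1}^{(N)}$, so Faulhaber's formula cannot be applied directly.

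The paper first computes these weights explicitly (Corollary \ref{cor:supp_function_sigma}) and rearranges them by partial fractions (Lemma \ref{lem:contribution-formula-m}). It then uses the gluing condition $f(q-1)|_{\mathsf{x}_s=\mathsf{t}}=f(q)|_{\mathsf{x}_s=0}$ to split off the terms carrying these denominators; they telescope in $i$ to $c(Nm)$. Only the remainder is polynomial in $(i,N)$. Summing it over $i$ via Faulhaber then gives degree at most $d+1$ in $N$, not $d$.

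The sharp bound comes from an exact cancellation. The degree-$d$ part of the remainder telescopes to $\mathsf{x}_s(G_r-C)/\mathsf{z}_i^{(N)}$, which has degree only $d-1$. This is not a ``codimension-one drop of the boundary stratum''. Without these two ingredients, your outline yields neither polynomiality in $N$ nor the degree bound $2m+\sum_i k_i+d$.
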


Since Theorem \ref{thm:weight_main} is used in the proof of Theorem \ref{thm:dec}, the weight of each monomial appearing in Theorem \ref{thm:weight_main} must be computed independently without relying on the multiplicativity of weights.
Our proof uses logarithmic resolutions of the multiplication by $N$ map and Brion's formula for piecewise polynomial classes \cite{Brion}.
A more precise statement is proved in \S\ref{sec:wt}; see Theorem \ref{thm:wt}. A related combinatorial framework using $b$-divisors has recently been developed by Botero--Burgos Gil--Holmes--de Jong in \cite{BGHdJ}.

Note that by Theorem \ref{thm:dec}~(b), the calculations of Theorem \ref{thm:weight_main} compute the perverse degrees of tautological classes.

\subsection{Idea of the proofs}

\subsubsection{Sketch of the proof of Theorem \ref{thm:unit}}

Our first observation is that the Fourier transform of the line bundle\footnote{Since $\CO(\theta)$ is only a $\QQ$-line bundle, the argument below must be slightly modified; see \S\ref{sec:FMn}.} $\CO(\theta)$ has a formula analogous to the one for abelian schemes. Consider the autoequivalence
\eqref{eq:FM_intro}. Using the explicit description of the kernel $\Pbar$ as in \cite{BMP}, we have
\begin{equation*}
    \FM (\CO(\theta))
    \cong
    \CO(-\theta) \otimes \pi^*\Mc
\end{equation*}
for some line bundle $\Mc$ on $\A_g'$. Let $\fm$ be the Chow-theoretic Fourier transform \eqref{eq:F_intro}. Applying the Baum--Fulton--MacPherson isomorphism, the above equality in the derived category implies that
\begin{equation}\label{eq:intro2}
    \fm\bigl(\exp(-\theta)\cup \Td^\vee(\cR_\pi)^{-1}\bigr)
    =
	(-1)^g\exp(\theta-D/8),
\end{equation}
where $\cR_\pi$ is the residue sheaf of $\pi :\X_g'\to \A_g'$ and $D$ is the class of the boundary divisor in $\A_g'$.

As observed in \cite{BMP}, the class of the unit section is the Fourier inverse
of the fundamental class:
\[
[e] = \fm^{-1}(1).
\]
Up to Todd class contribution and signs, the leading contribution to the right-hand side is $\fm(1)$. The main difficulty in computing $\fm(1)$ from
\eqref{eq:intro2} is that it is hard to isolate this contribution unless $\CH^*(\X_g')$ carries a meaningful {\em second grading}, in addition to the grading by codimension.

We overcome this difficulty by studying the rational multiplication by $N$ map $[N]^{\ast}$ defined in \eqref{eq:intro_n}. Theorem \ref{thm:weight_main} controls the action of $[N]^{\ast}$ on tautological classes. This enables
us to compute the contribution of $\fm(1)$ in \eqref{eq:intro2} modulo tautological classes supported on the codimension-two stratum of $\X_g'$.
After a careful analysis of the intersection theory of this stratum, we obtain Theorem \ref{thm:unit}.

\subsubsection{Sketch of the proof of Theorem \ref{thm:dec}}

The polynomiality of $[\overline{\tau_N}]$ in $N$ follows from the fact that the class of the unit section $[e]$ lies in the tautological ring; see Theorem
\ref{thm:unit}. Consider the rational pullback along
\[
\phi_N : \X_g' \times_{\A_g'} \X_g' \dashrightarrow \X_g',
\quad
(x,y) \mapsto Nx-y,
\]
defined in the same way as $[N]^*$. Then $[\overline{\tau_N}]$ is obtained as the rational pullback $[\phi_N]^*([e])$. Hence the polynomiality follows from the weight bounds for tautological classes proved in \S\ref{sec:wt}.

The motivic decomposition of $\X_g' \to \A_g'$ follows from the multiplicativity of the classes $[\overline{\tau_N}]$:
\[
[\overline{\tau_N}] \circ [\overline{\tau_M}]
=
[\overline{\tau_{NM}}] \in \CH_*(\X_g' \times_{\A_g'}\X_g')\,,
\]
where $\circ$ denotes the composition of relative correspondences.
Since $[\overline{\tau_1}]$ is the class of the relative diagonal, this equality shows that the coefficients $\{\w_i\}_{i=0}^{2g}$ of $[\overline{\tau_N}]^{t}$ form mutually orthogonal projectors.

To prove the multiplicativity of the weight filtration, we establish a ``$P=W$'' phenomenon\footnote{Here, the $W$-side is {\em not} the weight filtration on the Betti moduli in the context of the non-abelian Hodge correspondence.}
\[
P_k\CH^*(\X_g') = W_k\CH^*(\X_g'),
\]
where the Chow-theoretic perverse filtration is defined using the images of the codimension at most $k$ components of $\fm$. The inclusion $P \subseteq W$ follows from a degree bound for $[1\times N]^*(\fm)$ in $N$ (Theorem \ref{thm:weight_main}).
The inclusion $W \subseteq P$ follows by studying $[N]_*$, the pushforward along (a resolution of) the rational multiplication by $N$ map on the Chow group, using the results of \S\ref{sec:wt}. The $P$-filtration is multiplicative by Maulik--Shen--Yin \cite{PerverseFourierMSY25}, and hence the multiplicativity of the $W$-filtration follows.

\subsubsection{Sketch of the proof of Theorem \ref{thm:taut}}

First, the relation \eqref{eq:Pixtonclass} on $\A_g'$ follows by pulling back the equality in Theorem \ref{thm:unit} along the unit section $e$. The complete description of $\R^*(\A_g')$ then follows from the existence of a smooth toroidal compactification of $\A_g'$, van der Geer's result in \cite{vdG99} and a vanishing result from \cite{CMOP}.

For $\X_g'$, the inclusion $\mathfrak I_g \subset \ker \Xi$ follows from Theorem \ref{thm:rel}. To prove the reverse inclusion, we analyze the restrictions of tautological classes on the strata of $\X_g'$ and the proper pushforward $\pi_* : \R^*(\X_g') \to \R^*(\A_g')$ using carefully chosen elements spanning $\R^*(\X_g')$.

\subsection{Future directions}

We discuss several open directions concerning the structure of algebraic cycles on general toroidal compactifications of $\X_g$ and $\A_g$. Although several key arguments in this paper rely on the relatively simple geometry of $\X_g'\to \A_g'$, we expect that some of the methods extend to deeper toroidal compactifications, at the cost of a heavier combinatorial burden.

Let $ \pi : \overline{\mathcal X}_g \to \overline{\mathcal A}_g$ be a toroidal partial compactification of the universal family, with both $\overline{\mathcal X}_g$ and $\overline{\mathcal A}_g$ smooth and $\pi$ flat. Define the small tautological rings
$$
\sR^*(\overline{\mathcal X}_g) = \spp^*(\overline{\mathcal X}_g)[\theta]\quad \text{and}\quad \sR^*(\overline{\mathcal A}_g) = \sPP^*(\overline{\mathcal A}_g),
$$
where $\theta$ is a suitable extension of the Theta divisor.

We view the following four questions as foundational challenges in understanding the intersection theory of toroidal compactifications of $\A_g$ and $\X_g$.

\begin{conjecture*}\label{conj1}
    The pushforward map $\pi_*$ preserves small tautological rings:
    $$
    \pi_* : \sR^*(\overline{\mathcal X}_g) \to \sR^*(\overline{\mathcal A}_g).
    $$
\end{conjecture*}

\begin{conjecture*}\label{conj2}
    Let $\overline{e}$ be the Zariski closure of the unit section. Then $[\overline{e}] \in \sR^g(\overline{\mathcal X}_g)$.
\end{conjecture*}

\begin{conjecture*}\label{conj3}
    There exists an exhaustive filtration on $\CH^*(\Xbar_g)$ which extends the weight filtration.
\end{conjecture*}

\begin{question*}\label{conj4}
    Can the rings $\sR^*(\overline{\mathcal X}_g)$ and $\sR^*(\overline{\mathcal A}_g)$ be described by \emph{natural} generators and relations?
\end{question*}

In addition to our main results, further evidence for the above conjectures comes from the study of fine compactified Jacobians $\Jbar_{g,n}$ over the moduli space of stable curves $\Mbar_{g,n}$ \cite{Caporaso94,KP19}. For these toroidal abelian fibrations, Conjecture \ref{conj2} was proved in \cite{BHPSS} (see also \cite[Proposition 8.3]{BMP}). Conjecture \ref{conj1} will be studied in \cite{BM}, and Conjecture \ref{conj3} in \cite{BP}.

\subsection{Structure of the paper}

\S\ref{sec:tr1} is devoted to preliminaries and to the relation between $\X_g'$ and the universal log abelian scheme.
In \S\ref{sec:Fourier}, we show that the extension of the Poincar\'e line bundle constructed in \cite{BMP} endows $\X_g'$ with the structure of a self-dualizable abelian fibration, and we compute the Fourier inverse of $\exp(\theta)$.
In \S\ref{sec:wt}, we develop a theory for computing the weights of tautological classes using log abelian schemes and prove Theorem \ref{thm:weight_main}. This is the technical heart of the paper.
We then prove Theorem \ref{thm:unit} in
\S\ref{sec:unit}, Theorem \ref{thm:dec} in \S\ref{sec:weight}, and Theorems \ref{thm:rel} and \ref{thm:taut} in \S\ref{sec:taut}.

\subsection*{Conventions}

\begin{itemize}
	\item We work over a base field $\Bbbk$.
	\item All Chow groups are taken with $\QQ$-coefficients.
	\item All monoids and log structures will always be fine and saturated (f.s.) in the sense of \cite{Kato}.
	\item For a Deligne--Mumford stack $X$, let $\dbcoh(X)$ denote the bounded derived category of coherent sheaves on $X$. Let $K_0(X)$ (resp. $K^0(X)$) denote the Grothendieck group of coherent sheaves (resp. locally free sheaves).
	\item If $f$ is a polynomial in an indeterminate $u$, we write $[u ^{m}]f$ for the coefficient of $u ^{m}$ in $f$.
	\item We write $\Pbar^{\vee}:=\hom(\Pbar,\CO_{\barA^{2}})$ for the dual of the Poincar\'e line bundle and $\Pbar^{-1}:=\hom(\Pbar,\CO_{\barA^2}) \otimes \pi_2^*\omega_\pi [g]$ for the kernel of the inverse Fourier transform. Similarly, for $\mathcal{P}$, $\mathcal{P}^{\circ}$, etc.
\end{itemize}

\subsection*{Acknowledgments}
We would like to thank Gerard van der Geer, Samuel Grushevsky, David Holmes, Davesh Maulik, Rahul Pandharipande, Alex Perry, Johannes Schmitt, Pim Spelier, Junliang Shen and Qizheng Yin for helpful discussions. Y.B. is especially grateful to Aaron Pixton for their collaboration, which inspired many of the ideas developed in this paper.

Large Language Models were used in the preparation of this manuscript. They helped with image creation, handling the combinatorics and finding typos. The authors have reviewed all of these contributions and maintain full responsibility for the contents of this paper.

Y.B. was supported by the June E Huh Visiting Fellowship and the AMS-Simons travel grant. A.I.L. and J.F. were supported by SNF-200020-219369. We thank Sapienza University for supporting research visits of A.I.L. and Y.B. during which parts of this project were completed.

\section{Torus rank at most one toroidal abelian fibrations}\label{sec:tr1}

\subsection{The partial toroidal compactification of $\X_g$}\label{sec:mum}

For $g\ge 1$, let $\A_g/\Bbbk$ be the moduli stack of principally polarized abelian varieties of dimension $g$. It is a smooth, geometrically connected Deligne--Mumford stack of dimension $g(g+1)/2$. Let $\X_g \to \A_g$ denote the universal family, and $\X_g^\vee$ the dual abelian scheme of $\X_g$. We use the following notation:

\begin{itemize}
    \item We write $C$ for the fiber product of the universal family and its dual in dimension $g-1$:
    $$
    C := \X_{g-1} \times_{\A_{g-1}} \X_{g-1}^\vee
    $$
    which we think of as the moduli space of $(X, \lambda, x, L)$, where $\lambda: X \to X^\vee$ is a principal polarization, $x \in X$ and $L \in X^\vee$.
    \item We write $\P_C$ for the Poincar\'e line bundle on $C$ trivialized along the unit sections. Let $Y :=\PP_C(\P_C \oplus\CO_C)$ be the projective closure of the total space of $\P_C$ and $s_0, s_\infty : C \to Y$ be the zero and the infinity sections.
    \item We write $\sh : C \to C$ for the shift operator
    \begin{equation}\label{eq:sh}
        \sh (X, \lambda, x, L)=(X, \lambda, x + \lambda^{-1}(L), L)\,,
    \end{equation}
    and $\sbar_{\infty}$ for the shifted infinity section:
    $$
    \sbar_{\infty} = s_{\infty}\circ \sh\,.
    $$
    The inverse of the shift operator is denoted by $\shb$.
    \item We write $D:=\X_{g-1}^\vee$ for the dual abelian scheme.
\end{itemize}

The universal family $\X_g \to \A_g$ admits a canonical partial compactification constructed as follows (see also \cite{mumford_kodaira}): Let $\Abar_g$ be a
smooth toroidal compactification of $\A_g$. It has a natural morphism $q : \Abar_g \to \Abar_g^{\mathrm{Sat}}$ to the Satake compactification.
The inverse image $\A_g' := q^{-1}(\A_g \sqcup \A_{g-1})$ is independent of the choice of $\Abar_g$; we call it the Mumford partial compactification.

By \cite{FC}, the universal abelian variety $\X_g \to \A_g$ extends to a semi-abelian scheme
$$
\X_g^{\circ} \to  \A_g'\, ,
$$
and $\X_g^{\circ}$ admits a minimal toroidal compactification $\X_g'$, constructed in \cite{Namikawa,FC}; see also \cite{Alexeev,EdGS}.
It is an open subset of the Delaunay--Voronoi compactification $\Xbar_g^{\operatorname{D--V}} \to \Abar_g^{\operatorname{vor}}$ constructed in \cite{namikawa76}; see \cite{EdGS}.

It will be convenient to introduce the following notation:
\begin{itemize}
    \item Let $B:= \A_g'$, and let $G:= \X_g^\circ \to B$ be the universal semi-abelian group scheme.
	\item Let $\barA:=\X_g'$ be the minimal toroidal compactification of $G$ and let $\pi :\barA \to B$ be the extension of $\X_g\to \A_g$.
	\item Let $A^{\circ}:=\X_g$ and $B^{\circ}:=\mathcal{A}_g$.
    \item If $X\to B$ is one of the spaces used below (e.g., $\barA$, $G$, etc.), write
    $X^s:=X\times_B\cdots\times_BX$ for its $s$-fold relative fiber
    product.
\end{itemize}

The natural action $G\curvearrowright G$ extends to an action $\mu : G\times_B
\barA \to \barA$, and the tuple $(\barA, B, G)$ is an Ng\^o fibration: it is a
weak $\delta$-regular abelian fibration; see \cite[\S 7.1]{Ngo}.
The boundary strata of $\pi :\barA \to B$ are studied in
\cite{mumford_kodaira,EvdG05,GZ}:

\begin{theorem}\label{thm:bdy}
    There are surjective finite maps of degree $2$:
    $$
    \iota : D \to \partial B := B \setminus\A_g\, , \qquad i : Y \to \partial \barA := \barA \setminus \X_g
    $$
    that are compatible with the structure map $Y \to D$:
    \begin{equation}\label{eqn:geom_setup}
    \begin{tikzcd}
	{Y} & {\partial \barA \subset \barA} \\
	{D} & {\partial B \subset B.}
	\arrow["i", from=1-1, to=1-2]
	\arrow["\pi_Y", from=1-1, to=2-1]
	\arrow["\pi", from=1-2, to=2-2]
	\arrow["\iota", from=2-1, to=2-2]
    \end{tikzcd}
    \end{equation}
    Moreover, the map $i$ is the normalization of an \'etale double cover of the boundary. There is a surjective finite \'etale map from $C$ to the singular locus of the boundary $\partial \barA$,
    $$
    j: C \to (\partial \barA)^{\mathrm{sing}}\subset \barA
    $$
    such that we can identify $\partial \barA$, up to an \'etale double cover, with the quotient of $Y$ obtained by gluing the zero section $s_0$ and the shifted infinity section $\sbar_\infty$, and
    \begin{equation}\label{eq: j = is0=isinf}
        j = i \circ s_0=i\circ \sbar_{\infty}\, .
    \end{equation}
\end{theorem}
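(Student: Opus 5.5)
We will prove Theorem \ref{thm:bdy} with Mumford's explicit construction of $\barA$ near the boundary, namely the degeneration data of Faltings--Chai. No new machinery is needed; everything reduces to a local model.

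\emph{The base.} Over the torus-rank-one stratum of $\A_g'$, a degenerating principally polarized abelian variety is recorded by its Raynaud extension. This is a $\Gm$-extension of a $(g-1)$-dimensional principally polarized abelian variety $X$, classified by a point $L\in X^\vee$. The only remaining ambiguity is the choice of isomorphism $\Gm\cong\Gm$, which acts by $L\mapsto L^{-1}$. Two consequences follow:
\begin{itemize}
\item the natural map $\iota: D=\X_{g-1}^\vee\to\partial B$ is finite and surjective of degree $2$;
\item it factors through the quotient by $[-1]$.
\end{itemize}

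\emph{The fibres.} Over a point $(X,L)$, the semi-abelian fibre $G$ is the $\Gm$-torsor $\P_L\setminus 0$ over $X$. Mumford's minimal compactification of the degenerate fibre is obtained as follows. Take the $\PP^1$-bundle $\PP(\P_L\oplus\CO)$ over $X$. Glue its $0$-section to its $\infty$-section, with a translation by $\lambda^{-1}(L)$ coming from the periods. This gives a cycle of one $\PP^1$-bundle, i.e.\ a rank-one Néron-type model. Performing this in families over $D$ identifies the normalization of (a double cover of) $\partial\barA$ with $Y=\PP_C(\P_C\oplus\CO_C)\to C\to D$.

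\emph{The gluing and the shift.} The gluing of the $0$-section to the $\infty$-section is given by the shift $\sh$ of \eqref{eq:sh}. This comes from the periodicity relation in Mumford's construction: the lattice element $1\in\ZZ$ acts on the torus-embedding chart $\PP^1$ by sending the $\infty$ face of one chart to the $0$ face of the next, translated by $\lambda^{-1}(L)$. Hence the quotient of $Y$ by $s_0\sim\sbar_\infty$ is the boundary, and $j:=i\circ s_0=i\circ\sbar_\infty$ gives \eqref{eq: j = is0=isinf}. Away from the glued locus, $i$ is an isomorphism onto its image modulo the degree-$2$ ambiguity $(L,x)\mapsto(L^{-1},-x)$. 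This shows that $i$ is the normalization of an étale double cover of $\partial\barA$, compatible with $\pi_Y$ and $\iota$. The singular locus of $\partial\barA$ is exactly the image of the glued sections. On $C$, the maps $s_0$ and $\sbar_\infty$ differ by $\sh$, and the $[-1]$ symmetry interchanges the roles of $0$ and $\infty$. Consequently $j$ is finite étale onto $(\partial\barA)^{\mathrm{sing}}$.

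\emph{Main obstacle.} The delicate step is the global, families version of the local chart description. One must check two things:
\begin{itemize}
\item The twist by $\sh$ is exactly the one above, and not, for example, its inverse. For this I would compute in Mumford's construction using the bilinear form $b(1,1)$ together with the principal polarization, which identifies the period $c(1)$ with $\lambda^{-1}(L)$.
\item The resulting identifications are étale rather than merely bijective. For this I would use the fact that the stacks are smooth and the toroidal charts are normal crossings, so that the boundary is a divisor with transversal self-intersection along $j(C)$.
\end{itemize}
Since these are classical computations from \cite{mumford_kodaira,EvdG05,GZ}, I would cite them for the chart-level statements and only verify the compatibility with the notation $s_0$, $\sbar_\infty$, $\sh$ used here.
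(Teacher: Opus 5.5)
Your sketch follows the paper's route: the paper does not prove Theorem~\ref{thm:bdy} itself but quotes it from Mumford's construction as worked out in \cite{mumford_kodaira,EvdG05,GZ}. Your families step is best phrased as follows: $\iota$ is the $\ZZ/2$-torsor of trivializations of the rank-one torus, so $\partial\barA\times_{\partial B}D\to\partial\barA$ is the \'etale double cover in the statement and $Y$ is its normalization.

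One step is wrong as written. The map $(L,x)\mapsto(L^{-1},-x)$ cannot be the deck involution, because $[-1]_X$ is an isomorphism $(X,\lambda,x,L)\cong(X,\lambda,-x,L^{-1})$ in $C$. So this map is isomorphic to the identity; it would only produce a gerbe, not the generically two-to-one map $i$.

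The actual deck involution comes from changing the trivialization of the torus. It is $(x,L)\mapsto(x,L^{-1})$ together with exchanging $s_0$ and $s_\infty$, or equivalently the fibrewise $[-1]$ of $G$. Combined with the gluing $s_0\sim\sbar_\infty$, it gives $j\circ\nu\circ\sh=j$ for $\nu(x,L)=(x,L^{-1})$. This, rather than the $[-1]$ symmetry of $C$, is where the degree $2$ of $j$ comes from.
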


\begin{figure}
	\begin{center}
	\begin{minipage}[c]{0.52\textwidth}
		\centering
		\begin{tikzpicture}[scale=0.82,transform shape,every node/.style={font=\small}]
			\draw[thick] (0.25,-2.35) -- (7.15,-2.35);
			\fill (5.23,-2.35) circle (1.6pt);
			\node[below] at (0.65,-2.42) {$B$};
			\node[below] at (5.23,-2.42) {$D$};

			\draw[thick,black!35]
				(0.63,-0.40) -- (1.88,0.02) -- (2.08,3.17) -- (0.83,2.75) -- cycle;
			\draw[thick,black!55]
				(1.93,-0.40) -- (3.18,0.02) -- (3.38,3.17) -- (2.13,2.75) -- cycle;
			\draw[thick,black!75]
				(3.23,-0.40) -- (4.48,0.02) -- (4.68,3.17) -- (3.43,2.75) -- cycle;
			\node at (0.52,3.45) {$\barA$};

			\coordinate (Fbottom) at (4.83,-0.40);
			\coordinate (Fnode) at (5.03,1.475);
			\coordinate (Ftop) at (5.23,3.35);
			\coordinate (Floop) at (4.05,1.475);
			\coordinate (Bbottom) at (6.08,0.02);
			\coordinate (Bnode) at (6.28,1.895);
			\coordinate (Btop) at (6.48,3.77);
			\coordinate (Bloop) at (5.30,1.895);
			\coordinate (Flooptangent) at (4.286643,1.869648);
			\coordinate (Blooptangent) at (5.536643,2.289648);
			\coordinate (Fedge) at (5.139244,2.156122);

			\draw[thick,red] (Fnode) -- (Bnode);

			\draw[thick,densely dashed,black!45]
				(Bbottom) .. controls (6.09,0.52) and (6.43,1.445) .. (Bnode);
			\draw[thick,densely dashed,black!45]
				(Bnode) .. controls (6.43,2.345) and (6.39,3.17) .. (Btop);
			\draw[thick,densely dashed,black!45]
				(Bnode) .. controls (6.13,2.345) and (5.30,2.545) ..
				(Bloop) .. controls (5.30,1.245) and (6.13,1.445) .. (Bnode);

			\draw[thick] (Fbottom) -- (Bbottom);
			\draw[thick] (Ftop) -- (Btop);
			\draw[thick] (Flooptangent) -- (Fedge);
			\draw[thick,densely dashed,black!45] (Fedge) -- (Blooptangent);

			\draw[thick]
				(Fbottom) .. controls (4.84,0.10) and (5.18,1.025) .. (Fnode);
			\draw[thick]
				(Fnode) .. controls (5.18,1.925) and (5.14,2.75) .. (Ftop);
			\draw[thick]
				(Fnode) .. controls (4.88,1.925) and (4.05,2.125) ..
				(Floop);
			\draw[thick]
				(Floop) .. controls (4.05,0.825) and (4.88,1.025) .. (Fnode);

			\node[red] at (5.35,1.38) {$C$};
			\node at (5.75,3.05) {$D$};

			\draw[->,thick]
				(3.35,-0.48) -- node[right] {$\pi$} (3.35,-2.08);

			\draw[thick]
				(8.25,1.55) -- (9.50,1.97) -- (9.70,5.27) -- (8.45,4.85) -- cycle;
			\draw[thick] (8.30,2.38) -- (9.55,2.80);
			\draw[thick] (8.40,4.03) -- (9.65,4.45);
			\node[left] at (8.30,2.38) {$s_0$};
			\node[left] at (8.40,4.03) {$s_\infty$};
			\node at (9.00,5.58) {$Y$};

			\draw[->,thick]
				(8.58,2.56) -- node[right] {$\sh$} (9.31,4.25);
			\draw[->,thick]
				(8.05,3.15) .. controls (7.55,2.982) and (7.22,1.788) ..
				node[pos=0.52,below] {$\pi_{\mathbb P}$} (6.72,1.62);
		\end{tikzpicture}
	\end{minipage}%
	\hspace{0.01\textwidth}%
	\begin{minipage}[c]{0.45\textwidth}
		\centering
		\resizebox{\linewidth}{!}{%
			\begin{tikzcd}[ampersand replacement=\&]
				\barA \arrow[dd, "\pi"'] \& Y \arrow[l, "i"'] \arrow[d, "\pi_{\mathbb P}"] \& \mathbb P_C(\mathcal O_C \oplus \mathcal P_C) \arrow[l, Rightarrow, no head] \\
				{} \& C \arrow[d, "\pi_C"] \arrow[lu, "j", bend left=30] \arrow[u, "\sbar_{\infty}", bend left=60] \arrow[u, "s_{0}", bend left=18] \& \mathcal X_{g-1}\times_{\mathcal A_{g-1}}\mathcal X_{g-1}^\vee \arrow[l, Rightarrow, no head] \\
				B \& D \arrow[l, "\iota"'] \& \mathcal X_{g-1}^\vee \arrow[l, Rightarrow, no head]
			\end{tikzcd}%
		}
	\end{minipage}
	\end{center}
	\caption{\label{fig:thm_bdy}A visualization of the maps and spaces appearing in Theorem \ref{thm:bdy}. The left-hand side depicts the geometry of the universal family $\barA$ over the base $B$.}
\end{figure}

\subsection{The geometry of self-fiber products} \label{sec:An}

Henceforth, we equip $B$ and $\barA$ with the divisorial log structure associated to the
toroidal boundaries $\partial B\subseteq B$, $\partial \barA\subseteq \barA$.
Since $\partial B$ is smooth by Theorem \ref{thm:bdy}, the space $B$ is log
smooth and there is a strict smooth morphism
$$
B\to \mathcal{A}^{1}:=[\mathbb{A}^{1}/\mathbb{G}_m]
$$
from $B$ to its Artin fan.
See \cite{SkeletonsAndFAbramo2015,AModuliStackCavali2017,LecturesOnLogOgus2018} for reference material on logarithmic geometry and Artin fans.\footnote{In this paper we do not distinguish between ``an Artin fan'' of a log scheme $X$ in the sense of \cite[Definition 43]{LogarithmicTauPandha2024} and ``the Artin fan'' of $X$ in the sense of \cite{Birational_inva_Abramo_2013}. All Artin fans considered in this paper will agree with the canonical Artin fans of Abramovich--Wise.}

In this section we compute the Artin fan of
$\barA$. This will allow us to then construct a desingularization $\barA^{(s)} \to\barA^s$ for $s \geq 2$ and define a subring of $\CH^{\ast}(\barA^{(s)})$ of ``piecewise polynomial classes'' in \S \ref{sec:taut_def}.

\begin{definition}
    Let $\Sigma^1$ be the cone complex obtained by identifying the two rays of $\RR_{\geq0}^2$, and let $\mathsf{t}:\Sigma^1\to\mathbb R_{\geq0}$ be the map of cone complexes that adds the two coordinates.
\end{definition}

\begin{proposition}\label{pro:Atrop}
	There is a 2-commutative diagram
	\[
	\begin{tikzcd}
		\barA \arrow[d] \arrow[r] & \mathcal A_{\Sigma^1} \arrow[d, "\mathsf{t}"] \\
		B \arrow[r] & \mathcal{A}^1,
	\end{tikzcd}
	\]
	where the horizontal morphisms are strict, smooth and surjective with connected fibers and the vertical morphisms are
	flat and log smooth, and the left-hand morphism is proper. In particular, $\barA$ is log smooth with Artin fan $\mathcal A_{\Sigma^1}$.
\end{proposition}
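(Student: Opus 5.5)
The plan is to build the map $\barA\to\mathcal A_{\Sigma^1}$ from the local toroidal description of $\partial\barA$ given by Theorem~\ref{thm:bdy}, and then check the required properties étale locally.

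\textbf{Step 1: the bottom map.} Since $\partial B$ is a smooth divisor, the divisorial log structure on $B$ has characteristic monoid $\mathbb N$ along $\partial B$ and $0$ elsewhere. Hence the map $B\to\mathcal A^1$ classified by $\CO(\partial B)$ with its canonical section is strict and smooth. It is surjective with connected fibers because $\partial B\neq\emptyset$ and $B$ is connected.

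\textbf{Step 2: local structure of $\barA$.} By Theorem~\ref{thm:bdy}, $\partial\barA$ is, étale locally, a divisor with normal crossings. It has:
\begin{itemize}[label=$\circ$]
\item one smooth branch away from $j(C)$;
\item two branches meeting transversally along $j(C)$, coming from $s_0$ and $\sbar_\infty$.
\end{itemize}
The branches of this double point are the two local sheets of $Y$. Globally they are interchanged by the monodromy of the gluing $s_0\sim\sbar_\infty$: the boundary is the quotient of a single irreducible $Y$, up to an étale double cover. So the characteristic monoid of $\barA$ is $0$, $\mathbb N$, or $\mathbb N^2$ on the three strata. The monodromy around the self-intersection swaps the two generators of $\mathbb N^2$ compatibly with the unique ray. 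This is exactly the data of the cone complex $\Sigma^1$, namely $\RR^2_{\ge0}$ with its two rays identified.

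Using Abramovich–Wise, the canonical Artin fan of a log smooth scheme with simple-normal-crossings-type charts whose tropicalization is $\Sigma^1$ is $\mathcal A_{\Sigma^1}$. The tautological map $\barA\to\mathcal A_{\Sigma^1}$ is strict and smooth. Each stratum $\X_g$, $Y^\circ$ and $j(C)$ is connected, so fibers are connected and the map is surjective.

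\textbf{Step 3: compatibility with $\pi$.} I must show that $\pi^\flat$ sends the generator of $\overline M_B=\mathbb N$ to $(1,1)\in\mathbb N^2$ at $j(C)$, and to $1$ at the smooth boundary points. This gives $\mathsf t$.

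This is the main obstacle, and I would settle it with Mumford's local model. Near a point of $j(C)$, the degeneration looks étale locally like the family $xy=t$ over the base coordinate $t$ (the $A_1$-type degeneration of the $\PP^1$-chain). Hence $\pi^*t$ vanishes to order one on each branch. At a smooth boundary point, $\pi^*t$ is a local equation of the boundary, because $\pi$ is flat with reduced fibers (semistable: the boundary fiber is reduced). These two facts give the claimed map on characteristic monoids and hence the 2-commutative square.

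\textbf{Step 4: remaining properties.} Flatness and log smoothness of $\mathsf t$ follow from Kato's criterion: $\mathbb N\to\mathbb N^2$, $1\mapsto(1,1)$, is injective, integral and saturated, with torsion-free cokernel. Since the horizontal maps are strict and smooth, the same properties then hold for $\pi$. Concretely, $\barA$ is étale locally smooth over $B\times_{\mathcal A^1}\mathcal A_{\Sigma^1}$, because the local model $\Bbbk[t][x,y]/(xy-t)$ is smooth over the toric chart. Properness of $\pi$ is part of the construction of the minimal toroidal compactification (\cite{FC,Namikawa}). Finally, $\barA$ is log smooth with Artin fan $\mathcal A_{\Sigma^1}$, being strict smooth over it with connected fibers.
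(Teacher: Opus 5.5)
Your Steps 1, 3 and 4 essentially follow the paper's route: the local chart $(x,y)\mapsto xy$ gives log smoothness and the cone map $(\mathsf a,\mathsf b)\mapsto \mathsf a+\mathsf b$, and flatness comes with it. Step 2, where the Artin fan is identified, has a genuine gap and is in fact internally inconsistent.

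You assert that the monodromy along the self-intersection exchanges the two generators of $\mathbb N^2$, and that this is ``exactly'' $\Sigma^1$. It is not. In $\Sigma^1$ the two rays of $\RR^2_{\ge 0}$ are merely glued; equivalently, $\Sigma^1=\mathcal U^1\Sigma^1/\ZZ$ with $\ZZ$ acting by the translations $\mathsf x\mapsto \mathsf x+m\mathsf t$, and its $2$-cone has no nontrivial automorphism. The gluing only records that both local branches lie on the single irreducible boundary component. That is all your justification (irreducibility of $Y$ and the gluing $s_0\sim\sbar_\infty$) actually gives. A monodromy exchanging the generators would instead produce the quotient of $\RR^2_{\ge0}$ by the coordinate swap. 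Its strict piecewise polynomials are the symmetric ones, so $\mathsf x\mathsf y(\mathsf x-\mathsf y)$, which is a piecewise polynomial on $\Sigma^1$ by Example~\ref{expl:pp_s_1}, would not be one. Moreover, no \emph{strict} morphism to $\mathcal A_{\Sigma^1}$ could then exist, because over the $2$-dimensional cone of $\mathcal A_{\Sigma^1}$ the two generators of the characteristic monoid are globally labeled.

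The paper argues the opposite. Via the Faltings--Chai model, a neighborhood of the singular locus is a $\ZZ/2\ZZ$-quotient of a toric bundle over $C$ attached to the global splitting $\CO(-2\theta_2^C)\oplus\P$. There $\ZZ/2\ZZ$ acts by $-1$ on both factors of $C$ and, the paper asserts, trivially on the toric fibres. Hence the two normal directions are \emph{not} interchanged and no further quotient is needed. To land on $\mathcal A_{\Sigma^1}$ you would have to prove this absence of swapping, not its presence.

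That said, be careful: the no-swap assertion is delicate, and your instinct has a real source. The automorphism $-1$ of a boundary fibre sends $s_0(x)$ to $s_\infty(-x)$. So at every node point of the fibre that it fixes, the stabilizer acts on $\overline M=\mathbb N^2$ by the transposition. For $g=1$ this is completely explicit:
\begin{itemize}
\item $\barA\cong\overline{\mathcal M}_{1,2}$, with log structure along $\delta_{\mathrm{irr}}$;
\item the singular locus of $\partial\barA$ is a single point with stabilizer $\ZZ/2\ZZ$ (here $j\colon C=\mathrm{pt}\to(\partial\barA)^{\mathrm{sing}}$ has degree $2$), namely the cycle of two rational curves;
\item its automorphism exchanges the two nodes, i.e.\ the two local branches of $t_1t_2=0$.
\end{itemize}
If that is right, the cone stack your Step 2 actually describes is the swap quotient above, and it would contradict, not prove, the identification with $\mathcal A_{\Sigma^1}$. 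Either way, ``the monodromy swaps the generators'' and ``the Artin fan is $\mathcal A_{\Sigma^1}$'' cannot both hold, so Step 2 as written does not establish the proposition.
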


\begin{proof}[Proof of Proposition \ref{pro:Atrop}]
	The vertical morphisms are flat by miracle flatness, using that no positive-dimensional cone in $\Sigma^1$ maps to the
	zero-cone in $\mathbb{R}_{\ge 0}$ under the projection onto the $\mathsf{t}$-coordinate. Properness of the left-hand vertical morphism
	is clear over $B^{\circ}$ and follows from the description of its fibers in Theorem \ref{thm:bdy}.
	The morphism $B \to \A^1$ is strict by construction and smooth since $B$ is log regular. It is surjective since $\A^{1}$ has two geometric points which are, respectively, the images of the dense open subset of $B$ and the boundary divisor
	$\partial B$. The fibers are the dense open and $\partial B$, respectively, and hence are connected.
	It remains to show that $\mathcal{A}_{\Sigma^{1}}$ is the Artin fan of $\barA$ and prove the commutativity of the diagram.
	By Theorem \ref{thm:bdy}, the singular locus of $\partial\barA$
	is smooth, so the Artin fan of $\barA$ is two-dimensional. To determine
	it, it suffices to consider an analytic neighborhood
	of the singular locus inside $\barA$. Moreover, away from the singular locus of $\partial\barA$, the
	simple normal crossings pair $(\barA,\partial\barA)$ is clearly log smooth, and the map $\barA\to B$ is
	log smooth on this open subset.

    By \cite[Theorem 1.13 (3)]{FC},
	an analytic neighborhood of the singular locus is isomorphic
	over an infinitesimal neighborhood of $[\mathcal{X}_{g-1}^{\vee}/(\mathbb{Z}/2\mathbb{Z})]$ to the
	space $\overline{X}$ constructed as follows: Let $X$ be the toric variety
	bundle over $\mathcal{X}_{g-1}\times_{\mathcal{A}_{g-1}} \mathcal{X}_{g-1}^{\vee}$ with
	torus bundle associated to the vector bundle
	$\mathcal{O}(-2\theta_2^C)\oplus \mathcal{P}$ (where $\theta_2^C$
	is the universal Theta divisor on the $\mathcal{X}_{g-1}^{\vee}$
	factor and $\mathcal{P}$ is the universal Poincar\'e bundle) and fan
	in $\mathbb{R}_{\mathsf{x},\mathsf{t}}^{2}$ given by some mixed DV-cell
	with respect to $\mathsf{t}$, viewed as a quadratic form on $\mathbb{R}^{2}$.
	Note that all mixed DV-cells are translates of the cone
	$\sigma:=\{(\mathsf{x},\mathsf{t})\in \mathbb{R}_{\ge 0}^{2}|\mathsf{x}\le \mathsf{t}\}$, so we may
	assume that the cone corresponding to $X$ is $\sigma$. Hence
	$X$ is the relative spectrum of the sheaf of algebras:
	$$\bigoplus_{\substack{(a,b)\in \mathbb{Z}_{\ge 0}\times\mathbb{Z}\\a+b\ge 0}}\mathcal{O}(-2a\theta_2^C)\otimes \mathcal{P}^{b}.$$
	We define $\overline{X}$ to be the quotient of $X$ by the following action
	of $\mathbb{Z}/2\mathbb{Z}=\GL(\mathbb{Z})$: On the space
	$\mathcal{X}_{g-1}\times_{\mathcal{A}_{g-1}}\mathcal{X}_{g-1}^{\vee}$, the nontrivial
	element $-1$ of $\mathbb{Z}/2\mathbb{Z}$ acts on both factors via multiplication by $-1$.
	Since
	$$[-1\times -1]^{\ast}(\mathcal{O}(-2a\theta_2^C)\otimes \mathcal{P}^{b})\cong \mathcal{O}(-2a\theta_2^C)\otimes \mathcal{P}^{b}$$
	canonically, this action lifts to an action on $X$, acting trivially on the toric variety bundle. Then
	$\overline{X}:=X/(\mathbb{Z}/2\mathbb{Z})$.

	An infinitesimal neighborhood $\overline{Y}$ of $\partial B=[\mathcal{X}_{g-1}^{\vee}/(\mathbb{Z}/2\mathbb{Z})]$ in $B$ is given by the relative spectrum of
	the sheaf of algebras
	$$\bigoplus_{a\in \mathbb{Z}_{\ge 0}}\mathcal{O}(-2a\theta_2^C)$$
	over $\mathcal{X}_{g-1}^{\vee}$ modulo the action of $\mathbb{Z}/2\mathbb{Z}$ on $\mathcal{X}_{g-1}^{\vee}$ by multiplication by
	$-1$. The map $\overline{X}\to \overline{Y}$ is given by the canonical map
	$$\bigoplus_{a\in \mathbb{Z}_{\ge 0}}\mathcal{O}(-2a\theta_2^C)\to \bigoplus_{\substack{(a,b)\in \mathbb{Z}_{\ge 0}\times\mathbb{Z}\\a+b\ge 0}}\mathcal{O}(-2a\theta_2^C)\otimes \mathcal{P}^{b}.$$
	Hence, locally on the source, the map $\overline{X}\to \overline{Y}$ is given by
	$$[(\mathcal{X}_{g-1}\times_{\mathcal{A}_{g-1}}\mathcal{X}_{g-1}^{\vee})/(\mathbb{Z}/2\mathbb{Z})]\times\mathbb{A}^{2}\to [\mathcal{X}_{g-1}^{\vee}/(\mathbb{Z}/2\mathbb{Z})]\times\mathbb{A}^{1}$$
	equipped with the strict log structures pulled back from $\mathbb{A}^{2}$ and $\mathbb{A}^{1}$,
	where the map to $\mathcal{X}_{g-1}^{\vee}$ is the projection onto the second factor and the map to $\mathbb{A}^{1}$ is the projection
	onto $\mathbb{A}^{2}$ followed by the map $\mathbb{A}^{2}\to \mathbb{A}^{1}$ sending $(x,y)$ to $xy$. Since the latter map is log smooth,
	it follows that $\barA\to B$ is log smooth. Moreover, it follows that
	the Artin fan of $\barA$ is a quotient of the Artin fan of $\mathbb{A}^{2}$ and since the latter space is log smooth (i.e.,
	the map to its Artin fan is smooth), so is $\barA$. The map of fans corresponding to $(x,y)\mapsto xy$ is the map
	$\mathbb{R}_{\ge 0}^{2}\to \mathbb{R}_{\ge 0},\ (\mathsf{a},\mathsf{b})\mapsto \mathsf{a}+\mathsf{b}$. Letting $\mathsf{t}:=\mathsf{a}+\mathsf{b}$ and $\mathsf{u}:=\mathsf{a}$, the cone $\mathbb{R}_{\ge 0}^{2}$ is
	identified with $\{(\mathsf{u},\mathsf{t})|0\le \mathsf{u}\le \mathsf{t}\}$ and the map of cones is identified with projecting onto the $\mathsf{t}$-coordinate. Note that since
	there is only one irreducible boundary component in $\barA$, the two rays in $\mathbb{R}_{\ge 0}^{2}$ are identified in the Artin fan.
	Since in the formal neighborhood $\overline{X}$, the toric variety bundle is associated to a rank-two vector bundle which is a global
	direct sum, the two normal directions are not interchanged by the monodromy on the singular locus, so no further quotient is needed.
	It is clear from the construction that the map $\barA\to \mathcal A_{\Sigma^1}$ is surjective with connected fibers.
\end{proof}

It will be more convenient to think of $\Sigma^1$ as a family of tropical tori over $\mathbb R_{\geq 0}$. For each $m \in \mathbb Z$, subdivide $\RR \times \RR_{\ge 0}$ along the hyperplanes $m\mathsf{t} = \mathsf{x}$. The group $\mathbb Z$ acts on $\RR\times \RR_{\ge 0}$ by $(\mathsf{x}, \mathsf{t}) \mapsto (\mathsf{x} + m\mathsf{t}, \mathsf{t})$ for $m \in \ZZ$, and this action preserves the subdivision. $\Sigma^1$ can be identified with the quotient of this subdivision by the action of $\ZZ$, see Figure \ref{fig:Sigma1}.

\begin{figure}
\begin{center}
	\[
	\begin{tikzpicture}[every node/.style={font=\small}]
		\coordinate (L0) at (0,-0.7);
		\coordinate (L1) at (1.4,-0.7);
		\coordinate (L2) at (1.4,0.45);
		\draw[thick] (L0) -- (L1);
		\draw[thick] (L0) -- (L2);
		\draw[<->,thick]
			(1.62,-0.7)
			to[out=18,in=-18,looseness=1.5] (1.62,0.45);
		\node at (0.8,-1.25) {$\mathbb R_{\geq 0}^{2}$};

		\draw[arrows={Hooks[harpoon,swap]-Latex},thick] (2.65,0) -- (3.70,0);

		\coordinate (U0) at (4.25,0);
		\def\linkx{6.75}
		\def\step{0.75}
		\def\actionx{6.95}
		\def\actiongap{0.10}
		\def\ellipsisgap{0.48}
		\def\ellipsisstep{0.13}
		\foreach \m in {-3,-2,-1,0,1,2,3} {
			\draw[thick] (U0) -- (\linkx,{\m*\step});
		}

		\foreach \m in {-4,-3,-2,-1,0,1,2} {
			\draw[->,thick]
				(\actionx,{(\m+0.5)*\step+\actiongap})
				to[out=18,in=-18]
				(\actionx,{(\m+1.5)*\step-\actiongap});
		}
		\foreach \d in {0,1,2} {
			\fill (\linkx,{3*\step+\ellipsisgap+\d*\ellipsisstep})
				circle (0.65pt);
			\fill (\linkx,{-3*\step-\ellipsisgap-\d*\ellipsisstep})
				circle (0.65pt);
		}
		\node at (5.50,-3.30) {$\mathcal U^{1}\Sigma^{1}$};

		\draw[->>,thick] (7.75,0) -- (9.05,0);

		\coordinate (Q0) at (9.65,-0.7);
		\def\basex{12.05}
		\def\baserx{0.34}
		\def\basecy{-0.0210842}
		\def\basery{0.6789158}
		\coordinate (Qbottom) at (\basex,-0.7);
		\coordinate (Qtangent) at (11.955562,0.631117);
		\draw[thick] (Q0) -- (Qbottom);
		\draw[thick,densely dashed] (Q0) -- (Qtangent);
		\draw[thick] (\basex,\basecy)
			ellipse[x radius=\baserx,y radius=\basery];
		\node at (10.95,-1.25) {$\Sigma^{1}$};
	\end{tikzpicture}
	\]
\end{center}
\caption{\label{fig:Sigma1}The cone $\mathbb R_{\geq 0}^{2}$ (left), the cone complex $\Sigma^{1}$ (right) and its ``universal cover'' $\mathcal{U}^{1}\Sigma^{1}$ (center).}
\end{figure}

From Proposition \ref{pro:Atrop}, it follows that the map $\barA\to B$ is proper and log
smooth, so the $s$-fold self-fiber product $\barA^s \to B$ is log smooth with
Artin fan associated to the cone complex
$$
\Sigma^s := \Sigma^1 \times_{\mathbb R_{\geq 0}} \cdots \times _{\mathbb R_{\geq 0}} \Sigma^1 \stackrel{\mathsf{t}}{\longrightarrow} \RR_{\ge 0}\, .
$$
Its universal cover, which we denote by $\mathcal U^s\Sigma^s$, is the subdivision of $\RR^s \times \RR_{\ge 0}$ along the hyperplanes $\{\mathsf{x}_i\in \mathsf{t}\ZZ\}_{i=1,\ldots,s}$. The cones in $\Sigma^s$ are not simplicial for $s\geq2$, and consequently $\barA^s$ is not smooth.

Let $\mathfrak S_s$ be the symmetric group on $s$ elements. For $\tau \in \mathfrak S_s$, consider the simplex
\begin{equation}\label{eq:sigma_cones}
\sigma_{\tau} = \{(\mathsf{x}_1, \ldots , \mathsf{x}_s, \mathsf{t}) \mid 0\le \mathsf{x}_{\tau(1)}\le \cdots \le \mathsf{x}_{\tau(s)}\le \mathsf{t}\} \subset \RR^s \times \RR_{\ge 0}\, .
\end{equation}
Let $\ZZ^s$ act on $\RR^s \times \RR_{\ge 0}$ by
$$
(\mathsf{x}_1,\dots,\mathsf{x}_s,\mathsf{t})\longmapsto (\mathsf{x}_1 + m_1\mathsf{t},\dots,\mathsf{x}_s+m_s\mathsf{t},\mathsf{t}),\qquad \underline{m} = (m_1, \ldots, m_s)\in \ZZ^s,
$$
and let $\sigma_{\underline{m}, \tau}$ be the $\underline{m}$-translate of $\sigma_{\tau}$. The cones $\sigma_{\underline{m}, \tau}$ are equivalently obtained by subdividing $\RR^s\times \RR_{\ge 0}$ along the hyperplanes $\{\mathsf{x}_i \in \mathsf{t}\mathbb Z\}_{i=1, \ldots , s}$ and $\{\mathsf{x}_i - \mathsf{x}_j \in \mathsf{t}\mathbb Z\}_{1 \leq i<j \leq s}$.

Gluing all the cones $\sigma_{\underline{m}, \tau}$ along their common faces forms a $\ZZ^s$-periodic cone complex $\mathcal U^s\Sigma^{(s)}$.

\begin{definition}
    Let $G$ be a group acting on a combinatorial cone stack $\Sigma$ with faithful monodromy. We write $\Sigma/G$ for the categorical quotient in cone
    stacks with faithful monodromy; equivalently, it is obtained from $[\Sigma/G]$ by quotienting by the subgroupoid of morphisms acting trivially on the rays.
\end{definition}

\begin{definition}
	For $s\geq 1$, we define the cone complex $\Sigma^{(s)}$ as the quotient $\mathcal U^s\Sigma^{(s)}/\ZZ^s$ (see Figure \ref{fig:Sigma_s}). It is a subdivision of $\Sigma^s$. The desingularization map $f:\barA^{(s)}\to\barA^s$ is obtained from the f.s. fiber diagram
    $$
    \begin{tikzcd}
        \barA^{(s)} \ar[r, "f"] \ar[d]  & \barA^s \ar[d] \\
        \mathcal A_{\Sigma^{(s)}} \ar[r] & \mathcal{A}_{\Sigma^s}.
    \end{tikzcd}
    $$
\end{definition}

\begin{figure}
\begin{center}
	\[
	\begin{tikzpicture}[every node/.style={font=\small}]
		\def\s{1.95}
		\def\p{0.33}
		\def\dx{5.10}
		\def\continuationgap{0.18}
		\def\continuationstep{0.13}
		\colorlet{extensiongray}{black!45}

		\newcommand{\plaintile}{%
			\draw[thick] (0,0) rectangle (\s,\s);
			\draw[thick] (0,0) -- (\s,\s);
		}
		\newcommand{\periodicplain}[2]{%
			\begin{scope}[shift={(#1,#2)}]
				\begin{scope}
					\clip (-\p,-\p) rectangle (\s+\p,\s+\p);
					\foreach \xx/\yy in {-\s/-\s,0/-\s,\s/-\s,-\s/0,\s/0,-\s/\s,0/\s,\s/\s} {
						\begin{scope}[draw=extensiongray,shift={(\xx,\yy)}]
							\plaintile
						\end{scope}
					}
					\plaintile
				\end{scope}
			\end{scope}
		}
		\newcommand{\edgeidentifications}{%
			\path[decorate,decoration={markings,
				mark=at position 0.54 with {\arrow{>[scale=1.4]}}}]
				(0,0) -- (\s,0);
			\path[decorate,decoration={markings,
				mark=at position 0.54 with {\arrow{>[scale=1.4]}}}]
				(0,\s) -- (\s,\s);
			\path[decorate,decoration={markings,
				mark=at position 0.47 with {\arrow{>[scale=1.4]}},
				mark=at position 0.61 with {\arrow{>[scale=1.4]}}}]
				(0,0) -- (0,\s);
			\path[decorate,decoration={markings,
				mark=at position 0.47 with {\arrow{>[scale=1.4]}},
				mark=at position 0.61 with {\arrow{>[scale=1.4]}}}]
				(\s,0) -- (\s,\s);
		}

		\periodicplain{0}{0}
		\begin{scope}[shift={(\dx,0)}]
			\plaintile
			\edgeidentifications
		\end{scope}

		\draw[->>,thick]
			(\s+0.95,0.5*\s) -- (\dx-0.65,0.5*\s);

		\foreach \d in {0,1,2} {
			\fill (0.5*\s,{\s+\p+\continuationgap+\d*\continuationstep})
				circle (0.65pt);
			\fill (0.5*\s,{-\p-\continuationgap-\d*\continuationstep})
				circle (0.65pt);
			\fill ({-\p-\continuationgap-\d*\continuationstep},0.5*\s)
				circle (0.65pt);
			\fill ({\s+\p+\continuationgap+\d*\continuationstep},0.5*\s)
				circle (0.65pt);
		}

		\node[anchor=east]
			at ({-\p-\continuationgap-2*\continuationstep-0.25},0.5*\s)
			{$\mathcal U^2\Sigma^{(2)}$};
		\node[anchor=west] at (\dx+\s+0.55,0.5*\s)
			{$\Sigma^{(2)}$};
	\end{tikzpicture}
	\]
\end{center}
\caption{\label{fig:Sigma_s}The quotient map $\mathcal U^2\Sigma^{(2)}\to\Sigma^{(2)}$. To simplify the presentation, we restrict to the link at $\mathsf{t}=1$.}
\end{figure}

For $s=1$, it is clear that $\Sigma^{(1)}=\Sigma^1$.
Since the cones in $\Sigma^{(s)}$ are simplicial and unimodular, the following statement follows immediately from Proposition \ref{pro:Atrop} and the definition.

\begin{corollary}\label{cor:As}
    $\barA^{(s)}$ is a smooth Deligne--Mumford stack which is a log modification of the relative $s$-fold fiber product of $\barA \to B$. Moreover, its Artin fan is $\mathcal A_{\Sigma^{(s)}}$.
\end{corollary}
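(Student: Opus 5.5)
The plan is to obtain everything from two facts: the base change of a log smooth morphism along a log modification of Artin fans, and the smoothness of the toric stack attached to a simplicial unimodular cone complex.

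\textbf{Step 1: log smoothness of $\barA^s$.} By Proposition \ref{pro:Atrop}, $\barA\to B$ is log smooth and flat. Being log smooth is stable under f.s. base change, and so are the strict smooth maps to Artin fans. Hence $\barA^s\to B$ is log smooth, and there is a strict smooth morphism $\barA^s\to\mathcal A_{\Sigma^s}$. To see this, note that Artin fans of fiber products over $\mathcal A^1$ are computed by f.s. fiber products of cone complexes over $\RR_{\ge 0}$. Concretely, the f.s. fiber product $\mathcal A_{\Sigma^1}\times_{\mathcal A^1}\cdots\times_{\mathcal A^1}\mathcal A_{\Sigma^1}$ is $\mathcal A_{\Sigma^s}$. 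Then the base change of the strict smooth maps $\barA\to \mathcal A_{\Sigma^1}$ gives a strict smooth map $\barA^s\to \mathcal A_{\Sigma^s}$.

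\textbf{Step 2: the modification.} The map $\Sigma^{(s)}\to\Sigma^s$ is a subdivision. Upstairs, the cones $\sigma_{\underline m,\tau}$ subdivide the cones of $\mathcal U^s\Sigma^s$, compatibly with the $\ZZ^s$-action, so they descend to the quotient. Therefore $\mathcal A_{\Sigma^{(s)}}\to\mathcal A_{\Sigma^s}$ is a proper birational log modification (log étale and an isomorphism over the open stratum). Its f.s. base change $f:\barA^{(s)}\to\barA^s$ is again a log modification. Since strict smooth morphisms are preserved by base change, $\barA^{(s)}\to\mathcal A_{\Sigma^{(s)}}$ is strict and smooth. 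Surjectivity and connectedness of its fibers follow from the same properties of $\barA^s\to\mathcal A_{\Sigma^s}$. Together these identify $\mathcal A_{\Sigma^{(s)}}$ as the Artin fan, with the same convention as in the footnote.

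\textbf{Step 3: smoothness.} Each $\sigma_\tau$ is the cone over the standard simplex $0\le \mathsf x_{\tau(1)}\le\cdots\le \mathsf x_{\tau(s)}\le \mathsf t$. Its rays are spanned by $(0,\dots,0,1)$ and the vectors $e_{\tau(k)}+\cdots+e_{\tau(s)}+e_{\mathsf t}$. These vectors are obtained from the standard basis by a unimodular triangular change of coordinates, so they form a $\ZZ$-basis of $\ZZ^{s+1}$. Translates by $\ZZ^s$ act by integral unimodular shears $\mathsf x_i\mapsto \mathsf x_i+m_i\mathsf t$. Hence every cone of $\Sigma^{(s)}$ is smooth.

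Consequently $\mathcal A_{\Sigma^{(s)}}$ is étale locally of the form $[\mathbb A^k/\Gm^k]$, which is smooth. Since $\barA^{(s)}$ is smooth over it, $\barA^{(s)}$ is smooth. It is Deligne--Mumford because it is representable over $\barA^s$, which is a DM stack.

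\textbf{Main obstacle.} The delicate point is the quotient by $\ZZ^s$, together with the self-gluings in $\Sigma^{(s)}$: faces of a single cone can be identified, as the two rays are in $\Sigma^1$. This means the cone stack has monodromy, so one must check that it does not spoil local unimodularity or the fiber product description. I would handle this by working étale locally on $\barA^s$, near the preimage of $C^s$. There, by the local model in the proof of Proposition \ref{pro:Atrop}, the Artin fan is the honest cone $\RR_{\ge0}^{2}$-fiber product with no identifications, and the local subdivision is exactly the one by the $\sigma_{\underline m,\tau}$.
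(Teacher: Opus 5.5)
Your proposal is correct and follows the paper's argument. The paper deduces the corollary immediately from Proposition~\ref{pro:Atrop} (log smoothness of $\barA^s\to B$ with Artin fan $\mathcal A_{\Sigma^s}$), the definition of $\barA^{(s)}$ as the f.s.\ base change along the subdivision $\Sigma^{(s)}\to\Sigma^s$, and the fact that the cones of $\Sigma^{(s)}$ are simplicial and unimodular; your Steps 1--3 spell out these same ingredients in more detail. One point should be stated precisely: the strict smoothness of $\barA^s\to\mathcal A_{\Sigma^s}$ comes from smoothness of $\barA\to B\times_{\mathcal A^1}\mathcal A_{\Sigma^1}$, which is where log smoothness of $\barA\to B$ enters, and not merely from base-changing the two horizontal maps.
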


We end this section with the following lemma on maps between the spaces $\barA^{(s)}$:

\begin{lemma}\label{lem:maps_regular}
	Let $s\ge r\ge 0$.
\begin{enumerate}[label = (\alph*)]
	\item There are (unique) extensions $p_{i_1,i_2,\dots,i_r}:\barA^{(s)}\to \barA^{(r)}$ of the projection operators $G^{s}\to G^{r},\ (x_k)_k\mapsto (x_{i_1},\dots,x_{i_r})$
	onto the factors $i_1,\dots,i_r$.
	\item There is a (unique) extension $\Delta:\barA\to \barA^{(2)}$ of the diagonal embedding $G\to G^{2},\ x\mapsto (x,x)$.
	\item There is a (unique) extension $\diff:\barA^{(2)}\to \barA$ of the difference map $G^{2}\to G,\ (x,y)\mapsto x-y$.
\end{enumerate}
\end{lemma}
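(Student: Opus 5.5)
The plan is to reduce all three statements to the tropical picture, using the strict smooth maps to Artin fans from Proposition \ref{pro:Atrop} and Corollary \ref{cor:As}. Uniqueness is immediate in all cases: $G^s$ is dense in $\barA^{(s)}$ (its complement is the boundary divisor), and the targets $\barA^{(r)}$, $\barA$ are separated DM stacks. So the work is existence.

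For existence, I would first extend each map to the singular fiber products, as morphisms of log schemes over $B$.
\begin{itemize}
\item For (a), the projection $\barA^s\to\barA^r$ exists tautologically.
\item For (b), the diagonal $\barA\to\barA^2$ exists tautologically.
\item For (c), there is no obvious map $\barA^2\to\barA$. Instead I would use the description of $\barA$ via the log abelian scheme of Kajiwara--Kato--Nakayama. The generic fiber group law extends as a morphism of log abelian schemes (sheaves on the log étale site), and $\barA$ is a log modification (a model) of it. The difference map on the log abelian scheme is then a morphism of sheaves, and what remains is to check when it is representable by a map from a given model.
\end{itemize}

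The key criterion is the standard one for log modifications. Let $X\to\mathcal A_{\Sigma_X}$ and $Y\to\mathcal A_{\Sigma_Y}$ be strict, and suppose $\phi$ is a morphism of log abelian sheaves. Then a rational map $X\dashrightarrow Y$ extends to a morphism exactly when the tropicalization $\Sigma_X\to$ (the tropical model of the target) sends each cone of $\Sigma_X$ into a cone of $\Sigma_Y$. In (c), the cones must map into the cones of $\Sigma^1$.

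Concretely, I would work on the universal covers. By $\ZZ^s$-equivariance it suffices to check one fundamental domain, namely the simplices $\sigma_{\underline m,\tau}$.
\begin{itemize}
\item \textbf{(a).} The linear projection $(\mathsf x_1,\dots,\mathsf x_s,\mathsf t)\mapsto(\mathsf x_{i_1},\dots,\mathsf x_{i_r},\mathsf t)$ sends $\sigma_{\underline m,\tau}$ into a cone of $\mathcal U^r\Sigma^{(r)}$. This holds because the defining inequalities of $\sigma_{\tau}$ restrict to a chain of inequalities among the chosen coordinates, and the chosen hyperplanes $\mathsf x_i\in\mathsf t\ZZ$ and $\mathsf x_i-\mathsf x_j\in\mathsf t\ZZ$ are among those used to subdivide.
\item \textbf{(b).} The map $(\mathsf x,\mathsf t)\mapsto(\mathsf x,\mathsf x,\mathsf t)$ sends the cone $m\mathsf t\le \mathsf x\le (m+1)\mathsf t$ into the face $\mathsf x_1=\mathsf x_2$ of $\sigma_{(m,m),\tau}$.
\item \textbf{(c).} The map $(\mathsf x_1,\mathsf x_2,\mathsf t)\mapsto (\mathsf x_1-\mathsf x_2,\mathsf t)$ sends $\sigma_{\underline m,\tau}$ into a single cone of $\mathcal U^1\Sigma^1$. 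This is exactly why the subdivision along $\mathsf x_1-\mathsf x_2\in\mathsf t\ZZ$ was introduced: on such a cone, $\mathsf x_1-\mathsf x_2$ lies between consecutive multiples of $\mathsf t$.
\end{itemize}
All three maps are $\ZZ$-equivariant in the appropriate sense, so they descend to maps of cone complexes $\Sigma^{(s)}\to\Sigma^{(r)}$, $\Sigma^1\to\Sigma^{(2)}$, and $\Sigma^{(2)}\to\Sigma^1$, each compatible with $\mathsf t$. For (a) and (b) we then get the morphism by the universal property of the fs fiber product defining $\barA^{(r)}$: the composite $\barA^{(s)}\to\barA^s\to\barA^r$ together with $\barA^{(s)}\to\mathcal A_{\Sigma^{(s)}}\to\mathcal A_{\Sigma^{(r)}}$ gives the map. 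The argument for (b) is the same.

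The main obstacle is (c), because it is not induced from a map of the unsubdivided products. Two points need care. First, the difference map must be realized at the level of log abelian varieties, with $\barA$ identified with the model of the KKN log abelian scheme attached to the subdivision $\Sigma^1$; this uses the Faltings--Chai/Mumford description recalled in the proof of Proposition \ref{pro:Atrop}. Second, the group law of the torus bundle must be checked to be compatible with the toric fans. In the local model, the toric bundle over $C$ with cone $\sigma$ maps monomially, and the translation by $\ZZ$ (the shift $\sh$) must match the gluing. I would verify this on the formal neighbourhood $\overline X$: there the difference is the monomial map of the fans computed above, twisted by the group law on $\mathcal X_{g-1}\times\mathcal X_{g-1}^\vee$. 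Separately, it needs to be checked on the semiabelian locus, where it is the group law of $G$. Gluing these two checks and using uniqueness finishes the proof.
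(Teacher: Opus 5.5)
This is essentially the paper's proof. For (a) and (b) you check that the linear projection (resp.\ diagonal) carries cones of $\mathcal U^s\Sigma^{(s)}$ (resp.\ $\mathcal U^1\Sigma^1$) into cones of $\mathcal U^r\Sigma^{(r)}$ (resp.\ $\mathcal U^2\Sigma^{(2)}$) and then base-change from the Artin fans; for (c) you check that $(\mathsf{x}_1,\mathsf{x}_2,\mathsf{t})\mapsto(\mathsf{x}_1-\mathsf{x}_2,\mathsf{t})$ carries cones of $\Sigma^{(2)}$ into cones of $\Sigma^1$, using the log abelian variety description that the paper offers as the alternative to the Mumford-model argument of \cite{vdgK}. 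One caveat: your closing formal-neighbourhood check and ``gluing'' step is superfluous, and as stated it would not by itself construct a morphism, whereas the cone-to-cone check alone already yields $\diff$ because $\logA\to\mathcal A_{\tropA}$ is a homomorphism of group objects and $\barA\cong\logA\times_{\mathcal A_{\tropA}}\mathcal A_{\Sigma^1}$ by \cite{KKN7}.
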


\begin{proof}
	For point (a), observe that by symmetry it suffices to consider the case $(i_1,\dots,i_r)=(1,\dots,r)$. It follows from \eqref{eq:sigma_cones} that the morphism $\mathcal U^s\Sigma^{(s)}\to\mathcal U^s\Sigma^s$ factors as
	$$
	\mathcal U^s\Sigma^{(s)}\to \mathcal U^r\Sigma^{(r)} \times_{\RR_{\ge 0}} \mathcal U^{s-r}\Sigma^{s-r} \to \mathcal U^s\Sigma^s.
	$$
	Descending this factorization along the action of $\ZZ^s$, we get a factorization
	$$
	\Sigma^{(s)}\to \Sigma^{(r)} \times_{\RR_{\ge 0}} \Sigma^{s-r} \to \Sigma^s.
	$$
	By base-change, we therefore get a factorization:
	$$\barA^{(s)}\to \barA^{(r)}\times_B \barA^{s-r}\to \barA^{s}\, ,$$
	and (a) follows. Point (b) is proved similarly. For point (c), one can argue using Mumford models as in \cite{vdgK} (where the authors study the addition map) that the map $\diff$ is toroidal and induces the map
	$$
    \Sigma^{(2)} \to \Sigma^{(1)}\, , \quad (\mathsf{x}_1, \mathsf{x_2}) \mapsto \mathsf{x_1}-\mathsf{x}_2\,.
    $$
    This function maps cones into cones, so it is regular. Alternatively, one can apply the theory of log abelian varieties,
    using ideas similar to those in the proof of Proposition \ref{prop:FM-theta}.
\end{proof}

\subsection{Extending the Poincar\'e line bundle}\label{subs:extP}

Since the fibers of $G \to B$ are irreducible and $G^2$ is smooth, there exists a unique line bundle $\mathcal P \in \Pic(G^2)$ that agrees with the usual Poincar\'e line bundle $\mathcal P^{\circ} \in \Pic(A^{\circ }\times_{B^\circ}A^{\circ })$ and such that $(1 \times e)^* \mathcal P = (e\times 1)^*\mathcal P = \mathcal O$. Moreover, since $\barA^{2} \setminus G^{2}$ has codimension $2$ and the resolution $f: \barA^{(2)} \to \barA^2$ is small, we have $\operatorname{codim}\big(\barA^{(2)} \setminus G^2\big)=2$. Since $\barA^{(2)}$ is smooth, it follows that $\mathcal{P}$ extends uniquely to a line bundle on $\barA^{(2)}$.

\begin{definition}
	The \emph{extended Poincar\'e line bundle} is the unique $\mathcal P \in \Pic(\barA^{(2)})$ that extends the line bundle on $G^2$ constructed above. The \emph{extended Theta divisor} is the $\mathbb Q$-line bundle $\mathcal O(\Theta)$ on $\barA$ defined by the formula $\mathcal O(\Theta)^{\otimes 2} = \Delta^*(\mathcal P)$, where $\Delta : \barA \to \barA^{(2)}$ is the diagonal map (see Lemma \ref{lem:maps_regular}). The first Chern classes of $\mathcal O(\Theta)$ and $\P$ are denoted by
    $$
    \theta := c_1(\mathcal O(\Theta)) \in \CH^1(\barA)\, , \quad \ell := c_1(\mathcal P) \in \CH^1(\barA^{(2)})\, .
    $$
\end{definition}

Note that, by definition, $\Theta$ agrees with the usual symmetric Theta divisor on $A^\circ$ and $e^* \mathcal O(\Theta)$ is trivial. In particular, since $\Pic(\barA)_{\QQ} = \mathbb Q\langle\lambda_1, \Theta, D\rangle$, the $\mathbb{Q}$-line bundle $\mathcal O(\Theta)$ agrees with the one constructed by Alexeev and Nakamura in \cite{AleNak}. We recall some of its properties:
\begin{lemma}\label{lem:thetample}
    The extended Theta divisor is $\pi$-ample and has no higher cohomology:
    $$
    R^{>0}\pi_* \mathcal O(n\Theta) =0\, ,\quad n \in 2\mathbb Z_{>0}\, .
    $$
\end{lemma}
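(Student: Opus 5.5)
We need to prove that $\Theta$ is $\pi$-ample and that $R^{>0}\pi_*\mathcal O(n\Theta)=0$ for even $n>0$. Both properties are local on $B$ and compatible with flat base change. The plan is to check them fiberwise and then use cohomology and base change for the flat proper morphism $\pi$ (flatness is part of Proposition \ref{pro:Atrop}).

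\textbf{Ampleness.} Over $B^\circ=\A_g$, $\Theta$ restricts to the symmetric theta divisor of a principal polarization, so $\mathcal O(2\Theta)$ is ample on each fiber. Over a boundary point $b\in\partial B$, the fiber $\barA_b$ is described by Theorem \ref{thm:bdy}. It is the $\PP^1$-bundle $Y_b=\PP(\P\oplus\CO)$ over the $(g-1)$-dimensional abelian variety $X$, with the zero section glued to the shifted infinity section via $\sh$. Since ampleness on a proper scheme can be tested on the normalization, it suffices to show that $i^*\mathcal O(2\Theta)$ is ample on $Y_b$.

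To compute $i^*\mathcal O(2\Theta)=i^*\Delta^*\P$, use Lemma \ref{lem:maps_regular} and the Mumford/Faltings--Chai description of $\barA$ near the boundary from the proof of Proposition \ref{pro:Atrop}. The expected answer is that this bundle is, up to pullback from the base, of the form $\pi_{\PP}^*\CO(2\theta_X)\otimes\CO_Y(s_0+s_\infty)$. Here $\CO(s_0+s_\infty)$ has degree $2$ on the $\PP^1$-fibers, and its restriction to the sections is $\P^{\pm1}$. Twisting by the ample class $2\theta_X$ from the base, the bundle is ample on $Y_b$ by the standard criterion for ampleness on projective bundles.

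Equivalently, and perhaps more cleanly, one can use the comparison with Alexeev--Nakamura noted just before the lemma. The bundle $\mathcal O(\Theta)$ agrees, after the identification $\Pic(\barA)_\QQ=\QQ\langle\lambda_1,\Theta,D\rangle$, with their $\QQ$-line bundle, up to classes pulled back from $B$. Alexeev and Nakamura prove that this bundle is relatively ample on the Mumford degeneration, since it is the descent of the ample bundle on the relatively complete model. Fiberwise ampleness then gives $\pi$-ampleness because $\pi$ is proper.

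\textbf{Vanishing.} Because $\pi$ is flat and proper, by cohomology and base change it suffices to prove $H^{>0}(\barA_b,\mathcal O(n\Theta)_b)=0$ for every geometric point $b$. Over the interior this is Mumford's vanishing for nondegenerate line bundles of positive index 0 on abelian varieties.

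For a degenerate fiber, the approach is to use the normalization sequence
\[
0\to\mathcal O(n\Theta)_b\to i_*i^*\mathcal O(n\Theta)\to j_*j^*\mathcal O(n\Theta)\to 0,
\]
where $j$ restricted to the fiber is the gluing locus $X$. We then compute the cohomology of each term:
\begin{itemize}
\item[(i)] On $Y_b$, push forward along $\pi_{\PP}$. Then $\pi_{\PP*}$ of $\CO(k(s_0+s_\infty))$ is a sum $\bigoplus_{|m|\le k}\P^m$. Hence $H^*(Y_b,\cdot)=\bigoplus_{|m|\le k} H^*(X, \CO(n\theta_X)\otimes\P^m)$, and each summand is an ample line bundle on $X$, so it has no higher cohomology.
\item[(ii)] On the gluing locus $X$, $j^*\mathcal O(n\Theta)$ is likewise ample, so its higher cohomology vanishes.
\end{itemize}
The long exact sequence then reduces the claim to surjectivity of the restriction map $H^0(Y_b)\to H^0(X)$. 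This surjectivity holds because the extreme summands $m=\pm k$ map isomorphically onto the sections over $s_0$ and $\sbar_\infty$ respectively, up to the twist by $\sh$. This gives vanishing of $H^1$, and vanishing in higher degrees is immediate.

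An alternative is to cite \cite{AleNak}, where this vanishing is proved, or to apply Kawamata--Viehweg in characteristic zero using that $\barA_b$ is Gorenstein with trivial dualizing sheaf.

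The main obstacle is to pin down exactly the restriction $i^*\mathcal O(2\Theta)$ together with its gluing data along $s_0$ and $\sbar_\infty$, since this is what the surjectivity step depends on. I would first try to extract this from the explicit toric-bundle model in the proof of Proposition \ref{pro:Atrop}, where sections of $\CO(2a\theta)\otimes\P^b$ are visible.
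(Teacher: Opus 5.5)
Your reduction to geometric fibers (cohomology and base change for the flat proper $\pi$, and the fiberwise criterion for relative ampleness) is the same as the paper's. Your fallback of citing Alexeev--Nakamura is exactly how the paper finishes: it realizes each fiber as the special fiber of a Mumford construction over a complete DVR and invokes the proofs of \cite[Theorems 4.4 and 4.7]{AleNak}.

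Your main line is a genuinely different, hands-on route: you compute on the normalization $Y_b\to\barA_b$ via the conductor sequence. It is sound, works in any characteristic, and even recovers $h^0=(n+1)n^{g-1}-n^{g-1}=n^g$, matching the rank of $\mathcal M_n$. To complete it you need two inputs, which you should state rather than merely expect.
\begin{enumerate}
\item Exactness of $0\to\mathcal O(n\Theta)_b\to i_*i^*\mathcal O(n\Theta)\to j_*j^*\mathcal O(n\Theta)\to 0$ requires the fiber to be locally $\{xy=0\}\times(\text{smooth})$ along the double locus. This is the local model $(x,y)\mapsto xy$ from the proof of Proposition \ref{pro:Atrop}.
\item You need the class of $i^*\mathcal O(2\Theta)$. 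This is $i^*\theta=\theta_1^C+U/2$ by \cite[Proposition 4.5]{EGH10}, which the paper itself uses later.
\end{enumerate}
The obstacle you single out is smaller than you think, because the gluing data never enters. Write $n=2k$. The summand $m=-k$ of $\bigoplus_{|m|\le k}H^0(X,\mathcal O(n\theta_X)\otimes\mathcal P^m)$ restricts isomorphically onto $H^0(X,j^*\mathcal O(n\Theta))$ along $s_0$, and it vanishes to order $2k$ along $\overline{s}_\infty$. So the difference map $H^0(Y_b)\to H^0(X)$ is surjective however the two sections are identified. Only numerical information is used: positive degree on the $\mathbb P^1$-fibers and ample restriction to $X$. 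The same information also gives your ampleness claim.

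Drop the Kawamata--Viehweg aside. $\barA_b$ is non-normal, so you would need an slc vanishing theorem, and that covers only characteristic zero, whereas the paper works over an arbitrary field.
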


\begin{proof}
    Since $\pi$ is flat, the vanishing of the higher derived pushforwards can be checked on each fiber. By \cite[Theorem 1.7.8]{Positive1}, since $\pi$ is proper, relative ampleness is also a fiberwise condition.
    By the construction of $\barA$ in \cite{FC}, each fiber can be realized as the special fiber of a Mumford construction \cite{Mumfordanalytic, FC} over a complete DVR. The proofs of \cite[Theorems 4.4 and 4.7]{AleNak} then show that the restriction of $\mathcal O(\Theta)$ to each fiber is ample and has no higher cohomology.
\end{proof}

By Lemma \ref{lem:maps_regular}, the difference map
$$
\diff : \barA^{(2)} \to \barA
$$
is regular, and Mumford's formula\footnote{We follow the convention that $\mathcal P^{\circ} = \mu^*\mathcal O(\Theta) \otimes p_1^*\mathcal O(-\Theta) \otimes p_2^*\mathcal O(-\Theta)$. This convention differs by a sign from the usual convention on Jacobians; see the discussion in \cite[(1.6.)]{beauvilleSL2}. In particular, the signs differ from \cite{BMP}. Our convention is the same as the one used by Kajiwara, Kato and Nakayama; see \cite[Proposition 2.4.]{KKN5}.}
\begin{equation}\label{eq:Mumform}
    \mathcal P^{\vee} \cong \diff^* \mathcal O(\Theta) \otimes p_1^*\mathcal O(-\Theta) \otimes p_2^*\mathcal O(-\Theta)
\end{equation}
holds since both sides agree on $A^\circ \times_{B^\circ}A^\circ$ and their pullbacks along the unit sections agree.

\subsection{The log Poincar\'e line bundle}\label{sec:Plog}

We relate the extended Poincar\'e line bundle to the theory of
$\Glog$-biextensions on log abelian varieties developed by Kajiwara,
Kato and Nakayama \cite{KKN2, KKN4, KKN5, KKN7}.

For this, we start by recalling some basic properties of log abelian varieties.
If $B$ is a log scheme, a log abelian variety is a sheaf of abelian groups
$A^{\log}$ on $B$ such that there is a short exact sequence
$$
0 \longrightarrow G \longrightarrow A^{\log} \longrightarrow A^{\trop} \longrightarrow 0
$$
where $G\to B$ is a semi-abelian scheme and $A^{\trop}$ is a tropical abelian variety\footnote{Tropical abelian varieties are sheaves on the category of rational polyhedral cones that are analogues of abelian varieties. A study of tropical abelian varieties will appear in \cite{troAV}. In \cite{KKN2}, $A^{\trop}$ is denoted by $\hom (X, \mathbb G_{m,\trop})^{(Y)}/\overline{Y}$. The latter is the geometric realization of the tropical abelian variety, and we call it a tropical abelian variety by abuse of notation}.
The sheaf $A^{\trop}$ is of combinatorial nature and will only appear in the proofs of Lemma \ref{lem:Artin_fan_A_N}
and Lemma \ref{lem:glob_secs_ptrop}.
A principal polarization on $A^{\log}$ is a symmetric $\Glog$-biextension (and, in particular, a logarithmic line bundle)
$$
\P^{\log} \to A^{\log}\times_B A^{\log}
$$
such that the induced map $A^{\log} \to \ext (A^{\log}, \Glog)$ is injective and restricts to a polarization of log 1-motives on each point.
By \cite[2.5.4.1 and 3.7.5]{grothendieck1973groupes} we have an exact sequence:
\begin{align*}
	\operatorname{Bil}(G,G;\mathbb{G}_{m,\trop})\cong \Hom(G,&\hom(G,\mathbb{G}_{m,\trop}))\to \operatorname{Biext}(G,G;\mathbb{G}_m)\to\\
	&\to \operatorname{Biext}(G,G;\mathbb{G}_{m,\log})\to \operatorname{Biext}(G,G;\mathbb{G}_{m,\trop}).
\end{align*}
By \cite[Proposition 1.6 (1)]{KKN5} the right-hand group vanishes
and by \cite[Lemma 6.1.1]{KKN2} the left-hand group vanishes.
Hence the restriction of $\P^{\log}$ to $G^2$ is induced by a unique $\Gm$-biextension of $G$.

By \cite[Theorem 1.8]{KKN7} there exists a universal principally polarized log abelian scheme
\[
\pi^{\log}: \logA\to B
\]
whose associated semi-abelian scheme agrees with $G \subset \barA$.
By \cite[Theorem 9.1]{KKN5}\footnote{The model $\barA$ is the model associated to the star $\{0,1,-1\}\subseteq \mathbb{Z}$ (see \cite[4.6]{KKN5}).}, $\barA^{(s)}$ is a projective model of $(\logA)^s$, and there is an f.s. fiber diagram
\begin{equation}
\begin{tikzcd}
	\barA^{(s)} & {\A_{\Sigma^{(s)}}} \\
	{(\logA)}^{s} & {(A^{\trop})}^{s}
	\arrow[from=1-1, to=1-2]
	\arrow[from=1-1, to=2-1,"\beta"]
	\arrow[from=1-2, to=2-2]
	\arrow[from=2-1, to=2-2]
\end{tikzcd}
\end{equation}
where the vertical arrows are log blowups and the horizontal arrows are smooth, strict and surjective, with connected fibers.

On any log scheme $X$, we have an exact sequence
$$
 H^1(X, \Gm) \longrightarrow H^1(X, \Glog) \longrightarrow H^1(X, \mathbb G_{m,\trop}).
$$
The middle group is the group of logarithmic line bundles. The following statement appears in \cite{J}. For completeness, we include a proof.

\begin{proposition}\label{pro:logline}
    Let $X$ be a log scheme over an algebraically closed field with smooth underlying scheme and log structure induced by an effective Cartier divisor $\partial X$.
    Then $H^1(X, \mathbb G_{m,\trop})=0$. In particular, any logarithmic line bundle on $X$ is represented by some line bundle.
\end{proposition}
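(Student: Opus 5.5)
The vanishing of $H^1(X,\mathbb G_{m,\trop})$ should come from a computation with the sheaf $\overline{M}^{\gp}_X = M_X^{\gp}/\CO_X^*$. Here $\mathbb G_{m,\trop}$ on a log scheme $X$ is the sheaf $M^{\gp}_X/\CO_X^*$ (in the étale topology), equivalently the quotient in $0\to\Gm\to\Glog\to\mathbb G_{m,\trop}\to 0$, where $\Glog(X)=\Gamma(X,M_X^{\gp})$. Work étale locally and let $\nu:\widetilde{\partial X}\to \partial X\hookrightarrow X$ be the normalization of the boundary. Since $X$ is smooth and the log structure comes from an effective Cartier divisor, at a geometric point $\bar x$ the stalk $\overline{M}^{\gp}_{X,\bar x}$ is the free abelian group on the local analytic branches of $\partial X$ through $\bar x$. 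Hence $\mathbb G_{m,\trop}\cong \nu_*\ZZ_{\widetilde{\partial X}}$, at least étale locally. Globally, $\overline{M}^{\gp}_X$ is the pushforward of the constant sheaf from the space of branches, which étale locally is the normalization of $\partial X$ when the log structure is the divisorial one.

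Given this identification, the plan has three steps.

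\textbf{Step 1: reduce to the normalization.} Since $\nu$ is finite, $\nu_*$ is exact on étale sheaves, so
\[
H^1(X,\mathbb G_{m,\trop})\cong H^1_{\acute et}(\widetilde{\partial X},\ZZ).
\]

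\textbf{Step 2: show $H^1_{\acute et}(Z,\ZZ)=0$ for a normal scheme $Z$.} Use $0\to\ZZ\to\QQ\to\QQ/\ZZ\to 0$. Here $H^1(Z,\QQ)=0$, because torsors under a uniquely divisible constant sheaf on a normal (hence geometrically unibranch) scheme are trivial; equivalently, $H^1(Z,\QQ)=\Hom_{cont}(\pi_1(Z),\QQ)=0$ for connected $Z$, since $\pi_1$ is profinite. Also $H^0(Z,\QQ)\to H^0(Z,\QQ/\ZZ)$ is surjective on each connected component. Together these give $H^1(Z,\ZZ)=0$. The normality of $\widetilde{\partial X}$ is exactly what is needed, and this is where smoothness of $X$ (hence unibranch branches after normalizing) enters.

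\textbf{Step 3: conclude.} From the exact sequence $H^1(X,\Gm)\to H^1(X,\Glog)\to H^1(X,\mathbb G_{m,\trop})=0$, every log line bundle lifts to a line bundle.

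The main obstacle is Step 1 in the non-simple-normal-crossings case, where $\partial X$ self-intersects as in $\barA$. There one must check carefully that $\overline M^{\gp}_X$ really is $\nu_*\ZZ$ for the normalization, and not a twisted form. This comes down to matching local branches of $\partial X$ at $\bar x$ with points of $\nu^{-1}(\bar x)$, which holds because $X$ is smooth, so the branches of $\partial X$ are locally the irreducible components of its étale-local pullback. I would verify this on strictly henselian local rings. If the divisor is not reduced the same argument applies to its reduction, since the log structure only sees $\partial X_{\mathrm{red}}$ up to saturation. The algebraically closed base field is used only to phrase the claim about geometric points; it is not needed beyond that.
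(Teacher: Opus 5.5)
Your argument is correct, but it takes a different route from the paper's. The paper gives a flasqueness argument. Since $M_X^{\gp}\cong j_*\CO_U^\times$ with $U=X\setminus\partial X$, the sheaf $\overline M_X^{\gp}$ is the sheaf of Weil divisors supported on $\partial X$. This sheaf is flasque, because a divisor on an open $V\subseteq W$ supported on $\partial X$ extends to $W$ by taking closures. Hence $H^1=0$ with no further input.

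You instead work \'etale locally, identify $\overline M_X^{\gp}\cong\nu_*\ZZ_{\widetilde{\partial X}}$, and reduce to the vanishing of $H^1_{\mathrm{et}}(Z,\ZZ)$ for the normal scheme $Z=\widetilde{\partial X}$.

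The closure argument is shorter, but it lives on the Zariski site. Restriction along an \'etale map $V'\to V$ is not surjective when a boundary component that is irreducible on $V$ splits into several branches on $V'$. This is exactly what happens along the self-intersection of $\partial\barA$. So to pass from Zariski to \'etale vanishing one also needs $R^1\epsilon_*\overline M_X^{\gp}=0$ for $\epsilon\colon X_{\mathrm{et}}\to X_{\mathrm{Zar}}$. That is again the statement $H^1(\text{normal},\ZZ)=0$, applied to the local normalizations of the boundary. Your route makes this input explicit, and it adapts directly to Deligne--Mumford stacks such as $\barA^{(2)}$, where the proposition is actually applied.

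One point to tighten in Step 1: matching stalks alone would not rule out a twisted form, so write down the map on sections over each \'etale $V\to X$. Both sides are canonically $\ZZ^{\{\text{irreducible components of }\partial X\times_X V\}}$, for two reasons:
\begin{itemize}
\item $V$ is regular, so Cartier divisors supported on the boundary are Weil divisors;
\item normalization commutes with \'etale base change, so the connected components of $\widetilde{\partial X}\times_X V$ are the normalizations of those irreducible components.
\end{itemize}
This gives a global isomorphism compatible with restrictions, which is what Step 1 needs.
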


\begin{proof}
	The assumptions imply that $M_X^\gp \cong j_*\CO_U^\times$, where $j: U:=X\setminus\partial X \to X$, so $\overline{M}_X^\gp$ is the sheaf of Weil divisors supported on $\partial X$.
	This sheaf is flasque because the closure of any codimension-$1$ subset of an open subset of $X$ supported on $\partial X$ is again a codimension-$1$ subset supported on $\partial X$.
	It follows that $H^1(X,\overline{M}_X^\gp)=0$.
\end{proof}

It follows that the restriction of $\P^{\log} \to (\logA)^2$ to $\barA^{(2)}$ is represented by a line bundle. Since $\operatorname{codim}\big(\barA^{(2)} \setminus G^2\big)=2$, this line bundle is unique if we require it to restrict to the $\Gm$-biextension on $G$ from above. Moreover, since $\Gm$-biextensions are trivialized at the origin, this line bundle representing $\P^{\log}$ must agree with the extended Poincar\'e line bundle $\P$ constructed in \S\ref{subs:extP}.

\begin{remark}\label{rem:Ptrop}
	In Lemma \ref{lem:glob_secs_ptrop} we will see that the set of lifts of $\mathcal{P}^{\log}$ to a line bundle on $\barA^{(2)}$
	is in bijection with the set of strict piecewise linear functions $p$ on the universal cover $\mathcal U^2\Sigma^{(2)}$ of
	$\Sigma^{(2)}$ such that
	$$
    p(\mathsf{x}_1,\mathsf{x}_2,\mathsf{t})=\mathsf{x}_1v_2+\mathsf{x}_2v_1+\mathsf{t}v_1v_2+p(\mathsf{x}_1+v_1\mathsf{t},\mathsf{x}_2+v_2\mathsf{t},\mathsf{t})
    $$
    for all $(v_1, v_2) \in \ZZ^2$. A choice of such a function is given by
    $$
    \mathsf{p}^{\trop} (\mathsf{x}_1, \mathsf{x}_2, \mathsf{t}) = -\mathsf{t} \left(\frac{\mathsf{x}_1 \mathsf{x}_2}{\mathsf{t}^2} - \left\lbrace\frac{\mathsf{x}_1}{\mathsf{t}}\right\rbrace\left\lbrace\frac{\mathsf{x}_2}{\mathsf{t}}\right\rbrace + \min \left(\left\lbrace\frac{\mathsf{x}_1}{\mathsf{t}}\right\rbrace, \left\lbrace\frac{\mathsf{x}_2}{\mathsf{t}}\right\rbrace\right)\right)
    $$
    Equivalently, $\mathsf{p}^{\trop}$ is characterized as the unique strict piecewise linear function on $\mathcal U^2\Sigma^{(2)}$ that agrees with $-\frac{\mathsf{x}_1\mathsf{x}_2}{\mathsf{t}}$ on the rays given by integral lattice points.
\end{remark}

\subsection{Multiplication by $N$ maps}
\label{sec:mbyNmaps}

In this section, we define the action of the multiplication by $N$ map
on $\CH^*(\barA)$ and, more generally, on $\CH^*(\barA^{(s)})$.

For $N \in \ZZ_{>0}$, there exists a {\em rational} multiplication by $N$ map
\[
N : \barA \dashrightarrow \barA\,, x \mapsto Nx\,,
\]
which is regular on the semi-abelian part $G \subset \barA$. We resolve the indeterminacy using the theory of log abelian varieties, see \S\ref{sec:Plog} and the references therein.

\begin{definition}\label{def:n}
    Let $N_i =(1, \ldots ,N, \ldots , 1)\in \ZZ^s_{>0}$ with $N$ placed at the $i$-th position. For any diagram
    $$
    \begin{tikzcd}
	& {\barA_{N_i}^{(s)}} && {\barA^{(s)}} \\
	{\barA^{(s)}} & {(\logA)^s} && {(\logA)^{s}}
	\arrow["{N_i}", from=1-2, to=1-4]
	\arrow["b"', from=1-2, to=2-1]
	\arrow[from=1-2, to=2-2]
	\arrow["\beta", from=1-4, to=2-4]
	\arrow["\beta", from=2-1, to=2-2]
	\arrow["{1\times \ldots\times N\times \ldots  \times 1}", from=2-2, to=2-4]
    \end{tikzcd}
    $$
    such that the right square is commutative, and $b$ is a log modification, we define
    $$
    [N_i]^* := b_* \circ N_i^* : \CH^*(\barA^{(s)}) \longrightarrow \CH^*(\barA^{(s)})\, .
    $$
	When $i$ is clear from context, we write $\barA_{N}^{(s)}:=\barA_{N_i}^{(s)}$ and $[N]^{\ast}:=[N_i]^{\ast}$.
\end{definition}

Since $b$ is a log modification, $\barA_{N_i}^{(s)}$ is irreducible and reduced. Therefore, the map $[N_i]^*$ is independent of the choice of diagram in Definition \ref{def:n}.

A convenient choice of $\barA_{N_i}^{(s)}$ is obtained as the fiber product in the following f.s. fiber diagram
\begin{equation}\label{eq:n}
    \begin{tikzcd}
        \barA^{(s)}_{N_i} \ar[r] \ar[rrr, bend left=20, "N_i"] \ar[d] & \barA^{(s)'}_{N_i} \ar[rr] \ar[d] && \barA^{(s)} \ar[d,"\beta"] \\
        \barA^{(s)} \ar[r, "\beta"] & (\logA)^{s} \ar[rr,"1\times \cdots \times N \times \cdots \times 1"] && (\logA)^{s}\,.
    \end{tikzcd}
\end{equation}

\begin{definition}
    We say that a class $\alpha \in \CH^*(\barA)$ has weight at most $k$ if $[N]^*\alpha$ is a polynomial in $N$ of degree at most $k$.
    We say that $\alpha$ has finite weight if $[N]^*\alpha$ is polynomial in $N$. That is, $\alpha$ has finite weight if and only if
	there exist classes $[\alpha]^{(0)},\dots,[\alpha]^{(m)}\in\CH^{\ast}(\barA)$ such that
    $$
    [N]^*\alpha = \sum_{k=0}^m N^k [\alpha]^{(k)}
    $$
	holds for all $N$. The class $[\alpha]^{(k)}$ is called the \emph{weight $k$ part} of $\alpha$. The associated filtration is called
	the \emph{weight filtration}:
    $$
    W_k \CH^*(\barA) = \{\alpha \in \CH^*(\barA) : \alpha\text{ has weight } \leq k\}\,.
    $$
\end{definition}

More generally, we say that $\alpha\in \CH^*(\barA^{(s)})$ has weight $ \leq k$ in the $i$-th direction if $[N_i]^*\alpha$ is polynomial in $N$ of degree at most $k$. We write
    $$
    W_{i,k}\CH^*(\barA^{(s)})
    $$
for the associated filtration, and $[\alpha]^{i, (k)}$ for the weight parts in the $i$-th direction. We will omit $i$ when it is clear from the context.

\begin{lemma}\label{lem:Npush}
    Let $1 \leq i\leq s \leq r$ and let $\pi : \barA^{(r)} \to \barA^{(s)}$ be the projection onto the first $s$ factors. For $N_i = (1, \ldots , N, \ldots , 1) \in \ZZ^s_{>0}$, the following holds:
    \begin{enumerate}[label = (\alph*)]
        \item $\pi_{*}\circ [N_i\times 1^{r-s}]^* = [N_i]^*\circ \pi_*$.
        \item $[N_i\times 1^{r-s}]^* \circ \pi^* = \pi^* \circ [N_i]^*$.
    \end{enumerate}
\end{lemma}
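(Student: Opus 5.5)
The plan is to construct one resolution of the rational multiplication map on $\barA^{(r)}$ that is compatible with a resolution on $\barA^{(s)}$ via the projection. Then (a) becomes proper base change and (b) becomes functoriality of flat (or lci) pullback. Since $[N_i]^*$ does not depend on the choice of diagram in Definition \ref{def:n}, we may pick resolutions adapted to $\pi$.

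\textbf{Step 1: the resolution on $\barA^{(r)}$.} On the log side, the map $1\times\cdots\times N\times\cdots\times 1$ on $(\logA)^r$ is the product of the corresponding map on $(\logA)^s$ with the identity on $(\logA)^{r-s}$. We therefore form the f.s. fiber product of diagram \eqref{eq:n} for $\barA^{(s)}_{N_i}$ with $(\logA)^{r-s}$, and then refine. Concretely, let $\widetilde{A}\to \barA^{(r)}$ be a log modification that dominates both
\[
\barA^{(r)}\times_{(\logA)^r}\bigl(\barA^{(s)}_{N_i}\times_B(\logA)^{r-s}\bigr)
\]
and the analogous space for the target. The key requirement is that the maps $\pi\circ b$ and $\pi\circ N_i$ factor through $b_s:\barA^{(s)}_{N_i}\to\barA^{(s)}$ and $N_i:\barA^{(s)}_{N_i}\to\barA^{(s)}$ respectively. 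On cone complexes this says: take the common refinement of $\Sigma^{(r)}$ and the pullback of $\Sigma^{(r)}$ under $\mathsf{x}_i\mapsto N\mathsf{x}_i$ on $\mathcal U^r$, and then make it compatible with the analogous refinement of $\Sigma^{(s)}$ under the coordinate projection. This is possible because the projection $\mathcal U^r\Sigma^{(r)}\to\mathcal U^s\Sigma^{(s)}$ is a map of fans (Lemma \ref{lem:maps_regular}(a)).

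\textbf{Step 2: the key squares.} The crucial point is that the two squares
\[
\widetilde A\xrightarrow{N_i}\barA^{(r)}\xrightarrow{\pi}\barA^{(s)}
\qquad\text{and}\qquad
\widetilde A\xrightarrow{b}\barA^{(r)}\xrightarrow{\pi}\barA^{(s)}
\]
are set up as follows. For (b), we only need commutativity on the interior $G^r$, where it is the identity $p\circ(N\times 1)=N\circ p$. Both sides are then regular maps $\widetilde A\to\barA^{(s)}$ agreeing on a dense open, so they coincide. Using the factorization $\pi\circ N_i=N_i^{(s)}\circ \rho$, where $\rho:\widetilde A\to \barA^{(s)}_{N_i}$, we get
\[
[N_i\times 1]^*\pi^* = b_*\rho^* N_i^{(s)*}.
\]
Next, $b_*\rho^*=\pi^*b_{s*}$ holds if the square formed by $b$, $b_s$, $\pi$ and $\rho$ satisfies base change. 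This requires $\pi$ to be flat, or the square to be Tor-independent with $b$ proper.

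\textbf{Step 3: part (a).} Dually, we write
\[
\pi_*[N_i\times 1]^* = \pi_*b_*N_i^* = b_{s*}\rho_*N_i^*,
\]
and we need $\rho_*N_i^* = N_i^{(s)*}\pi_*$. This is base change for the square with $\pi$ proper and $N_i^{(s)}$ the pullback map. We choose $\widetilde A$ to be the actual f.s. fiber product $\barA^{(s)}_{N_i}\times_{\barA^{(s)}}\barA^{(r)}$, taken along $N_i^{(s)}$, which is possible exactly when $N\times 1$ on the extra factors is the identity. We then resolve it by a further log modification $c$, using that $c_*c^*=\mathrm{id}$ and that log modifications do not change pushforward of fundamental-class-type operations.

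\textbf{Main obstacle.} The main obstacle is ensuring the base change squares are genuine: the projections $\pi:\barA^{(r)}\to\barA^{(s)}$ are not flat everywhere, since they are toroidal with possible dimension jumps over strata. I would first check flatness combinatorially: the map of cone complexes $\Sigma^{(r)}\to\Sigma^{(s)}$ sends cones surjectively onto cones (the simplices $\sigma_{\underline m,\tau}$ project onto simplices). This fact, together with miracle flatness as in Proposition \ref{pro:Atrop}, gives flatness of $\pi$. With $\pi$ flat and the f.s. fiber product agreeing with the ordinary fiber product up to a log modification, both (a) and (b) follow from flat base change and the projection formula.
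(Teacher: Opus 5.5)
Your proposal is correct and follows essentially the same route as the paper. The paper also picks a model of $[N_i\times 1^{r-s}]^*$ adapted to $\pi$, uses that $\pi:\barA^{(r)}\to\barA^{(s)}$ is semi-stable (which your cone-surjectivity check verifies) so that the f.s.\ fiber squares over $\barA^{(s)}_{N_i}$ are genuinely cartesian, applies base change, and absorbs the leftover proper birational map $h$ via $h_*h^*=\mathrm{id}$. The only difference is that the paper gets exactly cartesian squares from semi-stability, whereas your phrase ``agreeing with the ordinary fiber product up to a log modification'' is vaguer; this is harmless, because flatness of $\pi$ together with integral generic fibers already forces the ordinary fiber products to be integral.
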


\begin{proof}
    We prove (a). Consider the following diagram
    \[
    \begin{tikzcd}
        \barA^{(r)}_{N_i \times 1^{r-s}} \ar[r,"h"] \ar[d,"b^r"] & \barA^{(r)''}_{N_i \times 1^{r-s}} \ar[d,"\pi'"] \ar[r]  &\barA^{(r)'}_{N_i \times 1^{r-s}} \ar[r] \ar[d] & \barA^{(r)} \ar[d,"\pi"] \\
        \barA^{(r)} \ar[dr,"\pi"] &\barA^{(s)}_{N_i} \ar[r] \ar[d,"b^s"] & \barA^{(s)'}_{N_i} \ar[r] \ar[d] & \barA^{(s)} \ar[d] \\
        &\barA^{(s)} \ar[r] & (\logA)^s \ar[r,"N_i"] & (\logA)^s\,.
    \end{tikzcd}
    \]
    Here the four squares are f.s. fiber products, and $h$ is a proper birational morphism making the left trapezoid commute. Since $\pi : \barA^{(r)} \to \barA^{(s)}$ is semi-stable, the upper two squares are cartesian. In particular, $\barA^{(r)}_{N_i\times 1^{r-s}}$ is a model for $[N_i \times 1^{r-s}]^*$ in Definition \ref{def:n}.

    In the above diagram, denote by $N_i :\barA_{N_i}^{(s)} \to \barA^{(s)}$ and $(N_i \times 1^{r-s})' : \barA^{(r)''}_{N_i \times 1^{r-s}} \to \barA^{(r)}$ the corresponding compositions, and set $N_i \times 1^{r-s}= (N_i \times 1^{r-s})' \circ h$. Then, for $\alpha \in \CH^*(\barA^{(r)})$, we have
    \begin{align*}
        [N_i]^*\pi_*(\alpha)
        & = b^s_* N_i^*\pi_*(\alpha) \\
        & = b^s_*\pi'_*h_*((N_i\times 1^{r-s})' \circ h)^*(\alpha)\\
        & = \pi_*b^r_*(N_i \times 1^{r-s})^*(\alpha)\\
        & = \pi_*[N_i\times 1^{r-s}]^*(\alpha)\,,
    \end{align*}
    where the second equality follows from the fact that $h$ is proper and birational, using \cite[Lemma C.8]{BS1}, and the third follows from the commutativity of the left trapezoid. This proves the claim.

    The proof of part (b) is similar to (a).
\end{proof}

\begin{remark}
    Pullback along the multiplication by $N$ map induces an action on both the
    small and big logarithmic Chow groups of the universal log abelian
	scheme $\X_g^{\log} \to \A_g^{\log}$. However, contrary to our situation (see Proposition \ref{cor:pure}),
	the eigenspaces of this action do not span the logarithmic Chow group. See also \cite{BGHdJ}.
\end{remark}

\subsection{The tautological ring and tautological maps}\label{sec:taut_def}

There are three types of tautological classes on $\barA^{(s)}$: divisorial classes, $\lambda$-classes, and piecewise polynomial classes.

First, we introduce the divisorial classes. For $1 \le i \le s$, let $p_i :\barA^{(s)} \to \barA$ be the projection onto the $i$-th component, and let $p_{i,j} :\barA^{(s)} \to \barA^{(2)}$ be the unique extension of the projection onto the $(i,j)$-th component from Lemma \ref{lem:maps_regular}. Write
\[
\theta_i := p_i^*\theta, \quad \ell_{i,j} := p_{i,j}^*(\ell) \in \CH^1(\barA^{(s)})\,.
\]

We define the Hodge bundle to be the conormal bundle to the unit section, $\EE := N_e^\vee$, and set $\lambda_i := c_i(\EE)$. We use the same notation for the pullbacks of these classes along $\barA^{(s)} \to B$.

\begin{remark}\label{rem:Hodge_is_Omlog}
    By \cite[Chapter IV. Theorem 1.1.]{FC}, the sheaf of logarithmic differentials of $\pi$ is the pullback of the Hodge bundle:
    $$
    \Omega_{\barA/B}^{\log} = \pi^* \mathbb E\, ,\quad \Omega^{\log}_{\barA^{(s)}/B} =(\pi^s)^* \mathbb E^{\oplus s}\,.
    $$
    Alternatively, using the group structure on $\logA$, it is easy to show (see for example the argument in \cite[Tag 047I]{stacks-project}) that $\Omega_{\logA/B}^{\log} = (\pi^{\log})^*\mathbb E$, and, since $\barA\to \logA$ is a log modification, it preserves the logarithmic sheaf of differentials.
\end{remark}

Piecewise polynomial classes appear as a result of the logarithmic structure. We briefly recall the notation: Let $X$ be a log smooth Deligne--Mumford stack with a strict morphism
\[
X \to \mathcal A_\Sigma
\]
to its Artin fan. Let $\Sigma$ be the corresponding cone stack, and let $\spp^{\ast}(\Sigma)$ denote the ring of strict piecewise polynomials on $\Sigma$.

By \cite[Theorem B]{A_case_study_of_Molcho_2021} (or \cite[Theorem 14]{MPS} when $X$ is smooth), there is a ring isomorphism
\[
\CHop^*(\mathcal A_\Sigma) \cong \spp^{\ast}(\Sigma).
\]
Composing the inverse of this isomorphism with the pullback to $X$ gives a ring morphism
\begin{equation}\label{eq:Phi}
    \Phi_X : \spp^{\ast}(\Sigma) \longrightarrow \CHop^*(X).
\end{equation}
When $X$ is clear from context, we omit the subscript and write $\Phi=\Phi_X$.

\begin{definition}
    \begin{enumerate}[label = (\alph*)]
        \item The {\em small tautological ring} $\sR^*(\barA^{(s)}) \subseteq \CH^*(\barA^{(s)})$ is the subring generated by the classes $\theta_i$, $\ell_{i,j}$, and the piecewise polynomial classes.

        \item The {\em tautological ring} $\R^*(\barA^{(s)}) \subseteq \CH^*(\barA^{(s)})$ is the subring generated by $\sR^*(\barA^{(s)})$ and the classes $\lambda_i$.
    \end{enumerate}
\end{definition}

We identify $\spp^{\ast}(\Sigma^{(s)})$ with the $\ZZ^{s}$-periodic strict piecewise polynomials on $\mathcal U^{s}\Sigma^{(s)}$.

\begin{example}
	The tautological ring of $B$ has a simple description. Let $\iota : D = \X_{g-1}^\vee \to \partial B \subset B$ be the double cover
	of the boundary and write
	\[
		D := \frac{1}{2}\iota_*(1) \in \CH^1(B)\,.
	\]
	Then $\sR^*(\barA^{(0)}) = \sR^*(B)$ is generated by $D$, while $\R^*(B)$ is generated by $D$ and the classes $\lambda_i$.
\end{example}

\begin{example}\label{expl:pp_s_1}
    We unpack the definition of piecewise polynomial classes on $\barA$. Piecewise polynomials on $\Sigma^{1}$ can be identified with polynomials
    $p \in \mathbb{Q}[\mathsf{x},\mathsf{y}]$ satisfying $p(0,\mathsf{t})=p(\mathsf{t},0)$. This is a finitely generated $\mathbb{Q}$-algebra with generators
    \[
        \mathsf{t} = \mathsf{x} + \mathsf{y}\, ,\quad
        \mathsf{x} \mathsf{y}\, ,\quad
        \mathsf{x}\mathsf{y}(\mathsf{x} - \mathsf{y})\,.
    \]
    With this notation, the piecewise polynomial classes pulled back from $B$ correspond precisely to the polynomials in $\mathsf{t}$.
	Recall the situation from Figure \ref{fig:thm_bdy} (see also Theorem \ref{thm:bdy}). By definition, piecewise polynomial classes on
	$\barA$ are pushforwards along $i$ and $j$ of polynomials in the Chern roots of the normal bundles of these maps. To compute these
	Chern roots, we introduce the following notation:

	Let $\theta^C_1, \theta^C_2, \ell^C \in \CH^1(C)$ be the symmetric Theta class
	pulled back from $\mathcal X_{g-1}$, the symmetric Theta class pulled back from $\mathcal X_{g-1}^{\vee}$
	and the first Chern class of the Poincar\'e line bundle $\mathcal P_C$, respectively. These
	classes can also be seen in $\CH^1(Y)$ by pullback. Let $h = c_1(\mathcal O_Y(1)) \in \CH^1(Y)$
	be the hyperplane class. We denote by $\lambda_1^C, \ldots, \lambda_{g-1}^C\in \CH^{\ast}(C)$ the $\lambda$ classes pulled back
	from $\mathcal A_{g-1}$. We define:
	\begin{enumerate}[label = (\alph*)]
		\item The small tautological ring of $C$ (resp. $Y$) is the subring $\sR^*(C)$ (resp. $\sR^*(Y)$) generated by $\theta_1^C, \theta_2^C, \ell^C$ (resp. $\theta_1^C, \theta_2^C, \ell^C, h$).

		\item The tautological ring of $C$ (resp. $Y$) is the subring $\R^*(C)$ (resp. $\R^*(Y)$) generated by the corresponding small tautological ring and the $\lambda^C_i$ classes.
	\end{enumerate}
	Note that $\lambda_i^{C}$ is the restriction of $\lambda_i\in \CH^{\ast}(\barA)$ to $C$ by \cite{vdG99}.
	Finally, write
	\[
		U := 2h-\ell^C = [s_0] + [\sbar_\infty].
	\]
	Then
	$$
	[s_0]=\frac{U+\ell^C}{2}\,,\qquad[\sbar_{\infty}]=\frac{U-\ell^C}{2}\,.
	$$
	A direct computation shows that the Chern roots of the normal bundles of $i$ and $j$ are given by
	$$
	c_1(N_i) = -2\theta_2^C -U\,,\quad \alpha_1(N_j) = \ell^C\, ,\quad \alpha_2(N_j)=-2\theta_2^C-\ell^C\, .
	$$
	These formulas, together with
	\begin{equation}\label{eq:normalbunex}
	1 = \Phi(1)\, ,\quad \frac{1}{2}i_*(c_1(N_i)^a) = \Phi (\mathsf{x}^{a+1} + \mathsf{y}^{a+1})\, ,\text{ and}\quad \frac12j_*(\alpha_1(N_j)^{b_1}\cup\alpha_2(N_j)^{b_2}) = \Phi(\mathsf{x}^{b_1+1}\mathsf{y}^{b_2+1})\, ,
	\end{equation}
	allow one to compute explicitly the image of any piecewise polynomial on $\Sigma^{1}$ under $\Phi:\spp^{\ast}(\Sigma^{1})\to \CH^{\ast}(\barA)$
	as a ``decorated strata class'', i.e., a class of the form
	\begin{equation}\label{eq:dec_strata}
		\alpha + \frac{1}{2}i_*(\beta) + \frac{1}{2}j_*(\gamma)\, ,
	\end{equation}
	with $\alpha \in \mathbb Q[\theta, \lambda_i]$, $\beta\in \R^*(Y)$ and $\gamma \in \R^*(C)$.
	In Corollary \ref{cor:i_* in sR} we will show that any decorated strata class lies in $\R^{\ast}(\barA)$.
\end{example}

\begin{remark}
	By the projective bundle formula, we get
	$$
	\sR^*(Y) \cong \frac{\sR^*(C)[U]}{\langle U^2-(\ell^C)^2\rangle}\,.
	$$
\end{remark}

\section{The Fourier transform}\label{sec:Fourier}

\subsection{Semi-abelian Ng\^o fibration}

Abelian fibrations appear in several contexts; see \cite{Ngo,AF,PerverseFourierMSY25}. In this paper, we will
use the following notation:

\begin{definition}\label{def:sa}
    A {\em semi-abelian Ng\^o fibration} over a smooth algebraic stack $B$ is a tuple
    \[
    (\pi : \barA \to B, G\to B, \mu : G\times_B \barA \to \barA),
    \]
	where
    \begin{enumerate}[label=(\alph*)]
        \item $\pi :\barA \to B$ is a surjective, flat, Gorenstein, locally projective morphism whose geometric fibers are reduced, connected and pure of dimension $g$.
        \item $G \to B$ is a $\delta$-regular semi-abelian group scheme of dimension $g$. Moreover, over the locus $B^\circ \subset B$ where the morphism is proper, $G|_{B^\circ} \to B^\circ$ is principally polarized.
        \item For every $b\in B$, the action $G_b \curvearrowright \barA_b$ has affine stabilizers.
        \item The locus of $b \in B$ where $\barA_b$ is geometrically reducible has codimension at least two.
		\item There is a $G$-equivariant open immersion $G\hookrightarrow \barA$. In particular, there is a section $B\to \barA$ induced by the unit section $B\to G$.
    \end{enumerate}
\end{definition}

For any geometric point $b \in B$, the semi-abelian variety $G_b$ admits a unique Chevalley decomposition
\begin{equation}\label{eq:Chev}
	0 \to T_b \to G_b \stackrel{q}{\to} A_b \to 0\,,
\end{equation}
where $A_b$ is an abelian variety and $T_b$ is an algebraic torus. Let $\delta : B \to \ZZ_{\ge 0}$, $b \mapsto \dim T_b$. The family $G\to B$
is said to be $\delta$-regular if
\[
\codim \{ b \in B : \delta(b) \ge k\} \geq k
\]
for all $k \ge 0$.

Point (d) implies that the relative dualizing sheaf $\omega_\pi$ is isomorphic to the pullback of a line bundle from $B$ (\cite[Remark 2.2]{AF}).

We use the language of biextensions from \cite{Mum_ext,Hodge3}. For a semi-abelian variety $G_{\Bbbk}$ over a field $\Bbbk$, $\mathcal P \in \operatorname{Biext}(G_{\Bbbk}, G_{\Bbbk};\mathbb G_{m})$ induces a homomorphism from $G_{\Bbbk}$ to its Picard variety:
$$
\varphi_{\mathcal P} : G_{\Bbbk} \to \operatorname{Ext}(G_{\Bbbk}, \mathbb G_{m}) \subset \Pic(G_{\Bbbk})\,, \quad x \mapsto \mathcal P|_x\,.
$$

If we identify $G_{\Bbbk}$ with the $1$-motive
    $M_{G_{\Bbbk}} = [0 \to G_{\Bbbk}]$ as in \cite{Hodge3}, then its dual is given by $M_{G_\Bbbk}^\vee = [X \to A_{\Bbbk}^\vee]$, where $X$ is the character lattice of $T_{\Bbbk}$. In particular:
    $$
    \operatorname{Biext}(M_{G_\Bbbk}, M_{G_\Bbbk}; \mathbb G_m) = \Hom(M_{G_{\Bbbk}}, M_{G_\Bbbk}^\vee) = \Hom(G_{\Bbbk}, A_{\Bbbk}^\vee) = \Hom(A_{\Bbbk}, A_{\Bbbk}^\vee) = \operatorname{Biext}(A_{\Bbbk}, A_{\Bbbk}; \mathbb G_m)\, ,
    $$
    since there are no nonconstant morphisms from $T_{\Bbbk}$ to $A_{\Bbbk}$. It follows that every biextension $\mathcal P$ of $G_\Bbbk\times G_\Bbbk$ by $\mathbb{G}_{m}$ is of the form $q^* \mathcal Q$ for some biextension $\mathcal Q$ of $A_\Bbbk\times A_\Bbbk$ by $\mathbb{G}_{m}$.

\begin{definition}
    Let $G\to B$ be a semi-abelian group scheme. A $\mathbb G_m$-biextension $\mathcal P \to G^2$ is
    \emph{nondegenerate} if, for every point $b \in B$ with Chevalley
    decomposition \eqref{eq:Chev}, the unique biextension
    $\mathcal Q_b \in \operatorname{Biext}(A_b,A_b;\mathbb G_m)$ satisfying
    $q^*\mathcal Q_b = \mathcal P_b$ induces an isogeny
    $\varphi_{\mathcal Q_b} : A_b \to A_b^\vee$.
\end{definition}

We also make the following assumption, which is related to extending the Fourier--Mukai equivalence to the boundary.

\begin{assumption}\label{assum1}
    Let $(\barA,B,G)$ be a semi-abelian Ng\^o fibration. We assume the following:
    \begin{enumerate}[label=(\alph*)]
        \item There exists a symmetric nondegenerate biextension
        $\mathcal P \to G^2$ whose associated line bundle extends the
        normalized Poincar\'e line bundle over $B^\circ$. Moreover,
        $\mathcal P$ extends\footnote{Since $\mathcal P$ is symmetric, it is
        enough to require an extension to
        $ G \times_B \barA$.} to a symmetric line bundle
        \[
        \mathcal P^\circ \in \Pic(G \times_B \barA \cup \barA \times_B G)
        \]
        that is compatible with the group action.

        \item The line bundle $\mathcal P^\circ$ extends to a maximal
        Cohen--Macaulay sheaf
        \[
        \Pbar \in \operatorname{Coh}(\barA^2)
        \]
        which is flat along the projections
        $\pi_1,\pi_2 : \barA^2 \to \barA$.

    \end{enumerate}
\end{assumption}

\begin{remark}
    \begin{enumerate}[label = (\alph*)]
        \item Compatibility with the group action
        $\mu : G \times_B \barA \to \barA$ means that
        \begin{equation}\label{eq:compPoin}
            (\operatorname{id}_G \times \mu)^*\mathcal P^{\circ}
            \cong
            \pi_{1,2}^*\mathcal P^\circ \otimes \pi_{1,3}^*\mathcal P^\circ .
        \end{equation}
        For any point $a \in G$ mapping to $b \in B$, we obtain a line
		bundle
        \[
        \mathcal P_a^\circ
        :=
        \mathcal P^\circ|_{\{a\}\times \barA_b}
        \in \Pic(\barA_b)
        \]
        such that $\mathcal P_a^\circ|_{G_b} = \varphi_{\mathcal P}(a)$.

        \item Let $\phi : \barA \times_B G \cup G \times_B \barA \hookrightarrow \barA^2$ be the open embedding. Since the complement of the image of $\phi$ has codimension at least $2$, the extension property of Cohen--Macaulay sheaves implies that $\Pbar = \phi_*\mathcal P^\circ$.
    \end{enumerate}
\end{remark}

We now state the main result of this section, whose proof will be given in the
next section.
\begin{theorem}\label{thm:daf}
    Let $(\barA, B, G)$ be a semi-abelian Ng\^o fibration satisfying Assumption \ref{assum1}.
    \begin{enumerate}[label = (\alph*)]
        \item For $\Pbar \in \Coh(\barA^2)$ and $\Pbar^{-1}:= \hom(\Pbar,\CO_{\barA^2}) \otimes \pi_2^*\omega_\pi [g] $, we have
        \[
        \Pbar \circ \Pbar^{-1} \cong \CO_{\Delta_{\barA/B}}, \quad \Pbar^{-1} \circ \Pbar \cong \CO_{\Delta_{\barA/B}}\in \dbcoh(\barA^2)\,.
        \]
        \item There exists an object $\K \in \dbcoh(\barA^3)$ supported in codimension $\ge g$ which satisfies
        \[
        \Pbar \circ \K \cong \CO_{\Delta^{\mathrm{sm}}_{\barA/B}} \circ (\Pbar \boxtimes \Pbar) \in \dbcoh(\barA^3)\,.
        \]
        Here $\Delta^{\mathrm{sm}}_{\barA/B}$ denotes the small diagonal in $\barA^{3}$.
    \end{enumerate}
\end{theorem}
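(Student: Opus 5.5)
The plan is to follow the strategy of Arinkin's autoduality for compactified Jacobians, as adapted to abelian fibrations by Arinkin--Fedorov and Maulik--Shen--Yin. Everything is reduced to the open set where one factor lies in $G$, where the biextension structure makes the computation formal, and the extension to all of $\barA^2$ comes from Cohen--Macaulay and codimension arguments.

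\textbf{Part (a).} Let $\mathcal{C} := \Pbar^{-1}\circ\Pbar = p_{13*}\bigl(p_{12}^*\Pbar^{-1}\otimes p_{23}^*\Pbar\bigr)$. Flatness of $\Pbar$ over both projections (Assumption~\ref{assum1}(b)) makes the derived pullbacks and tensor product well behaved. The goal is to identify $\mathcal{C}$ with $\CO_{\Delta_{\barA/B}}$ in three steps.

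First, I restrict to $G\times_B\barA$. There $p_{12}^*\Pbar^{-1}$ is a line bundle up to the twist by $\omega_\pi[g]$. Using the group compatibility \eqref{eq:compPoin}, the integrand is, after the shear $(a,x,y)\mapsto (a,x,\mu(-a,y))$, the pullback of $\Pbar$ along a difference-type map. So the computation reduces to $R\pi_*$ of $\Pbar$ restricted to $\{a\}\times \barA_b$.

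Second, I compute that fiber cohomology. The line bundle $\Pbar|_{\{a\}\times\barA_b}$ is nontrivial for $a\neq e$ by nondegeneracy. Its cohomology vanishes by the standard argument: a translation-invariant nontrivial bundle on a $G_b$-equivariant variety has vanishing cohomology, because $G_b$ acts on it through a nontrivial character. For $a=e$ the bundle is trivial, and $R\pi_*\CO_{\barA_b}$ has top cohomology $\omega^{-1}[-g]$. This shows that $\mathcal{C}|_{G\times_B\barA}$ is supported on the diagonal and equals $\CO_\Delta$ there. This mirrors Arinkin's proof of \cite[Prop.~2.2]{AF}. The symmetric statement holds on $\barA\times_B G$.

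Third, I pass from this open set to all of $\barA^2$. By condition (c) and $\delta$-regularity, the complement $Z$ of $G\times_B\barA\cup\barA\times_BG$ has codimension $\ge 2$ in $\barA^2$, and a dimension estimate shows $\operatorname{codim} Z\ge g+1$ inside the support of $\mathcal{C}$. I would then show that $\mathcal{C}$ is concentrated in the correct degree and is Cohen--Macaulay of codimension $g$, via a duality argument: $\mathcal{C}$ is self-dual up to the twist, because $\Pbar$ is maximal Cohen--Macaulay and Grothendieck duality applies to the Gorenstein morphism $\pi$. A CM sheaf agreeing with $\CO_\Delta$ off a subset of codimension $\ge 2$ in its support is then $\CO_\Delta$. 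The isomorphism $\Pbar\circ\Pbar^{-1}\cong\CO_\Delta$ follows by the same argument, or by formal adjunction once one side is known.

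\textbf{Part (b).} Define $\K := \Pbar^{-1}\circ\bigl(\CO_{\Delta^{\mathrm{sm}}}\circ(\Pbar\boxtimes\Pbar)\bigr)$. Then $\Pbar\circ\K\cong \CO_{\Delta^{\mathrm{sm}}}\circ(\Pbar\boxtimes\Pbar)$ follows directly from (a). It remains to bound the support of $\K$, and for this I again restrict to the $G$-locus. There the biextension property $\mu^*$-multiplicativity identifies $\CO_{\Delta^{\mathrm{sm}}}\circ(\Pbar\boxtimes\Pbar)$ with $(\mathrm{id}\times\mu)^*\Pbar$-type kernels. Consequently $\K$ is, over $G$, the structure sheaf of the graph of addition, $\{(x,y,z): z=x+y\}$, which has codimension $g$. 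Off the $G$-locus I will use that the remaining strata have codimension $\ge\delta$ in the base, and that $\barA_b\setminus G_b$ is small by condition (c). Together these show that every component of $\operatorname{Supp}\K$ has codimension $\ge g$.

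\textbf{Main obstacle.} The hardest step is the extension across $Z$ in both parts, namely controlling $\mathcal{C}$ and $\K$ over points where both factors lie in $\barA\setminus G$. I would first try to follow \cite{AF} verbatim, using Cohen--Macaulayness of $\Pbar$ together with the $\delta$-regularity codimension count to force the cohomology sheaves supported on $Z$ to vanish by a depth argument.
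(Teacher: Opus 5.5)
Your outline of (a) has the right overall shape, and your definition of $\K$ in (b) is the paper's. Computing on an open locus via \eqref{eq:compPoin} and Proposition~\ref{pro:partial}, then extending by a Cohen--Macaulay/duality argument, is also the paper's route. The problem is that the step carrying all the content is missing: a bound on the support of $\mathcal C=\Pbar^{-1}\circ\Pbar$ and of $\K$ at points where the factors lie in $\barA_b\setminus G_b$.

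You assert that ``a dimension estimate'' gives $\codim(Z\cap\supp\mathcal C)\ge g+1$. In (b) you appeal to boundary strata having codimension $\ge\delta$ in $B$ and to $\barA_b\setminus G_b$ being small. Neither gives any bound on the support. For $\X_g'$, the part of $\barA^3$ over $\partial B$ with one factor in the fiber boundary already has codimension $2$, and $Z\subset\barA^2$ has codimension $3$. So for $g\ge 3$ nothing you list prevents $\supp\K$ from having components of codimension $<g$, or $\supp\mathcal C$ from containing pieces inside $Z$ of codimension $\le g$.

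The depth/duality argument you fall back on cannot supply this. A shifted $\CO_{\barA^2}$ satisfies every amplitude constraint that $\mathcal C$ and its Grothendieck dual satisfy, and is self-dual up to twist, yet its support has codimension $0$. In Arinkin's argument the logic runs the other way:
\begin{itemize}
\item first one needs $\codim\supp\mathcal C\ge g$ everywhere, strictly over $\{\delta>0\}$;
\item only then does duality make $\mathcal C$ a Cohen--Macaulay sheaf of codimension $g$ with no component over the boundary;
\item hence $\mathcal C\cong\CO_\Delta$.
\end{itemize}

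The missing idea is the fiberwise estimate of Proposition~\ref{pro:cdim}, which treats boundary and interior points uniformly.
\begin{itemize}
\item By cohomology and base change and Arinkin's argument, if $(x_1,\dots,x_s)\in\supp\K_s^\varepsilon$ over $b$, then $\bigotimes_i(\Pbar_{x_i}|_{G_b})^{\otimes\varepsilon_i}\cong\CO_{G_b}$.
\item Translate $x_1$ by $y\in G_b$ and use \eqref{eq:compPoin}. The $y$ that keep the point in the support all lie in a single fiber of $\varphi_{\mathcal P}$.
\item Since $\mathcal P=q^*\mathcal Q$ with $\varphi_{\mathcal Q}$ an isogeny, such fibers have codimension $\dim A_b=g-\delta(b)$ (Lemma~\ref{lem:semi-abelian_bound}).
\item The action map is smooth, so $\codim(\supp\K_s^\varepsilon\cap\barA_b^s)\ge g-\delta(b)$. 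Then $\delta$-regularity gives $\codim\ge g$ in $\barA^s$, strictly over $\delta(b)>0$ for $\mathcal C$, as in \cite[Proposition 7.6]{Arinkin2}.
\end{itemize}
Part (b) is then immediate from $\K\cong\K_3^{(1,1,-1)}\otimes\omega_\pi[g]$, with no separate analysis over $G^3$.

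Separately, the justification in your second step is wrong. For $a\in T_b\setminus\{e\}$ the restriction of $\Pbar|_{\{a\}\times\barA_b}$ to $G_b$ is trivial, because $\varphi_{\mathcal P}$ factors through $q$. So nondegeneracy does not make this bundle nontrivial. For such $a$, the vanishing of $R\Gamma(\barA_b,\mathcal P^\circ_a)$ comes from the boundary of $\barA_b$. That is exactly what Proposition~\ref{pro:partial} (Arinkin's argument together with \cite[Lemma 6.5]{MRV1}) supplies.
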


Point (a) is a generalization of Arinkin's result \cite{Arinkin2}, and point (b) first appears in \cite{PerverseFourierMSY25} (see also \cite{Daf}).
In contrast to loc. cit., we do not assume that the morphism $\pi : \barA \to B$ has full support.

We end this section with the following result:

\begin{theorem}\label{thm:deq}
    The tuple $(\mathcal{X}_g',\mathcal{A}_g',G)$, with
    $\Pbar := \mathrm{R}f_*\P$, is a semi-abelian Ng\^o fibration satisfying Assumption
    \ref{assum1}. In particular, $\Pbar$ induces an autoequivalence
    \[
    \FM_{\Pbar} : \dbcoh(\barA) \xrightarrow{\cong} \dbcoh(\barA),
    \]
    which is linear over $\dbcoh(B)$.
\end{theorem}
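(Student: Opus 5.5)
The proof has two parts. First, check the axioms of Definition \ref{def:sa} and Assumption \ref{assum1} for $(\X_g',\A_g',G)$ with $\Pbar=\mathrm{R}f_*\P$. Then invoke Theorem \ref{thm:daf}(a): $\Pbar$ and $\Pbar^{-1}$ are mutually inverse kernels, so $\FM_{\Pbar}$ is an autoequivalence. Linearity over $\dbcoh(B)$ is automatic, because the kernel is supported on $\barA^2=\barA\times_B\barA$ and the projection formula gives $\FM(F\otimes\pi^*E)\cong\FM(F)\otimes\pi^*E$.

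\textbf{The axioms of Definition \ref{def:sa}.}
\begin{enumerate}[label=(\alph*)]
\item Flatness and properness of $\pi$ come from Proposition \ref{pro:Atrop}, and local projectivity from Lemma \ref{lem:thetample}. The Gorenstein property holds because $\barA$ and $B$ are smooth and $\pi$ is flat. Fibers are reduced, connected and pure of dimension $g$ by Theorem \ref{thm:bdy}: over $\partial B$ the fiber is a $\PP^1$-bundle over an abelian variety, glued along two sections, hence reduced and irreducible.
\item $G$ is semi-abelian of torus rank $\le 1$, and the rank-one locus $\partial B$ is a divisor, so $\delta$-regularity holds. Over $\A_g$ the principal polarization is the standard one.
\item Stabilizers of $G_b$ on the boundary strata are subtori of $T_b\cong\Gm$, hence affine.
\item All fibers are irreducible, so this holds vacuously.
\item The open immersion $G\hookrightarrow\barA$ is the toroidal construction.
\end{enumerate}

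\textbf{Assumption \ref{assum1}(a).} Take $\P$ restricted to $G^2$. By \S\ref{sec:Plog} it is the $\Gm$-biextension underlying the log Poincaré biextension $\P^{\log}$. It is symmetric, and it is nondegenerate because on each fiber it is pulled back from the principal polarization of the abelian part $A_b$; this is the $\mathcal{X}_{g-1}$ polarization in the description of \cite{FC}.

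To extend it to $G\times_B\barA$, note that $f:\barA^{(2)}\to\barA^2$ is an isomorphism over $G\times_B\barA\cup\barA\times_B G$. Indeed, the cones of $\Sigma^2$ that get subdivided are those where both coordinates are interior, which are exactly the strata where both points lie in the boundary. So $\P^\circ:=\P|_{f^{-1}(\ldots)}$ is a line bundle. Compatibility with the action, \eqref{eq:compPoin}, holds on $G^3$ by the biextension property. The two sides are line bundles on the smooth space $G\times_B G\times_B\barA$, and they agree off codimension $\ge 2$ and are rigidified along unit sections, so they extend uniquely and the isomorphism extends. Here I use that the boundary of $G^2\times_B\barA$ inside $G\times_BG\times_B\barA$ is a divisor, so the rigidification must be checked on it. I would do this using the log biextension $\P^{\log}$ over $G\times\logA$, which is compatible with the group law by construction, together with the fact that $\barA\to\logA$ is a log modification that is an isomorphism on the relevant locus.

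\textbf{Assumption \ref{assum1}(b), the main obstacle.} We need $\Pbar=\mathrm{R}f_*\P$ to be a sheaf, to be maximal Cohen--Macaulay, and to be flat over both factors. For the sheaf property, $f$ is small with fibers $\PP^1$'s over the codimension-$2$ locus, i.e.\ a small resolution of the local toric singularity $\{xy=zw\}$ coming from subdividing the non-simplicial cone. I would compute $\P$ on these fibers via the piecewise linear function $\mathsf p^{\trop}$ of Remark \ref{rem:Ptrop}: its slopes show that $\P$ restricts to a line bundle of degree $0$ or $-1$ on each exceptional $\PP^1$. Hence $R^1f_*\P=0$, and $f_*\P$ is the reflexive extension $\phi_*\P^\circ$.

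Cohen--Macaulayness follows toric-locally. For $X_\sigma$ with $\sigma$ the cone over a square, pushforward of a toric line bundle with such degrees from a small resolution is a CM module; this is a standard computation of graded pieces (Hochster/Kempf type vanishing). Flatness over $\pi_1$ then follows from miracle flatness: $\barA^2\to\barA$ is flat with CM fibers, and $\Pbar$ is CM of full dimension on $\barA^2$. So $\Pbar|_{\text{fiber}}$ is CM of the right dimension, and the local criterion gives flatness. Flatness over $\pi_2$ follows by symmetry.

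Alternatively, one can cite the construction of $\Pbar$ in \cite[Theorem 1.5]{BMP}, which was built this way, and identify it with $\mathrm{R}f_*\P$ by uniqueness of reflexive extensions.
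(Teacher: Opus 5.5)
Your proposal is correct and follows the paper's route. You check Definition \ref{def:sa} directly, obtain Assumption \ref{assum1} from $\mathrm{R}^{>0}f_*\P=\mathrm{R}^{>0}f_*\P^\vee=0$ together with $f_*\P$ being the maximal Cohen--Macaulay extension of $\P^\circ$, and conclude with Theorem \ref{thm:daf}; the paper simply cites \cite[Section 9.5]{BMP} for the middle step, which your toric computation plus the module version of miracle flatness reconstructs. The one imprecision is the degree on the exceptional curves: there $\P^\vee$ restricts to $\diff^*\CO(\Theta)$ by \eqref{eq:Mumform}, so $\P$ has degree exactly $-1$, and it is the bound $|d|\le 1$ (i.e.\ vanishing of $\mathrm{R}^1f_*$ for both $\P$ and $\P^\vee$) that gives the Cohen--Macaulay property.
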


\begin{proof}
    Since $\pi : \barA \to B$ is a semi-stable morphism with integral fibers,
    the conditions in Definition \ref{def:sa} are easy to check. Assumption
    \ref{assum1} follows from \cite[Section 9.5]{BMP}. In particular,
    \[
    \mathrm{R}^{>0} f_*\P = 0, \quad \mathrm{R}^{>0} f_*\P^\vee = 0\,,
    \]
    and $f_*\P$ is the maximal Cohen--Macaulay extension of $\P^\circ$.
    The last statement follows from Theorem~\ref{thm:daf}.
\end{proof}

\begin{remark}\label{rmk:DbFinv}
    Since $\mathrm{R}f_* \mathcal P = \Pbar$, we also have that
    $$
    \FM_{\Pbar}(E) = \FM_{\mathcal P}(E) = \mathrm{R}p_{2,*} (p_1^* E \otimes \P)\,.
    $$
    By Theorem \ref{thm:daf}, its inverse is given by
    $$
    \FM_{\Pbar}^{-1}(E) = \FM_{\P^{\vee} \otimes \det(\mathbb E)[g]}(E) = \mathrm{R}p_{2,*}(p_1^* E \otimes \mathcal P^{\vee})\otimes \det(\mathbb E)[g]
    $$
\end{remark}

\begin{remark}\label{rmk:full}
    Since $(\mathcal{X}_g',\mathcal{A}_g',G)$ is a weak abelian fibration, with $G/B$ polarizable by \cite{AD}, and since all fibers are integral, every irreducible summand in the decomposition theorem \cite{BBDG} for $\pi : \barA \to B$ has full support by \cite[Th\'eor\`eme 7.2.1]{Ngo}.
\end{remark}

\subsection{Proof of Theorem \ref{thm:daf}}

In this section, we prove Theorem \ref{thm:daf}. The following result controls the dimensions of compositions of integral kernels, replacing the Abel--Jacobi arguments used for compactified Jacobians in \cite{Arinkin1,Arinkin2}.

\begin{lemma}\label{lem:semi-abelian_bound}
    Let $G_\Bbbk$ be a semi-abelian variety over a field $\Bbbk$ with Chevalley decomposition \eqref{eq:Chev}. If $\mathcal Q \in \operatorname{Biext}(A_\Bbbk,A_\Bbbk;\mathbb G_m)$ induces an isogeny $\varphi_{\mathcal Q} : A_\Bbbk \to A_\Bbbk^\vee$ and $\mathcal P = q^*\mathcal Q$, then the fibers of $\varphi_{\mathcal P}$ are countable unions of subvarieties of codimension $\dim A_{\Bbbk}$.
\end{lemma}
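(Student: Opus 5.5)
The plan is to factor $\varphi_{\mathcal P}$ through the abelian quotient and reduce to a statement about $\varphi_{\mathcal Q}$. Since $\mathcal P = q^*\mathcal Q$ as biextensions of $G_\Bbbk\times G_\Bbbk$, for any $x\in G_\Bbbk$ we have
\[
\varphi_{\mathcal P}(x) = \mathcal P|_{\{x\}\times G_\Bbbk} = q^*\bigl(\mathcal Q|_{\{q(x)\}\times A_\Bbbk}\bigr) = q^*\varphi_{\mathcal Q}(q(x))\,.
\]
Hence
\[
\varphi_{\mathcal P} = q^*\circ \varphi_{\mathcal Q}\circ q\,,
\]
where $q^* : A_\Bbbk^\vee = \operatorname{Ext}(A_\Bbbk,\mathbb G_m)\to \operatorname{Ext}(G_\Bbbk,\mathbb G_m)\subset\Pic(G_\Bbbk)$ is pullback. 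I will analyze the three maps in turn. The map $q$ is a $T_\Bbbk$-torsor, so its fibers have codimension $\dim A_\Bbbk$ in $G_\Bbbk$. The map $\varphi_{\mathcal Q}$ is an isogeny, so it has finite fibers.

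The main step, and what I expect to be the obstacle, is controlling the fibers of $q^*$. Here I would use the long exact sequence obtained by applying $\operatorname{Hom}(-,\mathbb G_m)$ to the Chevalley sequence \eqref{eq:Chev}:
\[
\operatorname{Hom}(G_\Bbbk,\mathbb G_m)\to \operatorname{Hom}(T_\Bbbk,\mathbb G_m)=X \xrightarrow{\ \partial\ } \operatorname{Ext}(A_\Bbbk,\mathbb G_m)\xrightarrow{q^*}\operatorname{Ext}(G_\Bbbk,\mathbb G_m)\,.
\]
Since $\operatorname{Hom}(A_\Bbbk,\mathbb G_m)=0$, this sequence shows that $\ker q^*$ is the image of the character lattice $X$. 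That image is a finitely generated, hence countable, subgroup of $A_\Bbbk^\vee(\bar\Bbbk)$. So every nonempty fiber of $q^*$ is a coset of a countable set of points. If needed, I would check this on $\bar\Bbbk$-points after base change, which is harmless for a dimension statement.

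To assemble the pieces, fix $L$ in the image of $\varphi_{\mathcal P}$ and write $L = q^*\varphi_{\mathcal Q}(q(x_0))$. Then
\[
\varphi_{\mathcal P}^{-1}(L) = \bigcup_{\chi\in X}\ q^{-1}\Bigl(\varphi_{\mathcal Q}^{-1}\bigl(\varphi_{\mathcal Q}(q(x_0))+\partial\chi\bigr)\Bigr)\,.
\]
Each inner preimage $\varphi_{\mathcal Q}^{-1}(\cdots)$ is a finite set of points of $A_\Bbbk$, possibly empty. Its preimage under $q$ is a finite union of $T_\Bbbk$-torsors, that is, closed subvarieties of $G_\Bbbk$ of codimension $\dim A_\Bbbk$. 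Taking the union over the countable set $X$ shows that the fiber is a countable union of subvarieties of codimension $\dim A_\Bbbk$, as claimed.

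A technical point remains: I must make sure the identification $\operatorname{Ext}(A_\Bbbk,\mathbb G_m)\cong A_\Bbbk^\vee$ and the description of $\varphi_{\mathcal P}$ as landing in $\operatorname{Ext}(G_\Bbbk,\mathbb G_m)$ are compatible with the biextension formalism of \cite{Mum_ext,Hodge3}. I would treat this as standard.
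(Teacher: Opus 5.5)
Your proof is correct and follows the paper's argument: factor $\varphi_{\mathcal P}=q^*\circ\varphi_{\mathcal Q}\circ q$, then combine the finite fibers of the isogeny $\varphi_{\mathcal Q}$, the countable fibers of $q^*$, and the fact that fibers of $q$ are $T_\Bbbk$-torsors of codimension $\dim A_\Bbbk$. The only difference is that you justify the countability of the fibers of $q^*$ explicitly: the $\operatorname{Ext}$ long exact sequence identifies its kernel with the image of the character lattice, whereas the paper simply asserts this from $q$ being a torus bundle.
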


\begin{proof}
    Consider the diagram
    \[
    \begin{tikzcd}
        G_{\Bbbk} \ar[d,"\varphi_{\mathcal{P}}"] \ar[r,"q"] & A_\Bbbk \ar[d, "\varphi_{\mathcal{Q}}"]\\
        \Pic (G_\Bbbk) & \Pic(A_\Bbbk) \ar[l, "q^*"']
    \end{tikzcd}
    \]
    The square is commutative because $\mathcal P = q^*\mathcal Q$. Since
    $\varphi_{\mathcal Q}$ is an isogeny, its fibers are finite. Since $q$ is a torus bundle, each fiber of $q^*$ is a countable set of points. Hence, for any $e \in \Pic(G_\Bbbk)$, the set-theoretic inverse
    image $(q^* \circ \varphi_{\mathcal Q})^{-1}(e)$ is a countable discrete set. Since $q$ is flat, it follows that
    \[
    \varphi_{\mathcal P}^{-1}(e)
    =
    (q^* \circ \varphi_{\mathcal Q} \circ q)^{-1}(e)
    \]
    is a countable union of subvarieties of codimension $\dim A_\Bbbk$.
\end{proof}

Denote by $\barA^{s}$ the $s$-fold relative fiber product.
Consider the projections
\[
p : \barA^{s} \to B, \quad \pi_i :\barA^s \to \barA,\quad \pi_{i,j} : \barA^s \to \barA^2\,, \cdots\,.
\]
Let $\Pbar^{\otimes -1} = \Pbar^\vee$. For a vector $\varepsilon = (\varepsilon_1,\ldots, \varepsilon_s)$, $\varepsilon_i \in \{\pm 1\}$, denote
\[
\K_s^{\varepsilon} := \mathrm{R}\pi_{1,\dots,s*}(\pi_{1,s+1}^*\Pbar^{\otimes \varepsilon_1} \otimes \cdots \otimes \pi_{s,s+1}^*\Pbar^{\otimes \varepsilon_s}) \in \dbcoh(\barA^s)\,.
\]
By the flatness of $\Pbar$ and $\Pbar^{\vee}$, the tensor product is quasi-isomorphic to the derived tensor product.

\begin{proposition}[\cite{Arinkin2}]\label{pro:cdim}
    For $s \ge 1$, $\codim_{\barA^s} (\supp (\K^{\varepsilon}_s)) \ge g$.
\end{proposition}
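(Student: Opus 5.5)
We follow Arinkin's strategy: first locate the support of $\K^\varepsilon_s$ fiberwise over $B$, then bound its dimension using $\delta$-regularity together with Lemma \ref{lem:semi-abelian_bound}.

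\textbf{Step 1: points off the support.} Fix a geometric point $b\in B$ and a tuple $(x_1,\dots,x_s)\in\barA_b^s$. Flatness of $\Pbar$ along the projections gives base change for $\K^\varepsilon_s$. Hence it is enough to show that
\[
\mathrm{R}\Gamma\big(\barA_b,\ \textstyle\bigotimes_i \Pbar^{\otimes\varepsilon_i}|_{\{x_i\}\times\barA_b}\big)=0
\]
for all tuples outside a closed subset of the expected codimension. Suppose all $x_i$ lie in $G_b$. Then by \eqref{eq:compPoin} the sheaf $\mathcal L:=\bigotimes_i(\mathcal P^\circ_{x_i})^{\varepsilon_i}$ is the line bundle $\mathcal P^\circ_{y}$, where $y=\sum_i\varepsilon_i x_i$. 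The group $G_b$ acts on $\mathrm{R}\Gamma(\barA_b,\mathcal L)$ through the translation structure. If $\varphi_{\mathcal P}(y)\neq 0$, then $\mathcal L|_{G_b}$ is a nontrivial translation-invariant class, and we want to conclude $\mathrm{R}\Gamma=0$. Here I would argue as for abelian varieties. For $a\in G_b$, compatibility with $\mu$ gives $\mu_a^*\mathcal L\cong \mathcal L\otimes(\text{fiber of }\mathcal P^\circ\text{ at }(a,y))$. So $G_b$ acts on the finite-dimensional cohomology through the character $a\mapsto \mathcal P_{(a,y)}$ of $G_b$ (a $\mathbb G_m$-valued character after trivialization). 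This character is nontrivial when $\varphi_{\mathcal P}(y)\ne0$. On the other hand, a connected group acting algebraically on coherent cohomology of a proper scheme acts via a representation that must also be compatible with the trivial action, since translation by $a\in G_b^0$ is homotopic to the identity. Comparing the two forces vanishing.

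\textbf{Step 2: boundary points.} When some $x_i\notin G_b$, I would use that $\barA_b\setminus G_b$ is controlled by the orbit stratification. Each $G_b$-orbit $O$ has affine stabilizer (condition (c)), and there are finitely many orbits. On $O^s$ the same character argument applies after replacing $G_b$ by the stabilizer, or by acting diagonally. This reduces us to bounding $\dim\{(x_i)\in O_1\times\cdots\times O_s:\ \text{the induced character is trivial}\}$.

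\textbf{Step 3: the dimension count.} On $G_b^s$, the bad locus is $\{\sum\varepsilon_ix_i\in\varphi_{\mathcal P}^{-1}(0)\}$. By Lemma \ref{lem:semi-abelian_bound}, which uses nondegeneracy, this has codimension $\dim A_b=g-\delta(b)$ in $G_b^s$. Summing over $b$ in the stratum $\{\delta=k\}$, which has codimension $\ge k$ by $\delta$-regularity, gives total codimension $\ge (g-k)+k=g$ in $\barA^s$. For boundary orbits, the orbit itself has codimension equal to the torus dimension it loses, and the same count goes through.

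\textbf{Main obstacle.} I expect the main obstacle to be the vanishing statement in Step 1 for the non-proper group $G_b$ acting on the proper $\barA_b$, and its extension over the boundary orbits. The cleanest route is probably Arinkin's: use the group action to show that $\K^\varepsilon_s$ restricted to the fiber over $b$ is $G_b$-equivariant with a twisted structure given by the character. The support is then $G_b$-invariant, and the stalk at a point with nontrivial character must vanish by a weight argument on the $T_b$-action.
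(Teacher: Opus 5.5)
Your outer structure matches the paper: a fiberwise bound $g-\delta(b)$ over each geometric point $b$, obtained from Lemma~\ref{lem:semi-abelian_bound}, followed by $\delta$-regularity. Both of the inner steps that are supposed to produce the fiberwise bound have genuine gaps, however.

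\textbf{Step 1 (vanishing of cohomology).} The mechanism does not work as stated.
\begin{itemize}
\item $\mathcal L=\bigotimes_i(\mathcal P^\circ_{x_i})^{\otimes\varepsilon_i}$ is only quasi-invariant under translation, so $G_b$ does not act on $\mathrm{R}\Gamma(\barA_b,\mathcal L)$.
\item The twist $a\mapsto\mathcal P_{(a,y)}$ is the line bundle $\varphi_{\mathcal P}(y)$ on $G_b$. When it is nontrivial it has no trivialization at all, so it is not a character.
\item The principle that translations ``homotopic to the identity'' act trivially on coherent cohomology is false. A torus acts on $H^0(\mathbb P^n,\CO(1))$ through nontrivial characters.
\item The $T_b$-weight argument in your last paragraph cannot repair this. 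Since $\varphi_{\mathcal P}(y)=q^*\varphi_{\mathcal Q}(q(y))$, it is trivial on $T_b=\ker q$, together with its multiplicative structure, so torus weights never detect it. For $\delta(b)=0$ there is no torus at all.
\end{itemize}
What is needed is the criterion the paper takes from \cite[Proposition 7.2]{Arinkin2}: if $(x_1,\dots,x_s)$ lies in $\supp\K^\varepsilon_s$, then $\bigotimes_i(\Pbar_{x_i}|_{G_b})^{\otimes\varepsilon_i}\cong\CO_{G_b}$. The underlying mechanism is as follows. Let $\mathcal F=\bigotimes_i\Pbar_{x_i}^{\otimes\varepsilon_i}$ and write $\mu^*\mathcal F\cong p_1^*\mathcal M\otimes p_2^*\mathcal F$ on $G_b\times\barA_b$. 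Compute $\mathrm{R}p_{1*}\mu^*\mathcal F$ in two ways:
\begin{itemize}
\item via the shear $(a,z)\mapsto(a,az)$, which gives $\CO_{G_b}\otimes\mathrm{R}\Gamma(\barA_b,\mathcal F)$;
\item via the projection formula, which gives $\mathcal M\otimes\mathrm{R}\Gamma(\barA_b,\mathcal F)$.
\end{itemize}
Comparing the two forces $\mathcal M\cong\CO_{G_b}$ whenever the cohomology is nonzero.

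\textbf{Steps 2--3 (boundary points).} This is not a side issue: the boundary of $\barA_b^s$ has codimension only $1$, while you need $g-\delta(b)$. Once some $x_i\notin G_b$, the sum $\sum_i\varepsilon_ix_i$ is undefined and you establish no vanishing criterion. The stabilizer idea cannot produce codimension either.
\begin{itemize}
\item Stabilizers are affine, so their identity components lie in $T_b$, and the characters of such subgroups form a discrete set.
\item Hence ``the induced character is trivial'' is locally constant on $O_1\times\cdots\times O_s$. The constraint you need lives in the abelian directions, which the stabilizer discards.
\item Acting diagonally also fails. Translating every $x_i$ by $a$ changes the twist by $\varphi_{\mathcal P}(a)^{\otimes\sum_i\varepsilon_i}$, which is trivial when $\sum_i\varepsilon_i=0$.
\item Finiteness of orbits is not among the hypotheses of Definition~\ref{def:sa}.
\end{itemize}

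\textbf{The missing idea.} The paper translates only the first coordinate.
\begin{enumerate}
\item The criterion above makes sense for every tuple, including boundary ones.
\item By \eqref{eq:compPoin} and symmetry, replacing $x_1$ by $y\cdot x_1$ with $y\in G_b$ tensors $\bigotimes_i(\Pbar_{x_i}|_{G_b})^{\otimes\varepsilon_i}$ by $\varphi_{\mathcal P}(y)^{\otimes\varepsilon_1}$.
\item So for fixed $(x_1,\dots,x_s)\in\barA_b^s$, the set of $y\in G_b$ with $(yx_1,x_2,\dots,x_s)\in\supp\K^\varepsilon_s$ lies in a single fiber of $\varphi_{\mathcal P}$. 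By Lemma~\ref{lem:semi-abelian_bound}, that fiber has codimension $g-\delta(b)$.
\item The map $\mu\times\id\colon G_b\times\barA_b^s\to\barA_b^s$ becomes a projection after the shear $(y,x_1,\dots,x_s)\mapsto(y,yx_1,x_2,\dots,x_s)$.
\item Therefore $\codim(\supp\K^\varepsilon_s\cap\barA_b^s)\ge g-\delta(b)$ uniformly, boundary included, with no orbit analysis. Your $\delta$-regularity count then finishes the proof.
\end{enumerate}
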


\begin{proof}
    This condition is smooth-local on $B$, so we may assume that $B$ is a smooth variety. Since $G \to B$ is $\delta$-regular, every irreducible subvariety $W \subset B$ satisfies $\codim(W) \geq \delta(W)$. Therefore, it suffices to show that, for every geometric point $b \in B$,
    \[
    \codim\bigl(\supp \K_s^{\varepsilon} \cap \barA_b^s\bigr) \geq g - \delta(b).
    \]

    For $x \in \barA$, let $\Pbar_x$ denote the restriction of $\Pbar$ to
    $\barA \times \{x\}$. By cohomology and base change,
    $(x_1,\ldots,x_s) \in \supp \K^{\varepsilon}_s$ if and only if
    \[
    \mathbb H^i\bigl(
        \barA_b,
        \Pbar^{\otimes\varepsilon_1}_{x_1} \otimes^{\LL} \cdots \otimes^{\LL} \Pbar^{\otimes \varepsilon_s}_{x_s}
    \bigr) \neq 0
    \]
    for some $i$. By the same argument as in
    \cite[Proposition 7.2]{Arinkin2}, we therefore have
    \begin{equation}\label{eq:cdim1}
        \bigotimes_{i=1}^s \, (\Pbar_{x_i}|_{G_b})^{\otimes \varepsilon_i} \cong \CO_{G_b}
    \end{equation}
	whenever $(x_1,\dots,x_s)\in \supp \K^{\varepsilon}_s$.

    Consider the action morphism $\mu \times \id : G_b \times \barA_b^s \to \barA_b^s$ on the first component. For $x_1,\ldots,x_s \in \barA_b$, set
    \[
    Z :=
    (\mu \times \id)^{-1}\supp(\K_s^\varepsilon)
    \cap
    \bigl(G_b \times \{(x_1,\ldots,x_s)\}\bigr).
    \]
    By \eqref{eq:cdim1} and \eqref{eq:compPoin}, if
    $(y,x_1,\ldots,x_s) \in Z$, then
    \[
    \varphi_{\mathcal P}(y)
    =
    \P_y
    \cong
    \bigotimes_{i=1}^s (\P_{x_i})^{\otimes (-\varepsilon_1\varepsilon_i)}.
    \]
    By Lemma \ref{lem:semi-abelian_bound}, the fiber of
    $\varphi_{\mathcal P}$ has codimension $g-\delta(b)$. Hence
    \[
    \codim \, Z \geq g-\delta(b).
    \]
    This proves the desired fiberwise codimension bound, and hence the claim.
\end{proof}

\begin{proposition}[\cite{Arinkin1}]\label{pro:partial}
    Let $p_1 : G \times_B \barA \to G$ be the projection onto the first component. Denote by $e : B \to G$ the unit section and let $\EE \to B$ be the Hodge bundle. Then
    \[
    \mathrm{R}p_{1,*} \P^\circ \cong e_* (\det \EE^{\vee}) [-g] \in \dbcoh(G)\,.
    \]
\end{proposition}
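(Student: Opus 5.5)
The plan is to follow Arinkin's argument: first determine the support of $\mathrm{R}p_{1,*}\P^\circ$, then compute it near the unit section using base change and Grothendieck duality. For a geometric point $x\in G$ over $b\in B$, flat base change gives a fiber spectral sequence, and the fiber of $\mathrm{R}p_{1,*}\P^\circ$ at $x$ is controlled by $\mathrm{R}\Gamma(\barA_b,\P^\circ_x)$. By the compatibility \eqref{eq:compPoin} with the group action, $\P^\circ_x$ is $G_b$-linearized. Its restriction to $G_b$ is $\varphi_{\P}(x)$, a translation-invariant line bundle on $G_b$.

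\textbf{Vanishing away from the unit section.} If $\varphi_\P(x)$ is nontrivial, the standard argument applies. Translation by $y\in G_b$ acts on $\mathrm{R}\Gamma(\barA_b,\P^\circ_x)$ through the character $\P_{x,y}$. Meanwhile, the action of the connected group $G_b$ on cohomology is trivial up to homotopy. Together these force $\mathrm{R}\Gamma(\barA_b,\P^\circ_x)=0$; this is the argument of \cite[Proposition 7.2]{Arinkin2}, already invoked in Proposition \ref{pro:cdim}. So $\mathrm{R}p_{1,*}\P^\circ$ is supported on $\varphi_\P^{-1}(0)$. I then need this locus to be exactly the unit section, at least scheme-theoretically near $e(B)$ after localizing.

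\textbf{The main obstacle.} On the toric part, $\varphi_\P$ factors through $q$ as in Lemma \ref{lem:semi-abelian_bound}. So set-theoretically $\varphi_\P^{-1}(0)$ could contain $T_b$ or torsion translates. I would rule these out using the full extension $\P^\circ_x$ on the compact fiber $\barA_b$, not just its restriction to $G_b$. For $x\in T_b$ nontrivial, $\P^\circ_x$ is a nontrivial line bundle on $\barA_b$ which is trivial on $G_b$, namely a twist by boundary components. Its cohomology should vanish by a direct computation on the Mumford degeneration, or via $\delta$-regularity plus a dimension count. I expect this to be the delicate step.

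\textbf{Computing the object.} Once the support is $e(B)$, I would show the complex is concentrated in a single degree. Since $\P^\circ$ is flat over $G$, the object $\mathrm{R}p_{1,*}\P^\circ$ is perfect. It is supported in codimension $g$ on the smooth $G$, so by the Auslander--Buchsbaum/Peskine--Szpiro argument used in \cite{Arinkin1} it is a sheaf placed in degree $g$, Cohen--Macaulay of codimension $g$. Grothendieck duality for $p_1$, whose dualizing sheaf $\omega_\pi$ is pulled back from $B$, identifies the dual of $\mathrm{R}p_{1,*}\P^\circ$ with $\mathrm{R}p_{1,*}(\P^{\circ\vee})\otimes\omega[g]$. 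By symmetry ($x\mapsto -x$) this has the same form. Comparing the two shows the length along $e(B)$ is one, i.e.\ the sheaf is $e_*L$ for a line bundle $L$ on $B$.

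\textbf{Identifying $L$.} Restrict to the unit section: $e^*\P^\circ\cong\CO_{\barA}$, and the base change to $e$ computes $L$ through the Koszul resolution of $e$. This resolution contributes $\det N_e$ together with the relative dualizing sheaf. Using $N_e^\vee=\EE$, and that $R^g\pi_*\CO_{\barA}\cong\det\EE^\vee$ (from $\omega_\pi\cong\pi^*\det\EE$, cf.\ Remark \ref{rem:Hodge_is_Omlog}), this gives $L\cong\det\EE^\vee$ in degree $g$.
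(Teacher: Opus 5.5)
Your outline is Arinkin's argument, and that is also what the paper invokes. The character argument confines $\supp \mathrm{R}p_{1,*}\P^\circ$ to $\varphi_{\P}^{-1}(0)$. Perfect amplitude $[0,g]$ together with support of codimension $\ge g$ then makes $\mathrm{R}p_{1,*}\P^\circ[g]$ a Cohen--Macaulay sheaf of pure codimension $g$. Finally, base change along $e$ gives $L\cong R^g\pi_*\CO_{\barA}\cong\det\EE^\vee$. However, the one non-formal step, that the support is exactly $e(B)$, is not proved. You flag it as the main obstacle and suggest two routes without carrying out either, and the second route cannot work.

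For $b\in\partial B$, the locus $\varphi_{\P}^{-1}(0)\cap G_b$ is $q^{-1}(\ZZ\cdot\varphi_{\mathcal Q_b}^{-1}(c_b))$, where $c_b\in A_b^\vee$ is the extension class of $G_b$. This class is non-torsion for general $b$. So the locus is not ``$T_b$ or torsion translates'': it also contains the $T_b$-torsors $q^{-1}(k\varphi_{\mathcal Q_b}^{-1}(c_b))$ for $k\neq 0$, which your proposal never treats.

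Each family $Z_k=\bigcup_{b\in\partial B}q^{-1}(k\varphi_{\mathcal Q_b}^{-1}(c_b))$ has dimension $\dim\partial B+1=\dim B$, so it has codimension exactly $g$ in $G$, the same as $e(B)$. Hence $\delta$-regularity plus a dimension count only gives $\codim\ge g$. That bound is fully compatible with some $Z_k$ being an extra component of the support of the Cohen--Macaulay sheaf. What you need is the \emph{strict} fiberwise estimate, namely $\mathrm{R}\Gamma(\barA_b,\P^\circ_x)=0$ for general $x\in Z_k\setminus e(B)$. Compare the strictness from \cite[Proposition 7.6]{Arinkin2} used in the proof of Theorem~\ref{thm:daf}.

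Two smaller points. First, for $x\in T_b$ the bundle $\P^\circ_x$ is not a twist by a boundary divisor. The boundary of $\barA_b$ is its non-normal locus and is not Cartier. Rather, $\P^\circ_x$ is trivial on the normalization and differs from $\CO$ only through the gluing along $s_0\sim\sbar_\infty$. Second, you assert that $\P^\circ_x$ is nontrivial for $x \in T_b\setminus\{e\}$ but give no reason.

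The direct computation you allude to can close the gap. Let $\nu\colon Y_b\to\barA_b$ be the normalization. Since $G_b$ is connected and $\P^\circ_e\cong\CO$, we get $\nu^*\P^\circ_x\in\pi_{\mathbb P}^*\mathrm{Pic}^0(A_b)$. Comparing with $\P^\circ_x|_{G_b}=q^*\varphi_{\mathcal Q_b}(q(x))$ then gives $\nu^*\P^\circ_x\cong\pi_{\mathbb P}^*\varphi_{\mathcal Q_b}(q(x))$. Now use the sequence $0\to\P^\circ_x\to\nu_*\nu^*\P^\circ_x\to\P^\circ_x|_{\nu(s_0)}\to 0$.
\begin{itemize}
\item If $q(x)\neq0$, every term of this sequence has vanishing cohomology.
\item If $x\in T_b$, then $\nu^*\P^\circ_x\cong\CO$. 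The connecting map on $H^*(A_b,\CO)$ is $(1-\chi(x))\cdot\mathrm{id}$, where $\chi(x)$ is the gluing scalar, because translations act trivially on $H^*(A_b,\CO)$.
\end{itemize}
You still need genuine input about $\P^\circ$ along the boundary: that $\chi$ is a nontrivial character of $T_b$. This is the analogue of Arinkin's Abel--Jacobi step. Once you have it, the residual support over $\partial B$ is finite over $\partial B$, and purity excludes it. Length one along $e(B)$ then follows from the abelian case over $B^\circ$ and purity, without any duality comparison. The paper does not carry out this computation. It transports Arinkin's proof and replaces the step relying on irreducibility of the fibers by \cite[Lemma 6.5]{MRV1}.
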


\begin{proof}
    The proof is identical to \cite{Arinkin1}. The argument involving the irreducibility of the fibers can be replaced by \cite[Lemma 6.5]{MRV1}.
\end{proof}

\begin{proof}[Proof of Theorem \ref{thm:daf}]
    We first prove part (a). Consider the composition
    \[
    \Psi
    :=
    \mathrm{R}\pi_{1,3,*}\bigl(\pi_{1,2}^*\Pbar^{-1} \otimes \pi_{2,3}^*\Pbar\bigr)
    \in \dbcoh(\barA^2),
    \]
    where, by the flatness of $\Pbar$, the ordinary tensor product is
    quasi-isomorphic to the derived tensor product. It is enough to show that
    \begin{equation}\label{eq:psi}
        \Psi
        \cong
        \CO_{\Delta_{\barA/B}} \in \dbcoh(\barA^2).
    \end{equation}

    By the same argument as in Proposition \ref{pro:cdim} and
    \cite[Proposition 7.6]{Arinkin2}, for any $b \in B$, we have
    \[
    \codim(\supp \Psi_b) \geq g - \delta(b),
    \]
    and the inequality is strict if $\delta(b)>0$. On the other hand, pulling
    back the equality from Proposition \ref{pro:partial} along the difference
    map $G^2 \to G$ gives \eqref{eq:psi} over $G^2$. The rest
    of the argument is identical to \cite[Proof of Proposition 7.1]{Arinkin2}.

    For part (b), by definition of the convolution kernel $\K$, we have
    \[
    \K
    \cong
    \Pbar^{-1}
    \circ
    \CO_{\Delta^{\mathrm{sm}}_{\barA/B}}
    \circ
	(\Pbar \boxtimes \Pbar) \cong \mathcal K_3^{(1,1,-1)}\otimes \omega_{\pi}[g].
    \]
    Therefore, the desired codimension estimate follows from Proposition
    \ref{pro:cdim}.
\end{proof}

From now on, we will restrict ourselves to the case $(\barA,B,G):=(\mathcal{X}_g',\mathcal{A}_g',G)$.

\subsection{Fourier image of the principal polarization}\label{sec:FMn}

We study the interaction of the Fourier transform with the principal polarization. While $\CO(2\Theta)$ is a line bundle,
$\CO(\Theta)$ is only a $\QQ$-line bundle, so one must treat cohomological calculations for $\CO(\Theta)$ with additional care.

For an even positive integer $n$, consider the Fourier transform
\[
\FM^{-1}_n(E) := \mathrm{R}p_{2,*}\bigl(p_1^*E\otimes \P^{\otimes -n}\bigr)\otimes \det(\mathbb E)[g] : \dbcoh(\barA) \to \dbcoh(\barA)
\]
induced by the $n$-th power of the extended Poincar\'e line bundle. By Lemma \ref{lem:thetample}, the derived pushforward
\begin{equation}\label{eq:defMn}
\Mc_n := \mathrm{R}\pi_*\CO(n \Theta) \cong \pi_*\CO(n\Theta)
\end{equation}
is a vector bundle of rank $n^g$.

\begin{proposition}\label{prop:FM-theta}
    Let $p_2 : \barA^{(2)} \to \barA$ be the projection onto the second factor. For any even positive integer $n$, we have $
    \mathrm{R}p_{2,*}\diff^*\CO(n\Theta) \cong \pi^*\mathcal M_n$.
\end{proposition}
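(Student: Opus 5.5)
The plan is to untwist the difference map by the shear automorphism $(x,y)\mapsto(x-y,y)$ of $G^2$ over the second factor, and then conclude by flat base change. Over $G^2$ the statement is essentially immediate: the shear $\sigma^\circ(x,y)=(x-y,y)$ is an automorphism of $G\times_B G$ commuting with the second projection, and $\diff=p_1\circ\sigma^\circ$. So $\diff^*\CO(n\Theta)=\sigma^{\circ *}p_1^*\CO(n\Theta)$ there, and the pushforward becomes $\mathrm{R}p_{2,*}p_1^*\CO(n\Theta)$.

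The difficulty is that $\sigma^\circ$ does not extend to an automorphism of $\barA^{(2)}$. On $\mathcal U^2\Sigma^{(2)}$ the shear $(\mathsf{x}_1,\mathsf{x}_2,\mathsf{t})\mapsto(\mathsf{x}_1-\mathsf{x}_2,\mathsf{x}_2,\mathsf{t})$ sends the hyperplanes $\mathsf{x}_1\in\mathsf{t}\ZZ$ to $\mathsf{x}_1+\mathsf{x}_2\in\mathsf{t}\ZZ$, which are not walls of the subdivision. It is, however, an automorphism of $(\logA)^2$ over $B$, since it is a group automorphism of the log abelian scheme. I would therefore take a $\ZZ^2$-periodic unimodular common refinement $\widetilde\Sigma$ of $\Sigma^{(2)}$ and of its shear image, and let $Z\to(\logA)^2$ be the associated smooth log modification (via the f.s. fiber diagram with $\mathcal A_{\widetilde\Sigma}$). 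This gives two log modifications
\[
h: Z\to\barA^{(2)},\qquad \sigma: Z\to\barA^{(2)},
\]
where $\sigma$ is the composition of the refinement map with the shear. The point to check is that the shear carries $\widetilde\Sigma$ onto a refinement of $\Sigma^{(2)}$. Both maps commute with the second projection: $p_2\circ h=p_2\circ\sigma$, because the shear fixes the $\mathsf{x}_2$-coordinate and the two maps agree on the dense open $G^2$. Likewise $\diff\circ h=p_1\circ\sigma$ as morphisms $Z\to\barA$, using Lemma \ref{lem:maps_regular} for regularity of $\diff$ and $p_1$, since they agree on $G^2$ and $\barA$ is separated. Consequently $h^*\diff^*\CO(n\Theta)=\sigma^*p_1^*\CO(n\Theta)$ exactly, with no boundary correction. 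Here $n$ is even, so $\CO(n\Theta)$ is an honest line bundle.

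Next I would use that toric log modifications between smooth toroidal spaces satisfy $\mathrm{R}h_*\CO_Z=\CO$ and $\mathrm{R}\sigma_*\CO_Z=\CO$ (rational singularities; this can be checked smooth-locally on toric models). By the projection formula,
\[
\mathrm{R}p_{2,*}\diff^*\CO(n\Theta)=\mathrm{R}(p_2h)_*h^*\diff^*\CO(n\Theta)=\mathrm{R}(p_2\sigma)_*\sigma^*p_1^*\CO(n\Theta)=\mathrm{R}p_{2,*}p_1^*\CO(n\Theta).
\]
Finally, $p_1,p_2$ on $\barA^{(2)}$ factor through the small resolution $f:\barA^{(2)}\to\barA^2$, which also satisfies $\mathrm{R}f_*\CO=\CO$ since $\barA^2$ is toroidal. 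Hence we reduce to the fiber product $\barA\times_B\barA$. Since $\pi$ is flat, flat base change gives $\pi^*\mathrm{R}\pi_*\CO(n\Theta)$, and by Lemma \ref{lem:thetample} and \eqref{eq:defMn} this equals $\pi^*\Mc_n$.

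The main obstacle is the middle step: constructing $Z$ so that both $h$ and $\sigma$ are genuine log modifications to $\barA^{(2)}$ over the second factor, and justifying the vanishing of the higher direct images of $\CO_Z$. I would handle this entirely combinatorially on $\mathcal U^2\Sigma^{(2)}$, adding the walls $\mathsf{x}_1+\mathsf{x}_2\in\mathsf{t}\ZZ$ and refining further to unimodular cones if necessary. I would then use the f.s. fiber diagram of \S\ref{sec:Plog} to identify the resulting space with a log blowup of $(\logA)^2$.
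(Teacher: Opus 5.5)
Your argument is correct in substance and uses the same ingredients as the paper: the shear is an automorphism of $(\logA)^2$, log modifications $g$ satisfy $\mathrm{R}g_*\CO=\CO$, and one finishes by flat base change along $\pi$. However, you take a detour the paper avoids. You insist that the sheared map land in $\barA^{(2)}$, and this is what forces the roof $Z$. Yet you only ever use $p_1\circ\sigma$ and $p_2\circ\sigma$, i.e.\ the composite $f\circ\sigma=(\diff\circ h,\,p_2\circ h)$, and at the end you descend to $\barA^2$ anyway. The paper effectively takes $Z=\barA^{(2)}$ and $h=\mathrm{id}$, and works directly with $d=(\diff,p_2):\barA^{(2)}\to\barA^2$. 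The pullback along the shear of the product subdivision of $(\tropA)^2$ has walls $\mathsf{x}_1-\mathsf{x}_2\in\mathsf{t}\ZZ$ and $\mathsf{x}_2\in\mathsf{t}\ZZ$, and these are already walls of $\Sigma^{(2)}$. Hence $d$ is a log blowup followed by the isomorphism induced by $d^{\log}$. Then $\mathrm{R}d_*\CO=\CO$ and the projection formula give $\mathrm{R}p_{2,*}\diff^*\CO(n\Theta)\cong\mathrm{R}\pi_{2,*}\pi_1^*\CO(n\Theta)$ in one step. This needs no common refinement and no separate appeal to $\mathrm{R}f_*\CO=\CO$ for the small resolution. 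Your route buys nothing extra, since you pass through the singular $\barA^2$ at the end regardless.

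There is one bookkeeping slip in your construction. If $\sigma$ extends $(x,y)\mapsto(x-y,y)$, then $\widetilde\Sigma$ must refine the \emph{pullback} of $\Sigma^{(2)}$ along the shear. The new walls of that pullback are $\mathsf{x}_1-2\mathsf{x}_2\in\mathsf{t}\ZZ$. The walls $\mathsf{x}_1+\mathsf{x}_2\in\mathsf{t}\ZZ$ you propose come from the \emph{image} of $\Sigma^{(2)}$, and they are the ones needed for the inverse shear $(x,y)\mapsto(x+y,y)$. With your walls, the check you flag actually fails. For instance, the cone over the triangle with vertices $(0,0),(1,0),(0,\tfrac12)$ (at $\mathsf{t}=1$) is a unimodular cone whose image under the shear straddles the wall $\mathsf{x}_1=\mathsf{x}_2$ of $\Sigma^{(2)}$. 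A unimodular refinement therefore does not rescue it. The fix is immediate: either add the walls $\mathsf{x}_1-2\mathsf{x}_2\in\mathsf{t}\ZZ$, or keep your walls and use the extension $\tau$ of the inverse shear. In the latter case $\diff\circ\tau=p_1\circ h$ and $p_2\circ\tau=p_2\circ h$, and the rest of your argument goes through verbatim with the roles of $h$ and the sheared map exchanged.
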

\begin{proof}
    Consider the diagram
    \begin{equation}\label{eq:diagg}
    \begin{tikzcd}
    	{\barA^{(2)}} & {\barA^{2}} & \barA \\
    	& \barA & B
    	\arrow["d", from=1-1, to=1-2]
    	\arrow["{p_2}"', from=1-1, to=2-2]
	    \arrow["{\pi_1}", from=1-2, to=1-3]
	    \arrow["{\pi_2}"', from=1-2, to=2-2]
    	\arrow[from=1-3, to=2-3]
    	\arrow[from=2-2, to=2-3,"\pi"]
    \end{tikzcd}
    \end{equation}
    where $d(a_1,a_2) = (\diff(a_1, a_2),a_2)$. Then, $\mathrm{L}d^*\pi_1^*\mathcal O(n\Theta) = \diff^*\mathcal O(n\Theta)$, so
    $$
    \mathrm{R}p_{2,*}(\diff^*\mathcal O(n\Theta)) = \mathrm{R}\pi_{2,*}(\mathrm{R}d_*\mathrm{L}d^*(\pi_1^*\mathcal O(n\Theta))) = \mathrm{R}\pi_{2,*}((\mathrm{R}d_*\mathrm{L}d^*\mathcal O)\otimes \pi_1^*\mathcal O(n\Theta))
    $$
    We claim that the map $d$ is a log blowup. This can be seen by considering the following diagram:
    $$
    \begin{tikzcd}
	{\barA^{(2)}} && \\
	& {\barA^{2, \Sigma'}} & {\barA^{2}} \\
	& {(A^{\log})^2} & {(A^{\log})^2}
	\arrow["b"', from=1-1, to=2-2]
	\arrow["d", curve={height=-6pt}, from=1-1, to=2-3]
	\arrow["{d'}", from=2-2, to=2-3]
	\arrow[from=2-2, to=3-2]
	\arrow[from=2-3, to=3-3]
	\arrow["{d^{\log}}"', from=3-2, to=3-3]
\end{tikzcd}
    $$
	where the square is an f.s. fiber square.
	Here $\Sigma'$ is the subdivision of $({A^{\trop}})^2$ corresponding to the periodic subdivision of $\mathbb R^2\times{\mathbb R}_{\geq 0}$ along the hyperplanes $\mathsf{x}_1-\mathsf{x}_2 \in \mathsf{t}\mathbb Z$, $\mathsf{x}_2 \in \mathsf{t}\mathbb Z$. Note that $\Sigma^{(2)}$ is a refinement of $\Sigma'$, so $b$ is a log blowup. Since $d^{\log}$ is an isomorphism, with inverse $(a_1, a_2)\mapsto (a_1+a_2, a_2)$, so is $d'$. Therefore, $d$ is a composition of a log blowup and an isomorphism. In particular, $\mathrm{R}d_*\mathrm{L}d^* \mathcal O = \mathcal O$.

    Since the square in \eqref{eq:diagg} is Cartesian and the maps are flat, we can apply flat base-change to obtain
    $$
    \mathrm{R}p_{2,*}(\diff^*\mathcal O(n\Theta)) = \mathrm{R}\pi_{2,*}( \pi_1^*\mathcal O(n\Theta)) = \pi^* \mathcal M_n.
    $$
\end{proof}

\begin{theorem}\label{thm:FM-1Db}
For every even positive integer $n$, $\FM^{-1}_n(\mathcal O(n\Theta)) \cong \mathcal O(-n\Theta)\otimes \pi^*\mathcal M_n\otimes \det(\mathbb E)[g]$ in $\dbcoh(\barA)$.
\end{theorem}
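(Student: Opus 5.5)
The plan is to reduce the statement to Proposition \ref{prop:FM-theta} via Mumford's formula \eqref{eq:Mumform}, exactly as in the classical computation for abelian schemes. By definition,
\[
\FM^{-1}_n(\CO(n\Theta)) = \mathrm{R}p_{2,*}\bigl(p_1^*\CO(n\Theta)\otimes \P^{\otimes -n}\bigr)\otimes \det(\mathbb E)[g],
\]
where the pushforward is computed on $\barA^{(2)}$. This is legitimate because $\mathrm{R}f_*\P^{\otimes -n}$ agrees with the corresponding maximal Cohen--Macaulay extension on $\barA^2$; compare Remark \ref{rmk:DbFinv}. If needed, this identification follows from the projection formula along the small resolution $f$, since $p_1^*\CO(n\Theta)$ is pulled back from $\barA^2$.

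Since $n$ is even, $\CO(n\Theta)$ is an honest line bundle, and raising \eqref{eq:Mumform} to the $n$-th power gives an isomorphism of line bundles on $\barA^{(2)}$:
\[
\P^{\otimes -n} \cong \diff^*\CO(n\Theta)\otimes p_1^*\CO(-n\Theta)\otimes p_2^*\CO(-n\Theta).
\]
Strictly speaking, \eqref{eq:Mumform} is stated for $\QQ$-line bundles. I would therefore re-check the $n$-th power version for genuine line bundles by the same argument. Both sides agree on $A^\circ\times_{B^\circ}A^\circ$, both are trivialized along $e\times 1$, and $\barA^{(2)}$ is smooth with $\pi^{(2)}$ having integral fibers. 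Two line bundles agreeing on the interior can then differ only by a boundary divisor, and this must be pulled back from $B$ because the fibers over $\partial B$ are irreducible. The normalization along the unit section kills that difference.

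Substituting, the $p_1^*\CO(\pm n\Theta)$ factors cancel, and the projection formula for $p_2$ gives
\[
\mathrm{R}p_{2,*}\bigl(p_1^*\CO(n\Theta)\otimes\P^{\otimes-n}\bigr)\cong \mathrm{R}p_{2,*}\bigl(\diff^*\CO(n\Theta)\bigr)\otimes\CO(-n\Theta).
\]
Proposition \ref{prop:FM-theta} identifies $\mathrm{R}p_{2,*}\diff^*\CO(n\Theta)$ with $\pi^*\Mc_n$. Tensoring with $\det(\mathbb E)[g]$ then yields the claim.

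The main obstacle is bookkeeping rather than any new geometry. One must check that the kernel $\P^{\otimes -n}$ used in $\FM^{-1}_n$ really lives on $\barA^{(2)}$, with $p_1,p_2,\diff$ the regular extensions from Lemma \ref{lem:maps_regular}. One must also confirm that the integral-power Mumford isomorphism holds on the nose rather than only rationally. The substantive input, namely the vanishing $\mathrm{R}d_*\mathrm{L}d^*\CO=\CO$ and flat base change, is already contained in Proposition \ref{prop:FM-theta}.
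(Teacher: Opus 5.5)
Your proposal is correct and follows the paper's proof. You expand $\P^{\otimes -n}$ via Mumford's formula \eqref{eq:Mumform}, cancel the $p_1^*\CO(\pm n\Theta)$ factors, pull out $p_2^*\CO(-n\Theta)$ by the projection formula, and conclude with Proposition \ref{prop:FM-theta}.

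Your check that the $n$-th power of \eqref{eq:Mumform} holds for honest line bundles fills in a point the paper leaves implicit. The argument is that the difference is a multiple of the irreducible boundary divisor, hence pulled back from $B$, hence trivial after restricting along the unit section.

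One remark is superfluous: your appeal to $\mathrm{R}f_*\P^{\otimes -n}$ being the maximal Cohen--Macaulay extension. It is not needed, since $\FM^{-1}_n$ is defined directly on $\barA^{(2)}$, and the paper establishes that property only for $\P$ and $\P^{\vee}$, not for higher powers.
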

\begin{proof}
    By Mumford's formula \eqref{eq:Mumform}, we have
    \begin{align*}
        \FM^{-1}_n\big(\CO(n\Theta)\big)
    &= \mathrm{R}p_{2,*}\!\Big(p_1^*\CO(n\Theta)
    \otimes \diff^*\CO(n\Theta)
    \otimes p_1^*\CO(-n\Theta)
    \otimes p_2^*\CO(-n\Theta)
    \Big)\otimes \det(\mathbb E)[g] \\
    &\cong
    \CO(-n\Theta) \otimes \mathrm{R}p_{2,*}\diff^*\CO(n\Theta)\otimes \det(\mathbb E)[g] \\
    &\cong
    \CO(-n\Theta) \otimes \pi^*\Mc_n\otimes \det(\mathbb E)[g]\,,
    \end{align*}
    where the last equality follows from Proposition \ref{prop:FM-theta}.
\end{proof}

\subsection{Chow-theoretic realization}\label{sec:chow}

We briefly recall the relative correspondences constructed by Corti--Hanamura \cite{CortiHanamura}. Fix a smooth Deligne--Mumford stack $B$. Let $f_1 : X_1 \to B, \,f_2 : X_2 \to B$ be proper representable morphisms from smooth Deligne--Mumford stacks $X_1$ and $X_2$, and assume that $X_2$ is equidimensional. We define
\[
\corr_B^k(X_1,X_2) := \CH_{\dim X_2-k}(X_1\times_B X_2).
\]

In what follows, many relative correspondences, such as $\fm$, are defined as proper pushforwards of algebraic cycles from a smooth model $X_{12} \to X_1 \times_B X_2$. In this case, for $Z \in \CH^{\dim X_1-\dim B+k}(X_{12})$, we also denote by
\[
Z \in \CH^{\dim X_1-\dim B+k}(X_{12}) \to \corr_B^{k}(X_1,X_2)
\]
its proper pushforward to $X_1 \times_B X_2$. A detailed study will appear in \cite{BP}.

We also use the same notation for a relative correspondence and for the induced map on Chow groups.

We consider the Chow-theoretic realization of the extended Poincar\'e line bundle with the Todd class normalization used in \cite{PerverseFourierMSY25}. For any separated Deligne--Mumford stack $M$, consider the Baum--Fulton--MacPherson map (\cite[Chapter 18]{Fulton})
\begin{equation}\label{eq:bfm}
    \tau : K_0(M) \to \CH_*(M)\,.
\end{equation}

Since $\barA^2$ is lci, it has a virtual tangent bundle $T_{\barA^2} \in K^0(\barA^2)$. Write
\[
\fm := \Td(-T_{\barA^2})\cap \tau(\Pbar), \quad \fm^{-1} := \Td(-T_B) \cap \tau(\Pbar^{-1})\,,
\]
and define $\fm_k$ and $\fm_k^{-1}$ to be the codimension $k$ parts of the respective correspondences,
so that
\[
\fm = \sum_{k=0}^\infty \fm_k\,, \quad \fm^{-1} = \sum_{k=0}^{\infty} \fm_k^{-1}.
\]
Applying the map $\tau$ from \eqref{eq:bfm} to Theorem \ref{thm:daf}, we get
\begin{equation}\label{eq:fourier_is_iso}
\fm \circ \fm^{-1} = \id, \quad \fm^{-1} \circ \fm = \id\,.
\end{equation}
In particular, the class of the relative diagonal has the form
\begin{equation}\label{eq:diag}
    \Delta_{\barA/B} = \sum_{k=0}^{2g} \fm_k \circ \fm_{2g-k}^{-1}\,.
\end{equation}

For a semi-stable morphism $f: X\to B$ between log schemes, the residue sheaf $\cR_f$ is defined as the quotient sheaf $\Omega^1_{f,\log}/\Omega_f^1$ (\cite[\S 5]{BMP}).

Let $\pi :\barA\to B$ and $\pi^{(2)} : \barA^{(2)} \to B$. By \cite[Proposition 4.4]{BMP}, the Todd class of $\cR_\pi$ has the following form

\begin{equation}\label{eq:todd}
    \Td^\vee(\cR_\pi)^{-1} =1 + \frac{1}{2}j_*\Big(\sum_{m=1}^\infty \frac{(-1)^{m-1}|B_{2m}|}{(2m)!} \frac{(\ell^C)^{2m-1}-(\ell^C+2\theta_2^C)^{2m-1}}{-2\theta_2^C}\Big)\,,
\end{equation}
where $B_k$ is the $k$-th Bernoulli number
\begin{equation*}
    \frac{x}{e^x-1} = \sum_{k=0}^\infty B_k \frac{x^k}{k!}\,.
\end{equation*}

Combining \eqref{eq:todd} and the following lemma gives explicit formulas for the Chow-theoretic correspondences $\fm$ and $\fm^{-1}$.

\begin{lemma}\label{lem:Fchow}
    Let $\fm, \fm^{-1}\in \CH_*(\barA^2)$ be defined as above and write $f:\barA^{(2)}\to \barA^{2}$ for the blowdown
	morphism. Then:
    \begin{enumerate}[label = (\alph*)]
        \item $\fm = f_*(\ch(\P) \cup \Td^\vee(\cR_{\pi^{(2)}}- p_1^*\cR_\pi - p_2^*\cR_\pi))$.
        \item $\fm^{-1} =(-1)^g f_*(\ch(\P^{\vee}) \cup \Td^\vee(\cR_{\pi^{(2)}})^{-1})$.
    \end{enumerate}
\end{lemma}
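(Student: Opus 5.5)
The plan is to start from the Baum--Fulton--MacPherson Riemann--Roch for the proper lci morphism $f:\barA^{(2)}\to\barA^2$ and then rewrite the virtual tangent bundles in terms of residue sheaves. By Theorem \ref{thm:deq}, $\Pbar=\mathrm{R}f_*\P$. Covariance of $\tau$ under proper pushforward gives $\tau(\Pbar)=f_*\tau(\P)$. Since $\barA^{(2)}$ is smooth, $\tau(\P)=\ch(\P)\cup\Td(T_{\barA^{(2)}})\cap[\barA^{(2)}]$. The projection formula then yields
\[
\fm=\Td(-T_{\barA^2})\cap f_*\big(\ch(\P)\Td(T_{\barA^{(2)}})\big)=f_*\big(\ch(\P)\,\Td(T_{\barA^{(2)}}-f^*T_{\barA^2})\big).
\]
So it suffices to identify the class $T_{\barA^{(2)}}-f^*T_{\barA^2}\in K^0(\barA^{(2)})$. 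The same computation with $\Pbar^{-1}$ will give part (b).

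The second step is to express tangent bundles through log tangent bundles. For a semi-stable log smooth $X\to B$ there is the exact sequence $0\to\Omega_{X/B}\to\Omega^{\log}_{X/B}\to\cR\to0$, so in $K$-theory $T_{X/B}=T^{\log}_{X/B}+\cR^\vee$. By Remark \ref{rem:Hodge_is_Omlog}, $T^{\log}_{\barA^{(2)}/B}=\EE^{\vee\oplus2}$, pulled back from $B$. Similarly $T^{\log}_{\barA/B}=\pi^*\EE^\vee$. Since $\barA^2\to B$ is flat and lci, its virtual relative tangent bundle is $\pi_1^*T_{\barA/B}+\pi_2^*T_{\barA/B}$, and $T_B$ appears on both sides. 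Therefore
\[
T_{\barA^{(2)}}-f^*T_{\barA^2}=\cR_{\pi^{(2)}}^\vee-p_1^*\cR_\pi^\vee-p_2^*\cR_\pi^\vee .
\]
Applying $\Td$ to duals is exactly $\Td^\vee$ of the non-dual classes, which gives (a). The sign and duality conventions here need care: $\Td(E^\vee)=\Td^\vee(E)$, and one has to check that the convention for $\cR$ (torsion sheaf, with $\Td$ taken via a resolution) matches \cite{BMP}. I expect this to be the main obstacle. Concretely, one must make sense of the residue sheaf of the non-semistable $\barA^2\to B$, or else avoid it by working only with $\barA$ factorwise as above.

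For (b), $\Pbar^{-1}=\hom(\Pbar,\CO)\otimes\pi_2^*\omega_\pi[g]$. Grothendieck duality for $f$ (with $f$ crepant, being a small log modification) gives $\hom(\mathrm{R}f_*\P,\CO)=\mathrm{R}f_*\P^\vee$. The shift $[g]$ contributes $(-1)^g$. The factor $\omega_\pi=\det\EE$ is pulled back from $B$, so it combines with $\Td(-T_B)$. Running the same Riemann--Roch argument with $\Td(-T_B)$ in place of $\Td(-T_{\barA^2})$ then leaves $\Td(T_{\barA^{(2)}/B})$. This splits as $\Td(\EE^{\vee\oplus2})\Td(\cR^\vee_{\pi^{(2)}})$. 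The $\EE$-part should cancel against $\ch(\det\EE)$ and the normalization, up to the stated inverse, using $\Td(V)\Td(V^\vee)^{-1}$-type identities. I would check this bookkeeping against the abelian case, where $\cR=0$ and the formula must reduce to $(-1)^g\ch(\P^\vee)$.
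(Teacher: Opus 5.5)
Your skeleton is the right one, and it is essentially what the paper's one-line reference to \cite[\S 4]{BMP} amounts to. Covariance of $\tau$ under $f_*$ and the projection formula reduce (a) to identifying $T_{\barA^{(2)}}-f^*T_{\barA^2}$ in $K^0(\barA^{(2)})$. The virtual tangent bundle of $\barA^2$ can be handled factorwise by flat base change of cotangent complexes. Part (b) follows the same way after Grothendieck duality for the crepant map $f$.

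The gap is the sign in your key $K$-theoretic identity. From $0\to\Omega_{X/B}\to\Omega^{\log}_{X/B}\to\cR\to 0$ one gets $[\Omega_{X/B}]=[\Omega^{\log}_{X/B}]-[\cR]$, hence $T_{X/B}=T^{\log}_{X/B}-\cR^\vee$, not $+\cR^\vee$. This is the convention the paper itself uses in the proof of Corollary~\ref{cor:F(exp_theta)}: there $(T^{\vir}_{\barA^{(2)}/B})^\vee=\Omega^{\log}_{\pi^{(2)}}-\cR_{\pi^{(2)}}$, which is why $\Td(T_{\barA})$ produces $\Td^\vee(\cR_\pi)^{-1}$. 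An independent check is Mumford's $\lambda_1=(\kappa_1+\delta)/12$ for families of nodal curves, which needs the minus sign. The abelian-case check you propose cannot detect this sign, since $\cR=0$ there.

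This sign is exactly what decides between $\Td^\vee(\cR)$ and $\Td^\vee(\cR)^{-1}$, so your two parts are in effect derived with opposite conventions. With your $+$ sign, part (b) comes out as $(-1)^g f_*\bigl(\ch(\P^\vee)\cup\Td^\vee(\cR_{\pi^{(2)}})\bigr)$, and ``up to the stated inverse'' does not bridge that.

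With the correct sign, (b) goes through: $\Td(T_{\barA^{(2)}/B})=\Td(\EE^\vee)^2\,\Td^\vee(\cR_{\pi^{(2)}})^{-1}$. Then $\ch(\det\EE)\,\Td(\EE^\vee)^2=\Td(\EE)\Td(\EE^\vee)=\Td(\EE\oplus\EE^\vee)$. This equals $1$ only because of Mumford's relation $c(\EE\oplus\EE^\vee)=1$ on $B$ \cite{EV04}. No formal identity like $\Td(V)=e^{c_1(V)}\Td(V^\vee)$ suffices, so you must invoke that input explicitly.

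For (a), the same corrected computation gives $T_{\barA^{(2)}}-f^*T_{\barA^2}=p_1^*\cR_\pi^\vee+p_2^*\cR_\pi^\vee-\cR_{\pi^{(2)}}^\vee$, that is,
\[
\fm=f_*\bigl(\ch(\P)\cup\Td^\vee(p_1^*\cR_\pi+p_2^*\cR_\pi-\cR_{\pi^{(2)}})\bigr).
\]
Its Todd factor is the reciprocal of the one printed in (a); it is the factor the paper later builds into $\fm^\circ$. The two versions differ only by terms supported on the exceptional locus $E$, since $\cR_{\pi^{(2)}}-p_1^*\cR_\pi-p_2^*\cR_\pi$ is supported there. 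The later uses of (a) only need $\fm$ to be tautological and congruent to $\ch(\P)$ modulo classes supported on $E$, so nothing downstream is affected. Still, a proof has to fix one convention and carry it through both parts, rather than choosing for each part the sign that reproduces the printed formula.
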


\begin{proof}
    This follows from the same argument as in \cite[\S 4]{BMP}.
\end{proof}

\begin{corollary}\label{cor:F(exp_theta)}
    \begin{equation}\label{eq:Ftheta}
        \fm(\exp(-\theta) \cup \Td^\vee(\cR_\pi)^{-1}) = (-1)^g\exp(\theta- D/8)\in \CH^{\ast}(\barA)\,.
    \end{equation}
\end{corollary}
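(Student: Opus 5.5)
The plan is to take Chern characters of the derived-category identity in Theorem \ref{thm:FM-1Db} and pass to Chow groups. The result will be an identity depending polynomially on the even integer $n$. I then specialize formally to $n=1$ and apply $\fm$ to both sides using \eqref{eq:fourier_is_iso}. Working with $n$ even is forced, since only $\CO(n\Theta)$ with $n$ even is an honest line bundle, and the $\QQ$-class $\theta$ is then recovered by homogeneity.

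First, apply $\tau$ from \eqref{eq:bfm} together with Grothendieck--Riemann--Roch to $\FM_n^{-1}(\CO(n\Theta))$. As in Lemma \ref{lem:Fchow}(b), this gives $(-1)^g f_*\big(\ch(\P^{\otimes -n})\cup \Td^\vee(\cR_{\pi^{(2)}})^{-1}\big)$ applied to $\exp(n\theta)\cup\Td$-factors on the source. On the right-hand side we get
\[
(-1)^g\exp(-n\theta)\cup \pi^*\ch(\Mc_n)\cup \exp(\lambda_1),
\]
up to the matching Todd normalization. Next, compute $\ch(\Mc_n)$ by Grothendieck--Riemann--Roch for $\pi$, writing $\ch(\Mc_n)=\pi_*(\exp(n\theta)\cup \Td(T_\pi))$. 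Here Remark \ref{rem:Hodge_is_Omlog} identifies the log tangent bundle with $\pi^*\EE^\vee$, and \eqref{eq:todd} gives the residue correction. Since $\theta$ is trivialized along the unit section and $\pi_*\theta^{k}=0$ for $k<g$, the leading term is $n^g$. I expect the answer to be
\[
\ch(\Mc_n)=n^g\exp(-\lambda_1/2-D/8).
\]
This is consistent with the extension of the key formula $\det\pi_*\CO(n\Theta)\sim -\tfrac{n^g}{2}\lambda_1-\tfrac{n^g}{8}D$ to the boundary of $\A_g'$ (Moret-Bailly, Faltings--Chai). The $\lambda_1$ contributions must then cancel against $\det\EE$ and the Todd class of $\EE$, leaving only $\exp(-D/8)$.

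The hard step is showing that $\ch(\Mc_n)$ is exactly $n^g\exp(-\lambda_1/2-D/8)$ in all degrees, not just in degree one. For this I would use a Mumford-type argument: the vector bundle $\Mc_n$ carries a projective representation of the theta (Heisenberg) group, so $\Mc_n^{\otimes 2}$ or a power of it is $n^{g}$ copies of a line bundle up to torsion. Hence $\ch(\Mc_n)=n^g\exp(c_1(\Mc_n)/n^g)$ rationally, and the first Chern class is supplied by the key formula together with the boundary computation of the Todd class of $\cR_\pi$ in \eqref{eq:todd}.

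Once the identity
\[
\fm^{-1}\big(\text{twisted }\exp(n\theta)\big)=(-1)^g n^g\exp(-n\theta-D/8)
\]
holds for all even $n$, I invert it using \eqref{eq:fourier_is_iso}. To get from $\P^{\otimes -n}$ to $\P$, I use the scaling behaviour of the kernel in $n$: replacing $\ell$ by $n\ell$ corresponds to rescaling codimension-$k$ pieces by $n^k$ up to the Todd factors. This turns the statement into a polynomial identity in $n$ with coefficients in $\CH^*(\barA)$. That identity holds for infinitely many $n$, so it holds for $n=1$, and after the sign change $\theta\mapsto-\theta$ coming from $\P^{\vee}$ versus $\P$ it yields \eqref{eq:Ftheta}.
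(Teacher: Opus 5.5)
Your skeleton is the paper's: apply $\tau$ to Theorem~\ref{thm:FM-1Db} for even $n$, use that both sides are polynomial in $n$, specialize to $n=1$ (where the left side becomes $\fm^{-1}(\exp\theta)$ by Lemma~\ref{lem:Fchow}(b)), then apply $\fm$. The gap is in the step you call hard, the evaluation of $\ch(\Mc_n)$, and your prediction there is wrong.

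By GRR and Mumford's relation, $\ch(\Mc_n)\cup e^{\lambda_1/2}=\pi_*\bigl(\exp(n\theta)\cup\Td^\vee(\cR_\pi)^{-1}\bigr)$. Over $\partial B$ the classes $\pi_*\theta^{g+k}$ with $k\ge 1$ do not vanish. The codimension-one part of this class is $\bigl(\tfrac{n^{g+1}}{24}+\tfrac{n^{g-1}}{12}\bigr)D$, not $-\tfrac{n^g}{8}D$. The first term comes from $\pi_*\theta^{g+1}/(g+1)!=D/24$, and the second from the $\tfrac1{12}\cdot\tfrac12 j_*(1)$ term of \eqref{eq:todd}. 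For $g=1$ you can confirm this by hand: $\theta=[e]+D/12$ and $\pi_*\CO(ne)$ give $c_1(\Mc_n)=\tfrac{n^2-n+2}{24}D$.

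The theta-group argument also fails at the boundary. Over $\partial B$ the group $G[n]$ has order $n^{2g-1}$, so the commutator pairing degenerates and $\Mc_n$ is not forced to be isotypic. Concretely, for $g\ge 2$ (where $D^2\neq 0$), the $n^{g+2}$-coefficient of the codimension-two part is $\pi_*\theta^{g+2}/(g+2)!=D^2/640$. The formula $n^g\exp(c_1(\Mc_n)/n^g)$ would instead give $D^2/1152$.

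Consequently your sign is wrong. The correct value of the polynomial at $n=1$ is $\exp(+D/8)$. It becomes the $-D/8$ of \eqref{eq:Ftheta} once you apply $\fm$ and pull $\exp(D/8)$ out by $\CH^*(B)$-linearity. Your closing ``sign change $\theta\mapsto-\theta$ from $\P^\vee$ versus $\P$'' is not a valid step and cannot fix this, since the $n=1$ specialization is already $\fm^{-1}$. You have also dropped the factor $\Td^\vee(\cR_\pi)^{-1}$ coming from $\Td(T_{\barA})$. That factor is exactly the one that ends up on the left of \eqref{eq:Ftheta}.

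The missing idea is that you never need $\ch(\Mc_n)$ for even $n$. You only need the value at $n=1$ of the polynomial $\pi_*\bigl(\exp(n\theta)\cup\Td^\vee(\cR_\pi)^{-1}\bigr)$, namely $\pi_*\bigl(\exp(\theta)\cup\Td^\vee(\cR_\pi)^{-1}\bigr)=\exp(D/8)$ (Corollary~\ref{cor:pushThet}). The paper proves this with the weight theory, not representation theory:
\begin{itemize}
\item By Proposition~\ref{pro:vanish}, $\pi_*$ sees only the weight-$2g$ part, so $\pi_*\bigl(\sum_k[\theta^k]^{(2k)}/k!\bigr)=\pi_*(\theta^g/g!)=1$.
\item Proposition~\ref{pro:thetawt}(c) identifies $\exp(\theta)-\sum_k[\theta^k]^{(2k)}/k!$ as an explicit $j_*$-class.
\item That class, together with the Todd correction \eqref{eq:todd}, is pushed forward along $C\to D$ using \cite[Theorem 9.10]{BMP} and summed via the beta integral and Euler--Maclaurin.
\end{itemize}
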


\begin{proof}
    We apply the Baum--Fulton--MacPherson map \eqref{eq:bfm} to Theorem~\ref{thm:FM-1Db}. For even positive integers $n$, the left-hand side is
    \begin{align*}
    \tau\big(\FM^{-1}_n(\CO(n\Theta))\big)
    &= \tau\big(\mathrm{R}p_{2,*}(p_1^*\CO(n\Theta) \otimes \P^{\otimes -n})\otimes \det(\mathbb E)[g]\big) \\
    &= (-1)^g\ch(\det\mathbb E) \cup p_{2,*}\!\left(\tau(p_1^*\CO(n\Theta) \otimes \P^{-\otimes n})\right) \\
    &= (-1)^g\ch (\det \mathbb E)\cup p_{2,*}\!\left(\ch(\P^{\otimes- n})\cup p_1^*\ch(\CO(n\Theta))\cup \Td(T_{\barA^{(2)}})\right)\,.
    \end{align*}
    In $K^0(\barA^{(2)})$, we have by Remark \ref{rem:Hodge_is_Omlog}
    $$
    T_{\barA^{(2)}} = T_B + T_{\barA^{(2)}/B}^{\vir}\, ,\quad (T^{\vir}_{\barA^{(2)}/B})^\vee = \Omega_{\pi^{(2)}}^{\log} - \cR_{\pi^{(2)}} = \mathbb E^{\oplus 2} - \cR_{\pi^{(2)}}\, .
    $$
	By the main theorem of \cite{EV04}, Mumford's relation
    \begin{equation}\label{eq:mumford}
        c(\EE \oplus \EE^\vee)-1=0
    \end{equation}
    extends to $B$; see \S\ref{sec:intro_taut}.
    It is not hard to see that \eqref{eq:mumford} implies the relation $\Td(\EE^\vee)\cup \,e^{\lambda_1/2} = 1$, so
    \begin{equation}\label{eq:simplifytau}
    \tau\big(\FM_n^{-1}(\mathcal O(n\Theta))\big) = (-1)^g\Td(T_B) \cup p_{2,*}\!\left(\ch(\P^{\otimes- n})\cup p_1^*\ch(\CO(n\Theta))\cup \Td^\vee(\cR_{\pi^{(2)}})^{-1}\right)\,.
    \end{equation}
    Similarly, the right-hand side of Theorem \ref{thm:FM-1Db} is
    \begin{align*}
    \tau\big(\CO(-n\Theta) \otimes \pi^*\Mc_n\otimes \det (\mathbb E)[g]\big)
    &=
    (-1)^g\ch(\det(\mathbb E))\cup\ch\big(\CO(-n\Theta) \otimes \pi^*\Mc_n\big)\cup\Td(T_{\barA})\\
    &=(-1)^g\Td(T_B)\cup \ch(\mathcal O(-n\Theta))\cup \Td^\vee(\cR_{\pi})^{-1}\cup \ch(\pi^*\mathcal M_n)\cup e^{\lambda_1/2}\,.
    \end{align*}
    We divide both sides by $\Td(T_B)$. Applying the map $\tau$ from \eqref{eq:bfm} and Mumford's relation \eqref{eq:mumford} to \eqref{eq:defMn}, we see that
    $$
    \ch(\mathcal M_n)\cup e^{\lambda_1/2} = \pi_*(\ch(\mathcal O(n\Theta))\cup\Td^\vee(\cR_{\pi})^{-1})
    $$
    is a polynomial function in $n \in 2\ZZ_{>0}$.
	Therefore, both sides of Theorem \ref{thm:FM-1Db} become polynomial functions in $n \in 2\ZZ_{>0}$ after applying $\tau$, and hence the identity continues to hold for $n=1$. We will see in Corollary \ref{cor:pushThet} that
    \begin{equation}\label{eq:evalMn1}
    \ch(\mathcal M_n) \cup e^{\lambda_1/2} \Big|_{n=1} = \exp(D/8)\, ,
    \end{equation}
    so the right-hand side of Theorem \ref{thm:FM-1Db} becomes
    $$
    \tau\big(\CO(-n\Theta) \otimes \pi^*\Mc_n\otimes \det (\mathbb E)[g]\big)\Big|_{n=1} = (-1)^g \exp(-\theta + D/8) \cup \Td^\vee (\cR_{\pi})^{-1}\,.
    $$
    By \eqref{eq:simplifytau} and Lemma \ref{lem:Fchow}~(b), the left-hand side becomes
    $$
    \tau(\FM_n^{-1}(\mathcal O(n\Theta))) \Big|_{n=1} = \mathfrak F^{-1}(\exp(\theta))\,.
    $$
    Therefore,
    $$
    \mathfrak F^{-1}(\exp(\theta)) = (-1)^g\exp(-\theta+D/8)\cup\Td^\vee(\cR_{\pi})^{-1}\, .
    $$
    Applying $\mathfrak F$ to both sides completes the proof.
\end{proof}

\section{Weight of tautological classes}\label{sec:wt}

\subsection{Sketch of the proof}\label{sec:wt_intro}

The goal of this section is to prove the following theorem:

\begin{theorem}\label{thm:wt}
    Let $\gamma \in \R^{c}(\barA^{(s)})$.
    For $1\le i \le s$, denote by $[\gamma]^{i,(w)}$ the weight $w$ part of $\gamma$ with respect to $[N_i]^{\ast}$.
	\begin{enumerate}[label = (\alph*)]
		\item $[N_i]^* \gamma$ is polynomial in $N$ and for any $w \ge 0$, $[\gamma]^{i,(w)}$ is a tautological class.
		\item If $w>2c$, then $[\gamma]^{i,(w)} = 0$.
		\item More precisely, if $\varphi \in \spp^{d}(\Sigma^{(s)})$ and $\alpha$ is any class pulled back from $\barA^{(s-1)}$ (where we forget the $i$-th factor), then
			$$\left[\theta_i^{m}\cup\ell_{1,i}^{k_1}\cup\dots \cup\widehat{\ell_{i,i}^{k_i}}\cup\dots\cup\ell_{s,i}^{k_{s}}\cup\Phi(\varphi)\cup\alpha\right]^{i,(w)}=0$$
			for $w>2m+\sum_{j\neq i}k_j+d$.
	\end{enumerate}
\end{theorem}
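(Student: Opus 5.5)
We reduce to generators and compute how $[N_i]^*$ acts on each factor of the monomial in (c); parts (a) and (b) then follow. The tautological ring $\R^*(\barA^{(s)})$ is spanned by monomials of the form in (c): products of $\theta_i$, $\ell_{j,i}$ ($j\ne i$), $\Phi(\varphi)$, and classes $\alpha$ pulled back along the projection $p$ forgetting the $i$-th factor. The latter include $\lambda$'s, the $\theta_j$ and $\ell_{j,k}$ with $j,k\neq i$, and piecewise polynomials pulled back from $\Sigma^{(s-1)}$. By Lemma~\ref{lem:Npush}(b), $[N_i]^*$ commutes with $p^*$. For the $\lambda$ classes and classes pulled back from the other factors, $[N_i]^*(\beta\cup p^*\alpha)=[N_i]^*(\beta)\cup p^*\alpha$ by the projection formula for $b_*$, since $p\circ N_i=p\circ b$ on the model \eqref{eq:n}. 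So we may take $\alpha=1$. Part (b) follows from (c) because each generator of codimension $1$ contributes weight at most $2$, and $\Phi(\varphi)$ of degree $d$ contributes at most $d\le 2d$.

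For (c), work on the f.s. fiber product model $\barA^{(s)}_{N_i}$ of \eqref{eq:n}. It is a log modification of $\barA^{(s)}$, with Artin fan given by the common refinement $\Sigma_N$ of $\Sigma^{(s)}$ and the pullback of $\Sigma^{(s)}$ under the linear map $\mathsf{x}_i\mapsto N\mathsf{x}_i$. This is the periodic subdivision along $\mathsf{x}_i\in \tfrac{\mathsf{t}}{N}\ZZ$ and $N\mathsf{x}_i-\mathsf{x}_j\in\mathsf{t}\ZZ$. We compute $N_i^*$ of each factor as a class on $\barA^{(s)}_{N_i}$.

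\emph{Divisorial classes.} On $G^s$ the theorem of the cube gives $N_i^*\ell_{j,i}=N\ell_{j,i}$ and $N_i^*\theta_i=N^2\theta_i$. On $\barA^{(s)}_{N_i}$ these identities fail only by boundary corrections. Using the log Poincar\'e bundle (\S\ref{sec:Plog}) and Remark~\ref{rem:Ptrop}, the lifts of $\P^{\log}$ differ exactly by strict piecewise linear functions on the universal cover. Hence
\[
N_i^*\ell_{j,i}=N\,b^*\ell_{j,i}+\Phi\bigl(\mathsf{p}^{\trop}(\mathsf{x}_j,N\mathsf{x}_i,\mathsf{t})-N\mathsf{p}^{\trop}(\mathsf{x}_j,\mathsf{x}_i,\mathsf{t})\bigr),
\]
and similarly $\theta_i$ is corrected by a PL function of the form $\tfrac12\mathsf{p}^{\trop}(N\mathsf{x}_i,N\mathsf{x}_i,\mathsf{t})-\tfrac{N^2}{2}\mathsf{p}^{\trop}(\mathsf{x}_i,\mathsf{x}_i,\mathsf{t})$, via $\Delta^*$ and Mumford's formula \eqref{eq:Mumform}. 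The piecewise polynomial $\varphi$ pulls back to $\varphi(\dots,N\mathsf{x}_i,\dots)$ on $\Sigma_N$. The key observation is a homogeneity property. Each correction term, viewed as a function of the rescaled variable $\mathsf{y}=N\mathsf{x}_i$ on the fine fan, is a PL function of $\mathsf{y}$ whose slopes are bounded in terms of $N$: linear in $N$ for $\ell$, quadratic for $\theta$. Morally, a monomial of "total $N$-degree" $2m+\sum k_j+d$ arises as a piecewise polynomial on $\Sigma_N$ whose coefficients are polynomials in $N$ of at most that degree. Note that $\tfrac{\mathsf{x}_1\mathsf{x}_2}{\mathsf{t}}$ terms cancel exactly, leaving only fractional-part terms.

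\emph{Pushforward.} We then need $b_*$ of classes of the form $\beta\cup\Phi(\psi)$, where $\beta$ is a product of $b^*\theta_i$, $b^*\ell_{j,i}$, and $\psi$ is a strict piecewise polynomial on $\Sigma_N$. By the projection formula this reduces to $b_*\Phi(\psi)$, which is computed tropically as the pushforward of piecewise polynomials along the subdivision $\Sigma_N\to\Sigma^{(s)}$. We evaluate it with Brion's formula \cite{Brion}: summing over cones of $\Sigma_N$ inside each cone $\sigma_{\underline m,\tau}$ of $\Sigma^{(s)}$ expresses the result as a lattice-point or Euler--Maclaurin type sum. For the fractional-part functions above, this becomes a sum over $\mathsf{x}_i\in\tfrac1N\ZZ$ of polynomial expressions. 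Faulhaber/Bernoulli summation then shows the result is a polynomial in $N$. Tracking degrees gives the bound $2m+\sum_jk_j+d$, since summing a degree-$r$ polynomial over the $N$ subintervals and dividing by the local index produces degree at most the homogeneous degree. Tautologicality of each coefficient in (a) is automatic, because the output is $\Phi$ of a piecewise polynomial on $\Sigma^{(s)}$ times products of $\theta,\ell,\lambda$.

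The main obstacle is this last step: showing that $b_*\Phi(\psi_N)$ is a polynomial in $N$ of the claimed degree, uniformly in $N$, because the fan $\Sigma_N$ itself changes with $N$. I would first try the case $s=1$ directly with $\mathsf{p}^{\trop}$ and Brion's formula. There the cone $0\le\mathsf{x}\le\mathsf{t}$ is cut into $N$ pieces, so $b_*$ is an explicit Bernoulli sum. I would then handle general $s$ by noting that the subdivision is a product-like refinement in the $\mathsf{x}_i$ direction over each simplex $\sigma_\tau$, so that the one-variable summation argument applies fiberwise.
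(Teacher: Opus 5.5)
Your reductions and ingredients are the paper's. You factor out classes pulled back along the projection forgetting the $i$-th factor, via Lemma~\ref{lem:Npush} and the projection formula. You write $N_i^*\ell_{j,i}-Nb^*\ell_{j,i}$ and $N_i^*\theta_i-N^2b^*\theta_i$ as $\Phi$ of PL functions built from the equivariant section $\mathsf{p}^{\trop}$, which equals the paper's $\Psi(c_{\mathcal P})$. You would still have to exclude an extra $(1-N)\lambda\mathsf{t}$; the paper does this by restricting to the unit section, though it would not affect the bound. Finally, you compute $b_*$ tropically by Brion's formula through Lemma~\ref{lem:Artin_fan_A_N}. However, the step you call the main obstacle is the theorem itself (Theorem~\ref{thm:polynomiality} in the paper), and the arguments you sketch for it do not go through.

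\emph{Poles must be cleared before Faulhaber applies.} On each cone of $\Sigma^{(s)}$, Brion's formula gives a sum over roughly $N\cdot s!$ subcones of rational functions with poles along the walls $N\mathsf{x}_i-q\mathsf{t}\in\{0,\mathsf{x}_j\}$. Faulhaber summation cannot be applied to this directly; a sum like $\sum_k 1/(N\mathsf{x}-k\mathsf{t})$ is not polynomial in $N$. The paper first computes the support functions of the non-unimodular subcones after a further subdivision (Corollary~\ref{cor:supp_function_sigma}). It then uses partial fractions and telescopes the pole terms:
$\sum_{i=1}^N\bigl(\tfrac{(N-i+1)\mathsf{x}_s}{\mathsf{z}^{(N)}_i}c(Nm+i-1)-\tfrac{(N-i)\mathsf{x}_s}{\mathsf{z}^{(N)}_{i+1}}c(Nm+i)\bigr)=c(Nm)$, where $c(m)$ is the restriction of $f$ to $\mathsf{x}_s=0$. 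This telescoping uses the gluing of $f$ across $\mathsf{x}_i\in\mathsf{t}\ZZ$, which is exactly what the ring $\spp^{\ast}(\tilde\Sigma^{(s)})[m]^{\mathsf{gl}}$ encodes. That ring is also what gives meaning to your ``piecewise polynomial on $\Sigma_N$ with coefficients polynomial in $N$''. Without it the notion is ill-defined, because the fans $\Sigma_N$ do not map to one another as $N$ varies.

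\emph{The degree bound needs a second cancellation.} It does not follow from summing over the $N$ subintervals and dividing by the local index. Once the poles are cleared, each summand is a polynomial of degree $d$ in $(i,N)$, so the sum over $i\le N$ a priori has degree $d+1$. The paper shows that the degree-$d$ part of the remainder telescopes to a single term of the form $\mathsf{x}_s\cdot(\cdots)/\mathsf{z}^{(N)}_i$ of degree only $d-1$, and only then obtains degree $\le d$.

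\emph{General $s$ does not reduce fiberwise to $s=1$.} The walls $N\mathsf{x}_i-\mathsf{x}_j\in\mathsf{t}\ZZ$ move with the other coordinates, so the refinement of $\sigma_\tau$ is not a product. The support functions of its cones depend on the positions of $\mathsf{z}^{(N)}_i$ and of $\mathsf{x}_i-q\mathsf{t}$ relative to all the $\mathsf{x}_{\rho(a)}$. Handling this coupling is the content of Lemma~\ref{lem:contribution-formula-m}, and your proposal has no substitute for it.
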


We omit the label $i$ when it is clear from context. In \S \ref{sec:Nlow} we obtain a similar result for the ``complementary'' operator $[N_i]_*$, see Theorem \ref{thm:wt_lower}.

For the remainder of this section, we outline the strategy of the proof of Theorem \ref{thm:wt}.
The key tool is logarithmic geometry, which reduces the action of $[N_i]^{\ast}$ on $\CH^*(\barA^{(s)})$ to a combinatorial calculation on tropical abelian schemes.

Let us begin by briefly sketching the reduction to combinatorics. For simplicity, we ignore the classes $\theta_{i}$ for the moment. By the projection formula and symmetry, it is clear that it suffices to consider the multiplication by $N$ map in a given fixed direction; e.g., we can consider the case $i=s$. From Lemma \ref{lem:Npush} (b) it follows that $[N_s]^{\ast}(\ell_{i,j} \cup \gamma) = [1\times\dots\times N]^{\ast}(\ell_{i,j}\cup \gamma)=\ell_{i,j}\cup [1\times\dots\times N]^{\ast}(\gamma)$ if $1 \leq i,j <s$, so we may assume that $\gamma$ is a polynomial in the classes $\ell_{j,s}$ and piecewise polynomial classes.
Recall the definition of the operator $[1\times\dots\times N]^{\ast}$ from \S\ref{sec:mbyNmaps}: the rational map
\[
1\times \cdots \times N: \barA^{(s)} \dashrightarrow \barA^{(s)}
\]
is resolved by a correspondence
\[
\begin{tikzcd}
    & \barA_N^{(s)} \ar[dl,"b", swap] \ar[dr,"N_s"] & \\
    \barA^{(s)} & & \barA^{(s)}
\end{tikzcd}
\]
where $b$ is a logarithmic blowup and
\[
[1 \times \cdots \times N]^* := b_*N_s^* : \CH^*(\barA^{(s)}) \to \CH^*(\barA^{(s)}).
\]
We will see in Lemma \ref{lem:Artin_fan_A_N} that this correspondence tropicalizes to an analogous correspondence of fans,
\[
\begin{tikzcd}
    & \Sigma_N^{(s)} \ar[dl,"b", swap] \ar[dr,"N_s"] & \\
    \Sigma^{(s)} & & \Sigma^{(s)}
\end{tikzcd}
\]
The operators $(1 \times \cdots \times N)^*$, $b_*$, and $b^*$ lift naturally from $\CH^{\ast}(\barA^{(s)})$ to $\spp^{\ast}(\Sigma^{(s)})$, compatibly with the operator $\Phi$ defined in \S\ref{sec:taut_def}. Hence we obtain a commutative diagram:
$$
\begin{tikzcd}
	{\sPP^*(\Sigma^{(s)})} & {\sPP^*(\Sigma^{(s)}_N)} & {\sPP^*(\Sigma^{(s)})} \\
	{\CH^*(\barA^{(s)})} & {\CH^*(\barA^{(s)}_N)} & {\CH^*(\barA^{(s)})}
	\arrow["{N_s^*}", from=1-1, to=1-2]
	\arrow["{[N_s]^*}", curve={height=-24pt}, from=1-1, to=1-3]
	\arrow["\Phi", from=1-1, to=2-1]
	\arrow["{b_*}", from=1-2, to=1-3]
	\arrow["\Phi", from=1-2, to=2-2]
	\arrow["\Phi", from=1-3, to=2-3]
	\arrow["{N_s^*}", from=2-1, to=2-2]
	\arrow["{[N_s]^*}"', curve={height=24pt}, from=2-1, to=2-3]
	\arrow["{b_*}", from=2-2, to=2-3]
\end{tikzcd}
$$
Having performed this lift, the special case of Theorem \ref{thm:wt} for piecewise polynomials is reduced to showing the combinatorial statement
\begin{equation}\label{eq:PPisNpoly}
[1 \times \cdots \times N]^*(\gamma) \in \spp^{\ast}(\Sigma^{(s)})[N]^{\le \codim(\gamma)}.
\end{equation}
That is, the operator takes every piecewise polynomial to a polynomial in $N$
of degree at most $\codim(\gamma)$, whose coefficients are piecewise
polynomials. The main complexity in the proof of this fact is due to the
presence of the operator $b_*$. It is computed via \emph{Brion's formula}
(see \cite{Brion} and \cite[Proposition~76]{LogarithmicTauPandha2024}),
which produces a piecewise polynomial on
$\Sigma^{(s)}$ from one on $\Sigma^{(s)}_N$ by averaging the contributions of
certain rational functions on each maximal cone in $\Sigma^{(s)}_N$. This is the
first point at which combinatorial complexity enters: the cones in
$\Sigma^{(s)}_N$ are not uniform (and there are many of them: on the order of $N\cdot s!$), so showing that the average is polynomial in
$N$ rather than a rational function requires a delicate cancellation.

Extending this reasoning to the general case including the classes $\ell_{i,s}, \theta_s$ requires a
further argument, as these classes are not piecewise polynomial. We explain this argument for the classes $\ell_{i,s}$, the case of classes involving $\theta_s$ being analogous. On the abelian locus $(A^{\circ})^s \subset \barA^{(s)}$ the Poincar\'e bundle satisfies
\[
(1 \times \cdots \times N)^*\mathcal{P} = \mathcal{P}^{\otimes N}
\]
and one might naively expect that $[1 \times \cdots \times N]^*\ell_{i,s}^k = N^k\ell_{i,s}^k$. This expectation is false, and it is the correction to this expectation that is piecewise polynomial:

\begin{theorem}[Corollary \ref{cor:pp_corr_poincare_precise}]\label{thm:pp_corr_poincare}
    For $1\le i \le s-1$, there exists a piecewise linear function $c_{\mathcal P,i,N}$ on $\Sigma_N^{(s)}$ such that
    \[
    (1\times\dots\times N)^{\ast}\ell_{i,s}=N \cdot b^{\ast} (\ell_{i,s}) + \Phi(c_{\mathcal P,i,N}).
    \]
\end{theorem}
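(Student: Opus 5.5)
The plan is to compare two line bundles on $\barA^{(s)}_N$, namely $(1\times\dots\times N)^*p_{i,s}^*\P$ and $b^*p_{i,s}^*\P^{\otimes N}$, and to show that their ratio is the line bundle $\CO(c_{\mathcal P,i,N})$ attached to a strict piecewise linear function on $\Sigma^{(s)}_N$. Taking first Chern classes then gives the statement, because $\Phi$ sends a piecewise linear function to the class of the corresponding toroidal Cartier divisor. This ratio is represented by a $\Glog$-torsor, and we compute it in three steps.

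First, work on the interior. Over $G^s$ the biextension structure of $\P|_{G^2}$ gives a canonical isomorphism
\[
(1\times N)^*\P \cong \P^{\otimes N}
\]
on $G^2$, and hence on $G^s$ after pulling back along $p_{i,s}$. The biextension is bilinear in the second variable, and both sides are trivialized along the unit sections. The canonical isomorphism therefore gives a rational section of the ratio line bundle $\mathcal L:=(1\times\dots\times N)^*\ell$-bundle$\otimes b^*p_{i,s}^*\P^{\otimes -N}$ on $\barA^{(s)}_N$. This section is nowhere vanishing on the preimage of $G^s$. Hence $\mathcal L\cong\CO(\sum a_\rho D_\rho)$, where the sum runs over boundary divisors $D_\rho$ of $\barA^{(s)}_N$, that is, over rays $\rho$ of $\Sigma^{(s)}_N$. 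The pair $(\barA^{(s)}_N,\partial)$ is log smooth and simple normal crossings after the log blowup, so Proposition \ref{pro:logline} applies. Consequently any such divisor is $\Phi$ of the piecewise linear function taking value $a_\rho$ at the primitive generator of $\rho$. It remains to check that the function is strict and well defined on the quotient fan.

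Second, to control the coefficients $a_\rho$, pass to the log Poincar\'e bundle of \S\ref{sec:Plog}. As a $\Glog$-biextension on $(\logA)^2$, $\P^{\log}$ satisfies $(1\times N)^*\P^{\log}\cong(\P^{\log})^{\otimes N}$ exactly, and this identity holds everywhere. By Remark \ref{rem:Ptrop}, the line bundle $\P$ on $\barA^{(2)}$ is the lift of $\P^{\log}$ determined by the strict piecewise linear function $\mathsf p^{\trop}$ on $\mathcal U^2\Sigma^{(2)}$. Pulling back both sides to $\barA^{(s)}_N$ and comparing lifts, the ratio becomes the difference of two lifts of the same log line bundle, which is given by
\[
c_{\mathcal P,i,N}(\mathsf x,\mathsf t)=\mathsf p^{\trop}(\mathsf x_i,N\mathsf x_s,\mathsf t)-N\,\mathsf p^{\trop}(\mathsf x_i,\mathsf x_s,\mathsf t)
\]
(up to a global sign convention). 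The quadratic parts $-\mathsf x_i\cdot N\mathsf x_s/\mathsf t$ cancel, so the remainder is built from fractional parts and is therefore genuinely piecewise linear. The quasi-periodicity rule in Remark \ref{rem:Ptrop} shows that this difference is $\ZZ^s$-invariant, with the cocycle terms matching under $(v_i,v_s)\mapsto(v_i,Nv_s)$. It is linear on each cone of $\mathcal U^s\Sigma^{(s)}_N$, because $\Sigma^{(s)}_N$ refines the pullback of the hyperplanes $N\mathsf x_s-\mathsf x_i\in\mathsf t\ZZ$ via Lemma \ref{lem:Artin_fan_A_N}. It is integral on the lattice points. Hence it descends to a strict PL function on $\Sigma^{(s)}_N$.

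The main obstacle is this comparison of lifts. One needs a precise statement, presumably the content of Lemma \ref{lem:glob_secs_ptrop}, that line bundle lifts of a fixed log line bundle on a log smooth model correspond bijectively and functorially to quasi-periodic strict PL functions on the universal cover. Here "functorially" means compatibly with pullback along the toroidal maps $N_s$ and $b$. Once this is established, the rest is bookkeeping. I would first prove it by reducing, \'etale locally near the boundary, to the toric local model of the proof of Proposition \ref{pro:Atrop}. In that local model, the kernel of $H^1(\Glog)\to H^1(\mathbb G_{m,\trop})$ is computed by global sections of $\overline M^{\gp}$, which are exactly the PL functions.
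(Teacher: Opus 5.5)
Your Step 2 is essentially the paper's proof. There, $\mathcal P$ is written as the preimage in $\mathcal P^{\log}$ of a section of $\mathcal P^{\trop}$, and such sections are identified with quasi-periodic strict PL functions on $\mathcal U^2\Sigma^{(2)}$ (Lemmas \ref{lem:glob_secs_ptrop} and \ref{lem:P_trop_triv}). The ratio $(1\times N)^*\mathcal P\otimes b^*\mathcal P^{\otimes -N}$ is then read off as the preimage of $c_{\mathcal P,N}=(1\times N)^*\Psi(c_{\mathcal P})-Np_N^*\Psi(c_{\mathcal P})$ in the trivial $\Glog$-torsor. Since $\Psi(c_{\mathcal P})=\mathsf p^{\trop}$, this is your formula, and the case of general $s$ is the pullback along $\pi_i:\Sigma^{(s)}_N\to\Sigma^{(2)}_N$.

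The functoriality you single out as the main obstacle is formal in this language: taking preimages of sections of the tropical torsor commutes with pullback. So no local toric computation is needed.

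Two points need fixing.

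First, Remark \ref{rem:Ptrop} does not say that $\mathcal P$ is the lift attached to $\mathsf p^{\trop}$. The lifts form a torsor under $\PL(\Sigma^{(2)})=\ZZ\mathsf t$, so a priori $\mathcal P$ corresponds to $\Psi(c_{\mathcal P})+\lambda\mathsf t$, and the correction is $c_{\mathcal P,N}+(1-N)\lambda\mathsf t$. The paper kills $\lambda$ by pulling back along $e^2$. The left side pulls back to $0$, and so does $c_{\mathcal P,N}$, since it vanishes on the integral rays $(v\mathsf t,\mathsf t)$. This leaves $(1-N)\lambda D=0$ with $D\neq 0$ in $\Pic(B)$. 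Your Step 1 can supply the same normalization. Your rational section is a unit on $b^{-1}(G^s)$, so the correction must vanish on the integral rays, where $c_{\mathcal P,N}$ vanishes and $\mathsf t$ does not (again using $D\neq 0$).

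Second, $\barA^{(s)}_N$ is not smooth and its boundary is not simple normal crossings, because $\Sigma^{(s)}_N$ has non-simplicial cones. For example, for $s=2$ and $N=4$ the cell $\{0\le \mathsf x_1\le \mathsf x_2,\ 2\mathsf x_2\le \mathsf t,\ 4\mathsf x_2-\mathsf x_1\ge \mathsf t\}$ has as link at $\mathsf t=1$ the quadrilateral with vertices $(0,\tfrac14),(0,\tfrac12),(\tfrac12,\tfrac12),(\tfrac13,\tfrac13)$; compare Figure \ref{fig:Sigma_N_bar}. Hence Proposition \ref{pro:logline} does not apply, and not every $\sum a_\rho D_\rho$ is Cartier. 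This is harmless. The divisor of a rational section of a line bundle is Cartier, and a Cartier divisor supported on the boundary of the log smooth space $\barA^{(s)}_N$ is a global section of $\overline M^{\gp}$. That is, it is a strict PL function on its Artin fan $\Sigma^{(s)}_N$ (Lemma \ref{lem:Artin_fan_A_N}).

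With that repair, your Step 1 alone already proves the bare existence statement, without the log Poincar\'e bundle; this is a genuinely more elementary route than the paper's. What it does not give is the explicit form of $c_{\mathcal P,i,N}$ in Lemma \ref{lem:cPi}. That explicit form is what the weight computations in \S\ref{sec:prf_wt_thm} actually use, and for it your Step 2 (that is, the paper's argument, with the normalization above) is needed.
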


This introduces the second complication: $c_{\P,i,N}$ is a piecewise linear function
on $\Sigma_{N}^{(s)}$ which is not pulled back from $\Sigma^{(s)}$, so the
polynomiality result in \eqref{eq:PPisNpoly} is not sufficient. The polynomiality result
must be extended at least for products of piecewise polynomials on
$\Sigma^{(s)}$ with powers of $c_{\P,i,N}$. However, it is not straightforward to
give a clean statement of the required result, as polynomiality is not a well-defined notion for
arbitrary piecewise polynomials on $\Sigma_N^{(s)}$. This is due to the fact that the spaces
$\Sigma_N^{(s)}$ do not map to one another as $N$ varies.
Thus the result depends on the precise expression for
$c_{\P,i,N}$. Of course, this class is not arbitrary: the expectation
\[
(1 \times \cdots \times N)^*\mathcal{P}^{\log} =(\mathcal{P}^{\log})^{\otimes N}
\]
is true for the logarithmic Poincar\'e bundle by \cite[Theorem 4.4.1]{Molcho_Wise_2022}, and therefore for its induced
$\mathbb{G}_m^{\textup{trop}}\cong \PL$-torsor. This torsor is not trivial on
$\Sigma^{(s)}$, but it becomes trivial after pulling back to the universal
cover $\mathcal{U}^{s}\Sigma^{(s)}$. The practical consequence is that the
correction term $c_{\mathcal P,i,N}$ in Theorem \ref{thm:pp_corr_poincare} has
the following explicit description.

\begin{lemma}\label{lem:cPi}
    There exists a piecewise linear function $c_{\mathcal P,i}$ on $\mathcal U^{s}\Sigma^{(s)}$ (see Remark \ref{rem:Ptrop}) which
    satisfies
    \[
    c_{\mathcal P,i,N} = (1\times\dots\times N)^{\ast}(c_{\mathcal P,i})-N\cdot b^{\ast}(c_{\mathcal P,i}).
    \]
\end{lemma}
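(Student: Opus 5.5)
The plan is to read both sides of the identity as statements about the tropicalization of the extended Poincaré bundle, and to compare the two natural lifts of $(1\times\dots\times N)^*\mathcal P$ on $\barA^{(s)}_N$.

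\textbf{Setting up the tropical description.} By Remark \ref{rem:Ptrop} and Lemma \ref{lem:glob_secs_ptrop}, $\P$ on $\barA^{(2)}$ is the line bundle lifting $\P^{\log}$ that corresponds to the strict piecewise linear function $\mathsf p^{\trop}$ on $\mathcal U^2\Sigma^{(2)}$. This function has the prescribed quasi-periodicity and agrees with $-\mathsf x_1\mathsf x_2/\mathsf t$ at integral rays. Pulling back along $p_{i,s}$, I set
\[
c_{\mathcal P,i}:=p_{i,s}^*\mathsf p^{\trop}\big|_{\mathcal U^s\Sigma^{(s)}}.
\]
This is the piecewise linear function on $\mathcal U^s\Sigma^{(s)}$ representing the lift $\P_{i,s}=p_{i,s}^*\P$ of $p_{i,s}^*\P^{\log}$.

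\textbf{Comparing the two lifts on $\Sigma^{(s)}_N$.} The key input is \cite[Theorem 4.4.1]{Molcho_Wise_2022}: $(1\times\dots\times N)^*\P^{\log}=(\P^{\log})^{\otimes N}$ as log line bundles, hence also as $\PL$-torsors after tropicalizing. On $\barA^{(s)}_N$ this gives two genuine line bundles, $N_s^*\P_{i,s}$ and $b^*\P_{i,s}^{\otimes N}$, that represent the same log line bundle. By the exact sequence $H^0(\overline M^{\gp})\to H^1(\Gm)\to H^1(\Glog)$ together with Proposition \ref{pro:logline}, two such lifts differ by $\CO(\varphi)$ for a global section $\varphi$ of $\overline M^{\gp}$, i.e.\ a strict piecewise linear function on $\Sigma^{(s)}_N$ (Lemma \ref{lem:Artin_fan_A_N}). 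Taking $c_1$, this yields Theorem \ref{thm:pp_corr_poincare} with $c_{\P,i,N}=\varphi$. To identify $\varphi$, I will work on the universal cover $\mathcal U^s\Sigma^{(s)}_N$, where both torsors are trivialized, by $(1\times\dots\times N)^*c_{\P,i}$ and by $N\cdot b^*c_{\P,i}$ respectively. The difference of two trivializations of isomorphic torsors, compatible with the identification of \cite{Molcho_Wise_2022}, is exactly the piecewise linear function giving the discrepancy. This produces the formula
\[
c_{\mathcal P,i,N}=(1\times\dots\times N)^*(c_{\mathcal P,i})-N\cdot b^*(c_{\mathcal P,i}).
\]

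\textbf{Checking descent and normalization.} It remains to verify that this difference is $\ZZ^s$-periodic, so that it descends to $\Sigma^{(s)}_N$. The quasi-periodicity of $\mathsf p^{\trop}$ under a translate $v$ adds the linear term $\mathsf x_iv_s+\mathsf x_sv_i+\mathsf t v_iv_s$. Under $\mathsf x_s\mapsto N\mathsf x_s$ the first term picks up $\mathsf x_i Nv_s+N\mathsf x_s v_i+\mathsf t N v_iv_s$; the second, with the translation $v_s$ replaced by $Nv_s$ under the cover map, picks up $N$ times $\mathsf x_iv_s+\mathsf x_sv_i+\mathsf tv_iv_s$. These agree, so the difference is invariant. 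I also need the normalization to be right: both bundles are trivial along the unit sections and agree on $G^s$, so no extra term pulled back from $B$ (a multiple of $\mathsf t$) appears. I will check this by evaluating at the cone over the unit stratum, where both functions vanish.

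\textbf{Main obstacle.} I expect the hardest step to be the last part of the middle step: showing that the torsor isomorphism of \cite{Molcho_Wise_2022} is compatible with the chosen trivializations on the universal cover, so that the discrepancy is literally the difference of pulled-back functions rather than that difference plus some other periodic linear function. I would handle this by noting that both sides are determined by their restriction to $G^s$ (codimension $\ge2$ complement in the relevant chart) together with their values on rays, and comparing them ray by ray.
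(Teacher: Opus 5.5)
Your proposal is essentially the paper's proof of this lemma (Corollary~\ref{cor:pp_corr_poincare_precise}). Both realize $\mathcal P$ as the preimage in $\mathcal P^{\log}$ of a section of the tropical torsor, use $(1\times N)^*\mathcal P^{\sfpp}\cong(\mathcal P^{\sfpp})^{\otimes N}$ to read off the discrepancy as the difference of pulled-back functions on the universal cover, check periodicity, and remove a residual multiple of $\mathsf t$ by pulling back along the unit section, on whose ray $c_{\mathcal P,N}$ vanishes.

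There are two small corrections. First, Remark~\ref{rem:Ptrop} and Lemma~\ref{lem:glob_secs_ptrop} only show that $\mathcal P$ corresponds to $\mathsf p^{\trop}+\lambda\mathsf t$ for an a priori unknown $\lambda\in\ZZ$; your unit-section step is exactly what disposes of the resulting $(1-N)\lambda\mathsf t$. Second, the codimension argument in your last paragraph is not available on $\barA^{(s)}_N$, because the exceptional divisors of $b$ lie outside $G^s$; the compatibility you worry about is instead supplied by the canonical identification of $\mathcal P^{\trop}$ with the pullback of $\mathcal P^{\sfpp}$ in Lemma~\ref{lem:glob_secs_ptrop}(a).
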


See Corollary \ref{cor:pp_corr_poincare_precise} for a more precise statement. The corresponding statement for $\theta_s$ is Corollary \ref{cor:pp_corr_theta_precise}, which also appears in \cite{BGHdJ}.

To make use of this structure, we move the computation to the universal cover. This does not resolve the issue automatically, as the full ring $\spp(\mathcal{U}^s\Sigma^{(s)})$ is too large: not all of its elements are polynomial in $N$ under $[1 \times \cdots \times N]^*$. However, by the lemma and Theorem \ref{thm:pp_corr_poincare}, it suffices to prove that $b_*(1\times \cdots \times N)^*(f)$ is polynomial in $N$
for $f$ contained in some subring of $\spp^{\ast}(\mathcal U^{s}\Sigma^{(s)})$
containing the functions $c_{\mathcal P,i}$ and the pullback of any piecewise
polynomial on $\Sigma^{(s)}$. This introduces the third complication, which is
finding an adequate candidate ring: small enough that its elements satisfy the above polynomiality, big enough to contain $c_{\mathcal P,i}$ and $\sPP(\Sigma^{(s)})$, and containing a convenient basis in which the weight calculations can actually be carried out.

Morally, the ring we choose is the subring generated by piecewise polynomials
and pullbacks of sections of nontrivial $\PL$-torsors. More precisely, we
introduce the following definitions:

\begin{definition}
	Let $\mathcal{U}\Sigma^{(s)}$ be the cone complex obtained by restricting $\mathcal{U}^{s}\Sigma^{(s)}$ to the region $0\le \mathsf{x}_i\le \mathsf{t}$ for $i\in \{1,\dots,s-1\}$ and let $\tilde{\Sigma}^{(s)}$ be the cone complex obtained by restricting $\mathcal{U}^{s}\Sigma^{(s)}$ to the region $0\le \mathsf{x}_i\le \mathsf{t}$ for $i\in \{1,\dots,s\}$
	(see Definition \ref{def:cone_complexes} and Figure \ref{fig:Sigma_N}).
	\begin{enumerate}[label=(\alph*)]
		\item We write $\spp^{\ast}(\tilde{\Sigma}^{(s)})[m]^{\mathsf{gl}}$ for the ring of polynomials $f(m)\in \spp^{\ast}(\tilde{\Sigma}^{(s)})[m]$
			such that
			\[
				f(q-1)|_{\{\mathsf{x}_s=\mathsf{t}\}}=f(q)|_{\{\mathsf{x}_s=0\}}\,,\quad q\in \mathbb{Z}.
			\]
		\item Moreover, $\Psi:\spp^{\ast}(\tilde{\Sigma}^{(s)})[m]^{\mathsf{gl}}\to \spp^{\ast}(\mathcal U\Sigma^{(s)})$ is defined by
			$$\Psi(f)(\mathsf{x}_1,\dots,\mathsf{x}_s,\mathsf{t})=f(m)(\mathsf{x}_1,\dots,\mathsf{x}_{s-1},\mathsf{x}_s-m\mathsf{t},\mathsf{t}),$$
			where $m$ is the unique integer satisfying $m\mathsf{t}\le \mathsf{x}_s<(m+1)\mathsf{t}$.
		\item For a maximal cone $\sigma \in \tilde{\Sigma}^{(s)}$ and $f\in \spp^{\ast}(\tilde{\Sigma}^{(s)})[m]^{\mathsf{gl}}$, we define $f|_{\sigma}$ to be the polynomial in $\mathsf{x}_1,\dots,\mathsf{x}_s,\mathsf{t},m$ obtained by restricting each coefficient of $f$ (viewed as a polynomial in $m$) to $\sigma$.
	\end{enumerate}
\end{definition}

The ring of piecewise polynomials $\spp^{\ast}(\Sigma^{(s)})$ is canonically identified with the subring of $\ZZ$-periodic piecewise polynomial functions on $\mathcal U\Sigma^{(s)}$ whose restrictions to ${\mathsf{x}_i=0}$ and ${\mathsf{x}_i=\mathsf{t}}$ agree under translation for each $i=1,\dots,s-1$.
Equivalently, it is the image under $\Psi$ of the subring of $\spp^{\ast}(\tilde{\Sigma}^{(s)})[m]^{\mathsf{gl}}$ consisting of elements that are constant in $m$ and satisfy the same boundary conditions. However, not every element of the ring $\spp^{\ast}(\tilde{\Sigma}^{(s)})[m]^{\mathsf{gl}}$ maps to a periodic piecewise polynomial under $\Psi$, and in fact the piecewise linear functions $c_{\mathcal P,i}$ from Lemma \ref{lem:cPi} are contained in it. Thus, to prove the polynomiality and degree estimates,
it suffices to restrict to $\spp^{\ast}(\tilde{\Sigma}^{(s)})[m]^{\mathsf{gl}}$. For this, we require the following statement:

\begin{lemma}[Lemma \ref{lem:combinat_lift} and Corollary \ref{cor:push_pull_trop_geom}]
	\label{lem:exists_explicit_lift}
	There exists an explicit map $\mathfrak{N}_{N,m}^{\ast}:\spp^{\ast}(\tilde{\Sigma}^{(s)})[m]^{\mathsf{gl}}\to \spp^{\ast}(\tilde{\Sigma}^{(s)})[m]^{\mathsf{gl}}$
	lifting the action of $[1\times\dots\times N]^{\ast}$ on $\spp^{\ast}(\Sigma^{(s)})$.
\end{lemma}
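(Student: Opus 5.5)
The plan is to construct $\mathfrak{N}_{N,m}^{\ast}$ as a combinatorial shadow of the correspondence $b_*\circ N_s^*$ on the Artin fans, worked out on the universal cover, where everything is explicit. First I would describe $\Sigma^{(s)}_N$ concretely. The map $1\times\dots\times N$ on $\mathcal U^s\Sigma^{(s)}$ is the linear map $(\mathsf{x}_1,\dots,\mathsf{x}_s,\mathsf{t})\mapsto(\mathsf{x}_1,\dots,N\mathsf{x}_s,\mathsf{t})$. Taking the f.s. fiber product in \eqref{eq:n} tropically, $\mathcal U^s\Sigma^{(s)}_N$ is the common refinement of $\mathcal U^s\Sigma^{(s)}$ with the preimage of $\mathcal U^s\Sigma^{(s)}$ under this map. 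The latter is the subdivision by the hyperplanes $N\mathsf{x}_s\in\mathsf{t}\ZZ$ and $\mathsf{x}_i-N\mathsf{x}_s\in\mathsf{t}\ZZ$. The result is periodic under the lattice $\ZZ^{s-1}\times N^{-1}\ZZ$ intersected with $\ZZ^s$, and this is what the later Lemma \ref{lem:Artin_fan_A_N} should record.

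Restricted to the fundamental domain $\tilde\Sigma^{(s)}$, with $0\le \mathsf{x}_s\le \mathsf{t}$, the slab splits into $N$ sub-slabs $k\mathsf{t}\le N\mathsf{x}_s\le(k+1)\mathsf{t}$ for $k=0,\dots,N-1$. On the $k$-th sub-slab the map sends $\mathsf{x}_s$ to $N\mathsf{x}_s-k\mathsf{t}\in[0,\mathsf{t}]$, up to the deck shift by $k$ in the last coordinate. So I would define, for $f\in\spp^{\ast}(\tilde\Sigma^{(s)})[m]^{\mathsf{gl}}$, the pullback $N_s^*f$ on the $k$-th sub-slab as
$$(N_s^*f)(m)(\mathsf{x},\mathsf{t}) = f(Nm+k)(\mathsf{x}_1,\dots,\mathsf{x}_{s-1},N\mathsf{x}_s-k\mathsf{t},\mathsf{t}).$$
The gluing condition in the definition of $(\cdot)^{\mathsf{gl}}$ is exactly what makes these pieces agree across the walls $N\mathsf{x}_s=k\mathsf{t}$. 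It also gives the $\mathsf{gl}$ condition between the slabs $m$ and $m+1$, because $f(N(m-1)+N-1)$ at $\mathsf{x}_s=\mathsf{t}$ matches $f(Nm)$ at $0$. I would then check that this operation is compatible with $\Psi$, that is, $\Psi(N_s^*f)=(1\times\dots\times N)^*\Psi(f)$ on $\mathcal U\Sigma^{(s)}$. This is a direct substitution.

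Next I would set $\mathfrak N_{N,m}^{\ast}:=b_*\circ N_s^*$, where $b_*$ is the Brion-type pushforward from the refinement $\tilde\Sigma^{(s)}_N$ back to $\tilde\Sigma^{(s)}$ (\cite{Brion}, \cite[Proposition~76]{LogarithmicTauPandha2024}). This pushforward is applied coefficientwise in $m$, cone by cone: on each maximal cone $\sigma$ of $\tilde\Sigma^{(s)}$ it sums the restrictions to the subcones $\tau\subset\sigma$, weighted by the ratio of products of primitive facet normals. Since $b$ is a subdivision respecting the walls $\mathsf{x}_s\in\{0,\mathsf{t}\}$, the faces $\mathsf{x}_s=0$ and $\mathsf{x}_s=\mathsf{t}$ are not subdivided in a way that breaks the gluing. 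Brion pushforward commutes with restriction to these boundary faces, because the subdivision there is induced, so $b_*$ preserves the $\mathsf{gl}$ condition. Finally I would show that $\mathfrak N_{N,m}^{\ast}$ lifts $[1\times\dots\times N]^*$. For $f$ constant in $m$ and periodic, $\Psi(f)$ is the pullback of a class on $\Sigma^{(s)}$, and the formula reduces to the tropical identity $[N_s]^*=b_*N_s^*$. This identity is compatible with $\Phi$ by \cite[Theorem B]{A_case_study_of_Molcho_2021} together with functoriality of operational Chow under pullback and proper pushforward along log modifications.

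The main obstacle is this last compatibility, specifically matching the geometric $b_*$ on $\barA^{(s)}_N$ with the combinatorial Brion pushforward. Two points need care: the cone stack $\Sigma^{(s)}$ has monodromy and self-gluings, and $\barA^{(s)}_N\to\barA^{(s)}$ may fail to be strict over the Artin fan. I would handle both by working on the universal cover, where the cones are honest simplicial cones in a vector space. There the log blowup is a toric subdivision, and I would invoke the known compatibility of strict piecewise polynomials with pushforward along subdivisions, then descend using $\ZZ^s$-equivariance. The gluing bookkeeping in the $m$-variable should be routine once the $k$-indexing above is fixed.
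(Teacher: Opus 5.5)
Your construction of $\mathfrak{N}_{N,m}^{\ast}$ is the one in the paper. On the $k$-th sub-slab you pull back via $m\mapsto Nm+k$ and $\mathsf{x}_s\mapsto N\mathsf{x}_s-k\mathsf{t}$ (the paper's $\mathsf{z}_{k+1}^{(N)}$), and then push forward by Brion's formula, cone by cone and coefficientwise in $m$. The paper only goes further in making Brion's formula explicit: it passes to the finer subdivision $\mathcal U\overline{\Sigma}_N^{(s)}$ and computes the support functions (Corollary~\ref{cor:supp_function_sigma}). The identity $\Psi\circ\mathfrak N_{N,m}^{\ast}=\mathfrak N_N^{\ast}\circ\Psi$ in Lemma~\ref{lem:combinat_lift} is exactly your translation bookkeeping. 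Your argument that the $\mathsf{gl}$ condition survives $b_*$ is also fine. The faces $\mathsf{x}_s\in\{0,\mathsf{t}\}$ are single cones of the refinement, and on an unsubdivided cone the Brion pushforward restricts to the restriction of the input.

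The step that does not close is the comparison with geometry, namely $b_*\Phi(f)=\Phi(p_{N,*}f)$ together with $(1\times\dots\times N)^*\Phi=\Phi\,(1\times\dots\times N)^*$.
\begin{itemize}
\item Functoriality of operational Chow does not by itself give the pushforward identity.
\item Passing to the universal cover of the cone complex does not address the real issue, which is geometric. You need to know that $b$ is the base change of the subdivision $\mathcal A_{\Sigma_N^{(s)}}\to\mathcal A_{\Sigma^{(s)}}$ along the strict smooth map $\barA^{(s)}\to\mathcal A_{\Sigma^{(s)}}$. This is precisely the strictness you were unsure about.
\end{itemize}
This is Lemma~\ref{lem:Artin_fan_A_N}, and it is proved directly. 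By \cite{KKN7} there is a cartesian square $\barA^{(s)}\cong(\logA)^s\times_{\mathcal A_{(\tropA)^s}}\mathcal A_{\Sigma^{(s)}}$. Since the two subdivisions combine to $\Sigma_N^{(s)}$, the f.s.\ fiber product \eqref{eq:n} is $(\logA)^s\times_{\mathcal A_{(\tropA)^s}}\mathcal A_{\Sigma_N^{(s)}}$. Transitivity of fiber products then gives $\barA_N^{(s)}\cong\barA^{(s)}\times_{\mathcal A_{\Sigma^{(s)}}}\mathcal A_{\Sigma_N^{(s)}}$, with strict smooth vertical maps.

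Flat base change along these maps yields $b_*\Phi=\Phi\,p_{N,*}$ (Corollary~\ref{cor:push_pull_trop_geom}). The pullback identity follows from the commutative square with $(1\times\dots\times N)$ and functoriality of $\CHop^*(\mathcal A_\Sigma)\cong\spp^*(\Sigma)$. Both this identification and Brion's formula hold for cone stacks directly, so the monodromy of $\Sigma^{(s)}$ causes no trouble and no descent from the universal cover is needed.
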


\begin{definition}
	For $f\in \spp^{\ast}(\tilde{\Sigma}^{(s)})[m]^{\mathsf{gl}}$, we define $\deg_{m,\mathsf{x}_s}(f)$
	to be the maximal total degree of $f|_{\sigma}$ in $(m,\mathsf{x}_s)$
	as $\sigma$ ranges over the maximal cones in $\tilde{\Sigma}^{(s)}$.
\end{definition}

The following is the main combinatorial result on the weights of piecewise polynomials.

\begin{theorem}
	\label{thm:polynomiality}
	Let $f\in \spp^{\ast}(\tilde{\Sigma}^{(s)})[m]^{\mathsf{gl}}$ and set $d:=\deg_{m,\mathsf{x}_s}(f)$. For every maximal cone
	$\sigma$ of $\tilde{\Sigma}^{(s)}$, the restriction
	$$\left.\mathfrak N_{N,m}^\ast(f)\right|_\sigma$$
	is a polynomial in $N$ of degree at most $d$.
\end{theorem}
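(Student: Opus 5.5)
The plan is to compute $\mathfrak N_{N,m}^\ast(f)|_\sigma$ explicitly as a finite sum over the cones of the subdivision $\Sigma_N^{(s)}$ lying over $\sigma$, and then to show that this sum is a polynomial in $N$ with controlled degree. I will use the explicit description of $\mathfrak N_{N,m}^\ast$ from Lemma \ref{lem:combinat_lift} and Corollary \ref{cor:push_pull_trop_geom}: it is pullback along $(\mathsf{x}_1,\dots,\mathsf{x}_s,\mathsf{t})\mapsto(\mathsf{x}_1,\dots,N\mathsf{x}_s,\mathsf{t})$ on the universal cover, followed by $b_\ast$ computed with Brion's formula.

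\textbf{Step 1: decompose into slabs.} Fix a maximal cone $\sigma$ of $\tilde\Sigma^{(s)}$, cut out by an ordering of $0,\mathsf{x}_1,\dots,\mathsf{x}_s,\mathsf{t}$. The cones of $\Sigma^{(s)}_N$ over $\sigma$ are obtained by cutting $\sigma$ along the hyperplanes $N\mathsf{x}_s=k\mathsf{t}$ and $N\mathsf{x}_s-k\mathsf{t}=\mathsf{x}_j$, for $0\le k\le N$ and $j<s$. I group them into slabs indexed by $k=0,\dots,N-1$, where slab $k$ is $k\mathsf{t}\le N\mathsf{x}_s\le(k+1)\mathsf{t}$. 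On a cone $\tau$ in slab $k$, the pulled-back function is $f(q)|_{\sigma'}(\mathsf{x}_1,\dots,\mathsf{x}_{s-1},N\mathsf{x}_s-k\mathsf{t},\mathsf{t})$, with $q=m+k$ (after the shift by $m$) and $\sigma'$ the cone of $\tilde\Sigma^{(s)}$ containing the image of $\tau$. The key observation is that inside slab $k$ the combinatorial type of the cones does not depend on $k$ or $N$. Indeed, the affine map $\mathsf{x}_s\mapsto N\mathsf{x}_s-k\mathsf{t}$ identifies slab $k$ with the fixed region $0\le\mathsf{x}_s\le\mathsf{t}$, intersected with the conditions defining $\sigma$. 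So each slab contributes a sum over a fixed finite set of cone types, and the gluing condition defining $\spp^{\ast}(\tilde\Sigma^{(s)})[m]^{\mathsf{gl}}$ ensures that the pieces agree across the walls $N\mathsf{x}_s=k\mathsf{t}$.

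\textbf{Step 2: apply Brion's formula.} Brion's formula \cite{Brion}, in the form of \cite[Proposition~76]{LogarithmicTauPandha2024}, writes $b_\ast g|_\sigma$ as $\prod_{\rho\in\sigma(1)}u_\rho\cdot\sum_{\tau\subseteq\sigma}g|_\tau/\prod_{\rho\in\tau(1)}u_\rho$. Here the $u_\rho$ are the dual linear forms, which for a cone $\tau$ in slab $k$ are affine-linear in $(k,N)$ after the substitution. Each summand is therefore a rational function $R_\tau(k,N;\mathsf{x},\mathsf{t})$. Assign weight $1$ to $m$, $\mathsf{x}_s$, $k$ and $N$. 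The numerator has weighted degree at most $d=\deg_{m,\mathsf{x}_s}(f)$, since $\mathsf{x}_s\mapsto N\mathsf{x}_s-k\mathsf{t}$ and $m\mapsto m+k$ preserve this weighting. The denominators are homogeneous of the appropriate weight, and the normalization factor coming from the index of the sublattice contributes a factor $1/N$.

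\textbf{Step 3: polynomiality and the degree bound.} This is the main obstacle: summing over $k$ must produce a polynomial in $N$, not merely a rational function. My first attempt is to use the fact that the total Brion sum over any complete subdivision of $\sigma$ is a genuine polynomial in $\mathsf{x},\mathsf{t}$. I would apply this slab by slab. The contribution of a full slab, viewed as the pushforward of a piecewise polynomial from the subdivided slab to the unsubdivided slab, is polynomial in $\mathsf{x},\mathsf{t}$ and in the parameters $k,N$. The only non-cancelling denominators are those attached to the interior walls $N\mathsf{x}_s=k\mathsf{t}$. These telescope between adjacent slabs because of the gluing identity $f(q-1)|_{\mathsf{x}_s=\mathsf{t}}=f(q)|_{\mathsf{x}_s=0}$, leaving only the outer walls $k=0$ and $k=N$, which are walls of $\sigma$ itself. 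What remains is $\sum_{k=0}^{N-1}P(k,N)$ with $P$ polynomial of weighted degree at most $d+1$, multiplied by the factor $1/N$. By Faulhaber's formula, $\sum_{k<N}k^a$ is a polynomial in $N$ of degree $a+1$ with no constant term. The sum is therefore divisible by $N$, and the result is a polynomial in $N$ of degree at most $d$. If the telescoping does not go through cleanly, the fallback is to prove polynomiality first for generators of $\spp^{\ast}(\tilde\Sigma^{(s)})[m]^{\mathsf{gl}}$ (products of $m^a$, $\mathsf{x}_s^b$ and piecewise polynomials pulled back from the first $s-1$ coordinates) and then to check the $N$-degree by computing the leading homogeneous term directly.
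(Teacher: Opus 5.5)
Your treatment of polynomiality follows the paper's route. You decompose $\sigma$ into the slabs $0\le \mathsf{z}_i^{(N)}\le \mathsf{t}$ and apply Brion's formula. Denominators from walls internal to a slab cancel by continuity, and the poles on the walls $N\mathsf{x}_s=k\mathsf{t}$ telescope across adjacent slabs by the gluing condition. In the paper one writes $f|_{\sigma_{\rho_0}}=c(m)+\mathsf{x}_sR_0$ and $f|_{\sigma_{\rho_{s-1}}}=c(m+1)+(\mathsf{x}_s-\mathsf{t})R_s$. The $c$-terms of slab $i$ then combine to $\frac{(N-i+1)\mathsf{x}_s}{\mathsf{z}_i^{(N)}}c(Nm+i-1)-\frac{(N-i)\mathsf{x}_s}{\mathsf{z}_{i+1}^{(N)}}c(Nm+i)$, and these sum to $c(Nm)$.

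Several of your details are off, however.
\begin{itemize}
\item On slab $k$ the variable $m$ is replaced by $Nm+k$, not $m+k$.
\item The cones in a slab are not of a fixed type. They are non-unimodular with indices $i-1$ and $N-i$ (Corollary~\ref{cor:supp_function_sigma}), which is where the $(k,N)$-dependence of the Brion coefficients comes from; there is no global factor $1/N$.
\item The edge slabs $i=1,N$ have fewer cones. Uniformity of the slab formula in $k$ holds only because the corresponding coefficients vanish identically there.
\item Your weighting is not preserved, since $N\mathsf{x}_s$ has weight $2$. The right bookkeeping is total degree in $(k,N)$, with $m$ and $\mathsf{x}_s$ treated as coefficients; then $m\mapsto Nm+k$ and $\mathsf{x}_s\mapsto N\mathsf{x}_s-k\mathsf{t}$ have degree $1$.
\end{itemize}

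The genuine gap is the degree bound. With the correct bookkeeping, each slab contributes, besides the $c$-terms, a polynomial of degree at most $d$ in $(k,N)$. Summing over the $N$ slabs therefore gives only degree $\le d+1$. Your Faulhaber step does not improve this: even on your own terms, $\frac1N\sum_{k<N}P(k,N)$ with $\deg P\le d+1$ has degree up to $d+1$, not $d$.

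What is needed is that the top homogeneous part in $(k,N)$ of each slab's contribution vanishes, and this is a real cancellation. Take $s=1$ and $f=\mathsf{x}_1(\mathsf{t}-\mathsf{x}_1)$, so $d=2$. The slab contribution $S_{i,0}$ of Lemma~\ref{lem:contribution-formula-m} consists of $(i-1)(\mathsf{t}-\mathsf{x}_1)(\mathsf{t}-\mathsf{z}_i^{(N)})$, $\mathsf{z}_i^{(N)}(\mathsf{t}-\mathsf{z}_i^{(N)})$ and $(N-i)\mathsf{x}_1\mathsf{z}_i^{(N)}$. Each has degree $2$ in $(i,N)$, yet their sum is $N\mathsf{x}_1(\mathsf{t}-\mathsf{x}_1)$, so the total is $N^2\mathsf{x}_1(\mathsf{t}-\mathsf{x}_1)$.

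The paper proves this cancellation in general by passing to the degree-$d$ parts $G_h$ of $f|_{\sigma_{\rho_h}}$ in $(m,\mathsf{x}_s)$, with $C=G_0(m,0)$.
\begin{itemize}
\item The top parts of $R_0$, $H^f_{\rho,h}$ and $R_s$ are $(G_0-C)/\mathsf{x}_s$, $(G_h-G_{h-1})/\mathsf{x}_s$ and $(G_{s-1}-C)/\mathsf{x}_s$.
\item The sums over $h$ telescope.
\item The identity $\mathsf{z}_i^{(N)}+(i-1)(\mathsf{t}-\mathsf{x}_s)-(N-i)\mathsf{x}_s=\mathsf{x}_s$ collapses the would-be top part to $\mathsf{x}_s(G_r-C)/\mathsf{z}_i^{(N)}$, which has degree $d-1$ in $(i,N)$.
\end{itemize}
Your fallback mentions computing the leading term but supplies no such mechanism. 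Its proposed generators $m^a$ and $\mathsf{x}_s^b$ also fail the gluing condition, on which the telescoping, and hence polynomiality, depends.
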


Theorem \ref{thm:polynomiality} will be proved in \S \ref{sec:trop}.
The proof requires fairly involved combinatorial arguments and proceeds via explicit calculation based on the expression for $\mathfrak{N}_{N,m}^{\ast}$ from Lemma~\ref{lem:exists_explicit_lift}. Summarizing the above discussion, the main source of the combinatorial complexity is the application of Brion's formula to non-periodic functions on the universal covers of the irregular subdivisions $\Sigma_N^{(s)}$.
The reader comfortable with assuming Theorem \ref{thm:polynomiality} may wish to continue with \S\ref{sec:prf_wt_thm}.
The reader who wishes to continue is encouraged to assume $s=2$ on a first reading.

\subsection{The multiplication by \texorpdfstring{$N$}{N} map on tropical abelian schemes}\label{sec:trop}

The goal of this section is to prove Theorem \ref{thm:polynomiality}.
The comparison with the geometric operator $[1\times\dots \times N]^{\ast}$ on $\CH^{\ast}(\barA^{(s)})$
will be given in \S\ref{sec:prf_wt_thm}. We start with the construction of the map
$\mathfrak N_{N,m}^{\ast}$. For the convenience of the reader, the following definition collects the cone complexes
that will appear most frequently throughout \S\ref{sec:wt}. See also Figure \ref{fig:Sigma_N} for a visualization of these
cone complexes.

\begin{definition}\label{def:cone_complexes}
	Fix $s\ge 1$.
	\begin{enumerate}[label = (\alph*)]
		\item Let $\mathcal{U}\Sigma^{(s)}$ be the restriction of $\mathcal{U}^{s}\Sigma^{(s)}$ to the region $0\le \mathsf{x}_i\le \mathsf{t}$ for $i\in \{1,\dots,s-1\}$.
		\item For $N\ge 1$, let $(1\times\dots\times N)$ be the map from
			$\mathcal U\Sigma^{(s)}$ to the cone over $[0,1]^{s-1}\times
			\mathbb{R}$ obtained by sending
			$(\mathsf{x}_1,\dots,\mathsf{x}_s,\mathsf{t})$ to
			$(\mathsf{x}_1,\dots,\mathsf{x}_{s-1},N\mathsf{x}_s,\mathsf{t})$,
			where $\mathsf{t}$ is the coordinate in the cone direction.
		\item Let $\mathcal U\Sigma_N^{(s)}$ be the refinement of $\mathcal U\Sigma^{(s)}$ obtained by pulling back $\mathcal U\Sigma^{(s)}$
			along $(1\times\dots\times N)$.
		\item Let $\mathcal{U}^{s}\Sigma_N^{(s)}$ be the $\mathbb{Z}^{s-1}$-periodic continuation of
			$\mathcal{U}\Sigma_N^{(s)}$ in the directions $\mathsf{x}_i$ for $i\in \{1,\dots,s-1\}$. Equivalently, $\mathcal{U}^{s}\Sigma_N^{(s)}$
			is the refinement of $\mathcal{U}^{s}\Sigma^{(s)}$ obtained by pulling back $\mathcal{U}^{s}\Sigma^{(s)}$ along
			$(1\times\dots\times N)$.
		\item Let $\tilde{\Sigma}^{(s)}$ (resp. $\tilde{\Sigma}_N^{(s)}$) denote the intersection of $\mathcal{U}\Sigma^{(s)}$ (resp. $\mathcal{U}\Sigma_N^{(s)}$) with the region $0\le \mathsf{x}_s\le \mathsf{t}$.
		\item Let $\Sigma_N^{(s)}:=\mathcal{U}^{s}\Sigma_N^{(s)}/\mathbb{Z}^{s}$. Equivalently,
			it is the cone complex obtained from $\mathcal U\Sigma_N^{(s)}/\mathbb{Z}$ by identifying the
			face $\{\mathsf{x}_i=0\}$ with the face $\{\mathsf{x}_i=\mathsf{t}\}$ for $i\in \{1,\dots,s-1\}$. Here, $a\in \mathbb{Z}$ acts on the $\mathsf{x}_s$-direction by $\mathsf{x}_s\mapsto \mathsf{x}_s+a\mathsf{t}$.
	\end{enumerate}
	By abuse of notation, we denote by $(1\times\dots\times N)$ the map
	$\mathcal U\Sigma_N^{(s)}\to \mathcal U\Sigma^{(s)}$ (resp. $\mathcal{U}^{s}\Sigma_N^{(s)}\to \mathcal{U}^{s}\Sigma^{(s)}$, etc.)
	induced by $(1\times\dots\times N)$. We also write $p_N:\mathcal U\Sigma_N^{(s)}\to \mathcal U\Sigma^{(s)}$
	(resp. $p_N:\mathcal{U}^{s}\Sigma_N^{(s)}\to \mathcal{U}^{s}\Sigma^{(s)}$, etc.) for the subdivision map.
\end{definition}

By Lemma \ref{lem:Artin_fan_A_N}, $\Sigma_N^{(s)}$ is the cone complex corresponding to the Artin fan of $\barA_N^{(s)}$ from \eqref{eq:n}.
The reason for introducing $\mathcal{U}\Sigma^{(s)}$ is that both $p_N$ and $(1\times\dots\times N)$ preserve this subcomplex
of $\mathcal{U}^{s}\Sigma^{(s)}$ and restricting the computation to this region simplifies the notation by removing the $s-1$
``unused'' directions.

\begin{definition}
	We define an operator $\mathfrak N_N^{\ast}:\spp^{\ast}(\Sigma^{(s)})\to \spp^{\ast}(\Sigma^{(s)})$ by:
	$$\mathfrak{N}_N^{\ast}:=p_{N,\ast}(1\times\dots \times N)^{\ast}.$$
\end{definition}

\begin{figure}[!t]
    \centering
	\[
	\begin{tikzpicture}[every node/.style={font=\small}]
		\def\s{1.95}
		\def\p{0.33}
		\colorlet{extensiongray}{black!45}
		\def\dx{6.05}
		\def\ytilde{12.198}
        \def\yU{8.132}
        \def\yUU{4.066}
        \def\yquot{0}
		\def\varrowhalf{0.533}
		\pgfmathsetmacro{\yarrowone}{0.5*(\ytilde+\yU+\s+\p)}
		\pgfmathsetmacro{\yarrowtwo}{0.5*(\yU-\p+\yUU+\s+\p)}
		\pgfmathsetmacro{\yarrowthree}{0.5*(\yUU-\p+\yquot+\s)}

		\newcommand{\plaintile}{%
			\draw[thick] (0,0) rectangle (\s,\s);
			\draw[thick] (0,0) -- (\s,\s);
		}
		\newcommand{\refinedtile}{%
			\draw[thick] (0,0) rectangle (\s,\s);
			\foreach \k in {1,2,3} {
				\draw[thick] (0,{\k*\s/4}) -- (\s,{\k*\s/4});
			}
			\foreach \k in {0,1,2,3} {
				\draw[thick] (0,{\k*\s/4}) -- (\s,{(\k+1)*\s/4});
			}
			\draw[thick] (0,0) -- (\s,\s);
		}
		\newcommand{\edgeidentifications}{%
			\path[decorate,decoration={markings,
				mark=at position 0.54 with {\arrow{>[scale=1.4]}}}]
				(0,0) -- (\s,0);
			\path[decorate,decoration={markings,
				mark=at position 0.54 with {\arrow{>[scale=1.4]}}}]
				(0,\s) -- (\s,\s);
			\path[decorate,decoration={markings,
				mark=at position 0.47 with {\arrow{>[scale=1.4]}},
				mark=at position 0.61 with {\arrow{>[scale=1.4]}}}]
				(0,0) -- (0,\s);
			\path[decorate,decoration={markings,
				mark=at position 0.47 with {\arrow{>[scale=1.4]}},
				mark=at position 0.61 with {\arrow{>[scale=1.4]}}}]
				(\s,0) -- (\s,\s);
		}

		\newcommand{\verticalplain}[2]{%
			\begin{scope}[shift={(#1,#2)}]
				\begin{scope}
					\clip (0,-\p) rectangle (\s,\s+\p);
					\begin{scope}[draw=extensiongray,shift={(0,-\s)}]\plaintile\end{scope}
					\begin{scope}[draw=extensiongray,shift={(0,\s)}]\plaintile\end{scope}
					\plaintile
				\end{scope}
			\end{scope}
		}
		\newcommand{\verticalrefined}[2]{%
			\begin{scope}[shift={(#1,#2)}]
				\begin{scope}
					\clip (0,-\p) rectangle (\s,\s+\p);
					\begin{scope}[draw=extensiongray,shift={(0,-\s)}]\refinedtile\end{scope}
					\begin{scope}[draw=extensiongray,shift={(0,\s)}]\refinedtile\end{scope}
					\refinedtile
				\end{scope}
			\end{scope}
		}

		\newcommand{\periodicplain}[2]{%
			\begin{scope}[shift={(#1,#2)}]
				\begin{scope}
					\clip (-\p,-\p) rectangle (\s+\p,\s+\p);
					\foreach \xx/\yy in {-\s/-\s,0/-\s,\s/-\s,-\s/0,\s/0,-\s/\s,0/\s,\s/\s} {
						\begin{scope}[draw=extensiongray,shift={(\xx,\yy)}]\plaintile\end{scope}
					}
					\plaintile
				\end{scope}
			\end{scope}
		}
		\newcommand{\periodicrefined}[2]{%
			\begin{scope}[shift={(#1,#2)}]
				\begin{scope}
					\clip (-\p,-\p) rectangle (\s+\p,\s+\p);
					\foreach \xx/\yy in {-\s/-\s,0/-\s,\s/-\s,-\s/0,\s/0,-\s/\s,0/\s,\s/\s} {
						\begin{scope}[draw=extensiongray,shift={(\xx,\yy)}]\refinedtile\end{scope}
					}
					\refinedtile
				\end{scope}
			\end{scope}
		}

		\begin{scope}[shift={(0,\ytilde)}]\refinedtile\end{scope}
		\begin{scope}[shift={(\dx,\ytilde)}]\plaintile\end{scope}
		\node[anchor=east] at (-0.92,\ytilde+0.5*\s) {$\widetilde{\Sigma}_N^{(2)}$};
		\node[anchor=west] at (\dx+\s+0.92,\ytilde+0.5*\s) {$\widetilde{\Sigma}^{(2)}$};

		\verticalrefined{0}{\yU}
		\verticalplain{\dx}{\yU}
		\node[anchor=east] at (-0.92,\yU+0.5*\s) {$\mathcal U\Sigma_N^{(2)}$};
		\node[anchor=west] at (\dx+\s+0.92,\yU+0.5*\s) {$\mathcal U\Sigma^{(2)}$};

		\periodicrefined{0}{\yUU}
		\periodicplain{\dx}{\yUU}
		\node[anchor=east] at (-0.92,\yUU+0.5*\s) {$\mathcal U^2\Sigma_N^{(2)}$};
		\node[anchor=west] at (\dx+\s+0.92,\yUU+0.5*\s) {$\mathcal U^2\Sigma^{(2)}$};

		\begin{scope}[shift={(0,\yquot)}]
			\refinedtile
			\edgeidentifications
		\end{scope}
		\begin{scope}[shift={(\dx,\yquot)}]
			\plaintile
			\edgeidentifications
		\end{scope}
		\node[anchor=east] at (-0.92,\yquot+0.5*\s) {$\Sigma_N^{(2)}$};
		\node[anchor=west] at (\dx+\s+0.92,\yquot+0.5*\s) {$\Sigma^{(2)}$};

		\foreach \yy in {\ytilde,\yU,\yUU,\yquot} {
			\draw[->,thick] (\s+\p+0.75,\yy+0.5*\s)
				-- node[above] {$p_N$} (\dx-\p-0.75,\yy+0.5*\s);
		}

		\foreach \x in {0.5*\s,\dx+0.5*\s} {
			\draw[arrows={Hooks[harpoon]-Latex},thick]
				(\x,{\yarrowone+\varrowhalf}) -- (\x,{\yarrowone-\varrowhalf});
			\draw[arrows={Hooks[harpoon]-Latex},thick]
				(\x,{\yarrowtwo+\varrowhalf}) -- (\x,{\yarrowtwo-\varrowhalf});
			\draw[->>,thick]
				(\x,{\yarrowthree+\varrowhalf}) -- (\x,{\yarrowthree-\varrowhalf});
		}
	\end{tikzpicture}
	\]
\caption{\label{fig:Sigma_N}An illustration of the cone complexes defined in Definition \ref{def:cone_complexes} in the case $s=2$ and $N=4$. To simplify the presentation, we restrict to the link at $\mathsf{t}=1$.}
\end{figure}

By Corollary \ref{cor:push_pull_trop_geom}, the restriction of the operator $\mathfrak{N}_N^{\ast}$ to
$\spp^{\ast}(\Sigma^{(s)})$ defines a lift of the operator $[1\times\dots\times N]^{\ast}$
on $\CH^{\ast}(\barA^{(s)})$. The reason for introducing the ring $\spp^{\ast}(\tilde{\Sigma}^{(s)})[m]^{\mathsf{gl}}$ is that the restriction of $\mathfrak{N}_N^{\ast}$
to the image of this ring under $\Psi$ is polynomial in $N$ in the sense of Theorem \ref{thm:polynomiality}. To prove
polynomiality, we must give a more concrete description of $\mathfrak{N}_N^{\ast}$.

The main
technical difficulty is the computation of $p_{N,\ast}$ via Brion's formula. Namely, we must determine
the maximal cones in $\mathcal U\Sigma^{(s)}$ and $\mathcal U\Sigma_N^{(s)}$ as well as their support functions. In fact, it
will be computationally simpler to replace $\mathcal U\Sigma_N^{(s)}$ with a further subdivision $\mathcal U\overline{\Sigma}_N^{(s)}$.
By the projection formula, $p_{N,\ast}$ is invariant under this replacement.

\subsubsection{Maximal cones of $\mathcal U\Sigma_N^{(s)}$ and the subdivision $\mathcal U\overline{\Sigma}_N^{(s)}$}

To simplify notation, we introduce the functions
$$\mathsf{z}_i^{(N)}(\mathsf{u}):=N\mathsf{u}-(i-1)\mathsf{t}.$$
We also write $\mathsf{z}_i^{(N)}:=\mathsf{z}_i^{(N)}(\mathsf{x}_s)$.

\begin{lemma}
	\label{lem:explicit-N-refinement}
	The subdivision $\mathcal U\Sigma_N^{(s)}\to \mathcal U\Sigma^{(s)}$
	is an iterated subdivision along hyperplanes. Explicitly, it is obtained by subdividing
	along the hyperplanes:
	$$\mathsf{z}_{i}^{(N)}=\mathsf{x}_j,\quad i\in \mathbb{Z},\quad 1\le j\le s-1$$
	and
	$$\mathsf{z}_{i}^{(N)}=0,\quad i\in \mathbb{Z}.$$
\end{lemma}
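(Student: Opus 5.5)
The plan is to compute the pullback of the complex $\mathcal U\Sigma^{(s)}$ along the linear map $(1\times\dots\times N)$ directly. The first step is to list the walls of $\mathcal U\Sigma^{(s)}$. By definition, $\mathcal U^s\Sigma^{(s)}$ is the subdivision of $\RR^s\times\RR_{\ge 0}$ along the hyperplanes $\{\mathsf{x}_i\in \mathsf{t}\ZZ\}$ and $\{\mathsf{x}_i-\mathsf{x}_j\in\mathsf{t}\ZZ\}$, $1\le i<j\le s$. We restrict to the region $0\le \mathsf{x}_i\le\mathsf{t}$ for $i\le s-1$. There the relevant walls involving only the first $s-1$ coordinates are $\mathsf{x}_i=0$, $\mathsf{x}_i=\mathsf{t}$ and $\mathsf{x}_i=\mathsf{x}_j$ ($i,j\le s-1$), since $|\mathsf{x}_i-\mathsf{x}_j|\le\mathsf{t}$ in this region. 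The walls involving $\mathsf{x}_s$ are $\mathsf{x}_s=k\mathsf{t}$ and $\mathsf{x}_s-\mathsf{x}_j=k\mathsf{t}$ for $k\in\ZZ$ and $j\le s-1$. Since all cones are cut out by these hyperplanes (the cones $\sigma_{\underline m,\tau}$ are exactly the chambers), $\mathcal U\Sigma^{(s)}$ is the iterated subdivision of the region along this hyperplane arrangement.

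The second step is the observation that the pullback of a complex defined by a hyperplane arrangement along a linear isomorphism onto its image is the complex defined by the pulled-back arrangement. The map $(1\times\dots\times N)$ is linear and injective, sending $(\mathsf{x}_1,\dots,\mathsf{x}_s,\mathsf{t})$ to $(\mathsf{x}_1,\dots,\mathsf{x}_{s-1},N\mathsf{x}_s,\mathsf{t})$. Preimages of chambers are therefore exactly the chambers of the preimage arrangement intersected with the domain. Hence $\mathcal U\Sigma_N^{(s)}$ is the common refinement of $\mathcal U\Sigma^{(s)}$ with the preimage arrangement.

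The third step is to compute that preimage. Walls not involving $\mathsf{x}_s$ are unchanged and are already walls of $\mathcal U\Sigma^{(s)}$. The wall $\mathsf{x}_s=k\mathsf{t}$ pulls back to $N\mathsf{x}_s=k\mathsf{t}$, i.e.\ $\mathsf{z}^{(N)}_{k+1}=0$. The wall $\mathsf{x}_s-\mathsf{x}_j=k\mathsf{t}$ pulls back to $N\mathsf{x}_s-k\mathsf{t}=\mathsf{x}_j$, i.e.\ $\mathsf{z}^{(N)}_{k+1}=\mathsf{x}_j$. Reindexing $i=k+1\in\ZZ$ gives exactly the hyperplanes listed in the lemma.

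Finally, one checks that the original walls of $\mathcal U\Sigma^{(s)}$ involving $\mathsf{x}_s$ are among the new ones, so the refinement is literally the subdivision of $\mathcal U\Sigma^{(s)}$ along the listed hyperplanes. Indeed, $\mathsf{x}_s=k\mathsf{t}$ is $\mathsf{z}^{(N)}_{kN+1}=0$, and $\mathsf{x}_s-\mathsf{x}_j=k\mathsf{t}$ is not automatically of the new form, but it is a wall of $\mathcal U\Sigma^{(s)}$ already, so it is kept. The main, though mild, obstacle is justifying that each subdivision step is a genuine iterated hyperplane subdivision of a cone complex. The hyperplanes are infinite in number, but they are locally finite, since on $\mathsf{t}=1$ only finitely many meet any bounded region. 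This matters because only then are the resulting cones rational polyhedral and is the order of subdivision irrelevant.
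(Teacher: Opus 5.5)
Your proposal is correct and follows essentially the same route as the paper. Both list the walls of $\mathcal U\Sigma^{(s)}$ (the hyperplanes $\mathsf{x}_i=\mathsf{x}_j$ and $\mathsf{x}_i\in\{0,\mathsf{t}\}$ for $i,j\le s-1$, together with $\mathsf{x}_j=\mathsf{x}_s+q\mathsf{t}$ and $\mathsf{x}_s+q\mathsf{t}=0$), note that the first group is unchanged under $\mathsf{x}_s\mapsto N\mathsf{x}_s$, and pull back the second group to $\mathsf{z}^{(N)}_i=\mathsf{x}_j$ and $\mathsf{z}^{(N)}_i=0$ after reindexing; your remarks on local finiteness and on retaining the old $\mathsf{x}_s$-walls are harmless additions that the paper leaves implicit.
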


\begin{proof}
	Observe that $\mathcal U\Sigma^{(s)}$ is an iterated subdivision of the
	cone over $[0,1]^{s-1}\times \mathbb{R}$ along the hyperplanes
	\begin{enumerate}[label = (\alph*)]
		\item $\mathsf{x}_i=\mathsf{x}_j$ for $i,j\in \{1,\dots,s-1\}$,
		\item $\mathsf{x}_i=\mathsf{t}$ for $i\in \{1,\dots,s-1\}$,
		\item $\mathsf{x}_j=\mathsf{x}_s+q\mathsf{t}$ for $q\in \mathbb{Z}$ and $j\in \{1,\dots,s-1\}$ and
		\item $\mathsf{x}_s+q\mathsf{t}=0$ for $q\in \mathbb{Z}$.
	\end{enumerate}
	The first two classes of hyperplanes pull back to the same
	hyperplanes along $(1\times\dots\times N)$ and hence are already
	contained in the subdivision $\mathcal U\Sigma^{(s)}$. The third class
	of hyperplanes pulls back to the hyperplanes:
	$$\mathsf{x}_j=N\mathsf{x}_s+q\mathsf{t}\Leftrightarrow \mathsf{x}_j=\mathsf{z}_{1-q}^{(N)}\,.$$
	Finally, the fourth class of hyperplanes pulls back to
	$$N\mathsf{x}_s+q\mathsf{t}=0\Leftrightarrow \mathsf{z}_{1-q}^{(N)}=0\,.$$
\end{proof}

It follows from the definition of $\mathcal U\Sigma^{(s)}$
that the maximal cones of $\mathcal U\Sigma^{(s)}$ are
indexed by tuples $(\rho,q,r)$ with $\rho\in \mathfrak S_{s-1}$, $r\in \{0,\dots,s-1\}$ and $q\in \mathbb{Z}$.
The cone corresponding to such a tuple is:
$$\sigma_{\rho,q,r}:=\{(\mathsf{x},\mathsf{t})\in \mathbb{R}^{s}\times \mathbb{R}_{\ge 0}|0\le \mathsf{x}_{\rho(1)}\le\dots \le \mathsf{x}_{\rho(r)}\le \mathsf{x}_s-q\mathsf{t}\le \mathsf{x}_{\rho(r+1)}\le \dots\le \mathsf{x}_{\rho(s-1)}\le \mathsf{t}\}.$$

\begin{corollary}
	\label{cor:max_cones_sigma_n}
	The maximal cones in $\mathcal U\Sigma_N^{(s)}$ are indexed by a choice of tuple $(\rho,q,i,a,b)$, where $\rho\in \mathfrak S_{s-1}$,
	$i,q\in \mathbb{Z}$, $Nq+1\le i\le N(q+1)$ and $a,b\in\{1,\dots,s\}$. The maximal cone corresponding to the
	tuple $(\rho,q,i,a,b)$ is:
	\begin{align*}
		\sigma_{\rho,q,i,a,b}&:=\left\{(\mathsf{x},\mathsf{t})\in \RR^s\times\RR_{\ge 0}\middle|0\le \mathsf{x}_{\rho(1)}\le\dots\le \mathsf{x}_{\rho(s-1)}\le \mathsf{t},\right.\\
							 &\left.\mathsf{x}_{\rho(a-1)}\le \mathsf{x}_s-q\mathsf{t}\le \mathsf{x}_{\rho(a)},\quad \mathsf{x}_{\rho(b-1)}\le \mathsf{z}_i^{(N)}\le \mathsf{x}_{\rho(b)}\right\},
	\end{align*}
	with the convention $\mathsf{x}_{\rho(0)}=0$ and $\mathsf{x}_{\rho(s)}=\mathsf{t}$, subject
	to the additional conditions:
	\begin{itemize}
		\item $a\le b$ if $i=Nq+1$ and
		\item $b\le a$ if $i=N(q+1)$.
	\end{itemize}
\end{corollary}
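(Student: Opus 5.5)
The plan is to obtain the maximal cones of $\mathcal U\Sigma_N^{(s)}$ by intersecting each maximal cone $\sigma_{\rho,q,r}$ of $\mathcal U\Sigma^{(s)}$ with the chambers cut out by the extra hyperplanes of Lemma \ref{lem:explicit-N-refinement}. By that lemma, $\mathcal U\Sigma_N^{(s)}$ is the common refinement of $\mathcal U\Sigma^{(s)}$ with the hyperplanes $\mathsf{z}_i^{(N)}=\mathsf{x}_j$ and $\mathsf{z}_i^{(N)}=0$. So a maximal cone is determined by (i) a maximal cone $\sigma_{\rho,q,a-1}$ of $\mathcal U\Sigma^{(s)}$, which fixes $\rho$, $q$ and the slot $a$ of $\mathsf{x}_s-q\mathsf{t}$ in the chain $0=\mathsf{x}_{\rho(0)}\le\mathsf{x}_{\rho(1)}\le\dots\le\mathsf{x}_{\rho(s)}=\mathsf{t}$, and (ii) the relative position of all the linear forms $\mathsf{z}_i^{(N)}$ with respect to this chain.

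Fix $\rho$ and a point with $q\mathsf{t}\le \mathsf{x}_s\le (q+1)\mathsf{t}$. Then $N\mathsf{x}_s\in[Nq\mathsf{t},N(q+1)\mathsf{t}]$. The forms $\mathsf{z}_i^{(N)}=N\mathsf{x}_s-(i-1)\mathsf{t}$ form a decreasing arithmetic progression with step $\mathsf{t}$. Hence, generically, exactly one index $i$ satisfies $0\le \mathsf{z}_i^{(N)}\le \mathsf{t}$; namely $i-1=\lfloor N\mathsf{x}_s/\mathsf{t}\rfloor$, so $Nq+1\le i\le N(q+1)$. Every other $\mathsf{z}_{i'}^{(N)}$ is then $<0$ or $>\mathsf{t}$, so it lies on a definite side of each $\mathsf{x}_j$ and of $0$. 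Consequently, on an open chamber, the only data not already determined are $i$ and the slot $b$ with $\mathsf{x}_{\rho(b-1)}\le\mathsf{z}_i^{(N)}\le\mathsf{x}_{\rho(b)}$. The chamber is then exactly the set $\sigma_{\rho,q,i,a,b}$ displayed in the statement. Conversely, each such set is a cone cut out by the hyperplanes of the refinement. It is therefore a union of closures of chambers, and in fact a single one, since all defining hyperplanes are already imposed.

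It remains to decide which tuples give full-dimensional cones. Write $u=\mathsf{x}_s-q\mathsf{t}\in[0,\mathsf{t}]$, so that $\mathsf{z}_i^{(N)}=Nu-(i-1-Nq)\mathsf{t}$ with $k:=i-1-Nq\in\{0,\dots,N-1\}$. If $k=0$, then $\mathsf{z}_i^{(N)}=Nu\ge u$, so the slot of $\mathsf{z}_i$ is at least that of $u$, forcing $a\le b$. If $k=N-1$, then $\mathsf{z}_i^{(N)}-u=(N-1)(u-\mathsf{t})\le 0$, forcing $b\le a$. For $0<k<N-1$ there is no constraint: given any $a,b$, one can choose the $\mathsf{x}_{\rho(\cdot)}$ spread out generically, pick $u$ in slot $a$, and then check that the affine function $Nu-k\mathsf{t}$ can be placed in slot $b$. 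This works because the window $u\in[k\mathsf{t}/N,(k+1)\mathsf{t}/N]$ maps onto all of $[0,\mathsf{t}]$ under $u\mapsto Nu-k\mathsf{t}$. The same surjectivity argument shows that the remaining tuples in the boundary cases are realized.

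I expect this realizability check to be the main obstacle: showing that each allowed $(a,b)$ gives nonempty interior, and that the two stated inequalities are the only obstructions. I would handle it by choosing explicit interior points. Take $\mathsf{t}=1$, place the $\mathsf{x}_{\rho(j)}$ at carefully chosen values clustered to accommodate both $u$ and $Nu-k$ in the prescribed intervals, and use $N\ge 2$. When $N=1$, $k=0=N-1$ and the two conditions combine to $a=b$, which is consistent with $\Sigma^{(s)}_1=\Sigma^{(s)}$.
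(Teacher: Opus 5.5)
Your proposal is correct and follows essentially the same route as the paper. You refine each $\sigma_{\rho,q,a-1}$ by the hyperplanes of Lemma~\ref{lem:explicit-N-refinement}, observe that only the single form $\mathsf{z}_i^{(N)}$ with $Nq+1\le i\le N(q+1)$ can lie in $[0,\mathsf{t}]$ (so chambers are indexed by $(i,b)$), and check full-dimensionality on the link $\mathsf{t}=1$ via the boundary cases $k=0$ and $k=N-1$, which are the paper's $h=1$ and $h=N$.

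The one point to tighten is the middle case $0<k<N-1$. The relevant fact is not that the window surjects onto $[0,\mathsf{t}]$; with the $\mathsf{x}_{\rho(j)}$ fixed first, slot $a$ need not even meet the window. What matters is that the crossing point $u=k\mathsf{t}/(N-1)$ of $u$ and $\mathsf{z}_i^{(N)}$ lies strictly inside the open window, so both relative orders of $u$ and $\mathsf{z}_i^{(N)}$ occur. Only then do you choose the $\mathsf{x}_{\rho(j)}$ so that exactly $a-1$ of them lie below $u$ and $b-1$ below $\mathsf{z}_i^{(N)}$, exactly as the paper does.
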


\begin{proof}
	From Lemma \ref{lem:explicit-N-refinement}, we know that the subdivision $\mathcal U\Sigma_N^{(s)}$ is obtained by
	subdividing $\mathcal U\Sigma^{(s)}$ along the hyperplanes
	$$\mathsf{z}_{i}^{(N)}=\mathsf{x}_j,\quad i\in \mathbb{Z},\quad 1\le j\le s-1$$
	and
	$$\mathsf{z}_{i}^{(N)}=0,\quad i\in \mathbb{Z}.$$
	Note that $\mathsf{z}_{i}^{(N)}=0\Leftrightarrow \mathsf{z}_{i-1}^{(N)}=\mathsf{t}$.
	Hence we see that on any maximal cone in the subdivision,
	the values $(0,\mathsf{x}_1,\dots,\mathsf{x}_{s-1},\mathsf{t},\mathsf{z}_i^{(N)})$ and the values $(0,\mathsf{x}_1,\dots,\mathsf{x}_{s-1},\mathsf{t},\mathsf{x}_s-q\mathsf{t})$ have a fixed order.
	In the latter case, we have the additional requirement that $0$ is minimal and $\mathsf{t}$ is maximal. Note that $0\le \mathsf{x}_s-q\mathsf{t}\le \mathsf{t}$
	is equivalent to $(Nq-i+1)\mathsf{t}\le \mathsf{z}_i^{(N)}\le (N(q+1)-i+1)\mathsf{t}$. Hence the location of $\mathsf{z}_{i}^{(N)}$ in the order
	is fixed on $\sigma_{\rho,q,r}$ if either $(Nq-i+1)\mathsf{t}\ge \mathsf{t}\Leftrightarrow i\le Nq$ or $(N(q+1)-i+1)\mathsf{t}\le 0\Leftrightarrow i\ge N(q+1)+1$.
	Hence we only obtain nontrivial walls in $\sigma_{\rho,q,r}$
	for $Nq+1\le i\le N(q+1)$. Since $\sigma_{\rho,q,i,a,b}$
	is obtained by choosing an ordering of $(0,\mathsf{x}_1,\dots,\mathsf{x}_{s-1},\mathsf{t},\mathsf{z}_i^{(N)})$ compatible with the fixed order
	of $(0,\mathsf{x}_1,\dots,\mathsf{x}_{s-1},\mathsf{t})$ on $\sigma_{\rho,q,r}$, it
	remains to show that each of the cones $\sigma_{\rho,q,i,a,b}$
	has maximal dimension.

	It is enough to do this on the link $\mathsf{t}=1$. Put $\mathsf{y}=\mathsf{x}_s-q$ and $h=i-Nq$. Then $1\le h\le N$ and
	\[
		\mathsf{z}_i^{(N)}=N\mathsf{y}-(h-1).
	\]
	The strip condition $0<\mathsf{z}_i^{(N)}<1$ is equivalent to
	\[
		\frac{h-1}{N}<\mathsf{y}<\frac{h}{N}.
	\]
	If $1<h<N$, the point where $\mathsf{y}=\mathsf{z}_i^{(N)}$ lies in this open interval, so $\mathsf{y}$ and $\mathsf{z}_i^{(N)}$ can be chosen in either
	order. If $h=1$, then $\mathsf{y}\le \mathsf{z}_i^{(N)}$ on the strip, and if $h=N$, then $\mathsf{z}_i^{(N)}\le \mathsf{y}$ on the strip. Thus we
	see that $a\le b$ for $i=Nq+1$ and $b\le a$ for $i=N(q+1)$ are necessary conditions.

	Conversely, assume these conditions hold. If $1<h<N$, choose $\mathsf{y}$ in the open strip so that $\mathsf{y}<\mathsf{z}_i^{(N)}$ when
	$a\le b$ and $\mathsf{z}_i^{(N)}<\mathsf{y}$ when $b\le a$. If $h=1$ or $h=N$, choose any interior point of the strip. We can then choose
	\[
		0<\mathsf{x}_{\rho(1)}<\cdots <\mathsf{x}_{\rho(s-1)}<1
	\]
	so that exactly $a-1$ of the $\mathsf{x}_{\rho(j)}$ lie below $\mathsf{y}$ and exactly $b-1$ lie below $\mathsf{z}_i^{(N)}$. This gives a point
	at which all inequalities defining $\sigma_{\rho,q,i,a,b}$ are strict. Hence the cone has nonempty interior in the
	corresponding maximal cone of the refinement, and therefore has maximal dimension.
\end{proof}

\begin{figure}
\begin{center}
	\[
	\begin{tikzpicture}
		\def\s{2.2}
		\def\dx{4.4}
		\def\y{0}

		\draw[thick] (0,\y) rectangle ++(\s,\s);

		\foreach \k in {1,2,3} {
			\draw[thick] (0,\y+\k*\s/4) -- ++(\s,0);
		}
		\foreach \k in {0,1,2,3} {
			\draw[thick] (0,\y+\k*\s/4) -- ++(\s,\s/4);
		}
		\draw[thick] (0,\y) -- (\s,\y+\s);

		\foreach \k in {1,2} {
			\draw[red, thick] (0,\y+\k*\s/3) -- ++(\s,0);
		}

		\node at (0.5*\s,\y+\s+0.5) {$\mathcal U\overline{\Sigma}_N^{(2)}$};

		\draw[thick] (\dx,\y) rectangle ++(\s,\s);
		\foreach \k in {1,2,3} {
			\draw[thick] (\dx,\y+\k*\s/4) -- ++(\s,0);
		}
		\foreach \k in {0,1,2,3} {
			\draw[thick] (\dx,\y+\k*\s/4) -- ++(\s,\s/4);
		}
		\draw[thick] (\dx,\y) -- (\dx+\s,\y+\s);

		\node at (\dx+0.5*\s,\y+\s+0.5) {$\mathcal U\Sigma_N^{(2)}$};

		\draw[->, thick]
			(\s+0.4,\y+0.5*\s) -- (\dx-0.4,\y+0.5*\s);

		\draw[->, thick] (\dx+\s+0.8,\y+0.25) -- ++(0.75,0)
			node[right] {$\mathsf{x}_1$};
		\draw[->, thick] (\dx+\s+0.8,\y+0.25) -- ++(0,0.75)
			node[above] {$\mathsf{x}_2$};
	\end{tikzpicture}
	\]
\end{center}
\caption{\label{fig:Sigma_N_bar}This figure depicts the (infinite) cone complexes $\mathcal U\Sigma_N^{(2)}$ and $\mathcal U\overline{\Sigma}_N^{(2)}$ in the case $N=4$. To simplify the illustration, we only depict their intersection with the region $0\le \mathsf{x}_1,\mathsf{x}_2\le \mathsf{t}$ and restrict to the links at $\mathsf{t}=1$.}
\end{figure}

From this result it follows in particular that, in general, the
order between $\mathsf{z}_i^{(N)}$ and $\mathsf{x}_s-q\mathsf{t}$ is not fixed on $\sigma_{\rho,q,i,a,b}$.
This complicates the computation of the support function of this cone.
For this reason, we introduce the following subdivision:
\begin{definition}
Let $\mathcal U\overline{\Sigma}_N^{(s)}$ be
the subdivision of $\mathcal U\Sigma_N^{(s)}$ obtained by subdividing along the hyperplanes
$$\mathsf{z}_i^{(N)}=\mathsf{x}_s$$
for all $i\in \mathbb{Z}$.
\end{definition}

See Figure \ref{fig:Sigma_N_bar} for an illustration of $\mathcal U\overline{\Sigma}_N^{(s)}$. From Corollary \ref{cor:max_cones_sigma_n}
we have:

\begin{corollary}
	\label{cor:max_cones_sigma_n_bar}
	The maximal cones of $\mathcal U\overline{\Sigma}_N^{(s)}$ are indexed by tuples $(\pi,q,i)$, where $\pi\in \mathfrak S_{s+1}$,
	$q\in \mathbb{Z}$ and $Nq+1\le i\le N(q+1)$ satisfying the constraints:
	\begin{enumerate}[label = (\alph*)]
		\item If $N=1$, then $\pi^{-1}(s+1)=\pi^{-1}(s)+1$. Otherwise, the following two conditions hold:
		\item if $i=Nq+1$, then $\pi^{-1}(s+1)>\pi^{-1}(s)$ and
		\item if $i=N(q+1)$, then $\pi^{-1}(s+1)<\pi^{-1}(s)$.
	\end{enumerate}
	The maximal cone corresponding to $(\pi,q,i)$ is constructed as follows:
	Let
    \[
    (b_1,\dots,b_{s+1})=(\mathsf{x}_1,\dots,\mathsf{x}_{s-1},\mathsf{x}_s-q\mathsf{t},\mathsf{z}_i^{(N)}).
    \]
    Then we define
	$$\sigma_{\pi,q,i}:=\{(\mathsf{x},\mathsf{t})\in \RR^s\times\RR_{\ge 0}|0\le b_{\pi(1)}\le\dots\le b_{\pi(s+1)}\le \mathsf{t}\}.$$
\end{corollary}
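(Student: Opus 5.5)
The plan is to derive the statement directly from Corollary \ref{cor:max_cones_sigma_n}. We refine each maximal cone $\sigma_{\rho,q,i,a,b}$ by the single additional hyperplane $\mathsf{z}_i^{(N)}=\mathsf{x}_s$.

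First we check that only this hyperplane matters inside $\sigma_{\rho,q,i,a,b}$. On the link $\mathsf{t}=1$, set $\mathsf{y}=\mathsf{x}_s-q$ and $h=i-Nq\in\{1,\dots,N\}$. Then $\mathsf{z}_i^{(N)}-\mathsf{x}_s=(N-1)\mathsf{y}-(h-1)+\ldots$. More precisely, comparing $\mathsf{z}_i^{(N)}$ with $\mathsf{x}_s$ is a different comparison from comparing it with $\mathsf{y}=\mathsf{x}_s-q\mathsf{t}$. However, on the strip $(h-1)/N\le \mathsf{y}\le h/N$ we have $0\le \mathsf{z}_i^{(N)}\le \mathsf{t}$. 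Hence for $q\neq 0$ the hyperplane $\mathsf{z}_i^{(N)}=\mathsf{x}_s$ does not meet the interior of the strip, and does not cut the cone. For $q=0$ the two hyperplanes coincide.

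Because of this, I will read the definition (as the coordinates $b_s=\mathsf{x}_s-q\mathsf{t}$ in the statement suggest) as subdividing along the translates $\mathsf{z}_i^{(N)}=\mathsf{x}_s-q\mathsf{t}$, i.e. along all $\mathbb Z$-translates. This is the convention making the subdivision $\mathbb Z$-periodic, and I will first check that it is consistent with Figure \ref{fig:Sigma_N_bar}.

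With this reading, every maximal cone of $\mathcal U\overline{\Sigma}_N^{(s)}$ lies in a unique $\sigma_{\rho,q,i,a,b}$, since $q$ and $i$ are determined by the strips. On its interior the $s+3$ quantities $0,\mathsf{x}_1,\dots,\mathsf{x}_{s-1},\mathsf{x}_s-q\mathsf{t},\mathsf{z}_i^{(N)},\mathsf{t}$ are totally ordered. In this order $0$ is minimal and $\mathsf{t}$ is maximal, since $0\le b_s,b_{s+1}\le \mathsf{t}$ on the strip. Such a total order is exactly a permutation $\pi\in\mathfrak S_{s+1}$ of $b_1,\dots,b_{s+1}$. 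Conversely, given $\pi$, the set $\sigma_{\pi,q,i}$ is a simplicial cone, and it is contained in some $\sigma_{\rho,q,i,a,b}$ with $\rho,a,b$ read off from $\pi$. So the only question is which $\pi$ give full-dimensional cones.

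The main step, and the only real obstacle, is the nonemptiness analysis on the link. We use the relation $\mathsf{z}_i^{(N)}=N\mathsf{y}-(h-1)$ together with $\mathsf{y}\in[(h-1)/N,h/N]$.

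\begin{itemize}
\item If $N=1$, then $\mathsf{z}=\mathsf{y}$ identically, so the two entries must be adjacent in the order. Listing $\mathsf{y}$ before $\mathsf{z}$ is the convention that gives condition (a).
\item If $N\ge2$ and $h=1$, then $\mathsf{z}-\mathsf{y}=(N-1)\mathsf{y}\ge 0$, forcing $\mathsf{y}$ before $\mathsf{z}$, which is condition (b).
\item If $N\ge2$ and $h=N$, then $\mathsf{z}-\mathsf{y}=(N-1)(\mathsf{y}-1)\le0$, which is condition (c).
\item For $1<h<N$, both orders occur, as in the proof of Corollary \ref{cor:max_cones_sigma_n}.
\end{itemize}

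Finally, for sufficiency we pick $\mathsf{y}$ in the open strip realizing the required order of $\mathsf{y}$ and $\mathsf{z}$. We then interleave generic values of $\mathsf{x}_{\rho(j)}$ in $(0,1)$ in the positions prescribed by $\pi$. This is possible because $\mathsf{y},\mathsf{z}\in(0,1)$ are distinct (or adjacent, if $N=1$). All inequalities are then strict, so the cone is maximal.
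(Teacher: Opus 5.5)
Your argument is correct and matches the paper's, which simply reads the statement off Corollary \ref{cor:max_cones_sigma_n}: refine each $\sigma_{\rho,q,i,a,b}$ by the relative order of $\mathsf{x}_s-q\mathsf{t}$ and $\mathsf{z}_i^{(N)}$, then reuse the strip analysis for the cases $N=1$, $h=1$, $h=N$ and $1<h<N$. Your reinterpretation of the definition is unnecessary, because $\mathsf{z}_{i-q}^{(N)}=\mathsf{z}_i^{(N)}+q\mathsf{t}$ shows that for $N\ge 2$ the literal family $\{\mathsf{z}_i^{(N)}=\mathsf{x}_s\}_{i\in\mathbb Z}$ already equals the periodic family $\{\mathsf{x}_s\in\frac{1}{N-1}\mathbb Z\,\mathsf{t}\}$; moreover, each open strip $\frac{i-1}{N}\mathsf{t}<\mathsf{x}_s<\frac{i}{N}\mathsf{t}$ has width $\mathsf{t}/N<\mathsf{t}/(N-1)$ and so meets at most one of these hyperplanes, which is the check, not actually carried out in your first paragraph, that no further hyperplane cuts $\sigma_{\pi,q,i}$.
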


\subsubsection{Support functions of cones in $\mathcal U\overline{\Sigma}_N^{(s)}$}
\label{sec:supp_functions}

In this section, we compute the support functions of the cones $\sigma_{\pi,q,i}$ as in Corollary \ref{cor:max_cones_sigma_n_bar}.
We first determine the support functions for a more general class of cones (Corollary \ref{cor:ineq_cone}) and then specialize the result to $\sigma_{\pi,q,i}$ (Corollary \ref{cor:supp_function_sigma}).

The following elementary result on support functions is well known:

\begin{lemma}
	\label{lem:delta-pullback-linear}
	Let $V$ be a finite-dimensional real vector space equipped with a full-dimensional lattice.
	Let $A:V\to V$ be an invertible linear map mapping the lattice to a sublattice of finite index, and let $\sigma$ be a full-dimensional cone in $V$. Then
	\[
		\delta_{A^{-1}\sigma}(\mathsf{x})=\frac{\delta_\sigma(A\mathsf{x})}{|\det A|}.
	\]
\end{lemma}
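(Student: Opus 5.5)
The proof works directly from the explicit description of the support function of a full-dimensional simplicial cone. The cones to which the lemma will be applied, namely the $\sigma_{\pi,q,i}$ of Corollary \ref{cor:max_cones_sigma_n_bar}, are simplicial, so I restrict to that case; this is an assumption about how the lemma is used rather than something proved here. I take $\delta_\sigma$ to be the homogeneous polynomial of degree $\dim V$ whose reciprocal is the local term in Brion's formula. Concretely, let $v_1,\dots,v_n$ be generators of the rays of $\sigma$ and let $f_1,\dots,f_n\in V^\vee$ be the dual basis, so that $f_j(v_k)=\delta_{jk}$. Then
\[
\delta_\sigma=|\det(v_1,\dots,v_n)|\cdot f_1\cdots f_n ,
\]
where the determinant is computed in a lattice basis of $V$. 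I am assuming this is the normalization the paper intends; the computation below depends on it.

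\textbf{Step 1 (independence of the generators).} If $v_j$ is replaced by $c_jv_j$ with $c_j>0$, then $f_j$ is replaced by $c_j^{-1}f_j$ and the determinant is multiplied by $\prod_j c_j$. Hence $\delta_\sigma$ does not depend on the choice of generators. In particular, one does not need primitive lattice vectors. This matters because $A$ only maps the lattice to a sublattice of finite index, so $A^{-1}$ need not send primitive vectors to primitive vectors.

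\textbf{Step 2 (transport along $A$).} The cone $A^{-1}\sigma$ is again simplicial and full-dimensional, with ray generators $A^{-1}v_1,\dots,A^{-1}v_n$. The corresponding dual basis is $f_1\circ A,\dots,f_n\circ A$, since $(f_j\circ A)(A^{-1}v_k)=\delta_{jk}$. The determinant computed in the fixed lattice basis is
\[
\det(A^{-1}v_1,\dots,A^{-1}v_n)=\det(v_1,\dots,v_n)/\det A .
\]
By Step 1 we may use these generators, which gives
\[
\delta_{A^{-1}\sigma}(\mathsf{x})=\frac{|\det(v_1,\dots,v_n)|}{|\det A|}\prod_j f_j(A\mathsf{x})=\frac{\delta_\sigma(A\mathsf{x})}{|\det A|}.
\]

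As a sanity check, in dimension one take $\sigma=\RR_{\ge0}$ and $A=2$. Then $A^{-1}\sigma=\sigma$ and $\delta_\sigma(\mathsf{x})=\mathsf{x}$, and the identity reads $\mathsf{x}=2\mathsf{x}/2$, which holds.

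The only real obstacle is pinning down the normalization of $\delta_\sigma$. The factor $1/|\det A|$, together with the fact that the lattice is held fixed rather than replaced by $A^{-1}$ of itself, forces $\delta_\sigma$ to be homogeneous of degree $+\dim V$. The reciprocal $1/\delta_\sigma$, a volume-weighted product of inverse facet forms, would not satisfy the identity as stated. If the paper's convention also covers non-simplicial cones, I would first subdivide $\sigma$ into simplicial cones and extend by the corresponding valuation property of the Brion local terms. The subdivision is transported by $A^{-1}$, so it is compatible with both sides of the identity.
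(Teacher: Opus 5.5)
Your argument is correct, and the normalization you assume is the paper's. In the proof of Lemma~\ref{lem:cone-over-product} the support function is characterized by $1/\delta_\sigma(\mathsf{x})=\int_{\sigma^\vee}e^{-\langle u,\mathsf{x}\rangle}\,du$, with Lebesgue measure normalized by the dual lattice. For a simplicial cone this integral equals $1/\bigl(|\det(v_1,\dots,v_n)|\,f_1(\mathsf{x})\cdots f_n(\mathsf{x})\bigr)$, which is your formula.

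The paper gives no proof of this lemma; it calls it well known. From the integral description, though, the statement follows for every full-dimensional cone in one step. Since $(A^{-1}\sigma)^\vee=\{w\circ A : w\in\sigma^\vee\}$, the substitution $u=w\circ A$, $du=|\det A|\,dw$ gives
\[
\frac{1}{\delta_{A^{-1}\sigma}(\mathsf{x})}=|\det A|\int_{\sigma^\vee}e^{-\langle w,A\mathsf{x}\rangle}\,dw=\frac{|\det A|}{\delta_\sigma(A\mathsf{x})}.
\]

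Your route works in ray and facet coordinates instead. It is more hands-on, but it only covers simplicial cones directly, so for the lemma as stated your last paragraph is genuinely needed. That step is sound. What is additive under subdivision is $1/\delta$: one has $1/\delta_\sigma=\sum_k 1/\delta_{\sigma_k}$, which is Brion's formula applied to the constant function $1$, whose pushforward under a subdivision is $1$. Since the lattice is fixed and $A^{-1}$ carries $\{\sigma_k\}$ to a subdivision of $A^{-1}\sigma$, summing your simplicial identity gives the general one. The integral argument gets the general case for free and shows directly that the factor $1/|\det A|$ comes from keeping the lattice fixed.

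One correction to your framing. The cones $\sigma_{\pi,q,i}$ of Corollary~\ref{cor:max_cones_sigma_n_bar} are not simplicial in general: they are cut out by $s+2$ inequalities in dimension $s+1$, and for $s=2$ the red lines in Figure~\ref{fig:Sigma_N_bar} already produce quadrilateral cells. For such cones $\delta_\sigma$ is a rational function, as in Lemma~\ref{lem:balanced-cone}, not a polynomial. Moreover, the lemma is not applied to these cones. It is invoked only in Lemma~\ref{lem:balanced-cone}, for the cone over the standard simplex, which is simplicial; the non-simplicial cones are handled through Lemma~\ref{lem:cone-over-product} and Corollary~\ref{cor:ineq_cone}. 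So the simplicial case does suffice for the paper's use, but not for the reason you give.
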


\begin{lemma}[Cone over a product]
	\label{lem:cone-over-product}
	Let $P\subseteq \RR^p$ and $Q\subseteq \RR^q$ be rational polyhedra. Let $\sigma_P,\sigma_Q,\sigma_{P\times Q}$ denote the cones over $P,Q,P\times Q$, each regarded inside its own linear span. Let $N_P:=\Span(\sigma_P)\cap \mathbb{Z}^{p+1}$ and define $N_Q$ similarly. Let $\pi_P$ (resp. $\pi_Q$) denote the projection from $N_P$ (resp. $N_Q$) to $\mathbb{Z}$ induced by the projection onto the cone direction. Then
	\[
		\delta_{\sigma_{P\times Q}}(\mathsf{x},\mathsf{y},\mathsf{t})=\gcd([\mathbb{Z}:\pi_P(N_P)],[\mathbb{Z}:\pi_Q(N_Q)])\frac{\delta_{\sigma_P}(\mathsf{x},\mathsf{t})\,\delta_{\sigma_Q}(\mathsf{y},\mathsf{t})}{\mathsf{t}}.
	\]
\end{lemma}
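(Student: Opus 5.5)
I would take as the working characterization of the support function the one that makes Lemma \ref{lem:delta-pullback-linear} transparent. For a full-dimensional rational cone $\sigma\subseteq V$, the function $1/\delta_\sigma$ is the Laplace transform of the dual cone:
\[
\frac{1}{\delta_\sigma(\mathsf{x})}=\int_{\sigma^\vee}e^{-\langle \mathsf{y},\mathsf{x}\rangle}\,d\mathsf{y},
\]
with Lebesgue measure normalized by the dual lattice. For a simplicial cone $\{l_i\ge 0\}$ this gives the normalized product of the facet forms. The expression is additive under subdivision, so it agrees with the equivariant-multiplicity normalization used in Brion's formula. I would first check this against Lemma \ref{lem:delta-pullback-linear}: since $(A^{-1}\sigma)^\vee=A^{T}\sigma^\vee$, a change of variables produces exactly the factor $|\det A|$. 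This fixes the normalizations.

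The first step is to describe the dual cones. Write $\sigma_P=\{(\mathsf{t}p,\mathsf{t})\}$ and note that $\sigma_{P\times Q}=\sigma_P\times_{\RR_{\ge0}}\sigma_Q$ is a fibre product over the $\mathsf{t}$-ray. Its dual is the image of $\sigma_P^\vee\times\sigma_Q^\vee$ under the map $(\mathsf{y}_P,s_P,\mathsf{y}_Q,s_Q)\mapsto(\mathsf{y}_P,\mathsf{y}_Q,s_P+s_Q)$. Because $\mathsf{t}\ge0$ on $\sigma_P$, the vector $(0,1)$ lies in $\sigma_P^\vee$, so each dual cone is upward closed in the $s$-direction. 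Hence there are functions $a_P(\mathsf{y}_P)$ and $a_Q(\mathsf{y}_Q)$ with $\sigma_P^\vee=\{s_P\ge a_P(\mathsf{y}_P)\}$, and similarly for $Q$. The image cone is then $\{s\ge a_P+a_Q\}$. For a point of the image, the fibre of the sum map is an interval in $s_P$ of length $s-a_P-a_Q$.

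The second step is a one-variable integration along these fibres. Compute the product integral $\frac{1}{\delta_{\sigma_P}(\mathsf{x},\mathsf{t})\,\delta_{\sigma_Q}(\mathsf{y},\mathsf{t})}$ by integrating first over $s_P$ with $s=s_P+s_Q$ held fixed, and then over $s$. This uses the elementary identities
\[
\int_c^\infty (s-c)e^{-s\mathsf{t}}\,ds=\frac{e^{-c\mathsf{t}}}{\mathsf{t}^2},\qquad \int_c^\infty e^{-s\mathsf{t}}\,ds=\frac{e^{-c\mathsf{t}}}{\mathsf{t}}.
\]
Together they show that the product integral equals $\frac1{\mathsf{t}}$ times the Laplace transform of $\sigma_{P\times Q}^\vee$, up to a constant coming from the comparison of measures. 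This yields $\delta_{\sigma_{P\times Q}}=\kappa\,\delta_{\sigma_P}\delta_{\sigma_Q}/\mathsf{t}$ for some positive rational constant $\kappa$.

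The last step, which I expect to be the main obstacle, is pinning down $\kappa$ from the lattice normalizations. Here $\kappa$ is a covolume ratio between three lattices: the image of $N_P^\vee\oplus N_Q^\vee$ under the sum map, the dual lattice of $N_{P\times Q}=N_P\times_{\ZZ}N_Q$, and the lattice of the fibre direction $\ZZ(1,-1)$. I would dualize and work on the primal side. The covolume of $N_P\times_{\ZZ}N_Q$ inside $\Span(\sigma_P)\times_{\RR}\Span(\sigma_Q)$ is controlled by the indices $[\ZZ:\pi_P(N_P)]$ and $[\ZZ:\pi_Q(N_Q)]$. The fibre product of subgroups $d_P\ZZ$ and $d_Q\ZZ$ of $\ZZ$ is $\lcm(d_P,d_Q)\ZZ$, and $\lcm(d_P,d_Q)/(d_Pd_Q)=1/\gcd(d_P,d_Q)$, which is where the $\gcd$ should come from. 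I would verify the sign convention, i.e.\ that $\gcd$ multiplies rather than divides, on the simplest case $P=Q=$ a point with $d_P=d_Q=d$. There $\sigma_{P\times Q}$ is again a ray and both sides can be computed directly.
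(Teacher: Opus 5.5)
Your proposal is correct and is essentially the paper's argument. The paper also writes $1/\delta$ as the Laplace transform of the dual cone, describes the dual cones as epigraphs $\{\lambda\ge f_P(u)\}$ and $\{\lambda\ge f_Q(v)\}$ with $\sigma_{P\times Q}^\vee=\{\lambda\ge f_P(u)+f_Q(v)\}$, integrates out $\lambda$ to produce the factor $\mathsf{t}$, and gets the constant from $N_{P\times Q}$ lying over $\lcm(d_P,d_Q)\ZZ$, using $d_Pd_Q/\lcm(d_P,d_Q)=\gcd(d_P,d_Q)$; the only difference is that it first treats the index-one case and then rescales the integral structures, which is equivalent to your single covolume comparison. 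One small caution: your test case $d_P=d_Q$ only separates $\gcd$ from $1/\gcd$ and cannot distinguish $\gcd$ from $\lcm$, so a check with coprime indices is more decisive, e.g.\ $P=\{1/2\}$, $Q=\{1/3\}$, where both sides equal $\mathsf{t}/6$.
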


\begin{proof}
	First assume that $P$ and $Q$ are full-dimensional in $\mathbb{R}^{p}$ and $\mathbb{R}^{q}$ respectively, so that $[\mathbb{Z}:\pi_P(N_P)]=[\mathbb{Z}:\pi_Q(N_Q)]=1$.

	Let
	\[
		f_P(u):=-\inf_{p\in P}\langle u,p\rangle,\qquad
		f_Q(v):=-\inf_{q\in Q}\langle v,q\rangle.
	\]
	Then
	\[
		\sigma_P^\vee=\{(u,\lambda)\mid \lambda\ge f_P(u)\},\qquad
		\sigma_Q^\vee=\{(v,\lambda)\mid \lambda\ge f_Q(v)\},
	\]
	and
	\[
		\sigma_{P\times Q}^\vee=\{(u,v,\lambda)\mid \lambda\ge f_P(u)+f_Q(v)\}.
	\]
	Therefore
	\begin{align*}
		\frac{1}{\delta_{\sigma_{P\times Q}}(\mathsf{x},\mathsf{y},\mathsf{t})}
	&=\int_{\sigma_{P\times Q}^\vee} e^{-\langle u,\mathsf{x}\rangle-\langle v,\mathsf{y}\rangle-\lambda \mathsf{t}}\,du\,dv\,d\lambda\\
	&=\int_{\RR^p\times \RR^q}\int_{f_P(u)+f_Q(v)}^\infty
	e^{-\langle u,\mathsf{x}\rangle-\langle v,\mathsf{y}\rangle-\lambda \mathsf{t}}\,d\lambda\,du\,dv\\
&=\frac1{\mathsf{t}}\left(\int_{\RR^p} e^{-\langle u,\mathsf{x}\rangle-\mathsf{t}f_P(u)}\,du\right)
\left(\int_{\RR^q} e^{-\langle v,\mathsf{y}\rangle-\mathsf{t}f_Q(v)}\,dv\right)\\
&=\frac{\mathsf{t}}{\delta_{\sigma_P}(\mathsf{x},\mathsf{t})\,\delta_{\sigma_Q}(\mathsf{y},\mathsf{t})}.
	\end{align*}
	Taking reciprocals gives the result.

	We now return to the general case when $P$ and $Q$ are not necessarily full-dimensional.
	In this case, the image of the lattice $\Span(\sigma_{P\times Q})\cap \mathbb{Z}^{p+q+1}$ in $\mathbb{Z}$ under projection onto
	the cone direction has index $\ell:=\lcm([\mathbb{Z}:\pi_P(N_P)],[\mathbb{Z}:\pi_Q(N_Q)])$. Let $\tilde{\sigma}_P$
	(resp. $\tilde{\sigma}_Q$) be the cone $\sigma_P$ (resp. $\sigma_Q$)
	equipped with the integral structure which is the preimage of $\ell \mathbb{Z}$ in the integral structure of the respective cones
	under the projection onto the cone direction. Then, replacing $\mathbb{R}^{p}$ (resp. $\mathbb{R}^{q}$) with the span of $\sigma_P$ (resp. $\sigma_Q$),
	the same argument as above applies and we get:
	$$\delta_{\sigma_{P\times Q}}(\mathsf{x},\mathsf{y},\mathsf{t})=\frac{\delta_{\tilde{\sigma}_P}(\mathsf{x},\mathsf{t})\,\delta_{\tilde{\sigma}_Q}(\mathsf{y},\mathsf{t})}{\frac{\mathsf{t}}{\ell}}.$$
	From the definition of the support function as an integral, it is easy to see that changing the integral structure changes the support
	function by the index of the lattice inclusion, so we get:
	$$\delta_{\sigma_{P\times Q}}(\mathsf{x},\mathsf{y},\mathsf{t})=\frac{[\mathbb{Z}:\pi_P(N_P)][\mathbb{Z}:\pi_Q(N_Q)]}{\ell}\frac{\delta_{\sigma_P}(\mathsf{x},\mathsf{t})\,\delta_{\sigma_Q}(\mathsf{y},\mathsf{t})}{\mathsf{t}}.$$
\end{proof}

\begin{lemma}
	\label{lem:balanced-cone}
	Let $p,q\ge 1$ and let
	\[
		a_1,\dots,a_p,b_1,\dots,b_q\in \ZZ_{\ge 0}
	\]
	with
	\[
		\gcd(a_1,\dots,a_p,b_1,\dots,b_q)=1
	\]
	and such that neither $(a_1,\dots,a_p)$ nor $(b_1,\dots,b_q)$ is identically zero.
	Consider the cone
	\[
		\sigma:=\{(\mathsf{x},\mathsf{y})\in \RR_{\ge 0}^p\times \RR_{\ge 0}^q \mid a_1\mathsf{x}_1+\cdots+a_p\mathsf{x}_p=b_1\mathsf{y}_1+\cdots+b_q\mathsf{y}_q\}.
	\]
	Then
	\[
		\delta_\sigma=\frac{\prod_{i=1}^p \mathsf{x}_i\cdot \prod_{j=1}^q \mathsf{y}_j}{a_1\mathsf{x}_1+\cdots+a_p\mathsf{x}_p}.
	\]
\end{lemma}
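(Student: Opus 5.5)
The plan is to recognize $\sigma$ as the cone over a product of two polyhedra, with the cone coordinate $\mathsf{t}$ equal to the common value $a\cdot\mathsf{x}=b\cdot\mathsf{y}$. Then Lemma \ref{lem:cone-over-product} applies, and everything reduces to two standard-orthant computations. Concretely, set
\[
P:=\{\mathsf{x}\in\RR^p_{\ge 0}\mid a_1\mathsf{x}_1+\cdots+a_p\mathsf{x}_p=1\},\qquad Q:=\{\mathsf{y}\in\RR^q_{\ge 0}\mid b_1\mathsf{y}_1+\cdots+b_q\mathsf{y}_q=1\}.
\]
These are nonempty rational polyhedra because $a$ and $b$ are nonzero. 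The cone over $P\times Q$, meaning the closure of $\{(\mathsf{x},\mathsf{y},\mathsf{t}) : \mathsf{t}>0,\ (\mathsf{x}/\mathsf{t},\mathsf{y}/\mathsf{t})\in P\times Q\}$, is
\[
\{(\mathsf{x},\mathsf{y},\mathsf{t}) : \mathsf{x},\mathsf{y}\ge 0,\ a\cdot\mathsf{x}=\mathsf{t}=b\cdot\mathsf{y}\}.
\]
If some $a_i$ or $b_j$ vanish, $P$ or $Q$ is unbounded and the closure adds the recession directions at $\mathsf{t}=0$; these are exactly the points of $\sigma$ with $a\cdot\mathsf{x}=0$, so the identification still holds. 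Forgetting $\mathsf{t}=a\cdot\mathsf{x}$ identifies this cone with $\sigma$. I would check that this identification respects the integral structures: the lattice $\Span(\sigma_{P\times Q})\cap\ZZ^{p+q+1}$ consists of integral $(\mathsf{x},\mathsf{y},\mathsf{t})$ with $a\cdot\mathsf{x}=\mathsf{t}=b\cdot\mathsf{y}$. Its projection is precisely $\{(\mathsf{x},\mathsf{y})\in\ZZ^{p+q} : a\cdot\mathsf{x}=b\cdot\mathsf{y}\}$, the integral structure on $\sigma$ in its own span, so $\delta_\sigma=\delta_{\sigma_{P\times Q}}$ after substituting $\mathsf{t}=a\cdot\mathsf{x}$.

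Next I would compute the two factors. The cone $\sigma_P=\{(\mathsf{x},\mathsf{t}) : \mathsf{x}\ge 0,\ \mathsf{t}=a\cdot\mathsf{x}\}$ has lattice $N_P=\{(\mathsf{x},a\cdot\mathsf{x}) : \mathsf{x}\in\ZZ^p\}$. The projection to $\mathsf{x}$ is a lattice isomorphism $N_P\cong\ZZ^p$, and it carries $\sigma_P$ to the standard unimodular orthant. Hence $\delta_{\sigma_P}(\mathsf{x},\mathsf{t})=\mathsf{x}_1\cdots\mathsf{x}_p$. This follows from the integral definition $1/\delta=\int_{\sigma^\vee}e^{-\langle u,\cdot\rangle}$ used in the proof of Lemma \ref{lem:cone-over-product}, or from Lemma \ref{lem:delta-pullback-linear} with $A$ unimodular. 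Moreover $\pi_P(N_P)=a\cdot\ZZ^p=\gcd(a_1,\dots,a_p)\ZZ$, so $[\ZZ:\pi_P(N_P)]=\gcd(a)$. Symmetrically, $\delta_{\sigma_Q}=\mathsf{y}_1\cdots\mathsf{y}_q$ and $[\ZZ:\pi_Q(N_Q)]=\gcd(b)$.

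Plugging into Lemma \ref{lem:cone-over-product}, the prefactor is $\gcd(\gcd(a),\gcd(b))=\gcd(a_1,\dots,a_p,b_1,\dots,b_q)=1$. This gives
\[
\delta_\sigma=\frac{\prod_i\mathsf{x}_i\prod_j\mathsf{y}_j}{\mathsf{t}},\qquad \mathsf{t}=a_1\mathsf{x}_1+\cdots+a_p\mathsf{x}_p,
\]
which is the claim. The main point requiring care is that Lemma \ref{lem:cone-over-product} is used with possibly unbounded $P,Q$ and with the cone's own integral structure. I would verify that its proof, which only uses $f_P(u)=-\inf_{p\in P}\langle u,p\rangle$ and Fubini, goes through when $f_P$ takes the value $+\infty$ outside the dual of the recession cone. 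Such $u$ simply contribute zero to the integral, so the computation is unchanged. The index bookkeeping in the non-full-dimensional case is exactly what the gcd hypothesis is designed to neutralize.
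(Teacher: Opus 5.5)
Your proof is correct and follows essentially the same route as the paper: you realize $\sigma$ as the cone over $P\times Q$ with $\mathsf{t}=a\cdot\mathsf{x}$, compute $\delta_{\sigma_P}=\prod_i\mathsf{x}_i$ and $\delta_{\sigma_Q}=\prod_j\mathsf{y}_j$ with indices $\gcd(a)$ and $\gcd(b)$, and apply Lemma~\ref{lem:cone-over-product} with the gcd hypothesis removing the prefactor. The differences are minor. The paper first splits off coordinates with $a_i=0$ or $b_j=0$ as $\RR_{\ge 0}$ factors, so that $P,Q$ become bounded simplices, and then computes $\delta_{\sigma_P}$ via the rescaling $\phi_a$ and Lemma~\ref{lem:delta-pullback-linear}; you instead keep $P,Q$ possibly unbounded, correctly check that the Fubini argument tolerates $f_P=+\infty$, and use the lattice isomorphism $N_P\cong\ZZ^p$ directly, which sidesteps the non-invertibility of $\phi_a$ when some $a_i=0$.
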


\begin{proof}
	First observe that if $a_i=0$ for some $i$, then $\sigma$ is the product of $\mathbb{R}_{\ge 0}$ (corresponding to the coordinate $\mathsf{x}_i$)
	with the cone $\sigma'$ obtained by the same procedure as in the statement of the lemma, but with $a_i$ removed. Hence
	$\delta_{\sigma}=\mathsf{x}_i\cdot \delta_{\sigma'}$. Since the formula for $\delta_{\sigma}$ satisfies the same multiplicativity property
	when $a_i=0$, we may reduce to the case when $a_i>0$ for all $i$. By symmetry, we may also assume that $b_i>0$ for all $i$.
	Let
	\[
		P_a:=\{\mathsf{x}\in \RR_{\ge 0}^p\mid a_1\mathsf{x}_1+\cdots+a_p\mathsf{x}_p=1\},
		\qquad
		P_b:=\{\mathsf{y}\in \RR_{\ge 0}^q\mid b_1\mathsf{y}_1+\cdots+b_q\mathsf{y}_q=1\}.
	\]
	Let $\sigma_a$ and $\sigma_b$ be the cones over $P_a$ and $P_b$, respectively. Thus
	\[
		\sigma_a=\{(\mathsf{x},\mathsf{t})\in \RR_{\ge 0}^p\times \RR_{\ge 0}\mid a_1\mathsf{x}_1+\cdots+a_p\mathsf{x}_p=\mathsf{t}\},
	\]
	\[
		\sigma_b=\{(\mathsf{y},\mathsf{t})\in \RR_{\ge 0}^q\times \RR_{\ge 0}\mid b_1\mathsf{y}_1+\cdots+b_q\mathsf{y}_q=\mathsf{t}\}.
	\]

	Consider the diagonal map
	\[
		\phi_a:\RR^{p+1}\to \RR^{p+1},
		\qquad
		\phi_a(\mathsf{x}_1,\dots,\mathsf{x}_p,\mathsf{t}):=(a_1\mathsf{x}_1,\dots,a_p\mathsf{x}_p,\mathsf{t}).
	\]
	Then $\sigma_a=\phi_a^{-1}(\sigma_{\Delta^{p-1}})$, where $\sigma_{\Delta^{p-1}}$ is the cone over the standard simplex
	\[
		\{\mathsf{u}\in \RR_{\ge 0}^p\mid \mathsf{u}_1+\cdots+\mathsf{u}_p=1\}.
	\]
	Since
	\[
		\delta_{\sigma_{\Delta^{p-1}}}(\mathsf{u},\mathsf{t})=\mathsf{u}_1\cdots \mathsf{u}_p
	\]
	and $|\det(\phi_a)|=a_1\cdots a_p$, Lemma~\ref{lem:delta-pullback-linear} gives
	\[
		\delta_{\sigma_a}(\mathsf{x},\mathsf{t})=\mathsf{x}_1\cdots \mathsf{x}_p.
	\]
	The same argument for $P_b$ gives
	\[
		\delta_{\sigma_b}(\mathsf{y},\mathsf{t})=\mathsf{y}_1\cdots \mathsf{y}_q.
	\]

	Now let $\sigma_{P_a\times P_b}$ be the cone over $P_a\times P_b$. We have already shown that
	\[
		\delta_{\sigma_a}(\mathsf{x},\mathsf{t})=\mathsf{x}_1\cdots \mathsf{x}_p,
		\qquad
		\delta_{\sigma_b}(\mathsf{y},\mathsf{t})=\mathsf{y}_1\cdots \mathsf{y}_q.
	\]
	The vertices of $P_a$ (resp. $P_b$) are $\frac{e_i}{a_i}$ (resp. $\frac{e_i}{b_i}$), so
	one sees that the two indices appearing in the statement of Lemma~\ref{lem:cone-over-product}
	are given by $\gcd(a_1,\dots,a_p)$ and $\gcd(b_1,\dots,b_q)$. By assumption, we have
	$\gcd(a_1,\dots,a_p,b_1,\dots,b_q)=1$,
	so Lemma~\ref{lem:cone-over-product} gives
	\[
		\delta_{\sigma_{P_a\times P_b}}(\mathsf{x},\mathsf{y},\mathsf{t})
		=
		\frac{\mathsf{x}_1\cdots \mathsf{x}_p\,\mathsf{y}_1\cdots \mathsf{y}_q}{\mathsf{t}}.
	\]

	Finally, $\sigma$ is naturally identified with $\sigma_{P_a\times P_b}$ via
	\[
		\sigma\longrightarrow \sigma_{P_a\times P_b},
		\qquad
		(\mathsf{x},\mathsf{y})\longmapsto (\mathsf{x},\mathsf{y},a_1\mathsf{x}_1+\cdots+a_p\mathsf{x}_p).
	\]
	This identification is lattice-preserving, so the support function pulls back unchanged. Under it, the coordinate $\mathsf{t}$ becomes
	\[
		\mathsf{t}=a_1\mathsf{x}_1+\cdots+a_p\mathsf{x}_p=b_1\mathsf{y}_1+\cdots+b_q\mathsf{y}_q,
	\]
	so the displayed formula pulls back to
	\[
		\delta_\sigma=\frac{\prod_{i=1}^p \mathsf{x}_i\cdot \prod_{j=1}^q \mathsf{y}_j}{a_1\mathsf{x}_1+\cdots+a_p\mathsf{x}_p}.
	\]
\end{proof}

\begin{corollary}
	\label{cor:ineq_cone}
	Let $\mathsf{y}_1,\dots,\mathsf{y}_n$ be the coordinate functions on $\mathbb{R}^{n}$ and let
	$\sigma\subseteq \mathbb{R}^{n}$ be the cone defined by the inequalities $0\le \mathsf{y}_1\le \dots\le \mathsf{y}_n$
	and the equality $\mathsf{y}_i=a\mathsf{y}_j-b\mathsf{y}_n$ for $a,b\in \mathbb{Z}_{\ge 0}$, $a>0$, $b<a$ and $i,j\in \{1,\dots,n-1\}$
	distinct. We additionally require the following conditions in edge cases:
	\begin{itemize}
		\item If $b=0$, then we require that $i>j$.
		\item If $b=a-1$, then we require $i<j$.
	\end{itemize}
	Then
	$$\delta_{\sigma}=\frac{\prod_{r=1}^n (\mathsf{y}_r-\mathsf{y}_{r-1})}{b(\mathsf{y}_n-\mathsf{y}_j)}$$
	if $i<j$ and
	$$\delta_{\sigma}=\frac{\prod_{r=1}^n (\mathsf{y}_r-\mathsf{y}_{r-1})}{(a-b-1)\mathsf{y}_j}$$
	if $i>j$, with the convention that $\mathsf{y}_0=0$.
\end{corollary}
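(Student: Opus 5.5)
The plan is to reduce to Lemma~\ref{lem:balanced-cone} by a unimodular change of coordinates. Put $\mathsf{u}_r:=\mathsf{y}_r-\mathsf{y}_{r-1}$ for $1\le r\le n$, with $\mathsf{y}_0=0$. This is an invertible linear map of $\ZZ^n$ of determinant $1$. By Lemma~\ref{lem:delta-pullback-linear}, applied inside the span of $\sigma$ with the induced lattice, support functions transform without any extra factor. In these coordinates the inequalities $0\le \mathsf{y}_1\le\dots\le\mathsf{y}_n$ become $\mathsf{u}\in\RR^n_{\ge 0}$. The product $\prod_r(\mathsf{y}_r-\mathsf{y}_{r-1})$ is just $\prod_r \mathsf{u}_r$. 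So it remains to rewrite the single linear equation $\mathsf{y}_i=a\mathsf{y}_j-b\mathsf{y}_n$ in the balanced form $\sum \alpha_r\mathsf{u}_r=\sum\beta_r\mathsf{u}_r$, with nonnegative coefficients supported on disjoint sets of indices. We then check the hypotheses of Lemma~\ref{lem:balanced-cone} and read off the denominator.

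\emph{Case $i<j$.} Write $a\mathsf{y}_j-b\mathsf{y}_n=(a-b)\mathsf{y}_j-b(\mathsf{y}_n-\mathsf{y}_j)$ and $\mathsf{y}_j=\mathsf{y}_i+\sum_{i<r\le j}\mathsf{u}_r$. The equation becomes
\[
(a-b-1)\sum_{r\le i}\mathsf{u}_r+(a-b)\sum_{i<r\le j}\mathsf{u}_r=b\sum_{r>j}\mathsf{u}_r .
\]
All coefficients are nonnegative since $b<a$. The left side is not identically zero since $a-b\ge1$ and $i<j$. The right side is nonzero: $j\le n-1$, and $b\ge 1$ because $b=0$ forces $i>j$. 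The gcd of all coefficients divides $\gcd(a-b,a-b-1)=1$. Lemma~\ref{lem:balanced-cone} applies, and the denominator may be taken to be either side of the equation, since they agree on $\sigma$. Taking the right side gives $b(\mathsf{y}_n-\mathsf{y}_j)$.

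\emph{Case $i>j$.} Use $\mathsf{y}_i=\mathsf{y}_j+\sum_{j<r\le i}\mathsf{u}_r$ and $\mathsf{y}_n-\mathsf{y}_j=\sum_{r>j}\mathsf{u}_r$. The equation becomes
\[
(a-b-1)\sum_{r\le j}\mathsf{u}_r=(b+1)\sum_{j<r\le i}\mathsf{u}_r+b\sum_{r>i}\mathsf{u}_r .
\]
The left side is nonzero because $b=a-1$ is excluded when $i>j$, so $a-b-1\ge 1$. The right side is nonzero since $b+1\ge1$ and $i>j$. The gcd divides $\gcd(b+1,b)=1$, so $\gcd=1$. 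If $i=n$ the last sum is empty, which causes no problem. Lemma~\ref{lem:balanced-cone} then gives the denominator $(a-b-1)\mathsf{y}_j$.

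The only delicate points are bookkeeping. First, the edge-case conditions are exactly what guarantee that both sides of the balanced equation are nonzero. Second, the lattice on the span of $\sigma$ is the one Lemma~\ref{lem:balanced-cone} uses: the coordinate change is unimodular on $\ZZ^n$, so it restricts to an isomorphism of the intersected lattices. I do not expect any genuine obstacle beyond these checks.
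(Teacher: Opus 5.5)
Your argument is correct and is essentially the paper's proof: the same substitution $\mathsf{u}_r=\mathsf{y}_r-\mathsf{y}_{r-1}$, the same balanced rewritings of $\mathsf{y}_i=a\mathsf{y}_j-b\mathsf{y}_n$ in the cases $i<j$ and $i>j$, and an application of Lemma~\ref{lem:balanced-cone}, where your explicit checks of the nonvanishing and gcd hypotheses (left implicit in the paper) are a useful addition. One small correction: your aside that $i=n$ ``causes no problem'' is wrong, since your gcd bound in the case $i>j$ uses that the coefficient $b$ actually occurs, which needs $i\le n-1$ (for $i=n$, $a=4$, $b=1$ the remaining coefficients would be $2,2$); this case is excluded by hypothesis, so the proof stands.
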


\begin{proof}
	Define $\mathsf{z}_r:=\mathsf{y}_{r}-\mathsf{y}_{r-1}$ for $r\in \{1,\dots,n\}$. Then the inequalities
	$0\le \mathsf{y}_1\le\dots\le \mathsf{y}_n$ are equivalent to $\mathsf{z}_r\ge 0$ for all $r$. The equality $\mathsf{y}_i=a\mathsf{y}_j-b\mathsf{y}_n$ becomes
	$$\sum_{r=1}^{i}\mathsf{z}_r=\sum_{r=1}^{j}a\mathsf{z}_r-\sum_{r=1}^{n}b\mathsf{z}_r.$$
	Assume first that $i<j$. Then we can rewrite the equality as:
	$$\sum_{r=j+1}^{n}b\mathsf{z}_r=\sum_{r=1}^{i}(a-b-1)\mathsf{z}_r+\sum_{r=i+1}^{j}(a-b)\mathsf{z}_r.$$
	Hence, by Lemma \ref{lem:balanced-cone}, we obtain
	$$\delta_{\sigma}=\frac{\prod_{r=1}^n \mathsf{z}_r}{\sum_{r=j+1}^{n}b\mathsf{z}_r}=\frac{\prod_{r=1}^n (\mathsf{y}_r-\mathsf{y}_{r-1})}{b(\mathsf{y}_n-\mathsf{y}_j)}.$$
	Now assume that $i>j$. Then we instead rewrite the equality as:
	$$\sum_{r=i+1}^{n}b\mathsf{z}_r+\sum_{r=j+1}^{i}(b+1)\mathsf{z}_r=\sum_{r=1}^{j}(a-b-1)\mathsf{z}_r.$$
	Hence, by Lemma \ref{lem:balanced-cone}, we obtain
	$$\delta_{\sigma}=\frac{\prod_{r=1}^n \mathsf{z}_r}{\sum_{r=1}^{j}(a-b-1)\mathsf{z}_r}=\frac{\prod_{r=1}^n (\mathsf{y}_r-\mathsf{y}_{r-1})}{(a-b-1)\mathsf{y}_j}.$$
\end{proof}

Combining Corollary \ref{cor:max_cones_sigma_n_bar} and
Corollary \ref{cor:ineq_cone}, we get:

\begin{corollary}
	\label{cor:supp_function_sigma}
	Let $\pi\in \mathfrak S_{s+1}$,
	$q\in \mathbb{Z}$ and $Nq+1\le i\le N(q+1)$ satisfying the constraints:
	\begin{enumerate}[label = (\alph*)]
		\item If $N=1$, then $\pi^{-1}(s+1)=\pi^{-1}(s)+1$. Otherwise, the following two conditions hold:
		\item if $i=Nq+1$, then $\pi^{-1}(s+1)>\pi^{-1}(s)$ and
		\item if $i=N(q+1)$, then $\pi^{-1}(s+1)<\pi^{-1}(s)$.
	\end{enumerate}
	Use notation as in Corollary \ref{cor:max_cones_sigma_n_bar}.
	Then
	$$\delta_{\sigma_{\pi,q,i}}=\frac{(\mathsf{t}-b_{\pi(s+1)})\prod_{r=1}^{s+1} (b_{\pi(r)}-b_{\pi(r-1)})}{(i-Nq-1)((q+1)\mathsf{t}-\mathsf{x}_s)}$$
	if $\pi^{-1}(s+1)<\pi^{-1}(s)$ and
	$$\delta_{\sigma_{\pi,q,i}}=\frac{(\mathsf{t}-b_{\pi(s+1)})\prod_{r=1}^{s+1} (b_{\pi(r)}-b_{\pi(r-1)})}{((q+1)N-i)(\mathsf{x}_s-q\mathsf{t})}$$
	if $\pi^{-1}(s+1)>\pi^{-1}(s)$, with the convention that
	$b_{\pi(0)}=0$ and when $N=1$, the (vanishing) factors $((q+1)N-i)(\mathsf{x}_s-q\mathsf{t})$
	and $b_{s+1}-b_s$ are omitted.
\end{corollary}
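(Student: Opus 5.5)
The plan is to make the lattice isomorphism between $\sigma_{\pi,q,i}$ and a cone of the type in Corollary \ref{cor:ineq_cone} explicit, and then read off the formula. The main obstacle is the lattice bookkeeping: checking that this identification preserves the integral structure, since otherwise a determinant factor from Lemma \ref{lem:delta-pullback-linear} would appear.

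\textbf{Change of coordinates.} The map $(\mathsf{x}_1,\dots,\mathsf{x}_s,\mathsf{t})\mapsto (\mathsf{x}_1,\dots,\mathsf{x}_{s-1},\mathsf{x}_s-q\mathsf{t},\mathsf{t})=(b_1,\dots,b_s,\mathsf{t})$ is unimodular, because it is a shear with integer coefficient $q$. So we may use $b_1,\dots,b_s,\mathsf{t}$ as lattice coordinates on $\RR^s\times\RR_{\ge0}$. In these coordinates
\[
b_{s+1}=\mathsf{z}_i^{(N)}=N(b_s+q\mathsf{t})-(i-1)\mathsf{t}=Nb_s-(i-1-Nq)\mathsf{t}.
\]
Embed $\RR^{s+1}$ into $\RR^{s+2}$ with coordinates $\mathsf{y}_1,\dots,\mathsf{y}_{s+2}$ by setting $\mathsf{y}_r:=b_{\pi(r)}$ for $1\le r\le s+1$ and $\mathsf{y}_{s+2}:=\mathsf{t}$.

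\textbf{Matching with Corollary \ref{cor:ineq_cone}.} Put $n=s+2$, $i'=\pi^{-1}(s+1)$, $j'=\pi^{-1}(s)$, $a=N$ and $b=i-1-Nq$. Under the embedding, $\sigma_{\pi,q,i}$ becomes the cone $\{0\le\mathsf{y}_1\le\dots\le\mathsf{y}_n\}$ cut out by the single equation $\mathsf{y}_{i'}=a\mathsf{y}_{j'}-b\mathsf{y}_n$.

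\begin{itemize}
\item The constraint $Nq+1\le i\le N(q+1)$ is exactly $0\le b\le a-1$.
\item The case $b=0$ is $i=Nq+1$, where condition (b) of the statement demands $i'>j'$. The case $b=a-1$ is $i=N(q+1)$, where condition (c) demands $i'<j'$. These are precisely the edge-case hypotheses of Corollary \ref{cor:ineq_cone}.
\end{itemize}

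\textbf{Lattice check.} Corollary \ref{cor:ineq_cone} computes $\delta$ with respect to the lattice $H\cap\ZZ^{n}$, where $H$ is the hyperplane $\mathsf{y}_{i'}=a\mathsf{y}_{j'}-b\mathsf{y}_n$. Integer points of $H$ are in bijection with integer values of the remaining coordinates, because $a,b\in\ZZ$ and $\mathsf{y}_{i'}$ is determined integrally by the others. The remaining coordinates are exactly $(b_1,\dots,b_s,\mathsf{t})$. So the embedding is a lattice isomorphism onto $H\cap\ZZ^n$, and the support function transfers without any determinant factor.

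\textbf{Substitution.} The numerator of Corollary \ref{cor:ineq_cone} is $\prod_{r=1}^{n}(\mathsf{y}_r-\mathsf{y}_{r-1})$ with $\mathsf{y}_0=0$. Its last factor is $\mathsf{y}_n-\mathsf{y}_{n-1}=\mathsf{t}-b_{\pi(s+1)}$, and the remaining factors give $\prod_{r=1}^{s+1}(b_{\pi(r)}-b_{\pi(r-1)})$ with $b_{\pi(0)}=0$.
\begin{itemize}
\item If $i'<j'$, the denominator is $b(\mathsf{y}_n-\mathsf{y}_{j'})$. Since $\mathsf{y}_{j'}=b_s=\mathsf{x}_s-q\mathsf{t}$, this equals $(i-Nq-1)(\mathsf{t}-\mathsf{x}_s+q\mathsf{t})=(i-Nq-1)((q+1)\mathsf{t}-\mathsf{x}_s)$.
\item If $i'>j'$, the denominator is $(a-b-1)\mathsf{y}_{j'}=(N(q+1)-i)(\mathsf{x}_s-q\mathsf{t})$.
\end{itemize}
Both agree with the claimed formulas.

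\textbf{The case $N=1$.} Corollary \ref{cor:ineq_cone} degenerates here: $a=1$ and $b=0$ force $a-b-1=0$. Instead, note that $b_{s+1}=b_s$ identically. Condition (a), $\pi^{-1}(s+1)=\pi^{-1}(s)+1$, says the two coincide in adjacent positions, so the inequality $b_s\le b_{s+1}$ is vacuous. The cone is then the unimodular simplicial cone $0\le b_{\pi(1)}\le\dots\le\widehat{b_{\pi(i')}}\le\dots\le\mathsf{t}$. Its support function is the product of consecutive differences, which is the stated formula with the vanishing factors $b_{s+1}-b_s$ and $((q+1)N-i)(\mathsf{x}_s-q\mathsf{t})$ omitted.
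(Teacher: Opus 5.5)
Your proposal is correct and follows the paper's own route: the paper obtains this corollary by combining Corollary~\ref{cor:max_cones_sigma_n_bar} with Corollary~\ref{cor:ineq_cone}, exactly as you do. You also supply the details the paper leaves implicit, namely the unimodular shear to the coordinates $(b_1,\dots,b_s,\mathsf{t})$, the check that the embedding into the hyperplane $H$ preserves the lattice, and the separate direct treatment of the degenerate case $N=1$, which Corollary~\ref{cor:ineq_cone} does not cover.
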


\subsubsection{Proof of Theorem \ref{thm:polynomiality}}

For $\pi\in \mathfrak S_d$ represented by $(j_1,\dots,j_d)$
and $r\in \{0,\dots,d\}$ let $\pi_r\in \mathfrak S_{d+1}$ be the permutation given by
$(j_1,\dots,j_r,d+1,j_{r+1},\dots,j_d)$ with the convention that $d+1$ is inserted
at the beginning when $r=0$ and at the end when $r=d$.
If $r\in \{1,\dots,d\}$, we write $\pi_{-r}\in \mathfrak S_{d-1}$ for the permutation obtained by
removing $r$ from $(j_1,\dots,j_d)$ and replacing the remaining entries $j_i$ with $j_i-1$
whenever $j_i>r$.

\begin{lemma}
	\label{lem:combinat_lift}
	For $f\in \spp^{\ast}(\tilde{\Sigma}^{(s)})[m]^{\mathsf{gl}}$, define
	$\mathfrak{N}_{N,m}^\ast(f)$ by the following formula: for $\pi\in \mathfrak S_{s}$,
	\[
		\left.\mathfrak{N}_{N,m}^\ast(f)\right|_{\sigma_{\pi}}
			:=\sum_{i=1}^{N}\sum_{a=0}^{s}
			\frac{\delta_{\sigma_{\pi}}}{\delta_{\sigma_{\pi_a,0,i}}}\,
			f|_{\sigma_{(\pi_a)_{-s}}}
			\bigl(Nm+i-1\bigr)
			\bigl(\mathsf{x}_1,\dots,\mathsf{x}_{s-1},\mathsf{z}_i^{(N)},\mathsf{t}\bigr)
	\]
	with the convention that when $i=1$ we additionally require
	$a\ge \pi^{-1}(s)$ and when $i=N$ we additionally require $a<\pi^{-1}(s)$ (if $N=1$, we require $a=\pi^{-1}(s)$).
	Then $\mathfrak{N}_{N,m}^\ast(f)\in \spp^{\ast}(\tilde{\Sigma}^{(s)})[m]^{\mathsf{gl}}$ and
	$$\Psi(\mathfrak{N}_{N,m}^\ast(f))=\mathfrak{N}_N^\ast(\Psi(f)).$$
\end{lemma}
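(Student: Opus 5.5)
The plan is to compute $\mathfrak N_N^\ast(\Psi(f))=p_{N,\ast}(1\times\dots\times N)^\ast\Psi(f)$ directly on a maximal cone of $\mathcal U\Sigma^{(s)}$. We then check that the answer is $\Psi$ of the displayed expression. Fix a maximal cone $\sigma_\pi\subset\tilde\Sigma^{(s)}$, i.e.\ the cone $\sigma_{\rho,0,r}$ with $q=0$. By the projection formula we may replace $\mathcal U\Sigma_N^{(s)}$ by the finer subdivision $\mathcal U\overline{\Sigma}_N^{(s)}$. Brion's formula (\cite{Brion}, \cite[Proposition~76]{LogarithmicTauPandha2024}) then gives
\[
p_{N,\ast}(g)|_{\sigma_\pi}=\sum_{\tau\subseteq\sigma_\pi}\frac{\delta_{\sigma_\pi}}{\delta_\tau}\,g|_\tau ,
\]
where the sum runs over the maximal cones $\tau$ of $\mathcal U\overline{\Sigma}_N^{(s)}$ contained in $\sigma_\pi$.

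\textbf{Step 1: enumerate the cones $\tau$.} By Corollary~\ref{cor:max_cones_sigma_n_bar}, these cones are the $\sigma_{\pi',0,i}$ with $1\le i\le N$ whose ordering $\pi'\in\mathfrak S_{s+1}$ of $(\mathsf x_1,\dots,\mathsf x_{s-1},\mathsf x_s,\mathsf z_i^{(N)})$ restricts to $\pi$ on the first $s$ entries. Such a $\pi'$ is exactly $\pi_a$ for a choice of insertion position $a\in\{0,\dots,s\}$ of $\mathsf z_i^{(N)}$. The edge constraints (b), (c) of that corollary for $i=1$ and $i=N$ translate into $a\ge\pi^{-1}(s)$ and $a<\pi^{-1}(s)$, and for $N=1$ into $a=\pi^{-1}(s)$. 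This matches the conventions in the statement. The support functions $\delta_{\sigma_{\pi_a,0,i}}$ are given by Corollary~\ref{cor:supp_function_sigma}.

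\textbf{Step 2: evaluate the pullback on $\tau$.} The map $(1\times\dots\times N)$ sends $\tau=\sigma_{\pi_a,0,i}$ into the region where $\mathsf x_s\mapsto N\mathsf x_s$. Here $N\mathsf x_s=\mathsf z_i^{(N)}+(i-1)\mathsf t$ with $0\le \mathsf z_i^{(N)}\le \mathsf t$, so the image lies in the translate by $q'=i-1$ of the cone of $\tilde\Sigma^{(s)}$ whose ordering is $(\pi_a)_{-s}$. That ordering is obtained by forgetting $\mathsf x_s$ and renaming $\mathsf z_i^{(N)}$ as the new $s$-th coordinate. Since $\Psi(f)$ on the strip $m\mathsf t\le\mathsf x_s<(m+1)\mathsf t$ is $f(m)$ evaluated at $\mathsf x_s-m\mathsf t$, we get
\[
(1\times\dots\times N)^\ast\Psi(f)|_\tau=f|_{\sigma_{(\pi_a)_{-s}}}(i-1)(\mathsf x_1,\dots,\mathsf x_{s-1},\mathsf z_i^{(N)},\mathsf t).
\]
This proves the formula in the strip $m=0$.

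\textbf{Step 3: the other strips.} For a general $m$, translate $\mathsf x_s\mapsto\mathsf x_s-m\mathsf t$. The cone $\sigma_{\pi,m,r}$ is subdivided by the cones with $q=m$ and $i=Nm+i'$. Re-indexing, the local coordinate $N(\mathsf x_s-m\mathsf t)-(i'-1)\mathsf t$ equals $\mathsf z^{(N)}_{Nm+i'}$, and $f$ is evaluated at the parameter $Nm+i'-1$. The support-function ratios are translation invariant: $\delta_{\sigma_{\pi,m,r}}$ and $\delta_{\sigma_{\pi_a,m,Nm+i'}}$ are the $q=0$ versions composed with the translation, as one sees from Corollary~\ref{cor:supp_function_sigma}. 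Hence $\mathfrak N_N^\ast(\Psi f)$ is $\Psi$ applied to the displayed formula, now regarded as a polynomial in $m$.

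\textbf{Step 4: membership in the ring (the main obstacle).} It remains to show that $\mathfrak N^\ast_{N,m}(f)$ lies in $\spp^\ast(\tilde\Sigma^{(s)})[m]^{\mathsf{gl}}$. Three things must be checked. First, on each cone the expression is a polynomial, which holds because it is the Brion pushforward of a genuine piecewise polynomial. Second, it is continuous across walls of $\tilde\Sigma^{(s)}$. Third, it satisfies the gluing condition between $\mathsf x_s=\mathsf t$ at $m=q-1$ and $\mathsf x_s=0$ at $m=q$.

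The cleanest route avoids direct algebra. One verifies that $\Psi(f)$ is a well-defined strict piecewise polynomial on $\mathcal U\Sigma^{(s)}$; this is exactly the gl-condition on $f$. Pullback and Brion pushforward preserve strict piecewise polynomials, so $\mathfrak N_N^\ast(\Psi f)$ is again one. Its restriction to each strip is the displayed expression, which is polynomial in $m$ because $f$ is. Continuity across $\mathsf x_s=m\mathsf t$ then gives the gl-condition for all integers $q$.

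The delicate point is that Brion's formula requires checking that the restriction to a strip, as a polynomial in $m$, is independent of the representative. A polynomial identity in $m$ that holds for all integers holds identically, so this should suffice. I would verify this carefully, together with the edge conventions at $i=1$ and $i=N$.
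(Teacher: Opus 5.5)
Your proposal is correct and follows the paper's proof. Both apply Brion's formula on each $\sigma_{\rho,q,r}$ with the subdividing cones $\sigma_{(\rho_r)_a,q,Nq+i}$ of $\mathcal U\overline{\Sigma}_N^{(s)}$. Both evaluate the pullback of $\Psi(f)$ on the strip indexed by $Nq+i-1$, and both use translation invariance of the support-function ratios. Your Step~4 deduces polynomiality, gluing and the gl-condition from two facts: $\mathfrak N_N^\ast(\Psi f)$ is a genuine piecewise polynomial on every strip, and each coefficient of $m^k$ can be recovered by interpolation from finitely many integer values of $m$. This is a sound way to supply what the paper leaves as ``one easily checks''.
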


\begin{proof}
	For each $q \in \ZZ$ and each maximal cone $\sigma_{\rho,q,r}$ of $\mathcal U\Sigma^{(s)}$, it remains to prove that
    \[
    \left.\Psi(\mathfrak N_{N,m}^*(f))\right|_{\sigma_{\rho,q,r}}
    =
    \left.\mathfrak N_N^*(\Psi(f))\right|_{\sigma_{\rho,q,r}}.
    \]

	For the right-hand side, we compute using Brion's formula, where in the cases $i=1$ and $i=N$ we implicitly omit all terms that do not satisfy the conditions in the statement:
	\begin{align*}
		\mathfrak{N}_N^\ast(\Psi(f))|_{\sigma_{\rho,q,r}}&=\sum_{i=1}^{N}\sum_{a=0}^{s}\frac{\delta_{\sigma_{\rho,q,r}}}{\delta_{\sigma_{(\rho_r)_a,q,Nq+i}}}((1\times\cdots\times N)^{\ast}\Psi(f))|_{\sigma_{(\rho_r)_a,q,Nq+i}}\\
													&=\sum_{i=1}^{N}\sum_{a=0}^{s}\frac{\delta_{\sigma_{\rho,q,r}}}{\delta_{\sigma_{(\rho_r)_a,q,Nq+i}}}\Psi(f)|_{\sigma_{((\rho_r)_a)_{-s},Nq+i-1}}(\mathsf{x}_s\to N\mathsf{x}_s)\\
													&=\sum_{i=1}^{N}\sum_{a=0}^{s}\frac{\delta_{\sigma_{\rho,q,r}}}{\delta_{\sigma_{(\rho_r)_a,q,Nq+i}}}f|_{\sigma_{((\rho_r)_a)_{-s}}}(Nq+i-1)(\mathsf{x}_1,\dots,\mathsf{x}_{s-1},N\mathsf{x}_s-(Nq+i-1)\mathsf{t},\mathsf{t}).
	\end{align*}
	Similarly, for the left-hand side, we get:
	\begin{align*}
		\Psi(\mathfrak{N}_{N,m}^\ast(f))|_{\sigma_{\rho,q,r}}&=\mathfrak{N}_{N,m}^\ast(f)|_{\sigma_{\rho_r}}(q)(\mathsf{x}_1,\dots,\mathsf{x}_{s-1},\mathsf{x}_s-q\mathsf{t},\mathsf{t})\\
														   &=\sum_{i=1}^{N}\sum_{a=0}^{s} \frac{\delta_{\sigma_{\rho_r}}(\mathsf{x}_1,\dots,\mathsf{x}_{s-1},\mathsf{x}_s-q\mathsf{t},\mathsf{t})}{\delta_{\sigma_{(\rho_r)_a,0,i}}(\mathsf{x}_1,\dots,\mathsf{x}_{s-1},\mathsf{x}_s-q\mathsf{t},\mathsf{t})}\, \\
														   &\quad \cdot f|_{\sigma_{((\rho_r)_a)_{-s}}} \bigl(Nq+i-1\bigr) \bigl(\mathsf{x}_1,\dots,\mathsf{x}_{s-1},N(\mathsf{x}_s-q\mathsf{t})-(i-1)\mathsf{t},\mathsf{t}\bigr)\\
														   &=\sum_{i=1}^{N}\sum_{a=0}^{s} \frac{\delta_{\sigma_{\rho,q,r}}}{\delta_{\sigma_{(\rho_r)_a,q,Nq+i}}}f|_{\sigma_{((\rho_r)_a)_{-s}}} \bigl(Nq+i-1\bigr) \bigl(\mathsf{x}_1,\dots,\mathsf{x}_{s-1},N(\mathsf{x}_s-q\mathsf{t})-(i-1)\mathsf{t},\mathsf{t}\bigr).
	\end{align*}
	From this it is clear that the two expressions agree.
	One easily checks that $\mathfrak{N}_{N,m}^\ast(f)$ lies in $\spp^{\ast}(\tilde{\Sigma}^{(s)})[m]^{\mathsf{gl}}$,
	finishing the proof.
\end{proof}

For simplicity, we introduce the following notation:
Fix $\rho\in \mathfrak S_{s-1}$. For
$f\in \spp^{\ast}(\tilde{\Sigma}^{(s)})[m]^{\mathsf{gl}}$ and $1\le r\le s-1$, set
\[
	H_{\rho,r}^f(m):=
	\frac{f|_{\sigma_{\rho_{r-1}}}(m)-f|_{\sigma_{\rho_r}}(m)}{\mathsf{x}_{\rho(r)}-\mathsf{x}_s}
\]
This quotient is polynomial in $m,\mathsf{x}_1,\dots,\mathsf{x}_s,\mathsf{t}$ by the continuity of $f$.
In preparation for the proof of Theorem \ref{thm:polynomiality}, the following
lemma uses Corollary \ref{cor:supp_function_sigma} to give a more explicit
expression for $\mathfrak N_{N,m}^{\ast}$.

\begin{lemma}
	\label{lem:contribution-formula-m}
	Let $f\in \spp^{\ast}(\tilde{\Sigma}^{(s)})[m]^{\mathsf{gl}}$, fix $\rho\in \mathfrak S_{s-1}$ and $0\le r\le s-1$.
	For $1\le i\le N$, define
	\begin{align}
		S_{i,r}={}&
		\frac{(i-1)(\mathsf{t}-\mathsf{x}_s)}{\mathsf{z}_i^{(N)}}f|_{\sigma_{\rho_0}}(m\to Nm+i-1,\mathsf{x}_s\to \mathsf{z}_i^{(N)})
		+\sum_{h=1}^{r}(i-1)(\mathsf{t}-\mathsf{x}_s)H_{\rho,h}^f(m\to Nm+i-1,\mathsf{x}_s\to \mathsf{z}_i^{(N)})\notag\\
				  &+f|_{\sigma_{\rho_r}}(m\to Nm+i-1,\mathsf{x}_s\to \mathsf{z}_i^{(N)})
				  +\sum_{h=r+1}^{s-1}(N-i)\mathsf{x}_sH_{\rho,h}^f(m\to Nm+i-1,\mathsf{x}_s\to \mathsf{z}_i^{(N)})\notag\\
				  &+\frac{(N-i)\mathsf{x}_s}{\mathsf{t}-\mathsf{z}_i^{(N)}}f|_{\sigma_{\rho_{s-1}}}(m\to Nm+i-1,\mathsf{x}_s\to \mathsf{z}_i^{(N)}).
				  \label{eq:contribution-formula-m}
	\end{align}
	Then
	\[
		\left.\mathfrak N_{N,m}^\ast(f)\right|_{\sigma_{\rho_r}}=\sum_{i=1}^N S_{i,r}.
	\]
\end{lemma}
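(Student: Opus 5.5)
The plan is to start from the defining formula of $\mathfrak N_{N,m}^\ast$ in Lemma~\ref{lem:combinat_lift}, specialised to $\pi=\rho_r$, and evaluate each summand explicitly using Corollary~\ref{cor:supp_function_sigma}. We have $\pi^{-1}(s)=r+1$, and $(\pi_a)_{-s}$ is $\rho_{r}$ or $\rho_{r-1}$ with $\mathsf{z}_i^{(N)}$ inserted in position $a$. For fixed $i$, the inner sum over $a\in\{0,\dots,s\}$ then splits into the terms with $a\le r$ (where $\mathsf{z}_i^{(N)}$ lies below $\mathsf{x}_s$, so $\pi_a^{-1}(s+1)<\pi_a^{-1}(s)$) and those with $a\ge r+1$ (where it lies above). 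The boundary conventions for $i=1$ and $i=N$ remove exactly one of these two groups. Since $\delta_{\sigma_{\rho_r}}$ is the product of consecutive differences of $(0,\mathsf{x}_{\rho(1)},\dots,\mathsf{x}_s,\dots,\mathsf{t})$, each ratio $\delta_{\sigma_{\rho_r}}/\delta_{\sigma_{\pi_a,0,i}}$ becomes explicit.

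Take the case $a\le r$. By Corollary~\ref{cor:supp_function_sigma}, the ratio equals $(i-1)(\mathsf{t}-\mathsf{x}_s)$ times (old gap)/(product of the two new gaps). The old gap is $\mathsf{x}_{\rho(a)}-\mathsf{x}_{\rho(a-1)}$, and the new gaps are $\mathsf{z}_i^{(N)}-\mathsf{x}_{\rho(a-1)}$ and $\mathsf{x}_{\rho(a)}-\mathsf{z}_i^{(N)}$. The function evaluated there is $f|_{\sigma_{\rho_{a}}}$, i.e.\ the restriction with the $s$-th coordinate inserted at slot $a$, evaluated at $\mathsf{x}_s\to\mathsf{z}_i^{(N)}$ and $m\to Nm+i-1$. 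Write $F_a:=f|_{\sigma_{\rho_a}}(\dots)$. The key algebraic step is partial fractions:
\[
\frac{\mathsf{x}_{\rho(a)}-\mathsf{x}_{\rho(a-1)}}{(\mathsf{z}-\mathsf{x}_{\rho(a-1)})(\mathsf{x}_{\rho(a)}-\mathsf{z})}=\frac{1}{\mathsf{z}-\mathsf{x}_{\rho(a-1)}}+\frac{1}{\mathsf{x}_{\rho(a)}-\mathsf{z}}.
\]
Summing over $a$ then telescopes into the differences $(F_{h-1}-F_h)/(\mathsf{x}_{\rho(h)}-\mathsf{z})=H^f_{\rho,h}$ for $1\le h\le r$. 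Two pieces are left over. One is the boundary term $F_0/\mathsf{z}$ coming from $\mathsf{x}_{\rho(0)}=0$, which gives the first summand of \eqref{eq:contribution-formula-m}. The other is a term of the form $F_r/(\mathsf{x}_s-\mathsf{z})$ attached to the top gap below $\mathsf{x}_s$.

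The case $a\ge r+1$ is handled symmetrically with the factor $(N-i)\mathsf{x}_s$ (here $q=0$). It produces the $H^f_{\rho,h}$ for $h\ge r+1$, the boundary term $F_{s-1}/(\mathsf{t}-\mathsf{z})$, and a term $F_r/(\mathsf{z}-\mathsf{x}_s)$. The two $F_r$ leftovers combine to
\[
\frac{(i-1)(\mathsf{t}-\mathsf{x}_s)-(N-i)\mathsf{x}_s}{\mathsf{x}_s-\mathsf{z}_i^{(N)}}\,F_r .
\]
Since $\mathsf{x}_s-\mathsf{z}_i^{(N)}=(i-1)\mathsf{t}-(N-1)\mathsf{x}_s$, the numerator equals the denominator, and this is exactly the middle term $f|_{\sigma_{\rho_r}}$. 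The identity $\mathsf{z}=\mathsf{x}_{\rho(h)}$ defining $H$ agrees with the variable substitution because $H^f_{\rho,h}$ is evaluated after the substitution, so the formulas match.

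The main obstacle is bookkeeping at the edge cases. For $i=1$ and $i=N$ only one group of $a$ survives, but the vanishing factor $(i-1)$ or $(N-i)$ kills the spurious leftover terms, so the formula remains valid. For $N=1$ one has to check the convention omitting factors. I would verify these separately, and confirm the sign conventions of Corollary~\ref{cor:supp_function_sigma} against the position of $\mathsf{z}$.
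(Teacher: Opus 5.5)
Your argument is correct and is essentially the paper's proof. You expand Lemma~\ref{lem:combinat_lift} at $\pi=\rho_r$, compute the ratios $\delta_{\sigma_{\rho_r}}/\delta_{\sigma_{(\rho_r)_a,0,i}}$ from Corollary~\ref{cor:supp_function_sigma}, split at $a=r$, use partial fractions to telescope into the $H^f_{\rho,h}$ and the two boundary terms, check that the coefficient $\bigl((i-1)(\mathsf{t}-\mathsf{x}_s)-(N-i)\mathsf{x}_s\bigr)/(\mathsf{x}_s-\mathsf{z}_i^{(N)})$ of $f|_{\sigma_{\rho_r}}$ equals $1$, and let the vanishing factors absorb the $i=1,N$ edge cases, exactly as the paper remarks. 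One indexing slip needs fixing: for $0\le a\le r$ the point $\mathsf{z}_i^{(N)}$ lies in the gap $[\mathsf{x}_{\rho_r(a)},\mathsf{x}_{\rho_r(a+1)}]$ (the top one ends at $\mathsf{x}_s$), and $((\rho_r)_a)_{-s}=\rho_a$ there and $\rho_{a-1}$ for $a\ge r+1$; it is not the gap $[\mathsf{x}_{\rho(a-1)},\mathsf{x}_{\rho(a)}]$ you wrote, and with that correction the telescoping produces exactly the terms you state.
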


\begin{proof}
	Let $\pi\in \mathfrak S_s$. We start by computing the factor $\frac{\delta_{\sigma_{\pi}}}{\delta_{\sigma_{\pi_a,0,i}}}$
	from Lemma \ref{lem:combinat_lift}. Observe that $\sigma_{\pi}$ is simplicial, defined by the conditions
	$\mathsf{x}_{\pi(j+1)}-\mathsf{x}_{\pi(j)}\ge 0$ for all $j\in \{0,\dots,s\}$, with the usual convention that $\mathsf{x}_{\pi(s+1)}=\mathsf{t}$
	and $\mathsf{x}_{\pi(0)}=0$. Hence
	$$\delta_{\sigma_{\pi}}=\prod_{j=0}^{s}(\mathsf{x}_{\pi(j+1)}-\mathsf{x}_{\pi(j)}).$$
	By Corollary \ref{cor:supp_function_sigma}, we therefore get:
	$$\frac{\delta_{\sigma_{\pi}}}{\delta_{\sigma_{\pi_a,0,i}}}=\frac{(i-1)(\mathsf{t}-\mathsf{x}_s)(\mathsf{x}_{\pi(a+1)}-\mathsf{x}_{\pi(a)})}{(\mathsf{z}_i^{(N)}-\mathsf{x}_{\pi(a)})(\mathsf{x}_{\pi(a+1)}-\mathsf{z}_i^{(N)})}$$
	when $a<\pi^{-1}(s)$ and
	$$\frac{\delta_{\sigma_{\pi}}}{\delta_{\sigma_{\pi_a,0,i}}}=\frac{(N-i)\mathsf{x}_s(\mathsf{x}_{\pi(a+1)}-\mathsf{x}_{\pi(a)})}{(\mathsf{z}_i^{(N)}-\mathsf{x}_{\pi(a)})(\mathsf{x}_{\pi(a+1)}-\mathsf{z}_i^{(N)})}$$
	when $a\ge \pi^{-1}(s)$. Note that in the cases excluded in Corollary \ref{cor:max_cones_sigma_n_bar}, these
	expressions vanish identically, so no special treatment of the edge cases is necessary.

	Using this, we can rewrite $\left.\mathfrak{N}_{N,m}^{\ast}(f)\right|_{\sigma_{\rho_r}}$ as:
	\begin{align*}
		\sum_{i=1}^{N}&\left(\sum_{a=0}^{r}
			\frac{(i-1)(\mathsf{t}-\mathsf{x}_s)(\mathsf{x}_{\rho_r(a+1)}-\mathsf{x}_{\rho_r(a)})}{(\mathsf{z}_i^{(N)}-\mathsf{x}_{\rho_r(a)})(\mathsf{x}_{\rho_r(a+1)}-\mathsf{z}_i^{(N)})}\,
			f|_{\sigma_{\rho_a}}
			\bigl(Nm+i-1\bigr)
			\bigl(\mathsf{x}_1,\dots,\mathsf{x}_{s-1},\mathsf{z}_i^{(N)},\mathsf{t}\bigr)\right.\\
		&\left.+\sum_{a=r+1}^{s}
			  \frac{(N-i)\mathsf{x}_s(\mathsf{x}_{\rho_r(a+1)}-\mathsf{x}_{\rho_r(a)})}{(\mathsf{z}_i^{(N)}-\mathsf{x}_{\rho_r(a)})(\mathsf{x}_{\rho_r(a+1)}-\mathsf{z}_i^{(N)})}\,
			  f|_{\sigma_{\rho_{a-1}}}
			  \bigl(Nm+i-1\bigr)
		  \bigl(\mathsf{x}_1,\dots,\mathsf{x}_{s-1},\mathsf{z}_i^{(N)},\mathsf{t}\bigr)\right).
	\end{align*}
	We simplify this expression by applying the partial fraction decomposition
	\[
		\frac{c-b}{(c-\mathsf{z})(\mathsf{z}-b)}=\frac{1}{c-\mathsf{z}}-\frac{1}{b-\mathsf{z}},
	\]
	to each term in the two inner sums. For $1\le a\le r$, we can reorder the sum to group the two terms with the same denominator, which together give
	\begin{align*}
	&(i-1)(\mathsf{t}-\mathsf{x}_s)\frac{f|_{\sigma_{\rho_{a-1}}}(m\to Nm+i-1,\mathsf{x}_s\to \mathsf{z}_i^{(N)})-f|_{\sigma_{\rho_a}}(m\to Nm+i-1,\mathsf{x}_s\to \mathsf{z}_i^{(N)})}{\mathsf{x}_{\rho_r(a)}-\mathsf{z}_i^{(N)}}\\
	&=(i-1)(\mathsf{t}-\mathsf{x}_s)H_{\rho,a}^f(m\to Nm+i-1,\mathsf{x}_s\to \mathsf{z}_i^{(N)}).
	\end{align*}
	Similarly, for $r+2\le a\le s$ we get
	\begin{align*}
&(N-i)\mathsf{x}_s\frac{f|_{\sigma_{\rho_{a-2}}}(m\to Nm+i-1,\mathsf{x}_s\to \mathsf{z}_i^{(N)})-f|_{\sigma_{\rho_{a-1}}}(m\to Nm+i-1,\mathsf{x}_s\to \mathsf{z}_i^{(N)})}{\mathsf{x}_{\rho_r(a)}-\mathsf{z}_i^{(N)}}\\
&=(N-i)\mathsf{x}_sH_{\rho,a-1}^f(m\to Nm+i-1,\mathsf{x}_s\to \mathsf{z}_i^{(N)}).
	\end{align*}
	Using this, and keeping track of the boundary terms, we get:
	\begin{align*}
		\left.\mathfrak{N}_{N,m}^{\ast}(f)\right|_{\sigma_{\rho_r}}=\sum_{i=1}^{N}&\left(\sum_{a=1}^{r}(i-1)(\mathsf{t}-\mathsf{x}_s)H_{\rho,a}^f(m\to Nm+i-1,\mathsf{x}_s\to \mathsf{z}_i^{(N)})\right.\\
		&+\sum_{a=r+2}^{s}
			  (N-i)\mathsf{x}_sH_{\rho,a-1}^f(m\to Nm+i-1,\mathsf{x}_s\to \mathsf{z}_i^{(N)})\\
		&+\frac{(i-1)(\mathsf{t}-\mathsf{x}_s)}{\mathsf{z}_i^{(N)}}f|_{\sigma_{\rho_0}}
			\bigl(Nm+i-1\bigr)
			\bigl(\mathsf{x}_1,\dots,\mathsf{x}_{s-1},\mathsf{z}_i^{(N)},\mathsf{t}\bigr)\\
		&+\left(\frac{(i-1)(\mathsf{t}-\mathsf{x}_s)}{\mathsf{x}_s-\mathsf{z}_i^{(N)}}-\frac{(N-i)\mathsf{x}_s}{\mathsf{x}_{s}-\mathsf{z}_i^{(N)}}\right)f|_{\sigma_{\rho_r}}
			\bigl(Nm+i-1\bigr)
			\bigl(\mathsf{x}_1,\dots,\mathsf{x}_{s-1},\mathsf{z}_i^{(N)},\mathsf{t}\bigr)\\
		&\left.+ \frac{(N-i)\mathsf{x}_s}{\mathsf{t}-\mathsf{z}_i^{(N)}}f|_{\sigma_{\rho_{s-1}}}
			  \bigl(Nm+i-1\bigr)
		  \bigl(\mathsf{x}_1,\dots,\mathsf{x}_{s-1},\mathsf{z}_i^{(N)},\mathsf{t}\bigr)\right)
	\end{align*}
	After simplifying and reindexing the sums, this precisely matches
	the formula in the statement.
\end{proof}

\begin{proof}[Proof of Theorem \ref{thm:polynomiality}]
	Fix $\rho\in \mathfrak S_{s-1}$ and $0\le r\le s-1$. For simplicity, set
	\[
		c(m):=f|_{\sigma_{\rho_0}}(m)(\mathsf{x}_s\to 0).
	\]
	The continuity condition in the definition of $\spp^{\ast}(\tilde{\Sigma}^{(s)})[m]^{\mathsf{gl}}$ gives
	\[
		f|_{\sigma_{\rho_{s-1}}}(m)(\mathsf{x}_s\to \mathsf{t})=c(m+1).
	\]
	Thus we may write
	\[
		f|_{\sigma_{\rho_0}}(m)=c(m)+\mathsf{x}_sR_0(m),\qquad
		f|_{\sigma_{\rho_{s-1}}}(m)=c(m+1)+(\mathsf{x}_s-\mathsf{t})R_s(m)
	\]
	with $R_0$ and $R_s$ in $\mathbb{Q}[m,\mathsf{x}_1,\dots,\mathsf{x}_s,\mathsf{t}]$. Substituting this into $S_{i,r}$, we rewrite
    {\allowdisplaybreaks
	\begin{align*}
		S_{i,r}={}&\left(1+\frac{(i-1)(\mathsf{t}-\mathsf{x}_s)}{\mathsf{z}_i^{(N)}}\right)c(Nm+i-1)
		+\frac{(N-i)\mathsf{x}_s}{\mathsf{t}-\mathsf{z}_i^{(N)}}c(Nm+i)\\
				  &+(i-1)(\mathsf{t}-\mathsf{x}_s)R_0(m\to Nm+i-1,\mathsf{x}_s\to \mathsf{z}_i^{(N)})
				  +\sum_{h=1}^{r}(i-1)(\mathsf{t}-\mathsf{x}_s)H_{\rho,h}^f(m\to Nm+i-1,\mathsf{x}_s\to \mathsf{z}_i^{(N)})\notag\\
				  &+f|_{\sigma_{\rho_r}}(m\to Nm+i-1,\mathsf{x}_s\to \mathsf{z}_i^{(N)})-c(Nm+i-1)
				  +\sum_{h=r+1}^{s-1}(N-i)\mathsf{x}_sH_{\rho,h}^f(m\to Nm+i-1,\mathsf{x}_s\to \mathsf{z}_i^{(N)})\notag\\
				  &-(N-i)\mathsf{x}_sR_s(m\to Nm+i-1,\mathsf{x}_s\to \mathsf{z}_i^{(N)}).
	\end{align*}}
	The coefficient of $c(Nm+i-1)$ simplifies as
	\[
		1+\frac{(i-1)(\mathsf{t}-\mathsf{x}_s)}{\mathsf{z}_i^{(N)}}=\frac{(N-i+1)\mathsf{x}_s}{\mathsf{z}_i^{(N)}}.
	\]
	Using $\mathsf{t}-\mathsf{z}_i^{(N)}=-\mathsf{z}_{i+1}^{(N)}$, the sum of the terms involving
	$c$ is
	\[
		\sum_{i=1}^N\left(\frac{(N-i+1)\mathsf{x}_s}{\mathsf{z}_i^{(N)}}c(Nm+i-1)
		-\frac{(N-i)\mathsf{x}_s}{\mathsf{z}_{i+1}^{(N)}}c(Nm+i)\right),
	\]
	which telescopes to
	\[
		c(Nm).
	\]
	All remaining terms are polynomial in $(i,N)$. Hence polynomiality in $N$ follows.

	It remains to prove the degree bound. If $d=0$, all $H_{\rho,h}^{f}$ and the functions $R_0,R_s$ vanish, $c$ is independent
	of $m$ and
	\[
		\left.\mathfrak N_{N,m}^\ast(f)\right|_{\sigma_{\rho_r}}=c(Nm),
	\]
	which has degree at most $0$ in $N$.

	Assume $d\ge 1$. Note that the degree of $c$ in $m$ is at most $d$ and hence the contribution $c(Nm)$ has
	degree at most $d$ in $N$. The remaining terms in the above expression for $S_{i,r}$ are clearly polynomial in $i$ and $N$ of total
	degree at most $d$. Hence it suffices to show that the degree $d$ part of the remainder of $S_{i,r}$ vanishes.

	For $0\le h\le s-1$, let $G_h(m,\mathsf{x}_s)$ be the homogeneous degree-$d$ part in $m$ and $\mathsf{x}_s$ of $f|_{\sigma_{\rho_h}}(m)$, and let $C(m)=G_0(m,0)$. The homogeneous degree-$(d-1)$ parts in $m$ and $\mathsf{x}_s$ of $R_0$, $R_s$, and $H_{\rho,h}^f$ are respectively
	\[
		\frac{G_0-C}{\mathsf{x}_s},\qquad \frac{G_{s-1}-C}{\mathsf{x}_s},\qquad \frac{G_h-G_{h-1}}{\mathsf{x}_s}.
	\]
	Since the substitutions $m\to Nm+i-1$ and $\mathsf{x}_s\to \mathsf{z}_i^{(N)}$ are of total degree 1 in $i$ and $N$, it follows that the
	possible degree-$d$ contribution in $i$ and $N$ to $S_{i,r}$ (omitting the $c$ terms as explained above) is
	the degree $d$ part of
	\[
		\frac{(i-1)(\mathsf{t}-\mathsf{x}_s)}{\mathsf{z}_i^{(N)}}(G_0-C)
		+\frac{(i-1)(\mathsf{t}-\mathsf{x}_s)}{\mathsf{z}_i^{(N)}}\sum_{h=1}^r(G_h-G_{h-1})
		+(G_r-C)
	\]
	\[
		+\frac{(N-i)\mathsf{x}_s}{\mathsf{z}_i^{(N)}}\sum_{h=r+1}^{s-1}(G_h-G_{h-1})
		-\frac{(N-i)\mathsf{x}_s}{\mathsf{z}_i^{(N)}}(G_{s-1}-C).
	\]
	The sums telescope, so this equals
	\[
		(G_r-C)\left(1+\frac{(i-1)(\mathsf{t}-\mathsf{x}_s)}{\mathsf{z}_i^{(N)}}-\frac{(N-i)\mathsf{x}_s}{\mathsf{z}_i^{(N)}}\right)
		=\frac{\mathsf{x}_s(G_r-C)}{\mathsf{z}_i^{(N)}}.
	\]
	This has total degree $d-1$ in $(m,\mathsf{x}_s,i,N)$ as required. Substituting $m$ and $\mathsf{x}_s$ as above and summing over $i=1,\dots,N$ gives degree at most $d$ in $N$.
\end{proof}

\begin{corollary}
	\label{cor:glob_poly}
	Let $f\in \spp^{\ast}(\tilde{\Sigma}^{(s)})[m]^{\mathsf{gl}}$. There is a unique element
	$f_N\in \spp^{\ast}(\tilde{\Sigma}^{(s)})[m]^{\mathsf{gl}}[N]$ of degree at most $\deg_{m,\mathsf{x}_s}(f)$ such that, for every
	$N\in \mathbb{Z}_{\ge 1}$, we have $f_N(N)=\mathfrak{N}_{N,m}^\ast(f)$.
\end{corollary}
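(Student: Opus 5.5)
The plan is to assemble $f_N$ cone by cone from Theorem \ref{thm:polynomiality} and then check that the assembled object lies in $\spp^{\ast}(\tilde{\Sigma}^{(s)})[m]^{\mathsf{gl}}[N]$. Uniqueness is immediate. A polynomial in $N$ with coefficients in the $\mathbb{Q}$-vector space $\spp^{\ast}(\tilde{\Sigma}^{(s)})[m]^{\mathsf{gl}}$ that vanishes for all $N\in\ZZ_{\ge 1}$ is zero. To see this, restrict to each maximal cone $\sigma$ and to each monomial coefficient: this gives a polynomial in $N$ with infinitely many roots.

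For existence, fix a maximal cone $\sigma=\sigma_{\rho_r}$ of $\tilde{\Sigma}^{(s)}$. By Theorem \ref{thm:polynomiality}, $\mathfrak N_{N,m}^\ast(f)|_\sigma$ is a polynomial in $N$ of degree at most $d=\deg_{m,\mathsf{x}_s}(f)$. The proof of that theorem in fact exhibits it as an element $g_\sigma(N)\in\mathbb{Q}[m,\mathsf{x}_1,\dots,\mathsf{x}_s,\mathsf{t}][N]$. It is the sum of $c(Nm)$ and finite sums over $i=1,\dots,N$ of polynomials in $i,N$, and by Faulhaber these sums are polynomials in $N$. We then write $g_\sigma(N)=\sum_{k=0}^d N^k g_{\sigma,k}$ and define $f_N:=\sum_k N^k f^{(k)}$, where $f^{(k)}$ is the collection $(g_{\sigma,k})_\sigma$.

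The key step is showing that each $f^{(k)}$ is a genuine element of $\spp^{\ast}(\tilde{\Sigma}^{(s)})[m]^{\mathsf{gl}}$. Two conditions must hold. First, continuity: $g_{\sigma,k}$ and $g_{\sigma',k}$ agree on common faces. Second, the gluing condition $f^{(k)}(q-1)|_{\mathsf{x}_s=\mathsf{t}}=f^{(k)}(q)|_{\mathsf{x}_s=0}$. Lemma \ref{lem:combinat_lift} says that for every fixed integer $N\ge1$ the tuple $(g_\sigma(N))_\sigma$ is an element of this ring. Each condition is a linear identity between the restrictions of the $g_\sigma(N)$ to faces, and these restrictions are polynomials in $N$ (and in $q$ for the gluing condition). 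So each identity holds for infinitely many $N$, and comparing coefficients of $N^k$ gives it for each $f^{(k)}$ separately. The evaluation $f_N(N)=\mathfrak N_{N,m}^\ast(f)$ then holds by construction, and the degree bound comes directly from Theorem \ref{thm:polynomiality}.

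The only point needing care is to confirm that the per-cone polynomial from Theorem \ref{thm:polynomiality} is polynomial in $N$ jointly with the variables $m,\mathsf{x},\mathsf{t}$, not merely for each fixed evaluation. I expect this to follow from the explicit form of $S_{i,r}$ in Lemma \ref{lem:contribution-formula-m} after the telescoping in that proof, so I do not expect a real obstacle here.
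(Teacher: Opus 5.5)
Your proof is correct and follows essentially the paper's route: use Theorem~\ref{thm:polynomiality} on each maximal cone, then extract the coefficients of powers of $N$. The only difference is in your key step. The paper defines $f_N$ by Lagrange interpolation of $\mathfrak N_{1,m}^\ast(f),\dots,\mathfrak N_{K+1,m}^\ast(f)$, so each coefficient is visibly a $\mathbb{Q}$-linear combination of elements of $\spp^{\ast}(\tilde{\Sigma}^{(s)})[m]^{\mathsf{gl}}$; this also removes your worry about joint polynomiality in $m,\mathsf{x},\mathsf{t}$ without inspecting $S_{i,r}$.
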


\begin{proof}
	Let $K$ be the maximum of the degrees in $N$ of the restrictions of
	$\mathfrak{N}_{N,m}^\ast(f)$ to each maximal cone in $\tilde{\Sigma}^{(s)}$.
	Let $f_N$ be the polynomial defined by Lagrange interpolation of the
	evaluations $\mathfrak{N}_{1,m}^\ast(f),\dots,\mathfrak{N}_{K+1,m}^\ast(f)$.
	The restriction of $f_N$ to each maximal cone in $\tilde{\Sigma}^{(s)}$
	agrees with $\mathfrak{N}_{N,m}^\ast(f)$ for every $N$ and hence the two
	functions agree on all of $\tilde{\Sigma}^{(s)}$. Since $f_N$ is uniquely
	determined by its evaluations at all $N\in \mathbb{Z}_{\ge 1}$, the
	uniqueness of $f_N$ is clear. Moreover, since the coefficient of $N^{j}$ in
	$f_N$ is a linear combination of the elements of
	$\mathfrak{N}_{1,m}^\ast(f),\dots,\mathfrak{N}_{K+1,m}^\ast(f)\in
	\spp^{\ast}(\tilde{\Sigma}^{(s)})[m]^{\mathsf{gl}}$ for all $j\ge 0$, it follows that
	each such coefficient lies in $\spp^{\ast}(\tilde{\Sigma}^{(s)})[m]^{\mathsf{gl}}$.
\end{proof}

\subsection{Proof of the main theorem and corollaries}\label{sec:prf_wt_thm}

\subsubsection{Pullbacks and the log Poincar\'e line bundle}
\label{sec:pullbacks_poincare}

Over the interior, the fact that $\mathcal{P}^{\circ}$ is a biextension of ${A^{\circ}}^{2}$ implies that
$$(1\times N)^{\ast}(\mathcal{P}^{\circ})={\mathcal{P}^{\circ}}^{\otimes N}.$$
The same argument applies after replacing $\mathcal{P}^{\circ}$ with $\mathcal{P}^{\log}$.
However, the analogous equation for $\mathcal{P}$ is false, as
boundary corrections appear. The goal of this section is to use the description of $\mathcal{P}$
as a line bundle representing $\mathcal{P}^{\log}$ from \S\ref{sec:Plog} to determine these boundary corrections
(see Corollary \ref{cor:pp_corr_poincare_precise}). A related computation appears in \cite{BGHdJ}.

Let $\mathcal{P}^{\trop}:=\mathcal{P}^{\log}\times^{\mathbb{G}_{m,\log}}\mathbb{G}_{m,\trop}$ (equivalently,
$\mathcal{P}^{\trop}$ is the image of $\mathcal{P}^{\log}$ in $H^{1}(\barA^{(2)},\mathbb{G}_{m,\trop})$ under the map
$H^1(X, \Glog) \longrightarrow H^1(X, \mathbb G_{m,\trop})$ from \S\ref{sec:Plog}).
Write $\mathsf{PL}$ for the sheaf on $\RPC$ which sends $\sigma\in \RPC$ to its character 
lattice $M_{\sigma}$. Since $M_{\sigma}$ is the group of linear functions on $\sigma$, the set of morphisms
from a cone complex $\Sigma$ to $\mathsf{PL}$ is the set of strict piecewise linear functions
on $\Sigma$. Observe that $\mathbb{G}_{m,\trop}=\mathcal{A}_{\mathsf{PL}}$.
Let $\mathcal{P}^{\sfpp}$ denote the $\mathsf{PL}$-torsor on $\Sigma^{(2)}$ obtained as the quotient
$$\mathcal{P}^{\sfpp}:=(\mathcal U^2\Sigma^{(2)}\times\mathsf{PL})/\mathbb{Z}^{2},$$
where $(v_1,v_2)\in \mathbb{Z}^{2}$ sends $((\mathsf{x}_1,\mathsf{x}_2,\mathsf{t}),p)$ to
$$((\mathsf{x}_1+v_1\mathsf{t},\mathsf{x}_2+v_2\mathsf{t},\mathsf{t}),(\mathsf{y}_1,\mathsf{y}_2,s)\mapsto v_2\mathsf{y}_1+v_1\mathsf{y}_2+v_1v_2s+p(s,\mathsf{y}_1+v_1s,\mathsf{y}_2+v_2s)).$$

\begin{lemma}\label{lem:glob_secs_ptrop}
	\begin{enumerate}[label = (\alph*)]
		\item $\mathcal{P}^{\trop}$ is canonically isomorphic to the pullback of the geometric realization $\mathcal{A}_{\mathcal{P}^{\sfpp}}$ of $\mathcal{P}^{\sfpp}$
			along $\barA^{(2)}\to \mathcal{A}_{\Sigma^{(2)}}$.
		\item The pullback morphism induces an isomorphism from the group of global sections of $\mathcal{P}^{\sfpp}$ to the
			group of global sections of $\mathcal{P}^{\trop}$.
	\end{enumerate}
\end{lemma}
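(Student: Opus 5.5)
For part (a), I would compute $\mathcal{P}^{\trop}$ directly from its definition as the image of $\mathcal{P}^{\log}$ under $H^1(\barA^{(2)},\Glog)\to H^1(\barA^{(2)},\mathbb{G}_{m,\trop})$.

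The starting point is the f.s. fiber diagram of \S\ref{sec:Plog}. Over $(A^{\trop})^2$, the log Poincar\'e biextension has a tropical shadow: a $\mathbb{G}_{m,\trop}$-biextension $\mathcal{P}^{\trop,A}$ of $A^{\trop}\times A^{\trop}$. In the description of $A^{\trop}$ as the quotient of $\hom(X,\mathbb{G}_{m,\trop})$ by the period lattice $Y\cong\ZZ$ (here $X=Y=\ZZ$ with pairing $\mathsf{t}$), this biextension is trivial on the universal cover. It is obtained by descending the trivial torsor along $\ZZ^2$, with the standard Mumford-type cocycle $(v_1,v_2)\mapsto v_2\mathsf{y}_1+v_1\mathsf{y}_2+v_1v_2 s$. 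This cocycle is forced by bilinearity together with the pairing $\langle v_1,v_2\rangle=v_1v_2\mathsf{t}$ given by the polarization. I would check this by comparing with \cite{KKN2,KKN5}, where the tropical part of the log Poincar\'e bundle is written in these coordinates. The identification $\mathbb{G}_{m,\trop}=\mathcal{A}_{\mathsf{PL}}$ then turns this cocycle description into exactly the quotient $(\mathcal U^2\Sigma^{(2)}\times\mathsf{PL})/\ZZ^2$ defining $\mathcal{P}^{\sfpp}$, after pulling back along the log blowup $\mathcal{A}_{\Sigma^{(2)}}\to(A^{\trop})^2$. Since the strict map $\barA^{(2)}\to\mathcal{A}_{\Sigma^{(2)}}$ is compatible with this, pulling back gives (a). Canonicity follows because every identification used is canonical: the trivialization on the universal cover is normalized at the origin, consistent with rigidification of the biextension along the unit sections.

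For part (b), I would use that the map $\barA^{(2)}\to\mathcal{A}_{\Sigma^{(2)}}$ is strict, smooth and surjective with connected fibers. For any $\mathsf{PL}$-torsor $T$ on $\Sigma^{(2)}$, a section of $\mathcal{A}_T$ over $\barA^{(2)}$ is locally (on a strict étale chart) a section of $\overline{M}^{\gp}$, that is, a piecewise linear function on the corresponding cone. Since $\overline{M}^{\gp}$ is constant along the fibers of the map to the Artin fan, these local sections descend. Concretely, I would show that pullback $\Gamma(\Sigma^{(2)},T)\to\Gamma(\barA^{(2)},\mathcal{A}_T\times\barA^{(2)})$ is a bijection in two steps. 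First, trivialize $T$ locally on cones, which reduces to the statement $\Gamma(\barA^{(2)}|_{\sigma},\overline{M}^{\gp})=M_\sigma$ on strata-neighborhoods. This holds because the strata have connected fibers over cones and there is no monodromy; the latter is established in the proof of Proposition \ref{pro:Atrop}, where the two normal directions are not interchanged. Second, glue, using that both sides are sheaves and the cone stack is the colimit of its cones.

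The main obstacle is (a): pinning down exactly the cocycle and its sign conventions, including the footnoted sign convention for $\mathcal{P}$ versus KKN. As a consistency check I would verify that the section $\mathsf{p}^{\trop}$ of Remark \ref{rem:Ptrop} satisfies the resulting transformation law.
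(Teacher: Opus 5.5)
Your part (a) is essentially the paper's argument. Both rest on \cite[Proposition 1.6 and Lemma 3.10]{KKN5}, which present the tropicalized log Poincar\'e biextension as the $\ZZ^2$-quotient of the trivial $\PL$-torsor on $\mathcal{Q}\times_{\mathcal{A}^1}\mathcal{Q}$, with an explicit bilinear cocycle. Both then pull this back along the $\ZZ^2$-equivariant map $\mathcal U^2\Sigma^{(2)}\to\mathcal{Q}\times_{\mathcal{A}^1}\mathcal{Q}$ underlying $\Sigma^{(2)}\to(\tropA)^2$. Your heuristic that bilinearity forces the cocycle is not needed once you rely on that reference.

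For (b) you take a genuinely different route.

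The paper first invokes Lemma \ref{lem:P_trop_triv}, whose explicit section $\Psi(c_{\mathcal P})$ trivializes $\mathcal{P}^{\sfpp}$. Both sides then become torsors under $H^0$ of $\mathbb{G}_{m,\trop}$, and the paper computes these groups directly. $H^0(\barA^{(2)},\mathbb{G}_{m,\trop})$ is the group of Cartier divisors supported on $\partial\barA^{(2)}$. This is $\ZZ$, generated by the pullback of $\mathsf{t}$, because $\barA^{(2)}$ is smooth and its boundary is irreducible ($\Sigma^{(2)}$ has a single ray). It therefore matches $\PL(\Sigma^{(2)})=\ZZ\mathsf{t}$, and equivariance of pullback gives the bijection.

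You instead prove a general descent statement. For the strict, smooth, surjective map $\phi:\barA^{(2)}\to\mathcal{A}_{\Sigma^{(2)}}$ with connected fibers, you show $\overline{M}^{\gp}\cong\phi_*\phi^{-1}\overline{M}^{\gp}$ by checking on \'etale cone charts and gluing. This gives bijectivity on global sections of every $\PL$-torsor, not just $\mathcal{P}^{\sfpp}$.
\begin{itemize}
\item \textbf{What your route buys:} it needs neither the triviality of $\mathcal{P}^{\sfpp}$ nor the single-ray feature. It is therefore logically independent of Lemma \ref{lem:P_trop_triv} and applies unchanged to $\barA^{(s)}$ and $\barA_N^{(s)}$.
\item \textbf{What it costs:} sheaf-theoretic bookkeeping. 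In particular, the connectedness you actually need is that of the closed stratum of each chart $\barA^{(2)}\times_{\mathcal{A}_{\Sigma^{(2)}}}\mathcal{A}_\sigma$. That stratum coincides with the fiber of $\phi$ over $\sigma$ because cones of $\mathcal U^2\Sigma^{(2)}$ have trivial stabilizers in $\ZZ^2$. This is the relevant ``no monodromy'' input for $\barA^{(2)}$, rather than the statement about $\barA$ in Proposition \ref{pro:Atrop}.
\end{itemize}
The paper's version is shorter precisely because it exploits this special geometry together with the lemma that follows.
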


For the proof, we require the following definition which will also be used in the proof of Lemma \ref{lem:Artin_fan_A_N}:
Let $\mathcal{Q}$ be the sheaf on the category of rational polyhedral cones which sends a cone $\sigma$ with integral structure $N_{\sigma}$ to
the set of morphisms $f:N_{\sigma}\to \mathbb{Z}^{2}$ satisfying
$f_{\mathbb{R}}(\sigma)\subseteq \mathbb{R}_{\ge 0}\times \mathbb{R}$ and
$f_{\mathbb{R}}(\sigma)\cap \{0\}\times \mathbb{R}=\{0\}$. There is a natural action of $v\in \mathbb{Z}$ on $\mathcal{Q}$
which sends $f\in \Hom(\sigma,\mathcal{Q})$ to
$$w\mapsto f(w)+(0,f_1(w)v),\ \forall w\in N_{\sigma}.$$
Here, for $i\in \{1,2\}$, we write $f_i$ for the composition of $f$
with the canonical projection from $\mathbb{Z}^{2}$ to the $i$-th factor.
We define $\tropA:=\mathcal{Q}/\mathbb{Z}$.
Pointwise addition on the second factor of $\mathbb{Z}^2$ makes
$\tropA$ a group object over the cone $\mathcal{A}^{1}=(\mathbb{R}_{\ge 0},\mathbb{Z})$ via projection onto the first coordinate.
We write $(\tropA)^{s}:=\tropA\times_{\mathcal{A}^{1}}\dots\times_{\mathcal{A}^{1}}\tropA$. Observe that a morphism $\sigma\to (\tropA)^{s}$
similarly has a description as an equivalence class of morphisms $N_{\sigma}\to \mathbb{Z}^{s+1}$ satisfying conditions as above.

\begin{proof}[Proof of Lemma \ref{lem:glob_secs_ptrop}]
	For point (a), by \cite[Proposition 1.6 and Lemma 3.10]{KKN5}, $\mathcal{P}^{\trop}$
	is canonically isomorphic to the pullback along $(\logA)^{2}\to \mathcal{A}_{(\tropA)^{2}}$
	of $\mathcal{A}_{(\mathcal{Q}\times_{\mathcal{A}^{1}}\mathcal{Q}\times \PL)/\mathbb{Z}^{2}}$,
	where $(v_1,v_2)\in \mathbb{Z}^{2}$ acts on the $\sigma$-points of $\mathcal{Q}\times_{\mathcal{A}^{1}}\mathcal{Q}\times \PL$
	by $(f,g,m)\mapsto (f_{v_1},g_{v_2},-f_2^{\vee}(v_2)-g_2^{\vee}(v_1)-f_1^{\vee}(v_1v_2)+m)$.
	Here, $f_2^{\vee}:\mathbb{Z}\to M_{\sigma}$ (resp. $f_1^{\vee}:\mathbb{Z}\to M_{\sigma}$) denotes the dual of $f_2$
	(resp. $f_1$) (similarly for $g$). Observe that since the fiber product of $\mathcal{Q}$
	with itself is taken over $\mathcal{A}^{1}$, we have
	$f_1=g_1$, from which it is easy to show that the above formula
	determines a group action. Note that there is a canonical $\mathbb{Z}^{2}$-equivariant map
	$\mathcal U^2\Sigma^{(2)}\to \mathcal{Q}\times_{\mathcal{A}^{1}}\mathcal{Q}$
	induced by the inclusion $\mathcal U^2\Sigma^{(2)}\subseteq \mathbb{R}_{\ge 0}\times \mathbb{R}^{2}$
	(which gives two canonical maps $N_{\sigma}\to \mathbb{Z}^{2}$ for all $\sigma\in \mathcal U^2\Sigma^{(2)}$ by projecting onto the factor $\mathbb{R}_{\ge 0}$ and either the first or second factor of $\mathbb{R}^{2}$).
	Pulling back the above canonical isomorphism along $\Sigma^{(2)}=\mathcal U^2\Sigma^{(2)}/\mathbb{Z}^{2}\to (\tropA)^{2}$
	gives the canonical isomorphism of $\mathcal{P}^{\trop}$ with the pullback of $\mathcal{A}_{\mathcal{P}^{\sfpp}}$.

	We continue with point (b): Since $\mathcal{P}^{\sfpp}\times_{(\tropA)^{2}}\Sigma^{(2)}$
	is a trivial $\mathbb{G}_{m,\trop}$-torsor by Lemma \ref{lem:P_trop_triv} below, this is equivalent
	to the claim that the space of sections of $\mathbb{G}_{m,\trop}$
	over $\mathcal{A}_{\Sigma^{(2)}}$ is equal to the space of sections over $\barA^{(2)}$.
	For this, observe that $H^{0}(\mathbb{G}_{m,\trop})$ is canonically isomorphic to the group of
	Cartier divisors supported on $\partial\barA^{(2)}$. Since $\partial\barA^{(2)}$ is an irreducible Cartier divisor
	(corresponding to the unique ray in $\Sigma^{(2)}$), it follows that $H^{0}(\mathbb{G}_{m,\trop})\cong \mathbb{Z}$,
	canonically generated by the pullback of $\mathsf{t}\in \spp^{\ast}(\Sigma^{(2)})$. By the final statement in Lemma \ref{lem:P_trop_triv},
	this is precisely equal to $H^{0}(\Sigma^{(2)},\PL)$.
\end{proof}

From the definition of $\mathcal{P}^{\sfpp}$ it follows that the global sections of $\mathcal{P}^{\sfpp}$
are canonically in bijection with the set of strict piecewise linear functions $p$ on $\mathcal U^2\Sigma^{(2)}$ satisfying
$$p(\mathsf{x}_1,\mathsf{x}_2,\mathsf{t})=\mathsf{x}_1v_2+\mathsf{x}_2v_1+\mathsf{t}v_1v_2+p(\mathsf{x}_1+v_1\mathsf{t},\mathsf{x}_2+v_2\mathsf{t},\mathsf{t}).$$
Note that any such function is uniquely determined by its restriction to $\tilde{\Sigma}^{(2)}$ and in particular
by its restriction to $\mathcal U^2\Sigma^{(2)}$.

\begin{definition}
    Let $c_{\mathcal P}\in\spp^{\ast}(\tilde{\Sigma}^{(2)})[m]^{\mathsf{gl}}$ be the strict piecewise linear function which is
    $-(m+1)\mathsf{x}_1$ on the $0\le \mathsf{x}_1\le \mathsf{x}_{2}\le \mathsf{t}$ and which is $-\mathsf{x}_2-m\mathsf{x}_1$ on $0\le \mathsf{x}_2\le \mathsf{x}_1\le \mathsf{t}$.
\end{definition}

\begin{lemma}\label{lem:P_trop_triv}
	The $\PL$-torsor $\mathcal{P}^{\sfpp}$ is trivial. Under the restriction of the above canonical identification
	to $\mathcal U\Sigma^{(2)}$, the global sections of $\mathcal{P}^{\sfpp}$ are identified with the set:
	$$\{\Psi(c_{\mathcal P})+k\mathsf{t}|k\in \mathbb{Z}\}.$$
	In particular, $\PL(\Sigma^{(2)})=\{k\mathsf{t}|k\in \mathbb{Z}\}$.
\end{lemma}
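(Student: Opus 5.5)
We will use the description recorded just before the statement: a global section of $\mathcal P^{\sfpp}$ is the same as a strict piecewise linear function $p$ on $\mathcal U^2\Sigma^{(2)}$ satisfying the twisted periodicity
\[
p(\mathsf{x}_1,\mathsf{x}_2,\mathsf{t})=\mathsf{x}_1v_2+\mathsf{x}_2v_1+\mathsf{t}v_1v_2+p(\mathsf{x}_1+v_1\mathsf{t},\mathsf{x}_2+v_2\mathsf{t},\mathsf{t}),\qquad (v_1,v_2)\in\mathbb Z^2 .
\]
The plan has three steps. First, exhibit one section, namely the extension of $\Psi(c_{\mathcal P})$. This proves triviality of the torsor. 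Second, observe that the difference of two sections is a $\mathbb Z^2$-periodic strict piecewise linear function, i.e. an element of $\PL(\Sigma^{(2)})$. Third, compute $\PL(\Sigma^{(2)})=\{k\mathsf{t}\}$. Together these give the description of the set of global sections.

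\emph{Step 1: $\Psi(c_{\mathcal P})$ is a section.}
\begin{itemize}
\item[(i)] Check that $c_{\mathcal P}$ lies in $\spp^{\ast}(\tilde\Sigma^{(2)})[m]^{\mathsf{gl}}$. On the wall $\mathsf{x}_1=\mathsf{x}_2$ the two formulas $-(m+1)\mathsf{x}_1$ and $-\mathsf{x}_2-m\mathsf{x}_1$ agree. For the gluing condition, at $\mathsf{x}_2=\mathsf{t}$ (cone $\mathsf{x}_1\le\mathsf{x}_2$) we get $c_{\mathcal P}(q-1)=-q\mathsf{x}_1$. At $\mathsf{x}_2=0$ (cone $\mathsf{x}_2\le \mathsf{x}_1$) we get $c_{\mathcal P}(q)=-q\mathsf{x}_1$. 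These agree.
\item[(ii)] The function $\Psi(c_{\mathcal P})$ on $\mathcal U\Sigma^{(2)}$ satisfies the twisted periodicity for $(v_1,v_2)=(0,1)$. Indeed, shifting $\mathsf{x}_2$ by $\mathsf{t}$ replaces $m$ by $m+1$ and leaves $\mathsf{x}_2-m\mathsf{t}$ unchanged. This changes the value by exactly $-\mathsf{x}_1$, as the equation requires.
\item[(iii)] We then extend $\Psi(c_{\mathcal P})$ from the strip $0\le\mathsf{x}_1\le\mathsf{t}$ to all of $\mathcal U^2\Sigma^{(2)}$ by imposing the equation for $v_1$. The only thing to verify is compatibility on the two boundaries of the strip, i.e. $p(0,\mathsf{x}_2,\mathsf{t})=\mathsf{x}_2+p(\mathsf{t},\mathsf{x}_2,\mathsf{t})$. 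At $\mathsf{x}_1=0$ the first formula gives $0$. At $\mathsf{x}_1=\mathsf{t}$ we are in the cone $\mathsf{x}_2-m\mathsf{t}\le\mathsf{x}_1$, and we get $-(\mathsf{x}_2-m\mathsf{t})-m\mathsf{t}=-\mathsf{x}_2$. So the identity holds.
\item[(iv)] Since the two generating shifts are compatible, the resulting function is a well-defined strict piecewise linear function on $\mathcal U^2\Sigma^{(2)}$ with the required twisted periodicity.
\end{itemize}
This Step 1 is the only place where care is needed: one must make sure the cocycle terms $\mathsf{t}v_1v_2$ come out consistently when the two shifts are combined. I would verify this by checking the shift $(1,1)$ directly from the two single shifts.

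\emph{Step 2: differences of sections.} If $p$ and $p'$ both satisfy the twisted equation, then $p-p'$ is a strict piecewise linear function on $\mathcal U^2\Sigma^{(2)}$ invariant under $\mathbb Z^2$. Hence it descends to an element of $\PL(\Sigma^{(2)})$. Conversely, adding an element of $\PL(\Sigma^{(2)})$ to a section gives a section. So, via the identification, the set of global sections is $\Psi(c_{\mathcal P})+\PL(\Sigma^{(2)})$.

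\emph{Step 3: $\PL(\Sigma^{(2)})=\{k\mathsf{t}\mid k\in\mathbb Z\}$.} The maximal cones $\sigma_{\underline m,\tau}$ of $\mathcal U^2\Sigma^{(2)}$ are unimodular simplices. Each has primitive ray generators of the form $(a,b,1)$ with $a,b\in\mathbb Z$. All such lattice points form a single $\mathbb Z^2$-orbit, so $\Sigma^{(2)}$ has a unique ray. A periodic strict piecewise linear function is therefore determined by a single integer $k$, its common value on all the generators $(a,b,1)$. On each unimodular simplex the linear function taking value $k$ at every generator is $k\mathsf{t}$, because every generator has last coordinate $1$. Hence the function equals $k\mathsf{t}$, and conversely $k\mathsf{t}$ is clearly periodic. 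This proves the last assertion and, combined with Steps 1 and 2, the lemma.
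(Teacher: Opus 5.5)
Your proposal is correct and follows essentially the same route as the paper: exhibit $\Psi(c_{\mathcal P})$ as a twisted-periodic function on $\mathcal U^2\Sigma^{(2)}$, note that the difference of two sections is $\mathbb Z^2$-invariant, and use that every ray generator $(a,b,1)$ lies in a single $\mathbb Z^2$-orbit to force that difference to be $k\mathsf{t}$. The only difference is that you spell out the extension check the paper calls ``easily checked''. That check is even simpler than you suggest: the cocycle identity $c(v+w,x)=c(v,x)+c(w,x+v\mathsf{t})$ for $c(v,x)=\mathsf{x}_1v_2+\mathsf{x}_2v_1+\mathsf{t}v_1v_2$ holds identically, so no separate verification of the $(1,1)$-shift is needed.
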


\begin{proof}
	One easily checks that $\Psi(c_{\mathcal P})$ extends (uniquely) to a piecewise linear function on $\mathcal U^2\Sigma^{(2)}$
	satisfying the above equivariance property. Since $k\mathsf{t}$ is globally linear and independent of $(\mathsf{x}_1,\mathsf{x}_2)$, it follows that
	each $\Psi(c_{\mathcal P})+k\mathsf{t}$ gives a global section of $\mathcal{P}^{\sfpp}$. If $p\in \spp^{\ast}(\mathcal U^2\Sigma^{(2)})$
	is a second function satisfying the above equivariance property, the difference $\Psi(c_{\mathcal P})-p$
	is $\mathbb{Z}^{2}$-invariant. Subtracting some multiple of $\mathsf{t}$ from $p$, we may assume that $\Psi(c_{\mathcal P})-p$ vanishes
	at $(0,0,\mathsf{t})\in \mathbb{R}^{2}\times\mathbb{R}_{\ge 0}$. By $\mathbb{Z}^{2}$-invariance, it hence vanishes on all rays
	$(v_1\mathsf{t},v_2\mathsf{t},\mathsf{t})$ for $(v_1,v_2)\in \mathbb{Z}^{2}$. Since the rays of each maximal cone in $\mathcal U^2\Sigma^{(2)}$
	are of this form, it follows that the restriction of $\Psi(c_{\mathcal{P}})-p$ to each maximal cone vanishes at every ray. Since this restriction
	is linear, it vanishes, so $p=\Psi(c_{\mathcal P})$. The final statement follows from the fact that $\Sigma^{(2)}=\mathcal U^2\Sigma^{(2)}/\mathbb{Z}^{2}$.
\end{proof}

Recall the definition of $\Phi$ from \eqref{eq:Phi}.

\begin{corollary}\label{cor:pp_corr_poincare_precise}
	The piecewise linear function $c_{\mathcal P,N}:=(1\times N)^{\ast}\Psi(c_{\mathcal P})-Np_N^{\ast}\Psi(c_{\mathcal P})$
	on $\mathcal{U}\Sigma_N^{(2)}$ descends to a piecewise linear function on $\Sigma_N^{(2)}$ and
	$$(1\times N)^{\ast}\ell=Nb^{\ast}\ell+\Phi(c_{\mathcal P,N}) \in \CHop^1(\barA_N^{(2)}).$$
\end{corollary}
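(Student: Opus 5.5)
The plan is to compare two representatives of the same logarithmic line bundle on $\barA_N^{(2)}$ and to read off their difference tropically. Write $\beta_N:\barA_N^{(2)}\to(\logA)^2$ for the composite structure map. On the logarithmic side, \cite[Theorem 4.4.1]{Molcho_Wise_2022} gives $(1\times N)^*\mathcal P^{\log}\cong(\mathcal P^{\log})^{\otimes N}$ as $\Glog$-torsors on $(\logA)^2$. Pulling back along $\beta_N$, and using that $N_s\colon\barA^{(2)}_N\to\barA^{(2)}$ lies over $1\times N$ (diagram \eqref{eq:n}) while $b$ lies over the identity, we obtain an isomorphism of logarithmic line bundles $N_s^*\mathcal P^{\log}\cong b^*(\mathcal P^{\log})^{\otimes N}$ on $\barA_N^{(2)}$.

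Next, both $N_s^*\mathcal P$ and $b^*\mathcal P^{\otimes N}$ are honest line bundles representing these two isomorphic logarithmic line bundles. They therefore differ by a line bundle whose induced $\Glog$-torsor is trivial. By the exact sequence $H^0(\mathbb G_{m,\trop})\to H^1(\Gm)\to H^1(\Glog)$, such a line bundle is $\mathcal O(\phi)$ for a global section $\phi$ of $\mathbb G_{m,\trop}$, i.e.\ a Cartier divisor supported on the boundary. Hence it equals $\Phi$ of a strict piecewise linear function on the Artin fan $\Sigma_N^{(2)}$ of $\barA_N^{(2)}$ (Lemma \ref{lem:Artin_fan_A_N}).

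To pin down $\phi$ we keep track of trivializations of the $\mathbb G_{m,\trop}$-torsors. By Lemma \ref{lem:glob_secs_ptrop} and Lemma \ref{lem:P_trop_triv}, the line bundle $\mathcal P$ corresponds to the section $\Psi(c_{\mathcal P})$ of $\mathcal P^{\sfpp}$; the normalization along the unit sections fixes $k=0$. Pulling back trivializations, $N_s^*\mathcal P$ corresponds to the section $(1\times N)^*\Psi(c_{\mathcal P})$ of $(1\times N)^*\mathcal P^{\sfpp}$, and $b^*\mathcal P^{\otimes N}$ corresponds to $Np_N^*\Psi(c_{\mathcal P})$ of $p_N^*(\mathcal P^{\sfpp})^{\otimes N}$. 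These two torsors are identified by the tropicalization of the Molcho--Wise isomorphism. Concretely, on $\mathcal U^2\Sigma_N^{(2)}$ this identification is the identity on $\PL$: the cocycles $v_2\mathsf y_1+v_1\mathsf y_2+v_1v_2s$ pulled back along $(\mathsf x_1,\mathsf x_2)\mapsto(\mathsf x_1,N\mathsf x_2)$ and the action $(v_1,v_2)\mapsto(v_1,Nv_2)$ agree with $N$ times the cocycle, because the cocycle is bilinear.

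The difference $c_{\mathcal P,N}=(1\times N)^*\Psi(c_{\mathcal P})-Np_N^*\Psi(c_{\mathcal P})$ is then $\mathbb Z^2$-invariant on $\mathcal U^2\Sigma^{(2)}_N$. To verify this we extend $\Psi(c_{\mathcal P})$ equivariantly as in Lemma \ref{lem:P_trop_triv} and check that the quasi-periodicity terms cancel: shifting $\mathsf x_2$ by $v_2\mathsf t$ on the source shifts $N\mathsf x_2$ by $Nv_2\mathsf t$, producing $Nv_2\mathsf x_1$ on both sides, and similarly for $v_1$. Hence $c_{\mathcal P,N}$ descends to $\Sigma_N^{(2)}=\mathcal U^2\Sigma_N^{(2)}/\mathbb Z^2$, and $\phi=c_{\mathcal P,N}$. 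Taking $c_1$ gives the formula in $\CHop^1(\barA_N^{(2)})$, with sign convention $\Phi(\phi)=c_1(\mathcal O(\phi))$ matching Lemma \ref{lem:glob_secs_ptrop}.

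The main obstacle is the compatibility step: showing that the Molcho--Wise isomorphism, after tropicalization, identifies the trivializations exactly, with no extra global linear term $k\mathsf t$. I would handle this by restricting to the unit section $(1\times e)$, where $\mathcal P$ and both pullbacks are canonically trivial and $c_{\mathcal P,N}$ vanishes on the corresponding ray. The descent check itself is a direct, if fiddly, cocycle computation.
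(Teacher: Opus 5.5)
Your plan is essentially the paper's proof: realize $\mathcal P$ as the preimage in $\mathcal P^{\log}$ of $\Psi(c_{\mathcal P})+\lambda\mathsf t$, transport along the tropicalized biextension isomorphism $(1\times N)^*\mathcal P^{\log}\cong(\mathcal P^{\log})^{\otimes N}$ so that the difference line bundle is cut out by $c_{\mathcal P,N}+(1-N)\lambda\mathsf t$, take $c_1$, and kill the leftover multiple of $\mathsf t$ by restricting to a unit section (the paper uses $e^2\colon B\to\barA^{(2)}$ rather than $1\times e$, which makes no difference). The one input you leave implicit, both in ``normalization fixes $k=0$'' and in your final step, is that restricting to a unit section (and then, in your version, further to $B$) only gives $k\,D=0$ in $\Pic(B)$; to conclude $k=0$ you need $D$ to be non-torsion, which the paper takes from \cite[Corollary 1.6]{mumford_kodaira}.
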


\begin{proof}
	The fact that $c_{\mathcal P,N}$ descends to $\Sigma_N^{(2)}$ can either be checked explicitly
	by showing that $c_{\mathcal P,N}$ is $\mathbb{Z}$-periodic with respect to the action $\mathsf{x}_2\mapsto \mathsf{x}_2+q\mathsf{t}$
	and that $c_{\mathcal P,N}$ restricts to zero at $\mathsf{x}_1\in \{0,\mathsf{t}\}$, or it follows from the fact that
	$c_{\mathcal P,N}$ is canonically a global section of the pullback of the trivial $\PL$-torsor
	on $(\tropA)^2$, since $(1\times N)^{\ast}\mathcal{P}^{\sfpp}\cong {\mathcal{P}^{\sfpp}}^{\otimes N}$. Recall from \S\ref{subs:extP}
	that $\mathcal{P}$ is a line bundle representing $\mathcal{P}^{\log}$.
	From \S\ref{sec:Plog} and Lemma \ref{lem:glob_secs_ptrop}, we can write
	the $\mathbb{G}_{m}$-torsor $\mathcal{P}$ as the preimage of $\Psi(c_{\mathcal P})+\lambda \mathsf{t}$ in $\mathcal{P}^{\log}$
	for some $\lambda\in \mathbb{Z}$. Hence $(1\times N)^{\ast}\mathcal{P}$ is the preimage of $(1\times N)^{\ast}(\Psi(c_{\mathcal P})+\lambda \mathsf{t})$
	in $(1\times N)^{\ast}\mathcal{P}^{\log}$ and $b^{\ast}(\mathcal{P})^{\otimes N}$ is the preimage of $N(p_N^{\ast}(\Psi(c_{\mathcal P})+\lambda \mathsf{t}))$
	in $b^{\ast}{\mathcal{P}^{\log}}^{\otimes N}$. Hence
	$$
    (1\times N)^{\ast}\mathcal{P}\otimes b^{\ast}(\mathcal{P})^{\otimes -N}
    $$
	is the preimage of $c_{\mathcal P,N}+(1-N)\lambda \mathsf{t}$ in the trivial $\mathbb{G}_{m,\log}$-torsor. Taking Chern classes, we get
	$$
    (1\times N)^{\ast}\ell-Nb^{\ast}\ell=\Phi(c_{\mathcal P,N}+(1-N)\lambda \mathsf{t})\,.
    $$
	We pull back this equality along the unit section $e^2:B \to \barA^{(2)}$. Since $c_{\mathcal P,N}$ vanishes on the rays, it pulls back to zero, and, by our conventions on $\mathcal P$, the left-hand side also pulls back to zero. We get
    $$
    0 = (e^2)^* \Phi((1-N)\lambda \mathsf{t}) = (1-N)\lambda D\,. $$
    By \cite[Corollary 1.6]{mumford_kodaira}, the class $D$ is a nonzero element of the Picard group of $B$, so we
	conclude that $\lambda=0$.
\end{proof}

We end this section by using the above result to show that the Theta divisor is of weight $2$ up to a piecewise linear correction. Let
$$c_{\theta}=-\frac{1}{2}((2m+1)\mathsf{x}_1+m^{2}\mathsf{t})\in \spp^{\ast}(\tilde{\Sigma}^{(1)})[m]^{\mathsf{gl}}.$$

\begin{corollary}\label{cor:pp_corr_theta_precise}
	The piecewise linear function $c_{\theta,N}=N^{\ast}\Psi(c_{\theta})-N^{2}p_N^{\ast}\Psi(c_{\theta})$
	on $\mathcal{U}\Sigma_N^{(1)}$ descends to a piecewise linear function on $\Sigma_N^{(1)}$ and
	$$N^{\ast}\theta=N^{2}b^{\ast}\theta+\Phi(c_{\theta,N}) \in \CHop^1(\barA_N^{(1)}).$$
\end{corollary}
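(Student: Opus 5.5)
Plan: deduce this from Corollary \ref{cor:pp_corr_poincare_precise} via the relation $\mathcal O(\Theta)^{\otimes 2}=\Delta^*\mathcal P$, rather than redoing the log-biextension argument for a $\QQ$-line bundle.

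\emph{Step 1 (descent of $c_{\theta,N}$).} Check directly that $c_{\theta,N}$ descends to $\Sigma^{(1)}_N$. On $\mathcal U\Sigma^{(1)}$, with $m\mathsf t\le\mathsf x_1<(m+1)\mathsf t$, we have
\[
\Psi(c_\theta)=-\tfrac12\bigl((2m+1)(\mathsf x_1-m\mathsf t)+m^2\mathsf t\bigr).
\]
This is the unique PL function agreeing with $-\tfrac12\mathsf x_1^2/\mathsf t$ at the integral rays $\mathsf x_1=m\mathsf t$. It therefore transforms under $\mathsf x_1\mapsto\mathsf x_1+v\mathsf t$ by adding $-v\mathsf x_1-\tfrac{v^2}{2}\mathsf t$, which is the quadratic cocycle obtained by restricting the Poincar\'e cocycle of Remark \ref{rem:Ptrop} to the diagonal and halving.

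Now take the combination $N^*\Psi(c_\theta)-N^2p_N^*\Psi(c_\theta)$. Under $\mathsf x_1\mapsto\mathsf x_1+v\mathsf t$, the first term picks up $-Nv\cdot N\mathsf x_1-\tfrac{N^2v^2}{2}\mathsf t$ and the second term picks up $N^2(v\mathsf x_1+\tfrac{v^2}{2}\mathsf t)$. These cancel, so $c_{\theta,N}$ is $\ZZ$-periodic and descends to $\Sigma^{(1)}_N$. The same combination also vanishes on the integral rays.

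\emph{Step 2 (the Chern class identity).} Compare $N^*\theta$ with $N^2 b^*\theta$ using $2\theta=\Delta^*\ell$.
\begin{itemize}
\item First check that the resolution $\barA^{(1)}_N$ maps compatibly to $\barA^{(2)}_{N\times N}$ through a diagonal lift of $\Delta$. The cleanest version: choose a further log modification $\barA'\to\barA^{(1)}_N$ carrying a map $\tilde\Delta$ to a log modification of $\barA^{(2)}_{N\times N}$, covering both $\Delta$ and $N\times N$.
\item On $(\logA)^2$ the identity $(N\times N)^*\mathcal P^{\log}=(\mathcal P^{\log})^{\otimes N^2}$ holds, by bilinearity of the biextension (\cite[Theorem 4.4.1]{Molcho_Wise_2022}).
\item Repeat the argument of Corollary \ref{cor:pp_corr_poincare_precise} for the map $N\times N$, not just $1\times N$. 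The correction term is then $(N\times N)^*\Psi(\tilde c_{\mathcal P})-N^2p^*\Psi(\tilde c_{\mathcal P})$, where $\tilde c_{\mathcal P}$ is the equivariant section from Lemma \ref{lem:P_trop_triv}.
\item Pull back along $\tilde\Delta$ and halve. By Step 1's identification, $\tilde\Delta^*\tilde c_{\mathcal P}=2c_\theta$ up to a multiple of $\mathsf t$, since both agree with $-\mathsf x_1^2/\mathsf t$ on integral rays.
\item Push down along the log modification $\barA'\to\barA^{(1)}_N$ using the projection formula.
\end{itemize}
This gives $N^*\theta=N^2b^*\theta+\Phi(c_{\theta,N})+\kappa D$ for some constant $\kappa$.

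\emph{Step 3 (the constant).} Pull back along the unit section. The term $\theta$ is trivialized there by definition, and $c_{\theta,N}$ vanishes on the rays. Hence $\kappa D=0$, and since $D\neq0$ in $\Pic(B)_\QQ$ \cite{mumford_kodaira}, $\kappa=0$.

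The main obstacle is Step 2. It needs a careful description of the combined subdivisions so that the diagonal restriction of the tropical Poincar\'e section is identified with $2\Psi(c_\theta)$ on the nose, not just up to periodic PL functions. I would settle this by the rays argument of Lemma \ref{lem:P_trop_triv}: equivariant PL functions with the same cocycle differ by an invariant PL function, which is determined by its values on integral rays.
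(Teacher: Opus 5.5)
Your proof is correct. It shares the paper's core reduction: use $2\theta=\Delta^*\ell$, transport the Poincar\'e correction to the diagonal, and identify $\Delta^*\Psi(c_{\mathcal P})=2\Psi(c_\theta)$ by comparing values on integral rays. Both sides are the PL interpolation of $-\mathsf x^2/\mathsf t$, so this identity holds on the nose and not merely up to a multiple of $\mathsf t$. Your Step~3 is then only needed to pin down the normalization of $(N\times N)^*\mathcal P^{\log}\cong(\mathcal P^{\log})^{\otimes N^2}$, just as the unit-section argument does in Corollary~\ref{cor:pp_corr_poincare_precise}.

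The decomposition differs from the paper's.
\begin{itemize}
\item The paper never re-runs the torsor argument. It factors $N\times N=(N\times 1)\circ(1\times N)$ and applies Corollary~\ref{cor:pp_corr_poincare_precise} twice, the second time conjugated by the swap $\varrho$. After restricting to the diagonal it must use the symmetry of $\Psi(c_{\mathcal P})$ to cancel the cross terms $N(1\times N)^*\Psi(c_{\mathcal P})$ and $N(N\times 1)^*p_N^*\Psi(c_{\mathcal P})$. Descent of $c_{\theta,N}$ then comes for free from $c_{\theta,N}=\tfrac12\Delta^*\bigl(Nc_{\mathcal P,N}+(N\times 1)^*c_{\mathcal P,N}\bigr)$.
\item You invoke bilinearity of $\mathcal P^{\log}$ once, for $N\times N$. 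The tropical computation then collapses to $\Delta^*(N\times N)^*=N^*\Delta^*$ and $\Delta^*p^*=p_N^*\Delta^*$, with no cancellation. You check descent directly from the cocycle $-v\mathsf x_1-\tfrac{v^2}{2}\mathsf t$.
\end{itemize}

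Your route is also better adapted to the models. The common refinement of $\Sigma^{(2)}$ and $(N\times N)^*\Sigma^{(2)}$ has walls $\mathsf x_i,\ N\mathsf x_i,\ \mathsf x_1-\mathsf x_2,\ N(\mathsf x_1-\mathsf x_2)\in\mathsf t\ZZ$. These cut the diagonal exactly at $\mathsf x\in\tfrac{\mathsf t}{N}\ZZ$, so the diagonal maps $\Sigma_N^{(1)}$ cone-wise and your auxiliary $\barA'$ is unnecessary. On the factored route, the intermediate complexes contain walls such as $\mathsf x_2-N\mathsf x_1\in\mathsf t\ZZ$, which meet the diagonal at $(N-1)\mathsf x\in\mathsf t\ZZ$. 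There, for $N\ge 3$, the extra log modification you allow for (followed by the projection formula) is genuinely needed.

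The price of your route is that you must redo the identification of $(N\times N)^*\mathcal P$ as the preimage of $(N\times N)^*\Psi(c_{\mathcal P})$, rather than quoting an already proved statement.
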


\begin{figure}
	\begin{center}
		\[
			\begin{tikzpicture}[scale=0.82, transform shape, every node/.style={font=\scriptsize}]
				\tikzset{cone value/.style={align=center, inner sep=0.5pt}}
				\def\s{2.3}
				\def\lx{1.35}
				\def\dx{13.2}

				\foreach \m in {-1,0,1} {
					\draw[thick] (\lx+\m*\s,\m*\s) -- ++(\s,\s);
				}
				\foreach \m in {-1,0,1} {
					\fill (\lx+\m*\s,\m*\s) circle (1.2pt);
				}
				\fill (\lx,0) circle (2pt);
				\node[fill=white, inner sep=1pt] at (\lx-0.14*\s,0) {$0$};
				\node at (\lx+0.5*\s,2.50*\s) {$\Sigma^{(1)}$};
				\node[rotate=-45, font=\bfseries\large] at (\lx+2.18*\s,2.18*\s) {$\vdots$};
				\node[rotate=-45, font=\bfseries\large] at (\lx-1.18*\s,-1.18*\s) {$\vdots$};
				\node[above left, inner sep=1pt] at (\lx-0.50*\s,-0.50*\s) {$\mathsf{x}$};
				\node[above left, inner sep=1pt] at (\lx+0.50*\s,0.50*\s) {$-\mathsf{x}$};
				\node[above left, inner sep=1pt] at (\lx+1.50*\s,1.50*\s) {$-3\mathsf{x}+2\mathsf{t}$};

				\draw[->, thick] (\lx+1.25*\s,0.5*\s) -- node[above] {$\Delta$} (\dx-1.95*\s,0.5*\s);

				\foreach \i in {-1,0,1} {
					\foreach \j in {-1,0,1} {
						\draw[thick] (\dx+\i*\s,\j*\s) rectangle ++(\s,\s);
						\draw[thick] (\dx+\i*\s,\j*\s) -- ++(\s,\s);
					}
				}
				\draw[red, very thick] (\dx-\s,-\s) -- (\dx+2*\s,2*\s);
				\fill (\dx,0) circle (2pt);
				\node[inner sep=1pt] at (\dx-0.34*\s,-0.10*\s) {$(0,0)$};
				\node at (\dx+0.5*\s,2.50*\s) {$\Sigma^{(2)}$};
				\node[font=\bfseries\large] at (\dx+0.5*\s,2.28*\s) {$\vdots$};
				\node[font=\bfseries\large] at (\dx+0.5*\s,-1.28*\s) {$\vdots$};
				\node[font=\bfseries\large] at (\dx-1.28*\s,0.5*\s) {$\cdots$};
				\node[font=\bfseries\large] at (\dx+2.28*\s,0.5*\s) {$\cdots$};

				\node[cone value] at (\dx-0.67*\s,-0.22*\s) {$\mathsf{x}_2$};
				\node[cone value] at (\dx-0.30*\s,-0.75*\s) {$\mathsf{x}_1$};
				\node[cone value] at (\dx+0.33*\s,-0.22*\s) {$0$};
				\node[cone value] at (\dx+0.70*\s,-0.75*\s) {$\mathsf{x}_1-\mathsf{x}_2$\\$-\mathsf{t}$};
				\node[cone value] at (\dx+1.33*\s,-0.22*\s) {$-\mathsf{x}_2$};
				\node[cone value] at (\dx+1.70*\s,-0.75*\s) {$\mathsf{x}_1-2\mathsf{x}_2$\\$-2\mathsf{t}$};

				\node[cone value] at (\dx-0.67*\s,0.78*\s) {$-\mathsf{x}_1+\mathsf{x}_2$\\$-\mathsf{t}$};
				\node[cone value] at (\dx-0.30*\s,0.25*\s) {$0$};
				\node[cone value] at (\dx+0.33*\s,0.78*\s) {$-\mathsf{x}_1$};
				\node[cone value] at (\dx+0.70*\s,0.25*\s) {$-\mathsf{x}_2$};
				\node[cone value] at (\dx+1.33*\s,0.78*\s) {$-\mathsf{x}_1-\mathsf{x}_2$\\$+\mathsf{t}$};
				\node[cone value] at (\dx+1.70*\s,0.25*\s) {$-2\mathsf{x}_2$};

				\node[cone value] at (\dx-0.67*\s,1.78*\s) {$-2\mathsf{x}_1+\mathsf{x}_2$\\$-2\mathsf{t}$};
				\node[cone value] at (\dx-0.30*\s,1.25*\s) {$-\mathsf{x}_1$};
				\node[cone value] at (\dx+0.33*\s,1.78*\s) {$-2\mathsf{x}_1$};
				\node[cone value] at (\dx+0.70*\s,1.25*\s) {$-\mathsf{x}_1-\mathsf{x}_2$\\$+\mathsf{t}$};
				\node[cone value] at (\dx+1.33*\s,1.78*\s) {$-2\mathsf{x}_1-\mathsf{x}_2$\\$+2\mathsf{t}$};
				\node[cone value] at (\dx+1.70*\s,1.25*\s) {$-\mathsf{x}_1-2\mathsf{x}_2$\\$+2\mathsf{t}$};
			\end{tikzpicture}
		\]
	\end{center}
	\caption{\label{fig:Delta}The diagonal map $\Delta:\Sigma^{(1)}\to \Sigma^{(2)}$ on the links at $\mathsf{t}=1$. The labels denote the unique extension of $\Psi(c_{\mathcal P})$ as in the proof of Lemma \ref{lem:glob_secs_ptrop} (right) and its restriction to the diagonal (left).}
\end{figure}

\begin{proof}
	Let $\Delta:\barA^{(1)}\to \barA^{(2)}$ be the map induced by the diagonal morphism
	$\logA\to (\logA)^{2}$ (see Figure \ref{fig:Delta}). Since $\Delta$ maps the unit section in $\barA^{(1)}$ to the unit section
	in $\barA^{(2)}$, the pullback $\Delta^{\ast}\ell$ is trivial along the unit section. Moreover,
	over the interior $B\setminus \partial B$, the class $\frac{1}{2}\Delta^{\ast}\ell$ restricts to the Theta divisor
	on $\mathcal{X}_g$. Hence $\theta=\frac{1}{2}\Delta^{\ast}\ell$. Let $\varrho:\Sigma^{(2)}\to \Sigma^{(2)}$
	be the automorphism induced by $(\mathsf{x}_1,\mathsf{x}_2,\mathsf{t})\mapsto (\mathsf{x}_2,\mathsf{x}_1,\mathsf{t})$ and write $\Sigma_{N,1}^{(2)}$ for the subdivision
	of $\Sigma^{(2)}$ obtained by pulling back $\Sigma_N^{(2)}$ along $\varrho$. We set
	$$\Sigma_{N,N}^{(2)}:=\Sigma_N^{(2)}\times_{\Sigma^{(2)}}\Sigma_{N,1}^{(2)}.$$
	Consider the following commutative diagram:
	$$\begin{tikzcd}
		\Sigma_{N,N}^{(2)}\arrow[r,"(N\times 1)"]\arrow[d,"q_N"]&\Sigma_{N}^{(2)}\arrow[d,"p_N"]\arrow[r,"(1\times N)"]&\Sigma^{(2)}\\
		\Sigma_{N,1}^{(2)}\arrow[r,"(N\times 1)"]&\Sigma^{(2)}
	\end{tikzcd}$$
	where the vertical morphisms are subdivisions and by abuse of notation $(N\times 1)$ denotes any morphism
	of the form $\varrho\circ (1\times N)\circ \varrho$ on some subdivision. This diagram induces a diagram of models of $(\logA)^{2}$:
	$$\begin{tikzcd}
		\barA_{N,N}^{(2)}\arrow[r,"(N\times 1)"]\arrow[d,"b'"]&\barA_N^{(2)}\arrow[d,"b"]\arrow[r,"(1\times N)"]&\barA^{(2)}\\
		\barA_{N,1}^{(2)}\arrow[r,"(N\times 1)"]&\barA^{(2)},
	\end{tikzcd}$$
	where $\barA_{N,N}^{(2)}$ and $\barA_{N,1}^{(2)}$ are defined in the obvious way.
	We compute using Corollary \ref{cor:pp_corr_poincare_precise}:
	\begin{align*}
		(N\times 1)^{\ast}(1\times N)^{\ast}\ell&=(N\times 1)^{\ast}(Nb^{\ast}\ell+\Phi(c_{\mathcal P,N}))\\
												&={b'}^{\ast}(N\times 1)^{\ast}N\ell+(N\times 1)^{\ast}\Phi(c_{\mathcal P,N})\\
												&={b'}^{\ast}\varrho^{\ast}\circ (1\times N)^{\ast}\circ \varrho^{\ast}N\ell+(N\times 1)^{\ast}\Phi(c_{\mathcal P,N})\\
												&={b'}^{\ast}\varrho^{\ast}(N^{2}b^{\ast}\ell+N\Phi(c_{\mathcal P,N}))+(N\times 1)^{\ast}\Phi(c_{\mathcal P,N})\\
												&=N^{2}{b'}^{\ast}\varrho^{\ast}b^{\ast}\ell+{b'}^{\ast}\varrho^{\ast}b^{\ast}\Phi(Nc_{\mathcal P,N})+(N\times 1)^{\ast}\Phi(c_{\mathcal P,N}).
	\end{align*}
	Now consider the commutative diagrams:
	$$\begin{tikzcd}[column sep=5em]
		\barA\arrow[d,"\Delta"]&\arrow[l,swap,"b"]\barA_{N}^{(1)}\arrow[d,"\Delta"]\arrow[r,"N"]&\barA\arrow[d,"\Delta"]\\
		\barA^{(2)}&\arrow[l,swap,"b\circ \varrho\circ b'"]\barA_{N,N}^{(2)}\arrow[r,"(1\times N)\circ (N\times 1)"]&\barA^{(2)}.
	\end{tikzcd}$$
	Using this, we get:
	\begin{align*}
		N^{\ast}2\theta&=N^{\ast}\Delta^{\ast}\ell\\
					  &=\Delta^{\ast}(N\times 1)^{\ast}(1\times N)^{\ast}\ell\\
					  &=\Delta^{\ast}\left(N^{2}{b'}^{\ast}\varrho^{\ast}b^{\ast}\ell+{b'}^{\ast}\varrho^{\ast}b^{\ast}\Phi(Nc_{\mathcal P,N})+(N\times 1)^{\ast}\Phi(c_{\mathcal P,N})\right)\\
					  &=2N^{2}b^*\theta+\Delta^{\ast}\Phi(Nc_{\mathcal P,N})+\Delta^{\ast}(N\times 1)^{\ast}\Phi(c_{\mathcal P,N}).
	\end{align*}
	It remains to show:
	$$\Phi(c_{\theta,N})=\frac{1}{2}\Delta^{\ast}\Phi(Nc_{\mathcal P,N})+\frac{1}{2}\Delta^{\ast}(N\times 1)^{\ast}\Phi(c_{\mathcal P,N})=\frac{1}{2}\Delta^{\ast}\Phi(Nc_{\mathcal P,N}+(N\times 1)^{\ast}c_{\mathcal P,N})$$
	(note that from this expression, it is clear that $c_{\theta,N}$ descends to a piecewise linear function on $\Sigma_N^{(1)}$).
	Consider the commutative diagram:
	$$\begin{tikzcd}
		\mathcal U\Sigma^{(1)}\arrow[d,"\Delta"]\arrow[r]&\tropA\arrow[d,"\Delta"]\\
		\mathcal U^2\Sigma^{(2)}\arrow[r]&(\tropA)^{2}
	\end{tikzcd}$$
	where the left-hand vertical map is induced by the diagonal map $\mathbb{R}\to \mathbb{R}^{2}$.
	From this, it follows that
	$$\frac{1}{2}\Delta^{\ast}\Phi(Nc_{\mathcal P,N}+(N\times 1)^{\ast}c_{\mathcal P,N})=\frac{1}{2}\Phi(\Delta^{\ast}(Nc_{\mathcal P,N}+(N\times 1)^{\ast}c_{\mathcal P,N})),$$
	where by abuse of notation, we identify $c_{\mathcal P,N}$ and $(N\times 1)^{\ast}c_{\mathcal P,N}$ with their unique extensions to $\mathcal U^2\Sigma^{(2)}$
	as in the proof of Lemma \ref{lem:glob_secs_ptrop}. To compute these extensions, we first write:
	$$Nc_{\mathcal P,N}+(N\times 1)^{\ast}c_{\mathcal P,N}=N(1\times N)^{\ast}\Psi(c_{\mathcal P})-N^{2}p_N^{\ast}\Psi(c_{\mathcal P})+(N\times 1)^{\ast}(1\times N)^{\ast}\Psi(c_{\mathcal P})-N(N\times 1)^{\ast}p_N^{\ast}\Psi(c_{\mathcal P}).$$
	We recall that the extension of $\Psi(c_{\mathcal P})$ must satisfy the functional equation:
	$$p(\mathsf{x}_1+v_1\mathsf{t},\mathsf{x}_2+v_2\mathsf{t},\mathsf{t})=-\mathsf{x}_1v_2-\mathsf{x}_2v_1-\mathsf{t}v_1v_2+p(\mathsf{x}_1,\mathsf{x}_2,\mathsf{t})$$
	for all $v_1,v_2\in \mathbb{Z}$. Using this, one easily computes that $\Delta^{\ast}\Psi(c_{\mathcal P})=2\Psi(c_{\theta})$. Moreover, observe
	that $\Psi(c_{\mathcal P})(\mathsf{x}_1,\mathsf{x}_2,\mathsf{t})=\Psi(c_{\mathcal P})(\mathsf{x}_2,\mathsf{x}_1,\mathsf{t})$ on $\tilde{\Sigma}^{(2)}$. The functional equation above preserves this symmetry, so it
	follows that the unique extension of $\Psi(c_{\mathcal P})$ to $\mathcal U^2\Sigma^{(2)}$ satisfies $\Psi(c_{\mathcal P})(\mathsf{x}_1,\mathsf{x}_2,\mathsf{t})=\Psi(c_{\mathcal P})(\mathsf{x}_2,\mathsf{x}_1,\mathsf{t})$. From this, it follows that
	$$\Delta^{\ast}(N(1\times N)^{\ast}\Psi(c_{\mathcal P}))=\Delta^{\ast}(N(N\times 1)^{\ast}p_N^{\ast}\Psi(c_{\mathcal P})),$$
	so, after applying $\Delta^{\ast}$, these two terms cancel. Hence:
	\begin{align*}
		\Delta^{\ast}(Nc_{\mathcal P,N}+(N\times 1)^{\ast}c_{\mathcal P,N})&=\Delta^{\ast}(-N^{2}p_N^{\ast}\Psi(c_{\mathcal P})+(N\times 1)^{\ast}(1\times N)^{\ast}\Psi(c_{\mathcal P}))\\
														   &=-2N^{2}p_N^{\ast}\Psi(c_{\theta})+\Delta^{\ast}(N\times 1)^{\ast}(1\times N)^{\ast}\Psi(c_{\mathcal P})\\
														   &=-2N^{2}p_N^{\ast}\Psi(c_{\theta})+N^{\ast}\Delta^{\ast}\Psi(c_{\mathcal P})\\
														   &=-2N^{2}p_N^{\ast}\Psi(c_{\theta})+2N^{\ast}\Psi(c_{\theta})\\
														   &=2c_{\theta,N}.
	\end{align*}
	This concludes the proof.
\end{proof}

\subsubsection{Comparison between the combinatorial and algebraic settings}

We return to the setting of \S\ref{sec:tr1} and study the action of $[1\times\dots\times N]^{\ast}$
on tautological classes of $\barA^{(s)}$.
Recall the space
$\barA_N^{(s)}:=\barA_{(1,\dots,1,N)}^{(s)}$ from equation \ref{eq:n}.
The following result
establishes the link between the tropical and algebraic settings:

\begin{lemma}\label{lem:Artin_fan_A_N}
	There is a commutative diagram:
	$$\begin{tikzcd}
		\barA_N^{(s)}\arrow[r,"{(1,\dots,N)}"]\arrow[d]&\barA^{(s)}\arrow[d]\\
		\mathcal{A}_{\Sigma_N^{(s)}}\arrow[r,"(1\times\dots\times N)"]&\mathcal{A}_{\Sigma^{(s)}}
	\end{tikzcd}$$
	where the vertical morphisms are strict, smooth, and surjective and
	the top horizontal morphism is proper. Moreover, the diagram:
	$$\begin{tikzcd}
		\barA_N^{(s)}\arrow[d]\arrow[r,"b"]&\barA^{(s)}\arrow[d]\\
		\mathcal{A}_{\Sigma_N^{(s)}}\arrow[r]&\mathcal{A}_{\Sigma^{(s)}}
	\end{tikzcd}$$
	is cartesian, where the bottom horizontal morphism is induced by $p_N$.
\end{lemma}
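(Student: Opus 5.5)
The proof works through the tropicalization maps to tropical abelian schemes. The basic input is the f.s. fiber square \eqref{eq:n}, which defines $\barA_N^{(s)}$ as the f.s. base change of $\beta:\barA^{(s)}\to(\logA)^s$ along $\beta$ composed with the multiplication map $1\times\dots\times N$ on $(\logA)^s$. We also use the f.s. fiber diagram of \S\ref{sec:Plog}: $\barA^{(s)}\to\mathcal A_{\Sigma^{(s)}}$ is strict, smooth and surjective, and $\barA^{(s)}$ is the base change of $(\logA)^s\to\mathcal A_{(\tropA)^s}$ along the log blowup $\mathcal A_{\Sigma^{(s)}}\to\mathcal A_{(\tropA)^s}$.

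The first step is a tropical statement. Using the description of $\tropA=\mathcal Q/\ZZ$ given before Lemma \ref{lem:glob_secs_ptrop}, we check that multiplication by $N$ in the $s$-th factor of $(\logA)^s$ tropicalizes to the endomorphism of $(\tropA)^s$ induced by $(\mathsf{x}_1,\dots,\mathsf{x}_s,\mathsf{t})\mapsto(\mathsf{x}_1,\dots,N\mathsf{x}_s,\mathsf{t})$ on $\mathcal Q\times_{\mathcal A^1}\dots\times_{\mathcal A^1}\mathcal Q$. The key point is that this map is $\ZZ^s$-equivariant for the homomorphism $\ZZ^s\to\ZZ^s$, $v\mapsto(v_1,\dots,Nv_s)$. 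This gives a 2-commutative square relating multiplication by $N$ on $(\logA)^s$ to the corresponding map on $\mathcal A_{(\tropA)^s}$.

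Next we compute the f.s. fiber product of the cone stacks $\Sigma^{(s)}\to(\tropA)^s\xleftarrow{1\times\dots\times N}(\tropA)^s\leftarrow\Sigma^{(s)}$. On universal covers this is the common refinement of $\mathcal U^s\Sigma^{(s)}$ with the pullback of $\mathcal U^s\Sigma^{(s)}$ along $(1\times\dots\times N)$. Since $\mathcal U^s\Sigma^{(s)}$ already contains all hyperplanes $\mathsf{x}_s\in\mathsf{t}\ZZ$, Lemma \ref{lem:explicit-N-refinement} shows that this common refinement is exactly $\mathcal U^s\Sigma_N^{(s)}$. Quotienting by $\ZZ^s$, where the $s$-th factor acts through the source copy, gives $\Sigma_N^{(s)}$ as in Definition \ref{def:cone_complexes}(f).

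I expect this to be the main obstacle. One must check that the f.s. (saturated) fiber product of Artin fans really is $\mathcal A_{\Sigma_N^{(s)}}$ with no extra stacky or non-saturated structure. This holds because the cones are integral refinements. One must also check that the quotient by $\ZZ^s$ commutes with the fiber product, which holds because $\ZZ^s$ acts freely on cones and preserves the subdivision.

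Finally we base change. By stacking f.s. fiber squares, $\barA_N^{(s)}$ is the pullback of $\barA^{(s)}\to\mathcal A_{\Sigma^{(s)}}$ along $\mathcal A_{\Sigma_N^{(s)}}\to\mathcal A_{\Sigma^{(s)}}$ induced by $p_N$. This gives the cartesian second square, with left vertical arrow strict, smooth and surjective by base change. The first square then commutes by construction, via the projection to the other factor of the fiber product composed with $1\times\dots\times N$. Properness of the top map $(1,\dots,N):\barA_N^{(s)}\to\barA^{(s)}$ follows because both spaces are proper over $B$ by Proposition \ref{pro:Atrop} and base change along proper subdivisions, and a morphism between spaces proper over $B$ is proper, given separatedness.
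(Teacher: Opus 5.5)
Your proposal is correct and follows essentially the paper's route: you use the strict smooth tropicalization $(\logA)^s\to\mathcal A_{(\tropA)^s}$ and its compatibility with multiplication by $N$ to identify $\barA_N^{(s)}$ as the base change of $(\logA)^s$ along the pulled-back subdivision $\Sigma_N^{(s)}$, and then paste fiber squares. The one difference is that you get strictness, smoothness and surjectivity of the left vertical arrow directly by base change from the cartesian second square. The paper instead argues smoothness via log \'etaleness over $(\logA)^s$ and surjectivity via density and properness, so your version is slightly cleaner; you also spell out the equivariant fiber-product computation that the paper leaves to ``the definitions''.
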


\begin{proof}
	We start with the first commutative diagram.

	The top horizontal morphism is proper since both $\barA_N^{(s)}$ and $\barA^{(s)}$ are proper over $B$. The right-hand
	vertical map is strict and smooth by Proposition \ref{pro:Atrop} and the definition of $\barA^{(s)}$. Recall the sheaf
	$\tropA$ from \S\ref{sec:pullbacks_poincare}.
	By \cite[9.3]{KKN7}, there is a strict smooth morphism $\logA\to \mathcal{A}_{\tropA}$ of group objects and there is a cartesian diagram:
	$$\begin{tikzcd}
		\barA^{(s)}\arrow[d]\arrow[r]&(\logA)^{s}\arrow[d]\\
		\mathcal{A}_{\Sigma^{(s)}}\arrow[r]&\mathcal{A}_{(\tropA)^{s}},
	\end{tikzcd}$$
	where the bottom horizontal morphism is induced by the canonical morphism $\Sigma^{(s)}\to \mathbb{R}^{s+1}$ by viewing
	the cone over $[0,1]^{s-1}\times \mathbb{R}$ as a subset of $\mathbb{R}^{s+1}$ with the cone direction in the first coordinate.
	The map $\barA^{(s)}\to (\logA)^{s}$ is a log modification corresponding to the subdivision
	$\Sigma^{(s)}\to (\tropA)^{s}$. Since log modifications are stable under base-change, it
	follows that $\barA_N^{(s)}$ is the log modification of $\barA^{(s)}$ obtained by pulling back the subdivision
	$f:\Sigma^{(s)}\to (\tropA)^{s}$ along the composition $\Sigma^{(s)}\xrightarrow{f} (\tropA)^{s}\xrightarrow{1\times\dots\times N} (\tropA)^{s}$.
	It follows from the definitions that this subdivision is $\Sigma_N^{(s)}$. From this, we get the commutative
	diagram in the statement. The left-hand vertical map is smooth since $\logA$ is log smooth and $\barA_N^{(s)}\to (\logA)^{s}$
	is log \'etale. For surjectivity, note that, since all involved spaces are log smooth, the images of the vertical maps contain
	the dense open point. Since $(\logA)^{s}$ and $\barA^{(s)}$ are proper over $B$, it follows that the vertical maps are surjective.

	For the second diagram, the above shows that $\barA_N^{(s)}\to (\logA)^{s}$ is a log modification and
	$$\barA_N^{(s)}\cong (\logA)^{s}\times_{\mathcal{A}_{(\tropA)^{s}}}\mathcal{A}_{\Sigma_N^{(s)}}.$$
	So we obtain a diagram:
	$$\begin{tikzcd}
		\barA_N^{(s)}\arrow[d]\arrow[r]&\mathcal{A}_{\Sigma_N^{(s)}}\arrow[d]\\
		\barA^{(s)}\arrow[d]\arrow[r]&\mathcal{A}_{\Sigma^{(s)}}\arrow[d]\\
		(\logA)^{s}\arrow[r]&\mathcal{A}_{(\tropA)^{s}},
	\end{tikzcd}$$
	where the outer rectangle and lower square are cartesian. The result follows from the transitivity
	of fiber products.
\end{proof}

Recall the definition of $\Phi$ from \eqref{eq:Phi}.

\begin{corollary}\label{cor:push_pull_trop_geom}
\begin{enumerate}[label = (\alph*)]
	\item For all $f\in \spp^{\ast}(\Sigma^{(s)})$, $(1\times\dots\times N)^{\ast}(\Phi(f))=\Phi((1\times\dots\times N)^{\ast}(f))$.
	\item For all $f\in \spp^{\ast}(\Sigma_N^{(s)})$, $b_{\ast}(\Phi(f))=\Phi(p_{N,\ast}(f))$.
\end{enumerate}
\end{corollary}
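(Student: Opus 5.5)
Both statements reduce to the naturality of the isomorphism $\CHop^*(\mathcal A_\Sigma)\cong\spp^*(\Sigma)$ of \cite[Theorem B]{A_case_study_of_Molcho_2021} under morphisms of Artin fans. We then transport these facts to $\barA_N^{(s)}$ and $\barA^{(s)}$ using the two diagrams in Lemma \ref{lem:Artin_fan_A_N}. By definition, $\Phi_X$ is the composite of the inverse of this isomorphism with pullback along the strict map $X\to\mathcal A_\Sigma$. So in each part we must show two things: the corresponding operation on the Artin fan matches the combinatorial operation on piecewise polynomials, and it commutes with pullback to $X$.

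For (a), I use the first commutative square of Lemma \ref{lem:Artin_fan_A_N}. Functoriality of operational Chow pullback gives
\[
(1\times\dots\times N)^*\Phi_{\barA^{(s)}}(f)=\Phi_{\barA_N^{(s)}}\bigl((1\times\dots\times N)_{\mathcal A}^*f\bigr).
\]
It then remains to identify the pullback along the morphism of Artin fans $\mathcal A_{\Sigma_N^{(s)}}\to\mathcal A_{\Sigma^{(s)}}$, induced by the map of cone complexes $(1\times\dots\times N)$, with the naive pullback of piecewise polynomials. This is the compatibility of the isomorphism $\CHop^*(\mathcal A_\Sigma)\cong\spp^*(\Sigma)$ with pullback along morphisms of cone stacks \cite{A_case_study_of_Molcho_2021}. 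That compatibility is checked on generators: a piecewise linear function corresponds to the first Chern class of the associated line bundle on the Artin fan, and pulling back that line bundle is composition with the cone map.

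For (b), I use the cartesian square in Lemma \ref{lem:Artin_fan_A_N}. The vertical maps are strict and smooth, hence flat, and the map $\mathcal A_{\Sigma_N^{(s)}}\to\mathcal A_{\Sigma^{(s)}}$ is a proper log modification. Since proper pushforward commutes with flat pullback in a cartesian square of operational classes (on the smooth space $\barA^{(s)}$ we identify $\CHop$ with $\CH$ via Poincaré duality), we get $b_*\Phi(f)=\Phi(p_{N*}^{\mathcal A}f)$, where $p_{N*}^{\mathcal A}$ is pushforward on the Artin fans. The remaining input is that pushforward along a subdivision of Artin fans corresponds to the combinatorial pushforward $p_{N,*}$ given by Brion's formula. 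This is \cite[Proposition~76]{LogarithmicTauPandha2024} (or \cite{Brion} in the toric case).

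I expect the main obstacle to be the technical side of (b). The Artin fans are Artin stacks rather than DM stacks, so pushforward and base change have to be taken in the operational or bivariant sense, and we must check that Brion's formula applies to the non-complete, infinitely-coned complexes arising here. I would handle this by working locally: restrict to the star of each cone of $\Sigma^{(s)}$, where the subdivision is finite and toric, apply the local Brion formula there, and glue the results using the fact that both sides are determined by their restrictions to maximal cones. I would also note that the $\mathbb Z^s$-quotient causes no trouble, since $p_N$ is equivariant and all formulas are computed cone by cone.
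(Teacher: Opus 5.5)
Your proposal matches the paper's proof. Part (a) is the functoriality of $\CHop^*(\mathcal A_\Sigma)\cong\spp^*(\Sigma)$ applied to the first square of Lemma \ref{lem:Artin_fan_A_N}. Part (b) is flat base change in the cartesian square of that lemma, with $p_{N,*}$ taken as the Artin-fan pushforward computed by Brion's formula.

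One minor correction to your justification of (a): the compatibility cannot be checked on global piecewise linear generators. Indeed $\spp^1(\Sigma^{(1)})=\QQ\,\mathsf{t}$, while $\mathsf{x}\mathsf{y}\in\spp^2(\Sigma^{(1)})$ is not a polynomial in $\mathsf{t}$. The check must instead be done cone by cone, where polynomials are generated by linear functions.
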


\begin{proof}
    The first statement follows from Lemma \ref{lem:Artin_fan_A_N} and the functoriality of the identification $\CH^{\ast}(\mathcal{A}_{\Sigma}) \cong \spp^{\ast}(\Sigma)$ from \S\ref{sec:taut_def}.
    For the second statement, consider the cartesian diagram
    $$\begin{tikzcd} \barA_N^{(s)}\arrow[d]\arrow[r,"b"]&\barA^{(s)}\arrow[d]\\ \mathcal{A}_{\Sigma_N^{(s)}}\arrow[r]&\mathcal{A}_{\Sigma^{(s)}} \end{tikzcd}$$
    from the lemma.
    Since the vertical morphisms are strict, this is both a fiber diagram of fs log stacks and a fiber diagram of algebraic stacks with log structures.
    As the vertical morphisms are smooth and the horizontal morphisms are proper, the second statement follows from flat base change.
\end{proof}

\subsubsection{Proof of Theorem \ref{thm:wt}}

\begin{proof}[Proof of Theorem \ref{thm:wt}]
	We prove the result only for $\gamma$ in the subring generated by the $\ell_{i,j}$, $\lambda_i$ and piecewise polynomial classes.
	The case including the $\theta_i$ is similar, additionally using Corollary \ref{cor:pp_corr_theta_precise}.
	We start with some elementary reductions: As explained in \S\ref{sec:wt_intro}, it suffices to prove the statement of the theorem for $i=s$,
	i.e., we are considering weight with respect to the multiplication by $N$ map on the $s$-th component. By Lemma \ref{lem:Npush} we have
	$$[\alpha\cup \gamma]^{(w)}=\alpha\cup [\gamma]^{(w)}$$
	for any $\alpha$ pulled back along $p_{1, \ldots , s-1}:\barA^{(s)}\to \barA^{(s-1)}$ (forgetting the $s$-th factor, see Lemma \ref{lem:maps_regular}).
	Hence any such $\alpha$ can be ignored. In particular, this applies to the classes $\ell_{i,j}$ for $i,j\ne s$, and
	we may assume that $\gamma=\prod_{i=1}^{s-1}\ell_{i,s}^{e_i}\cup\Phi(f)$ for some $e_i\in \mathbb{N}$
	and $f\in \spp^{\ast}(\Sigma^{(s)})$. Define
	$$\gamma(a_1,\dots,a_{s-1}):=\Phi(f)\cup\prod_{i=1}^{s-1}e^{a_i\ell_{i,s}}$$
	where the $a_i$ are formal indeterminates. The class $\gamma$ is recovered
	from $\gamma(a_1,\dots,a_{s-1})$ by extracting the coefficient of $a_1^{e_1}\cdot\dots\cdot a_{s-1}^{e_{s-1}}$
	(up to the constant global factor $\prod_{i=1}^{s-1}e_i!$, which we henceforth ignore).
	By the remark
	following Lemma \ref{lem:exists_explicit_lift}, we can
	write $f=\Psi(g)$ for some
	$g\in \spp^{\ast}(\tilde{\Sigma}^{(s)})[m]^{\mathsf{gl}}$ constant
	in $m$. It is clear that $\deg_{m,\mathsf{x}_s}(g)=\deg_{\mathsf{x}_s}(g)=\deg_{\mathsf{x}_s}(f)$.
	For $i\in \{1,\dots,s-1\}$, let $c_{\mathcal P,i,N}$ be the pullback
	of $c_{\mathcal P,N}$ (see Corollary \ref{cor:pp_corr_poincare_precise})
	along the map $\pi_i:\Sigma_N^{(s)}\to \Sigma_N^{(2)}$ induced by
	the projection to the $(i,s)$ factors.
	We now compute:
	\begin{align*}
		(1\times \dots\times N)^{\ast}(\gamma(a_1,\dots,a_{s-1}))&=\prod_{i=1}^{s-1}(1\times \dots\times N)^{\ast}(e^{a_i\ell_{i,s}})\cup(1\times \dots\times N)^{\ast}(\Phi(\Psi(g)))\\
		&=\prod_{i=1}^{s-1}\exp(Na_ib^{\ast}\ell_{i,s}+a_i\Phi(c_{\mathcal P,i,N}))\cup(1\times \dots\times N)^{\ast}(\Phi(\Psi(g)))\\
		&=\prod_{i=1}^{s-1}\exp(Na_ib^{\ast}\ell_{i,s}+a_i\Phi(c_{\mathcal P,i,N}))\cup(\Phi((1\times \dots\times N)^{\ast}\Psi(g))),
	\end{align*}
	where on the second line we applied Corollary \ref{cor:pp_corr_poincare_precise}
	and in the third line we used Corollary \ref{cor:push_pull_trop_geom}. Pushing forward along $b$, we get:
	{\allowdisplaybreaks\begin{align}
		&[1\times \dots\times N]^{\ast}(\gamma(a_1,\dots,a_{s-1}))=b_{\ast}\left(\prod_{i=1}^{s-1}\exp(Na_ib^{\ast}\ell_{i,s}+a_i\Phi(c_{\mathcal P,i,N}))\cup(\Phi((1\times \dots\times N)^{\ast}\Psi(g)))\right)\notag\\
		&=\prod_{i=1}^{s-1}\exp(Na_i\ell_{i,s})\cup b_{\ast}\left(\prod_{i=1}^{s-1}\exp(a_i\Phi(c_{\mathcal P,i,N}))\cup(\Phi((1\times \dots\times N)^{\ast}\Psi(g)))\right)\notag\\
		&=\prod_{i=1}^{s-1}\exp(Na_i\ell_{i,s})\cup\Phi\left(p_{N,\ast}\left(\prod_{i=1}^{s-1}\exp(a_ic_{\mathcal P,i,N})(1\times \dots\times N)^{\ast}(\Psi(g))\right)\right)\notag\\
		&=\prod_{i=1}^{s-1}\exp(Na_i\ell_{i,s})\cup\Phi \left(p_{N,\ast}\left(\prod_{i=1}^{s-1}\exp(a_i\pi_i^{\ast}((1\times N)^{\ast}\Psi(c_{\mathcal P})-Np_N^{\ast}\Psi(c_{\mathcal P})))(1\times \dots\times N)^{\ast}(\Psi(g))\right)\right)\notag\\
		&=\prod_{i=1}^{s-1}\exp(Na_i\ell_{i,s})\cup\Phi\left(\prod_{i=1}^{s-1}\exp(a_i\pi_i^{\ast}(-N\Psi(c_{\mathcal P}))) p_{N,\ast}(1\times\dots\times N)^{\ast}\left(\prod_{i=1}^{s-1}\exp(a_i\pi_i^{\ast}(\Psi(c_{\mathcal P})))\Psi(g)\right)\right)\notag\\
		&=\prod_{i=1}^{s-1}\exp(Na_i\ell_{i,s})\cup\Phi\left(\prod_{i=1}^{s-1}\exp(-Na_i\pi_i^{\ast}(\Psi(c_{\mathcal P}))) \Psi\left(\mathfrak{N}_{N,m}^{\ast}\left(\prod_{i=1}^{s-1}\exp(a_i\pi_i^{\ast}(c_{\mathcal P}))g\right)\right)\right)\label{eq:expansion_gamma}.
	\end{align}}
	where on the third line we applied Corollary \ref{cor:push_pull_trop_geom}.
	By abuse of notation, we also wrote $\pi_i^{\ast}$ for the map $\spp^{\ast}(\tilde{\Sigma}^{(2)})[m]^{\mathsf{gl}}\to \spp^{\ast}(\tilde{\Sigma}^{(s)})[m]^{\mathsf{gl}}$
	induced by the pullback along the projection onto the $(i,s)$ factors. Observe that $\deg_{m,\mathsf{x}_2}(c_{\mathcal P})=1$, so by
	Corollary \ref{cor:glob_poly}, the piecewise polynomial
	$$\mathfrak{N}_{N,m}^{\ast}\left(\prod_{i=1}^{s-1}\pi_i^{\ast}c_{\mathcal P}^{c_i}\cdot g \right)$$
	is polynomial in $N$ of degree at most
	$$\deg_{m,\mathsf{x}_s}(g)+\sum_{i=1}^{s-1}c_i.$$
	It follows that the contribution of the coefficient of $\prod_{i=1}^{s-1}a_i^{c_i}$ in the expansion of the factor $\mathfrak N_{N,m}^{\ast}(\ast)$
	in the expression \ref{eq:expansion_gamma} is polynomial in $N$ of degree at most $\deg_{m,\mathsf{x}_s}(g)+\sum_{i=1}^{s-1}c_i$.
	Taking into account the remaining factors, it follows that the coefficient of $\prod_{i=1}^{s-1}a_i^{e_i}$ in the expansion of the
	full expression \ref{eq:expansion_gamma} is polynomial in $N$ of degree at most
	$$\deg_{m,\mathsf{x}_s}(g)+\sum_{i=1}^{s-1}e_i\le \deg(f)+\sum_{i=1}^{s-1}e_i=c.$$
	This proves parts (b) and (c) of Theorem \ref{thm:wt}.
	Each pure degree $d$ part of the expression \ref{eq:expansion_gamma} is obtained by extracting the pure degree
	$d'$-parts of the coefficient of $\prod_{i=1}^{s-1}a_i^{c_i}$ in the factor
	$$\mathfrak{N}_{N,m}^{\ast}\left(\prod_{i=1}^{s-1}\pi_i^{\ast}c_{\mathcal P}^{c_i}\cdot g \right)$$
	in $N$, expanding the remaining factors in $N$ and grouping the resulting terms by their degrees. Each pure weight part
	is hence a linear combination of tautological classes, which proves
	part (a) of Theorem \ref{thm:wt}.
\end{proof}

\subsection{Weights under \texorpdfstring{$N_{\ast}$}{N*}}\label{sec:Nlow}
In this section, we study the action of the ``complementary'' map $[N_i]_{\ast}$ on $\CH^{\ast}(\barA^{(s)})$:

\begin{definition}
	For $\alpha\in \CH^{\ast}(\barA^{(s)})$ and $1\le i\le s$, let
	$$
    [N_i]_{\ast}(\alpha):=N^{-2g}N_{i,\ast}b^{\ast}(\alpha)\,.
    $$
    We say that $\alpha\in \CH^{\ast}(\barA^{(s)})$ has weight at most $k$ with respect to the $i$-th component for $i\in\{1,\dots,s\}$ if the classes
    $$
    [N_i]_{\ast}(\alpha)
    $$
    are polynomial in $N^{-1}$ of degree $k$. In this case, we denote the coefficient of
    $N^{-k}$ by $[\alpha]_{i,(k)}$ and will omit $i$ whenever it is clear from context.
    The class $[\alpha]_{(k)}$ is called the \emph{weight $k$ part} of $\alpha$. The associated filtration is denoted by
    $$
    \widehat{W}_k\CH^*(\barA^{(s)}).
    $$
\end{definition}
As with $[N_i]^{\ast}$, the map $[N_i]_{\ast}$ has a combinatorial counterpart (also denoted by $[N_i]_{\ast}$; see Proposition \ref{prop:push_pull_trop_geom_lower}).

The goal of this section is to prove the following analogue of Theorem \ref{thm:wt}:

\begin{theorem}\label{thm:wt_lower}
    Let $\gamma \in \R^{c}(\barA^{(s)})$.
	\begin{enumerate}[label = (\alph*)]
		\item $[N_i]_* \gamma$ is polynomial in $N^{-1}$ and for any $w \ge 0$, $[\gamma]_{i,(w)}$ is a tautological class.
		\item If $w>2c$, then $[\gamma]_{i,(w)} = 0$.
		\item More precisely, if $\varphi \in \spp^{d}(\Sigma^{(s)})$ and $\alpha$ is any class pulled back from $\barA^{(s-1)}$ (where we forget the $i$-th factor), then
			$$\left[\theta_i^{m}\cup\ell_{1,i}^{k_1}\cup\dots \cup\widehat{\ell_{i,i}^{k_i}}\cup\dots\cup\ell_{s,i}^{k_{s}}\cup\Phi(\varphi)\cup\alpha\right]_{i,(w)}=0$$
			for $w>2m+\sum_{j\neq i}k_j+d$.
	\end{enumerate}
\end{theorem}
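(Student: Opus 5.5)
I will mirror the proof of Theorem \ref{thm:wt}, replacing the correspondence $b_*N_s^*$ by $N^{-2g}N_{s,*}b^*$. By symmetry take $i=s$. The analogue of Lemma \ref{lem:Npush} for $[N_s]_*$ (proved the same way, from the cartesian upper squares and \cite[Lemma C.8]{BS1}) gives $[N_s]_*(\alpha\cup\gamma)=\alpha\cup[N_s]_*\gamma$ for $\alpha$ pulled back from $\barA^{(s-1)}$. The argument for this identity is the projection formula along $N_s$, using that $p_{1,\dots,s-1}\circ N_s=p_{1,\dots,s-1}\circ b$. This lets us drop the $\lambda_j$, the $\ell_{i,j}$ with $i,j\neq s$, and $\alpha$, and reduce to $\gamma(a)=\Phi(f)\cup\prod_i e^{a_i\ell_{i,s}}$ (and the analogous generating function with $\theta_s$).

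Next I need the lower analogue of Corollaries \ref{cor:pp_corr_poincare_precise} and \ref{cor:pp_corr_theta_precise}, expressing $b^*$ in terms of $N_s^*$. Rearranging the first gives $b^*\ell_{i,s}=N^{-1}\bigl(N_s^*\ell_{i,s}-\Phi(c_{\mathcal P,i,N})\bigr)$, and rearranging the second gives $b^*\theta_s=N^{-2}\bigl(N_s^*\theta_s-\Phi(c_{\theta,N})\bigr)$. By the projection formula,
\[
N_{s,*}b^*\gamma(a)=\prod_i e^{a_i\ell_{i,s}/N}\cup N_{s,*}\Bigl(\prod_i e^{-a_i\Phi(c_{\mathcal P,i,N})/N}\cup b^*\Phi(f)\Bigr).
\]
Combined with Lemma \ref{lem:Artin_fan_A_N}, this reduces everything to the combinatorial pushforward $(1\times\dots\times N)_*$ along the map of fans $\Sigma^{(s)}_N\to\Sigma^{(s)}$. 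That is the content of the promised Proposition \ref{prop:push_pull_trop_geom_lower}. The geometric input is that the strict smooth squares of Lemma \ref{lem:Artin_fan_A_N} are compatible with proper pushforward along $N_s$, by base change along $\barA^{(s)}\to\mathcal A_{\Sigma^{(s)}}$, after accounting for the degree $N^{2g}$ of $N_s$ on generic fibers. The factor $N^{-2g}$ absorbs exactly this degree, leaving a purely toric pushforward. That pushforward is computed by Brion's formula on $\Sigma^{(s)}_N$, weighted by the lattice indices of the cones.

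The main step, and the expected obstacle, is the lower analogue of Theorem \ref{thm:polynomiality}. I must define an explicit lift $\mathfrak N_{N,m,*}$ on $\spp^*(\tilde\Sigma^{(s)})[m]^{\mathsf{gl}}$ and show the following: for $f$ with $\deg_{m,\mathsf{x}_s}f=d$, each restriction to a maximal cone is a polynomial in $N^{-1}$ of degree at most $d$.

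On a cone $\sigma_{\rho_r}$ of the target, the pushforward is a sum over the cones of $\mathcal U\overline\Sigma_N^{(s)}$ mapping into it. Pushing forward along $(1\times\dots\times N)$ amounts to substituting $\mathsf{x}_s\mapsto(\mathsf{x}_s+(i-1)\mathsf{t})/N$ and $m\mapsto\lfloor(Nm'+i-1)\ldots\rfloor$. In practice, the relevant preimage of $\sigma_{\rho_r}$ consists of $N$ translates of a cone, indexed by $i$, each of the shape given in Corollary \ref{cor:supp_function_sigma}. So the pushforward is an average over $i=1,\dots,N$ of $f$ evaluated at $\mathsf{x}_s/N+(i-1)\mathsf{t}/N$ and at $m\mapsto m/N$-type arguments, with Brion-type rational weights.

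I would run the same partial-fraction and telescoping argument as in Lemma \ref{lem:contribution-formula-m} and the proof of Theorem \ref{thm:polynomiality}:
\begin{enumerate}[label=(\roman*)]
\item The boundary terms involving $c(m)$ telescope using the gluing condition.
\item The remaining terms become a normalized sum $\frac1N\sum_{i=1}^N P\bigl(\tfrac{i-1}{N},\tfrac{\mathsf{x}_s}{N},\dots\bigr)$ with $P$ polynomial. By Faulhaber's formula this is a polynomial in $N^{-1}$.
\item For the degree bound, I would check that the top homogeneous part in $(m,\mathsf{x}_s)$ contributes through the same cancellation $1+\frac{(i-1)(\mathsf t-\mathsf x_s)}{\mathsf z}-\frac{(N-i)\mathsf x_s}{\mathsf z}=\frac{\mathsf x_s}{\mathsf z}$.
\end{enumerate}

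Once this polynomiality holds, combining it with $\deg_{m,\mathsf{x}_2}c_{\mathcal P}=1$ (and $\deg c_\theta=2$) and extracting coefficients of $\prod a_i^{e_i}$ yields (b) and (c). Part (a) follows as before, since each coefficient is obtained by Lagrange interpolation in $N^{-1}$ from tautological classes.
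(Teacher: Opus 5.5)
\paragraph{The gap: the geometric comparison for $N_{s,*}$.}
Your reductions agree with the paper's: taking $i=s$, discarding classes pulled back from $\barA^{(s-1)}$ and the $\lambda_j$, writing $b^*\ell_{i,s}=N^{-1}\bigl(N_s^*\ell_{i,s}-\Phi(c_{\mathcal P,i,N})\bigr)$, and pulling $e^{a_i\ell_{i,s}/N}$ out of $N_{s,*}$. The gap is the step you treat as routine, namely the identity
\[
N_{s,*}\Phi(h)=N^{2g}\,\Phi\Bigl(\tfrac1N\textstyle\sum_{j=0}^{N-1}T_j^*c_{N,*}q_{N,*}h\Bigr),\qquad h\in\spp^*(\Sigma_N^{(s)}).
\]
This is Proposition~\ref{prop:push_pull_trop_geom_lower}(b), the main new input of the proof, and you cannot simply cite it.

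Your justification, base change along $\barA^{(s)}\to\mathcal A_{\Sigma^{(s)}}$, does not work. Lemma~\ref{lem:Artin_fan_A_N} gives a cartesian square only for $b$ over $p_N$. The square for $N_s$ over $(1\times\dots\times N)$ is merely commutative, and it cannot be cartesian: $(1\times\dots\times N)\colon\mathcal A_{\Sigma_N^{(s)}}\to\mathcal A_{\Sigma^{(s)}}$ is not separated, separatedness descends along the smooth surjection $\barA^{(s)}\to\mathcal A_{\Sigma^{(s)}}$, and $N_s$ is proper. So there is no Artin-fan pushforward to base-change.

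Nor can the degree $N^{2g}$ just be divided out to leave a toric pushforward. On the semi-abelian part of a boundary fibre, $[N]$ has degree only $N^{2g-1}$. The remaining factor comes from the tropical $N$-torsion $\mathcal A_{\tropA}[N]\cong\ZZ/N\ZZ$, and this is exactly what the average $\frac1N\sum_j T_j^*$ encodes.

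The paper proves the identity as follows:
\begin{enumerate}
\item It factors $(1\times\dots\times N)=c_N\circ q_N$ through $(\Sigma_N^{(s)})'$.
\item It passes to the root stack $B^N$ and to the finite flat level-$N$ cover $\logA(N)$.
\item It uses surjectivity of $\logA[N]\to\mathcal A_{\tropA}[N]$ (Lemma~\ref{lem:surj_mult_N}) to show that $(1\times\dots\times N)_{N,*}\Phi(h)$ is unchanged under $h\mapsto T_j^*h$.
\item After averaging, the class becomes a pullback along $(1\times\dots\times N)'_N$, and the projection formula produces $N^{2g}$.
\end{enumerate}
Your guessed shape (an average over $N$ translates) is correct, but it is unproved. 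You also place the main difficulty in the combinatorics rather than here.

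\paragraph{Secondary gap: the combinatorial input.}
Even granting the identity, your polynomiality statement for $f\in\spp^*(\tilde\Sigma^{(s)})[m]^{\mathsf{gl}}$ does not apply to the integrand $\prod_i e^{-a_ic_{\mathcal P,i,N}/N}\,p_N^*f$, which is not of the form $p_N^*\Psi(g)$. To reduce to that form you must write $c_{\mathcal P,i,N}=(1\times N)^*\pi_i^*\Psi(c_{\mathcal P})-Np_N^*\pi_i^*\Psi(c_{\mathcal P})$ and pull the first piece out by the projection formula. Since $\Psi(c_{\mathcal P})$ is not periodic, this does not commute with the averaging: $T_j^*\pi_i^*\Psi(c_{\mathcal P})=\pi_i^*\Psi(c_{\mathcal P})-j\mathsf{x}_i$. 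Factors $e^{a_ij\mathsf{x}_i/N}$ therefore appear, and you need polynomiality on $\spp^*(\tilde\Sigma^{(s)})[m]^{\mathsf{gl}}[\tfrac jN]$ with degree counted in $(m,\tfrac jN,\mathsf{x}_s)$.

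The combinatorics also differ from the upper case in three ways:
\begin{enumerate}
\item The variable $m$ is replaced by $\lfloor (q+j)/N\rfloor$, so polynomiality in $N^{-1}$ holds only for $N\ge|q|$, and one concludes by periodicity.
\item The Brion weights (Lemma~\ref{lem:combinat_expr_lower}) have no $1/\mathsf{z}_i^{(N)}$ denominators.
\item The top-degree cancellation is not the identity $1+\frac{(i-1)(\mathsf t-\mathsf x_s)}{\mathsf z}-\frac{(N-i)\mathsf x_s}{\mathsf z}=\frac{\mathsf x_s}{\mathsf z}$ that you plan to reuse. Instead, the degree $d+k+1$ part telescopes to $\frac1N\sum_j(\tfrac jN)^k$ times the top-degree part of $f|_{\sigma_{\rho_r}}$ at shifted arguments. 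A Faulhaber-type count then bounds this by degree $d+k$.
\end{enumerate}
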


The main additional difficulty in the proof of this theorem is that the tropical map $(1\times\dots\times N):\Sigma_N^{(s)}\to \Sigma^{(s)}$
is not separated, which complicates the construction of a lift of the geometric operator $(1\times\dots\times N)_{\ast}$.
In Proposition \ref{prop:push_pull_trop_geom_lower}, we show that there is nevertheless a Brion-like combinatorial formula for this pushforward
operator on toroidal compactifications of semi-abelian fibrations.

The proof of Theorem \ref{thm:wt_lower} will be given in \S\ref{sec:prf_wt_lower}. As in the proof of Theorem \ref{thm:wt},
we start with the tropical part of Theorem \ref{thm:wt_lower} in \S\ref{sec:wt_lower_trop} and then compare
with the geometric setup in \S\ref{sec:wt_lower_pushforwards} and \S\ref{sec:prf_wt_lower}.

\subsubsection{Weights with respect to $[1\times\dots\times N]_\ast$ on tropical abelian schemes}
\label{sec:wt_lower_trop}

Let $(\mathcal U\Sigma_N^{(s)})'$ be the (infinite) cone complex obtained from
$\mathcal U\overline{\Sigma}_N^{(s)}$ by forgetting the location of $\mathsf{x}_s-q\mathsf{t}$ in the
given order of $(0,\mathsf{x}_1,\dots,\mathsf{x}_{s-1},\mathsf{x}_{s}-q\mathsf{t},\mathsf{z}_i^{(N)})$ on
the maximal cone $\sigma_{\pi,q,i}$. More precisely, we let
$(\mathcal U\Sigma_N^{(s)})'$ be the subdivision of the cone over
$[0,1]^{s-1}\times \mathbb{R}$ whose maximal cones are of the
form $\tau_{\rho,i,r}$, where $\rho\in \mathfrak S_{s-1}$, $r\in \{0,\dots,s-1\}$
and $i\in \mathbb{Z}$. The cone $\tau_{\rho,i,r}$ is defined as:
$$\tau_{\rho,i,r}:=\{(\mathsf{x},\mathsf{t})\in \mathbb{R}^{s}\times\mathbb{R}_{\ge 0}|0\le \mathsf{x}_{\rho(1)}\le\dots \le \mathsf{x}_{\rho(r)}\le \mathsf{z}_i^{(N)}\le \mathsf{x}_{\rho(r+1)}\le \dots\le \mathsf{x}_{\rho(s-1)}\le \mathsf{t}\}$$
with the usual conventions $\mathsf{x}_{\rho(0)}=0$ and $\mathsf{x}_{\rho(s)}=\mathsf{t}$.
In particular, $\tau_{\rho,i,r}$ is simplicial, defined by the
hyperplanes
$$\mathsf{x}_{\rho(j+1)}-\mathsf{x}_{\rho(j)}\ge 0,\ \forall j\in \{0,\dots,\hat{r},\dots,s-1\}$$
and
$$\mathsf{z}_i^{(N)}-\mathsf{x}_{\rho(r)}\ge 0,\quad \mathsf{x}_{\rho(r+1)}-\mathsf{z}_i^{(N)}\ge 0.$$
Observe that the sublattice of $\langle \mathsf{x}_1,\dots,\mathsf{x}_s,\mathsf{t}\rangle$
spanned by the defining equations is
$\langle \mathsf{x}_1,\dots,\mathsf{x}_{s-1},N\mathsf{x}_s,\mathsf{t}\rangle$. In particular, it has index
$N$, so the support function of $\tau_{\rho,i,r}$ is
$$\delta_{\tau_{\rho,i,r}}=\frac{1}{N}(\mathsf{z}_i^{(N)}-\mathsf{x}_{\rho(r)})(\mathsf{x}_{\rho(r+1)}-\mathsf{z}_i^{(N)})\prod_{j\in \{0,\dots,\hat{r},\dots,s-1\}}^{}(\mathsf{x}_{\rho(j+1)}-\mathsf{x}_{\rho(j)}).$$
We write $q_N:\mathcal U\Sigma_N^{(s)}\to (\mathcal U\Sigma_N^{(s)})'$ for the subdivision
map (see also Figure \ref{fig:Sigma_prime}). Moreover, we define $(\Sigma_N^{(s)})'$ to be the cone complex
obtained from $(\mathcal{U}\Sigma_N^{(s)})'/\mathbb{Z}$ by identifying the faces $\{\mathsf{x}_i=0\}$ and $\{\mathsf{x}_i=\mathsf{t}\}$
for $i\in \{1,\dots,s-1\}$ (see also Definition \ref{def:cone_complexes}).

\begin{figure}
\begin{center}
	\[
	\begin{tikzpicture}
		\def\s{2.0}
		\def\dx{3.8}
		\def\yTop{3.6}
		\def\yBot{0}


		\draw[thick] (0,\yTop) rectangle ++(\s,\s);
		\foreach \k in {1,2,3} {
			\draw[thick] (0,\yTop+\k*\s/4) -- ++(\s,0);
		}
		\foreach \k in {0,1,2,3} {
			\draw[thick] (0,\yTop+\k*\s/4) -- ++(\s,\s/4);
		}
		\draw[thick] (0,\yTop) -- (\s,\yTop+\s);
		\node at (0.5*\s,\yTop+\s+0.5) {$\mathcal U\Sigma_N^{(2)}$};

		\draw[thick] (\dx,\yTop) rectangle ++(\s,\s);
		\foreach \k in {1,2,3} {
			\draw[thick] (\dx,\yTop+\k*\s/4) -- ++(\s,0);
		}
		\foreach \k in {0,1,2,3} {
			\draw[thick] (\dx,\yTop+\k*\s/4) -- ++(\s,\s/4);
		}
		\node at (\dx+0.5*\s,\yTop+\s+0.5) {$(\mathcal U\Sigma_N^{(2)})'$};

		\draw[thick] (2*\dx,\yTop) rectangle ++(\s,\s);
		\draw[thick] (2*\dx,\yTop) -- ++(\s,\s);
		\node at (2*\dx+0.5*\s,\yTop+\s+0.5) {$\mathcal U\Sigma^{(2)}$};

		\draw[->, thick]
			(\s+0.35,\yTop+0.5*\s) -- node[above] {$q_N$} (\dx-0.35,\yTop+0.5*\s);
		\draw[->, thick]
			(\dx+\s+0.35,\yTop+0.5*\s) -- node[above] {$c_N$} (2*\dx-0.35,\yTop+0.5*\s);


		\draw[thick] (0,\yBot) rectangle ++(\s,\s);
		\draw[thick] (0,\yBot) -- ++(\s,\s);
		\node at (0.5*\s,\yBot-0.7) {$\mathcal U\Sigma^{(2)}$};

		\draw[->, thick]
			(0.5*\s,\yTop-0.35) -- (0.5*\s,\yBot+\s+0.35);
		\node[left] at (0.5*\s,\yBot+\s+0.8) {$p_N$};
	\end{tikzpicture}
	\]
\end{center}
\caption{\label{fig:Sigma_prime}This figure depicts the (infinite) cone complexes $\mathcal U\Sigma_N^{(2)}$, $(\mathcal U\Sigma_N^{(2)})'$ and $\mathcal U\Sigma^{(2)}$, as well as the maps between them, in the case $N=4$. To simplify the illustration, we only depict their intersection with the region $0\le \mathsf{x}_1,\mathsf{x}_2\le \mathsf{t}$ and restrict to the links at $\mathsf{t}=1$. The morphisms $q_N$ and $p_N$ are subdivision morphisms, while $c_N$ corresponds to scaling the $\mathsf{x}_2$-coordinate by $N$.}
\end{figure}

\begin{lemma}\label{lem:combinat_expr_lower}
	Let $f\in \spp^{\ast}(\tilde{\Sigma}^{(s)})[m]^{\mathsf{gl}}$ and let $\tau_{\rho,i,r}$
	be a maximal cone of $(\mathcal U\Sigma_N^{(s)})'$. Write $q:=\lfloor \frac{i-1}{N}\rfloor$. That is,
	$q$ is the unique integer satisfying $Nq+1\le i\le N(q+1)$. Then
	\begin{align*}
		q_{N,\ast}p_N^{\ast}(\Psi(f))|_{\tau_{\rho,i,r}}
		={}&
		\frac{N(q+1)-i}{N}f|_{\sigma_{\rho_0}}(q)(\mathsf{x}_1,\dots,\mathsf{x}_{s-1},\mathsf{x}_s-q\mathsf{t},\mathsf{t})\\
		   &+\frac{(N(q+1)-i)(\mathsf{x}_s-q\mathsf{t})}{N}\sum_{a=1}^{r}H_{\rho,a}^f(q)(\mathsf{x}_s\to \mathsf{x}_s-q\mathsf{t})\\
		&+\frac{1}{N}f|_{\sigma_{\rho_r}}(q)(\mathsf{x}_1,\dots,\mathsf{x}_{s-1},\mathsf{x}_s-q\mathsf{t},\mathsf{t})\\
		&+\frac{(i-1-Nq)(\mathsf{t}-\mathsf{x}_s+q\mathsf{t})}{N}\sum_{a=r+1}^{s-1}H_{\rho,a}^f(q)(\mathsf{x}_s\to \mathsf{x}_s-q\mathsf{t})\\
		&+\frac{i-1-Nq}{N}f|_{\sigma_{\rho_{s-1}}}(q)(\mathsf{x}_1,\dots,\mathsf{x}_{s-1},\mathsf{x}_s-q\mathsf{t},\mathsf{t}).
	\end{align*}
\end{lemma}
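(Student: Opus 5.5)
The plan is to compute $q_{N,\ast}$ by Brion's formula, exactly as in the proofs of Lemma \ref{lem:combinat_lift} and Lemma \ref{lem:contribution-formula-m}, but with the roles of $\mathsf{x}_s-q\mathsf{t}$ and $\mathsf{z}_i^{(N)}$ interchanged. Since $p_N^{\ast}$ commutes with further subdivision, and $q_{N,\ast}$ is unchanged under passing to the refinement $\mathcal U\overline{\Sigma}_N^{(s)}$ (projection formula), we can work with the maximal cones $\sigma_{\pi,q,i}$ of Corollary \ref{cor:max_cones_sigma_n_bar}. Fix $\tau=\tau_{\rho,i,r}$ and put $\mathsf{y}:=\mathsf{x}_s-q\mathsf{t}$. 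Here $q=\lfloor (i-1)/N\rfloor$ is forced, because on $\tau$ we have $0\le \mathsf{z}_i^{(N)}\le \mathsf{t}$, which gives $q\mathsf{t}\le \mathsf{x}_s\le (q+1)\mathsf{t}$.

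The maximal cones of $\mathcal U\overline{\Sigma}_N^{(s)}$ contained in $\tau$ are those $\sigma_{\pi,q,i}$ in which the order of $(0,\mathsf{x}_{\rho(1)},\dots,\mathsf{z}_i^{(N)},\dots,\mathsf{x}_{\rho(s-1)},\mathsf{t})$ is the one defining $\tau$. In each such cone, $\mathsf{y}$ is inserted into one of the $s$ gaps. There are two cases.
\begin{itemize}
\item If $\mathsf{y}$ lies below $\mathsf{z}_i^{(N)}$, it sits in the gap $(\mathsf{x}_{\rho(a)},\mathsf{x}_{\rho(a+1)})$ for some $0\le a\le r$, where the gap with $a=r$ has upper end $\mathsf{z}_i^{(N)}$.
\item If $\mathsf{y}$ lies above $\mathsf{z}_i^{(N)}$, it sits in a gap with $r\le a\le s-1$, where the gap with $a=r$ now has lower end $\mathsf{z}_i^{(N)}$.
\end{itemize}
On such a cone, $p_N^{\ast}\Psi(f)$ restricts to $f|_{\sigma_{\rho_a}}(q)$ evaluated at $\mathsf{x}_s\to \mathsf{y}$. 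This holds because only the position of $\mathsf{y}$ among the $\mathsf{x}_{\rho(j)}$ matters, and $m=q$ on the whole strip.

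For the Brion weights $\delta_{\tau}/\delta_{\sigma_{\pi,q,i}}$, we combine the formula for $\delta_\tau$, which carries the factor $\tfrac1N$, with Corollary \ref{cor:supp_function_sigma}. All gap factors cancel except three: the gap $[\mathsf{l},\mathsf{u}]$ containing $\mathsf{y}$ contributes $(\mathsf{u}-\mathsf{l})$ in the numerator and $(\mathsf{y}-\mathsf{l})(\mathsf{u}-\mathsf{y})$ in the denominator, and the denominator of Corollary \ref{cor:supp_function_sigma} survives. Using $\mathsf{z}_i^{(N)}=N\mathsf{y}-(i-1-Nq)\mathsf{t}$, this gives
\[
\frac{1}{N}\,\frac{(N(q+1)-i)\,\mathsf{y}\,(\mathsf{u}-\mathsf{l})}{(\mathsf{y}-\mathsf{l})(\mathsf{u}-\mathsf{y})}
\quad\text{or}\quad
\frac{1}{N}\,\frac{(i-1-Nq)(\mathsf{t}-\mathsf{y})\,(\mathsf{u}-\mathsf{l})}{(\mathsf{y}-\mathsf{l})(\mathsf{u}-\mathsf{y})}
\]
in the two cases respectively. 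The excluded edge cases $i=Nq+1$ and $i=N(q+1)$ have vanishing weights, so they need no separate treatment, just as in Lemma \ref{lem:contribution-formula-m}.

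Next we apply the partial fraction identity $\frac{\mathsf{u}-\mathsf{l}}{(\mathsf{y}-\mathsf{l})(\mathsf{u}-\mathsf{y})}=\frac{1}{\mathsf{u}-\mathsf{y}}-\frac{1}{\mathsf{l}-\mathsf{y}}$ and regroup the terms sharing a denominator $\mathsf{x}_{\rho(a)}-\mathsf{y}$.
\begin{itemize}
\item In the lower range, each regrouped pair is $(N(q+1)-i)\mathsf{y}\,H^f_{\rho,a}(q)(\mathsf{x}_s\to\mathsf{y})$ for $1\le a\le r$.
\item In the upper range, each regrouped pair is $(i-1-Nq)(\mathsf{t}-\mathsf{y})\,H^f_{\rho,a}(q)(\mathsf{x}_s\to\mathsf{y})$ for $r+1\le a\le s-1$.
\item The boundary denominators $\mathsf{y}$ and $\mathsf{t}-\mathsf{y}$ cancel against the prefactors, leaving the $f|_{\sigma_{\rho_0}}$ and $f|_{\sigma_{\rho_{s-1}}}$ terms with coefficients $\tfrac{N(q+1)-i}{N}$ and $\tfrac{i-1-Nq}{N}$.
\item The two terms with denominator $\mathsf{z}_i^{(N)}-\mathsf{y}$, coming from $\mathsf{y}$ just below and just above $\mathsf{z}_i^{(N)}$, both involve $f|_{\sigma_{\rho_r}}$. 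Their combined numerator is $(N(q+1)-i)\mathsf{y}-(i-1-Nq)(\mathsf{t}-\mathsf{y})=(N-1)\mathsf{y}-(i-1-Nq)\mathsf{t}=\mathsf{z}_i^{(N)}-\mathsf{y}$. This cancels the denominator and leaves exactly $\tfrac1N f|_{\sigma_{\rho_r}}(q)$.
\end{itemize}
Collecting these contributions gives the stated formula.

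I expect the main obstacle to be the careful identification of which $\sigma_{\pi,q,i}$ lie in $\tau$, together with the correct form of the Corollary \ref{cor:supp_function_sigma} denominator for each relative position of $\mathsf{y}$ and $\mathsf{z}_i^{(N)}$, including the degenerate extreme values of $i$. The algebra afterwards mirrors Lemma \ref{lem:contribution-formula-m}.
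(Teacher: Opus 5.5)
Your proposal is correct and follows essentially the same route as the paper. You use Brion's formula over the cones $\sigma_{\pi,q,i}$ of $\mathcal U\overline{\Sigma}_N^{(s)}$ lying in $\tau_{\rho,i,r}$, with the same two families of weights, the same partial-fraction regrouping into the $H^f_{\rho,a}$ terms, and the same cancellation $(N(q+1)-i)\mathsf{y}-(i-1-Nq)(\mathsf{t}-\mathsf{y})=\mathsf{z}_i^{(N)}-\mathsf{y}$ that produces the $\frac{1}{N}f|_{\sigma_{\rho_r}}$ term.

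Two small fixes are needed. First, treat $N=1$ separately, as the paper does: there $\mathsf{z}_i^{(N)}\equiv\mathsf{y}$ and both prefactors vanish, so your weights and the division by $\mathsf{z}_i^{(N)}-\mathsf{y}$ degenerate. The case is trivial anyway, because $p_N$ and $q_N$ are then identities. Second, $\mathsf{y}$ is inserted into one of $s+1$ gaps, not $s$, which is what your own enumeration over $0\le a\le r$ and $r\le a\le s-1$ actually gives.
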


\begin{proof}
	If $N=1$, the formula is clear, so assume $N>1$.
	Let
	$$(c_0,\dots,c_{s+1})=(0,\mathsf{x}_{\rho(1)},\dots,\mathsf{x}_{\rho(r)},\mathsf{z}_i^{(N)},\mathsf{x}_{\rho(r+1)},\dots,\mathsf{x}_{\rho(s-1)},\mathsf{t}).$$
	By the above computation and Corollary \ref{cor:supp_function_sigma}, we have for every $b\in \{0,\dots,s-1\}$:
	$$\frac{\delta_{\tau_{\rho,i,r}}}{\delta_{\sigma_{(\rho_{b})_r,q,i}}}=\frac{(i-Nq-1)((q+1)\mathsf{t}-\mathsf{x}_s)(c_{b+2}-c_{b+1})}{N(c_{b+2}-\mathsf{x}_s+q\mathsf{t})(\mathsf{x}_s-q\mathsf{t}-c_{b+1})}$$
	if $r\le b$ and
	$$\frac{\delta_{\tau_{\rho,i,r}}}{\delta_{\sigma_{(\rho_{b})_{r+1},q,i}}}=\frac{((q+1)N-i)(\mathsf{x}_s-q\mathsf{t})(c_{b+1}-c_{b})}{N(c_{b+1}-\mathsf{x}_s+q\mathsf{t})(\mathsf{x}_s-q\mathsf{t}-c_{b})}$$
	if $r\ge b$.

	By Brion's formula, we have
    {\allowdisplaybreaks
	\begin{align*}
		q_{N,\ast}p_N^{\ast}(\Psi(f))|_{\tau_{\rho,i,r}}&=\sum_{b=0}^{r}\frac{\delta_{\tau_{\rho,i,r}}}{\delta_{\sigma_{(\rho_{b})_{r+1},q,i}}}p_N^{\ast}(\Psi(f))|_{\sigma_{(\rho_{b})_{r+1},q,i}}\\
														&+\sum_{b=r}^{s-1}\frac{\delta_{\tau_{\rho,i,r}}}{\delta_{\sigma_{(\rho_{b})_{r},q,i}}}p_N^{\ast}(\Psi(f))|_{\sigma_{(\rho_{b})_{r},q,i}}\\
														&=\sum_{b=0}^{r}\frac{((q+1)N-i)(\mathsf{x}_s-q\mathsf{t})(c_{b+1}-c_{b})}{N(c_{b+1}-\mathsf{x}_s+q\mathsf{t})(\mathsf{x}_s-q\mathsf{t}-c_{b})}\Psi(f)|_{\sigma_{\rho_{b},q}}\\
														&+\sum_{b=r}^{s-1}\frac{(i-Nq-1)((q+1)\mathsf{t}-\mathsf{x}_s)(c_{b+2}-c_{b+1})}{N(c_{b+2}-\mathsf{x}_s+q\mathsf{t})(\mathsf{x}_s-q\mathsf{t}-c_{b+1})}\Psi(f)|_{\sigma_{\rho_{b},q}}\\
														&=\sum_{b=0}^{r}\frac{((q+1)N-i)(\mathsf{x}_s-q\mathsf{t})(c_{b+1}-c_{b})}{N(c_{b+1}-\mathsf{x}_s+q\mathsf{t})(\mathsf{x}_s-q\mathsf{t}-c_{b})}f|_{\sigma_{\rho_b}}(q)(\mathsf{x}_1,\dots,\mathsf{x}_{s-1},\mathsf{x}_s-q\mathsf{t},\mathsf{t})\\
														&+\sum_{b=r}^{s-1}\frac{(i-Nq-1)((q+1)\mathsf{t}-\mathsf{x}_s)(c_{b+2}-c_{b+1})}{N(c_{b+2}-\mathsf{x}_s+q\mathsf{t})(\mathsf{x}_s-q\mathsf{t}-c_{b+1})}f|_{\sigma_{\rho_{b}}}(q)(\mathsf{x}_1,\dots,\mathsf{x}_{s-1},\mathsf{x}_s-q\mathsf{t},\mathsf{t}).
	\end{align*}}

	Note that after applying the partial fraction decomposition:
	$$\frac{c_1-c_2}{(c_1-\mathsf{x}_s+q\mathsf{t})(\mathsf{x}_s-q\mathsf{t}-c_2)}=\frac{1}{c_1-\mathsf{x}_s+q\mathsf{t}}-\frac{1}{c_2-\mathsf{x}_s+q\mathsf{t}},$$
	we can rearrange both sums to group terms with equal denominator.
	Keeping track of boundary terms gives:
    {\allowdisplaybreaks
	\begin{align*}
		\sum_{b=0}^{r}&\frac{((q+1)N-i)(\mathsf{x}_s-q\mathsf{t})(c_{b+1}-c_{b})}{N(c_{b+1}-\mathsf{x}_s+q\mathsf{t})(\mathsf{x}_s-q\mathsf{t}-c_{b})}f|_{\sigma_{\rho_b}}(q)(\mathsf{x}_1,\dots,\mathsf{x}_{s-1},\mathsf{x}_s-q\mathsf{t},\mathsf{t})\\
														&+\sum_{b=r}^{s-1}\frac{(i-Nq-1)((q+1)\mathsf{t}-\mathsf{x}_s)(c_{b+2}-c_{b+1})}{N(c_{b+2}-\mathsf{x}_s+q\mathsf{t})(\mathsf{x}_s-q\mathsf{t}-c_{b+1})}f|_{\sigma_{\rho_{b}}}(q)(\mathsf{x}_1,\dots,\mathsf{x}_{s-1},\mathsf{x}_s-q\mathsf{t},\mathsf{t})\\
														&=\frac{((q+1)N-i)(\mathsf{x}_s-q\mathsf{t})}{N}\sum_{b=1}^{r}H_{\rho,b}^{f}(q)(\mathsf{x}_s\to \mathsf{x}_s-q\mathsf{t})\\
														&+\frac{(i-Nq-1)((q+1)\mathsf{t}-\mathsf{x}_s)}{N}\sum_{b=r+1}^{s-1}H_{\rho,b}^{f}(q)(\mathsf{x}_s\to \mathsf{x}_s-q\mathsf{t})\\
														&-\frac{((q+1)N-i)(\mathsf{x}_s-q\mathsf{t})}{N(c_0-\mathsf{x}_s+q\mathsf{t})}f|_{\sigma_{\rho_0}}(q)(\mathsf{x}_1,\dots,\mathsf{x}_{s-1},\mathsf{x}_s-q\mathsf{t},\mathsf{t})\\
														&-\frac{(i-Nq-1)((q+1)\mathsf{t}-\mathsf{x}_s)}{N(c_{r+1}-\mathsf{x}_s+q\mathsf{t})}f|_{\sigma_{\rho_{r}}}(q)(\mathsf{x}_1,\dots,\mathsf{x}_{s-1},\mathsf{x}_s-q\mathsf{t},\mathsf{t})\\
														&+\frac{((q+1)N-i)(\mathsf{x}_s-q\mathsf{t})}{N(c_{r+1}-\mathsf{x}_s+q\mathsf{t})}f|_{\sigma_{\rho_r}}(q)(\mathsf{x}_1,\dots,\mathsf{x}_{s-1},\mathsf{x}_s-q\mathsf{t},\mathsf{t})\\
														&+\frac{(i-Nq-1)((q+1)\mathsf{t}-\mathsf{x}_s)}{N(c_{s+1}-\mathsf{x}_s+q\mathsf{t})}f|_{\sigma_{\rho_{s-1}}}(q)(\mathsf{x}_1,\dots,\mathsf{x}_{s-1},\mathsf{x}_s-q\mathsf{t},\mathsf{t})\\
														&=\frac{((q+1)N-i)(\mathsf{x}_s-q\mathsf{t})}{N}\sum_{b=1}^{r}H_{\rho,b}^{f}(q)(\mathsf{x}_s\to \mathsf{x}_s-q\mathsf{t})\\
														&+\frac{(i-Nq-1)((q+1)\mathsf{t}-\mathsf{x}_s)}{N}\sum_{b=r+1}^{s-1}H_{\rho,b}^{f}(q)(\mathsf{x}_s\to \mathsf{x}_s-q\mathsf{t})\\
														&+\frac{((q+1)N-i)}{N}f|_{\sigma_{\rho_0}}(q)(\mathsf{x}_1,\dots,\mathsf{x}_{s-1},\mathsf{x}_s-q\mathsf{t},\mathsf{t})\\
														&+\frac{1}{N}f|_{\sigma_{\rho_r}}(q)(\mathsf{x}_1,\dots,\mathsf{x}_{s-1},\mathsf{x}_s-q\mathsf{t},\mathsf{t})\\
														&+\frac{(i-Nq-1)}{N}f|_{\sigma_{\rho_{s-1}}}(q)(\mathsf{x}_1,\dots,\mathsf{x}_{s-1},\mathsf{x}_s-q\mathsf{t},\mathsf{t}).
		\end{align*}}
		This is exactly the expression in the statement, up to reordering.
\end{proof}

Let $c_N:(\mathcal U\Sigma_N^{(s)})'\to \mathcal U\Sigma^{(s)}$ be the morphism
induced by $\mathsf{x}_s\mapsto N\mathsf{x}_s$. Note that $c_N$ sends $\tau_{\rho,i,r}$
isomorphically to $\sigma_{\rho,i-1,r}$. Hence by Brion's formula,
we have
$$c_{N,\ast}(f)=f\left(\mathsf{x}_s\to \frac{\mathsf{x}_s}{N}\right)$$
for any $f\in \spp^{\ast}((\mathcal U\Sigma_N^{(s)})')$. For $j\in \mathbb{Z}$,
let $T_j:\mathcal U\Sigma^{(s)}\to \mathcal U\Sigma^{(s)}$ be the translation by $j$
operator, obtained by sending $\mathsf{x}_s$ to $\mathsf{x}_s+j\mathsf{t}$.

\begin{definition}
	For $N\ge 1$, we define an operator
	$\mathfrak{N}_{N,\ast}:\spp^{\ast}(\mathcal U\Sigma^{(s)})\left[\frac{j}{N}\right]\to \spp^{\ast}(\mathcal U\Sigma^{(s)})$ by
	$$\mathfrak{N}_{N,\ast}(f):=\frac{1}{N}\sum_{j=0}^{N-1}T_j ^{\ast}(c_{N,\ast}q_{N,\ast}p_N^{\ast}(f)).$$
\end{definition}

\begin{theorem}\label{thm:polynomiality_lower}
	Let $f\in \spp^{\ast}(\tilde{\Sigma}^{(s)})[m]^{\mathsf{gl}}\left[\frac{j}{N}\right]$. Then
	the restriction of $\mathfrak{N}_{N,\ast}(\Psi(f))$ to the maximal
	cone $\sigma_{\rho,q,r}$ of $\mathcal U\Sigma^{(s)}$ is a polynomial in $N^{-1}$ of degree at most
	$\deg_{m,\frac{j}{N},\mathsf{x}_s}(f)$ for every $N\in \mathbb{Z}_{\ge 1}$ satisfying $N\ge |q|$.
\end{theorem}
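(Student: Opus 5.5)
The plan is to compute $\mathfrak{N}_{N,\ast}(\Psi(f))|_{\sigma_{\rho,q,r}}$ in closed form by composing the three operators explicitly, and then to read off polynomiality in $N^{-1}$ from Faulhaber-type sums. Fix a point of the interior of $\sigma_{\rho,q,r}$, so that $0\le \mathsf{x}_{\rho(1)}\le\dots\le \mathsf{x}_s-q\mathsf{t}\le\dots\le \mathsf{t}$. By the description of $c_{N,\ast}$ as the substitution $\mathsf{x}_s\to \mathsf{x}_s/N$, the $j$-th summand of $\mathfrak{N}_{N,\ast}$ evaluates $q_{N,\ast}p_N^{\ast}(\Psi(f))$ at the point with last coordinate $(\mathsf{x}_s+j\mathsf{t})/N$. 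This point lies in the cone $\tau_{\rho,i,r}$ of $(\mathcal U\Sigma_N^{(s)})'$ with
\[
\mathsf{z}_i^{(N)}\bigl((\mathsf{x}_s+j\mathsf{t})/N\bigr)=\mathsf{x}_s-q\mathsf{t},
\]
that is, $i=q+j+1$. The ordering index $r$ is unchanged. Hence the relevant integer in Lemma \ref{lem:combinat_expr_lower} is $q'=\lfloor (q+j)/N\rfloor$. Because $N\ge |q|$ and $0\le j\le N-1$, $q'$ takes at most two values as $j$ varies: $q'=0$ or $1$ when $q\ge 0$, and $q'=-1$ or $0$ when $q<0$. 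The range of $j$ therefore splits into two intervals whose endpoints are of the form $\varepsilon N+c$ with $c\in\{0,-q,1-q,N-1\}$ independent of $N$. This is exactly where the hypothesis $N\ge|q|$ enters.

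On each of the two intervals I substitute $i=q+j+1$ and the constant value of $q'$ into the formula of Lemma \ref{lem:combinat_expr_lower}. The coefficients become $(N(q'+1)-q-j-1)/N$, $1/N$ and $(q+j-Nq')/N$. Each restriction $f|_{\sigma_{\rho_h}}(q')$ and each $H^f_{\rho,h}(q')$ is evaluated at
\[
\mathsf{x}_s\to (\mathsf{x}_s+j\mathsf{t})/N-q'\mathsf{t},
\]
and the auxiliary variable $j/N$ of $f$ is specialized consistently. Every resulting term is then a polynomial in $j/N$, $1/N$ and the coordinates. Summing $\frac{1}{N}\sum_j (j/N)^k$ over an interval with endpoints $\varepsilon N+c$ gives, by Faulhaber's formula, a polynomial in $N^{-1}$. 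This proves polynomiality.

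For the degree bound, let $d=\deg_{m,\frac{j}{N},\mathsf{x}_s}(f)$. I would argue as in the proof of Theorem \ref{thm:polynomiality}. Track the homogeneous top-degree parts $G_h$ of $f|_{\sigma_{\rho_h}}$ in $(m,j/N,\mathsf{x}_s)$. In the weighted sum of Lemma \ref{lem:combinat_expr_lower}, the $H$-sums telescope, exactly as before, into a combination of $G_0$, $G_r$ and $G_{s-1}$. The coefficients of these three terms are affine in $j/N$ and sum to $1$, so their top-degree contribution has degree at most $d$ in $(j/N, N^{-1})$.

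The main obstacle is the boundary between the two $j$-intervals, where $q'$ jumps by one. There the $N^0$- and higher-order pieces of the two sub-sums must match, or the degree in $N^{-1}$ could exceed $d$. I would handle this by using the gluing condition $f(q-1)|_{\{\mathsf{x}_s=\mathsf{t}\}}=f(q)|_{\{\mathsf{x}_s=0\}}$, written as $f|_{\sigma_{\rho_{s-1}}}(q'-1)(\mathsf{x}_s\to\mathsf{t})=f|_{\sigma_{\rho_0}}(q')(\mathsf{x}_s\to 0)$. Substituting $c(m)=f|_{\sigma_{\rho_0}}(m)(\mathsf{x}_s\to 0)$ as in the earlier proof, I would show that the $c$-terms from the two intervals combine into a single Riemann-type sum of a polynomial of degree at most $d$. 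Such a sum contributes only terms $N^{-k}$ with $k\le d$. The remaining $R_0$ and $R_s$ terms each carry a factor of total degree one in the substituted variables, so they stay within the bound.
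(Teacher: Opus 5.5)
The polynomiality half of your argument is sound and is essentially the paper's. The $j$-th summand lives on $\tau_{\rho,q+j+1,r}$, and Lemma~\ref{lem:combinat_expr_lower} applies with $q'=\lfloor (q+j)/N\rfloor$, which takes only two values because $N\ge|q|$. Faulhaber on the two pieces then gives polynomials in $N^{-1}$.

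The gap is in the degree bound across the jump of $q'$. Freezing $q'$ as a constant makes the substitutions $m\mapsto q'$ and $\mathsf{x}_s\mapsto(\mathsf{x}_s+j\mathsf{t})/N-q'\mathsf{t}$ non-homogeneous in $(j/N,N^{-1})$ when $q'\neq 0$. So your top-degree telescoping is only literally valid on the interval where $q'=0$.

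More seriously, a partial sum such as $\frac1N\sum_{j=0}^{N-q-1}(j/N)^a$ has a nonzero $N^{-a-1}$ coefficient, so each one-sided sub-sum can genuinely have degree $d+1$. What must cancel is the highest power of $N^{-1}$ in the difference of the two one-sided expressions at the $|q|$ boundary indices. The gluing condition does not control this. It only kills the order-$N^{0}$ term of that difference, which contributes $O(N^{-1})$ to the sum and is harmless once $d\ge1$; in fact the paper's argument never uses gluing here.

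Your concrete claim about the $c$-terms is also false. Take $q>0$ and $u=(q+j)/N$. Up to $O(N^{-1})$, the $c$-terms are $(1-u)c(0)+u\,c(1)$ on one side and $(2-u)c(1)+(u-1)c(2)$ on the other. These differ by $(u-1)\bigl(c(2)-2c(1)+c(0)\bigr)$, so together they form a piecewise-linear interpolation of $c$, not a single polynomial.

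The missing idea is that the jump is controlled by the joint degree of $f$ in $(m,j/N,\mathsf{x}_s)$. So $\lfloor (q+j)/N\rfloor$ must be kept as a degree-one variable, not frozen. The paper sets $\mathsf{x}=j/N$, $\mathsf{y}=\lfloor (q+j)/N\rfloor$, $\mathsf{z}=N^{-1}$.

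With these variables, $m\mapsto\mathsf{y}$ and $\mathsf{x}_s\mapsto\mathsf{z}\mathsf{x}_s+\mathsf{x}\mathsf{t}-\mathsf{y}\mathsf{t}$ are homogeneous of degree one. The summand $Q(\mathsf{x},\mathsf{y},\mathsf{z})$ inside $\frac1N\sum_j$ is then a polynomial of total degree at most $d+1$. Its degree-$(d+1)$ part vanishes by exactly your telescoping, because the linear parts of the coefficients $\mathsf{y}+1-(q+1)\mathsf{z}-\mathsf{x}$, $\mathsf{z}$ and $q\mathsf{z}+\mathsf{x}-\mathsf{y}$ sum to zero.

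One then shows that for $N\ge|q|$ the sum $\frac1N\sum_{j=0}^{N-1}\mathsf{x}^a\mathsf{y}^b\mathsf{z}^c$ is a polynomial in $N^{-1}$ of degree at most $a+b+c$.
- For $b=0$ this is Faulhaber, since $\sum_{j=0}^{N-1}j^a$ is a polynomial in $N$ with no constant term.
- For $b>0$, use $\mathsf{y}=\mathbf{1}_{j\ge N-q}-\mathbf{1}_{j<-q}$. This is nonzero for at most $|q|$ indices, each contributing $N^{-1}$ times a polynomial of degree at most $a+c$. The result has degree at most $a+c+1\le a+b+c$.

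In the language of your split: write the total as the full-range sum of $Q(\mathsf{x},0,\mathsf{z})$ plus a correction over the $|q|$ boundary indices. That correction involves $Q(\mathsf{x},\pm1,\mathsf{z})-Q(\mathsf{x},0,\mathsf{z})$. This difference has degree at most $d-1$ precisely because $Q$ has total degree at most $d$ in all three variables after the cancellation.
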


\begin{proof}
	For $k\ge 0$ and $f\in \spp^{\ast}(\tilde{\Sigma}^{(s)})[m]^{\mathsf{gl}}$ define:
	$$\mathfrak{N}_{N,k,\ast}(\Psi(f)):=\frac{1}{N}\sum_{j=0}^{N-1}\left(\frac{j}{N}\right)^{k}T_j ^{\ast}(c_{N,\ast}q_{N,\ast}p_N^{\ast}(\Psi(f))).$$
	By linearity it suffices to show that $\mathfrak{N}_{N,k,\ast}(\Psi(f))$ is polynomial in $N^{-1}$ of degree at most $d+k$, where
	$d:=\deg_{\mathsf{x}_s,m}(f)$. By Lemma \ref{lem:combinat_expr_lower}, we can write:
    {\allowdisplaybreaks
	\begin{align*}
		&\mathfrak{N}_{N,k,\ast}(\Psi(f))|_{\sigma_{\rho,q,r}}=\frac{1}{N}\sum_{j=0}^{N-1}\left(\frac{j}{N}\right)^{k}q_{N,\ast}p_N^{\ast}(\Psi(f))|_{\tau_{\rho,q+j+1,r}}\left(\mathsf{x}_s\to \frac{\mathsf{x}_s+j\mathsf{t}}{N}\right)\\
		&\quad=\frac{1}{N}\sum_{j=0}^{N-1}\left(\frac{j}{N}\right)^{k}\left(\frac{N\left(\left\lfloor \frac{q+j}{N}\right\rfloor+1\right)-q-j-1}{N}f|_{\sigma_{\rho_0}}\left(\left\lfloor \frac{q+j}{N}\right\rfloor\right)\left(\mathsf{x}_1,\dots,\mathsf{x}_{s-1},\frac{\mathsf{x}_s+j\mathsf{t}}{N}-\left\lfloor \frac{q+j}{N}\right\rfloor \mathsf{t},\mathsf{t}\right)\right.\\
		&\quad +\frac{\left(N\left(\left\lfloor \frac{q+j}{N}\right\rfloor+1\right)-q-j-1\right)\left(\frac{\mathsf{x}_s+j\mathsf{t}}{N}-\left\lfloor \frac{q+j}{N}\right\rfloor \mathsf{t}\right)}{N}\sum_{a=1}^{r}H_{\rho,a}^f\left(\left\lfloor \frac{q+j}{N}\right\rfloor\right)\left(\mathsf{x}_s\to \frac{\mathsf{x}_s+j\mathsf{t}}{N}-\left\lfloor \frac{q+j}{N}\right\rfloor \mathsf{t}\right)\\
		&\quad+\frac{1}{N}f|_{\sigma_{\rho_r}}\left(\left\lfloor \frac{q+j}{N}\right\rfloor\right)\left(\mathsf{x}_1,\dots,\mathsf{x}_{s-1},\frac{\mathsf{x}_s+j\mathsf{t}}{N}-\left\lfloor \frac{q+j}{N}\right\rfloor \mathsf{t},\mathsf{t}\right)\\
		&\quad+\frac{\left(q+j-N\left\lfloor \frac{q+j}{N}\right\rfloor\right)\left(\mathsf{t}-\frac{\mathsf{x}_s+j\mathsf{t}}{N}+\left\lfloor \frac{q+j}{N}\right\rfloor \mathsf{t}\right)}{N}\sum_{a=r+1}^{s-1}H_{\rho,a}^f\left(\left\lfloor \frac{q+j}{N}\right\rfloor\right)\left(\mathsf{x}_s\to \frac{\mathsf{x}_s+j\mathsf{t}}{N}-\left\lfloor \frac{q+j}{N}\right\rfloor \mathsf{t}\right)\\
		&\quad\left.+\frac{q+j-N\left\lfloor \frac{q+j}{N}\right\rfloor}{N}f|_{\sigma_{\rho_{s-1}}}\left(\left\lfloor \frac{q+j}{N}\right\rfloor\right)\left(\mathsf{x}_1,\dots,\mathsf{x}_{s-1},\frac{\mathsf{x}_s+j\mathsf{t}}{N}-\left\lfloor \frac{q+j}{N}\right\rfloor \mathsf{t},\mathsf{t}\right)\right)\\
	\end{align*}}
	Hence, up to constants, the expression in the outer sum is a polynomial in $\frac{j}{N}$, $\left\lfloor \frac{q+j}{N}\right\rfloor$ and $N^{-1}$ of total degree
	at most $d+k+1$. We show:

	\emph{Claim:} Let $a,b,c\ge 0$ and $q\in \mathbb{Z}$. Then
	$$\frac{1}{N}\sum_{j=0}^{N-1}\left(\frac{j}{N}\right)^{a}\left\lfloor \frac{q+j}{N}\right\rfloor^{b}N^{-c}$$
	is polynomial in $N^{-1}$ of degree at most $a+b+c$ for all
	$N\ge |q|$.
	\emph{Proof of Claim:} If $b=0$, we have by Faulhaber's formula
	that $\sum_{j=0}^{N-1}j^{a}$ is polynomial of degree at most $a+1$,
	from which the result follows. Hence we may assume that $b>0$.

	Since $|q|\le N$, we have
	$$\left\lfloor \frac{q+j}{N}\right\rfloor=\mathbf{1}_{j\ge N-q}-\mathbf{1}_{j< -q}.$$
	First assume that $q<0$. Then we can rewrite the above expression as:
	$$\sum_{j=0}^{-q-1}\left(\frac{j}{N}\right)^{a}(-1)^{b}N^{-c-1}$$
	which is clearly a polynomial in $N^{-1}$ of degree $a+c+1\le a+b+c$.
	Conversely, if $q\ge 0$, then the expression becomes:
	$$\sum_{j=N-q}^{N-1}\left(\frac{j}{N}\right)^{a}N^{-c-1}.$$
	By Faulhaber's formula, this is a polynomial in $N^{-1}$ of
	degree at most $a+c+1\le a+b+c$. This finishes the proof of the
	claim.

	From the claim, it follows that the expression above is polynomial
	of degree at most $d+k+1$. It remains to show that the degree
	$d+k+1$ contribution vanishes. For this, we write
	$$\mathsf{x}=\frac{j}{N},\quad \mathsf{y}=\left\lfloor \frac{q+j}{N}\right\rfloor,\quad \mathsf{z}=N^{-1}.$$
	Then we can rewrite the above expression as:
	{\allowdisplaybreaks
    \begin{align*}
		\frac{1}{N}\sum_{j=0}^{N-1}&\mathsf{x}^{k}\biggl((\mathsf{y}+1-q\mathsf{z}-\mathsf{x}-\mathsf{z})f|_{\sigma_{\rho_0}}\left(\mathsf{y}\right)\left(\mathsf{x}_1,\dots,\mathsf{x}_{s-1},\mathsf{z}\mathsf{x}_s+\mathsf{x}\mathsf{t}-\mathsf{y} \mathsf{t},\mathsf{t}\right)\\
			&+\left(\mathsf{y}+1-\mathsf{z}q-\mathsf{x}-\mathsf{z}\right)\left(\mathsf{z}\mathsf{x}_s+\mathsf{x}\mathsf{t}-\mathsf{y} \mathsf{t}\right)\sum_{a=1}^{r}H_{\rho,a}^f\left(\mathsf{y}\right)\left(\mathsf{x}_s\to \mathsf{z}\mathsf{x}_s+\mathsf{x}\mathsf{t}-\mathsf{y} \mathsf{t}\right)\\
		    &+\mathsf{z}f|_{\sigma_{\rho_r}}\left(\mathsf{y}\right)\left(\mathsf{x}_1,\dots,\mathsf{x}_{s-1},\mathsf{z}\mathsf{x}_s+\mathsf{x}\mathsf{t}-\mathsf{y} \mathsf{t},\mathsf{t}\right)\\
			&+\left(\mathsf{z}q+\mathsf{x}-\mathsf{y}\right)\left(\mathsf{t}-\mathsf{z}\mathsf{x}_s-\mathsf{x}\mathsf{t}+\mathsf{y} \mathsf{t}\right)\sum_{a=r+1}^{s-1}H_{\rho,a}^f\left(\mathsf{y}\right)\left(\mathsf{x}_s\to \mathsf{z}\mathsf{x}_s+\mathsf{x}\mathsf{t}-\mathsf{y} \mathsf{t}\right)\\
			&+(\mathsf{z}q+\mathsf{x}-\mathsf{y})f|_{\sigma_{\rho_{s-1}}}\left(\mathsf{y}\right)\left(\mathsf{x}_1,\dots,\mathsf{x}_{s-1},\mathsf{z}\mathsf{x}_s+\mathsf{x}\mathsf{t}-\mathsf{y} \mathsf{t},\mathsf{t}\right)\biggr)\\
	\end{align*}}
	Write $G_a$ for the homogeneous degree $d$ part of $f|_{{\sigma}_{\rho_a}}$
	in $\mathsf{x}_s$ and $m$. Since in the above expression we are replacing
	$m$ and $\mathsf{x}_s$ with homogeneous degree 1 terms in $\mathsf{x}$, $\mathsf{y}$ and
	$\mathsf{z}$ in the arguments of $H_{\rho,a}^{f}$, we may replace it
	with its degree $d-1$ homogeneous part in $\mathsf{x}_s$ and $m$ which
	is given by $\frac{G_{a-1}-G_a}{-\mathsf{x}_s}$. Hence the two sums telescope and, replacing $\mathsf{x}_s$ with $\mathsf{z}\mathsf{x}_s+\mathsf{x}\mathsf{t}-\mathsf{y}\mathsf{t}$, the
	degree $d+k+1$ part simplifies as the degree $d+k+1$ part of:
    {\allowdisplaybreaks
	\begin{align*}
		\frac{1}{N}\sum_{j=0}^{N-1}&\mathsf{x}^{k}\biggl((\mathsf{y}+1-q\mathsf{z}-\mathsf{x}-\mathsf{z})G_0\left(\mathsf{y}\right)\left(\mathsf{x}_1,\dots,\mathsf{x}_{s-1},\mathsf{z}\mathsf{x}_s+\mathsf{x}\mathsf{t}-\mathsf{y} \mathsf{t},\mathsf{t}\right)\\
				&+\left(\mathsf{y}+1-\mathsf{z}q-\mathsf{x}-\mathsf{z}\right)(G_{r}(\mathsf{y})(\mathsf{x}_s\to \mathsf{z}\mathsf{x}_s+\mathsf{x}\mathsf{t}-\mathsf{y}\mathsf{t})-G_0(\mathsf{y})(\mathsf{x}_s\to \mathsf{z}\mathsf{x}_s+\mathsf{x}\mathsf{t}-\mathsf{y}\mathsf{t}))\\
				&+\mathsf{z}G_r\left(\mathsf{y}\right)\left(\mathsf{x}_1,\dots,\mathsf{x}_{s-1},\mathsf{z}\mathsf{x}_s+\mathsf{x}\mathsf{t}-\mathsf{y} \mathsf{t},\mathsf{t}\right)\\
				&+\left(\mathsf{z}q+\mathsf{x}-\mathsf{y}\right)(G_r(\mathsf{y})(\mathsf{x}_s\to \mathsf{z}\mathsf{x}_s+\mathsf{x}\mathsf{t}-\mathsf{y}\mathsf{t})-G_{s-1}(\mathsf{y})(\mathsf{x}_s\to \mathsf{z}\mathsf{x}_s+\mathsf{x}\mathsf{t}-\mathsf{y}\mathsf{t}))\\
				&+(\mathsf{z}q+\mathsf{x}-\mathsf{y})G_{s-1}\left(\mathsf{y}\right)\left(\mathsf{x}_1,\dots,\mathsf{x}_{s-1},\mathsf{z}\mathsf{x}_s+\mathsf{x}\mathsf{t}-\mathsf{y} \mathsf{t},\mathsf{t}\right)\biggr)\\
				&=\frac{1}{N}\sum_{j=0}^{N-1}\mathsf{x}^{k}G_r\left(\mathsf{y}\right)\left(\mathsf{x}_1,\dots,\mathsf{x}_{s-1},\mathsf{z}\mathsf{x}_s+\mathsf{x}\mathsf{t}-\mathsf{y} \mathsf{t},\mathsf{t}\right).
		\end{align*}}
		The term in the sum has total degree at most $d+k$ in $\mathsf{x}$, $\mathsf{y}$ and $\mathsf{z}$ so, by the
		claim, the sum is again polynomial in $N^{-1}$
		of degree at most $d+k$. Hence the degree $d+k+1$ part vanishes, as claimed.
\end{proof}

\subsubsection{Comparison between the combinatorial and algebraic settings}

\label{sec:wt_lower_pushforwards}

In this section, we compare the tropical machinery from
\S\ref{sec:wt_lower_trop} with the geometric setting in
order to prove the following result, which will be the main technical
tool in the proof of Theorem \ref{thm:wt_lower}:

\begin{proposition}\label{prop:push_pull_trop_geom_lower}
\begin{enumerate}[label = (\alph*)]
	\item For all $f\in \spp^{\ast}(\Sigma^{(s)})$ we have $b^{\ast}(\Phi(f))=\Phi(p_N^{\ast}(f))$.
	\item For all $f\in \spp^{\ast}(\Sigma_N^{(s)})$ we have
		$$(1\times\dots\times N)_{\ast}(\Phi(f))=N^{2g}\Phi\left(\frac{1}{N}\sum_{j=0}^{N-1}T_j ^{\ast}c_{N,\ast}q_{N,\ast}(f)\right).$$
\end{enumerate}
\end{proposition}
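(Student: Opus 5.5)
Part (a) is formal. By Lemma \ref{lem:Artin_fan_A_N}, the square formed by $b:\barA_N^{(s)}\to\barA^{(s)}$ and the subdivision $\mathcal A_{\Sigma_N^{(s)}}\to\mathcal A_{\Sigma^{(s)}}$ is cartesian, and its vertical maps are strict. The map $\Phi$ is defined by pulling back operational classes from the Artin fan along a strict morphism, and the identification $\CHop^*(\mathcal A_\Sigma)\cong\spp^*(\Sigma)$ turns pullback along a subdivision into $p_N^\ast$. Hence $b^\ast\Phi(f)=\Phi(p_N^\ast f)$ follows from functoriality of operational pullback.

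For part (b), I factor the tropical map $(1\times\dots\times N):\Sigma_N^{(s)}\to\Sigma^{(s)}$ through the intermediate complex $(\Sigma_N^{(s)})'$:
\[
\Sigma_N^{(s)}\xrightarrow{q_N}(\Sigma_N^{(s)})'\xrightarrow{\bar c_N}\Sigma^{(s)}.
\]
Here $q_N$ is a subdivision. The map $\bar c_N$ descends from $c_N$ on $(\mathcal U\Sigma_N^{(s)})'$, which sends each cone $\tau_{\rho,i,r}$ isomorphically, as a cone, onto $\sigma_{\rho,i-1,r}$, but with lattice index $N$. Geometrically, the plan is as follows.

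\begin{enumerate}[label=(\roman*)]
\item Build the log modification $\barA_N'^{(s)}$ of $(\logA)^s$ attached to $(\Sigma_N^{(s)})'$, so that $\barA_N^{(s)}\to\barA_N'^{(s)}$ is a log blowup lifting $q_N$.
\item Show that the multiplication map factors through $\barA_N'^{(s)}$.
\item By the same cartesian-square and flat base change argument as in Corollary \ref{cor:push_pull_trop_geom}(b), the pushforward along the log blowup corresponds to Brion's formula $q_{N,\ast}$.
\end{enumerate}

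It then remains to compute the pushforward along $N':\barA_N'^{(s)}\to\barA^{(s)}$. This map is not a log modification; it is finite of degree $N^{2g}$ over the interior, since it is multiplication by $N$ on one abelian factor.

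The hard step is handling $N'$ near the boundary, where it is not strict. I would analyse it stratum by stratum using the structure $0\to G\to\logA\to\tropA\to0$. On the semi-abelian part $G$, the map is multiplication by $N$, finite flat of degree $N^{2g-2}\cdot N\cdot N$ on torus-rank-one fibres. This splits as follows:
\begin{itemize}
\item a factor $N^{2(g-1)}$ from the abelian part;
\item a factor $N$ from the torus;
\item a factor $N$ from the component group, i.e.\ the $\mathbb Z/N$ translations $T_j$ of $\mathsf x_s$ by $j\mathsf t$, $0\le j<N$, which arise because $N\cdot\mathsf x_s$ only hits $\mathsf t\mathbb Z$-periodic classes up to translation.
\end{itemize}
Tropically, the map on each cone $\tau\to\sigma$ is an isomorphism of cones of lattice index $N$. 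Pushforward of operational classes along such a finite toroidal map is therefore the reparametrization $f\mapsto f(\mathsf x_s\to \mathsf x_s/N)$, that is $c_{N,\ast}$, up to the degree of the map on the strata.

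Assembling these, $N'_\ast\Phi(f)$ on a cone $\sigma$ of $\Sigma^{(s)}$ sums, over the $N$ preimage cones (the translates $T_j$), the reparametrized functions, weighted by $N^{2g}/N$. This gives the stated formula $N^{2g}\Phi\bigl(\tfrac1N\sum_jT_j^\ast c_{N,\ast}q_{N,\ast}f\bigr)$.

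To make this rigorous, my first attempt would be to test the identity after pulling back to each stratum. By the toroidal structure, pushforward of strata classes decorated by normal-bundle Chern roots reduces to a computation in the local toric model. In that model, $N'$ is a quotient of a toric morphism by the finite group $\ker[N]$ on the abelian part, and the result follows from the toric case via Brion's formula together with projection formula degree counts. Checking that the $T_j$-averaging exactly matches the component-group contribution, with no extra multiplicities from the non-separatedness of $(1\times\dots\times N)$, is the step I expect to be most delicate.
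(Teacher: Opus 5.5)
Part (a) and steps (i)--(iii) of (b) are correct and match the paper. There, $(\barA_N^{(s)})'$ is the f.s.\ fibre product with Artin fan $(\Sigma_N^{(s)})'$, and the pushforward along the log blowup $\barA_N^{(s)}\to(\barA_N^{(s)})'$ is computed by $q_{N,\ast}$ via the cartesian square and flat base change.

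The gap is the remaining step, the pushforward along $N':(\barA_N^{(s)})'\to\barA^{(s)}$, which is the whole content of (b). The tropical map $c_N:(\Sigma_N^{(s)})'\to\Sigma^{(s)}$ is $N$-to-one on maximal cones, and the induced map of Artin fans is not separated. The square with $N'$ on top is not cartesian: its base change to $\barA^{(s)}$ is an isomorphism over the interior, while $N'$ has degree $N^{2g}$. So no base-change argument computes $N'_\ast\Phi(\psi)$ for $\psi$ not pulled back along $c_N$. Your sentence that the pushforward ``is therefore the reparametrization \dots\ $c_{N,\ast}$, up to the degree of the map on the strata'' is exactly the statement to be proved.

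The rigorization you sketch does not supply it:
\begin{itemize}
\item Pulling back to strata does not detect Chow classes. For example, $\theta^gD^g\neq0$ on $\barA$ by Corollary \ref{cor:non-zero}, yet it restricts to $0$ on $\X_g$ and $i^\ast(\theta^gD^g)=0$.
\item Local toric models do not compute a global proper pushforward.
\item A stratum-by-stratum count would need the degrees of all strata maps $V(\tau)\to V(c_N\tau)$ and a global matching of normal-bundle Chern roots. The tropical data here is not uniform: already for $s=1$ the ray $\mathsf{x}=k\mathsf{t}/N$ of $(\Sigma_N^{(1)})'$ maps onto the ray of $\Sigma^{(1)}$ with lattice index $N/\gcd(k,N)$.
\end{itemize}
Your degree bookkeeping is also misplaced. $[N]$ on a torus-rank-one fibre of $G$ has degree $N^{2g-1}$, not $N^{2g}$. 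The last factor $N$ comes from $N$-torsion of $\logA$ with nonzero image in $\tropA$, and such torsion is not even a section over $B$.

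The missing idea is to symmetrize by exactly this torsion. Translation by an $N$-torsion point in the last factor leaves $N'$ invariant, since $N$ kills the point, and it acts tropically by some $T_j$. Hence $N'_\ast\Phi(\psi)=N'_\ast\Phi(T_j^\ast\psi)$ for all $j$. The average $\frac1N\sum_jT_j^\ast\psi$ is $c_N^\ast$ of a piecewise polynomial on $\Sigma^{(s)}$, namely the averaged $c_{N,\ast}\psi$ in the statement. The projection formula with $\deg N'=N^{2g}$ then gives the formula.

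To make the torsion act, the paper proceeds as follows:
\begin{enumerate}
\item It base changes to the root stack $B^N\to B$, because over $B$ the tropical $N$-torsion points are not integral.
\item It passes to the level cover $\logA(N)\to(\logA)^N$, which is finite flat of degree $|\Aut((\ZZ/N\ZZ)^{2g})|$.
\item It lifts the translation action to $\barA^{(s)}(N)$, using that $(\Sigma_N^{(s)})'$ is invariant under $\mathsf{x}_s\mapsto\mathsf{x}_s+\frac1N\mathsf{t}$.
\item It uses Lemma \ref{lem:surj_mult_N} to see that $(\ZZ/N\ZZ)^{2g}\to\tropA[N]\cong\ZZ/N\ZZ$ is surjective, so every $T_j$ is realized.
\item It descends along the root stack and the level cover by the projection formula.
\end{enumerate}
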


Contrary to the proof of Corollary \ref{cor:push_pull_trop_geom},
the proof of point (2) in Proposition \ref{prop:push_pull_trop_geom_lower} is
significantly more technical. The reason for this is that the
morphism $(1\times\dots\times N):\mathcal{A}_{\Sigma_{N}^{(s)}}\to \mathcal{A}_{\Sigma^{(s)}}$
is not separated, so the usual flat base-change argument does not
apply. We solve this by factoring the map
$(1\times\dots\times N)$ into several simpler maps as shown
in Figure \ref{fig:Sigma_prime}. These are treated separately
in the proof of Proposition \ref{prop:push_pull_trop_geom_lower}.
The main idea is to use the fact that the morphism
$\logA\xrightarrow{N} \logA$ is a group morphism, so after symmetrizing
by its kernel, every class is pulled back from the source and we
can apply the projection formula. This is made significantly more
complicated by the fact that the $N$-torsion of $\logA$ is not
strict over the base, so one must perform a root stack operation
before being able to execute this plan.
We require the following technical lemma.

\begin{lemma}\label{lem:surj_mult_N}
	Let $A\to S$ be a family of log abelian varieties and $A_{\RPC}$ a tropical abelian variety over the cone stack $\Sigma_S$ corresponding
	to the (an) Artin fan of $S$ such that there
	exists a short exact sequence:
	$$
    0\to G_A\to A\to \mathcal{A}_{A_{\RPC}}\to 0
    $$
	as in \cite[4.1.2]{KKN2}, where $G_A$ denotes the maximal semi-abelian
	subsheaf of $A$. Then there is a short exact sequence:
	$$
    0\to G_A[N]\to A[N]\to \mathcal{A}_{A_{\RPC}}[N]\to 0
    $$
	of $N$-torsion subsheaves.
\end{lemma}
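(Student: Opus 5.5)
The plan is to apply the snake lemma to multiplication by $N$ on the given short exact sequence
$$0\to G_A\to A\to \mathcal{A}_{A_{\RPC}}\to 0.$$
Since multiplication by $N$ is a group endomorphism of each term and commutes with the maps, we get the exact sequence
$$0\to G_A[N]\to A[N]\to \mathcal{A}_{A_{\RPC}}[N]\xrightarrow{\partial} G_A/NG_A\to A/NA\to \cdots.$$
Exactness at $G_A[N]$ and $A[N]$ is formal: $G_A[N]=G_A\cap A[N]$. The whole statement therefore reduces to showing that the connecting map $\partial$ vanishes. It suffices to show that $N:G_A\to G_A$ is an epimorphism of sheaves in the topology used in \cite{KKN2} (the log flat, or Kummer log flat, topology). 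Then $G_A/NG_A=0$ and $\partial=0$ automatically.

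The key step is the surjectivity of $N$ on $G_A$. Here $G_A$ is a semi-abelian scheme over $S$. Fibrewise, multiplication by $N$ on a semi-abelian variety is an isogeny: it is surjective on the torus and on the abelian quotient, so it is finite, flat and surjective. By fibrewise flatness (the criterion for flatness by fibres, using that $G_A\to S$ is flat and of finite presentation), $N:G_A\to G_A$ is faithfully flat and quasi-finite. Hence it is an epimorphism of fppf sheaves, and therefore also in any finer topology, in particular the Kummer log flat one. I would first try this direct argument: take a local section $y$ of $G_A$, base change $N$ along $y$ to get an fppf cover over which $y$ lifts, and conclude.

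I expect the main obstacle to be the topology and characteristic bookkeeping. The sequence in \cite[4.1.2]{KKN2} is exact as sheaves on the (Kummer) log flat site, so one must check that $G_A$ there is the representable semi-abelian sheaf, and that fppf covers are covers in that site. When $N$ is divisible by the characteristic, $G_A[N]$ is non-étale, which is why an fppf/log flat topology rather than étale is needed. An alternative route, if the snake lemma bookkeeping is awkward, is to lift an $N$-torsion section $\bar a$ of $\mathcal{A}_{A_{\RPC}}$ directly: first lift $\bar a$ to some $a\in A$ locally, then note that $Na\in G_A$, then use the surjectivity of $N$ on $G_A$ to write $Na=Ng$, and take $a-g\in A[N]$. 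Either way, the only real input is that $N$ is an epimorphism on the semi-abelian part.
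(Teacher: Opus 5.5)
Your proposal is correct and is the paper's argument: apply the snake lemma to multiplication by $N$ on $0\to G_A\to A\to \mathcal{A}_{A_{\RPC}}\to 0$, and observe that the only obstruction, $G_A/NG_A$, vanishes because semi-abelian schemes are divisible. Your fibrewise-flatness argument that $N\colon G_A\to G_A$ is an fppf (hence log flat) epimorphism just spells out the divisibility the paper states in one line. Your caveat about which topology the sequence of \cite{KKN2} is exact in is also apt: it matters when the characteristic divides $N$, and the paper leaves this point implicit.
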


\begin{proof}
	Since the $N$-torsion subgroup is the kernel of the multiplication
	by $N$ operator, the snake lemma gives a long exact sequence:
	$$0\to G_A[N]\to A[N]\to \mathcal{A}_{A_{\RPC}}[N]\to G_A/(NG_A).$$
	Since semi-abelian schemes are divisible, the right-hand sheaf
	is trivial.
\end{proof}

\begin{proof}[Proof of Proposition \ref{prop:push_pull_trop_geom_lower}]
	Point (1) follows from the definition of $\Phi$ and the commutative diagram:
	$$\begin{tikzcd}
		\barA_N^{(s)}\arrow[d]\arrow[r,"b"]&\barA^{(s)}\arrow[d]\\
		\mathcal{A}_{\Sigma_N^{(s)}}\arrow[r]&\mathcal{A}_{\Sigma^{(s)}}
	\end{tikzcd}$$
	from Lemma \ref{lem:Artin_fan_A_N}. For point (2), consider the map $\phi_N:\mathcal{A}^{1}\to \mathcal{A}^{1}=(\mathbb{R}_{\ge 0},\mathbb{Z})$
	given by sending $\mathsf{x}$ to $N\mathsf{x}$. For any stack $\Sigma$ on $\RPC/\mathcal{A}^{1}$ (e.g., $\Sigma_N^{(s)}$, $\Sigma^{(s)}$, etc.
	via the projection onto the $\mathsf{t}$-coordinate) we write $\Sigma^{N}$ (e.g., $\Sigma_N^{(s),N}$, $\Sigma^{(s),N}$, $(\Sigma_N^{(s),N})'$)
	for the pullback of $\Sigma$ along $\phi_N$. Note that this simply replaces the integral structure $\mathbb{Z}^{s+1}$ with the
	integral structure $\mathbb{Z}^{s}\times N\mathbb{Z}$, where the $N\mathbb{Z}$ factor is in the $\mathsf{t}$-direction.
	Moreover, the action of $a\in \mathbb{Z}$ on $\mathcal U\Sigma^{(s)}$, etc. pulls back to the action of $a\in \mathbb{Z}$ on $\mathcal U\Sigma^{(s),N}$, etc.
	given by $(\mathsf{x}_1,\dots,\mathsf{x}_{s-1},\mathsf{x}_s,\mathsf{t})\mapsto (\mathsf{x}_1,\dots,\mathsf{x}_{s-1},\mathsf{x}_s+Na\mathsf{t},\mathsf{t})$. The corresponding quotient cone
	complex, obtained from $\mathcal U\Sigma^{(s),N}/\mathbb{Z}$ by identifying the sides $\{\mathsf{x}_i=0\}$ and $\{\mathsf{x}_i=\mathsf{t}\}$ for all $i\in \{1,\dots,s-1\}$, is $\Sigma^{(s),N}$.
	We define
	$$(\barA_N^{(s)})':=\barA_N^{(s)}\times_{\mathcal{A}_{\Sigma_N^{(s)}}}\mathcal{A}_{(\Sigma_N^{(s)})'}.$$
	Write $B^{N}:=B\times_{\phi_N}\mathcal{A}^{1}$ and $\barA_N^{(s),N}:=\barA_N^{(s)}\times_{B}B^{N}$ (similarly for $(\barA_N^{(s)})'$, etc.).
	Note that $B^{N}\to B$ is a
	root stack along the boundary divisor, so in particular, it is birational. Let $B(N)\to B^{N}$ denote the moduli space of full level $N$ structures of $(\logA)^{N}$ and let $\logA(N)\to (\logA)^{N}$ denote the moduli space of
	pairs consisting of a section of $(\logA)^{N}$ and an isomorphism of the $N$-torsion subsheaf of the corresponding family of log abelian varieties
	with $(\mathbb{Z}/N\mathbb{Z})^{2g}$. By \cite[Proposition 18.1]{KKN4},
	the morphism $\logA(N)\to (\logA)^{N}$ is finite flat of degree $|\Aut((\mathbb{Z}/N\mathbb{Z})^{2g})|$.
	Note that the $N$-torsion global $\mathcal{A}^{1}$-sections of $(\tropA)^{N}$ are precisely the maps
	from $\mathcal{A}^{1}=(\mathbb{R}_{\ge 0},\mathbb{Z})$
	to $\mathbb{R}_{\ge 0}\times \mathbb{R}$ sending $1$ to $(1,k)$ for $k\in \mathbb{Z}$ modulo the equivalence relation generated by $(1,k)\sim (1,k+N)$. Indeed, the projection map
	$(\tropA)^{N}\to \tropA$ corresponds to the map $\mathbb{R}_{\ge 0}\times \mathbb{R}\xrightarrow{N\times 1}\mathbb{R}_{\ge 0}\times \mathbb{R}$
	so from the definition of $\tropA$ it follows that a map $\mathcal{A}^{1}$ to $(\tropA)^{N}$ is the unit section whenever it
	sends $1$ to $(1,Nk)$ for $k\in \mathbb{Z}$. Let $(\logA)^{s}(N):=(\logA)^{N}\times_{B^{N}}\dots\times_{B^{N}} (\logA)^{N}\times_{B^{N}} \logA(N)$
	and $\barA^{(s)}(N):=(\logA)^{s}(N)\times_{\mathcal{A}_{(\tropA)^{s,N}}}\mathcal{A}_{(\Sigma_N^{(s),N})'}$. Note that the subdivision
	$(\Sigma_N^{(s)})'$ is invariant under the map $(\mathsf{x}_1,\dots,\mathsf{x}_s,\mathsf{t})\mapsto (\mathsf{x}_1,\dots,\mathsf{x}_s+\frac{1}{N}\mathsf{t},\mathsf{t})$, so the pullback
	$(\Sigma_N^{(s),N})'$ is invariant under the action of $\mathbb{Z}$ by $(\mathsf{x}_1,\dots,\mathsf{x}_s,\mathsf{t})\mapsto (\mathsf{x}_1,\dots,\mathsf{x}_s+\mathsf{t},\mathsf{t})$.
	Let $\xi:B(N)\to (\logA)^{s}(N)$ be an $N$-torsion section in the $s$-th coordinate (and the unit section in the first $s-1$ coordinates).
	This gives a corresponding section $\mathcal{A}^{1}\to (\tropA)^{s,N}$ which is $N$-torsion in the last coordinate. By the above,
	it corresponds to sending $1\in \mathbb{R}_{\ge 0}$ to $(0,\dots,0,k,1)\in \mathbb{R}^{s}\times\mathbb{R}_{\ge 0}$ for some
	$k\in \mathbb{Z}$. In particular, $(\Sigma_N^{(s),N})'$ is invariant under translation by $N$-torsion points, so the
	action of $(\mathbb{Z}/N\mathbb{Z})^{2g}$ on $(\logA)^{s}(N)$ given by multiplication by $N$-torsion points in the last coordinate lifts to an action
	on $\barA^{(s)}(N)$ such that the map $\barA^{(s)}(N)\to \mathcal{A}_{(\Sigma_N^{(s),N})'}$ is $(\mathbb{Z}/N\mathbb{Z})^{2g}$-equivariant.
	Here, the action of $(\mathbb{Z}/N\mathbb{Z})^{2g}$ on the target is given by the composition
	$$(\mathbb{Z}/N\mathbb{Z})^{2g}\xrightarrow{\sim}\logA[N]\to \mathcal{A}_{\tropA}[N]$$
	and the action by translation of $\mathcal{A}_{\tropA}[N]=\mathcal{A}_{\tropA[N]}$ on $\mathcal{A}_{(\Sigma_N^{(s),N})'}$.
	Now consider the map $(\logA)^{s}(N)\to (\logA)^{s,N}$ given by multiplication by $N$ in the last coordinate. Since the $N$-torsion
	in the last coordinate is killed by this map, it is $(\mathbb{Z}/N\mathbb{Z})^{2g}$-invariant. Hence the same is true for the map
	$$(1\times\dots\times N)_N:\barA^{(s)}(N)\to \barA^{(s),N}.$$
	By Lemma \ref{lem:surj_mult_N},
	the map $(\mathbb{Z}/N\mathbb{Z})^{2g}\to \tropA[N]\cong \mathbb{Z}/N\mathbb{Z}$ is surjective, so it follows
	that for any $f\in \spp^{\ast}((\Sigma_N^{(s),N})')$ and any $j\in \mathbb{Z}$:
	$$(1\times\dots\times N)_{N,\ast}(\Phi(f))=(1\times\dots\times N)_{N,\ast}(\Phi(T_j^{\ast}(f))),$$
	where by abuse of notation we also write $T_j:(\mathcal{U}\Sigma_N^{(s),N})'\to(\mathcal{U}\Sigma_N^{(s),N})'$ for the translation operator
	$$(\mathsf{x}_1,\dots,\mathsf{x}_s,\mathsf{t})\mapsto (\mathsf{x}_1,\dots,\mathsf{x}_{s-1},\mathsf{x}_s+j\mathsf{t},\mathsf{t}).$$
	Now consider the following commutative diagram:
	$$\begin{tikzcd}
		&&\barA^{(s)}(N)\arrow[d,"\rho"]\arrow[rrd,sloped,"(1\times\dots\times N)_N"]&&\\
		\barA_N^{(s),N}\arrow[d]\arrow[rdd,pos=0.25,"\pi_1"]\arrow[rr,"r_{N,N}"]&&(\barA_N^{(s),N})'\arrow[d]\arrow[rdd,pos=0.25,"\pi_2"]\arrow[rr,"(1\times\dots \times N)'_N"]&&\barA^{(s),N}\arrow[d]\arrow[rdd,pos=0.25,"\pi_3"]&\\
		\mathcal{A}_{\Sigma_N^{(s),N}}\arrow[rr,"q_N"]\arrow[dd,pos=0.25,"\phi_1"]&&\mathcal{A}_{(\Sigma_N^{(s),N})'}\arrow[dd,pos=0.25,"\phi_2"]\arrow[rr,"c_N"]&&\mathcal{A}_{\Sigma^{(s),N}}\arrow[dd,pos=0.25,"\phi_3"]&&\\
				&\barA_N^{(s)}\arrow[rr,pos=0.25,"r_N"]\arrow[rrrr,swap,bend right=70,"(1\times\dots\times N)"]\arrow[dl]&&(\barA_N^{(s)})'\arrow[dl]\arrow[rr,pos=0.25,"(1\times\dots\times N)'"]&&\barA^{(s)}\arrow[dl]\\
                \mathcal{A}_{\Sigma_N^{(s)}}\arrow[rr,"q_N"]&&\mathcal{A}_{(\Sigma_N^{(s)})'} \arrow[rr,"c_N"]&&\mathcal{A}_{\Sigma^{(s)}}&.
	\end{tikzcd}$$
	The left-hand squares are Cartesian, and the maps $\phi_i$ and $\pi_i$ are changes of lattices, hence proper and birational. Thus, for every
	$f\in \spp^{\ast}(\Sigma_N^{(s)})$:
    {\allowdisplaybreaks
	\begin{align*}
		(1\times\dots\times N)_{\ast}(\Phi(f))&=(1\times\dots\times N)'_{\ast}r_{N,\ast}(\Phi(f))\\
					                          &=(1\times\dots\times N)'_{\ast}(\Phi(q_{N,\ast}f))\\
											  &=(1\times\dots\times N)'_{\ast}(\pi_{2,\ast}\pi_2^{\ast}\Phi(q_{N,\ast}f))\\
											  &=\frac{1}{|\Aut((\mathbb{Z}/N\mathbb{Z})^{2g})|}\pi_{3,\ast}(1\times\dots\times N)'_{N,\ast}(\rho_{\ast}\rho^{\ast}\Phi(\phi_2^{\ast}q_{N,\ast}f))\\
											  &=\frac{1}{|\Aut((\mathbb{Z}/N\mathbb{Z})^{2g})|}\pi_{3,\ast}(1\times\dots\times N)_{N,\ast}(\Phi_{\barA^{(s)}(N)}(\phi_2^{\ast}q_{N,\ast}f))\\
											  &=\frac{1}{|\Aut((\mathbb{Z}/N\mathbb{Z})^{2g})|}\pi_{3,\ast}(1\times\dots\times N)_{N,\ast}\left(\Phi_{\barA^{(s)}(N)}\left(\frac{1}{N}\sum_{j=0}^{N-1}T_j^{\ast}\phi_2^{\ast}q_{N,\ast}f)\right)\right)\\
											  &=\frac{1}{|\Aut((\mathbb{Z}/N\mathbb{Z})^{2g})|}\pi_{3,\ast}(1\times\dots\times N)'_{N,\ast}\left(\rho_{\ast}\rho^{\ast}\Phi_{{\barA_N^{(s),N}}'}\left(\frac{1}{N}\sum_{j=0}^{N-1}T_j^{\ast}\phi_2^{\ast}q_{N,\ast}f)\right)\right)\\
											  &=\pi_{3,\ast}(1\times\dots\times N)'_{N,\ast}\left(\Phi_{{\barA_N^{(s),N}}'}\left(\frac{1}{N}\sum_{j=0}^{N-1}T_j^{\ast}\phi_2^{\ast}q_{N,\ast}f)\right)\right).
	\end{align*}}
	Now observe that since $\phi_2^{\ast}q_{N,\ast}f$ is
	$\mathbb{Z}$-periodic with respect
	to the action of $a\in \mathbb{Z}$ by
	$(\mathsf{x}_1,\dots,\mathsf{x}_s,\mathsf{t})\mapsto (\mathsf{x}_1,\dots,\mathsf{x}_{s-1},\mathsf{x}_s+Na\mathsf{t},\mathsf{t})$,
	the piecewise polynomial
	$$\frac{1}{N}\sum_{j=0}^{N-1}T_j^{\ast}\phi_2^{\ast}q_{N,\ast}f$$
	is $\mathbb{Z}$-periodic with respect to the action of
	$a\in \mathbb{Z}$ by $(\mathsf{x}_1,\dots,\mathsf{x}_s,\mathsf{t})\mapsto (\mathsf{x}_1,\dots,\mathsf{x}_{s-1},\mathsf{x}_s+a\mathsf{t},\mathsf{t})$.
	Hence this piecewise polynomial descends along $c_N$ to a piecewise polynomial
	on $\Sigma^{(s),N}$. Concretely, one easily sees that
	$$\frac{1}{N}\sum_{j=0}^{N-1}T_j^{\ast}\phi_2^{\ast}q_{N,\ast}f=c_N^{\ast}\left(\frac{1}{N}\sum_{j=0}^{N-1}T_j^{\ast}c_{N,\ast}\phi_2^{\ast}q_{N,\ast}f\right)$$
	where $c_{N,\ast}$ is defined as in \S\ref{sec:wt_lower_trop}.
	Hence:
    {\allowdisplaybreaks
	\begin{align*}
		(1\times\dots\times N)_{\ast}(\Phi(f))&=\pi_{3,\ast}(1\times\dots\times N)'_{N,\ast}\left(\Phi_{{\barA_N^{(s),N}}'}\left(c_N^{\ast}\left(\frac{1}{N}\sum_{j=0}^{N-1}T_j^{\ast}c_{N,\ast}\phi_2^{\ast}q_{N,\ast}f\right)\right)\right)\\
											  &=\pi_{3,\ast}(1\times\dots\times N)'_{N,\ast}\left({(1\times \dots\times N)'_{N}}^{\ast}\Phi_{{\barA^{(s),N}}}\left(\frac{1}{N}\sum_{j=0}^{N-1}T_j^{\ast}c_{N,\ast}\phi_2^{\ast}q_{N,\ast}f\right)\right)\\
											  &=N^{2g}\pi_{3,\ast}\Phi_{{\barA^{(s),N}}}\left(\frac{1}{N}\sum_{j=0}^{N-1}T_j^{\ast}c_{N,\ast}\phi_2^{\ast}q_{N,\ast}f\right)\\
											  &=N^{2g}\Phi\left(\frac{1}{N}\sum_{j=0}^{N-1}\phi_{3,\ast}T_j^{\ast}c_{N,\ast}\phi_2^{\ast}q_{N,\ast}f\right)\\
											  &=N^{2g}\Phi\left(\frac{1}{N}\sum_{j=0}^{N-1}T_j^{\ast}c_{N,\ast}q_{N,\ast}f\right).
	\end{align*}}
\end{proof}

\subsubsection{Proof of Theorem \ref{thm:wt_lower}}
\label{sec:prf_wt_lower}

\begin{proof}[Proof of Theorem \ref{thm:wt_lower}]
	We prove the result only for $\gamma$ in the subring generated by the $\ell_{i,j}$, $\lambda_i$ and piecewise polynomial classes.
	The case including the $\theta_i$ is similar, additionally using Corollary \ref{cor:pp_corr_theta_precise}.
	By symmetry, it suffices to prove the theorem for weight with
	respect to the $s$-th component, i.e., we prove that
	$$[1\times\dots\times N]_{\ast}(\gamma)$$
	is polynomial in $N^{-1}$ of degree at most $c=\deg(\gamma)$. Since
	the classes $\lambda_i$ are pulled back from $B$ and
	$b$ and $1\times\dots\times N$ are morphisms over $B$, we get
	$$(1\times\dots\times N)_{\ast}b^{\ast}(\lambda_i\cup\gamma)=\lambda_i\cup(1\times\dots\times N)_{\ast}b^{\ast}(\gamma).$$
	Hence $[\lambda_i\cup\gamma]_{(w)}=\lambda_i\cup[\gamma]_{(w)}$. By linearity,
	we may therefore assume that $\gamma$ is a monomial in the classes
	$\ell_{i,j}$ and piecewise polynomials. Similarly, if $s\notin\{i,j\}$,
	then $\ell_{i,j}$ is pulled back along the projection
	$\barA^{(s)}\to \barA^{(2)}$ lifting the projection
	$(\logA)^{s}\to (\logA)^{2}$ onto the $i$-th and $j$-th factors. Since
	both $b$ and $(1\times\dots\times N)$ are morphisms over this
	projection, the same argument as above shows that we can assume
	that $\gamma$ is of the form
	$$\gamma=\prod_{i=1}^{s-1}\ell_{i,s}^{e_i}\cup\Phi(f)$$
	for some $f\in \spp^{\ast}(\Sigma^{(s)})$. Define
	$$\gamma(a_1,\dots,a_{s-1}):=\Phi(f)\cup\prod_{i=1}^{s-1}e^{a_i\ell_{i,s}}$$
	where the $a_i$ are formal indeterminates. The class $\gamma$ is recovered
	from $\gamma(a_1,\dots,a_{s-1})$ by extracting the coefficient of $a_1^{e_1}\cdot\dots\cdot a_{s-1}^{e_{s-1}}$ (up to the constant global factor $\prod_{i=1}^{s-1}e_i!$, which we henceforth ignore).
	For $i\in \{1,\dots,s-1\}$, let $c_{\mathcal P,i,N}$ be the pullback
	of $c_{\mathcal P,N}$ (see Corollary \ref{cor:pp_corr_poincare_precise})
	along the map $\pi_i:\Sigma_N^{(s)}\to \Sigma_N^{(2)}$ induced by
	the projection to the $(i,s)$ factors. By abuse of notation, we also
	write $\pi_i:\Sigma^{(s)}\to \Sigma^{(2)}$ for the projection onto
	the $(i,s)$ factors.
	Since $f$ is $\mathbb{Z}$-periodic, we can write
	$f=\Psi(f')$ for some $f'\in \spp^{\ast}(\tilde{\Sigma}^{(s)})[m]^{\mathsf{gl}}$
	independent of $m$ and such that $\deg_{\mathsf{x}_s}(f)=\deg_{\mathsf{x}_s}(f')=\deg_{m,\mathsf{x}_s}(f')$.
	We compute using Corollary \ref{cor:pp_corr_poincare_precise}
	and Proposition \ref{prop:push_pull_trop_geom_lower}:
	{\allowdisplaybreaks\begin{align*}
			&[1\times\dots\times N]_{\ast}(\gamma(a_1,\dots,a_{s-1}))=N^{-2g}(1\times\dots\times N)_{\ast}b^{\ast}\left(\prod_{i=1}^{s-1}e^{a_i\ell_{i,s}}\cup\Phi(f)\right)\\
			&=N^{-2g}(1\times\dots\times N)_{\ast}\left(\prod_{i=1}^{s-1}b^{\ast}e^{a_i\ell_{i,s}}\cup \Phi(p_N^{\ast}f)\right)\\
			&=N^{-2g}(1\times\dots\times N)_{\ast}\left(\prod_{i=1}^{s-1}\exp\left(\frac{1}{N}(1\times \dots\times N)^{\ast}a_i\ell_{i,s}-\frac{1}{N}\Phi(a_ic_{\mathcal P,i,N})\right)\cup\Phi(p_N^{\ast}f)\right)\\
			&=\prod_{i=1}^{s-1}\exp\left(\frac{1}{N}a_i\ell_{i,s}\right)\cup\Phi\left(\frac{1}{N}\sum_{j=0}^{N-1}T_j ^{\ast}c_{N,\ast}q_{N,\ast}\left(\prod_{i=1}^{s-1}\exp\left(-\frac{1}{N}a_ic_{\mathcal P,i,N}\right)p_N^{\ast}f\right)\right)\\
			&=\prod_{i=1}^{s-1}\exp\left(\frac{1}{N}a_i\ell_{i,s}\right)\cup\Phi\left(\frac{1}{N}\sum_{j=0}^{N-1}T_j ^{\ast}c_{N,\ast}q_{N,\ast}\left(\prod_{i=1}^{s-1}\pi_i^{\ast}\exp\left(-\frac{a_i}{N}(1\times N)^{\ast}\Psi(c_{\mathcal P})+a_ip_N^{\ast}\Psi(c_{\mathcal P})\right)p_N^{\ast}f\right)\right)\\
			&=\prod_{i=1}^{s-1}\exp\left(\frac{1}{N}a_i\ell_{i,s}\right)\cup\Phi\Biggl(\frac{1}{N}\sum_{j=0}^{N-1}T_j ^{\ast}c_{N,\ast}q_{N,\ast}\biggl(\prod_{i=1}^{s-1}\Bigl((1\times\dots\times N)^{\ast}\pi_i^{\ast}\left(\exp\left(-\frac{a_i}{N}\Psi(c_{\mathcal P})\right)\right)\\
			&\cdot\pi_i^{\ast}\left(\exp\left(a_ip_N^{\ast}\Psi(c_{\mathcal P})\right)\right)\Bigr)p_N^{\ast}f\biggr)\Biggr)\\
			&=\prod_{i=1}^{s-1}\exp\left(\frac{1}{N}a_i\ell_{i,s}\right)\\
			&\cup \Phi\left(\frac{1}{N}\sum_{j=0}^{N-1}T_j ^{\ast}\left(\prod_{i=1}^{s-1}\pi_i^{\ast}\left(\exp\left(-\frac{a_i}{N}\Psi(c_{\mathcal P})\right)\right)\cdot c_{N,\ast}q_{N,\ast}\left(p_N^{\ast}\left(\prod_{i=1}^{s-1}\pi_i^{\ast}\left(\exp\left(a_i\Psi(c_{\mathcal P})\right)\right)f\right)\right)\right)\right)
	\end{align*}}
	where on the last line we used the projection formula and $c_{N}\circ q_N=(1\times\dots\times N)$.
	From the definition of $c_{\mathcal P}$ and $T_j$ it is easy to see that
	$$T_j ^{\ast}(\pi_i^{\ast}(\Psi(c_{\mathcal P})))=\pi_i^{\ast}\Psi(c_{\mathcal P})-j\mathsf{x}_i.$$
	Hence we can rewrite the above expression as:
	\begin{equation*}
    \begin{alignedat}{1}
		\prod_{i=1}^{s-1}\exp\left(\frac{1}{N}a_i\ell_{i,s}\right)\cup &\,\Phi\left(\prod_{i=1}^{s-1}\pi_i^{\ast}\left(\exp\left(-\frac{a_i}{N}\Psi(c_{\mathcal P})\right)\right)\cdot \right.\\
        &\qquad \left.\frac{1}{N}\sum_{j=0}^{N-1}T_j ^{\ast}\left(c_{N,\ast}q_{N,\ast}\left(p_N^{\ast}\left(\prod_{i=1}^{s-1}\pi_i^{\ast}\left(\exp\left(a_i \frac{j}{N}\mathsf{x}_i+a_i\Psi(c_{\mathcal P})\right)\right)f\right)\right)\right)\right)\\
		=\prod_{i=1}^{s-1}\exp\left(\frac{1}{N}a_i\ell_{i,s}\right)\cup&\,\Phi\left(\prod_{i=1}^{s-1}\pi_i^{\ast}\left(\exp\left(-\frac{a_i}{N}\Psi(c_{\mathcal P})\right)\right)\cdot\right.\\
		&\qquad \left.\mathfrak{N}_{N,\ast}\left(\Psi\left(\prod_{i=1}^{s-1}\pi_i^{\ast}\left(\exp\left(a_i\left(\frac{j}{N}\mathsf{x}_i+c_{\mathcal P})\right)\right)\right)f'\right)\right)\right).
	\end{alignedat}
    \end{equation*}
	By Theorem \ref{thm:polynomiality_lower} each term in the expansion of the factor $\mathfrak{N}_{N,\ast}(\ast)$ in the indeterminates $a_i$
	is polynomial in $N^{-1}$. The expansions of the other factors are clearly polynomial in $N^{-1}$, so it follows that every term in the expansion
	of the entire expression is polynomial in $N^{-1}$. Since $\deg_{m,\frac{j}{N},\mathsf{x}_s}\left(\frac{j}{N}\mathsf{x}_i+c_{\mathcal{P}}\right)=1$,
	the degree in $N^{-1}$ of the coefficient of the monomial $\prod_{i=1}^{s-1}a_i^{e_i}$ in the expansion of the factor $\mathfrak{N}_{N,\ast}(\ast)$
	is at most $\deg_{m,\mathsf{x}_s}(f')+\sum_{i=1}^{s-1}e_i$ when restricted to any maximal cone $\sigma_{\rho,q,r}$ of $\mathcal U\Sigma^{(s)}$ with $N\ge|q|$. However, notice that the entire expression inside
	$\Phi(\ast)$ is $\mathbb{Z}$-periodic, so if the restriction to $\sigma_{\rho,0,r}$ is polynomial in $N^{-1}$, the same is true for the restriction
	to any $\sigma_{\rho,q,r}$. As in the proof of Corollary
	\ref{cor:glob_poly} one sees that the coefficients of this polynomial
	glue to global piecewise polynomials in $\spp^{\ast}(\Sigma^{(s)})$. Taking into account the remaining factors, the degree in $N^{-1}$ of the
	coefficient of $\prod_{i=1}^{s-1}a_i^{e_i}$ in the entire expression above is at most $\deg_{m,\mathsf{x}_s}(f')+\sum_{i=1}^{s-1}e_i\le \deg_{\mathsf{x}_s}(f)+\sum_{i=1}^{s-1}e_i\le c$.
	Moreover, $[\gamma]_{(w)}$ is obtained from the coefficient of $\prod_{i=1}^{s-1}a_i^{e_i}$ in the above expression by
	first decomposing the piecewise polynomial
	part inside the $\mathfrak{N}_{N,\ast}(\ast)$ into its pure degree parts (with respect to $N^{-1}$)
	and then summing all monomials of degree exactly $w$.
	In particular, every weight $w$ part is again a polynomial in piecewise polynomials and $\ell_{i,j}$, hence tautological.
\end{proof}

\subsection{Explicit calculations in \texorpdfstring{$\R^*(\barA)$}{R(A)}}\label{sec:expl_calc}

When $s=1$, the formulas simplify greatly. In this section, we perform some computations that yield:
\begin{itemize}
    \item An explicit formula for $[\theta^k]^{(w)}$.
    \item An explicit formula for the pushforwards of powers of $\theta$.
    \item A proof of the identity \eqref{eq:evalMn1}.
\end{itemize}
Instead of the coordinates $(x_1,t)$ used in the preceding sections, a more convenient choice of coordinates
for these computations consists of $\mathsf{x}=\mathsf{x}_1$ and $\mathsf{y}=\mathsf{t}-\mathsf{x}_1$, viewed as elements of $\spp^{\ast}(\tilde{\Sigma}^{(1)})[m]$ that are constant in $m$.

For any $a, b, k \geq 0$, let
$$
\mathfrak N(a,b,k, N)=\sum_{i=0}^{N-1}N\mathsf{x}\mathsf{y}\frac{(N^2\theta-1/2((i+1)i\mathsf{x}+(N-i-1)(N-i)\mathsf{y}))^k}{((i+1)\mathsf{x}+(i+1-N)\mathsf{y})^{1-a}(-i\mathsf{x}+(N-i)\mathsf{y})^{1-b}}\,.
$$
From Lemma \ref{lem:combinat_lift} and the computations of the support functions in \S\ref{sec:supp_functions}, it is clear that
\begin{equation}\label{eq:multN-poly}
\mathfrak N_N(\sum c_{a,b,k}\mathsf{x}^a\mathsf{y}^b\theta^k) = \sum c_{a,b,k}\mathfrak N(a,b,k,N)
\end{equation}
lifts the multiplication by $N$ maps to $\mathbb{Q}[\mathsf{x},\mathsf{y},\theta]$, i.e., if $p(x,y,\theta)\in \spp^{\ast}(\Sigma^{(1)})[m]^{\mathsf{gl}}[\theta]$,
then $\mathfrak N_N(p)$ is polynomial in $N$ and the following diagram commutes:
$$
\begin{tikzcd}
	{\spp^{\ast}(\Sigma^{(1)})[m]^{\mathsf{gl}}[\theta]} & {\spp^{\ast}(\Sigma^{(1)})[m]^{\mathsf{gl}}[\theta]} \\
	{\CH^*(\barA)} & {\CH^*(\barA)}
	\arrow["{\mathfrak N_N}", from=1-1, to=1-2]
	\arrow[from=1-1, to=2-1]
	\arrow[from=1-2, to=2-2]
	\arrow["{[N]^*}", from=2-1, to=2-2]
\end{tikzcd}
$$
The advantage of lifting the computation to $\mathbb{Q}[x,y,\theta]$ is that we can formally invert polynomials. This is useful because a straightforward calculation using \eqref{eq:multN-poly} shows that for any $p \in \spp^{\ast}(\Sigma^{(1)})$,
\begin{align*}
\mathfrak N_N(\mathsf{t} \cdot p(\mathsf{x},\mathsf{y},\theta) ) &= \mathsf{t}\cdot\mathfrak N_N(p(\mathsf{x},\mathsf{y},\theta))\,,\\
\mathfrak N_N((2(\mathsf{x}+\mathsf{y})\theta-\mathsf{x}\mathsf{y})\cdot p(\mathsf{x},\mathsf{y},\theta)) &= N^2( 2(\mathsf{x}+\mathsf{y})\theta-\mathsf{x}\mathsf{y}) \cdot \mathfrak N_N(p(\mathsf{x},\mathsf{y},\theta))\,.
\end{align*}

Using these two identities, we have
$$
2\mathsf{t}[\theta^{k+1}]^{(2w+2)} =[2\mathsf{t}\theta^{k+1}]^{(2w+2)} = [\mathsf{x}\mathsf{y}\theta^k]^{(2w+2)}+(2(\mathsf{x}+\mathsf{y})\theta-\mathsf{x}\mathsf{y})\cdot [\theta^{k}]^{(2w)}\,,
$$
so, iterating, we get
\begin{align}\label{eq:theta-recursion}
[\theta^{k}]^{(2w)} =\frac{(2(\mathsf{x}+\mathsf{y})\theta - \mathsf{x}\mathsf{y})^{w}}{2^{w}\mathsf{t}^{w}}[\theta^{k-w}]^{(0)}+\sum_{v=0}^{w-1}\frac{(2(\mathsf{x}+\mathsf{y})\theta - \mathsf{x}\mathsf{y})^v}{2^{v+1}\mathsf{t}^{v+1}}[\mathsf{x}\mathsf{y}\theta^{k-v-1}]^{(2w-2v)}\, ,
\end{align}
for $k\ge w \geq 1$, and $[\theta^k]^{(0)} = \frac{1}{2(\mathsf{x}+\mathsf{y})}[\mathsf{x}\mathsf{y}\theta^{k-1}]^{(0)}$ for $k \geq 1$.

\begin{lemma}\label{lem:weight-pieces-xytheta}
For every $k\geq 0$ and every $0\leq m\leq k$ one has
\begin{equation}\label{eq:weight-piece-q-basis}
 [\mathsf{x}\mathsf{y}\theta^k]^{(2m+2)}
 =
 \frac{\mathsf{x}\mathsf{y}\,\mathsf{t}^{k-2m-1}}{2^k}
 \sum_{\ell=0}^{m}
 \Gamma_{k,m,\ell}\,
 (2(\mathsf{x}+\mathsf{y})\theta-\mathsf{x}\mathsf{y})^{m-\ell}(\mathsf{x}^{2\ell+1}+\mathsf{y}^{2\ell+1}),
\end{equation}
where
$$
 \Gamma_{k,m,\ell}
 =
 \sum_{n=0}^{k-m}
 (-1)^{n+\ell}
 \binom{k}{n+\ell}
 \binom{k-n-\ell}{m-\ell}
 \frac{(2n+2\ell)!}{(2\ell+1)!(2n)!}
 \frac{B_{2n}\!\left(\frac12\right)}{4^{k-m-n}}.
$$
Moreover, $[\mathsf{x}\mathsf{y}\theta^k]^{(w)}=0$ for $w=0$, $w$ odd or $w >2k+2$, for any $k \geq 0$.
\end{lemma}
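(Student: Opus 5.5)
We compute $\mathfrak N(1,1,k,N)$ directly from its definition as a polynomial in $N$ and read off its coefficients. For $a=b=1$ the denominators disappear, leaving
\[
\mathfrak N(1,1,k,N)=N\mathsf{x}\mathsf{y}\sum_{i=0}^{N-1}\Bigl(N^2\theta-\tfrac12\bigl((i+1)i\,\mathsf{x}+(N-i-1)(N-i)\,\mathsf{y}\bigr)\Bigr)^k .
\]

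The first step is to put the summand into a symmetric form. Set $u=i+\tfrac12-\tfrac N2$, so that $u$ runs over the half-integer-shifted points symmetric about $0$. Writing $(i+1)i=(u+\tfrac N2)^2-\tfrac14$ and $(N-i-1)(N-i)=(u-\tfrac N2)^2-\tfrac14$, the bracket becomes
\[
N^2\theta-\tfrac12\bigl(\mathsf{t}(u^2+\tfrac{N^2}{4}-\tfrac14)+Nu(\mathsf{x}-\mathsf{y})\bigr).
\]
This suggests grouping the terms as
\[
N^2\Bigl(\theta-\tfrac{\mathsf{t}}8\Bigr)-\tfrac{\mathsf{t}}2\Bigl(u+\tfrac{N(\mathsf{x}-\mathsf{y})}{2\mathsf{t}}\Bigr)^2+\tfrac{N^2(\mathsf{x}-\mathsf{y})^2}{8\mathsf{t}}+\tfrac{\mathsf{t}}8 .
\]
Note that $N^2\bigl((\mathsf{x}-\mathsf{y})^2/(8\mathsf{t})-\mathsf{t}/8\bigr)=-N^2\mathsf{x}\mathsf{y}/(2\mathsf{t})$. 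Hence the $N^2$ part combines into $\tfrac{N^2}{2\mathsf{t}}\bigl(2\mathsf{t}\theta-\mathsf{x}\mathsf{y}\bigr)=\tfrac{N^2}{2\mathsf{t}}Q$ with $Q:=2(\mathsf{x}+\mathsf{y})\theta-\mathsf{x}\mathsf{y}$. This is exactly the eigen-combination identified earlier, which explains why powers of $Q$ appear in \eqref{eq:weight-piece-q-basis}.

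The second step is the expansion. Expand $(\tfrac{N^2}{2\mathsf{t}}Q-\tfrac{\mathsf{t}}2(u+\tfrac{N(\mathsf{x}-\mathsf{y})}{2\mathsf{t}})^2+\tfrac{\mathsf{t}}8)^k$ by the multinomial theorem, and sum the powers of the shifted variable over the $N$ points using Faulhaber's formula in Bernoulli polynomial form:
\[
\sum_{i=0}^{N-1}(i+\alpha)^p=\frac{B_{p+1}(N+\alpha)-B_{p+1}(\alpha)}{p+1}.
\]
The shift produces $\alpha$ of the form $\tfrac12+\tfrac{N\mathsf{x}}{\mathsf{t}}-N$ type values. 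Since $\mathsf{x}/\mathsf{t}+\mathsf{y}/\mathsf{t}=1$, the two Bernoulli endpoints become evaluations at $\tfrac12+N\mathsf{x}/\mathsf{t}$ and $\tfrac12-N\mathsf{y}/\mathsf{t}$, up to sign and symmetry. Expanding $B_{2j+1}(\tfrac12+z)$ in $z$ gives only odd powers of $z$, with coefficients involving $B_{2n}(\tfrac12)$, since the odd Bernoulli values at $\tfrac12$ vanish. This produces the odd power sums $\mathsf{x}^{2\ell+1}+\mathsf{y}^{2\ell+1}$ together with factors $N^{2\ell+1}/\mathsf{t}^{2\ell+1}$, the factor $\binom{2n+2\ell}{2\ell+1}$-type ratio $\tfrac{(2n+2\ell)!}{(2\ell+1)!(2n)!}$, and powers $4^{-(k-m-n)}$ coming from the $\tfrac{\mathsf{t}}8$ and $\tfrac{\mathsf{t}}2$ terms.

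The third step is bookkeeping. Collect the coefficient of $N^{2m+2}$. Each $Q$ contributes $N^2$, the Bernoulli part contributes $N^{2\ell+1}$, and there is the prefactor $N$. Hence $m-\ell$ factors of $Q$ are chosen, with multiplicity $\binom{k}{n+\ell}\binom{k-n-\ell}{m-\ell}$, which accounts for $\Gamma_{k,m,\ell}$ and for the power $\mathsf{t}^{k-2m-1}$ by homogeneity.

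The vanishing claims follow from the same expansion. All $N$-exponents are of the form $1+(2\ell+1)+2(\text{number of }Q\text{'s})$, which is even and at least $2$; this rules out $w=0$ and odd $w$. The bound $w\le 2k+2$ follows from Theorem \ref{thm:wt}(c) with $d=2$.

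The main obstacle is that division by $\mathsf{t}$ takes place in $\mathbb{Q}[\mathsf{x},\mathsf{y},\theta]$ localized, so the formula must be checked to be polynomial, or justified via the formal inversion discussed above. The other difficulty is the combinatorial reorganization into exactly $\Gamma_{k,m,\ell}$. My plan is to first check the cases $k=0$ and $k=1$ by hand, and then to verify the general identity through generating functions in $\theta$, namely $\sum_k \mathfrak N(1,1,k,N)\tfrac{z^k}{k!}$. This turns the sum over $i$ into a Gaussian-type exponential sum whose Bernoulli expansion is standard.
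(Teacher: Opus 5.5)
Your proposal is correct and is essentially the paper's own proof. Your completed square reproduces exactly the paper's rewriting of the summand as $\frac{1}{2\mathsf{t}}\bigl(N^2\mathsf{u}+\tfrac{\mathsf{t}^2}{4}-f_i^2\bigr)$, with $f_i=\mathsf{t}(i+\tfrac12)-N\mathsf{y}$ and $\mathsf{u}=2\mathsf{t}\theta-\mathsf{x}\mathsf{y}$; after that come the same multinomial expansion, Faulhaber's formula in Bernoulli-polynomial form, expansion of $B_{2r+1}$ at $\tfrac12$, and reindexing via $p=m-\ell$, $q=k-m-n$, $r=n+\ell$.

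One adjustment: read the bound $w\le 2k+2$ straight off your expansion rather than citing Theorem \ref{thm:wt}(c). The exponent of $N$ is $2p+2\ell+2$ with $\ell\le r$ and $p+q+r=k$. Theorem \ref{thm:wt} speaks about Chow classes, whereas the lemma is an identity in the formal lift with $\mathsf{t}$ inverted, which is how it enters \eqref{eq:theta-recursion}. For the same reason, no separate polynomiality check or generating-function verification is needed.
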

\begin{proof}
By \eqref{eq:multN-poly},
\[
 \mathfrak N_N(\mathsf{x}\mathsf{y}\theta^k)=N\mathsf{x}\mathsf{y}\sum_{i=0}^{N-1}(N^2\theta-1/2((i+1)i\mathsf{x}+(N-i-1)(N-i)\mathsf{y}))^k
\]
In particular, the coefficient of $N^0$ vanishes. Set
$$
f_i:=\mathsf{t}\left(i+\frac12\right)-N\mathsf{y}\, ,\quad \mathsf{u} = 2\mathsf{t}\theta-\mathsf{x}\mathsf{y}\, .
$$
Then
\[
N^2\theta-1/2((i+1)i\mathsf{x}+(N-i-1)(N-i)\mathsf{y})=\frac{N^2\mathsf{u}+\frac{\mathsf{t}^2}{4}-f_i^2}{2\mathsf{t}}.
\]
Hence
\begin{equation}\label{eq:N-xytheta-expanded}
 \mathfrak N_N(\mathsf{x}\mathsf{y}\theta^k)
 =
 \frac{N\mathsf{x}\mathsf{y}}{2^k\mathsf{t}^k}
 \sum_{p+q+r=k}
 (-1)^r\binom{k}{p,q,r}
 \frac{N^{2p}}{4^q}\mathsf{u}^p\mathsf{t}^{2q}
 \sum_{i=0}^{N-1}f_i^{2r}.
\end{equation}
Using Faulhaber's formula,
$$
\sum_{i=0}^{N-1}(i+a)^m = \frac{B_{m+1}(N+a)-B_{m+1}(a)}{m+1},
$$
where $B_m(x)$ are Bernoulli polynomials (see Appendix \ref{sec:B}), we get
\[
\sum_{i=0}^{N-1}f_i^{2r}=\frac{\mathsf{t}^{2r}}{2r+1}
\left[B_{2r+1}\!\Bigl(\frac12+\frac{N\mathsf{x}}{\mathsf{t}}\Bigr)-B_{2r+1}\!\Bigl(\frac12-\frac{N\mathsf{y}}{\mathsf{t}}\Bigr)\right].
\]
Expanding the Bernoulli polynomials at $1/2$ gives
\begin{equation}\label{eq:Bernoulli-half-expansion}
 B_{2r+1}\!\Bigl(\frac12+z\Bigr)-B_{2r+1}\!\Bigl(\frac12-w\Bigr)
 =
 \sum_{\ell=0}^r
 \binom{2r+1}{2\ell+1}
 B_{2r-2\ell}\!\Bigl(\frac12\Bigr)
 (z^{2\ell+1}+w^{2\ell+1}).
\end{equation}
Substituting $z= N\mathsf{x}/\mathsf{t}$ and $w=N\mathsf{y}/\mathsf{t}$ into \eqref{eq:Bernoulli-half-expansion}, and then into \eqref{eq:N-xytheta-expanded}, we obtain
\begin{align*}
\mathfrak N_N(\mathsf{x}\mathsf{y}\theta^k)
&=
\frac{\mathsf{x}\mathsf{y}}{2^k\mathsf{t}^k}
\sum_{p+q+\ell+n=k}
(-1)^{\ell+n}\binom{k}{p,q,\ell+n}
\binom{2\ell+2n+1}{2\ell+1}
\frac{B_{2n}(1/2)}{4^q(2\ell+2n+1)}\\
&\hspace{3cm}\cdot N^{2p+2\ell+2}\mathsf{u}^p\mathsf{t}^{2q+2n-1}(\mathsf{x}^{2\ell+1}+\mathsf{y}^{2\ell+1}).
\end{align*}
Fix $m\in\{0,\dots,k\}$. The weight $2m+2$ part comes from the terms with $p+\ell=m$. Writing $p=m-\ell$ and $q=k-m-n$, we get
\begin{align*}
[\mathsf{x}\mathsf{y}\theta^k]^{(2m+2)}&=\frac{\mathsf{x}\mathsf{y}\,\mathsf{t}^{k-2m-1}}{2^k}\sum_{\ell=0}^{m}\sum_{n=0}^{k-m}(-1)^{n+\ell}\binom{k}{m-\ell,k-m-n,n+\ell}\\
&\hspace{2cm}\cdot\frac{(2n+2\ell)!}{(2\ell+1)!(2n)!}\frac{B_{2n}(1/2)}{4^{k-m-n}}\mathsf{u}^{m-\ell}(\mathsf{x}^{2\ell+1}+\mathsf{y}^{2\ell+1}).
\end{align*}
Finally, the multinomial coefficient satisfies
\[
\binom{k}{m-\ell,k-m-n,n+\ell}
=
\binom{k}{n+\ell}\binom{k-n-\ell}{m-\ell},
\]
so the inner sum is precisely $\Gamma_{k,m,\ell}$. This proves \eqref{eq:weight-piece-q-basis}.
\end{proof}

\begin{proposition}\label{pro:thetawt}
Let $k\geq 1$. The weight parts of $\theta^k$ are given as follows:
\begin{itemize}
    \item[(a)] For $w=0$, $w$ odd or $w >2k$, $[\theta^k]^{(w)}=0$.
    \item[(b)] For $1 \leq w <k$,
    $$
    [\theta^k]^{(2w)}
    =
    \frac{\mathsf{x}\mathsf{y}\,(\mathsf{x}+\mathsf{y})^{k-2w-1}}{2^k}
    \sum_{v=0}^{w-1}
    \Lambda_{k,w,v}\,
     (2(\mathsf{x}+\mathsf{y})\theta - \mathsf{x}\mathsf{y})^{w-1-v}(\mathsf{x}^{2v+1}+\mathsf{y}^{2v+1}),
    $$
    where
    $$
    \Lambda_{k,w,v}
    :=
    \sum_{r=0}^{w-1-v}\Gamma_{k-r-1,w-r-1,v}\, .
    $$
    \item[(c)] For the top-weight part, we have
    $$
    \frac{[\theta^k]^{(2k)}}{k!} = \St^k_g \in \CH^k(\barA)
    $$
    where $\St_g^k$ is the codimension $k$ tautological class defined in \eqref{eq:St}. In particular, $[N]^*\theta=N^2\theta$ and for any $k\ge 2$, there exists $\vartheta^k(N) \in \R^{k-2}(C)[N]$ such that $[N]^*(\theta^k) = N^{2k} \theta^k + j_*\vartheta^k(N)$.
\end{itemize}
\end{proposition}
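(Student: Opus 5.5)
The plan is to work entirely in the lifted ring $\mathbb{Q}[\mathsf{x},\mathsf{y},\theta]$, where \eqref{eq:multN-poly} represents $[N]^*$. Three earlier inputs will be used: the recursion \eqref{eq:theta-recursion}, the explicit weight pieces of Lemma \ref{lem:weight-pieces-xytheta}, and the two multiplicativity identities preceding it, namely that $\mathsf{t}$ has weight $0$ and that $\mathsf{u}=2\mathsf{t}\theta-\mathsf{x}\mathsf{y}$ has weight $2$ multiplicatively. For (a), the identity $2\mathsf{t}\theta^{k}=\mathsf{x}\mathsf{y}\theta^{k-1}+\mathsf{u}\theta^{k-1}$ together with these identities gives, degree by degree in $N$,
\[
2\mathsf{t}[\theta^k]^{(w)}=[\mathsf{x}\mathsf{y}\theta^{k-1}]^{(w)}+\mathsf{u}[\theta^{k-1}]^{(w-2)}.
\]
Induction on $k$ then applies. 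The base case is $\theta^0=1$, which has only weight $0$ (and $[N]^*\theta=N^2\theta$ can be read off from $\mathfrak N(0,0,1,N)$). The vanishing of $[\mathsf{x}\mathsf{y}\theta^{k-1}]^{(w)}$ for $w=0$, $w$ odd, or $w>2k$ from Lemma \ref{lem:weight-pieces-xytheta} then kills the right-hand side in these ranges. To pass back from $\mathsf{t}$-multiples to classes, I would argue in the formal ring $\mathbb{Q}[\mathsf{x},\mathsf{y},\theta]$, where $\mathsf{t}=\mathsf{x}+\mathsf{y}$ is a nonzerodivisor, and only afterwards map to $\CH^*(\barA)$ via $\Phi$. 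The $w=0$ case at $k\ge1$ uses $[\theta^k]^{(0)}=\frac{1}{2\mathsf{t}}[\mathsf{x}\mathsf{y}\theta^{k-1}]^{(0)}=0$.

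For (b), substitute (a) into \eqref{eq:theta-recursion}. When $w<k$, the leading term $[\theta^{k-w}]^{(0)}$ vanishes since $k-w\ge1$. Each remaining term is $\frac{\mathsf{u}^v}{2^{v+1}\mathsf{t}^{v+1}}[\mathsf{x}\mathsf{y}\theta^{k-v-1}]^{(2(w-v-1)+2)}$. Inserting \eqref{eq:weight-piece-q-basis} with $k\to k-v-1$ and $m\to w-v-1$, the powers of $2$ and $\mathsf{t}$ combine to $2^{-k}\mathsf{t}^{k-2w-1}$. Then rename the summation indices: the outer recursion index becomes $r$, and the inner $\ell$ becomes $v$, so the power of $\mathsf{u}$ is $r+(w-r-1-v)=w-1-v$. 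Collecting coefficients gives $\Lambda_{k,w,v}=\sum_r\Gamma_{k-r-1,w-r-1,v}$. This is bookkeeping.

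For (c), \eqref{eq:theta-recursion} with $w=k$ gives
\[
[\theta^k]^{(2k)}=\frac{\mathsf{u}^k}{2^k\mathsf{t}^k}+\sum_{v=0}^{k-1}\frac{\mathsf{u}^v}{2^{v+1}\mathsf{t}^{v+1}}[\mathsf{x}\mathsf{y}\theta^{k-v-1}]^{(2k-2v)},
\]
and the top pieces $m=k-v-1$ of the lemma involve only $\Gamma_{k',k',\ell}$, where $n=0$ and $B_0(1/2)=1$. So $\Gamma_{k',k',\ell}=(-1)^\ell\binom{k'}{\ell}/(2\ell+1)$, and the result is an explicit rational function in $\mathsf{x},\mathsf{y},\theta$ that must be shown to be polynomial. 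It must then be converted via $\Phi$ and \eqref{eq:normalbunex} into the decorated strata form \eqref{eq:St}. The natural route is to expand $\exp(\theta)$-type generating functions in the weight-$2$ variable and separate the part supported on the interior, which is $\theta^k/k!$ since $\Phi$ of the pure-$\theta$ part restricts correctly, from the part divisible by $\mathsf{x}\mathsf{y}$. The $\mathsf{x}\mathsf{y}$-divisible part is a $j_*$ class, using $\Phi(\mathsf{x}^{b_1+1}\mathsf{y}^{b_2+1})=\frac12 j_*(\alpha_1^{b_1}\alpha_2^{b_2})$ together with $\theta|_C$ expressed through $\theta_1^C$ and the Chern roots.

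\textbf{Main obstacle.} The hardest step is the combinatorial identity matching coefficients with $\frac{(-1)^{k_1+k_2}(2k_1)!(2k_2)!}{2^{k_1+k_2}(2k_1+2k_2)!k_1!k_2!}$; these are Beta integrals $\int_0^1 s^{2k_1}(1-s)^{2k_2}$. I would first try to recompute $[\theta^k]^{(2k)}$ directly as the leading $N$-asymptotic of $\mathfrak N(0,0,k,N)$, which is a Riemann sum converging to an integral over $s=i/N\in[0,1]$. Expanding that integral in $\mathsf{x},\mathsf{y}$ should produce exactly these Beta coefficients. The final statement about $\vartheta^k(N)$ then follows because all lower-weight pieces are divisible by $\mathsf{x}\mathsf{y}$, hence lie in $j_*\R^{k-2}(C)$.
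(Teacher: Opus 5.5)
Your treatment of (a) and (b) is the paper's: both are read off from \eqref{eq:theta-recursion} and Lemma \ref{lem:weight-pieces-xytheta}. Your induction and index bookkeeping are correct, including the power $\mathsf{t}^{k-2w-1}$ and the renaming that produces $\Lambda_{k,w,v}$. The gap is (c), the only part with real content, which you leave at ``should produce''. On the recursion route you correctly get $\Gamma_{k',k',\ell}=(-1)^\ell\binom{k'}{\ell}/(2\ell+1)$, but everything after that is missing:
\begin{itemize}
\item the hockey stick identity giving $\Lambda_{k,k,v}=(-1)^v\binom{k}{v+1}/(2v+1)$;
\item the summation over $k$ with weights $1/k!$ into $\exp(\mathsf{u}/(2\mathsf{t}))$ times an explicit series;
\item the rewriting $(\mathsf{x}^{2v+1}+\mathsf{y}^{2v+1})/((2v+1)\mathsf{t})=\int_0^1(\mathsf{t}u-\mathsf{y})^{2v}\,du$, which yields \eqref{eq:thetamidddle};
\item the integrated second-derivative identity, which removes the $1/\mathsf{t}$ and yields \eqref{eq:mddlstep}.
\end{itemize}
Only at that point does one see that the Beta-expansion terms with $k_1=0$ or $k_2=0$ cancel against $e^{-\mathsf{x}/2}+e^{-\mathsf{y}/2}$. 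That cancellation is precisely the statement that the top-weight piece has no $i_*$-type component. So your step ``separate the interior part $\theta^k/k!$ from the $\mathsf{x}\mathsf{y}$-divisible part'' assumes what has to be proved.

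Your Riemann-sum alternative does work, and for (c) it is shorter than the paper's route, but as stated it has two holes.

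First, set $s=i/N$ and $g(s)=\tfrac12\bigl(s^2\mathsf{x}+(1-s)^2\mathsf{y}\bigr)$. The two denominators in $\mathfrak N(0,0,k,N)$ are exactly $Ng'(s+1/N)$ and $-Ng'(s)$, and $g'(s)=s\mathsf{t}-\mathsf{y}$ vanishes at $s=\mathsf{y}/\mathsf{t}\in(0,1)$ whenever $\mathsf{x},\mathsf{y}$ have the same sign. For formal variables ``convergence'' means nothing. You must evaluate at real $\mathsf{x}>0>\mathsf{y}$, where $g'>0$ on $[0,1]$. Then use the degree bound from (a) to identify the $N^{2k}$-coefficient with $\lim_N N^{-2k}\mathfrak N(0,0,k,N)$, and conclude by Zariski density.

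Second, the limit is $\sum_k[\theta^k]^{(2k)}/k!=-\mathsf{x}\mathsf{y}\,e^{\theta}\int_0^1e^{-g}/g'^2\,ds$, which does not expand into Beta integrals because of the $1/g'^2$. The missing idea is the identity $g'^2=2\mathsf{t}g-\mathsf{x}\mathsf{y}$. It gives $-\mathsf{x}\mathsf{y}\,e^{-g}/g'^2=\frac{d}{ds}\bigl(2ge^{-g}/g'\bigr)-(1-2g)e^{-g}$, and the boundary term is $e^{-\mathsf{x}/2}+e^{-\mathsf{y}/2}$. So you land directly on \eqref{eq:mddlstep} without $\Gamma_{k,m,\ell}$, the hockey stick identity, or \eqref{eq:thetamidddle}.

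Finally, you need not express $\theta|_C$ through $\theta_1^C$. Since \eqref{eq:St} is written as $\exp(\theta)\cup j_*(\cdot)$, \eqref{eq:normalbunex} and the projection formula translate the lifted identity directly.
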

\begin{proof}
Parts (a) and (b) are a direct consequence of \eqref{eq:theta-recursion} and Lemma \ref{lem:weight-pieces-xytheta}. For (c), it is enough to show that
\begin{equation}\label{eq:goalll}
    \sum_{k \geq 0} \frac{[\theta^k]^{(2k)}}{k!} = \exp(\theta)\left(1-
    \sum_{k_1,k_2\ge 1} \frac{(-1)^{k_1+k_2}(2k_1)!(2k_2)!}{2^{k_1+k_2}(2k_1+2k_2)!}
    \frac{\mathsf{x}^{k_1}\mathsf{y}^{k_2}}{k_1!k_2!}\right)\, .
\end{equation}
Using \eqref{eq:theta-recursion} and Lemma \ref{lem:weight-pieces-xytheta} we can write
$$
[\theta^k]^{(2k)} = \frac{(2(\mathsf{x}+\mathsf{y})\theta -\mathsf{x}\mathsf{y})^k}{2^k(\mathsf{x}+\mathsf{y})^k} + \frac{\mathsf{x}\mathsf{y}}{2^k(\mathsf{x}+\mathsf{y})^{k+1}}\sum_{v=0}^{k-1}\Lambda_{k,k,v}(2(\mathsf{x}+\mathsf{y})\theta-\mathsf{x}\mathsf{y})^{k-1-v}(\mathsf{x}^{2v+1}+\mathsf{y}^{2v+1})\, ,
$$
where
$$
\Lambda_{k,k,v} = \sum_{r=0}^{k-1-v}\Gamma_{k-r-1, k-r-1, v} = (-1)^v\binom{k}{v+1}\frac{1}{2v+1}
$$
by the hockey stick identity. Consequently, summing over $k$ and using
\[
\frac{1}{k!}\binom{k}{v+1}
=
\frac{1}{(v+1)!(k-v-1)!},
\]
we find
$$
\sum_{k\geq 0}\frac{[\theta^k]^{(2k)}}{k!}
=
\exp\!\left(\frac{2(\mathsf{x}+\mathsf{y})\theta-\mathsf{x}\mathsf{y}}{2(\mathsf{x}+\mathsf{y})}\right)\left(1+\sum_{v\geq 0}\frac{(-1)^v\mathsf{x}\mathsf{y}(\mathsf{x}^{2v+1}+\mathsf{y}^{2v+1})}{2^{v+1}(v+1)!(2v+1)(\mathsf{x}+\mathsf{y})^{v+2}}\right)
$$
Since
$$
\int_0^1((\mathsf{x}+\mathsf{y})u-\mathsf{y})^{2v}\,du=\frac{\mathsf{x}^{2v+1}+\mathsf{y}^{2v+1}}{(2v+1)(\mathsf{x}+\mathsf{y})},
$$
we obtain by direct expansion that
\begin{align}\label{eq:thetamidddle}
\sum_{k\geq 0}\frac{[\theta^k]^{(2k)}}{k!} &=\exp(\theta)\exp\!\left(-\frac{\mathsf{x}\mathsf{y}}{2(\mathsf{x}+\mathsf{y})}\right)\left(\frac{\mathsf{x}\mathsf{y}}{\mathsf{x}+\mathsf{y}}
\int_0^1
\exp\left(-\frac{((\mathsf{x}+\mathsf{y})u-\mathsf{y})^2}{2(\mathsf{x}+\mathsf{y})}\right)\,du\right.\notag\\
&\hspace{5cm}\left.+ \frac{\mathsf{y}}{\mathsf{x}+\mathsf{y}}
\exp\left(-\frac{\mathsf{x}^2}{2(\mathsf{x}+\mathsf{y})}\right)
+\frac{\mathsf{x}}{\mathsf{x}+\mathsf{y}}\exp\left(-\frac{\mathsf{y}^2}{2(\mathsf{x}+\mathsf{y})}\right)\right)\notag\\
&=\exp(\theta)\left(\frac{\mathsf{y}\exp(-\mathsf{x}/2) + \mathsf{x}\exp(-\mathsf{y}/2)}{\mathsf{x}+\mathsf{y}} + \frac{\mathsf{x}\mathsf{y}}{\mathsf{x}+\mathsf{y}}\int_0^1 \exp\left(-\frac{\mathsf{x}u^2 + \mathsf{y}(1-u)^2}{2}\right)\, du\right)
\end{align}
We have
\begin{align*}
\frac{1}{\mathsf{x}+\mathsf{y}}\frac{d^2}{du^2}\exp\left(-\frac{\mathsf{x}u^2+\mathsf{y}(1-u)^2}{2}\right)=\, &-\frac{\mathsf{x}\mathsf{y}}{\mathsf{x}+\mathsf{y}}\exp\left(-\frac{\mathsf{x}u^2+\mathsf{y}(1-u)^2}{2}\right)\\
&-\left(1-\mathsf{x}u^2-\mathsf{y}(1-u)^2\right)\exp\left(-\frac{\mathsf{x}u^2+\mathsf{y}(1-u)^2}{2}\right)
\end{align*}
and integrating both sides from $0$ to $1$ implies that $\sum_{k\geq 0}\frac{[\theta^k]^{(2k)}}{k!}$ equals
\begin{equation}\label{eq:mddlstep}
\exp(\theta)\left(\exp(-\mathsf{x}/2) + \exp(-\mathsf{y}/2) - \int_0^1\left(1-\mathsf{x}u^2-\mathsf{y}(1-u)^2\right)\exp\left(-\frac{\mathsf{x}u^2+\mathsf{y}(1-u)^2}{2}\right)\,du\right)\, .
\end{equation}
After expanding the exponential and using the beta integral $\int_0^1u^{2k_1}(1-u)^{2k_2}\,du=\frac{(2k_1)!(2k_2)!}{(2k_1+2k_2+1)!}$, we see that
$$
\int_0^1\left(1-\mathsf{x}u^2-\mathsf{y}(1-u)^2\right)\exp\left(-\frac{\mathsf{x}u^2+\mathsf{y}(1-u)^2}{2}\right)\,du=\sum_{k_1,k_2\geq0}\frac{(-1)^{k_1+k_2}(2k_1)!(2k_2)!}{2^{k_1+k_2}(2k_1+2k_2)!}\frac{\mathsf{x}^{k_1}\mathsf{y}^{k_2}}{k_1!k_2!}
$$
Then, the terms with $k_1=0$ or $k_2=0$ cancel with the first two terms in the second factor of \eqref{eq:mddlstep}, proving \eqref{eq:goalll}. The last observation follows since all the terms in the second factor of \eqref{eq:mddlstep} except for $1$ are divisible by $\mathsf{x}\mathsf{y} = \frac{1}{2}j_*(1)$.
\end{proof}

\begin{proposition}\label{pro:vanish}
    Let $1 \leq i \leq s$, and consider the projection\footnote{If $s=1$, we take $\barA^{(0)}:=B$.} $p_i : \barA^{(s)} \to \barA^{(s-1)}$ forgetting the $i$-th component. If $\alpha$ has finite weight for $[1 \times \cdots \times N\times \cdots \times 1]^*$ or $[1 \times \cdots \times N\times \cdots \times 1]_*$, then we have
    $$
    p_{i,*} \big([\alpha]^{i,(w)}\big) = \begin{cases}
        p_{i,*} (\alpha)&\text{ if }w = 2g\, ,\\
        0&\text{ otherwise.}
    \end{cases}\, ,\quad p_{i,*} \big([\alpha]_{i,(w)}\big) = \begin{cases}
        p_{i,*} (\alpha)&\text{ if }w = 2g\, ,\\
        0&\text{ otherwise.}
    \end{cases}
    $$
    respectively.
\end{proposition}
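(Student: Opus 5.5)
The plan is to compute $p_{i,*}[N_i]^*\alpha$ directly and compare it with $p_{i,*}\alpha$, using that the multiplication by $N$ map on the $i$-th factor is generically finite of degree $N^{2g}$ over $\barA^{(s-1)}$. Concretely, take the model $\barA^{(s)}_{N_i}$ from \eqref{eq:n}, with $b:\barA^{(s)}_{N_i}\to\barA^{(s)}$ a log modification and $N_i:\barA^{(s)}_{N_i}\to\barA^{(s)}$. Both maps commute with the projections to $\barA^{(s-1)}$ forgetting the $i$-th factor. This holds because on $(\logA)^s$ the map $1\times\cdots\times N\times\cdots\times 1$ lies over the identity of $(\logA)^{s-1}$, and the projection $p_i$ extends regularly to every model (Lemma \ref{lem:maps_regular}, together with the semistability argument in Lemma \ref{lem:Npush}). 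Consequently
\[
p_{i,*}[N_i]^*\alpha=p_{i,*}b_*N_i^*\alpha=(p_i\circ N_i)_*N_i^*\alpha=p_{i,*}N_{i,*}N_i^*\alpha .
\]
The map $N_i$ is proper, surjective and generically finite of degree $N^{2g}$, since on the open part $G^s$ it is multiplication by $N$ on the semi-abelian factor. The projection formula therefore gives $N_{i,*}N_i^*\alpha=N^{2g}\alpha$, and hence $p_{i,*}[N_i]^*\alpha=N^{2g}p_{i,*}\alpha$.

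Now suppose $[N_i]^*\alpha=\sum_w N^w[\alpha]^{i,(w)}$ for all $N\ge1$. Applying $p_{i,*}$ yields
\[
\sum_w N^w p_{i,*}[\alpha]^{i,(w)}=N^{2g}p_{i,*}\alpha
\]
for all $N$. Both sides are polynomials in $N$ with coefficients in $\CH^*(\barA^{(s-1)})$, so comparing coefficients (for instance by Vandermonde or Lagrange interpolation) proves the first claim.

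For $[N_i]_*$ the argument is analogous. By definition $p_{i,*}[N_i]_*\alpha=N^{-2g}p_{i,*}N_{i,*}b^*\alpha$. Since $p_i\circ N_i=p_i\circ b$, this equals $N^{-2g}p_{i,*}b_*b^*\alpha=N^{-2g}p_{i,*}\alpha$, using that $b$ is proper and birational, so $b_*b^*=\id$. Comparing coefficients of $N^{-w}$ then gives $p_{i,*}[\alpha]_{i,(w)}=p_{i,*}\alpha$ for $w=2g$ and $0$ otherwise.

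The main point to verify is the degree statement $N_{i,*}[\barA^{(s)}_{N_i}]=N^{2g}[\barA^{(s)}]$, together with the validity of the projection formula on the possibly singular source. On the source I would use that $N_i^*$ is the refined/operational pullback from the smooth target, and that $\barA^{(s)}_{N_i}$ is irreducible and reduced because $b$ is a log modification. The degree can then be checked over the dense open $G^s\subset\barA^{(s)}$, where the relevant map is the étale (char $0$) or finite flat isogeny $[N]$ of degree $N^{2g}$ on the semi-abelian scheme. This uses that the preimage of $G^s$ is dense in $\barA^{(s)}_{N_i}$, which holds since the source is irreducible and its interior maps onto $G^s$. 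The compatibility of $b$ and $N_i$ with the projection $p_i$ is the other point to check, via the factorization of the fans $\Sigma^{(s)}_N\to\Sigma^{(s-1)}$ as in Lemma \ref{lem:Npush}.
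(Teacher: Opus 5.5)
Your proposal is correct and is essentially the paper's proof. Like the paper, you use $p_i\circ b=p_i\circ N_i$ to get $p_{i,*}b_*N_i^*=p_{i,*}N_{i,*}N_i^*=N^{2g}p_{i,*}$ and $p_{i,*}N_{i,*}b^*=p_{i,*}b_*b^*=p_{i,*}$, then compare coefficients in $N$ (respectively $N^{-1}$).

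One small correction: only the generic degree of $N_i$, computed over $\A_g$, matters. Over $\partial B$ the map $[N]$ on $G$ is merely quasi-finite of fiber degree $N^{2g-1}$, so ``finite flat isogeny of degree $N^{2g}$ on the semi-abelian scheme'' is accurate only over $\A_g$.
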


\begin{proof}
    To simplify notation, we write
    \begin{equation*}
    N_i := 1 \times \cdots \times N \times \cdots \times 1\,.
    \end{equation*}
    Let $\barA^{(s)}_{N_i}$ be a model of $\barA^{s}$ as in Definition \ref{def:n}, fitting
    into the following commutative diagram:
    \[
    \begin{tikzcd}
        \barA^{(s)}_{N_i} \ar[r,"N_i"] \ar[d,"b"] & \barA^{(s)} \ar[d,"p_i"] \\
        \barA^{(s)} \ar[r,"p_i"] & \barA^{(s-1)} .
    \end{tikzcd}
    \]
    Then
    \[
    p_{i,*} \left(\sum_{w\geq 0} N^w[\alpha]^{i,(w)}\right) = p_{i,*} \circ [N_i]^*(\alpha)
    =
    p_{i,*} b_* N_i^*(\alpha)
    =
    p_{i,*} N_{i,*} N_i^*(\alpha)
    =
    N^{2g} p_{i,*}(\alpha),
    \]
    where the second equality follows from the commutativity of the diagram,
    and the third follows from the projection formula and the fact that $N_i$
    is generically finite of degree $N^{2g}$. Extracting coefficients in $N$ yields the first result.
    For the second formula, we argue similarly, using that
    $$
    p_{i,*} \left(\sum_{w\geq 0} N^{-w}[\alpha]_{i,(w)}\right) = p_{i,*} \circ [N_i]_*(\alpha)
    =
    N^{-2g}p_{i,*} N_{i,*}b^*(\alpha)
    =
    N^{-2g}p_{i,*} b_{*} b^*(\alpha)
    =
    N^{-2g} p_{i,*}(\alpha).
    $$
\end{proof}

This result allows us to compute pushforwards using weights. We showcase this for $s=1$:

\begin{corollary}\label{cor:pushThet}
    $$
    \pi_*(\exp(\theta)\cup\Td^\vee(\cR_\pi)^{-1}) = \exp(D/8)\, , \quad \pi_*(\exp(\theta)) = \sum_{k \geq 0} \frac{(D/8)^{k}}{k!(2k+1)}
    $$
\end{corollary}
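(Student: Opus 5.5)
We would derive both identities from Proposition \ref{pro:vanish} with $s=1$, $i=1$, $p_1=\pi$: for any class $\alpha$ of finite weight, $\pi_*\alpha=\pi_*[\alpha]^{(2g)}$. Everything then reduces to computing the weight-$2g$ part of the integrand and pushing it forward.

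\textbf{Second identity.} By Proposition \ref{pro:thetawt}, $\theta^k$ has finite weight, with nonzero weight parts only in even weights $2w$, $1\le w\le k$. Hence
\[
\pi_*(\theta^k)=\pi_*[\theta^k]^{(2g)}.
\]
This vanishes unless $k\ge g$. The pushforward must have codimension $k-g$.

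For $k=g$, the top-weight part gives $\pi_*[\theta^g]^{(2g)}=g!\,\pi_*\St^g_g$. Since $\pi_*\theta^g=g!$ on the interior, and the $j_*$-corrections push forward to zero for dimension reasons (they are supported over $\partial B$ with fibers of dimension $g-2<g$), we get $\pi_*(\theta^g/g!)=1$.

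For $k=g+n$ with $n\ge 1$, the weight-$2g$ part is given by Proposition \ref{pro:thetawt}(b) with $w=g<k$. It is an explicit piecewise polynomial in $\mathsf{x},\mathsf{y}$ times powers of $(2(\mathsf{x}+\mathsf{y})\theta-\mathsf{x}\mathsf{y})$. Its pushforward is computed via \eqref{eq:normalbunex}: everything becomes $\tfrac12 i_*$ or $\tfrac12 j_*$ of tautological classes on $Y$ and $C$, which we push to $D$ and then to $B$.

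Over $Y\to C\to D$, the fiber $C\to D$ is $\X_{g-1}$, on which $\theta^C_1$ integrates as in the abelian case. The $\PP^1$-direction contributes through $U$ and $h$. The net effect should be that $\pi_*(\theta^{g+n})$ is a rational multiple of $D^n$. We then identify the coefficient as $\frac{(g+n)!}{n!\,8^n(2n+1)}$ after summing the combinatorial factors $\Lambda_{k,g,v}$ and $\Gamma$.

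A cleaner route is available. Use the generating-function form \eqref{eq:thetamidddle}: the integrand $\exp(-(\mathsf{x}u^2+\mathsf{y}(1-u)^2)/2)$ integrated over $u\in[0,1]$ naturally produces $\int_0^1 e^{-c u^2}du$-type series, whose coefficients are $\frac{1}{k!(2k+1)}$. We would extract the relevant weight part of $\exp(\theta)$ directly, rather than working through the $\Lambda$ sums.

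\textbf{First identity.} Multiply by $\Td^\vee(\cR_\pi)^{-1}$ using \eqref{eq:todd}. The correction is $\tfrac12 j_*$ of a series in $\ell^C,\theta_2^C$. By the projection formula, its pushforward is $\tfrac12(\iota\circ\pi_C)_*$ of $\exp(j^*\theta)$ times that series. Here $j^*\theta$ restricts on $C$ to $\theta^C_1$ plus terms in $\ell^C,\theta_2^C$, and integrating over the $\X_{g-1}$ fiber is a Gaussian-type computation.

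Combining the two contributions should turn $\sum (D/8)^k/(k!(2k+1))$ into $\exp(D/8)$. This is the identity
\[
\int_0^1 e^{cu^2}\,du \;+\; \text{(boundary correction)} \;=\; e^{c}.
\]
It is an integration by parts mirroring the passage from \eqref{eq:thetamidddle} to \eqref{eq:mddlstep}.

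\textbf{Main obstacle.} The hard step is the boundary pushforward: computing $\pi_*$ of $\tfrac12 i_*$ and $\tfrac12 j_*$ decorated classes in terms of $D$. This requires $\iota_*$ of the pushforward of $\theta_2^C$-powers from $D=\X_{g-1}^\vee$. The key input is
\[
\tfrac12\iota_*\bigl((\theta_2^C)^a\bigr)=c_a D^{a+1}
\]
for explicit constants $c_a$, which would follow from $\iota^*D=-\theta_2^C$ up to normalization, i.e.\ the normal bundle of $\iota$. We would first establish this normal-bundle identity; the Gaussian bookkeeping should then become routine.
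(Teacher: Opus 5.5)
Your proposal names most of the right ingredients, but neither identity is actually derived. Every step that would produce a coefficient is phrased as ``should'', and the routes you suggest do not close.

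Your main route reduces to $\pi_*(\theta^{g+n})=\pi_*[\theta^{g+n}]^{(2g)}$ and then to the sums $\Lambda_{g+n,g,v}$ of Proposition~\ref{pro:thetawt}(b), which are built from the values $B_{2n}(\tfrac12)$. You assert that these collapse to $\frac{(g+n)!}{n!\,8^n(2n+1)}$ but give no mechanism, and that identity is the whole content of the statement.

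Your ``cleaner route'' is not compatible with this reduction. \eqref{eq:thetamidddle} is the generating function of the \emph{top-weight} parts $[\theta^k]^{(2k)}$, not of the weight-$2g$ parts $[\theta^k]^{(2g)}$ you reduced to. So it cannot extract ``the relevant weight part'' in your sense.

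The missing idea is to apply Proposition~\ref{pro:vanish} to the top-weight parts instead. It gives $\pi_*[\theta^k]^{(2k)}=0$ for $k\neq g$ and $\pi_*[\theta^g]^{(2g)}=\pi_*\theta^g=g!$. Hence $\pi_*\bigl(\sum_k[\theta^k]^{(2k)}/k!\bigr)=1$, and
\[
\pi_*\exp(\theta)=1+\pi_*\Bigl(\exp(\theta)-\sum_{k}\tfrac{[\theta^k]^{(2k)}}{k!}\Bigr)=1+\tfrac12\iota_*\pi_{C*}\bigl(\exp(\theta_1^C+x/2)\,A(x,y)\bigr).
\]
Here $x=\ell^C$, $y=-\ell^C-2\theta_2^C$, and $A$ is read off from Proposition~\ref{pro:thetawt}(c) via \eqref{eq:normalbunex} and $j^*\theta=\theta_1^C+\ell^C/2$. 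Only $j_*$-terms appear; no $i_*$-terms or $\PP^1$-directions enter.

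From there the computation is the Gaussian one you anticipate, but it has to be made precise. The formula $\pi_{C*}\exp(\theta_1^C+ux)=\exp(u^2t/2)$, with $t=-2\theta_2^C$, turns a polynomial $P(x,t)$ into $P(\partial_u,t)e^{u^2t/2}|_{u=1/2}$. For the $A$-term, the beta integral then gives $\frac1t\int_0^1(e^{tv^2/8}-1)\,dv$, which is the second identity.

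For the first identity, the Todd term from \eqref{eq:todd} carries the Bernoulli numbers $B_{2m}$. These are not resummed by an integration by parts as in \eqref{eq:thetamidddle}$\to$\eqref{eq:mddlstep}. They are resummed by the Euler--Maclaurin formula applied to $f(u)=\exp\bigl(\tfrac t2(u+\tfrac12)^2\bigr)$ on $[-1,0]$, which gives $\frac1t\bigl(e^{t/8}-\int_0^1e^{tv^2/8}\,dv\bigr)$. The two integrals then cancel, leaving $\frac{e^{t/8}-1}{t}$.

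Finally, the step you call the main obstacle is immediate. We have $\iota^*D=c_1(N_\iota)=-2\theta_2^C=t$ (not $-\theta_2^C$), so the projection formula gives $\tfrac12\iota_*(t^k)=D^{k+1}$. This converts the two series into $\sum_{k\ge1}\frac{(D/8)^k}{k!(2k+1)}$ and $\exp(D/8)-1$, respectively.
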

\begin{proof}
    By Proposition \ref{pro:vanish}, we have
    \[
    \pi_*\left(\sum_{k\geq0}\frac{[\theta^k]^{(2k)}}{k!}\right)=\pi_*\left(\frac{\theta^g}{g!}\right)=1\, .
    \]
    Let $x=\ell^C$, $y=-\ell^C-2\theta_2^C$, $t=-2\theta_2^C$,
    so that \(x+y=t\), and consider the power series
    \[
    A(x,y)=\sum_{k_1,k_2\geq1}\frac{(-1)^{k_1+k_2}(2k_1)!(2k_2)!}{2^{k_1+k_2}(2k_1+2k_2)!\,k_1!k_2!}x^{k_1-1}y^{k_2-1}\, ,\quad
    R(x,y)=\sum_{m\geq1}\frac{B_{2m}}{(2m)!}\frac{x^{2m-1}+y^{2m-1}}{x+y}.
    \]
    Using Proposition \ref{pro:thetawt}~(c), the expression for the
    Todd class in \eqref{eq:todd}, and Lemma \ref{lem:jstar}~(a)\footnote{While Lemma \ref{lem:jstar} appears later, point (a) is just \cite[Corollary 4.6]{EGH10}}, we obtain
    \begin{align*}
        \pi_*\left(\exp(\theta)\cup\Td^\vee(\cR_\pi)^{-1}\right)
        ={}&
        1+
        \pi_*\left(
        \exp(\theta)
        -
        \sum_{k\geq0}\frac{[\theta^k]^{(2k)}}{k!}
        \right) +
        \pi_*\left(
        \exp(\theta)\cup
        \left(\Td^\vee(\cR_\pi)^{-1}-1\right)
        \right) \notag\\
        ={}&
        1+\frac12\pi_*j_*\left(
        \exp(\theta_1^C+x/2)\cup
        \bigl(A(x,y)+R(x,y)\bigr)
        \right) \notag\\
        ={}&
        1+\frac12\iota_*\pi_{C*}\left(
        \exp(\theta_1^C+x/2)\cup
        \bigl(A(x,y)+R(x,y)\bigr)
        \right).
    \end{align*}
    By \cite[Theorem 9.10]{BMP}, pushforward along the abelian scheme
    \(\pi_C\colon C\to D\) is determined by
    \[
    \pi_{C*}\left(\exp(\theta_1^C+ux)\right)
    =
    \exp(u^2t/2),
    \]
    as a polynomial identity in \(u\). Therefore, for every polynomial
    \(P(x,t)\),
    \[
    \pi_{C*}\left(
    \exp(\theta_1^C+x/2)P(x,t)
    \right)
    =
    \left.
    P(\partial_u,t)\exp(u^2t/2)
    \right|_{u=1/2}.
    \]
    In particular, after replacing \(y\) by \(t-x\), define
    \[
    \mathcal L_t(P)
    :=
    \left.
    P(\partial_u,t)\exp(u^2t/2)
    \right|_{u=1/2}.
    \]
    Using the beta integral
    \[
    \frac{(2k_1)!(2k_2)!}{(2k_1+2k_2)!}
    =
    (2k_1+2k_2+1)
    \int_0^1v^{2k_1}(1-v)^{2k_2}\,dv
    \]
    gives
    $$
    \mathcal L_t\bigl(A(x,t-x)\bigr)=
    \frac1t\int_0^1
    \left(\exp\left(\frac{tv^2}{8}\right)-1\right)\,dv=\frac1t\int_0^1
    \sum_{r\geq1}\frac{t^rv^{2r}}{8^r r!}\,dv=
    \sum_{r\geq1}
    \frac{t^{r-1}}{8^r r!(2r+1)}\,.
    $$
    We next evaluate \(R\). We have
    \[
    \mathcal L_t \left(\frac{x^{2m-1}+y^{2m-1}}{x+y}\right)
    =
    \frac{2}{t}\left(\frac{d}{du}\right)^{2m-1}\exp\left(\frac t2\left(u+\frac12\right)^2\right)\Bigg|_{u=0}\,.
    \]
    By the Euler-Maclaurin identity
    $$
    \int_{-1}^0f(u)\, du = \frac{f(-1)+f(0)}{2} - \sum_{m\geq 1}\frac{B_{2m}}{(2m)!}\left(f^{(2m-1)}(0) - f^{(2m-1)}(-1)\right)\,,
    $$
    we obtain
    $$
    \mathcal L_t\bigl(R(x,t-x)\bigr)=\frac1t\left(\exp(t/8)-\int_{-1}^0\exp\left(\frac t2\left(u+\frac12\right)^2\right)\,du\right)=\sum_{r\geq1}\frac{t^{r-1}}{8^r r!}\frac{2r}{2r+1}\,.
    $$
    We conclude the proof by using $\frac{1}{2}\iota_* t^k = D^{k+1}$.
\end{proof}

\section{The class of the unit section}\label{sec:unit}

This section is devoted to the derivation of two closed formulas for the unit section $e : B \to \barA$ (Theorems \ref{thm:unit} and \ref{thm:altzero}). We also discuss the work of Grushevsky and Zakharov \cite{GZ}.

\subsection{Boundary class}

We follow the notation for the boundary strata of $\barA \to B$ introduced in \S\ref{sec:mum}; see also Figure \ref{fig:thm_bdy}.
Let $\epsilon :E \to \barA^{(2)}$ be the normalization of the exceptional locus of $f: \barA^{(2)}\to \barA^2$. It is isomorphic to a $\PP^1$-bundle over $C \times_D C$.
Recall that $\sR^*(C)\subseteq \CH^*(C)$ is the subring generated by $\theta_1^C,\theta_2^C$ and $\ell^C$ (Example \ref{expl:pp_s_1}).
We define $\sR^{\ast}(C\times_D C)$ to be the subring of $\CH^{\ast}(C\times_D C)$ generated by the pullbacks of classes in $\sR^{\ast}(C)$
from the two factors. Similarly, we define
$\sR^{\ast}(E)$ to be the subring of $\CH^{\ast}(E)$ generated by the pullbacks of classes in $\sR^{\ast}(C\times_D C)$ and the hyperplane class of the $\mathbb{P}^{1}$-bundle $E\to C\times_D C$.

\begin{lemma}\label{lem:Fex}
	For $\beta\in \sR^*(E)$, let $(f\circ \epsilon)_*(\beta) \in \CH^*(\barA^2)$ be the associated correspondence. Then the image of $(f\circ \epsilon)_*(\beta) : \sR^*(\barA)\to \CH^*(\barA)$ lies in the image of $j_*\sR^*(C)$.
\end{lemma}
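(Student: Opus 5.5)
The natural approach is to compute the correspondence directly from the geometry of the exceptional locus. By definition, for $\alpha\in\sR^*(\barA)$ the action of the correspondence is
\[
(f\circ\epsilon)_*(\beta)(\alpha)=p_{2,*}\bigl((f\circ\epsilon)_*\beta\cup p_1^*\alpha\bigr)=(p_2\circ f\circ\epsilon)_*\bigl(\beta\cup(p_1\circ f\circ\epsilon)^*\alpha\bigr),
\]
where the pullback is legitimate because $\barA^{(2)}$ is smooth and $p_i\circ f=p_i$ are the regular projections of Lemma \ref{lem:maps_regular}. I would therefore first identify the two composites $E\to\barA$. The exceptional locus of the small resolution $f$ lies over the codimension-two stratum $(\partial\barA)^{\mathrm{sing}}\times_{\partial B}(\partial\barA)^{\mathrm{sing}}$. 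The cone-complex description of $\Sigma^{(2)}\to\Sigma^2$ (the product of two copies of $\RR^2_{\ge0}$ glued along $\mathsf{t}$, subdivided by $\mathsf{x}_1=\mathsf{x}_2$) makes the $\PP^1$-direction of $E\to C\times_D C$ correspond to the subdividing ray. This gives two conclusions. First, $p_i\circ f\circ\epsilon$ factors as $E\to C\times_D C\xrightarrow{\mathrm{pr}_i}C\xrightarrow{j}\barA$, possibly up to the shift $\sh$ on one factor, coming from the identification $j=i\circ s_0=i\circ\sbar_\infty$. Second, the map $E\to C\times_D C$ is the $\PP^1$-bundle projection $\varpi$.

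The next step is to reduce to an operation on $C$. Write $\varpi:E\to C\times_D C$, so that $p_2\circ f\circ\epsilon=j\circ\mathrm{pr}_2\circ\varpi$. Then the correspondence output is $j_*\bigl(\mathrm{pr}_{2,*}\varpi_*(\beta\cup\varpi^*\mathrm{pr}_1^*j^*\alpha)\bigr)$. By the projective bundle formula, $\varpi_*(\beta\cup\varpi^*\gamma)=\varpi_*(\beta)\cup\gamma$, and $\varpi_*\beta\in\sR^*(C\times_DC)$ because the normal/hyperplane class of $E$ is expressible through $\theta_i^C,\ell^C$. This is an explicit computation, analogous to $c_1(N_i)=-2\theta_2^C-U$ in Example \ref{expl:pp_s_1}.

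Two facts remain to be checked.

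(i) $j^*\sR^*(\barA)\subseteq\sR^*(C)$. For $\theta$ this holds by restriction of the extended Theta divisor. For piecewise polynomials, pullback along the strict stratum inclusion lands in polynomials in the Chern roots $\alpha_1(N_j),\alpha_2(N_j)$, which are given in Example \ref{expl:pp_s_1}, together with self-intersection terms that are again such polynomials.

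(ii) $\mathrm{pr}_{2,*}:\sR^*(C\times_DC)\to\sR^*(C)$. The map $\mathrm{pr}_2$ is the base change of the abelian scheme $\pi_C:C\to D$. Its pushforward is a fiber integral over $\X_{g-1}$ of polynomials in $\theta_1$, the mixed Poincaré classes, and classes pulled back from the second factor. This can be handled with the Fourier/Beauville decomposition on abelian schemes, as in the computation of $\pi_{C*}\exp(\theta_1^C+ux)$ from \cite[Theorem 9.10]{BMP} used in Corollary \ref{cor:pushThet}. The mixed class between the two $\X_{g-1}$-factors is $\ell$ pulled back from $\X_{g-1}^2$, and its pushforward in the same way gives polynomials in $\theta_2^C,\ell^C$.

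I expect the main obstacle to be the first step: precisely identifying $E$ and the two maps $p_i\circ f\circ\epsilon$, including the shift $\sh$ and the sign convention for the fiber product over $D$. Once that is pinned down, the remaining steps are standard projection-formula and abelian-scheme pushforward bookkeeping.
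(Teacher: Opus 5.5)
Your proposal is correct and follows the paper's proof: push $\beta$ forward along the $\PP^1$-bundle $\varpi\colon E\to C\times_D C$ to obtain $\beta'\in\sR^*(C\times_D C)$ with $(f\circ\epsilon)_*\beta=g_*\beta'$, then apply the projection formula. The paper simply records the commutative square $f\circ\epsilon=g\circ\varpi$, so your hedge about a shift is unnecessary. Your write-up is more explicit in one respect: you spell out the two closure facts the paper leaves under ``projection formula'', namely $j^*\sR^*(\barA)\subseteq\sR^*(C)$ and that pushforward along the abelian scheme $C\times_D C\to C$ preserves small tautological classes.
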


\begin{proof}
    Consider the following commutative diagram:
    \[
    \begin{tikzcd}
        E \ar[r,"\epsilon"]\ar[d] & \barA^{(2)}\ar[d,"f"]\\
        C\times_{D} C\ar[r,"g"] & \barA^2.
    \end{tikzcd}
    \]
    By the projective bundle formula, there exists $\beta'\in \sR^*(C\times_{D} C)$ such that $(f\circ \epsilon)_*(\beta) = g_*(\beta')$. Therefore, the result follows from the projection formula.
\end{proof}

\begin{definition}\label{def:equiv}
    For $\alpha_1,\alpha_2\in \CH^k(\barA)$, we say $\alpha_1 \equiv \alpha_2$ if there exists $\phi \in \sR^{k-2}(C)$ such that $\alpha_1 = \alpha_2 + j_*(\phi)$.
\end{definition}

Lemma \ref{lem:Fex} implies that the Fourier transform sends any class supported on $E$ to zero under this equivalence relation. Informally,
we may ignore any ``correction term'' supported on $E$.

\begin{proposition}\label{pro:jclosed}
    Let $\ell^C\in \CH^1(C)$ be the first Chern class of the Poincar\'e line bundle.
    \begin{enumerate}[label = (\alph*)]
        \item If $k+m<2g-2$, then $\fm_k (j_*((\ell^C)^m)) \equiv 0$.
        \item If $k+m=2g-2$,
        \[
        \fm_k \left(j_*\frac{(\ell^C)^m}{m!}\right) \equiv i_* \left[ \frac{(-\theta_1^C + u\ell^C - u^2\theta_2^C)^{g-1}}{(g-1)!}\right]_{\text{coeff } u^m}\,.
        \]
    \end{enumerate}
\end{proposition}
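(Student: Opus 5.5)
We compute $\fm(j_*\beta)$ by the projection formula applied to the correspondence of Lemma \ref{lem:Fchow}(a). Write $\fm=f_*(\ch(\P)\cup T)$ with $T=\Td^\vee(\cR_{\pi^{(2)}}-p_1^*\cR_\pi-p_2^*\cR_\pi)$. Then
\[
\fm(j_*\beta)=p_{2,*}\bigl(\ch(\P)\cup T\cup p_1^*j_*\beta\bigr).
\]
The term $p_1^*j_*\beta$ is the pushforward from $p_1^{-1}(j(C))$. Over the boundary this preimage is a union of two kinds of pieces. The first is the part lying over $C\times_D Y$, which maps birationally onto $j(C)\times_{\partial B}\partial\barA$. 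The second consists of components contained in the exceptional divisor $E$.

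By Lemma \ref{lem:Fex}, every contribution supported on $E$ is $\equiv 0$. The same holds for the correction $T-1$, because $T-1$ is supported on the exceptional and codimension-two strata. Modulo $\equiv$, we may therefore replace $T$ by $1$ and restrict to the piece $C\times_D Y$. We then push forward along $C\times_D Y\to Y\xrightarrow{i}\barA$ after restricting $\ch(\P)$.

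The key computation is the restriction of $\P$ to $C\times_D Y$, where the first factor is the point $(x,L)\in C$. Over the interior of $Y$, i.e. the $\Gm$-bundle over $C$, the boundary $\partial\barA$ consists of semi-abelian varieties. Their Poincaré pairing with a point of the double-point locus $j(C)$ is given by the abelian parts, together with the Poincaré bundle $\mathcal P_C$ along the fibre coordinate. Concretely, I expect
\[
\P|_{C\times_DY}\cong \mathcal P^{\vee}_{C,(x,y)}\otimes(\text{twist by }\mathcal O_Y(1)\text{ involving }L),
\]
so that
\[
c_1 = -\ell_{12}^C+\ldots .
\]
This is derived by pulling back Mumford's formula \eqref{eq:Mumform} and matching the log description, with $c_{\mathcal P}$ restricted to the relevant cones. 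Expanding the result in the fibre variable of $Y\to C$, the $\PP^1$-integration over the first factor's fibre kills everything except the appropriate power of $\ell^C$.

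Pushing forward along the abelian scheme $C\to D$ in the first factor then uses \cite[Theorem 9.10]{BMP}, the analogue of Mumford's computation on $\X_{g-1}$, which gives Fourier-type exponentials. On $\X_{g-1}$ the class $(\ell^C)^m$ has weight $m$ in each factor. Hence its Fourier transform is concentrated in codimension $2(g-1)-m$ relative to $C\to D$, so it vanishes for $k<2g-2-m$. This gives (a).

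In the borderline case $k+m=2g-2$ we take the top term. The generating function
\[
\exp(-\theta_1^C+u\ell^C-u^2\theta_2^C)
\]
arises as the Fourier transform on $\X_{g-1}\times\X_{g-1}^\vee$ of $\exp(u\ell^C)$, by completing the square with $\theta$ via Mumford's relation $\ell^2=\ldots$. Extracting the coefficient of $u^m$ and the codimension-$(g-1)$ part yields (b). The factor $i_*$ appears because this piece lives on the full boundary divisor $Y$.

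The main obstacle is the precise identification of $\P$ restricted to the relevant boundary component, including the signs and the $\sh$-shift, together with the verification that all remaining terms are pushed from $E$ or from $j(C)$ with $\sR(C)$-decorations. For this I would work on the universal cover $\mathcal U^2\Sigma^{(2)}$ using $\Psi(c_{\mathcal P})$.
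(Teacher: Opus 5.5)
Your skeleton is the one the paper uses. You pull $j_*\beta$ back along $p_1$ and split $p_1^{-1}(j(C))$ into two parts. The components inside $E$ are negligible modulo $\equiv$ by Lemma~\ref{lem:Fex}; the Todd correction may also be dropped, but only because it is supported on $E$ alone, not on the other codimension-two strata. The main component is $\PP(\P_{23}\oplus\CO)\to\X_{g-1}^3$, which is your $C\times_D Y$. You then finish with \cite[Theorem~9.10]{BMP}.

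The genuine gap is the step you defer as ``the main obstacle'', namely the restriction of $\P$ to the main component; the whole proposition rests on it. The paper takes it from \cite[Theorem~5.1]{vdgK}: $\P|_{\PP(\P_{23}\oplus\CO)}\cong\P_{13}\otimes\CO(1)$, i.e.\ $c_1=\ell_{1,3}+h$. Here $\ell_{1,3}$ pairs the abelian coordinate of the point on $j(C)$ with the abelian coordinate of the point on $Y$, $h$ is the hyperplane class of $Y$, and there is no further twist. With this,
\[
\fm_k\Bigl(j_*\frac{(\ell^C)^m}{m!}\Bigr)\equiv\frac12\, i_*q^*(p_{2,3})_*\Bigl(\frac{(\ell_{1,3}+h)^k}{k!}\cup\frac{\ell_{1,2}^m}{m!}\Bigr),
\]
where $q\colon Y\to\X_{g-1}^2$ is the bundle projection and $p_{2,3}$ forgets the abelian coordinate of the point on $j(C)$.

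Part (a) is then the vanishing of this abelian pushforward on monomials in $\ell_{1,2},\ell_{1,3}$ of degree $<2g-2$. That is a statement about the input \emph{times} the restricted kernel, not about a Fourier transform of $(\ell^C)^m$ on $C\to D$ alone. Part (b) is the top-degree case of \cite[Theorem~9.10]{BMP}.

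Your guessed form, a dual Poincar\'e bundle with an unspecified $\CO_Y(1)$-twist ``involving $L$'', does not pin the restriction down. An extra term involving the first point, such as $\ell_{1,2}$ or a Theta class of that factor, would change both the weight count in (a) and the coefficient extraction in (b). An overall sign on $\ell_{1,3}$ would be harmless modulo $\equiv$: it amounts to $u\mapsto -u$, which only affects terms divisible by $\ell^C$, and $i_*(\ell^C\cdot\alpha)=j_*(s_0^*\alpha-\sbar_\infty^*\alpha)\equiv0$. The shape of the twist, however, matters. Reading the twist off from $\Psi(c_{\mathcal P})$ alone is not enough either. The piecewise linear function only records which boundary twist of $\P^{\log}$ is chosen; identifying the resulting $\Gm$-torsor on the stratum still requires the degeneration data, which is what \cite{vdgK} supplies.

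Separately, no $\PP^1$-integration occurs. The point on $j(C)$ has no $\PP^1$-fibre. The map $C\times_D Y\to Y$ is the abelian scheme $C\to D$ base-changed to $Y$, and the $\PP^1$ of $Y$ survives, which is why the answer is an $i_*$. The $h$-terms disappear for a degree reason. Since $h$ is pulled back from $Y$, a term $h^a\ell_{1,3}^{k-a}\ell_{1,2}^m$ with $a>0$ pushes forward to $h^a$ times the pushforward of a monomial of degree $k+m-a<2g-2$, which vanishes.
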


\begin{proof}
    Let $\X_{g-1}^n$ be the $n$-fold relative fiber product of $\X_{g-1} \to \A_{g-1}$. The normalization of $\barA|_{D}$ can be identified with $\PP(\P \oplus \CO) \longrightarrow \X_{g-1}^2$. Similarly, the normalization of $\barA^{(2)}|_{C \times_{D} \barA}$ can be identified with
    \[
    E \quad \sqcup \quad \PP(\P_{23} \oplus \CO) \longrightarrow \X_{g-1}^3.
    \]
    Consider the diagram
    \[
    \begin{tikzcd}
    \PP(\P_{23} \oplus \CO) \ar[r] \ar[d]
        & \PP(\P \oplus \CO) \ar[d,"q"] \ar[r,"i"]
        & \barA \\
    \X_{g-1}^3 \ar[r,"p_{2,3}"]
        & \X_{g-1}^2, &
    \end{tikzcd}
    \]
    where the square is Cartesian.

    The restriction of $\P$ to $\PP(\P_{23} \oplus \CO)$ is $\P_{13} \otimes \CO(1)$, see \cite[Theorem 5.1]{vdgK}. Write $h = c_1(\CO(1))$. We then obtain
    \[
    \fm_k \left(j_*\frac{(\ell^C)^m}{m!}\right) \equiv \frac{1}{2} i_* q^*(p_{2,3})_* \left(
    \frac{(\ell_{1,3} +h)^k}{k!} \cup \frac{\ell_{1,2}^m}{m!}
    \right).
    \]

    We apply the closed formula for the proper pushforward of monomials in $\ell_{1,2}$ and $\ell_{1,3}$ proved in \cite[Theorem~9.10]{BMP}. If the degree of the monomial is less than $2g-2$, then $(p_{2,3})_*$ vanishes, which proves part~(a).
    Now consider the case $k + m = 2g - 2$. After expanding $(\ell_{1,3} + h)^k$, any monomial containing a positive power of $h$ is annihilated by $(p_{2,3})_*$. Therefore, the claim follows again from \cite[Theorem~9.10]{BMP}.
\end{proof}

\begin{remark}
    The right-hand side of Proposition \ref{pro:jclosed} differs from \cite[eq. (114)]{BMP} by the global sign $(-1)^{g-1}$. This discrepancy arises because \cite{BMP} uses the opposite convention for the Poincar\'e line bundle from the one adopted in this paper, see \S \ref{subs:extP}.
\end{remark}

\subsection{The key step}

We state a straightforward consequence of the proof of Theorem \ref{thm:wt}:
\begin{corollary}\label{cor:Fwt}
    Let $\epsilon : E \to \barA^{(2)}$ denote the normalization of the exceptional locus.
    For each
    $k \geq 0$, there exists a class $\rho^{k}(N) \in \sR^{k-2}(E)[N]$ polynomial in $N$ of degree at most $k$, such that
    \[
    [1 \times N]^* (\fm_k) = N^k\fm_k + \epsilon_* \rho^k(N)\,.
    \]
\end{corollary}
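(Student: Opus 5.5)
We will work with $\fm_k$ as a class on the smooth model $\barA^{(2)}$, pushed forward along $f$. By Lemma \ref{lem:Fchow}~(a), $\fm$ is the $f$-pushforward of
\[
\ch(\P)\cup\Td^\vee(\cR_{\pi^{(2)}}-p_1^*\cR_\pi-p_2^*\cR_\pi).
\]
The first step is to split this into $\exp(\ell)$ plus a correction. The residue sheaves are supported on the boundary strata of codimension two. Hence the Todd factor is $1$ plus a class supported on the codimension-two strata of $\barA^{(2)}$: the exceptional locus $E$ and the pullbacks of $C$ along the two projections. We use the explicit formula \eqref{eq:todd} and its analogue for $\pi^{(2)}$ to write it as $\epsilon_*$ of classes in $\sR^*(E)$ plus pushforwards from the other codimension-two strata, all expressed through piecewise polynomials.

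Thus $\fm_k=\ell^k/k!+\Phi(\psi_k)$ up to such terms, where $\psi_k$ is a piecewise polynomial of degree $k$ divisible by the codimension-two stratum classes, possibly multiplied by powers of $\ell$.

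The second step is to compute $[1\times N]^*$ term by term, following the proof of Theorem \ref{thm:wt} with $s=2$. For the main term, we rerun the calculation \eqref{eq:expansion_gamma} with $f=1$. It gives
\[
[1\times N]^*\tfrac{\ell^k}{k!}=\text{coefficient of }a^k\text{ in }\exp(Na\ell)\cup\Phi\bigl(e^{-Na\Psi(c_\P)}\Psi(\mathfrak N_{N,m}^*(e^{ac_\P}))\bigr).
\]
The factor multiplying $\exp(Na\ell)$ is $1$ on the interior. We need to show that this factor equals $1$ plus a piecewise polynomial supported on the codimension-two cones, i.e.\ vanishing on the ray and on the rays' neighbourhoods of the codimension-one cones of $\Sigma^{(2)}$.

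The plan is to check this by restricting the explicit formula of Lemma \ref{lem:contribution-formula-m} to the codimension-one faces. Along the faces $\{\mathsf{x}_1=0\}$ and $\{\mathsf{x}_2\in\mathsf{t}\ZZ\}$ the computation reduces to the $s=1$ case. There, the telescoping in the proof of Theorem \ref{thm:polynomiality} shows that $\mathfrak N^*_{N,m}$ acts by $c(Nm)$. This exactly matches $e^{Nac_\P}$, because $c_\P$ is linear in $m$ on those faces.

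A piecewise polynomial on $\Sigma^{(2)}$ vanishing on all codimension-one cones is divisible by the product of the corresponding linear forms. It is therefore $\Phi$ of something pushed forward from the closed codimension-two strata. Modulo the boundary components $C\times_D\barA$ and $\barA\times_D C$, which we must also show cancel by the same face restriction, this leaves exactly the image of $\epsilon_*$ with decorations in $\sR^*(E)$.

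Polynomiality in $N$ and the degree bound $\le k$ come directly from Theorem \ref{thm:wt}~(a),(c). The total degree in $\ell$ and piecewise-polynomial degree is $k$. The correction has codimension $k$ in $\barA^{(2)}$, so its decoration on $E$ has codimension $k-2$.

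The correction terms from $\Td^\vee$ are treated the same way. They are already supported in codimension two, and $[1\times N]^*$ preserves this by Corollary \ref{cor:push_pull_trop_geom}, since $b$ and $N$ map boundary strata to boundary strata. Their leading $N^k$ behaviour is controlled by Theorem \ref{thm:wt}~(c). We still have to verify that the $N^k$ coefficient of these terms recombines with the residual part of $\Td$. This should follow from the fact that $\Td^\vee$ of the residue sheaves is compatible with $N$-pullback on the log level, as $\Omega^{\log}$ is a pullback of $\mathbb E$ (Remark \ref{rem:Hodge_is_Omlog}).

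The main obstacle is the face-restriction check. We must show that every nonconstant correction genuinely lives over $E$ and not over the codimension-two strata $C\times_D\barA$ on which $\fm$ also has support. The first approach is to use the symmetry of $c_\P$ and the vanishing of $c_{\P,N}$ on the rays (Corollary \ref{cor:pp_corr_poincare_precise}).
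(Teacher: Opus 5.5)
Your mechanism for the main term $\ell^k/k!$ is the right one, and it is what the paper means when it calls this corollary a consequence of the proof of Theorem~\ref{thm:wt}. By \eqref{eq:expansion_gamma} with $s=2$, the correction is the $a^k$-coefficient of $e^{Na\ell}\cup\Phi(F_N)$, where $F_N=e^{-Na\Psi(c_{\P})}\Psi(\mathfrak N^*_{N,m}(e^{ac_{\P}}))-1$. The face check does work:
\begin{itemize}
\item On $\{\mathsf x_2\in\mathsf t\ZZ\}$, the formula of Lemma~\ref{lem:contribution-formula-m} collapses to $c(Nm)$, with $c(m)=f|_{\sigma_{\rho_0}}(m)(\mathsf x_2\to 0)$.
\item On $\{\mathsf x_1=0\}$, the $H^f_{\rho,1}$-term cancels every occurrence of $f|_{\sigma_{\rho_0}}$. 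What remains is the $s=1$ formula applied to $f|_{\sigma_{\rho_1}}$ restricted to $\mathsf x_1=0$.
\end{itemize}
For $f=e^{ac_{\P}}$, both faces give exactly $e^{Na\Psi(c_{\P})}$.

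However, your criterion ``vanishing on all codimension-one cones'' is wrong, because the correction lives on the diagonal face. For $k=2$, a direct computation with Lemma~\ref{lem:contribution-formula-m} gives $[1\times N]^*\ell^2-N^2\ell^2=-\binom{N}{2}\Phi(\mathsf e)$. Here $\mathsf e$ equals $\mathsf x_1(\mathsf t-\mathsf x_2)$ on $0\le\mathsf x_1\le\mathsf x_2\le\mathsf t$ and $\mathsf x_2(\mathsf t-\mathsf x_1)$ on $0\le\mathsf x_2\le\mathsf x_1\le\mathsf t$, so it is nonzero on $\mathsf x_1=\mathsf x_2$. What you need, and what the face check gives, is vanishing on the two non-diagonal faces only. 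That is exactly divisibility by $\mathsf e$, i.e.\ being of the form $\epsilon_*(\cdot)$.

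The genuine gap is in your handling of the Todd factor. You assume $\Td^\vee(\cR_{\pi^{(2)}}-p_1^*\cR_\pi-p_2^*\cR_\pi)-1$ also has components over $C\times_D\barA$ and $\barA\times_D C$. You are then left needing the $N^k$-coefficients of those components to ``recombine'', which you justify by a compatibility of $\Td^\vee(\cR)$ with $N$-pullback on the log level. That justification fails: $\cR=\Omega^{\log}/\Omega$ is not a log invariant, since the ordinary $\Omega$ changes under log modifications and under $1\times N$. Knowing that $\Omega^{\log}$ is pulled back from $\mathbb E$ says nothing about $\cR$.

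The missing observation is that these components do not exist. The map $f$ is an isomorphism from $\barA^{(2)}\setminus E$ onto $\barA^2\setminus(C\times_D C)=G\times_B\barA\cup\barA\times_B G$, and there $\cR_{\pi^{(2)}}\cong p_1^*\cR_\pi\oplus p_2^*\cR_\pi$. So the $K$-class inside the Todd factor restricts to zero off $E$. Hence $\fm_k=\ell^k/k!+(\text{terms }\ell^j\Phi(\psi)\text{ with }\psi\text{ divisible by }\mathsf e)$; the paper uses exactly this before Corollary~\ref{cor:Fleading2}.

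With this, nothing needs to recombine. The term $N^k$ times the Todd part is simply absorbed into $\epsilon_*\rho^k(N)$, and the only requirement is that $[1\times N]^*$ keeps these terms divisible by $\mathsf e$. Your ``preserves codimension-two support'' is too weak for that, but it follows from the same two face formulas: if $\psi$ vanishes on $\{\mathsf x_1=0\}$ and $\{\mathsf x_2\in\mathsf t\ZZ\}$, so does $e^{-Na\Psi(c_{\P})}\Psi(\mathfrak N^*_{N,m}(e^{ac_{\P}}\psi))$. The $\ell$-powers are absorbed by the prefactor $e^{Na\ell}$. Once this is in place, the ``main obstacle'' in your last paragraph disappears, and the degree bound follows from Theorem~\ref{thm:wt}(c) as you say.
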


The following is the key computation in this section.

\begin{proposition}\label{pro:Fleading}
    Under the equivalence relation in Definition \ref{def:equiv}, we have
    \[
    \fm_{2g}(1) \equiv \frac{(-\theta)^g}{g!}\,.
    \]
\end{proposition}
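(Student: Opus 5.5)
The plan is to extract the top weight part of Corollary~\ref{cor:F(exp_theta)} using the operator $[N]^*$, working modulo the subspace
\[
V:=j_*\sR^{g-2}(C)\subseteq \CH^g(\barA).
\]
By Definition~\ref{def:equiv}, $V$ is exactly the space of classes $\equiv 0$ in codimension $g$. Because $V$ is a $\QQ$-vector space, an identity of polynomials in $N$ that holds modulo $V$ for all $N\ge 1$ implies, by a Vandermonde argument, that corresponding coefficients agree modulo $V$. So it suffices to compute both sides of the codimension $g$ part of \eqref{eq:Ftheta} after applying $[N]^*$, as polynomials in $N$ modulo $V$.

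\emph{Left-hand side.} Write $\exp(-\theta)\cup\Td^\vee(\cR_\pi)^{-1}=\sum_c\alpha_c$ with $\alpha_c\in\sR^c(\barA)$. Since $\fm_k$ raises codimension by $k-g$, the codimension $g$ part of the left-hand side is $\sum_c\fm_{2g-c}(\alpha_c)$, and the $c=0$ term is $\fm_{2g}(1)$. I claim that for any $\alpha\in\sR^*(\barA)$,
\[
[N]^*\bigl(\fm_k(\alpha)\bigr)\equiv N^k\fm_k(\alpha).
\]
To see this, write $\fm_k(\alpha)=p_{2*}(\fm_k\cup p_1^*\alpha)$ on $\barA^{(2)}$. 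Applying Lemma~\ref{lem:Npush}(a) in the second factor, together with Lemma~\ref{lem:Npush}(b) in the first factor (here the composite $p_2\circ\Delta$-style projection onto the first factor is what is used), gives
\[
[N]^*\fm_k(\alpha)=p_{2*}\bigl([1\times N]^*(\fm_k)\cup p_1^*\alpha\bigr).
\]
Corollary~\ref{cor:Fwt} then gives $N^k\fm_k(\alpha)$ plus a term $(f\circ\epsilon)_*\rho^k(N)$ applied to $\alpha$, and this term lies in $j_*\sR^*(C)$ by Lemma~\ref{lem:Fex}. Consequently the left-hand side becomes $\sum_c N^{2g-c}\fm_{2g-c}(\alpha_c)$ modulo $V$, and its $N^{2g}$ coefficient is $\fm_{2g}(1)$.

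\emph{Right-hand side.} The codimension $g$ part of $(-1)^g\exp(\theta-D/8)$ is a combination of the classes $D^a\theta^{g-a}$. The class $D$ is pulled back from $B$, so by the projection formula (Lemma~\ref{lem:Npush}(b) with $s=0$) we have $[N]^*(D^a\theta^{b})=D^a\,[N]^*(\theta^b)$. Proposition~\ref{pro:thetawt}(c) gives $[N]^*\theta^b=N^{2b}\theta^b+j_*\vartheta^b(N)$ with $\vartheta^b(N)\in\R^{b-2}(C)[N]$. Since $\iota^*D$ is a multiple of the base class on $D$ (pulling back to a multiple of $\theta_2^C$), the product $D^a\,j_*\vartheta^b(N)$ again lies in $j_*(\cdots)$. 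I would check that the $\lambda$-classes entering $\vartheta^b$ drop out in codimension $g$, or otherwise note that the $N^{2g}$ coefficient in the next step is unaffected by them. Granting this, the $N^{2g}$ coefficient of the right-hand side modulo $V$ comes only from $a=0$, and equals $(-1)^g\theta^g/g!$.

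Comparing the $N^{2g}$ coefficients of the two sides gives $\fm_{2g}(1)\equiv(-\theta)^g/g!$. The main obstacle I expect is bookkeeping: I need to confirm that every correction term really lies in $j_*\sR^{g-2}(C)$, rather than in $j_*\R(C)$ involving $\lambda$-classes or in $i_*$-classes. This applies to the $\epsilon_*\rho^k$ terms acting on the $\Td$-corrected inputs $\alpha_c$, and to the $D^a\,j_*\vartheta^b$ terms. To handle the $\lambda$-classes, I would first try the relations $\lambda_{g-1}\cup\gamma=0$ for boundary classes, together with Lemma~\ref{lem:Fex} applied to the projection-formula form of each correction term.
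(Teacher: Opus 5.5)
Your argument is correct and is essentially the paper's proof. Both take the $N^{2g}$ (weight $2g$) coefficient of the codimension-$g$ part of \eqref{eq:Ftheta}, using Corollary~\ref{cor:Fwt} with Lemma~\ref{lem:Fex} on the left and Proposition~\ref{pro:thetawt} on the right. The only difference is that the paper bounds the weights of the terms $\fm_i(\cdot)$ with $i<2g$ via Theorem~\ref{thm:wt}, whereas the degree bound on $\rho^k(N)$ in Corollary~\ref{cor:Fwt} already gives you the same.

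The bookkeeping worries you raise are moot. By Corollary~\ref{cor:Fwt} and Proposition~\ref{pro:thetawt}(a), the terms with $c>0$ and the terms $D^a\theta^{g-a}$ with $a\ge 1$ have $N$-degree $<2g$, so they never reach the $N^{2g}$ coefficient. The only corrections that survive are $\epsilon_*\rho^{2g}(N)$ applied to $1$ and $g!\,\widetilde{\mathsf S}^g_g-\theta^g$. Both are $j_*$ of polynomials in $\theta_1^C,\ell^C,\theta_2^C$, so no $\lambda$-classes or $i_*$-classes appear.
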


\begin{proof}
    Taking the codimension $g$ component of \eqref{eq:Ftheta}, we obtain
	\begin{equation}\label{eq:Ftheta2}
    \Big[\fm(\exp(-\theta) \cup \Td^\vee(\cR_\pi)^{-1})\Big]_{\codim = g} = \frac{(-\theta +D/8)^g}{g!}\,.
    \end{equation}
    We decompose the left-hand side according to the codimension of $\fm$:
    \[
    \Big[\fm\big(\exp(-\theta)\cup \Td^\vee(\cR_\pi)^{-1}\big)\Big]_{\codim = g} = \fm_0(\cdots) + \cdots + \fm_{2g}(1)\,.
    \]
    By codimension considerations, the terms $\fm_{>2g}$ do not contribute to the codimension $g$ part. We take weight $2g$ on both sides of \eqref{eq:Ftheta2}.

    From the explicit shape of the Fourier transform in Lemma \ref{lem:Fchow}, by Lemma \ref{lem:Npush} and Theorem \ref{thm:wt}, each term $\fm_i(...)$ has weight $\leq i$. For the term $\fm_{2g}(1)$, by Corollary~\ref{cor:Fwt},
    \[
    [\fm_{2g}(1)]^{(2g)} \equiv \fm_{2g}(1)\,.
    \]
    On the right-hand side, since $D$ is a class pulled back from the base $B$, Proposition~\ref{pro:thetawt} shows that
    \[
    \left[\frac{(-\theta + D/8)^g}{g!}\right]^{(2g)} \equiv \left[\frac{(-\theta)^g}{g!}\right]^{(2g)} \equiv \frac{(-\theta)^g}{g!}\,.
    \]
    This completes the proof.
\end{proof}

Consider the correspondence
\[
\fm^\circ : = (-1)^g \cdot \ch(\P^\vee) \cup \Td^\vee(p_1^*\cR_\pi + p_2^*\cR_\pi-\cR_{\pi^{(2)}})\,.
\]
For $\alpha\in\CH^*(\barA)$, Lemma \ref{lem:Fex} implies that
\[
\fm_k (\alpha) \equiv (-1)^{g+k} \cdot \fm_k^\circ(\alpha)
\]
because the Todd class correction is supported on $E$.

For convenience, we introduce the class
\[
\sfS_g^\circ = \left[ \exp(\theta) \cup \left(1 - \frac{1}{2}i_*\sum_{k\geq 0}\frac{|B_{2k+2}|}{(k+1)!}\frac{(-\theta_2^C)^k}{2}\right)\right]_{\operatorname{codim}=g}\,.
\]
\begin{corollary}\label{cor:Fleading2}
    $\big[\fm^\circ(\Td^\vee(\cR_\pi)^{-1})\big]_{\codim =g} \equiv \sfS_g^\circ$.
\end{corollary}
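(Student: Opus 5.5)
We expand $\Td^\vee(\cR_\pi)^{-1}$ via \eqref{eq:todd} as $1+\frac12 j_*(\psi)$ with $\psi\in\sR^*(C)$. The codimension-$g$ part of $\fm^\circ$ applied to it then splits as
\[
\fm^\circ_{2g}(1)\;+\;\sum_{k+c=g}\fm^\circ_{k}\Big(\tfrac12 j_*\psi_{c}\Big),
\]
where $\psi_c$ has codimension $c$ in $C$, so $j_*\psi_c$ has codimension $c+2$ in $\barA$. Since $\fm_k^\circ\equiv(-1)^{g+k}\fm_k$, Proposition \ref{pro:Fleading} gives
\[
\fm_{2g}^\circ(1)\equiv(-1)^{3g}\frac{(-\theta)^g}{g!}=\frac{\theta^g}{g!}.
\]
This is the $\exp(\theta)$ leading term of $\sfS_g^\circ$.

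For the boundary terms we first rewrite $\psi$ in terms of $\ell^C$ and $\theta_2^C$. The class $\theta_2^C$ is pulled back from $D\subset B$, so $j_*(\theta_2^{C})^a(\ell^C)^m$ can be written as $(D\text{-class})^a\cup j_*(\ell^C)^m$, up to the identification $\frac12\iota_*t^a$. Because $\fm^\circ$ is linear over $\CH^*(B)$, it suffices to compute $\fm^\circ_k(j_*(\ell^C)^m)$. We need $k+(m+2+a)$ to equal $g$ as a total codimension count, where the correspondence $\fm_k$ shifts codimension by $k-g$. By Proposition \ref{pro:jclosed}(a), such a term is $\equiv0$ unless $k+m\ge 2g-2$. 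Combined with the codimension constraint, this forces $a=0$ and $k+m=2g-2$.

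Proposition \ref{pro:jclosed}(b) then evaluates the surviving terms as $i_*$ of the coefficient of $u^m$ in $(-\theta_1^C+u\ell^C-u^2\theta_2^C)^{g-1}/(g-1)!$. Summing these against the Bernoulli coefficients $\frac{(-1)^{m-1}|B_{2m}|}{(2m)!}$ from \eqref{eq:todd} is a generating-function calculation. We intend to perform it by interpreting $(\ell^C)^{2m-1}-(\ell^C+2\theta_2^C)^{2m-1}$, divided by $-2\theta_2^C$, as a finite difference in $u$. We then apply the identity $\frac12 i_*(\text{stuff on }Y)$ together with the relations $\theta_1^C\mapsto\theta$ and $\ell^C\mapsto$ a hyperplane-type class, to rewrite the result as
\[
\exp(\theta)\cup\Big(-\tfrac12 i_*\textstyle\sum_k\frac{|B_{2k+2}|}{(k+1)!}\frac{(-\theta_2^C)^k}{2}\Big)
\]
in codimension $g$.

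The main obstacle is this final combinatorial matching. We must track the signs $(-1)^{g+k}$, the factor $\frac12$ in $j_*$ versus $i_*$, and the restriction $i^*\theta$, and show that all $\ell^C$-dependence cancels modulo $j_*\sR(C)$, leaving only powers of $\theta_2^C$. Our first attempt is to use $U^2=(\ell^C)^2$ on $Y$ to reduce everything to the two monomials $1,\ell^C$ times powers of $\theta_2^C$. After that, we would verify the coefficient identity by comparing exponential generating functions in $\theta_2^C$, using $\sum B_{2m}x^{2m}/(2m)!=\frac{x}{2}\coth\frac x2$.
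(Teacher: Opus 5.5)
Your skeleton matches the paper's proof. You separate the codimension pieces of $\Td^\vee(\cR_\pi)^{-1}$ and take the leading term from Proposition~\ref{pro:Fleading} with the sign $(-1)^{g+k}$. You pull powers of $\theta_2^C$ (the restriction of $-\tfrac12\pi^*D$) out of $\fm^\circ$ by linearity over $\CH^*(B)$. Terms with $k+m<2g-2$ die by Proposition~\ref{pro:jclosed}(a), and the rest are evaluated by (b).

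Your codimension count is misstated twice. Since $\fm_k$ shifts codimension by $k-g$, the condition is $k+m+a+2=2g$, not $k+c=g$ or $k+m+2+a=g$; taken literally, your versions contradict your own conclusion $k+m=2g-2$. With the correct count you get $a=0$, $k=2g-2r$, $m=2r-2$. Only the $\theta_2^C$-free part $(2r-1)(\ell^C)^{2r-2}$ of $\frac{(\ell^C)^{2r-1}-(\ell^C+2\theta_2^C)^{2r-1}}{-2\theta_2^C}$ contributes, so no finite-difference interpretation is needed. The boundary contribution is then $\frac12 i_*\sum_{r}\frac{(-1)^{g+r+1}|B_{2r}|}{2r}[u^{2r-2}]\frac{(-\theta_1^C+u\ell^C-u^2\theta_2^C)^{g-1}}{(g-1)!}$.

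The genuine gap is the step you call the main obstacle. The $\ell^C$-dependence does not cancel numerically, so no comparison of coefficients or generating functions (with or without $\frac x2\coth\frac x2$) can remove it. For example, for $g=3$ the $u^2$-coefficient is $\frac12(\ell^C)^2+\theta_1^C\theta_2^C$. There $(\ell^C)^2$ occurs in exactly one term of the sum, and it is linearly independent of the other monomials in $\sR^2(C)$, and hence in $\sR^2(Y)$.

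The missing idea is geometric. On $Y$ one has $\ell^C=[s_0]-[\sbar_\infty]$ and $U=[s_0]+[\sbar_\infty]$, while $i\circ s_0=i\circ\sbar_\infty=j$. Hence $i_*(\ell^C\cup\alpha)$ and $i_*(U\cup\alpha)$ lie in $j_*\sR^*(C)$, so they are $\equiv 0$ for every $\alpha\in\sR^*(Y)$. Combined with $i^*\theta=\theta_1^C+U/2$, this gives $i_*(\theta_1^C\cup\alpha)\equiv\theta\cup i_*(\alpha)$, which is the actual justification for your substitution of $\theta$ for $\theta_1^C$. Your relation $U^2=(\ell^C)^2$ is true, but it is only useful together with $i_*(U\cup -)\equiv 0$, which you never state.

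With these facts, the $u^{2r-2}$-coefficient collapses modulo $j_*\sR^*(C)$ to the single monomial $\frac{(-1)^{g-1}(i^*\theta)^{g-r}(\theta_2^C)^{r-1}}{(g-r)!(r-1)!}$. Setting $k=r-1$, the sum then matches the $i_*$-part of $\sfS_g^\circ$ term by term, with no Bernoulli identity involved.
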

\begin{proof}
    For $0 \le k \le 2g$, consider the individual components
    \[
    \fm^\circ_{2g-k}\Big([\Td^\vee(\cR_\pi)^{-1}]_{\codim = k}\Big).
    \]
    When $k = 0$, this contribution is computed in Proposition~\ref{pro:Fleading}. For $k > 0$, equation~\eqref{eq:todd} implies that the Fourier transform is nonzero only when $k = 2r$ is even. Thus, for $1 \le r \le g$, we are led to compute
    \[
    \frac{(-1)^{r+1}|B_{2r}|}{2(2r)!} \cdot \fm^\circ_{2g-2r}\Bigg(j_*\!\left(
    \frac{(\ell^C)^{2r-1} - (\ell^C + 2\theta_2^C)^{2r-1}}{-2\theta_2^C}\right)\Bigg)\,.
    \]
    Since $\theta_2^C$ is pulled back from the base, by the projection formula, its powers factor out of $\fm^\circ$. Therefore, by Proposition~\ref{pro:jclosed} (a), the only nontrivial contribution is
    \[
    \frac{(-1)^{r+1} |B_{2r}|}{4r}\cdot \fm^\circ_{2g-2r}\left(j_*\frac{(\ell^C)^{2r-2}}{(2r-2)!}\right).
    \]
    Combining this with Proposition \ref{pro:jclosed} (b), we obtain
    \[
    \sfS_g^\circ
    =
    \frac{\theta^g}{g!}
    +
    \frac{1}{2}\, i_*
    \sum_{r=1}^g
    \frac{(-1)^{g+r+1}|B_{2r}|}{2r}
    \Bigg[\frac{(-\theta_1^C + u\ell^C -u^2\theta_2^C)^{g-1}}{(g-1)!}\Bigg]_{\text{coeff } u^{2r-2}}\,.
    \]
    It follows from \cite[Proposition 4.5]{EGH10} that $
    i^*(\theta) = \theta_1^C+U/2$. Since $U=[s_0]+[\sbar_{\infty }]$ and $\ell^{C}=[s_0]-[\sbar_{\infty }]$ are pushed forward from $C$, we obtain that $i_{\ast}(\theta_1^{C}\cdot \alpha)\equiv i_{\ast}(i^{\ast}(\theta)\cdot \alpha)$ and $i_{\ast}(\ell^{C}\cdot \alpha)\equiv 0$ for all $\alpha\in \sR^{\ast}(Y)$.
    Therefore, the formula simplifies to
    \begin{align*}
    \mathsf{S}_g^\circ &\equiv \frac{\theta^g}{g!}
    +
    \frac{1}{2}\, i_*
    \sum_{r=1}^g
    \frac{(-1)^{g+r+1}|B_{2r}|}{2r}
    \Bigg[\frac{(-i^*\theta -u^2\theta_2^C)^{g-1}}{(g-1)!}\Bigg]_{\text{coeff } u^{2r-2}}\\
    &=\left[ \exp(\theta)\cup\left(1 - \frac{1}{2}i_*\sum_{k\geq 0}\frac{|B_{2k+2}|}{2k+2}\frac{(-2\theta_2^C)^k}{2^kk!}\right)\right]_{\operatorname{codim}=g}\,.
    \end{align*}
    This completes the proof.
\end{proof}

We prove the main result of this section.

\begin{proposition}\label{pro:ecorrect}
    There exists $\alpha\in \sR^{g-2}(C)$ such that $[e] = \sfS^g_g + j_*(\alpha)$.
\end{proposition}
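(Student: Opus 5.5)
We start from the identity $[e]=\fm^{-1}(1)$ and the inverse relation \eqref{eq:fourier_is_iso}. The plan is to compute $\fm^{-1}(1)$ modulo the equivalence relation $\equiv$ of Definition \ref{def:equiv}, and then compare the result with $\sfS_g^g$.

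\textbf{Step 1: reduce to $[\fm^{-1}(1)]_{\codim=g}$.} Since $e$ has codimension $g$, $[e]$ equals the codimension $g$ part of $\fm^{-1}(1)$. By Lemma \ref{lem:Fchow}(b), $\fm^{-1}$ differs from $(-1)^g f_*\ch(\P^\vee)$ only by the Todd factor $\Td^\vee(\cR_{\pi^{(2)}})^{-1}$. The non-trivial part of that factor is supported on $E$ together with the pullbacks of $\cR_\pi$. Lemma \ref{lem:Fex} lets us discard every correction supported on $E$ up to $\equiv$. Comparing with $\fm^\circ$, we get
\[
[e]\equiv (-1)^g\bigl[\fm^\circ_{\bullet}(\text{Todd corrections from } p_1^*\cR_\pi)\bigr]_{\codim=g}.
\]
Concretely, this is $\fm^{-1}(1)\equiv \pm\fm^\circ(\Td^\vee(\cR_\pi)^{-1})$ up to the sign convention $\fm_k\equiv(-1)^{g+k}\fm^\circ_k$.

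\textbf{Step 2: apply Corollary \ref{cor:Fleading2}.} This gives $[e]\equiv\sfS_g^\circ$, after we track the parity signs. The point to check is that $\fm^\circ$ and $\fm^{-1}$ agree in the relevant codimensions up to the sign $(-1)^{g+k}$. When $k$ is even, as it is in the Todd class expansion \eqref{eq:todd}, these signs are uniform. The same argument as in Proposition \ref{pro:Fleading} also uses Theorem \ref{thm:wt} and Corollary \ref{cor:Fwt}, so that only the top-weight pieces survive.

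\textbf{Step 3: show $\sfS_g^\circ\equiv\sfS_g^g$.} The two formulas differ by $j_*$-terms and by the $i_*$-term. For the $i_*$-term we use $c_1(N_i)=-2\theta_2^C-U$. We expand $c_1(N_i)^{k-1}$ and note that $U\cdot\alpha$ is pushed forward from $C$ through $s_0$ and $\sbar_\infty$, hence is $\equiv 0$ after applying $i_*$. What remains is $(-2\theta_2^C)^{k-1}$. The Bernoulli coefficients then match by the identity $|B_{2k}|=(-1)^{k+1}B_{2k}$, which turns $\frac{(-1)^kB_{2k}}{2^kk!}(-2\theta_2^C)^{k-1}$ into $-\frac{|B_{2k}|}{2k!}(-\theta_2^C)^{k-1}$. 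The $\exp(\theta)$ factor is handled by $i^*\theta=\theta_1^C+U/2$ (\cite[Proposition 4.5]{EGH10}), which contributes only $\equiv$-trivial corrections. The $j_*$ part of $\sfS_g^g$ is $\equiv 0$ by definition. The classes $\theta\cup j_*(\cdot)=j_*(j^*\theta\cdot)$ stay in $j_*\sR(C)$ because $j^*\theta\in\sR^1(C)$.

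\textbf{Main obstacle.} We expect Step 1 to be the hardest part: justifying that every Todd correction of $\fm^{-1}$ not captured by $\Td^\vee(\cR_\pi)^{-1}$ lands in $j_*\sR^{g-2}(C)$. In particular, the image must land in the small tautological ring $\sR$ and not merely in $\CH$. For this we would use the projective bundle description of $E$ together with the explicit formula \eqref{eq:todd}. This shows that all the relevant classes are polynomial in $\theta_i^C$, $\ell^C$ and $h$, which is exactly what Lemma \ref{lem:Fex} requires.
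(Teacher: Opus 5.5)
Your route is the paper's: $[e]=\fm^{-1}(1)$, then compare $\fm^{-1}$ with $\fm^\circ$ through Lemma \ref{lem:Fchow}, apply Corollary \ref{cor:Fleading2}, and finish with $\sfS^g_g\equiv\sfS^\circ_g$. Your Step 3 correctly spells out that last equivalence, which the paper only asserts.

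The concrete problem is in Steps 1--2, where you compare $\fm^{-1}$ with $\fm^\circ$ using $\fm_k\equiv(-1)^{g+k}\fm^\circ_k$. That relation compares the \emph{forward} transform $\fm$, built from $\ch(\P)$, with $\fm^\circ$, which is built from $\ch(\P^\vee)$ and already carries the factor $(-1)^g$. It says nothing about $\fm^{-1}$. Your claim that ``$\fm^\circ$ and $\fm^{-1}$ agree in the relevant codimensions up to the sign $(-1)^{g+k}$'' is false. Since only even $k$ occur, applying it would give $[e]\equiv(-1)^g\sfS^\circ_g$, which is wrong for odd $g$. The extra $(-1)^g$ in your Step 1 display likewise double counts the sign built into $\fm^\circ$. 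As written, the sign you end up with depends on which of these two incorrect claims you apply, so the ``$\pm$'' is never actually resolved.

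The correct comparison is exact. Since $\Td^\vee$ is multiplicative,
\[
\Td^\vee(\cR_{\pi^{(2)}})^{-1}=\Td^\vee\bigl(p_1^*\cR_\pi+p_2^*\cR_\pi-\cR_{\pi^{(2)}}\bigr)\cup p_1^*\Td^\vee(\cR_\pi)^{-1}\cup p_2^*\Td^\vee(\cR_\pi)^{-1}.
\]
The first factor is part of $\fm^\circ$. So Lemma \ref{lem:Fchow}(b) gives
\[
\fm^{-1}=\fm^\circ\cup p_1^*\Td^\vee(\cR_\pi)^{-1}\cup p_2^*\Td^\vee(\cR_\pi)^{-1},
\]
with no sign and nothing to discard. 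This is exactly the paper's argument. The projection formula then yields $[e]=\Td^\vee(\cR_\pi)^{-1}\cup\fm^\circ(\Td^\vee(\cR_\pi)^{-1})$.

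Note the outer factor coming from $p_2^*$, which your Step 1 omits. It is harmless because $\Td^\vee(\cR_\pi)^{-1}\equiv 1$, and Corollary \ref{cor:Fleading2} then gives $[e]\equiv\sfS^\circ_g$ directly.

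Consequently, the ``main obstacle'' you identify does not arise in this proposition. The $E$-supported part of the Todd class in $\fm^{-1}$ is simply absorbed into $\fm^\circ$. The place where $E$-supported terms are genuinely discarded via Lemma \ref{lem:Fex} is upstream, in Proposition \ref{pro:Fleading} and Corollary \ref{cor:Fleading2}, which you may take as given.
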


\begin{proof}
    Since $\sfS^g_g \equiv \sfS_g^\circ$, we may equivalently show that $[e] \equiv \sfS^\circ_g$. Consider the inverse Fourier transform $\fm^{-1}$. By Lemma \ref{lem:Fchow},
    \[
    \fm^{-1} = \fm^\circ \cup p_1^*\Td^\vee(\cR_\pi)^{-1} \cup p_2^*\Td^\vee(\cR_\pi)^{-1}\,.
    \]
    By the same argument as in \cite[Proposition~5.2]{BMP}, one has $[e] = \fm^{-1}(1)$. Hence
    \[
    [e] = \fm^{-1}(1) = \Td^\vee(\cR_\pi)^{-1} \cup \fm^\circ(\Td^\vee(\cR_\pi)^{-1})\,.
    \]
    Since $\Td^\vee(\cR_\pi)^{-1} \equiv 1$, the claim follows from Corollary \ref{cor:Fleading2}.
\end{proof}

By \cite[Theorem 1.6]{BMP}, Proposition \ref{pro:ecorrect} holds on the semi-abelian part $ G \subset \barA$. By the excision sequence, it follows that $[e]= \sfS^\circ_g + j_*(\alpha)$ for some $\alpha\in \CH^{g-2}(C)$. The nontrivial content of the proposition is therefore to show that it is possible to find $\alpha$ in $\sR^{g-2}(C)$. See Remark \ref{rmk:gz}.

\subsection{Proof of Theorem \ref{thm:unit}}\label{sec:proofA}
To finish our proof of Theorem \ref{thm:unit}, we first study the pullback of the tautological class $\sfS_g^g$ along the morphism $j: C\to \barA$. For this, we require the following result:

\begin{lemma}\label{lem:self-In}
    The self-intersection formulas for $i$ and $j$ are given by
    $$
    i^*\left(\frac{1}{2}i_*(\alpha)\right) = c_1(N_i)\cup\alpha + s_{0,*}(\sbar_{\infty}^*\alpha) + \sbar_{\infty,*}(s_0^*\alpha)\, ,\quad j^*\left(\frac{1}{2}j_*(\beta)\right) = c_2(N_j)\cup\beta\,.
    $$
\end{lemma}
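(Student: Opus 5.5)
The plan is to apply the general excess intersection / self-intersection formula $f^*f_*\alpha = c_{\mathrm{top}}(\text{excess bundle})\cup \alpha + (\text{contributions of the other branches of } f^{-1}(f(X)))$ to the finite maps $i:Y\to\barA$ and $j:C\to\barA$. Both maps are finite onto their images: $i$ is the normalization of an étale double cover of $\partial\barA$, and $j$ is finite étale of degree two onto the double locus. The key step is to compute the fiber products $Y\times_{\barA}Y$ and $C\times_{\barA}C$ using Theorem \ref{thm:bdy} and the local model from the proof of Proposition \ref{pro:Atrop}. Locally near the singular locus, $\barA$ looks like the $\mathbb{Z}/2$-quotient of a family over $\mathbb{A}^2$ with boundary $\{xy=0\}$. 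There $Y$ is the normalization of the two branches, and $C$ is the origin stratum.

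For $i$, I would decompose $Y\times_{\barA}Y$ into components. The first is the diagonal-type component, isomorphic to $Y$; the map $Y\to\barA$ is an immersion there, so its normal bundle $N_i$ contributes $c_1(N_i)\cup\alpha$. The second consists of the components where two distinct sheets meet. These lie over the double locus and are identified, by the gluing description $j=i\circ s_0=i\circ\sbar_\infty$ in \eqref{eq: j = is0=isinf}, with two copies of $C$. On one copy the two projections to $Y$ are $s_0$ and $\sbar_\infty$, and on the other they are $\sbar_\infty$ and $s_0$. At these points the two branches meet transversally in the normal crossings model, so there is no excess bundle. Their contribution is therefore $s_{0,*}\sbar_\infty^*\alpha+\sbar_{\infty,*}s_0^*\alpha$.

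The factor $\tfrac12$ must be tracked against the degree-two étale cover. Because $Y\to\partial\barA$ factors through an étale double cover followed by normalization, $Y\times_{\barA}Y$ contains each component twice: the deck involution produces a second copy of the diagonal component and doubles the branch components. This is why the formula is written for $\tfrac12 i_*$. I would check the constant against the identity $\tfrac12 i_*(c_1(N_i)^a)=\Phi(\mathsf{x}^{a+1}+\mathsf{y}^{a+1})$ from \eqref{eq:normalbunex}, where restriction to $Y$ should amount to evaluating piecewise polynomials on the ray.

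For $j$, I would argue in the same way. Here $C\times_{\barA}C$ consists of $C$ together with its image under the deck involution, and both components are embedded with normal bundle $N_j$ of rank $2$. They contribute $2c_2(N_j)\cup\beta$ in total, which the $\tfrac12$ halves. As a consistency check, I would confirm this matches $\tfrac12 j_*(1)=\Phi(\mathsf{xy})$ and the fact that restricting $\mathsf{xy}$ to the origin cone gives the product of the two Chern roots $\alpha_1(N_j)\alpha_2(N_j)=c_2(N_j)$.

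The main obstacle is the precise bookkeeping of multiplicities and of which section pairs with which in the $i$ formula. Concretely, I need to decide whether the orientations $s_0\leftrightarrow\sbar_\infty$ versus $\sbar_\infty\leftrightarrow s_0$ each appear once after dividing by $2$, given the stacky $\mathbb{Z}/2$ quotient in the local model. To settle this, I would test the formula with $\alpha=1$ by computing $i^*\Phi(\mathsf{x}+\mathsf{y})=i^*\Phi(\mathsf{t})$ via the piecewise-polynomial calculus. This gives $c_1(N_i)+[s_0]+[\sbar_\infty]=-2\theta_2^C$, which equals the pullback of $D$, as expected.
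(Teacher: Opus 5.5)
Your computation of $Y\times_{\barA}Y$ matches the paper's: two copies of $Y$ with excess bundle $N_i$, and four transverse copies of $C$. Your treatment of $j$ as an \'etale double cover of a smooth substack is also the paper's. Both arguments share the same genuine gap. The extra copies produced by the double cover do not contribute the same terms as the original ones.

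The second copy of $Y$ is the graph of the deck involution $\tau_Y$ of $Y\to\partial\barA$. This involution comes from the nontrivial element of the generic stabilizer $\{\pm1\}$ of $\partial B$, which acts on the fibre $\overline{G}$ by inversion. Inversion exchanges the zero and infinity sections. So $\tau_Y$ swaps $s_0(C)$ and $s_\infty(C)$ and sends $\ell^C\mapsto-\ell^C$. It is not $2$-isomorphic to the identity: generic points of $\partial\barA$ and of its singular locus have trivial stabilizer, and $\tau_Y$ moves them.

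Likewise, the deck involution $\kappa$ of $j$ is determined by $\tau_Y\circ s_0=\sbar_\infty\circ\kappa$. Up to $2$-isomorphism it is $\kappa(X,\lambda,x,L)=(X,\lambda,x+\lambda^{-1}(L),-L)$. It interchanges the two branches of $\partial\barA$ along the double locus, so $\kappa^*Q(u)=Q(1-u)$. In particular $\kappa^*$ exchanges the Chern roots $\ell^C$ and $-\ell^C-2\theta_2^C$ of $N_j$.

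Consequently two of your four $C$-components have projections $(s_0,\tau_Y\circ\sbar_\infty)$ and $(\sbar_\infty,\tau_Y\circ s_0)$. The graph components of $\tau_Y$ and $\kappa$ contribute twisted terms. Excess intersection actually gives
\[
i^*i_*\alpha=(1+\tau_Y^*)\bigl(c_1(N_i)\cup\alpha+s_{0,*}\sbar_\infty^*\alpha+\sbar_{\infty,*}s_0^*\alpha\bigr),\qquad j^*j_*\beta=c_2(N_j)\cup(\beta+\kappa^*\beta).
\]

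These reduce to the stated formulas when $\alpha$ is $\tau_Y$-invariant, resp.\ $\beta$ is $\kappa$-invariant, but not in general. Take $\beta=\ell^C+\theta_2^C=\tfrac12\bigl(\alpha_1(N_j)-\alpha_2(N_j)\bigr)$. Then $\kappa^*\beta=-\beta$, and $j\circ\kappa=j$ forces $j_*\beta=0$. The stated formula would instead give $j^*j_*\beta=2c_2(N_j)\cup(\ell^C+\theta_2^C)$. This is nonzero in $\sR^3(C)$ for $g\ge 4$, since by Proposition \ref{pro:basis} there are no relations below degree $g$. Your proposed test with $\alpha=1$ cannot detect this, because $1$ is invariant.

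What your argument, and the paper's, really establishes is the invariant-class version. That version is what the applications require:
\begin{itemize}
\item $c_1(N_i)$ is $\tau_Y$-invariant.
\item The $j_*$-coefficients in \eqref{eq:S} are symmetric in $k_1,k_2$ (the remark after Lemma \ref{lem:4}).
\item In the proof of Theorem \ref{thm:unit}, the correction class can be replaced by its $\kappa$-average without changing its pushforward under $j$.
\end{itemize}
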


\begin{proof}
	The statement for $j$ follows immediately from the fact that $j$ is an \'etale double cover of its image, which is a smooth closed substack.
	For $i$, we observe that the fiber product $Y_2:=Y\times_{\barA}Y$ is described as follows: $Y_2$ has six components. The first
	two are isomorphic to $Y$ with the projection maps onto the two factors given by the identity. The other four arise from the fact that
	over $i([s_0])=i([\sbar_{\infty }])$, there are two possible preimages corresponding to the zero and infinity sections of the projective
	bundle. For the first two of these four components, the projection onto the first copy of $Y$ is identified with $s_0$ and the projection onto the second
	copy of $Y$ is identified with $\sbar_{\infty }$, while for the third and fourth components, the projection onto the first copy is identified
	with $\sbar_{\infty }$ and the projection onto the second copy is identified with $s_0$. Hence the excess intersection formula gives:
\begin{align*}
	i^{\ast}i_{\ast}(\alpha)=2c_1(N_i)\cup \alpha+2\overline{s}_{\infty,\ast}s_0^{\ast}(\alpha)+2s_{0,\ast}\overline{s}_{\infty}^{\ast}(\alpha)\,,
\end{align*}
	where the first term is the contribution of the two full-dimensional components.
\end{proof}

\begin{lemma}\label{lem:jstar}
    \begin{enumerate}[label=(\alph*)]
        \item $j^*\theta = \theta_1^C + \ell^C/2$.
        \item For $k\geq 1$, $j^*i_*(c_1(N_i)^{k-1}) = 2((-2\theta_2^C-\ell^C)^k + (\ell^C)^k)$.
		\item For $\alpha \in \CH^*(C)$, $j^*j_*(\alpha) = 2 \ell^C \cup (-\ell^C -2\theta_2^C) \cup \alpha$.
    \end{enumerate}
\end{lemma}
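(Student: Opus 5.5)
We prove the three formulas in Lemma~\ref{lem:jstar} one at a time, using the explicit local geometry of Theorem~\ref{thm:bdy} and Lemma~\ref{lem:self-In}, together with the identity $j = i\circ s_0 = i\circ \sbar_\infty$ from \eqref{eq: j = is0=isinf}.

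\textbf{Part (a).} This is the restriction of the Theta divisor to the codimension-two stratum. We factor $j^*\theta = s_0^* i^*\theta$. The statement $i^*\theta = \theta_1^C + U/2$ is \cite[Proposition 4.5]{EGH10}, already quoted in the proof of Corollary~\ref{cor:Fleading2}. It then remains to compute $s_0^*U$. Since $U = [s_0]+[\sbar_\infty]$ and the sections $s_0$ and $s_\infty$ are disjoint, we get $s_0^*[\sbar_\infty]=0$. Also $s_0^*[s_0]$ is the normal bundle of the zero section in the projective closure of the total space of $\P_C$, which is $c_1(\P_C)=\ell^C$. Hence $j^*\theta = \theta_1^C+\ell^C/2$. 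As a consistency check, the same computation with $\sbar_\infty$ gives $\sh^*$ applied to $-\ell^C/2$ plus the shift of $\theta_1^C$, and this must agree; this is a useful sanity check but not needed for the proof. (This is \cite[Corollary 4.6]{EGH10}, so we may simply cite it.)

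\textbf{Part (c).} This is the most direct. By Lemma~\ref{lem:self-In}, $j^*j_*\beta = 2c_2(N_j)\beta$. Using the Chern roots of $N_j$ from Example~\ref{expl:pp_s_1}, namely $\alpha_1(N_j)=\ell^C$ and $\alpha_2(N_j)=-2\theta_2^C-\ell^C$, we obtain $c_2(N_j)=\ell^C(-\ell^C-2\theta_2^C)$, which gives (c).

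\textbf{Part (b).} Write $j^* = s_0^* i^*$ and apply Lemma~\ref{lem:self-In} with $\alpha = c_1(N_i)^{k-1}$:
\[
j^*i_*\alpha = 2 s_0^*\bigl(c_1(N_i)^k + s_{0,*}\sbar_\infty^*\alpha + \sbar_{\infty,*}s_0^*\alpha\bigr).
\]
We evaluate the three terms separately, using $c_1(N_i) = -2\theta_2^C - U$.
\begin{itemize}
\item First term: $s_0^*U=\ell^C$, so $s_0^*c_1(N_i)=-2\theta_2^C-\ell^C$.
\item Second term: $s_0^*s_{0,*}$ is multiplication by $\ell^C$.
\item Third term: $s_0^*\sbar_{\infty,*}=0$, since the two sections are disjoint.
\end{itemize}
For the second term we also need $\sbar_\infty^*c_1(N_i) = -2\theta_2^C - \sbar_\infty^*U$. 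Here $\sbar_\infty^*U = \sbar_\infty^*[\sbar_\infty]$ is the normal bundle of the infinity section, which is $-\ell^C$; the shift $\sh$ preserves $\ell^C$ and $\theta_2^C$ (to be checked). This gives $\sbar_\infty^*c_1(N_i) = -2\theta_2^C+\ell^C$. Assembling,
\[
j^*i_*\alpha = 2\bigl((-2\theta_2^C-\ell^C)^k + \ell^C(\ell^C-2\theta_2^C)^{k-1}\bigr),
\]
which does not yet match the stated formula.

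\textbf{Expected obstacle.} The main difficulty is bookkeeping of signs and of the shift conventions: the sign of $\ell^C$ under $\sh$, and whether $\sbar_\infty^*U$ is $\pm\ell^C$. For the stated formula to hold, one needs $\sbar_\infty^* c_1(N_i)=\ell^C$ up to the appropriate normalization. Equivalently, we need $c_1(N_i)$ restricted to the other branch at $C$ to equal the other Chern root $\alpha_1(N_j)=\ell^C$. This is natural: the two branches of $\partial\barA$ through $C$ have normal directions given by the two Chern roots of $N_j$. I would therefore rederive (b) conceptually as follows. The pullback $j^*i_*\gamma$ is the sum over the two branches of the restrictions of $\gamma\cdot c_1(N_{\text{other branch}})$. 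Then $j^*\tfrac12 i_*(c_1(N_i)^{k-1})$ equals $\alpha_2\alpha_1^{k-1}$-type terms, which by the piecewise polynomial identity \eqref{eq:normalbunex}, $\tfrac12 i_*(c_1^{a})=\Phi(\mathsf{x}^{a+1}+\mathsf{y}^{a+1})$, restrict on $C$ to $\alpha_1^k+\alpha_2^k$. Restriction of $\Phi$ to the closed stratum $C$ is evaluation at the 2-cone with $\mathsf{x},\mathsf{y}\mapsto$ the two Chern roots. This gives $(\ell^C)^k+(-2\theta_2^C-\ell^C)^k$ times $2$, and I would present this piecewise-polynomial restriction argument as the actual proof, using it to fix the signs in the direct computation above.
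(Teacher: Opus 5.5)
Parts (a) and (c) are fine and agree with the paper, which cites \cite[Corollary 4.6]{EGH10} for (a) and applies Lemma~\ref{lem:self-In} for (c).

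For (b), your first computation is exactly the paper's proof. The mismatch you found is not a sign or normalization issue: it comes from a false claim. The shift does \emph{not} preserve $\ell^C$. By \eqref{eq:shiftQ}, $\sh^*Q(u)=Q(u+1)$, so $\sh^*\theta_2^C=\theta_2^C$ but $\sh^*\ell^C=\ell^C+2\theta_2^C$.

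Since $\sbar_\infty=s_\infty\circ\sh$, and since $s_\infty^*[s_0]=0$ and $s_\infty^*[s_\infty]=-\ell^C$, you get $\sbar_\infty^*U=\sh^*(-\ell^C)=-\ell^C-2\theta_2^C$. Hence $\sbar_\infty^*c_1(N_i)=-2\theta_2^C+\ell^C+2\theta_2^C=\ell^C$. Your second term is then $\ell^C\cdot(\ell^C)^{k-1}$, and the direct computation gives $2\bigl((-2\theta_2^C-\ell^C)^k+(\ell^C)^k\bigr)$ exactly. There is nothing to ``fix'' with signs. The same correction makes your consistency check in (a) work: $\sbar_\infty^*(\theta_1^C+U/2)=\sh^*(\theta_1^C-\ell^C/2)=\theta_1^C+\ell^C/2$.

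Your fallback argument for (b) is a legitimate alternative route. By \eqref{eq:normalbunex}, $\tfrac12 i_*(c_1(N_i)^{k-1})=\Phi(\mathsf{x}^k+\mathsf{y}^k)$. Moreover, $j^*\Phi(p)=p(c_1(L_1),c_1(L_2))$, where $N_j=L_1\oplus L_2$ are the normal lines of the two local branches. This holds because $C\to\barA\to\mathcal A_{\Sigma^1}$ factors through the closed stratum $B\mathbb{G}_m^2$ of the two-dimensional cone, and that map classifies these two line bundles. The conventions are consistent: \eqref{eq:normalbunex} with $b_1=b_2=0$ and Lemma~\ref{lem:self-In} give $j^*\Phi(\mathsf{x}\mathsf{y})=c_2(N_j)$. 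Because $\mathsf{x}^k+\mathsf{y}^k$ is symmetric, the result depends only on $c(N_j)$, so the labelling of the branches is irrelevant.

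This route is shorter and avoids both the excess-intersection analysis of $Y\times_{\barA}Y$ and all shift bookkeeping. In exchange it takes \eqref{eq:normalbunex} and the restriction property of $\Phi$ as inputs. You should state that restriction property explicitly rather than asserting it. The paper's route instead derives (b) directly from Lemma~\ref{lem:self-In} and $c_1(N_i)=-2\theta_2^C-[s_0]-[\sbar_\infty]$.
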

\begin{proof}
    Part (a) follows from \cite[Corollary 4.6]{EGH10}.
    Parts (b) and (c) follow from Lemma \ref{lem:self-In} with
    $c_1(N_i)= -2\theta_2^C - [s_0] - [\sbar_\infty]$
    and $c_2(N_j) = \ell^C\cup (-\ell^C - 2\theta^C_2)$.
\end{proof}
By Lemma \ref{lem:jstar}, we get
\begin{align}
    j^*\sfS^g_g
=
\Bigg[
\exp(\theta_1^C+ \ell^C/2) \cup
&\Bigg(
1
+
 \sum_{k\ge 1}
\frac{(-1)^k B_{2k}}{2^k k!}
\,\left((-\ell^C -2\theta_2^C)^k + (\ell^C)^k\right)\label{eq:jS} \\[4pt]
&+
\sum_{k_1,k_2\ge 1}
\frac{(-1)^{k_1+k_2}}{2^{k_1+k_2}}
\sum_{m=0}^{2k_2}
\binom{2k_2}{m}
B_{2k_1+m}
\frac{(\ell^C)^{k_1}}{k_1!}
\cup
\frac{(-\ell^C-2\theta_2^C)^{k_2}}{k_2!}
\Bigg)
\Bigg]_{\codim = g}\,.\nonumber
\end{align}

The intersection theory of the classes $\theta_1^C, \theta_2^C, \ell^C$ is governed by the polynomial
\begin{equation}\label{eq:Q}
    Q(u) = \theta_1^C + u\ell^C + \theta_2^C u^2\, .
\end{equation}
For instance, the shift operator $\sh^* : \sR^*(C) \to \sR^*(C)$ is uniquely determined by
\begin{equation}\label{eq:shiftQ}
\sh^*Q(u) = Q(u+1)\, .
\end{equation}
Let $\vartheta := \operatorname{disc}_u(Q) = (\ell^C)^2 - 4\theta_1^C \cup \theta_2^C$. By \eqref{eq:shiftQ}, $\vartheta$ is shift invariant.

For a polynomial $f(u)$, denote the coefficient of $u^m$ in $f(u)$ by
$[u^m]f(u)$. We fix a preferred additive basis of the ring $\sR^*(C)$ (see
Example \ref{expl:pp_s_1}).

\begin{proposition}\label{pro:basis}
    For any $a,b \geq 0$ and $0 \leq m \leq 2b$, let
    $$
    E_{a,b,m} = \vartheta^a\cup[u^m]Q(u)^b \in \mathbb Q[\theta_1^C, \theta_2^C, \ell^C]\,.
    $$
    The classes $E_{a,b,m}$ form a basis of $\mathbb Q[\theta_1^C, \theta_2^C, \ell^C]$ and the kernel of the natural map
    $$
    \mathbb Q[\theta_1^C, \theta_2^C, \ell^C] \to \sR^*(C)
    $$
    is the ideal $\mathfrak J_g$ with basis given by all $E_{a,b,m}$ with $a+b \geq g$.
\end{proposition}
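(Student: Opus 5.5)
The plan is to reduce the statement to the known structure of the tautological ring of the self-product $\X_{g-1}^2$ of an abelian scheme, i.e.\ of $C=\X_{g-1}\times_{\A_{g-1}}\X_{g-1}^\vee$. I would use the Fourier transform and the $\mathfrak{sl}_2$ (or rather $\mathrm{SL}_2(\ZZ)$) symmetry of the classes $\theta_1^C,\theta_2^C,\ell^C$.

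The first step is to note that $\mathrm{SL}_2$ acts on $\QQ[\theta_1^C,\theta_2^C,\ell^C]$ through the binary quadratic form $Q(u)$ of \eqref{eq:Q}, with variable $u$ and coefficients $(\theta_1^C,\ell^C,\theta_2^C)$. The shift $\sh$ realizes $u\mapsto u+1$ by \eqref{eq:shiftQ}, and the Fourier transform on $C$ (swapping the factors up to sign) realizes $u\mapsto -1/u$. The polynomial ring in the three coefficients of a binary quadratic form decomposes as $\bigoplus_{a,b}\vartheta^a\cdot V_{2b}$, where $\vartheta$ is the invariant discriminant and $V_{2b}$ is the irreducible representation of dimension $2b+1$. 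The space $V_{2b}$ is spanned by the coefficients $[u^m]Q(u)^b$, $0\le m\le 2b$. This is classical invariant theory: $\mathrm{Sym}^\bullet(\mathrm{Sym}^2)$ is free over the invariant $\vartheta$ with harmonic complement $\bigoplus_b V_{2b}$. It gives the basis claim, which I would verify by comparing Hilbert series, $(1-t^2)^{-1}\sum_b(2b+1)t^b = (1-t)^{-3}$ in the degree grading.

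The second step is the kernel. Since the kernel of $\QQ[\theta_1^C,\theta_2^C,\ell^C]\to\sR^*(C)$ is stable under $\sh^*$ and under the Fourier transform, it is an $\mathrm{SL}_2$-subrepresentation. It is also an ideal, hence stable under multiplication by $\vartheta$.

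First I show that $E_{a,b,m}$ with $a+b\ge g$ vanishes. Its degree is $2a+b$, and its weight under $[1\times N]^*$ (Beauville grading) is governed by the $V_{2b}$-component; the lowest-weight vector is $\theta_1^{b}\vartheta^a$ up to normalization. By $\mathrm{SL}_2$-equivariance it suffices to show $\vartheta^a(\theta_1^C)^b=0$ when $a+b\ge g$. This is the known relation of van der Geer / Grushevsky--Zakharov: on $\X_{g-1}^2$, $\vartheta^{a}\theta_1^b$ lies in the top-weight piece, and vanishing follows from $(\theta_1^C)^{g}=0$ together with the vanishing of the Fourier-dual expression.

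Concretely, I would use $\vartheta=\ell^2-4\theta_1\theta_2$ and the fact that $\theta_1^{k}$ has pure weight $2k$ in the first factor. The class $\vartheta^a\theta_1^b$ is then the $\mathfrak{sl}_2$-lowering of $\theta_1^{a+b}$ (an application of Lefschetz-type operators built from $\ell$ and $\theta_2$), so it vanishes when $a+b\ge g$. Making this identification precise is the step I expect to be the main obstacle. My first attempt would be the explicit $\mathfrak{sl}_2$-triple $(e,f,h)=(\theta_1\cdot,\ \partial_{\theta_2}\text{-type Fourier dual},\ \text{weight grading})$ acting on $\CH^*(C)$ as in Beauville/Polishchuk.

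Finally, injectivity on the span of $E_{a,b,m}$ with $a+b<g$. By irreducibility of each $\vartheta^aV_{2b}$, it suffices to show one vector is nonzero. I would take $\vartheta^a\theta_1^b$ and pair it against $\theta_1^{g-1-a-b}\theta_2^{g-1-a}$, or against suitable complementary classes, then push forward to $\A_{g-1}$ using \cite[Theorem 9.10]{BMP} (the formula $\pi_*\exp(\theta_1+u\ell+\ldots)$). This yields a nonzero rational multiple of the fundamental class. Distinct pairs $(a,b)$ land in distinct isotypic components, which finishes the proof.
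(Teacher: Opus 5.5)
Your basis argument is the paper's (Clebsch--Gordan for $\operatorname{Sym}^\bullet(\operatorname{Sym}^2V)$). Your strategy for the kernel differs from the paper's, which simply invokes \cite[Theorem 3.1]{GZ}: you note that the kernel is an $\mathrm{SL}_2$-subrepresentation and that each degree $\bigoplus_{2a+b=d}\vartheta^aV_{2b}$ is multiplicity-free, so it suffices to test one vector in each $\vartheta^aV_{2b}$. That strategy is sound.

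The vanishing half, which you flag as the obstacle, fails as written. The class $\vartheta^a(\theta_1^C)^b$ cannot be obtained from $(\theta_1^C)^{a+b}$ by an operator built from multiplication by $\ell^C,\theta_2^C$. The codimensions are $2a+b$ and $a+b$, and $\vartheta^a\theta_1^b$ contains the monomial $\ell^{2a}\theta_1^b$, so it is not a multiple of $\theta_1^{a+b}$. Moreover, the Lefschetz $\mathfrak{sl}_2$ of the abelian scheme changes codimension, so it is not the $\mathrm{SL}_2$ acting on $Q(u)$. ``Lies in the top-weight piece'' is also not the relevant fact.

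The missing observation is that the test vector has weight \emph{outside} the allowed range. View $C\to\X_{g-1}^\vee$ as an abelian scheme of relative dimension $g-1$. Then $[N\times 1]^*$ multiplies $\theta_1^C,\ell^C,\theta_2^C$ by $N^2,N,1$, so $\vartheta\mapsto N^2\vartheta$ and $E_{a,b,m}$ is an eigenvector of weight $2a+2b-m$. Hence $\vartheta^a(\theta_1^C)^b$ has pure weight $2(a+b)$. This exceeds $2g-2$ when $a+b\ge g$, so the Deninger--Murre decomposition forces it to vanish, and irreducibility kills all of $\vartheta^aV_{2b}$. The $\mathrm{SL}_2$-symmetry is genuinely needed here: for $g\ge 2$, $E_{0,g,g}$ has weight $g\le 2g-2$ for both factors, so weights alone do not kill it.

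Two smaller repairs:
\begin{itemize}
\item Realize $u\mapsto -1/u$ by the automorphism $(x,L)\mapsto(\lambda^{-1}(L),-\lambda(x))$ of $C$, which pulls $Q(u)$ back to $u^2Q(-1/u)$. A ring automorphism is what makes the kernel stable; the Fourier transform on Chow groups is not one.
\item Pass from $\mathrm{SL}_2(\ZZ)$ to $\mathrm{SL}_2$ by Zariski density, or by using $\log\sigma^*$.
\end{itemize}

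Your nonvanishing pairing does work. On a fiber $X\times X^\vee$ one has $\int\exp(s\theta_1+t\theta_2+w\ell)=(st-w^2)^{g-1}$. From this, the pushforward to $\A_{g-1}$ of $\vartheta^a\theta_1^{g-1-a}\theta_2^{g-1-a}$ is the multiple
\[
(-1)^a\sum_{j=0}^{a}\binom{a}{j}4^{a-j}\binom{g-1}{j}(2j)!\,\bigl((g-1-j)!\bigr)^2\neq 0
\]
of the fundamental class, since all terms have the same sign.

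With these repairs your argument is self-contained: it reproves the content of \cite[Theorem 3.1]{GZ} from weights and one fiber integral. Multiplicity-freeness also spares you a check that is implicit in the paper's ``follows from the first'': that the ideal generated by the coefficients of $Q(u)^g$ is exactly $\bigoplus_{a+b\ge g}\vartheta^aV_{2b}$. The paper's proof is two lines because it imports that theorem.
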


\begin{proof}
    For the first part, consider the standard $2$-dimensional $\mathfrak{sl}_2$-representation
    $V=\QQ\langle a,b\rangle$. Identify
    \[
    \QQ[\theta_1^C,\theta_2^C,\ell^C]
    \cong
    \operatorname{Sym}^\bullet(\operatorname{Sym}^2 V)
    \]
	by setting $\theta_1^C = a^2, \ell^C = 2ab, \theta_2^C = b^2$. The result then follows from the Clebsch--Gordan decomposition of
    $\operatorname{Sym}^\bullet(\operatorname{Sym}^2 V)$.

    By \cite[Theorem 3.1]{GZ}, the ideal $\mathfrak J_g$ is generated as an ideal by the coefficients of $Q(u)^g$. Therefore, the second part follows from the first.
\end{proof}

The following is a technical lemma describing the kernel of the pullback $j^* : \sR^*(\barA) \to \sR^*(C)$.

\begin{lemma}\label{lem:einvert}
    For $g\ge 1$, let $c_2(N_j)\cup -: \sR^{g-2}(C) \to \sR^{g}(C)$ be the map which multiplies classes in $\sR^{g-2}(C)$ by $c_2(N_j) = \ell^C\cup(-\ell^C-2\theta_2^C)$. The map $c_2(N_j)\cup -$ is an isomorphism of vector spaces.
\end{lemma}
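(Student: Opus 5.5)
We reduce the statement to a dimension count plus injectivity, both carried out in the explicit basis of Proposition \ref{pro:basis}. Throughout, $\sR^*(C)=\QQ[\theta_1^C,\theta_2^C,\ell^C]/\mathfrak J_g$. By Proposition \ref{pro:basis}, a basis of $\sR^k(C)$ is given by the classes $E_{a,b,m}$ with $2a+b=k$, $a+b\le g-1$ and $0\le m\le 2b$, since $\vartheta$ has degree $2$ and the coefficients of $Q(u)^b$ have degree $b$.

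\emph{Step 1: dimension count.} For $k=g-2$ the constraint $a+b\le g-1$ is automatic, so $a$ ranges over $0\le a\le\lfloor (g-2)/2\rfloor$ with $b=g-2-2a$. For $k=g$ the constraint becomes $a\ge 1$, and then $b=g-2a$. Substituting $a'=a-1$ in the second case produces exactly the same set of pairs $(a',b)$ as in the first. Hence
\[
\dim\sR^{g-2}(C)=\dim\sR^{g}(C)=\sum_{0\le a\le \lfloor (g-2)/2\rfloor}\bigl(2(g-2-2a)+1\bigr),
\]
and it suffices to prove that $c_2(N_j)\cup-$ is injective (equivalently, surjective).

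\emph{Step 2: reorganize $c_2(N_j)$ in the $\mathfrak{sl}_2$ picture.} Use the identification $\theta_1^C=a^2$, $\ell^C=2ab$, $\theta_2^C=b^2$ from the proof of Proposition \ref{pro:basis}. Then
\[
c_2(N_j)=-(\ell^C)^2-2\ell^C\theta_2^C=-4a^2b^2-4ab^3 .
\]
In the invariant basis this reads
\[
c_2(N_j)=-\tfrac{1}{3}\vartheta-\tfrac{2}{3}[u^2]Q(u)^2-[u^3]Q(u)^2 ,
\]
using $[u^2]Q^2=(\ell^C)^2+2\theta_1^C\theta_2^C$, $[u^3]Q^2=2\ell^C\theta_2^C$ and $\vartheta=(\ell^C)^2-4\theta_1^C\theta_2^C$; I will double-check these constants. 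The point is that $c_2(N_j)$ is the sum of the invariant $-\vartheta/3$ and a vector in the $5$-dimensional irreducible summand $\operatorname{Sym}^4V$. Multiplication by $\vartheta$ sends the basis vector $E_{a,b,m}$ of $\sR^{g-2}(C)$ to $E_{a+1,b,m}$, and this is exactly the bijection of index sets found in Step 1. So $\vartheta\cup-:\sR^{g-2}(C)\to\sR^g(C)$ is already an isomorphism.

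\emph{Step 3: triangularity.} Filter $\sR^{g-2}(C)$ and $\sR^g(C)$ by $b$, i.e.\ by the $\mathfrak{sl}_2$-isotypic type $\operatorname{Sym}^{2b}V$ of the $\vartheta$-primitive part. Multiplying an element of type $\operatorname{Sym}^{2b}V$ by the $\operatorname{Sym}^4V$ component produces, by Clebsch--Gordan, only types $\operatorname{Sym}^{2b'}V$ with $|b'-b|\le 2$. The main obstacle is to show that, after passing to $\sR^g(C)$, all the contributions with $b'\ne b$ either vanish or are strictly triangular with respect to a suitable order. The $b'=b+1,b+2$ parts are killed because the corresponding $E$'s have $a+b'\ge g$. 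The $b'=b-1,b-2$ parts carry extra factors of $\vartheta$ and land in strictly higher $a$. So the ordering by decreasing $b$ (equivalently increasing $a$) should give an upper-triangular matrix whose diagonal blocks are scalar: $-\tfrac13$ plus the scalar by which the $\operatorname{Sym}^4V$ part acts through the projection $\operatorname{Sym}^4V\otimes\operatorname{Sym}^{2b}V\to\operatorname{Sym}^{2b}V$.

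I would compute this diagonal scalar by applying the lowering operator, or more concretely by testing on the lowest-weight vector $\theta_2^{C\,b}\vartheta^a$, and show that it is nonzero. Here is where the computation is not automatic: I do not yet know whether the relevant Clebsch--Gordan projection coefficient vanishes for some $b$, and the outcome depends on how the $b'=b$ contribution interacts with the $\vartheta$-invariant term. As a fallback for this step, I would instead prove injectivity by nondegeneracy of the pairing $(\alpha,\beta)\mapsto\pi_{C*}(c_2(N_j)\,\alpha\beta)\in\sR^0(D)$. This pairing is computed via the Gaussian pushforward formula of \cite[Theorem 9.10]{BMP}, which is $\mathfrak{sl}_2$-invariant; Schur's lemma then reduces nondegeneracy to one nonzero scalar per isotypic type $b$, which can be evaluated on lowest-weight vectors.
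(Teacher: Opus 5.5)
Your dimension count in Step~1 and the rewriting of $c_2(N_j)$ in Step~2 are correct. Step~3, which carries all the content, does not go through, and the fallback fails for the same reason.

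First, the $b'=b+2$ contributions are not killed. For a source $E_{a,b,m}$ with $2a+b=g-2$, that component lies in the span of the $E_{a,b+2,m'}$. Since $a+(b+2)=g-a\le g-1$ as soon as $a\ge 1$, these vectors are nonzero in $\sR^g(C)$. Concretely, for $g=4$ one has $c_2(N_j)\cup\vartheta=-\tfrac13E_{2,0,0}-\tfrac23E_{1,2,2}-E_{1,2,3}$, and all three are basis vectors of $\sR^4(C)$. In the other direction, $c_2(N_j)\cup E_{0,2,2}$ has coefficient $-\tfrac{2}{15}$ along $E_{2,0,0}=\vartheta^2$ (a $b'=b-2$ term). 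So in your ordering the matrix is block tridiagonal, with off-diagonal blocks in both directions, not triangular.

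Second, the diagonal blocks are not scalars. The vector $P=-\tfrac23[u^2]Q(u)^2-[u^3]Q(u)^2$ is not $\mathfrak{sl}_2$-invariant. Hence $x\mapsto \mathrm{pr}_{\operatorname{Sym}^{2b}V}(P\,x)$ is the image of a non-invariant vector under an equivariant map $\operatorname{Sym}^4V\to\operatorname{End}(\operatorname{Sym}^{2b}V)$. It is therefore traceless, and it is already nonzero for $b=2$, so it is not a scalar and Schur's lemma gives nothing.

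The fallback breaks the same way. The socle pairing on $\sR^*(C)$ is invariant, but the form $(\alpha,\beta)\mapsto c_2(N_j)\alpha\beta$ is not, because $c_2(N_j)$ is not invariant. Its nondegeneracy therefore does not reduce to one scalar per isotypic type. Also, $\pi_{C*}$ of a degree $2g-2$ class lands in degree $g-1$ on $D$, not in $\sR^0(D)$.

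The missing idea is to filter by the torus weight rather than by isotypic type: set $\operatorname{wt}(\theta_1^C)=0$, $\operatorname{wt}(\ell^C)=1$, $\operatorname{wt}(\theta_2^C)=2$. Then $E_{a,b,m}$ is homogeneous of weight $m+2a$, and $\mathfrak J_g$ is spanned by weight vectors, so the grading descends to $\sR^*(C)$. Moreover $c_2(N_j)=-(\ell^C)^2-2\ell^C\theta_2^C$ splits into a piece raising weight by $2$ and a piece raising it by $3$. Looking at the lowest-weight component of a kernel element reduces injectivity of $c_2(N_j)\cup-$ to injectivity of $(\ell^C)^2\cup-:\sR^{g-2}(C)\to\sR^g(C)$.

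For the latter, note that $\mathfrak J_g$ vanishes in degree $g-2$ and in degree $g$ is spanned by the $[u^m]Q(u)^g$. So a weight-homogeneous $\beta$ with $(\ell^C)^2\beta\in\mathfrak J_g$ satisfies $(\ell^C)^2\beta=c\,[u^m]Q(u)^g$ in the polynomial ring. This forces $c=0$, because $[u^m]Q(u)^g$ contains $(\theta_1^C)^{g-k}(\theta_2^C)^k$ (if $m=2k$) or $(\theta_1^C)^{g-k-1}\ell^C(\theta_2^C)^k$ (if $m=2k+1$) with nonzero coefficient, so it is not divisible by $(\ell^C)^2$. Hence $\beta=0$. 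Together with your Step~1, this gives the isomorphism, and it is the argument the paper uses.
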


\begin{proof}
    We assign weights multiplicatively on $\mathbb Q[\theta_1^C, \theta_2^C, \ell^C]$ according to
    \begin{equation}\label{eqn:weightsproof}
    \operatorname{wt}(\theta_1^C) = 0\,, \operatorname{wt}(\ell^C)=1\, ,\operatorname{wt}(\theta_2^C)=2\, ,
    \end{equation}
    and work in the basis $E_{a,b,m}$ from Proposition \ref{pro:basis}. Then, $\operatorname{wt}(E_{a,b,m}) = m+2a$ and
    $$
    \sR^{g-2}(C) = \mathbb Q[\theta_1^C, \theta_2^C, \ell^C]^{g-2} =\bigoplus_{\substack{2a+b = g-2\\0 \leq m \leq 2b}}\mathbb Q E_{a,b,m}\, ,\quad \sR^{g}(C) = \bigoplus_{\substack{2a+b = g\\1\leq a,\, 0 \leq m \leq 2b}}\mathbb QE_{a,b,m}
    $$
    In particular, both spaces have the same dimension. The weights on the first part range from $0$ to $2g-4$, and the weights on the second from $2$ to $2g-2$. Multiplication by $(\ell^C)^2$ raises the weight by $2$, and multiplication by $\ell^C \cup \theta_2^C$ raises it by $3$. Therefore, it is enough to show that multiplication by $(\ell^C)^2$ is an isomorphism. Suppose that $\alpha \in \mathbb Q[\theta_1^C, \theta_2^C, \ell^C]\setminus \mathfrak J_g$ has (unweighted) degree $g-2$ and satisfies
    $$
    (\ell^C)^2\cup\alpha \in \mathfrak J_g^g = \bigoplus_{0 \leq m \leq 2g}\mathbb QE_{0,g,m}\, .
    $$
    Decomposing by weights \eqref{eqn:weightsproof}, we find some $\beta \neq 0$ such that $(\ell^C)^2 \cup \beta = [u^m]Q(u)^g$, which is impossible by inspection. So $(\ell^C)^2\cup -$ is injective, and since both vector spaces have the same dimension, $(\ell^C)^2\cup -$ is an isomorphism.
\end{proof}

The proof of the following statement is due to Aaron Pixton.
\begin{proposition}\label{pro:jS0}
    $j^*\sfS^g_g = 0$ in $\CH^g(C)$.
\end{proposition}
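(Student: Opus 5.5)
Our goal is to show that the explicit expression \eqref{eq:jS} vanishes in $\sR^g(C)$. The plan is to translate it into a statement about the generating polynomial $Q(u)$ of \eqref{eq:Q}, and then use the description of the relations in Proposition \ref{pro:basis}. The ideal $\mathfrak J_g$ is spanned by the $E_{a,b,m}$ with $a+b\ge g$. In degree $g$ this means $\sR^g(C)$ is the quotient of $\mathbb Q[\theta_1^C,\theta_2^C,\ell^C]^g$ by the span of the coefficients of $Q(u)^g$. So it suffices to write the degree-$g$ part of \eqref{eq:jS} as a linear combination of the $[u^m]Q(u)^g$, $0\le m\le 2g$.

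The key observation is the following. Set $x=\ell^C$, $y=-\ell^C-2\theta_2^C$ (so $x+y=-2\theta_2^C$) and $\Theta=\theta_1^C+\ell^C/2$. Then $Q(1/2)=\Theta+\theta_2^C/4$, and more generally
\[
Q(u+1/2)=\Theta+\tfrac{x-y}{4}\,u'+\ldots,
\]
with the shifted variable $u'=u-1/2$ giving $Q(u)=\Theta - \tfrac{x+y}{8}+ \tfrac{x-y}{4}\cdot 2u' \ldots$; I will fix the normalization so that $\exp(Q(v))$ has a transparent expansion. In the proof of Corollary \ref{cor:pushThet}, pushforward along $\pi_C$ is encoded by the operator $P(x,t)\mapsto P(\partial_u,t)\exp(u^2t/2)|_{u=1/2}$. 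This suggests the identity in degree $g$: a class $\exp(\Theta)P(x,y)$ lies in $\mathfrak J_g$ whenever it is a formal combination $\int \exp(Q(v))\,d\mu(v)$ over some "measure" in $v$. The reason is that $[\exp(Q(v))]_{\codim=g}=Q(v)^g/g!$, which is a combination of the $[u^m]Q^g$. Concretely, the plan is to show that the second factor of \eqref{eq:jS}, viewed as a power series $F(x,y)$ with $\exp(\Theta)F = \sum_n c_n \exp(Q(v_n))$ formally, arises as a Bernoulli/Euler–Maclaurin type expression. Examples are $\exp(\Theta)\exp(xv+\ldots)$ evaluated via Bernoulli generating functions $\sum B_n z^n/n! = z/(e^z-1)$.

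The steps, in order, are as follows.
\begin{enumerate}
\item Rewrite the three summands of the second factor of \eqref{eq:jS} as generating functions. Use $\sum_k B_{2k} s^{2k}/(2k)!$-type identities and the inverse Laplace trick $s^k/k! \leftrightarrow$ Gaussian moments, i.e.\ $\frac{(-1)^k}{2^kk!}y^k=[\exp(-y v^2/2)]$ paired with moments $v^{2k}\mapsto (2k)!$. The effect is to express $(\ell^C)^{k}/k!$ and $y^k/k!$ through $\exp$ of quadratic expressions in an auxiliary variable $v$.
\item Recognize that $\Theta + x v_1^2/2\cdot(\ldots)$, after combining exponents, equals $Q(w)$ for a linear $w$ in the auxiliary variables. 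This is modulo terms which I expect to cancel exactly because of the binomial sum $\sum_m\binom{2k_2}{m}B_{2k_1+m}$, which is the expansion of $B_{2k_1}$-polynomials at a shifted argument, i.e.\ the shift \eqref{eq:shiftQ} $u\mapsto u+1$.
\item Conclude that the whole expression equals a finite linear combination (after truncation to codimension $g$) of coefficients of $Q(u)^g$, hence lies in $\mathfrak J_g^g$ and vanishes by Proposition \ref{pro:basis}.
\end{enumerate}

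The main obstacle is step 2: finding the precise umbral/integral representation in which the Bernoulli numbers, including the mixed term with $B_{2k_1+m}$, become an evaluation functional in $u$ applied to $\exp(Q(u))$. My first attempt will be to guess that the functional is "Euler–Maclaurin on $[0,1]$". The idea is that the combination $1+\sum(\ldots)$ should equal $\int_0^1$-type averaging of $\exp(Q(u))/\exp(\Theta)$ minus boundary terms. The boundary terms at $u=0$ and $u=1$ are related by $\sh^*$, which is the source of the $s_0$ versus $\sbar_\infty$ gluing. If this fails, a fallback is to use Lemma \ref{lem:einvert} weight-wise: compare both sides in the $E_{a,b,m}$ basis with the $\operatorname{wt}$ grading \eqref{eqn:weightsproof}, and check that the weight-graded pieces vanish by a direct Bernoulli identity.
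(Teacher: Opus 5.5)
\textbf{There is a genuine gap.} Your reduction is sound. In degree $g$ the only basis elements $E_{a,b,m}$ of $\mathfrak J_g$ are $E_{0,g,m}=[u^m]Q(u)^g$. So it suffices to write the series \eqref{eq:jS} as $\mu\bigl(e^{Q(u)}\bigr)$ for some linear functional $\mu$ on $\QQ[u]$, up to terms whose degree-$g$ part visibly lies in that span. This is exactly the mechanism of the paper's proof.

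However, the proposal stops where the content of the proposition begins. You never produce $\mu$ or verify the identity, and your guesses point the wrong way:
\begin{itemize}
\item The functional that works is the Bernoulli evaluation $\fB(u^k)=B_k$, plus an explicit correction in $u^1$. It is not an $\int_0^1$-average with Euler--Maclaurin boundary terms, nor a pairing against Gaussian moments.
\item The two factors of the mixed term must be evaluated in one and the same variable, not in separate auxiliary variables.
\item The leftover terms do not cancel.
\end{itemize}

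Concretely, put $x=\theta_1^C$, $y=\ell^C$, $z=\theta_2^C$. The second factor of \eqref{eq:jS} is $\fB$ applied to
\[
1+\bigl(e^{(y/2+z)u^2}-1\bigr)+\bigl(e^{-yu^2/2}-1\bigr)+\bigl(e^{(y/2+z)u^2}-1\bigr)\bigl(e^{-y(1-u)^2/2}-1\bigr).
\]
For the mixed term this uses $\fB\bigl(u^{2k_2}(1-u)^{2k_1}\bigr)=\sum_{m}\binom{2k_2}{m}B_{2k_1+m}$, which follows from the parity of Bernoulli numbers and Lemma \ref{lem:4}.

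After multiplying by $e^{x+y/2}$, the product of the two exponentials in the last term is exactly $e^{Q(u)}$. This is your ``combining exponents'' step, and it only works because a single variable $u$ appears, as $u^{2k_2}$ against $(1-u)^{2k_1}$. The remainder is $e^{x+y/2}\bigl(e^{-yu^2/2}-e^{-y(1-u)^2/2}\bigr)$. By Lemma \ref{lem:1}(b) it evaluates to $y\,e^{x}$, not to $0$.

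Its degree-$g$ part is $x^{g-1}y/(g-1)!=[u^1]Q(u)^g/g!$, which is again a relation. Hence the degree-$g$ part of \eqref{eq:jS} equals $\fB\bigl(Q(u)^g\bigr)/g!+[u^1]Q(u)^g/g!$, and this vanishes by Proposition \ref{pro:basis}. Equivalently, the whole series equals $\fB\bigl(e^{Q(-u)}\bigr)$.

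Without this identity your argument is a strategy rather than a proof. Your fallback, checking weight-graded pieces via a direct Bernoulli identity in the $E_{a,b,m}$ basis, just restates the problem.
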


\begin{proof}[Proof by A. Pixton]
    To simplify the notation, write $x:= \theta_1^C, y := \ell^C,z := \theta_2^C$ as formal variables of degree $1$. We introduce a formal variable $u$. Let
    \begin{equation}\label{eq:Boperator}
        \fB \colon \QQ[u] \longrightarrow \QQ
    \end{equation}
    be the linear operator defined by $\fB(u^k)=B_k$ for all $k$. Then \eqref{eq:jS} can be written
    as the degree $g$ part of $\fB(F)$, where $F$ is a function defined by
    \begin{align*}
    F(x,y,z;u)
    =
    \exp\!\left(x+\frac{y}{2}\right)
    &\Bigg(
    1
    +
    \exp\!\big((\tfrac{y}{2}+z)u^2\big)-1
    +
    \exp\!\big(-\tfrac{y}{2}u^2\big)-1
    \\
    &\qquad\qquad
    +
    \big(\exp\!\big((\tfrac{y}{2}+z)u^2\big)-1\big)
    \big(\exp\!\big(-\tfrac{y}{2}(1-u)^2\big)-1\big)
    \Bigg).
    \end{align*}
    To show $j^*\sfS^g_g=0$, it suffices to prove that the degree $g$ part of $\fB(F)$ vanishes.

    Canceling terms and rearranging, we obtain
    \[
    F
    =
    \exp(x+uy+u^2z)
    +
    \exp\!\left(x+\frac{y}{2}\right)
    \left(
    \exp\!\left(-\frac{y}{2}u^2\right)
    -
    \exp\!\left(-\frac{y}{2}(1-u)^2\right)
    \right).
    \]
    By Lemma \ref{lem:1} (b), we have
    \[
    \fB\big(u^n-(1-u)^n\big)= -n
    \]
    for all non-negative $n>1$. Hence the formula simplifies to
    \[
    \fB\!\left(
    \exp\!\left(-\frac{y}{2}u^2\right)
    -
    \exp\!\left(-\frac{y}{2}(1-u)^2\right)
    \right)
    =
    y\,\exp\!\left(-\frac{y}{2}\right).
    \]
    Therefore, we get
    \begin{align*}
		[\fB(F)]_{\deg = g}
        &=
        \left[\fB\!\left(\exp(x+uy+u^2z)\right)
        +
        \exp(x)\,y\right]_{\deg = g}\\
        &=\frac{1}{g!}\fB\!\left((x+uy+u^2z)^g\right)\\
        &=0\,,
    \end{align*}
    where the second identity holds since the $u^1$-coefficient of the relation $Q(u)^g=0$ yields $x^{g-1}y=0$, and the third follows from Proposition \ref{pro:basis}. This yields the desired vanishing.
\end{proof}

We finally complete the proof of Theorem \ref{thm:unit}.

\begin{proof}[Proof of Theorem \ref{thm:unit}]
    By Proposition~\ref{pro:ecorrect}, there exists a class $\alpha \in \sR^{g-2}(C)$ such that
    \[
    [e] = \sfS^g_g + j_*(\alpha) \in \CH^g(\barA).
    \]
    We claim that the class $\alpha$ is uniquely determined by $\sfS^g_g$. Indeed, pulling back this equality along the morphism $j \colon C \to \barA$, we note that the image of the unit section lies in the semi-abelian locus of $\barA$ and is therefore disjoint from the image of $j$. Hence $j^*[e] = 0$. By the self-intersection formula, we obtain
    \[
    0 = j^*\sfS^g_g + j^*j_*(\alpha)
  = j^*\sfS^g_g + 2e(N_j) \cup \alpha.
    \]
    The Chern roots of $N_j$ are $\ell^C$ and $-\ell^C - 2\theta_2^C$. By Lemma~\ref{lem:einvert}, the cup product with the Euler class
    \[
    e(N_j) \cup - \; : \sR^{g-2}(C) \longrightarrow \sR^{g}(C)
    \]
    is an isomorphism. Therefore, the correction term $\alpha$ must be
    \[
    \alpha = -\frac{1}{2} e(N_j)^{-1}(j^*\sfS^g_g)\,.
    \]
    The right-hand side vanishes by Proposition~\ref{pro:jS0}, from which we get
	$$[e]=\sfS^{g}_g.$$
    This completes the proof.
\end{proof}

\begin{remark}
    Let $\Mbar_g' \subset \Mbar_g$ be the open locus of integral curves with at most one node, and let $\Jbar_g' \to \Mbar_g'$ be the degree $0$ relative compactified Jacobian. Under the Torelli map $\mathsf{tor} : \Jbar_g' \to \barA$, our formula for $\sfS_g^g$ coincides on $\Jbar_g'$ with the universal double ramification cycle formula $\uniDR_g^g(\emptyset)$ from \cite{BHPSS}.
\end{remark}

\begin{theorem}\label{thm:altzero}
	The unit section also has the formula
    $$
    [e] = \frac{[\theta^g]^{(2g)}}{g!} - \frac12 i_*\left(\sum_{m \geq 2}\frac{B_m}{m}[u^{m-2}]\frac{(\theta_1^C + u\ell^C + u^2\theta_2^C)^{g-1}}{(g-1)!}\right)
    $$
\end{theorem}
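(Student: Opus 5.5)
The plan is to compare the proposed right-hand side with $\sfS^g_g$, which equals $[e]$ by Theorem \ref{thm:unit}, and to show that the two agree modulo $j_*\sR^{g-2}(C)$. The rigidity argument from the proof of Theorem \ref{thm:unit} then removes the remaining ambiguity.

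\textbf{Step 1: comparison modulo $\equiv$.} By Proposition \ref{pro:thetawt}(c) and the formula \eqref{eq:St}, we have $[\theta^g]^{(2g)}/g! = \St^g_g$. This is $[\exp(\theta)]_{\codim=g}$ plus a $j_*$-term, so $[\theta^g]^{(2g)}/g!\equiv \theta^g/g!$ in the sense of Definition \ref{def:equiv}. On the other side, $[e]\equiv\sfS^\circ_g$ by Proposition \ref{pro:ecorrect}. Expanding $\exp(\theta)$ against $i_*$ via the projection formula, the boundary part of $\sfS^\circ_g$ is $-\tfrac12 i_*$ of a sum of Bernoulli numbers times powers of $i^*\theta=\theta_1^C+U/2$ and $\theta_2^C$.

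The task is to match this with $-\tfrac12 i_*\sum_m \frac{B_m}{m}[u^{m-2}]Q(u)^{g-1}/(g-1)!$. Here I would reverse the last simplification in the proof of Corollary \ref{cor:Fleading2}. Before we replaced $\ell^C$ by $0$ and $\theta_1^C$ by $i^*\theta$ (modulo $j_*$), the boundary term there was exactly $\tfrac12 i_*\sum_r\frac{(-1)^{g+r+1}|B_{2r}|}{2r}[u^{2r-2}](-\theta_1^C+u\ell^C-u^2\theta_2^C)^{g-1}/(g-1)!$.

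Two substitutions turn this into the stated sum. First, use $(-1)^{r+1}|B_{2r}|=B_{2r}$. Second, absorb the overall sign by $u\mapsto iu$-type parity: only even powers $u^{2r-2}$ occur, and the odd Bernoulli numbers $B_m$ with $m\ge3$ vanish. This rewrites the term as $\sum_{m\ge2}\frac{B_m}{m}[u^{m-2}]Q(u)^{g-1}/(g-1)!$, up to the sign $(-1)^{g-1}$ coming from $Q\mapsto -Q$ in degree $g-1$. I would double-check that this sign cancels against the $(-1)^g$ in $\sfS^\circ$. The outcome of this step is that the right-hand side $R$ satisfies $R\equiv[e]$.

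\textbf{Step 2: rigidity.} Write $[e]=R+j_*\alpha$ with $\alpha\in\sR^{g-2}(C)$. Pull back along $j$: since $j^*[e]=0$ and $j^*j_*\alpha=2e(N_j)\alpha$, Lemma \ref{lem:einvert} reduces the claim to $j^*R=0$. Since $j^*\sfS^g_g=0$ by Proposition \ref{pro:jS0}, it suffices to show $j^*(R-\sfS^g_g)=0$.

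To compute $j^*R$, use Lemma \ref{lem:jstar}(a),(c) for the $\St^g_g$ part and Lemma \ref{lem:self-In} for $j^*i_*=s_0^*i^*i_*$. The resulting expression is a $\fB$-type operator applied to $Q$ and to its shift $\sh^*Q(u)=Q(u+1)$, where $\fB$ is the operator \eqref{eq:Boperator}. I would then run an analogue of Pixton's argument. The key identity should be $\fB(u^n-(1-u)^n)=-n$ (Lemma \ref{lem:1}), which converts the difference of the $s_0$ and $\sbar_\infty$ contributions into the derivative of $Q^{g-1}$. This yields terms proportional to $[u^k]Q(u)^g$, which vanish in $\sR^*(C)$ by Proposition \ref{pro:basis}.

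\textbf{Main obstacle.} I expect the hardest part to be the explicit computation of $j^*$ of the $i_*$-term. The difficulty is twofold: tracking the shift operator $\sh$ through $\sbar_\infty^*$, and matching it against the $j_*$-part of $\St^g_g$ so that everything collapses to coefficients of $Q^g$. If the direct identity is messy, my fallback is to show $R-\sfS^g_g=j_*\beta$ explicitly and check $j^*$ of both sides symbolically. I would try small $g$ first to fix the signs.
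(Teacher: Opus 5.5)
Your proposal is correct and is essentially the paper's proof. The comparison $R\equiv\sfS^\circ_g$ rests on the same rule, namely $\ell^C\mapsto 0$ and $\theta_1^C\mapsto i^*\theta$ inside $i_*$ modulo $j_*\sR^{g-2}(C)$; the signs work out because $-\theta_1^C+u\ell^C-u^2\theta_2^C=-Q(-u)$, so the substitution is $u\mapsto -u$ rather than $u\mapsto iu$. This is followed by the same reduction to $j^*R=0$ via Lemma \ref{lem:einvert}.

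For that last step, the paper reads off $j^*[\theta^g]^{(2g)}/g!=\ell^C\int_0^1(1-z)Q(z)^{g-1}/(g-1)!\,dz$ from \eqref{eq:thetamidddle}. It writes the boundary contribution as $\fB\bigl(\int_0^u z(\cdots)\,dz\bigr)$ and uses $(\ell^C+2z\theta_2^C)Q(z)^{g-1}=0$ to collapse everything to $\ell^C\fB\bigl(\int_u^{u+1}(1-z)Q(z)^{g-1}/(g-1)!\,dz\bigr)=\ell^C(\theta_1^C)^{g-1}/(g-1)!=0$. The middle equality there is the translation identity $\fB(F(u+1)-F(u))=F'(0)$ from Lemma \ref{lem:1}(a).
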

Compare this formulation with \cite[Theorem 1.3]{BMP}.
\begin{proof}
    Let $\mathsf{Z}_g$ be the right-hand side of the equation. In the semi-abelian scheme, the boundary is the $\mathbb G_m$-torsor associated to $\mathcal P$, so $\ell^C$ vanishes on the boundary. Therefore,
    $$
    \mathsf{Z}_g \equiv \frac{\theta^g}{g!} - \frac{1}{2}i_*\sum_{m=0}^{g-1}\frac{B_{2m+2}}{(2m+2)}\frac{(\theta_2^C)^m(\theta_1^C)^{g-1-m}}{m!(g-1-m)!} \equiv \mathsf{S}_g^\circ
    $$
    where we have also used Proposition \ref{pro:thetawt} (c) to simplify the top weight part of $\theta^g$. By the argument in the proof of Theorem \ref{thm:unit}, it is enough to show that $j^* \mathsf{Z}_g=0$.

    By the expansion \eqref{eq:thetamidddle} in Proposition \ref{pro:thetawt} and the pullback formulas in Lemma \ref{lem:jstar}, we have
    \begin{align*}
        j^*\left(\sum_{k\geq0}\frac{[\theta^k]^{(2k)}}{k!}\right)
        &=
        \frac{e^{\theta_1^C}}{2\theta_2^C}\cup
        \left(
        \ell^C+2\theta_2^C
        -\ell^C\cup e^{\ell^C+\theta_2^C}
        +\ell^C\cup(\ell^C+2\theta_2^C)
        \int_0^1e^{z\ell^C+z^2\theta_2^C}\,dz
        \right)\\
        &=
        e^{\theta_1^C}
        +\ell^C\int_0^1(1-z)e^{Q(z)}\,dz.
    \end{align*}
    In particular,
    \[
        j^* \frac{[\theta^g]^{(2g)}}{g!}
        =
        \ell^C\int_0^1(1-z)
        \frac{Q(z)^{g-1}}{(g-1)!}\,dz\, ,
    \]
    since $(\theta_1^C)^g=0$ on $\sR^*(C)$. We use the Bernoulli operator $\fB$ defined in \eqref{eq:Boperator}. Then
    \[
        \sum_{m\geq2}\frac{B_m}{m}[u^{m-2}]\,e^{Q(u)}
        =
        \fB\left(\int_0^u z e^{Q(z)}\,dz\right)\, .
    \]
    Putting everything together and using Lemma \ref{lem:self-In}, together with
    $\sbar_{\infty}^*Q(u)=Q(u+1)$, gives
    \begin{align*}
    j^* \mathsf{Z}_g =&\,\ell^C \int_0^1(1-z)\frac{Q(z)^{g-1}}{(g-1)!}\, dz - \fB \left(\int_0^u
        z\frac{(-\ell^C-2\theta_2^C)Q(z)^{g-1}
        +\ell^C Q(z+1)^{g-1}}{(g-1)!}\,dz\right)\\
        =&\, \ell^C\fB \left(\int_0^1(1-z)\frac{Q(z)^{g-1}}{(g-1)!}\, dz + \int_0^u(z-1)\frac{Q(z)^{g-1}}{(g-1)!}\, dz - \int_1^{u+1}(z-1)\frac{Q(z)^{g-1}}{(g-1)!}\, dz\right)\\
        =&\, \ell^C\fB \left(\int_{u}^{u+1}(1-z)\frac{Q(z)^{g-1}}{(g-1)!}\, dz\right).
    \end{align*}
    In the second equality, we have used that $(\ell^C+2z\theta_2^C)Q(z)^{g-1}=0$, which follows from differentiating $Q(z)^g=0$, and changed $z$ to $z-1$ in the third integral. Lemma \ref{lem:1} implies that
    $$
    \fB \left(\int_{u}^{u+1}(1-z)\frac{Q(z)^{g-1}}{(g-1)!}\, dz\right) = \frac{(\theta_1^C)^{g-1}}{(g-1)!},
    $$
    and hence $j^* \mathsf{Z}_g=0$ because $\ell^C \cup (\theta_1^C)^{g-1} =0$, by the weight vanishing for the abelian scheme $C$.
\end{proof}

\subsection{Comparison with the formula of Grushevsky--Zakharov}\label{sec:gz}
We prove a formula for the class of the unit section proposed by Grushevsky--Zakharov \cite{GZ}. Comparing the notation of loc.\ cit.\ with ours, we obtain the identifications $2\Delta = j_*1$ and $2D = i_*1$. Set
\[
\eta_{a,b,c} := \frac{(-1)^{b+c}(2c+2b-1)!!}{2^{3b+3c}a!c!} \sum_{x=0}^b \frac{(2-2^{2c+2x})B_{2c+2x}}{(2c+2b-2x-1)!! (2c+2x-1)!!(b-x)! x!}\,.
\]
The following formula appears in \cite[Remark 1.2]{GZ}:
\[
\mathsf{GZ}_g
= \sum_{a+b+2c=g} \eta_{a,b,c}\theta^a \cup D^b \cup \Delta^c\,.
\]
\begin{corollary}\label{cor:gz=s}
    For any $g$, we have $[e] = \mathsf{GZ}_g$.
\end{corollary}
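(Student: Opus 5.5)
The strategy is to avoid comparing the two Bernoulli-type expressions directly. Instead, I will reduce $\mathsf{GZ}_g=[e]$ to the same uniqueness argument used in the proof of Theorem \ref{thm:unit}. Since $[e]=\sfS^g_g$ is already established, it suffices to show $\mathsf{GZ}_g=\sfS^g_g$ in $\CH^g(\barA)$.

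\textbf{Step 1: put $\mathsf{GZ}_g$ in the same normal form.} First I rewrite $\mathsf{GZ}_g$ as a decorated strata class of the shape \eqref{eq:dec_strata}. Here $D^b$ means powers of $\frac12 i_*1$ and $\Delta^c$ means powers of $\frac12 j_*1$. I compute these powers by repeatedly applying the self-intersection formulas of Lemma \ref{lem:self-In} and Lemma \ref{lem:jstar}, or equivalently in the piecewise polynomial language of Example \ref{expl:pp_s_1}. There $D$ corresponds to $\mathsf{t}=\mathsf{x}+\mathsf{y}$ and $\frac12 j_*1$ to $\mathsf{x}\mathsf{y}$.

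The outcome has the form
\[
\mathsf{GZ}_g=\frac{\theta^g}{g!}+\frac12 i_*(P(c_1(N_i)))+\frac12 j_*(\cdots),
\]
with $\theta$ restricted via $i^*\theta=\theta_1^C+U/2$.

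\textbf{Step 2: match modulo $j_*\sR^{g-2}(C)$.} Working modulo the relation $\equiv$ of Definition \ref{def:equiv}, I show $\mathsf{GZ}_g\equiv\sfS^\circ_g$. Modulo $j_*$, only the $a+b=g$ terms matter (those with $c=0$), and $\Delta^c$ with $c\ge1$ drops out. So I need to verify that
\[
\sum_{a+b=g}\eta_{a,b,0}\,\theta^aD^b\equiv\Big[\exp(\theta)\big(1-\tfrac12 i_*\textstyle\sum_k\frac{|B_{2k+2}|}{2k+2}\frac{(-\theta_2^C)^k}{k!}\big)\Big]_{\codim=g}.
\]
Using $D^b\equiv\frac12 i_*(c_1(N_i)^{b-1})$ and $c_1(N_i)\equiv-2\theta_2^C$ after discarding $U$-terms (which are pushed from $C$), this becomes a one-variable identity of Bernoulli coefficients. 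I expect it to follow from $\eta_{a,b,0}$ collapsing to $\frac{(2-2^{2b})B_{2b}}{\cdots}$-type expressions. These are exactly $B_{2b}(1/2)$ values, and they should match after recognizing the binomial sums. This step is routine.

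\textbf{Step 3: kill the $j_*$ ambiguity.} By Step 2, $\mathsf{GZ}_g=\sfS^g_g+j_*(\alpha)$ with $\alpha\in\sR^{g-2}(C)$. Pulling back along $j$ and using Lemma \ref{lem:jstar}(c) gives
\[
j^*\mathsf{GZ}_g=j^*\sfS^g_g+2c_2(N_j)\alpha=2c_2(N_j)\alpha,
\]
where the last equality uses Proposition \ref{pro:jS0}. By Lemma \ref{lem:einvert} it therefore suffices to prove $j^*\mathsf{GZ}_g=0$ in $\sR^g(C)$.

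To compute $j^*\mathsf{GZ}_g$, I use the following pullbacks:
\begin{itemize}
\item $j^*\theta=\theta_1^C+\ell^C/2$;
\item $j^*D=(-\ell^C-2\theta_2^C)+\ell^C=-2\theta_2^C$, the sum of the two Chern roots, by Lemma \ref{lem:jstar}(b) with $k=1$;
\item $j^*\Delta=\ell^C(-\ell^C-2\theta_2^C)$.
\end{itemize}
For the products $D^b\Delta^c$, I pull back the product as a product of pullbacks, since $j^*$ is a ring map. This gives an explicit polynomial in $\theta_1^C,\ell^C,\theta_2^C$.

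The main obstacle is showing this polynomial lies in the ideal $\mathfrak J_g$ of Proposition \ref{pro:basis}. My first attempt is to mimic Pixton's argument for Proposition \ref{pro:jS0}. I would package $\sum\eta_{a,b,c}$ as $\fB$ applied to a generating function in $u$; the factor $(2-2^{2c+2x})B_{2c+2x}$ is $2^{2c+2x}B_{2c+2x}(1/2)\cdot(-1)$, which suggests a shift $u\mapsto u-\frac12$. I would then reduce that generating function, via Lemma \ref{lem:1}, to $\fB\big(\exp(Q(u))\big)$ plus terms killed by $x^{g-1}y=0$, and conclude using Proposition \ref{pro:basis}. If the generating function does not close up, the fallback is to verify membership in $\mathfrak J_g$ coefficientwise in the $E_{a,b,m}$ basis.
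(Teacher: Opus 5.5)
Your reduction is correct and differs from the paper's proof, which establishes $\mathsf{GZ}_g=\sfS^g_g$ directly in the strata algebra.

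\begin{itemize}
\item Step 2 holds, in fact in two lines. Lemma \ref{lem:1}(c) gives $\sum_x\binom{2b}{2x}(2-2^{2x})B_{2x}=2^{2b}B_{2b}$, hence $\eta_{a,b,0}=\frac{(-1)^bB_{2b}}{2^b\,a!\,b!}$. These are plain Bernoulli numbers, not values $B_{2b}(\tfrac12)$. Also $(2-2^n)B_n=+2^nB_n(\tfrac12)$, without the sign you wrote.
\item The uniqueness mechanism of Step 3, via Lemma \ref{lem:jstar}(c), Proposition \ref{pro:jS0} and Lemma \ref{lem:einvert}, is sound.
\end{itemize}

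The gap is that the only substantive claim, $j^*\mathsf{GZ}_g=0$ in $\sR^g(C)$, is never proved. You describe a generating function you hope to find. Your fallback, checking membership in $\mathfrak J_g$ coefficientwise, is for general $g$ just a restatement of the goal. All the information in the coefficients $\eta_{a,b,c}$ with $c\ge1$ (double sums with double factorials and $(2-2^{2c+2x})B_{2c+2x}$) enters only at this point, and nothing in the proposal controls it.

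This step is also no easier than the direct comparison you wanted to avoid.

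\begin{itemize}
\item Write $\mathsf{GZ}_g=[\exp(\theta)\,\Phi(\mathcal E)]_{\codim=g}$ and $\sfS^g_g=[\exp(\theta)\,\Phi(\mathcal S)]_{\codim=g}$. Here $\mathcal E=\sum_{b,c}E(b,c)(\mathsf t/2)^b(\mathsf x\mathsf y/2)^c$ and $\mathcal S$ are symmetric power series in $\mathsf x,\mathsf y$, independent of $g$.
\item In degree $n$, $\mathfrak J_n$ is spanned by the coefficients $[u^m]Q(u)^n$, and each monomial occurs in exactly one of them. So the only elements of $\mathfrak J_n^n$ not involving $\theta_1^C$ are combinations of $(\theta_2^C)^n$ and $\ell^C(\theta_2^C)^{n-1}$.
\item Let $h$ be the lowest-degree part of $\mathcal E-\mathcal S$, of degree $n$. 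Then $j^*(\mathsf{GZ}_n-\sfS^n_n)=h(\ell^C,-\ell^C-2\theta_2^C)$. Membership in $\mathfrak J_n$ forces $h$ to be a multiple of $\mathsf t^n$. Step 2 is precisely the equality of the $\mathsf t^n$-coefficients, so it excludes this.
\end{itemize}

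Hence, given Step 2, the vanishing of $j^*\mathsf{GZ}_g$ for all $g$ is equivalent to $\mathcal E=\mathcal S$, i.e.\ to the identity of Theorem \ref{sp:final}. The relations in $\sR^*(C)$ absorb nothing. You still need exactly the Bernoulli identity that the paper proves via the quadratic transformation in Lemma \ref{lem:Hypergeometric}, together with Lemmas \ref{lem:3} and \ref{lem:4}.
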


\begin{proof}
    We first rewrite the formula for $\mathsf{GZ}_g$ to simplify the comparison with $\sfS^g_g$. Using the self-intersection formula, the powers of the boundary divisor class can be written as follows:
    \[
    D^b =
    \begin{cases}
    1, & b=0, \\[4pt]
    \dfrac{1}{2}\, i_*(c_1(N_i)^{\,b-1})
    \;+\;\frac{1}{2}
    j_* \displaystyle\sum_{\substack{k_1+k_2=b \\ k_1,k_2\ge 1}}
    \binom{b}{k_1}
    \,(\ell^C)^{k_1-1}\cup(-\ell^C-2\theta_2^C)^{k_2-1},
    & b\ge 1,
    \end{cases}
    \]
    Also,
    \[
    \Delta^c = \begin{cases}
        1 & c=0\\
        \frac{1}{2} j_*\left((\ell^C\cup(-\ell^C-2\theta_2^C))^{c-1}\right) & c>0.
    \end{cases}
    \]
	Hence we get
    \begin{align*}
    \mathsf{GZ}_g
    &=
    \frac{\theta^g}{g!}
    +
    \sum_{\substack{a+b=g \\ b\ge 1}}
    \eta_{a,b,0}\,
    \theta^a \cup
    \frac{1}{2}\,
    i_*\!\left( c_1(N_i)^{\,b-1} \right)
    \nonumber \\
    &\quad + \sum_{\substack{a+b=g \\ b\ge 1}} \eta_{a,b,0} \theta^a\cup
    \frac{1}{2}
    j_*\sum_{\substack{k_1+k_2=b\\k_1,k_2\ge 1}} \binom{b}{k_1}(\ell^C)^{k_1-1}\cup(-\ell^C-2\theta_2^C)^{k_2-1}\nonumber\\
    &\quad+
    \sum_{\substack{a+b+2c=g \\ c\ge 1}}
    \eta_{a,b,c}\,
    \frac{1}{2}\,\theta^a\cup
    j_*\!\left(
    \bigl(\ell^C\cup(-\ell^C-2\theta_2^C)\bigr)^{c-1}
    \cup
    (-2\theta_2^C)^b
    \right).
    \end{align*}

    We now show that $\sfS^g_g = \mathsf{GZ}_g$ at the level of strata algebras, i.e., without using relations in the Chow ring. Set
    \begin{equation}\label{eq:E}
        E(b,c) := 2^{b+c} a!\cdot \eta_{a,b,c}
    \end{equation}
	(this is independent of the choice of $a$). Comparing the coefficients of each tautological class, we see that for every $c$ with $2c \le g$, it suffices to prove that
    \[
    \sum_{k=0}^c
    \binom{g-2k}{c-k}
    2^{k}\,E(g-2k,k)
    =
    \frac{(-1)^g}{c!(g-c)!}
    \sum_{m=0}^{2c}
    \binom{2c}{m}
    B_{2g-2c+m}\,.
    \]
    This Bernoulli identity is proved in Appendix~\ref{sec:B} (see Theorem~\ref{sp:final}). Consequently, combining this with Theorem \ref{thm:unit}, we obtain
    \[
    [e] = \sfS^g_g = \mathsf{GZ}_g,
    \]
    which proves the claim.
\end{proof}

\begin{remark}\label{rmk:gz}
    We explain oversights in Grushevsky--Zakharov \cite{GZ}.
    In loc. cit., the authors consider the open substack $\X_g \subset \X_g'$ and the normalization $Y$ of its complement, and prove $[e] = \mathsf{GZ}_g$, after restricting to $\X_g$ and $Y$.
    Such an excision argument, however, is {\em not} enough to conclude that $[e] = \mathsf{GZ}_g$.
    The argument can be partially salvaged if one shows that the class $[e]$ lies in $\R^*(\X_g')$. 
    However, we do not know of an argument which shows that $[e]\in \R^*(\X_g')$ while avoiding the
	Fourier transform and the weight decomposition. Even then, the class $D^g$ (which we will show is nonzero in \S\ref{sec:taut})
	appears in the formula $\mathsf{GZ}_g$ with nonzero coefficient $\eta_{0,g,0}$ and vanishes on $\mathcal X_g$ and $Y$, so it is unclear how the argument in \cite{GZ} could determine the value of $\eta_{0,g,0}$.
\end{remark}

\subsection{Pushforwards of tautological classes}

\begin{theorem}\label{thm:small}
    The pushforward map
    $$
    p_{2,*} : \CH^*(\barA^{(2)}) \to \CH^*(\barA)
    $$
    preserves the (small) tautological ring.
\end{theorem}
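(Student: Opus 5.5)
The plan is to use Proposition \ref{pro:vanish} to turn pushforwards into weight extraction. For $\alpha\in\sR^*(\barA^{(2)})$, Theorem \ref{thm:wt} shows that $\alpha$ has finite weight in the first direction, with $[1\times N]$ replaced by $[N\times 1]$ by symmetry. Let $p_2$ forget the first factor. Proposition \ref{pro:vanish} then gives
\[
p_{2,*}(\alpha)=p_{2,*}\big([\alpha]^{1,(2g)}\big).
\]
This is only a reformulation, so the content lies in computing $p_{2,*}$ on monomials. I would instead run the Fourier transform argument in the style of \cite{BMP}, since it is the tool that actually moves classes between factors.

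\textbf{Step 1: reduce to monomials.} By the reductions in the proof of Theorem \ref{thm:wt}, I only need monomials of the form $\theta_1^m\,\ell_{1,2}^k\,\Phi(\varphi)\,\alpha$, where $\alpha$ is pulled back from the second factor. The projection formula removes $\alpha$. Writing $\theta_1$ via Mumford's formula \eqref{eq:Mumform} in terms of $\diff^*\theta$, $\theta_2$ and $\ell$, it suffices to handle $\exp(a\ell_{1,2})\cup\Phi(\varphi)$. The generating function $p_{2,*}(\exp(a\ell_{1,2}))$ is essentially $\fm$ applied to the tautological classes $\Td^\vee(\cR_\pi)^{-1}$, corrected by $\Td$ terms supported on $E$ (Lemma \ref{lem:Fchow}).

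\textbf{Step 2: the interior contribution.} By Theorem \ref{thm:unit} and the identity $[e]=\fm^{-1}(1)$, the class $\fm(1)$ is the Fourier transform of a tautological class. More generally, Corollary \ref{cor:F(exp_theta)} computes $\fm(\exp(-\theta)\cup\Td^\vee(\cR_\pi)^{-1})$. Applying the weight operators $[N]^*$ of Theorem \ref{thm:wt} separates codimension pieces, since $\fm_k$ has weight $\le k$ by Corollary \ref{cor:Fwt}. This should yield each $\fm_k(\exp(-\theta))$ modulo classes of the form $j_*\sR(C)$ and $i_*\sR(Y)$.

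\textbf{Step 3: boundary classes.} For $\Phi(\varphi)$ supported on the boundary, I would use the explicit strata of $\barA^{(2)}$. These are the exceptional divisor $E$ together with the preimages of $Y$ and $C$, namely $\PP^1$-bundles over products of $\X_{g-1}$'s. On these strata the pushforward reduces, as in the proof of Proposition \ref{pro:jclosed}, to pushforwards along abelian schemes $\X_{g-1}^3\to\X_{g-1}^2$. Those are tautological by \cite[Theorem~9.10]{BMP}, and their images land in $i_*\sR^*(Y)+j_*\sR^*(C)$ by Lemma \ref{lem:Fex}. It then remains to show that $i_*\sR^*(Y)$ and $j_*\sR^*(C)$ lie in $\sR^*(\barA)$. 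I would do this using \eqref{eq:normalbunex}: $i_*$ and $j_*$ of polynomials in Chern roots are piecewise polynomials, and the remaining $\theta^C_1$ and $\ell^C$ decorations are obtained by multiplying with $\theta$, using $j^*\theta=\theta_1^C+\ell^C/2$ and $i^*\theta=\theta_1^C+U/2$.

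\textbf{Main obstacle.} The hardest step is making the last point rigorous: showing that every decorated strata class \eqref{eq:dec_strata} is tautological (Corollary \ref{cor:i_* in sR}). The difficulty is generating the $\ell^C$-decorations on $C$ and the $\ell^C$-decorations on $Y$ without $\lambda$ classes. My first attempt would be an induction on degree, using $\theta\cdot j_*(\beta)=j_*((\theta_1^C+\ell^C/2)\beta)$. I would combine this with the identities $\sh^*Q(u)=Q(u+1)$ and $\theta_2^C=-\tfrac12 t$, and check that these generate all of $\sR^*(C)$ modulo lower-order strata.
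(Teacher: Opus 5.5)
Your Steps 2 and 3 are essentially the paper's proof. Boundary-supported classes are pushed forward through the $\PP^1$-bundle and $(g-1)$-dimensional abelian scheme structure using \cite[Theorem~9.10]{BMP}, then absorbed by Corollary \ref{cor:i_* in sR}, which the paper also simply invokes. That corollary is less of an obstacle than you suggest: on $C$ it is immediate because $\ell^C$ and $-\ell^C-2\theta_2^C$ are the Chern roots of $N_j$, and on $Y$ it reduces to $i\circ s_0=i\circ\sbar_\infty=j$, which sends $U$- and $\ell^C$-decorations into $j_*\sR^*(C)$. The interior monomials $\ell^a\theta_1^b$ are extracted from Corollary \ref{cor:F(exp_theta)} by applying $[N]^*$, which scales $\fm_k$ by $N^k$ modulo $E$-supported terms (Corollary \ref{cor:Fwt}).

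The only misstep is the Mumford-formula reduction in Step 1: $\diff^*\theta$ is not pulled back from the second factor, so $\theta_1$ cannot be discarded there. This is harmless, since Step 2 keeps $\exp(-\theta_1)$ throughout.
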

\begin{proof}
    The restriction of $p_2 : \barA^{(2)}\to \barA$ to the boundary is a blowup followed by a $\mathbb P^1$-bundle and a $(g-1)$-dimensional abelian scheme.
    The normal bundles and the restrictions of the Theta divisors, as well as $\ell$, can be expressed in terms of the hyperplane classes, the exceptional divisors, and tautological classes on the $(g-1)$-dimensional abelian scheme.
    The pushforward from the abelian scheme preserves tautological classes by \cite[Theorem 9.10]{BMP}, so the pushforward sends classes in $\spp^{>0}(\barA^{(2)})[\theta_1, \theta_2, \ell]$ to strata classes \eqref{eq:dec_strata} decorated by elements of the small tautological ring on $\barA$.
	By Corollary \ref{cor:i_* in sR}, these belong to $\sR^{*}(\barA)$.\footnote{While Corollary \ref{cor:i_* in sR} is only proved later in \S\ref{sec:taut}, its proof is formal and independent of the previous results. The placement of its proof was chosen in order to preserve the logical flow.}

    Then, Corollary \ref{cor:F(exp_theta)} shows that
    $$
    p_{2,*}(\exp(\ell -\theta_1)) \in \sR^*(\barA)\,.
    $$
    If we apply $[N]^*$ to both sides, use Lemma \ref{lem:Npush}(a) and Theorem \ref{thm:wt}(a), and move the boundary corrections to the right-hand side of the equation, we obtain
    $$
    p_{2,*}\left(\sum_{a, b\geq 0}(-1)^bN^a\frac{\ell^a\cup\theta_1^b}{a!b!}\right) \in \sR^*(\barA).
    $$
    Separating by power of $N$ and codimension, we get the desired result on $\sR^*(\barA^{(2)})$. Since the $\lambda_i$-classes are pulled back from $B$, we get the result on $\R^*(\barA^{(2)})$.
\end{proof}

\begin{remark}\label{rmk:pushforward}
    In a similar way, one can show that any $p_* : \CH^*(\barA^{(s)}) \to \CH^*(\barA^{(r)})$ preserves (small) tautological rings.
\end{remark}

\begin{proof}[Proof of Theorem \ref{thm:FourierIso}]
    The Fourier transform given by
    $$
    \fm(\alpha) = p_{2,*}(p_1^* \alpha \cup \fm)
    $$
    is an isomorphism by \eqref{eq:fourier_is_iso}.
    Since the Fourier kernel $\fm$ lies in $\sR^*(\barA^{(2)})$, the Fourier transform preserves the tautological ring.
    Moreover, it preserves the small tautological ring by Theorem \ref{thm:small}. The argument for $\mathfrak F^{-1}$ is the same.
\end{proof}

\section{Weight decomposition}\label{sec:weight}

\subsection{Motivic decomposition}

In this section, we prove the first two parts of Theorem \ref{thm:dec}. For a positive integer $N$, consider the rational section
\[
\tau_N : \barA \dashrightarrow \barA^{(2)}, \quad x\mapsto (x,Nx),
\]
and take its Zariski closure. The associated cycle is $[\overline{\tau_N}] \in \CH^g(\barA^{(2)})$.

\begin{lemma}\label{lem:mapvscorr}
    The correspondences $[\overline{\tau_N}]^t$ and $N^{-2g}[\overline{\tau_N}]$ induce the maps
    $$
    [N]^*, [N]_* : \CH^*(\barA) \to \CH^*(\barA)
    $$
    respectively.
\end{lemma}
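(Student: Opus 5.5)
The plan is to identify $[\overline{\tau_N}]$ with the pushforward of a fundamental class from the resolution $\barA_N:=\barA^{(1)}_N$ of Definition \ref{def:n}, and then unwind the composition of correspondences using the projection formula.

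Let $b:\barA_N\to\barA$ be the log modification and $N:\barA_N\to\barA$ the regular map resolving multiplication by $N$. Consider $\Gamma:=(b,N):\barA_N\to\barA\times_B\barA$. Since $b$ is birational and $N$ agrees with multiplication by $N$ on the dense open $G$, the image of $\Gamma$ is the Zariski closure of the graph $\{(x,Nx)\}$. Moreover $\Gamma$ is birational onto this image, because $b$ is. Hence
\[
\Gamma_*[\barA_N]=[\overline{\tau_N}]\in \CH_*(\barA\times_B\barA).
\]
If one prefers to work on $\barA^{(2)}$, it suffices to note that $\tau_N$ lifts to $\barA^{(2)}$ after a further log modification. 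Pushing forward along the small resolution $f$ then gives the same cycle on $\barA^2$, which is how the correspondence is defined in \S\ref{sec:chow}.

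Next I compute the induced maps. For $\alpha\in\CH^*(\barA)$, the transpose acts by $[\overline{\tau_N}]^t(\alpha)=p_{1,*}([\overline{\tau_N}]\cdot p_2^*\alpha)$. Since $\barA$ is smooth, the refined pullback along $p_2$ is defined, and the projection formula for $\Gamma$ gives
\[
p_{1,*}\bigl(\Gamma_*[\barA_N]\cdot p_2^*\alpha\bigr)=p_{1,*}\Gamma_*\bigl(\Gamma^*p_2^*\alpha\bigr)=b_*N^*\alpha=[N]^*\alpha.
\]
In the same way, the untransposed correspondence acts by
\[
p_{2,*}\bigl([\overline{\tau_N}]\cdot p_1^*\alpha\bigr)=N_*b^*\alpha,
\]
so multiplying by $N^{-2g}$ recovers $[N]_*$ as defined in \S\ref{sec:Nlow}.

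I expect the main obstacle to be justifying the intersection product with a cycle on the singular space $\barA^2$. I will handle this through Corti--Hanamura's relative correspondences, which are defined on the fiber product via refined Gysin maps for $\barA\to B$, using that $\pi$ is flat. The key input is the compatibility
\[
\Gamma^*\circ p_i^*=(p_i\circ\Gamma)^*,
\]
which holds because the composites $p_1\circ\Gamma=b$ and $p_2\circ\Gamma=N$ are maps between smooth stacks, so all pullbacks involved are ordinary lci pullbacks.

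Finally, the answer does not depend on the choice of resolution, by the independence remark after Definition \ref{def:n}.
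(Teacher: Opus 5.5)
Your proposal is correct and is essentially the paper's argument: the paper also identifies the correspondence with $(b,N)_*[\barA_N]$, using that $\barA_N$ is irreducible and that $(b,N)$ maps the abelian part isomorphically onto $\tau_N$. It then concludes by the projection formula, which is exactly your computation giving $b_*N^*$ for the transpose and $N_*b^*$ for $[\overline{\tau_N}]$ itself. One harmless imprecision: $\barA_N$ need not be smooth (for $N\ge 3$ the cones of $\Sigma_N^{(1)}$ are not unimodular), but only smoothness of the target $\barA$ is used for $b^*$, $N^*$ and the projection formula, and you could anyway pass to a smooth log modification since the choice of model does not matter.
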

\begin{proof}
    We have a map $(b,N) : \barA_N \to \barA^{2}$. On the abelian part, it is an isomorphism onto $\tau_N$. Since $\barA_N$ is irreducible, we conclude that \begin{equation}\label{eq:correq}
        (b,N)_*(1) = f_*([\overline{\tau_N}]) \in\corr_B^0(\barA,\barA)
    \end{equation}
    and this implies the result by the projection formula.
\end{proof}

We prove the polynomiality of the class $[\overline{\tau_N}]$ in $N$ using the formula for the unit section $e : B\to \barA$. To do so, consider the following f.s. fiber diagram
\begin{equation}\label{eq:A2s}
    \begin{tikzcd}
    \barA_N^{(2)} \ar[rr, bend left, "\phi_N"] \ar[r] \ar[d,"b"] & \barA_N^{'(2)}\ar[d] \ar[r] & \barA \ar[d,"\beta"] \\
    \barA^{(2)} \ar[r] & (\logA)^2 \ar[r, "\phi_N^\circ"] & \logA,
    \end{tikzcd}
\end{equation}
where the morphism $\phi^\circ_N$ between log abelian schemes is defined by $\phi_N^\circ(x,y) = Nx-y$.

\begin{lemma}\label{lem:flat}
    The morphism $\phi_N :\barA^{(2)}_N \to \barA$ is flat.
\end{lemma}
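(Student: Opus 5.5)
We would prove flatness of $\phi_N:\barA_N^{(2)}\to\barA$ by miracle flatness, after first reducing to a statement about the map of Artin fans.

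\textbf{Setup.} Both $\barA_N^{(2)}$ and $\barA$ are smooth: $\barA$ by Proposition \ref{pro:Atrop}, and $\barA_N^{(2)}$ because it is a log modification of the smooth $\barA^{(2)}$ given by a unimodular subdivision, as in Lemma \ref{lem:Artin_fan_A_N}. Hence it suffices to show that every nonempty fiber of $\phi_N$ has dimension $\dim\barA^{(2)}_N-\dim\barA=g$.

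\textbf{Step 1: the log-geometric picture.} The map $\phi_N^\circ:(\logA)^2\to\logA$, $(x,y)\mapsto Nx-y$, is a surjective homomorphism of log abelian schemes. Composing it with the automorphism $(x,y)\mapsto(x,Nx-y)$ of $(\logA)^2$ identifies it with the second projection. It is therefore log smooth, with kernel isomorphic to $\logA$.

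Consequently $\phi_N$ is log smooth, being the composition of a log modification (log \'etale) with a base change of $\phi_N^\circ$. By Kato's criterion, a log smooth morphism between log smooth spaces is flat as soon as the induced map of cone complexes sends no positive-dimensional cone into a cone of strictly smaller dimension than expected. Concretely, we need the image of every cone of $\Sigma^{(2)}_N$ under the tropical map $(\mathsf{x}_1,\mathsf{x}_2,\mathsf{t})\mapsto(N\mathsf{x}_1-\mathsf{x}_2,\mathsf{t})$ to land surjectively onto a cone of $\Sigma^{(1)}$ compatibly with the lattices. This is the same criterion used in the proof of Proposition \ref{pro:Atrop}, where no cone may map to the zero cone.

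\textbf{Step 2: the combinatorial check.} We would verify this directly on the universal cover. The subdivision $\Sigma^{(2)}_N$ is, by construction, the common refinement of $\Sigma^{(2)}$ and the pullback of $\Sigma^{(1)}$ along the linear map $\mathsf{x}_1\mapsto N\mathsf{x}_1-\mathsf{x}_2$. So every cone of $\Sigma^{(2)}_N$ maps into a single cone of $\Sigma^{(1)}$.

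The $\mathsf{t}$-coordinate is preserved, and the only rays of $\Sigma^{(1)}$ lie over $\mathsf{t}>0$. Hence the only cone mapping to the zero cone is the zero cone, which gives the required "no dimension drop over the boundary" condition.

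\textbf{Step 3: the fiber-dimension bound.} Over the interior, $\phi_N$ is the smooth homomorphism $A^\circ\times A^\circ\to A^\circ$, with fibers of dimension $g$. Over a boundary point we use that the fibers of $\phi_N$ are log-modifications of torsors under $\ker\phi_N^\circ\cong\logA$, restricted to a stratum. Such a fiber is a union of strata of dimension at most $g$. Together with the upper semicontinuity lower bound, this shows all fibers have dimension exactly $g$.

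\textbf{Main obstacle.} The hard step is the fiber dimension over the codimension-two stratum of $\barA^{(2)}_N$. There the subdivision is irregular, with many cones, as in Corollary \ref{cor:max_cones_sigma_n}. One must check that no exceptional component of $b$ collapses to a point stratum of $\barA$ in a way that produces a fiber of dimension $g+1$.

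I would try to rule this out using the equivariance of $\phi_N$ under the action of $G\times_B G$ through $(a,a')\mapsto Na-a'$. This action is transitive on the generic points of the fibers within each stratum. Combined with the explicit list of cones, it should reduce the check to verifying that each two-dimensional cone of $\Sigma^{(2)}_N$ maps onto a two-dimensional cone of $\Sigma^{(1)}$. That is exactly the statement that the linear form $N\mathsf{x}_1-\mathsf{x}_2$ is not constant along any wall, which is immediate from Lemma \ref{lem:explicit-N-refinement}.
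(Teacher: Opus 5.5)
There is a genuine gap. The criterion you state at the end of Step~1 is the right one, and it is what the paper checks: every cone of $\Sigma_N^{(2)}$ must map \emph{onto} a cone of $\Sigma^{(1)}$. But you never verify it.

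Step~2 shows only two things: each cone maps \emph{into} some cone, and no nonzero cone maps to the zero cone. The second condition suffices when the target fan is one-dimensional, as for $\mathsf{t}:\Sigma^1\to\RR_{\ge0}$ in Proposition~\ref{pro:Atrop}. Here, however, the maximal cones of $\Sigma^{(1)}$ are two-dimensional. A three-dimensional cone landing on a proper, non-face subcone of one of them is exactly what would make the fiber dimension jump.

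Step~3 does not fill the gap. That boundary fibers are unions of strata of dimension at most $g$ is the statement to be proved. The reduction in your last paragraph is also misformulated. The subdivision is cut out precisely by the walls $N\mathsf{x}_1-\mathsf{x}_2=i\mathsf{t}$, on which the linear form \emph{is} constant. So the two-dimensional cones lying in these walls map onto rays, not onto two-dimensional cones. This is harmless for flatness, but it means the statement you propose to check is false. Moreover, Lemma~\ref{lem:explicit-N-refinement} describes the refinement for $1\times N$, not for $(x,y)\mapsto Nx-y$.

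The missing observation is that every ray of the refined complex maps to a ray of $\Sigma^{(1)}$. The original rays $(v_1\mathsf{t},v_2\mathsf{t},\mathsf{t})$ with $v\in\ZZ^2$ go to $((Nv_1-v_2)\mathsf{t},\mathsf{t})$. All new rays lie on walls $N\mathsf{x}_1-\mathsf{x}_2\in\mathsf{t}\ZZ$ by construction. Each cell of the link lies over a single interval $[i,i+1]$ and is the convex hull of its vertices. Hence its image is $\{i\}$, $\{i+1\}$ or $[i,i+1]$, which is a cone.

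The paper makes this explicit on the link of $0\le\mathsf{x}_2\le\mathsf{t}$ at $\mathsf{t}=1$:
\begin{itemize}
\item each strip in the triangle $\mathsf{x}_1\le\mathsf{x}_2$ contains a segment of the edge $\mathsf{x}_2=1$ mapping onto $[i-1,i]$;
\item each strip in $\mathsf{x}_2\le\mathsf{x}_1$ contains a segment of $\mathsf{x}_2=0$ mapping onto $[i,i+1]$.
\end{itemize}

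A smaller point: $\barA_N^{(2)}$ is not smooth in general. For $N=3$, the strip $0\le 3\mathsf{x}_1-\mathsf{x}_2\le 1$ inside $0\le\mathsf{x}_1\le\mathsf{x}_2\le 1$ is the quadrilateral with vertices $(0,0),(\tfrac12,\tfrac12),(\tfrac23,1),(\tfrac13,1)$, so its cone is not simplicial. Miracle flatness still applies, because a log smooth space has toric, hence Cohen--Macaulay, singularities, and only the target $\barA$ needs to be regular.
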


\begin{figure}
\begin{center}
	\[
	\begin{tikzpicture}[every node/.style={font=\small}]
		\def\u{0.95}
		\def\ybase{1.55}

		\coordinate (A) at (0,\ybase);
		\coordinate (B) at ({4*\u},{\ybase+\u});
		\coordinate (C) at ({-\u},{\ybase+4*\u});
		\coordinate (D) at ({3*\u},{\ybase+5*\u});

		\draw[thick] (A) -- (B) -- (D) -- (C) -- (A);
		\draw[thick] (A) -- (D);

		\coordinate (O) at ({6.5*\u},1.55);
		\draw[->, thick] (O) -- ++(0.9,0.225)
			node[right] {$\mathsf{x}_1$};
		\draw[->, thick] (O) -- ++(-0.225,0.9)
			node[above] {$\mathsf{x}_2$};

		\foreach \k in {0,1,2,3} {
			\draw[thick]
				({\k*\u},{\ybase+\k*\u/4}) --
				({\k*\u},{\ybase+(4.25+\k/4)*\u});
			\draw[densely dashed, gray]
				({\k*\u},0.12) -- ({\k*\u},{\ybase+\k*\u/4});
		}

		\node at ({1.5*\u},{\ybase+5.55*\u})
			{$\mathcal U\Sigma_N^{(2)}$};

		\draw[thick] ({-\u},0) -- ({4*\u},0);
		\node[left, inner sep=2pt] at ({-\u},0) {$\cdots$};
		\node[right, inner sep=2pt] at ({4*\u},0) {$\cdots$};
		\foreach \k in {-1,0,1,2,3,4} {
			\fill ({\k*\u},0) circle (1.4pt);
		}
		\node at ({1.5*\u},-0.72) {$\mathcal U\Sigma^{(1)}$};

		\draw[->, thick]
			({1.5*\u},{\ybase+0.28*\u}) --
			node[right] {$\phi_N$} ({1.5*\u},0.18);
	\end{tikzpicture}
	\]
\end{center}
\caption{\label{fig:phi_n_flat}This figure depicts the tropicalization of the map $\phi_N$ from Lemma \ref{lem:flat} in the case $N=4$. To simplify the
illustration, we only depict the region
$0\leq\mathsf{x}_1,\mathsf{x}_2\leq\mathsf{t}$ and restrict to the links at
$\mathsf{t}=1$.}
\end{figure}

\begin{proof}
	Since $\barA$ is smooth, miracle flatness reduces the claim to showing that the fibers are equidimensional. Since $\phi_N$ is log smooth, this is equivalent to the statement that the induced map on Artin fans maps cones surjectively onto cones.

	We argue tropically; see also Figure \ref{fig:phi_n_flat} for a visualization of the tropical map. The Artin fan of $\barA_N^{(2)}$ is the fiber product of the Artin fans of the other spaces in \eqref{eq:A2s}. Since all subdivisions are $\ZZ$-equivariant, it suffices to work with the cone complexes before taking the quotient. Thus the fiber diagram induces a fiber diagram of cone complexes:
	$$\begin{tikzcd}[column sep=8em]
		\mathcal U\Sigma_N^{(2)}\arrow[d]\arrow[r]&\mathcal U\Sigma^{(1)}\arrow[d]\\
		\mathcal U\Sigma^{(2)}\arrow[r,"{(\mathsf{x}_1,\mathsf{x}_2,\mathsf{t})\mapsto (N\mathsf{x}_1-\mathsf{x}_2,\mathsf{t})}"]&\mathbb{R}\times\mathbb{R}_{\ge 0}.
	\end{tikzcd}$$
	The vertical map is the subdivision along the hyperplanes $\mathsf{x}_1=i\mathsf{t}$ for
    $i\in \ZZ$. Hence the universal cover of the cone complex $\Sigma_N^{(2)}$
    corresponding to the Artin fan of $\barA_N^{(2)}$ is the subdivision of
    $\mathcal U\Sigma^{(2)}$ along the hyperplanes
    \[
    N\mathsf{x}_1-\mathsf{x}_2=i\mathsf{t}.
    \]

    Since $\mathcal U\Sigma_N^{(2)}$ is $\ZZ$-periodic, it suffices to consider the maximal cones in $\tilde{\Sigma}_N^{(2)}$ obtained by restricting to $0 \leq \mathsf{x}_2 \leq \mathsf{t}$. Moreover, since no maximal cone in $\mathcal U\Sigma^{(1)}$ is supported over $\{\mathsf{t}=0\}$, it suffices to restrict to the link over $1\in \RR_{\geq 0}$; equivalently, we may set $\mathsf{t}=1$.
    The link of $\tilde{\Sigma}^{(2)}$ is the subdivision of the square
    $(\mathsf{x}_1,\mathsf{x}_2)\in [0,1]^{2}$ along the diagonal $\{\mathsf{x}_1=\mathsf{x}_2\}$. The link of $\tilde{\Sigma}_N^{(2)}$ is obtained by further subdividing
    along the hyperplanes $N\mathsf{x}_1-\mathsf{x}_2\in \mathbb{Z}$.

    First consider the maximal cones in the region $\mathsf{x}_1 \leq \mathsf{x}_2$. They are the strips
    \[
    \mathsf{x}_2+i-1 \leq N\mathsf{x}_1 \leq \mathsf{x}_2+i,
    \quad
    i \in \{0,\ldots,N-1\}.
    \]
    Restricting to $\mathsf{x}_2=1$, the condition $\mathsf{x}_1\leq \mathsf{x}_2$ is automatic, and the
    strip conditions become
    \[
    i \leq N\mathsf{x}_1 \leq i+1.
    \]
     Under the map
	$(\mathsf{x}_1,1)\mapsto N\mathsf{x}_1-1$ this surjects onto the interval $[i-1,i]$ which is the link of the maximal cone in $\mathcal U\Sigma^{(1)}$
	defined by $0\le \mathsf{x}_1-(i-1)\mathsf{t}\le \mathsf{t}$.

    Similarly, in the region $\mathsf{x}_2 \leq \mathsf{x}_1$, the maximal cones are the regions
    \[
    \mathsf{x}_2+i \leq N\mathsf{x}_1 \leq \mathsf{x}_2+i+1,
    \quad
    i \in \{0,\ldots,N-1\}.
    \]
    Restricting to $\mathsf{x}_2=0$, the first condition is automatic, and the strip conditions become
    \[
    i \leq N\mathsf{x}_1 \leq i+1.
    \]
    Under the map $(\mathsf{x}_1,0)\mapsto N\mathsf{x}_1$, this interval surjects onto $[i,i+1]$,
    which is the link of the maximal cone in $\mathcal U\Sigma^{(1)}$ defined by $0 \leq \mathsf{x}_1-i\mathsf{t} \leq \mathsf{t}$.

    Thus the induced map on Artin fans maps cones surjectively onto cones.
    Therefore $\phi_N$ is flat.
\end{proof}

Using Theorems \ref{thm:unit} and \ref{thm:wt}, we prove the polynomiality of $[\overline{\tau_N}]$.

\begin{proposition}\label{pro:poly}
    The class $[\overline{\tau_N}] \in \CH^g(\barA^{(2)})$ is polynomial in $N$ of degree $2g$.
\end{proposition}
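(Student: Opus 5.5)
We will realize $[\overline{\tau_N}]$ as a rational pullback of the unit section class. Over the abelian locus, $\tau_N$ is exactly the preimage of the unit section under $(x,y)\mapsto Nx-y$. Using the diagram \eqref{eq:A2s} and the flatness of $\phi_N$ from Lemma \ref{lem:flat}, the flat pullback $\phi_N^*[e]$ is the class of the subscheme $\phi_N^{-1}(e(B))\subset \barA^{(2)}_N$.

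The first step is to check that this subscheme is the closure of $\tau_N$ on $\barA_N^{(2)}$, with no extra boundary components. By flatness, every component of $\phi_N^{-1}(e(B))$ has codimension $g$. The image of $e$ lies in the semi-abelian locus $G$, and the preimage of $G$ in $\barA_N^{(2)}$ is the semi-abelian part of the model, where $\phi_N$ is the group homomorphism $G^2\to G$. There the preimage of the unit section is reduced and equal to the closure of $\{Nx=y\}$ in $G^2$. Granting this, we get
\[
[\overline{\tau_N}] = b_*\phi_N^*[e].
\]
Here $b:\barA_N^{(2)}\to\barA^{(2)}$ is the log modification, and we use that $b$ is an isomorphism over the interior.

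The second step is to substitute Theorem \ref{thm:unit}, $[e]=\sfS^g_g$, which is a tautological class on $\barA$: a polynomial in $\theta$ and piecewise polynomials, namely decorated strata classes via \eqref{eq:normalbunex}. We then define $[\phi_N]^*:=b_*\phi_N^*$ and show that it is polynomial in $N$ on tautological classes, of degree at most twice the codimension. To do this, factor $\phi_N$ as $(x,y)\mapsto(x,Nx-y)$. The map $(x,y)\mapsto(x,-y)$ is an automorphism, and $(x,y)\mapsto (x,y+(N-1)x)$ is a shear. Alternatively, write $\phi_N=\diff\circ(N\times 1)$ using Lemma \ref{lem:maps_regular}(c). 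Then
\[
[\phi_N]^*\alpha=[N\times 1]^*\diff^*\alpha,
\]
justified by the independence of models in Definition \ref{def:n} together with the argument of Lemma \ref{lem:Npush}, which identifies the correspondence resolution via \cite[Lemma C.8]{BS1}.

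Next we need $\diff^*\sfS^g_g\in \R^g(\barA^{(2)})$. Mumford's formula \eqref{eq:Mumform} gives $\diff^*\theta=\theta_1+\theta_2-\ell$, and the pullback of piecewise polynomials along the toroidal map $\diff$ is again piecewise polynomial, since $\diff$ tropicalizes to $(\mathsf x_1,\mathsf x_2)\mapsto \mathsf x_1-\mathsf x_2$. Theorem \ref{thm:wt}(a),(b) with $i=1$ then shows that $[N_1]^*\diff^*\sfS^g_g$ is polynomial in $N$ of degree at most $2g$. The top coefficient is nonzero because, on the abelian locus, $[\tau_N]$ has leading term $N^{2g}\theta_1^g/g!$.

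The main obstacle is the first step: showing that $b_*\phi_N^*[e]$ carries no boundary components or multiplicities, so that it equals exactly $[\overline{\tau_N}]$. The first thing to try is equidimensionality from flatness combined with the fact that $e(B)\subset G$ avoids the boundary. The boundary of $\barA_N^{(2)}$ that maps into $G$ consists of the strata over the open (non-boundary) cones of the target in the tropical picture. Over these strata $\phi_N$ restricts to a group-homomorphism torsor, so its fibers over $e$ should be the closure of the generic fiber. Checking this carefully is the delicate point.
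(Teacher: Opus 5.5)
Your overall route matches the paper's. You realize $[\overline{\tau_N}]=b_*\phi_N^*[e]$ via the flatness of $\phi_N$ (Lemma~\ref{lem:flat}), substitute $[e]=\sfS^g_g$, and rewrite $b_*\circ\phi_N^*=[N\times 1]^*\circ\diff^*$. You then use \eqref{eq:Mumform} and the toroidality of $\diff$ to see that $\diff^*\sfS^g_g$ is tautological, and conclude with Theorem~\ref{thm:wt}. Your observation that the $N^{2g}$-coefficient is nonzero (over $\A_g$ it is $\theta_1^g/g!$) is a small addition the paper leaves implicit.

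The gap is the first step, which you only grant. The argument you sketch for it rests on a false claim: $\phi_N^{-1}(G)$ is not the semi-abelian part $b^{-1}(G^2)$ of $\barA_N^{(2)}$. Take $N\ge 2$. The fan of $\barA_N^{(2)}$ described in the proof of Lemma~\ref{lem:flat} contains the cone whose link at $\mathsf{t}=1$ is the segment $\{\mathsf{x}_2=N\mathsf{x}_1,\ 0\le\mathsf{x}_1\le 1/N\}$. Its relative interior lies in the interior of the maximal cone $\{0\le\mathsf{x}_1\le\mathsf{x}_2\le\mathsf{t}\}$ of $\Sigma^{(2)}$, so the corresponding stratum maps under $b$ into the deepest stratum of $\barA^{(2)}$. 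Yet $N\mathsf{x}_1-\mathsf{x}_2=0$ on it, so $\phi_N$ sends it into $G$. The closure of $\tau_N$ does pass through this stratum. So knowing $\phi_N^{-1}(e(B))$ over $b^{-1}(G^2)$ says nothing about possible extra components there, and a stratum-by-stratum torsor analysis would be far more work than needed.

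No such analysis is necessary. The idea you list first, equidimensionality from flatness, finishes the job once combined with a dimension count. Put $B_N:=\phi_N^{-1}(e(B))$.
\begin{itemize}
\item Since $e(\partial B)$ has codimension $g+1$ in $\barA$ and $\phi_N$ is flat, $\phi_N^{-1}(e(\partial B))$ has codimension $g+1$ in $\barA_N^{(2)}$.
\item Hence it contains no irreducible component of the pure codimension-$g$ scheme $B_N$. Alternatively: $B_N\to B$ is flat, being a base change of $\phi_N$, so all its components dominate $B$.
\item So the cycle $\phi_N^*[e]=[B_N]$ is determined by its restriction over $\A_g$. There $\phi_N$ is smooth and $B_N$ is the reduced graph of multiplication by $N$.
\item Thus $[B_N]$ is the class of the closure of $\tau_N$ with multiplicity one, and $b_*[B_N]=[\overline{\tau_N}]$ since $b$ is an isomorphism over $\A_g$.
\end{itemize}
This is exactly how the paper closes the step, after which your second step goes through as written.
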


\begin{proof}
    Consider the diagram \eqref{eq:A2s}. We first prove that
    \begin{equation}\label{eq:poly}
        b_*\phi_N^*([e]) = [\overline{\tau_N}] \in \CH^g(\barA^{(2)})\,.
    \end{equation}
    For the unit section $e : B \to \barA$, form the cartesian diagram
    \[
    \begin{tikzcd}
        B_N \ar[r,"\tilde{\phi}_N"] \ar[d,"\tilde{e}"] &
        B \ar[d,"e"]\\
        \barA_N^{(2)} \ar[r,"\phi_N"] &
        \barA .
    \end{tikzcd}
    \]
    By Lemma \ref{lem:flat}, $\phi_N$ is flat, so we have
    \[
    \phi_N^* e_*([B])
    =
    \tilde e_*\tilde\phi_N^*([B])
    =
    \tilde e_*([B_N]).
    \]
    Over $\A_g \subset \A_g'=B$, the morphism $\phi_N$ is smooth, so $B_N$ is
    generically reduced. Moreover, over $\A_g$, the pushforward
    $b_*\tilde e_*([B_N])$ is clearly $[\tau_N]$. Therefore, we get \eqref{eq:poly}.

    On the other hand, we have
    \[
    b_*\circ \phi_N^* = [N\times 1]^* \circ \diff^* : \CH^*(\barA) \to \CH^*(\barA^{(2)}),
    \]
    where $\diff : \barA^{(2)} \to \barA$ is the difference map. Hence
    \begin{equation}\label{eq:taudelta}
    [\overline{\tau_N}] = b_*\phi_N^*([e])
    =
    [N \times 1]^*\circ\diff^*([e])
    =
    [N \times 1]^*\circ\diff^*(\sfS_g^g)\,,
    \end{equation}
    where the first equality follows from \eqref{eq:poly}, the second from the above equality, and the third from Theorem \ref{thm:unit}. Since $\diff$ is a map of log schemes, $\diff^*$ preserves piecewise polynomials. Together with Mumford's formula \eqref{eq:Mumform}, we see that $\diff^* : \sR^*(\barA) \to \sR^*(\barA^{(2)})$. Therefore, the right-hand side is polynomial in $N$ of degree $2g$ by Theorem \ref{thm:wt}. This proves the result.
\end{proof}

In what follows, we view $[\overline{\tau_N}]$ as a class on
$\barA^2$ via the pushforward along $f: \barA^{(2)} \to \barA^2$.

\begin{proposition}\label{pro:nm}
    For $N,M \in \ZZ_{>0}$, $[\overline{\tau_N}] \circ [\overline{\tau_M}] = [\overline{\tau_{NM}}] \in \corr_B^0(\barA,\barA)$.
\end{proposition}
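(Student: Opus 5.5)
We would prove the identity by comparing the two sides as maps on Chow groups. By Lemma \ref{lem:mapvscorr}, $[\overline{\tau_N}]^t$ acts as $[N]^*=b_*N^*$. Since transposition reverses composition, it suffices to show
\[
[\overline{\tau_M}]^t\circ[\overline{\tau_N}]^t=[\overline{\tau_{NM}}]^t,
\]
first as correspondences and not merely as operators.

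\textbf{Step 1: realize the composite as a pushforward of a fundamental class.} Choose a common log modification $\barA_{N,M}$ of $\barA$ dominating both $\barA_M$ and the fiber product $\barA_M\times_{\barA}\barA_N$, where the map $\barA_M\to\barA$ is $M$. Concretely, pull back the subdivision $\Sigma_N^{(1)}$ along the tropical multiplication by $M$ and refine it against $\Sigma_M^{(1)}$. By functoriality of the log group law $\logA\xrightarrow{M}\logA\xrightarrow{N}\logA$, this gives a log modification $c:\barA_{N,M}\to\barA$ together with a map $\barA_{N,M}\to\barA$ extending $x\mapsto NMx$. The composition of correspondences is then computed by the relative fiber product $\barA_M\times_{\barA}\barA_N$ over the middle factor, pushed forward to $\barA^2$.

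\textbf{Step 2: show this fiber product is irreducible of the expected dimension.} If it is, Lemma \ref{lem:mapvscorr}, in the form \eqref{eq:correq}, identifies the composite with $(c,NM)_*[\barA_{N,M}]$. That class agrees with $f_*[\overline{\tau_{NM}}]$ on the abelian locus, and both are irreducible cycles with no boundary components, so the identity follows.

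The expected dimension is guaranteed by flatness. The map $N$ on $\barA_N$ is log smooth with Artin fan map surjective on cones, exactly as in the proof of Lemma \ref{lem:flat}. So we argue by miracle flatness that $N:\barA_N\to\barA$ is flat, and hence
\[
[\barA_M\times_{\barA}\barA_N]=b^*\text{-type flat pullback of }[\barA_M].
\]
The refined intersection product defining the composition therefore equals this flat pullback.

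\textbf{Main obstacle: no extra components over the boundary.} We must rule out components of $\barA_M\times_{\barA}\barA_N$ supported over $\partial\barA$. Flatness gives pure dimension, so every component has the same dimension as the main one. The fiber product of log modifications, taken in the ordinary rather than f.s. category, may however carry nonreduced or extra components.

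We would try to show directly that the ordinary fiber product is already the f.s. fiber product, using integrality and saturatedness of the tropical map. We expect this to hold because $\Sigma_N^{(1)}\to\Sigma^{(1)}$ is cone-surjective with saturated lattice maps. If so, the fiber product is log smooth, hence reduced and with irreducible strata closures, and it is birational to $\barA_{N,M}$, which proves irreducibility.

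If that fails, we would instead compare both sides after applying $(p_1)_*$ and restricting to the boundary strata. Flat-pullback multiplicities are computed tropically by lattice indices, and the indices along each cone of the common refinement multiply to $1$, matching the degree-one map onto the closure of $\tau_{NM}$.
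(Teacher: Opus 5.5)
Your overall plan is sound and parallels the paper's. You compute the composite as the pushforward of $\delta^![\barA_N\times\barA_M]$, which is supported on the ordinary fiber product $X$ of a multiplication map on one model with the blow-down on the other. You then show $X$ is irreducible of dimension $\dim\barA$ with multiplicity one, and compare with $\overline{\tau_{NM}}$ over $\A_g$. One inconsistency: Step 1 forms the fiber product along $M$ and $b$, but Step 2 invokes flatness of $N\colon\barA_N\to\barA$, which is what is relevant for the fiber product along $N$ and $b'\colon\barA_M\to\barA$. By symmetry either version works.

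The gap is the step you call the main obstacle. You propose to show the ordinary fiber product is already the f.s.\ one because $\Sigma_N^{(1)}\to\Sigma^{(1)}$ is integral and saturated, but the saturatedness premise is false. The primitive generator $(i,N)/\gcd(i,N)$ of a ray of $\mathcal U\Sigma_N^{(1)}$ maps to $\frac{N}{\gcd(i,N)}(i,1)$, a non-primitive multiple of a primitive ray. So $N$ is ramified along the exceptional divisors of $b$, has nonreduced fibers over $\partial\barA$, and the lattice maps have index $N$. Saturatedness therefore cannot be invoked. Your fallback via lattice indices is too vague to serve as an argument.

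In fact nothing beyond the flatness you already establish is needed. The projection $X\to\barA_M$ is a base change of the flat map $N$, hence flat. By going-down, the generic point of every irreducible component of $X$ maps to the generic point of the irreducible stack $\barA_M$, so it lies over $\A_g$. There $X\cong\X_g\times_{N,\X_g,\mathrm{id}}\X_g\cong\X_g$ is integral, so $X$ is irreducible. Since $X$ has the expected dimension, $\delta^![\barA_N\times\barA_M]=c[X]$, and $c=1$ because over $\A_g$ the map $N$ is \'etale and the intersection is transverse. Pushing forward gives $[\overline{\tau_{NM}}]$.

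With this repair your route genuinely differs from the paper's. The paper uses the $s=1$ coincidence $(\Sigma_N^{(1)})'\cong\Sigma_N^{(1)}$, i.e.\ that $\barA_N=\barA_N'$ is itself the f.s.\ base change of $N\colon\logA\to\logA$. From this it concludes that $X$ is an f.s.\ fiber product, hence a log modification of $\barA_N'$ and irreducible of dimension $\dim\barA$, and then fixes $c=1$ on the interior. Your version replaces that log-geometric comparison of fiber products with a flatness check. That check consists of cone-surjectivity of $\Sigma_N^{(1)}\to\Sigma^{(1)}$ plus Cohen--Macaulayness of the possibly singular toroidal $\barA_N$, so that miracle flatness applies. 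This is more elementary, but equally special to $s=1$: for $s\ge 2$ the multiplication map on $\Sigma_N^{(s)}$ is no longer cone-surjective.
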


\begin{proof}
    First, we make an observation that is specific to the $s=1$ and torus-rank-one situation.
    For $N \in \ZZ_{>0}$, consider the f.s. fiber diagram
    \begin{equation}\label{eq:nm1}
        \begin{tikzcd}
        \barA_N \ar[r]\ar[d,"b"] \ar[rr, bend left,"N"] & \barA_N' \ar[d] \ar[r] & \barA\ar[d,"\beta"]\\
        \barA \ar[r,"\beta"] & \logA \ar[r,"N"] & \logA.
        \end{tikzcd}
    \end{equation}
	By base-change, the Artin fan of $\barA_N'$ is the cone complex $(\Sigma_N^{(1)})'$
	introduced in \S\ref{sec:wt_lower_trop}.
	From the definition of this cone complex, it readily follows
	that $(\Sigma_N^{(1)})'\cong \Sigma_N^{(1)}$, so
	the morphism $\barA_N\to \barA_N'$ in the above diagram is
	an isomorphism.

    For $N,M \in \ZZ_{>0}$, take $\barA_N$ and $\barA_M$ as in
    \eqref{eq:nm1}. Denote by $b : \barA_N \to \barA, \, b' : \barA_M \to \barA$ the corresponding log modifications. Consider the diagram
    \begin{equation}\label{eq:nm3}
        \begin{tikzcd}
            & X_{NM} \ar[r] \ar[d,"g"] & \barA_N \times \barA_M \ar[d,"h"] \\
            & \barA^3 \ar[r] \ar[dl,"\pi_{1,3}"'] \ar[d,"q"] & \barA^2 \times \barA^2 \ar[d,"q"]\\
            \barA^2 & \barA \ar[r, "\delta"] & \barA \times \barA,
        \end{tikzcd}
    \end{equation}
    where $h$ is given by $(b,N,b',M)$, $q$ is the projection onto the second and third factors, $\delta : \barA \to \barA \times \barA$ is the diagonal embedding, and the squares are cartesian. We also let $\pi_{1,3}$ denote the projection onto the first and third factors.

    We prove that the space $X_{NM}$ is irreducible of dimension $\dim \barA$, and the fundamental classes are related by
    \begin{equation}\label{eq:nm2}
        \delta^! \big( [\barA_N \times \barA_M] \big) = [X_{NM}]
        \in \CH_*(X_{NM}),
    \end{equation}
    where $\delta^!$ is the Gysin pullback along $\delta$.
    Since $\barA'_N = \barA_N$, the following diagram is cartesian:
    \[
    \begin{tikzcd}
        X_{NM} \ar[d] \ar[r] & \barA_M \ar[d,"b'"]\\
        \barA'_N \ar[r,"N"] & \barA .
    \end{tikzcd}
    \]
    Since $\barA_N' \to \barA$ is the pullback of $N : \logA \to \logA$, the above diagram is also an f.s. fiber diagram. Therefore $X_{NM}$ is a log modification of $\barA'_N$, hence irreducible and $\dim X_{NM} = \dim \barA$. For dimension reasons it follows that
    \[
    \delta^! \big([\barA_N \times \barA_M] \big) = c \cdot [X_{NM}] \in \CH_*(X_{NM})
    \]
    for some constant $c \in \QQ$. To check that $c=1$, we restrict to the
    open locus $\A_g \subset \A_g'$, where the equality $c=1$ is clear.

    By \eqref{eq:correq}, we can compute the composition of $[\overline{\tau_N}]$ and $[\overline{\tau_M}]$ using the models $\barA_N$ and $\barA_M$. From the diagram \eqref{eq:nm3}, we obtain
    \begin{align*}
        [\overline{\tau_N}] \circ [\overline{\tau_M}]
        &= \pi_{1,3*} \delta^! h_*\bigl([\barA_N \times \barA_M]\bigr) \\
        &= \pi_{1,3*} g_* \delta^!\bigl([\barA_N \times \barA_M]\bigr) \\
        &= \pi_{1,3*} g_*[X_{NM}],
    \end{align*}
    where the first equality follows from the definition, the second from the compatibility of proper pushforward with Gysin pullback, and the third from \eqref{eq:nm2}. Since $X_{NM}$ is irreducible of dimension $\dim \barA$ and $\pi_{1,3}\circ g$ maps onto the Zariski closure of the rational section $\tau_{NM}$, we have
    \[
    \pi_{1,3*}g_*[X_{NM}] = [\overline{\tau_{NM}}].
    \]
    This completes the proof.
\end{proof}

We prove parts (a) and (b) of Theorem \ref{thm:dec}. Part (c) will be proved in the last part of \S\ref{sec:weight}.
\begin{proof}[Proof of Theorem \ref{thm:dec}(a) and (b)]
    Part (a) was proved in Proposition \ref{pro:poly}, so it remains to prove
    part (b). By part (a), we can write
    \[
    [\overline{\tau_N}]^t = \sum_{k=0}^{2g} \w_k N^k,
    \qquad
    \w_k \in \CH^g(\barA^{(2)}) \to \corr^0_B(\barA,\barA).
    \]
    By Proposition \ref{pro:nm}, for positive integers $N,M$, the following
    identity holds in $\corr_B^0(\barA,\barA)$:
    \begin{equation}\label{eq:orth1}
        \sum_{i=0}^{2g} \w_i (NM)^i
        =
        \left(\sum_{i=0}^{2g} \w_i N^i\right)
        \circ
        \left( \sum_{j=0}^{2g} \w_j M^j \right).
    \end{equation}
    Both sides are polynomials in $N$ and $M$. Comparing the coefficients of
    each monomial in $N$ and $M$,
    \begin{equation}\label{eq:orth}
        \w_i^2 = \w_i, \qquad \w_i \circ \w_j = 0 \quad \text{for } i \neq j.
    \end{equation}
    Let $\Delta_{\barA/B}$ be the relative diagonal for $\pi : \barA \to B$. Since $[\overline{\tau_1}]^t = [\Delta_{\barA/B}]$, we get
    \[
    [\Delta_{\barA/B}] = \sum_{i=0}^{2g} \w_i,
    \]
    where $\{\w_i\}_{i=0}^{2g}$ are orthogonal projectors by \eqref{eq:orth}.
    This proves the relative motivic decomposition.

    Over $\A_g \subset \A_g'$, our $\w_i$ coincide with the weight projectors for $\X_g \to \A_g$ constructed in \cite{DM91}. Therefore, the motivic lift of the decomposition of $R\pi_*\QQ$ follows from the full-support property (Remark \ref{rmk:full}) and the argument in \cite[\S 2.5.2]{PerverseFourierMSY25}. If $\mathrm{char} (\Bbbk) = p>0$, the same argument works with $\overline{\QQ}_\ell$-coefficients with $p \neq \ell$.
\end{proof}

\begin{definition}
    We define the pure weight subspaces of $\CH^*(\barA)$ by
    $$
    \CH^{*,(w)}(\barA) := \im \left(\mathfrak w_w : \CH^*(\barA) \to \CH^*(\barA)\right)\, \quad \CH^*_{(w)}(\barA) = \im \left(\mathfrak w_{2g-w}^t : \CH^*(\barA) \to \CH^*(\barA)\right).
    $$
\end{definition}
By Theorem \ref{thm:dec}, we have canonical decompositions
$$
\CH^*(\barA) = \bigoplus_{w=0}^{2g} \CH^{*,(w)}(\barA) = \bigoplus_{w=0}^{2g} \CH^{*}_{(w)}(\barA),
$$
and by Lemma \ref{lem:mapvscorr}, $[N]^*$ (resp. $[N]_*$) acts by multiplication by $N^w$ (resp. $N^{-w}$) on $\CH^{*,(w)}(\barA)$ (resp. $\CH^{*}_{(w)}(\barA)$). In particular, we have
\begin{corollary}\label{cor:pure}
    Any class $\alpha \in \CH^*(\barA)$ has weights $\leq 2g$ for $[N]^*$ and $[N]_*$ respectively, and each weight part $[\alpha]^{(w)}$, $[\alpha]_{(w)}$ has pure weight $w$ for $[N]^*$ or $[N]_*$, respectively.
\end{corollary}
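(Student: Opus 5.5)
The plan is to read Corollary \ref{cor:pure} off the motivic decomposition in Theorem \ref{thm:dec}(b), together with the identification of $[N]^*$ and $[N]_*$ with the correspondences $[\overline{\tau_N}]^t$ and $N^{-2g}[\overline{\tau_N}]$ given by Lemma \ref{lem:mapvscorr}.

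\textbf{Step 1: polynomiality of $[N]^*$.} By Theorem \ref{thm:dec}(a) we have $[\overline{\tau_N}]^t=\sum_{k=0}^{2g}N^k\w_k$ as correspondences. Lemma \ref{lem:mapvscorr} identifies the action of $[\overline{\tau_N}]^t$ on $\CH^*(\barA)$ with $[N]^*$, so for any $\alpha$
\[
[N]^*\alpha=\sum_{k=0}^{2g}N^k\,\w_k(\alpha).
\]
Hence $\alpha$ has weight $\le 2g$. Since a polynomial in $N$ is determined by its values at all positive integers, the weight parts are exactly $[\alpha]^{(w)}=\w_w(\alpha)$.

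\textbf{Step 2: purity for $[N]^*$.} The $\w_k$ are orthogonal idempotents by \eqref{eq:orth}. Applying Step 1 to $\w_w(\alpha)$ gives
\[
[N]^*\w_w(\alpha)=\sum_k N^k\,\w_k\w_w(\alpha)=N^w\,\w_w(\alpha).
\]
So each $[\alpha]^{(w)}$ is pure of weight $w$.

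\textbf{Step 3: the case of $[N]_*$.} By Lemma \ref{lem:mapvscorr}, $[N]_*$ is given by $N^{-2g}[\overline{\tau_N}]$, that is, by the transpose
\[
N^{-2g}\sum_k N^k\w_k^t=\sum_{w}N^{-w}\,\w_{2g-w}^t.
\]
The transposes $\w_k^t$ are again orthogonal idempotents summing to the diagonal, because transposition reverses composition and fixes $\Delta_{\barA/B}$. Therefore $[N]_*\alpha$ is polynomial in $N^{-1}$ of degree $\le 2g$, with $[\alpha]_{(w)}=\w_{2g-w}^t(\alpha)$. The same idempotent computation as in Step 2 gives $[N]_*[\alpha]_{(w)}=N^{-w}[\alpha]_{(w)}$.

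\textbf{Main obstacle.} The only delicate point is Lemma \ref{lem:mapvscorr}: the identity $(b,N)_*1=f_*[\overline{\tau_N}]$ as correspondences, and its consequence that the induced action on $\CH^*(\barA)$ is $b_*N^*$, respectively $N_*b^*$. This must be compatible with how correspondences act via the model $\barA^{(2)}\to\barA^2$. Since that lemma is already established, I would only check that composition of correspondences on $\barA^2$ matches composition of the induced maps, which is standard Corti--Hanamura formalism.
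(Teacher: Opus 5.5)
Your proposal is correct and is essentially the paper's argument. The paper likewise deduces the corollary from the splitting $\CH^*(\barA)=\bigoplus_w \im(\w_w)=\bigoplus_w \im(\w_{2g-w}^t)$ given by the orthogonal projectors of Theorem~\ref{thm:dec}, combined with Lemma~\ref{lem:mapvscorr}, which makes $[N]^*$ (resp.\ $[N]_*$) act by $N^w$ (resp.\ $N^{-w}$) on the $w$-th summand; your explicit identifications $[\alpha]^{(w)}=\w_w(\alpha)$ and $[\alpha]_{(w)}=\w_{2g-w}^t(\alpha)$ simply spell out the step the paper leaves implicit.
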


\begin{remark}
    Theorem \ref{thm:dec} does not produce a multiplicative splitting of $\CH^*(\barA)$; see, for example, Proposition \ref{pro:thetawt}. Following \cite{BMSY1}, one can show using Corollary \ref{cor:pushThet} that, when $g\ge 2$, there is no multiplicative splitting of the perverse filtration on $\mathsf{H}^*(\barA,\QQ)$.
\end{remark}

\subsection{Fourier vanishing}

Recall that $\fm_i,\fm^{-1}_i\in\CH^i(\barA^{(2)})$ denote the codimension $i$ components of $\fm$ and $\fm^{-1}$, respectively.

\begin{theorem}\label{thm:fv}
    \begin{enumerate}[label = (\alph*)]
        \item If $i+j<2g$, $\fm_i^{-1} \circ \fm_j = 0$.
        \item If $i+j <2g$, $\fm_i \circ \fm_j^{-1} = 0$.
    \end{enumerate}
\end{theorem}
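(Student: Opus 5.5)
Since $\fm\circ\fm^{-1}=\id$ and $\fm^{-1}\circ\fm=\id$ by \eqref{eq:fourier_is_iso}, the natural approach is to compare weights of the two sides after applying the multiplication-by-$N$ correspondences. For (a), consider the composition $\fm^{-1}_i\circ\fm_j$ as a class in $\corr_B(\barA,\barA)$, represented by a tautological class on (a model of) $\barA^{(3)}$ pushed forward along the projection $p_{1,3}$ forgetting the middle factor. Concretely, $\fm_i^{-1}\circ \fm_j = p_{1,3,*}(p_{1,2}^*\fm_j\cup p_{2,3}^*\fm^{-1}_i)$, computed on $\barA^{(3)}$. The key idea is to apply $[1\times N\times 1]^*$ in the middle direction and then push forward. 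Proposition \ref{pro:vanish} says that after $p_{2,*}$ only the weight $2g$ part in the middle direction survives: $p_{1,3,*}(\alpha)=p_{1,3,*}([\alpha]^{2,(2g)})$.

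The steps are as follows. First, using Lemma \ref{lem:Fchow}, write $\fm_j$ and $\fm_i^{-1}$ as codimension $j$ and $i$ tautological classes in $\R^*(\barA^{(2)})$, built from $\ell$ (via $\ch(\P)$ or $\ch(\P^\vee)$) and piecewise polynomial Todd corrections. Their pullbacks $p_{1,2}^*\fm_j\cup p_{2,3}^*\fm_i^{-1}$ then form a tautological class of codimension $i+j$ on $\barA^{(3)}$, with $\lambda$-factors pulled back from $B$. Second, apply Theorem \ref{thm:wt}(b) in the middle direction: this class has weight at most $2(i+j)$ in direction $2$. That bound is too weak, so I need the refined statement (c). Monomials in $\ell_{1,2}$, $\ell_{2,3}$ and piecewise polynomials $\varphi$ of degree $d$ have middle weight at most $k_{12}+k_{23}+d\le i+j$, since there are no $\theta_2$ factors. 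Hence the weight is at most $i+j<2g$. Third, Proposition \ref{pro:vanish} gives that the pushforward $p_{1,3,*}$ kills every weight part other than weight $2g$, so the composition vanishes. Part (b) is identical with the roles of $\fm$ and $\fm^{-1}$ swapped, or alternatively follows by transposition.

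The main obstacle is making this rigorous on the correct space. The composition of correspondences is defined on $\barA^3$, not $\barA^{(3)}$, so one must show that computing on the small resolution $\barA^{(3)}$ with pulled-back kernels from $\barA^{(2)}$ (via the maps $p_{i,j}$ of Lemma \ref{lem:maps_regular}) gives the same class. I would handle this by the projection formula along the log modification, arguing as in Lemma \ref{lem:Npush}. I also need to check that the piecewise polynomial Todd factors $\Td^\vee(\cR_{\pi^{(2)}})$ pulled back along $p_{1,2}$ and $p_{2,3}$ have middle-direction degree $d$ bounded by their codimension, so that they are controlled by Theorem \ref{thm:wt}(c). The $\lambda$-classes pass through freely by Lemma \ref{lem:Npush}.
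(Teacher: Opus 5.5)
Your proposal is correct and is essentially the paper's own argument. The paper writes $\fm_i^{-1}\circ\fm_j = p_{1,3*}\bigl(p_{2,3}^*\fm_i^{-1}\cup p_{1,2}^*\fm_j\bigr)$ on $\barA^{(3)}$ and applies Theorem~\ref{thm:wt} to get that $[1\times N\times 1]^*$ of this class has degree at most $i+j<2g$ in $N$. As you correctly note, this implicitly uses the refined bound of part (c), since no $\theta_2$ appears. It then concludes by Proposition~\ref{pro:vanish}, with (b) handled the same way; the identification of the composition with this pushforward from $\barA^{(3)}$, which you flag as the main obstacle, is simply taken for granted in the paper.
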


\begin{proof}
    By Lemma \ref{lem:Fchow}, the classes $\fm_i^{-1}$ and $\fm_j$ on $\barA^{(2)}$ (before pushforward to $\barA^2$) are polynomials in $\ell$ and piecewise polynomial classes.
    Let $p_{i,j} : \barA^{(3)} \to \barA^{(2)}$ be the projection onto the $i$-th and $j$-th factors. Consider the action of $[1 \times N \times 1]^*$ on $\CH^*(\barA^{(3)})$. By Theorem \ref{thm:wt},
    \[
    [1\times N \times 1]^*
    \bigl(p_{2,3}^*\fm_i^{-1} \cup p_{1,2}^*\fm_j\bigr)
    \]
    is polynomial in $N$ of degree at most $i+j$. By assumption, $i+j<2g$. Therefore, by Proposition \ref{pro:vanish},
    \[
    \fm_i^{-1} \circ \fm_j =
    p_{1,3*}\bigl(p_{2,3}^*\fm_i^{-1} \cup p_{1,2}^*\fm_j\bigr) = 0\,.
    \]
    This proves part (a), and part (b) follows similarly.
\end{proof}

\begin{remark}
    In \cite{PerverseFourierMSY25}, Fourier vanishing is proved using Adams
    operations in algebraic $K$-theory. The argument above gives an independent
    proof using the theory of weights.
\end{remark}

\subsection{Comparison of filtrations}
\begin{definition}
    The {\em (Chow-theoretic) perverse filtration} is defined by
    \[
    P_k \CH^*(\barA) := \sum_{w \le k} \im \Big(\fm_w : \CH^*(\barA) \to \CH^*(\barA) \Big)\,.
    \]
\end{definition}

In \S \ref{sec:mbyNmaps} and \S\ref{sec:Nlow}, we introduced the {\em weight filtrations} $W_k$ and $\widehat{W}_k$. We have
\[
W_k \CH^*(\barA) = \bigoplus_{w\leq k}\CH^{*,(w)}(\barA)\, ,\quad \widehat{W}_k \CH^{*}(\barA) = \bigoplus_{w\le k} \CH^*_{(w)}(\barA)\,.
\]

The following is the main result of this section. The key input is the weight calculation for $[N]_*$.
\begin{proposition}\label{pro:P=W}
    For all $k$, $P_k \CH^*(\barA) = W_k \CH^*(\barA) = \widehat{W}_k\CH^*(\barA)$.
\end{proposition}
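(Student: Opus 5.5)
We prove $P_k \subseteq W_k$, then $W_k\subseteq \widehat W_k$ (or its analogue via $[N]_*$), and close the cycle with $\widehat W_k \subseteq P_k$, using the decomposition of the diagonal \eqref{eq:diag}, Fourier vanishing (Theorem \ref{thm:fv}), and the weight bounds of Theorems \ref{thm:wt} and \ref{thm:wt_lower}.

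\emph{Step 1: $P_k\subseteq W_k$.} Let $\alpha\in\CH^*(\barA)$. We have
\[
\fm_w(\alpha)=p_{2,*}(p_1^*\alpha\cup\fm_w).
\]
By Lemma \ref{lem:Npush}, $[N]^*\fm_w(\alpha)=p_{2,*}\bigl(p_1^*\alpha\cup[1\times N]^*\fm_w\bigr)$. Here $\fm_w$ is tautological of codimension $w$ on $\barA^{(2)}$: by Lemma \ref{lem:Fchow} it is a polynomial in $\ell$ and piecewise polynomial classes. Moreover, $p_1^*\alpha$ is pulled back from the factor that is not scaled. So Theorem \ref{thm:wt}(c) shows that $[1\times N]^*\fm_w$ has degree $\le w$ in $N$. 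Hence $\fm_w(\alpha)\in W_w\subseteq W_k$ for $w\le k$, using Corollary \ref{cor:pure} to identify polynomial degree with the weight decomposition.

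\emph{Step 2: $W_k\subseteq P_k$.} Take $\alpha$ of pure weight $w\le k$, so $[N]^*\alpha=N^w\alpha$. Write
\[
\alpha=\sum_j\fm_j\bigl(\fm^{-1}_{2g-j}(\alpha)\bigr)
\]
using \eqref{eq:diag}. It suffices to show that $\fm^{-1}_{2g-j}(\alpha)=0$ whenever $j>k$. We compute $\fm^{-1}_{2g-j}(\alpha)$ by replacing $\alpha$ with $N^{-w}[N]^*\alpha$ and moving the scaling onto the kernel: $p_{2,*}(p_1^*b_*N^*\alpha\cup \fm^{-1}_{2g-j})$ becomes, by the projection formula on the model $\barA^{(2)}_{N\times1}$, $N^{-w}\cdot N^{2g}\cdot p_{2,*}(p_1^*\alpha\cup[N\times 1]_*\fm^{-1}_{2g-j})$. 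By Theorem \ref{thm:wt_lower}, $[N\times 1]_*\fm^{-1}_{2g-j}$ is a polynomial in $N^{-1}$ of degree $\le 2g-j$. So the resulting expression is a polynomial in $N$ whose exponents lie in $[j-w,\,2g-w]$. Since the expression must be independent of $N$, it vanishes unless $j\le w$. This gives $\alpha\in P_w\subseteq P_k$.

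\emph{Step 3: $\widehat W_k$.} The same two arguments run with $[N]_*$ in place of $[N]^*$. Use Theorem \ref{thm:wt_lower} for the first inclusion and Theorem \ref{thm:wt} for the transposed scaling in the second. This gives $P_k=\widehat W_k$.

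\emph{Main obstacle.} The hard step is Step 2's transfer of the scaling from $\alpha$ to the kernel. One must check that the rational multiplication map's pullback and pushforward interact with correspondences exactly through $[N\times1]_*$ on $\barA^{(2)}$, including the $N^{2g}$ normalization. One must also check that the boundary corrections coming from the blowup $f$ (classes supported on $E$) are handled by Lemma \ref{lem:Fex} together with Theorem \ref{thm:wt_lower}. If this direct transfer fails, I would instead prove Step 2 by comparing dimensions of graded pieces. Fourier vanishing (Theorem \ref{thm:fv}) makes the $\fm_j\circ\fm^{-1}_{2g-j}$ into orthogonal projectors. Step 1 shows each projector's image lies in $W_j$. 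Both families of projectors sum to the identity, which would force equality of the filtrations.
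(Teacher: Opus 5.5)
Your proof is correct and follows the paper's argument. The inclusions $P_k\subseteq W_k$ and $P_k\subseteq\widehat W_k$ come from Lemma~\ref{lem:Npush} (and its $[N]_*$ analogue) together with the refined bounds in Theorems~\ref{thm:wt}(c) and \ref{thm:wt_lower}(c). The inclusion $W_k\subseteq P_k$ (resp.\ $\widehat W_k\subseteq P_k$) comes from moving $[N]^*$ (resp.\ $[N]_*$) onto $\fm^{-1}$, where it becomes $N^{2g}[N\times 1]_*$ (resp.\ $N^{-2g}[N\times 1]^*$), and comparing powers of $N$; the paper states this as $\fm^{-1}_k\circ\w_i=0$ (resp.\ $\fm^{-1}_k\circ\w^t_{2g-i}=0$) for $i+k<2g$ at the level of correspondences, rather than element-wise.

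The transfer you flag as the main obstacle is exact on $\barA^{(2)}$ by Lemma~\ref{lem:Npush}(b) and the projection formula, with no $E$-supported corrections. So Lemma~\ref{lem:Fex} plays no role and your fallback is unnecessary. It would also not work as stated: Chow groups need not be finite-dimensional, and Fourier vanishing only kills $\fm^{-1}_{2g-i}\circ\fm_j$ for $j<i$, so the $\fm_j\circ\fm^{-1}_{2g-j}$ are not shown to be orthogonal projectors.
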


\begin{proof}
	By Lemma \ref{lem:Npush}\footnote{The analogous statement for $[N]_{\ast}$ can be proven similarly.}, we have
    $$
    [N]^* \mathfrak F_k(\alpha) = p_{2*} (p_1^*(\alpha) \cup [1\times N]^* \mathfrak F_k)\, ,\quad [N]_* \mathfrak F_k(\alpha) = p_{2*} (p_1^*(\alpha) \cup [1\times N]_* \mathfrak F_k).
    $$
    Therefore, the inclusions
    $$
    P_k\subseteq W_k \quad\text{and}\quad P_k\subseteq\widehat{W}_k
    $$
    are a consequence of the weight estimates for tautological classes in Theorems \ref{thm:wt} and \ref{thm:wt_lower}.

    We first prove that $W_k \subseteq P_k$.
    By Theorem \ref{thm:fv}~(a) and Lemma \ref{lem:mapvscorr}, it suffices to prove the vanishing
    \begin{equation}\label{eq:PW1}
        \mathfrak F_k^{-1}\circ \mathfrak w_i =0 \quad \text{for } i+k<2g \quad \text{in } \CH^g(\barA^{(2)}).
    \end{equation}
	Consider the lift of the relative diagonal $\Delta : \barA \to \barA^{(2)}$ (Lemma \ref{lem:maps_regular}). By abuse of notation, we write
    \[
    \Delta := \Delta_*(1) \in \CH^g(\barA^{(2)}),
    \]
    and since $\diff$ is flat, $\diff^*([e]) = \Delta$. By \eqref{eq:taudelta}, we have
    \[
    \sum_{i=0}^{2g} N^i \mathfrak w_i
    =
    [\overline{\tau_N}]^t
    =
    [1\times N]^*(\Delta)
    \]
    in $\CH^g(\barA^{(2)})$.
    Let $p_{1,3} : \barA^{(3)} \to \barA^{(2)}$ be the projection onto the first and third factors, and let $\Delta_{1,2} \in \CH^g(\barA^{(3)})$ be the pullback of $\Delta$ along the projection onto the first two factors. By Lemma \ref{lem:Npush} and the projection formula,
    \begin{align*}
        \mathfrak F^{-1}\circ [1\times N]^*(\Delta)
        &=
        p_{1,3*}
        \bigl(
            b_*(1\times N \times 1)^*(\Delta_{1,2})
            \cup \mathfrak F_{23}^{-1}
        \bigr) \\
        &=
        p_{1,3*}
        \bigl(
            \Delta_{1,2}
            \cup
            N^{2g}[1\times N \times 1]_*(\mathfrak F_{23}^{-1})
        \bigr).
    \end{align*}
    Taking the codimension $k$ part of $\mathfrak F^{-1}$ and applying Theorem \ref{thm:wt_lower} (c), we obtain
    \[
    \sum_{i=0}^{2g} N^i\, \mathfrak F_k^{-1}\circ\mathfrak w_i
    =
    \sum_{j \leq k} N^{2g-j} \,
    p_{1,3*}(\Delta_{1,2}\cup \alpha_{k,j})
    \]
    for some classes $\alpha_{k,j}\in \CH^k(\barA^{(3)})$. Comparing powers of $N$ gives \eqref{eq:PW1}.

    Similarly, to prove $\widehat{W}_k \subseteq P_k$ it is enough to show that
    \begin{equation}\label{eq:PW2}
        \fm^{-1}_k \circ \mathfrak w_{2g-i}^t = 0 \quad \text{for } i+k < 2g \quad \text{in } \CH^g(\barA^{(2)}).
    \end{equation}
    Since $[\overline{\tau_N}] = [N \times 1]^* \Delta$, we have, by Lemma \ref{lem:Npush},
    $$
    \fm^{-1}_k \circ [\overline{\tau_N}] = \fm^{-1}_k \circ [N \times 1]^* \Delta = [N \times 1]^* \fm_k^{-1}\circ \Delta = [N \times 1]^* \fm_k^{-1}\,,
    $$
    which is a polynomial in $N$ of degree at most $k$ by Theorem \ref{thm:wt}. Therefore, $\fm_k^{-1} \circ \mathfrak w_j^t =0$ whenever $k<j$, which proves \eqref{eq:PW2}.
\end{proof}

We finally complete the proof of Theorem \ref{thm:dec}.

\begin{proof}[Proof of Theorem \ref{thm:dec}~(c)]
    By Proposition \ref{pro:P=W}, it suffices to prove the multiplicativity of
    the perverse filtration $P_k \CH^*(\barA)$. By Theorems \ref{thm:deq} and \ref{thm:fv}, the triple $(\barA,B,G)$ is a dualizable abelian fibration satisfying Fourier vanishing. Hence the multiplicativity of $P_k$ follows from \cite[Theorem 2.6]{PerverseFourierMSY25}. This completes the proof.
\end{proof}

\section{Tautological ring of the universal family}\label{sec:taut}

\subsection{Relations between \texorpdfstring{$\lambda$}{lambda} classes}

We study relations between the Chern classes of the Hodge bundle. For the duration of this section, we choose a
smooth toroidal compactification of $\mathcal A_g$, which we denote by $\overline{\mathcal A}_g$.

Recall from \cite{FC} that we may choose $\overline{\mathcal{A}}_g$ such that there exists a universal semi-abelian scheme $\mathcal G_g$ over $\overline{\mathcal A}_g$, with zero section $e: \overline{\mathcal A}_g\to \mathcal G_g$ and the Hodge bundle extends to $\overline{\mathcal{A}}_g$
as the conormal bundle to the zero section of $\mathcal G_g$:
$$\mathbb E := N_{e|\mathcal G_g}^{\vee}.$$
We denote its Chern classes by $\lambda_i = c_i(\mathbb E)$. 

There is a canonical map from $\overline{\mathcal A}_g$ to the Satake compactification:
$$
\beta : \overline{\mathcal A}_g \to \overline{\mathcal A}_g^{\mathrm{Sat}} = \mathcal A_g \sqcup \mathcal A_{g-1} \sqcup \ldots \sqcup \mathcal A_0.
$$
From the stratification of the Satake compactification, we obtain open loci in $\overline{\mathcal{A}}_g$ parametrizing degenerations
of torus rank at most $r$, for $r=0, \ldots , g$:
$$
\overline{\mathcal A}_g^{\leq r} := \beta^{-1}(\mathcal A_g \sqcup \ldots \sqcup \mathcal A_{g-r})\,.
$$
The goal of this section is to determine a presentation of the \emph{Lambda ring}:
$$
\Lambda^*(\overline{\mathcal A}_g^{\leq r}) :=\im \left(\QQ[\lambda_1, \ldots , \lambda_g] \to \CH^*(\overline{\mathcal A}_g^{\leq r})\right)\,.
$$
\begin{itemize}
	\item When $r=g$, \cite[Section 5]{vdGcoh} shows that
		\begin{equation}\label{eq:Lambdabarring}
			\Lambda^*(\overline{\mathcal A}_g) \cong\frac{\QQ[\lambda_1, \ldots , \lambda_g]}{\langle c(\EE\oplus \EE^\vee)-1\rangle }\,.
		\end{equation}
	\item When $r=0$, \cite[Theorem 1.5]{vdG99} proves that there is a single extra relation $\lambda_g=0$ in $\Lambda^{\ast}(\mathcal{A}_g)$.
\end{itemize}

The complement $\mathcal Z^{> r} := \overline{\mathcal A}_g\setminus \overline{\mathcal A}_g^{\leq r}$ is part of the toroidal boundary. By \cite[Section 7]{Pinkthesis}, there exists a proper surjective morphism
$$
\xi:\bigcup_{i=1}^K\overline{\mathcal T}_i \to \mathcal Z^{> r},
$$
where $\overline{\mathcal T}_i$ is a compactification of a toric variety bundle $\mathcal T_i \to \mathcal X_{g-s_i}^{s_i}$, for certain $s_i> r$\footnote{In the notation of \cite{Pinkthesis}, $\mathcal T_i$ is a mixed Shimura variety.}. Moreover, by \cite[Proposition 6.25, Proposition 6.26]{Pinkthesis} we can take the compactifications $\overline{\mathcal T}_i$ to be smooth and such that there is a regular map $\pi_i:\overline{\mathcal T}_i \to \overline{\mathcal A}_{g-s_i}$. This yields the following commutative diagram:
\begin{equation}\label{eq:Satake}
\begin{tikzcd}
	\overline{\mathcal T}_i && \Abar_g \\
	\Abar_{g-s_i} & {\Abar^\mathrm{Sat}_{g-s_i}} & {\Abar^\mathrm{Sat}_g}.
	\arrow["\xi", from=1-1, to=1-3]
	\arrow["\pi_i",from=1-1, to=2-1]
	\arrow["\beta", from=1-3, to=2-3]
	\arrow["\beta"', from=2-1, to=2-2]
	\arrow[from=2-2, to=2-3]
\end{tikzcd}
\end{equation}
\begin{lemma}\label{lem:lambdacomp}
    In the situation described above, let $\xi_i :\overline{\mathcal T}_i \to \mathcal Z^{> r}$ be the restriction of $\xi$. Then, there is a short exact sequence on each $\overline{\mathcal T}_i$
    $$
    0 \to \pi_i^* \mathbb E \to \xi_i^*\mathbb E \to \mathcal V \to 0,
    $$
    where $\mathcal V$ is a vector bundle of rank $s_i$ whose Chern classes vanish in $\CH^*(\overline{\mathcal T}_i)$. In particular, $\xi_i^*\lambda_j = \pi_i^*\lambda_j$, which vanishes whenever $j > g-r$.
\end{lemma}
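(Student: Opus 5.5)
The plan is to work with the universal semi-abelian scheme $\mathcal G_g$ pulled back along $\xi_i$, and to compare it with the Chevalley-type structure that the boundary stratum $\mathcal T_i$ carries. Over the mixed Shimura variety $\mathcal T_i$, which is a torus bundle over $\mathcal X^{s_i}_{g-s_i}$, the pullback $\xi_i^*\mathcal G_g$ is a semi-abelian scheme with constant torus rank $s_i$. It is therefore an extension
\[
0\to T\to \xi_i^*\mathcal G_g\to \pi_i^*\mathcal X_{g-s_i}\to 0,
\]
where $T$ is a torus of rank $s_i$ and the quotient is the universal abelian scheme pulled back from $\mathcal A_{g-s_i}$. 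Taking conormal bundles to the unit sections gives, over $\mathcal T_i$, the short exact sequence
\[
0\to \pi_i^*\mathbb E\to \xi_i^*\mathbb E\to \mathcal V\to 0,
\]
where $\mathcal V$ is the conormal bundle of the unit section of $T$. That bundle is $X(T)\otimes\mathcal O$, with $X(T)$ the character lattice.

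The first step is to extend this sequence to all of $\overline{\mathcal T}_i$. By the construction of the compactified mixed Shimura varieties (Pink, and Faltings--Chai's description of the boundary of $\overline{\mathcal A}_g$), the semi-abelian scheme $\mathcal G_g$ restricted to a boundary stratum has abelian part that degenerates compatibly with $\overline{\mathcal A}_{g-s_i}$. In other words, $\xi_i^*\mathcal G_g$ contains the torus $T$, and its quotient is the pullback along $\pi_i$ of the universal semi-abelian scheme $\mathcal G_{g-s_i}$ over $\overline{\mathcal A}_{g-s_i}$. The extended Hodge bundle is again the conormal bundle at the unit section, and conormal bundles are exact on extensions of smooth group schemes. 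This gives the sequence on $\overline{\mathcal T}_i$, with $\pi_i^*\mathbb E$ being the Hodge bundle of $\mathcal G_{g-s_i}$ via diagram \eqref{eq:Satake}.

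The second step is to show that the Chern classes of $\mathcal V$ vanish. The character lattice $X(T)$ is a local system of rank $s_i$ whose monodromy factors through the arithmetic group $\GL_{s_i}(\ZZ)$ acting on the stratum. So $\mathcal V$ is a flat bundle with finite monodromy after passing to a level cover, and it becomes trivial on a finite étale cover. Since Chow groups here have $\QQ$-coefficients, pulling back along a finite cover is injective. Hence the rational Chern classes of $\mathcal V$ vanish, and the Whitney formula gives $c(\xi_i^*\mathbb E)=\pi_i^*c(\mathbb E)$.

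The last step handles the vanishing for $j>g-r$. Since $\pi_i^*\mathbb E$ has rank $g-s_i<g-r$, its Chern classes $\lambda_j$ are zero for $j>g-s_i$, and in particular for $j>g-r$. I expect the main obstacle to be the first step: one has to check carefully that the extension structure, and its identification with $\pi_i^*\mathcal G_{g-s_i}$, persists over the toroidal compactification $\overline{\mathcal T}_i$ and not only over $\mathcal T_i$. To handle this, I would use the Faltings--Chai degeneration data, which is functorial in the boundary chart, to produce the torus subgroup globally, and then check that the quotient agrees with $\pi_i^*\mathcal G_{g-s_i}$ on the dense open $\mathcal T_i$.
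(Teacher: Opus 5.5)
Your proposal follows the paper's proof. Over $\mathcal T_i$ you take the rank-$s_i$ subtorus of $\xi_i^*\mathcal G_g$ and extend it to $\overline{\mathcal T}_i$, identifying the quotient with $\pi_i^*\mathcal G_{g-s_i}$ because the two agree on the dense open $\mathcal T_i$; the exact sequence of conormal bundles and the isotriviality of the torus (SGA3, Exp.~X, Thm.~5.16 in the paper) then make the Chern classes of $\mathcal V$ torsion. The one difference is that the paper needs no degeneration data for the extension step: it cites Faltings--Chai's results over a normal base, Prop.~I.2.9 to extend the closed subtorus from $\mathcal T_i$ and Prop.~I.2.7 to identify the quotient with $\pi_i^*\mathcal G_{g-s_i}$, and the latter is exactly the tool your ``check on the dense open'' step requires.
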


\begin{proof}
    Let $\mathcal G_{g-s_i}$ be the pullback of the universal semi-abelian scheme from $\Abar_{g-s_i}$ to $\overline{\mathcal T}_i$. Since $\mathcal G_g \big|_{\mathcal T_i}$ has torus rank equal to $s_i$, it has a closed subtorus\footnote{See \cite[Definition I.2.1.]{FC} for the definition of a torus.} $\mathcal H^\circ $ of rank $s_i$ by \cite[Corollary I.2.11]{FC}. By \cite[Proposition I.2.9.]{FC}, $\mathcal H^\circ $ extends to a closed torus $\mathcal H \subseteq \mathcal G_g|_{\overline{\mathcal T}_i}$, and we obtain a short exact sequence of semi-abelian schemes
    $$
    1 \longrightarrow \mathcal H \longrightarrow\mathcal G_g|_{\overline{\mathcal T}_i} \longrightarrow \mathcal B \longrightarrow 0.
    $$
    By the construction in \cite[page 106]{FC}, $\mathcal B\big|_{\mathcal T_i}$ is the universal family $\mathcal X_{g-s_i}$. Hence, by \cite[Proposition I.2.7]{FC}, $\mathcal B = \mathcal G_{g-s_i}$. From this we obtain a short exact sequence of normal bundles at the zero section
    \begin{equation}\label{eq:sess}
    0 \longrightarrow N_{1|\mathcal H} \longrightarrow \xi^* N_{e|\mathcal G_g} \longrightarrow N_{e|\mathcal G_{g-s_i}} \longrightarrow 0.
    \end{equation}
    By \cite[Exp.~X Theorem~5.16]{SGA3II}, there exists a finite \'etale cover of $\overline{\mathcal T}_i$ over which $\mathcal H$ is isomorphic to $\mathbb G_m^{s_i}$. Therefore, the Chern classes of $N_{1|\mathcal H}$ are torsion. Dualizing \eqref{eq:sess}, the lemma follows.
\end{proof}

The last part of Lemma \ref{lem:lambdacomp} was obtained in \cite{CMOP} by different methods. Following their work, we obtain well-defined maps
$$
\epsilon_k: \CH^{*}(\overline{\mathcal{A}}_g^{\leq r}) \to \CH^{*+g-k}(\overline{\mathcal{A}}_g)\,, \quad \alpha \mapsto \lambda_{g-k} \cup \overline{\alpha}\, ,
$$
for any $0\le k \leq r$, where $\overline{\alpha}$ is any extension of $\alpha$.

\begin{theorem}\label{thm:lambdarels}
    For each $r<g$, we have
    \begin{equation}\label{eq:Lambda_isom}
    \Lambda^*(\overline{\mathcal A}_g^{\leq r}) \cong \frac{\QQ[\lambda_1, \ldots , \lambda_g]}{\langle c(\mathbb E \oplus \mathbb E^\vee)-1, \lambda_g\cdots\lambda_{g-r}\rangle}
    \end{equation}
\end{theorem}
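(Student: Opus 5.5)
Both the existence of a surjection from the right-hand side of \eqref{eq:Lambda_isom} and the claim that there are no further relations have to be shown.

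\emph{Relations hold.} Mumford's relation $c(\mathbb E\oplus\mathbb E^\vee)=1$ holds on all of $\overline{\mathcal A}_g$ by \eqref{eq:Lambdabarring}, so it also holds after restriction to the open $\overline{\mathcal A}_g^{\leq r}$. For the second relation, I will show that the product $\lambda_g\lambda_{g-1}\cdots\lambda_{g-r}$ on $\overline{\mathcal A}_g$ is supported on $\mathcal Z^{>r}$, or more precisely that it lies in the image of pushforward from the $\overline{\mathcal T}_i$. The plan here is to reverse the method of \cite{CMOP}. First, the top class $\lambda_g$ vanishes on $\mathcal A_g$ by \cite{vdG99}, so it is supported on the boundary. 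Then, inductively over the torus-rank strata, each additional factor $\lambda_{g-k}$ vanishes on the stratum of torus rank exactly $k$, by Lemma \ref{lem:lambdacomp}: there $\lambda_j$ is pulled back from $\overline{\mathcal A}_{g-k}$, so $\lambda_{g-k}$ becomes the top class of the smaller Hodge bundle, which again vanishes on the interior. The same refinement of supports can also be phrased as the maps $\epsilon_k$ being well defined. The conclusion is that $\lambda_g\cdots\lambda_{g-r}$ is represented by a class supported on $\mathcal Z^{>r}$, so it restricts to zero on $\overline{\mathcal A}_g^{\leq r}$.

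\emph{No further relations.} Let $R_r$ denote the right-hand side of \eqref{eq:Lambda_isom}. The ring $R_g=\QQ[\lambda]/\langle c(\mathbb E\oplus\mathbb E^\vee)-1\rangle$ is the cohomology ring of the Lagrangian Grassmannian. It is Gorenstein with socle $\lambda_1\lambda_2\cdots\lambda_g$ in degree $g(g+1)/2$, and it has the standard basis $\lambda_I$ indexed by strict subsets $I\subseteq\{1,\dots,g\}$. Its quotient by $\lambda_g\cdots\lambda_{g-r}$ has basis the $\lambda_I$ with $\{g-r,\dots,g\}\not\subseteq I$. To prove injectivity, I take $\alpha\in R_g$ of degree $d$ that vanishes on $\overline{\mathcal A}_g^{\leq r}$ and pair it against $\epsilon_r$. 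Concretely, $\alpha\cdot\lambda_g\cdots\lambda_{g-r}=\epsilon_r(\alpha\cdot\lambda_{g-1}\cdots\lambda_{g-r}|_{\overline{\mathcal A}_g^{\leq r}})$ becomes zero once it is multiplied further by $\lambda_{g-r-1}\cdots$; more precisely, for every $\beta\in\Lambda$ the product $\alpha\beta\lambda_g\cdots\lambda_{g-r}=0$ in $\CH^*(\overline{\mathcal A}_g)$. Since the degree of $\lambda_1\cdots\lambda_g$ is nonzero (Hirzebruch–Mumford proportionality, which is used in \cite{vdGcoh}), the Gorenstein pairing in $R_g$ then forces $\alpha\cdot\lambda_g\cdots\lambda_{g-r}=0$ in $R_g$. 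Finally, I will check algebraically that the annihilator of $\lambda_g\cdots\lambda_{g-r}$ in $R_g$ is the ideal $\langle\lambda_g\cdots\lambda_{g-r}\rangle$ itself, or equivalently that this ideal equals its own annihilator's annihilator. In the $\lambda_I$ basis, multiplication by $\lambda_g\cdots\lambda_{g-r}$ is "add these indices", modulo the relations $\lambda_k^2=2\lambda_{k-1}\lambda_{k+1}-\cdots$. This kind of computation is done in \cite{CMOP} and \cite{vdG99}, where the case $r=0$ gives the single extra relation $\lambda_g$.

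The main obstacle is the middle step: showing that vanishing on $\overline{\mathcal A}_g^{\leq r}$ implies $\alpha\lambda_g\cdots\lambda_{g-r}=0$ on $\overline{\mathcal A}_g$. This requires that $\lambda_g\cdots\lambda_{g-r}$ annihilates every class supported on $\mathcal Z^{>r}$. I would prove that by pulling back along $\xi$ and applying Lemma \ref{lem:lambdacomp}: on each $\overline{\mathcal T}_i$ with $s_i>r$, the class $\xi_i^*\lambda_{g-r}$ is a pullback from $\overline{\mathcal A}_{g-s_i}$ of a $\lambda$ of index exceeding $g-s_i$, hence zero. The projection formula along the proper surjection $\xi$ then handles the excision sequence, using $\QQ$-coefficients so that surjectivity of pushforward on Chow groups holds.
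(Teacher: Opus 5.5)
Your proof that the relations hold is fine and matches the paper's inductive excision argument via Lemma \ref{lem:lambdacomp}. The gap is in the injectivity step. From $\alpha|_{\overline{\mathcal A}_g^{\leq r}}=0$ you extract only the single condition $\alpha\cdot\lambda_g\cdots\lambda_{g-r}=0$ in $R_g$. You then claim that $\mathrm{Ann}_{R_g}(\lambda_g\cdots\lambda_{g-r})=\langle\lambda_g\cdots\lambda_{g-r}\rangle$, but this is true only for $r=0$.

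To see this, use $\lambda_j^2\lambda_{j+1}\cdots\lambda_g=0$ in $R_g$. It gives that $\lambda_I\cdot\lambda_{g-r}\cdots\lambda_g$ vanishes if $I$ meets $\{g-r,\dots,g\}$, and equals $\lambda_{I\cup\{g-r,\dots,g\}}$ otherwise. Hence the ideal has dimension $2^{g-r-1}$. The annihilator, which is spanned by the $\lambda_I$ with $I\cap\{g-r,\dots,g\}\neq\emptyset$, has dimension $2^g-2^{g-r-1}$.

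Concretely, take $r=1$. The class $\lambda_g$ annihilates $\lambda_{g-1}\lambda_g$ but is not in $\langle\lambda_{g-1}\lambda_g\rangle$. Moreover $\lambda_g$ really is nonzero on $\overline{\mathcal A}_g^{\leq 1}$: $\lambda_{g-1}\lambda_g\neq 0$ on $\overline{\mathcal A}_g$ by \eqref{eq:Lambdabarring}, while $\lambda_{g-1}$ kills every class supported on $\mathcal Z^{>1}$. So your single test class cannot see the kernel. Also, "equivalently, the ideal equals its double annihilator" is not a reformulation of your claim: in a Gorenstein Artinian ring every ideal equals its double annihilator.

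The fix uses information you already have. The argument in your last paragraph shows that each $\lambda_{g-k}$ with $0\le k\le r$ individually annihilates all classes supported on $\mathcal Z^{>r}$, because $\xi_i^*\lambda_{g-k}=\pi_i^*\lambda_{g-k}$ with $g-k>g-s_i$. Thus $\alpha\in\bigcap_{k=0}^{r}\mathrm{Ann}_{R_g}(\lambda_{g-k})$, and this intersection is exactly $\langle\lambda_{g-r}\cdots\lambda_g\rangle$:
\begin{itemize}
\item Multiplying by $\lambda_g$ forces the coefficients of all $\lambda_I$ with $g\notin I$ to vanish.
\item On the span of the $\lambda_I$ with $g\in I$, multiplying by $\lambda_{g-1}$ and using $\lambda_{g-1}^2\lambda_g=0$ forces those with $g-1\notin I$ to vanish.
\item Continuing in this way down to $\lambda_{g-r}$ leaves only $\lambda_I$ with $I\supseteq\{g-r,\dots,g\}$.
\end{itemize}
This is precisely the paper's argument with the successive maps $\epsilon_0,\dots,\epsilon_r$ applied to the normal form \eqref{eq:Lambdamonoms}.

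Finally, once you invoke \eqref{eq:Lambdabarring}, the detour through Hirzebruch--Mumford proportionality and the Gorenstein pairing is unnecessary. Vanishing of a $\lambda$-polynomial in $\CH^*(\overline{\mathcal A}_g)$ already means vanishing in $R_g$.
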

\begin{proof}
    Mumford's relation $c(\mathbb E \oplus \mathbb E^\vee)=1$ was shown in \cite{EV04} to hold on any smooth toroidal compactification of $\mathcal A_g$.
    
	We prove the second relation by induction in $r$. The $r=0$ case is \cite[Proposition 1.2]{vdG99}. Hence, by the induction hypothesis, we may assume that $\lambda_g\cup\cdots \cup \lambda_{g-r}$ vanishes on $\Abar_{g}^{\le r}$ and want to show the same statement for $r+1$. By the excision sequence, we find
	$\alpha_i \in \CH^*(\overline{\mathcal T}_i)$ for $i=1, \ldots ,K$ such that $\lambda_g\cup\cdots \cup \lambda_{g-r} = \xi_{1,\ast}(\alpha_1)+\ldots +\xi_{K,\ast}(\alpha_K)$. Then, by Lemma \ref{lem:lambdacomp} and \eqref{eq:Satake}, we have
    \begin{align*}
    \lambda_g\cup \cdots \cup \lambda_{g-r-1}\big|_{\Abar_g^{\leq r}} &= \sum_{i=1}^K \xi_{i,*}(\alpha_i\cup \xi_i^*(\lambda_{g-r-1}))\big|_{\Abar_g^{\leq r}} \\
    &= \sum_{i=1}^K \xi_{i,*}\left(\alpha_i \cup \pi_i^*\left(\lambda_{g-r-1}\big|_{\mathcal A_{g-s_i}}\right)\right)\,,
    \end{align*}
    which vanishes by the case $r=0$.

    It remains to show that these are the only relations. Using Mumford's relation and $\lambda_g \cup \cdots\cup  \lambda_{g-r}=0$, every class $\alpha$ in the Lambda ring can be written in the form
    \begin{equation}\label{eq:Lambdamonoms}
        \alpha =\sum_{j=0}^r\sum_{\underline{u}^j \in \{0,1\}^{g-j-1}}c_{j, \underline{u}^j}\lambda_{1}^{u^j_1}\cup  \cdots\cup  \lambda_{g-j-1}^{u^j_{g-j-1}}\cup \lambda_{g-j+1}\cup\cdots\cup \lambda_g.
    \end{equation}
    Thus, \eqref{eq:Lambda_isom} is equivalent to proving that the monomials in \eqref{eq:Lambdamonoms} are linearly independent.
    
    Suppose that $\alpha =0$, where $\alpha$ is as in \eqref{eq:Lambdamonoms}. Then
    $$
    0 = \epsilon_0 \left(\alpha\right) = \sum_{\underline{u}^0 \in \{0,1\}^{g-1}}c_{0, \underline{u}^0}\lambda_{1}^{u^0_1}\cup  \cdots\cup  \lambda_{g-1}^{u^0_{g-1}}\cup \lambda_{g} \in \Lambda^*(\Abar_g),
    $$
    since $\lambda_g^2=0$ by Mumford's relation. By \eqref{eq:Lambdabarring}, the monomials in this sum are elements of a basis of $\Lambda^*(\Abar_g)$, so all coefficients $c_{0,\underline{u}^0}$ must vanish. Then,
    $$
    0 = \epsilon_1 \left(\alpha\right) =\sum_{\underline{u}^1 \in \{0,1\}^{g-2}}c_{1, \underline{u}^1}\lambda_{1}^{u^1_1}\cup  \cdots\cup  \lambda_{g-2}^{u^1_{g-2}}\cup \lambda_{g-1} \cup \lambda_{g} \in \Lambda^*(\Abar_g)
    $$
    using the relation $\lambda_{g-1}^2 \cup \lambda_g=0$, which follows from Mumford's relation. Therefore, the coefficients $c_{1, \underline{u}^1}$ vanish. Continuing this way, applying the evaluations $\epsilon_2(\alpha) , \ldots , \epsilon_r(\alpha)$, we conclude that all coefficients in \eqref{eq:Lambdamonoms} must vanish, using the relations $\lambda_k^2 \cup \ldots \cup \lambda_{g-1}\cup\lambda_g =0$.
\end{proof}

\subsection{Tautological relations}

We prove the tautological relations in Theorem \ref{thm:rel}.

\begin{proof}[Proof of Theorem \ref{thm:rel}]
    By Proposition \ref{pro:thetawt}, for each $c \geq 0$, we have
    \[
    \frac{[\theta^c]^{(2c)}}{c!} = \St_g^c \in \R^c(\barA),
    \]
    where $\St^c_g$ is a tautological class defined in \eqref{eq:St}. By Corollary \ref{cor:pure}, the class $[\theta^c]^{(2c)}$ has pure
    weight $2c$. By Theorem \ref{thm:dec}, $\CH^{*,{(w)}}(\barA)$ vanishes for
    $w > 2g$. Therefore, if $c > g$, then $\St_g^c = 0$.
\end{proof}

We also prove the relations in Theorem \ref{thm:taut} that are pulled back from $B$:
\begin{proposition}\label{pro:relB}
    The relations $\lambda_g\cup D = 0$, $\lambda_{g-1}\cup D=0$ (for $g>1$) and $\lambda_g = \frac{B_{2g}}{2^gg!}D^g$ hold.
\end{proposition}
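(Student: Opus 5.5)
The plan is to derive all three relations on $B=\A_g'$ by pulling back statements on $\barA$ along the unit section $e$, together with the $\lambda$-vanishing of Lemma \ref{lem:lambdacomp}.

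\textbf{The relations $\lambda_g\cup D=0$ and $\lambda_{g-1}\cup D=0$.} Here $D=\frac12\iota_*(1)$ with $\iota:\X_{g-1}^\vee\to\partial B$. By the projection formula,
\[
\lambda_k\cup D=\tfrac12\iota_*(\iota^*\lambda_k).
\]
It therefore suffices to show that $\iota^*\lambda_k=0$ for $k\ge g-1$. The semi-abelian scheme $G$ restricted to the boundary is an extension of the pullback of $\X_{g-1}$ by a rank-one torus. Repeating the argument of Lemma \ref{lem:lambdacomp}, we obtain an exact sequence
\[
0\to\EE_{g-1}\to\iota^*\EE\to\mathcal V\to0,
\]
where $\mathcal V$ is a line bundle with torsion $c_1$. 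Hence $\iota^*\lambda_k=\lambda_k^{(g-1)}$, the $\lambda$-class pulled back from $\A_{g-1}$.

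For $k=g$ this vanishes for rank reasons. For $k=g-1$ it vanishes because $\lambda_{g-1}=0$ on the open $\A_{g-1}$, by van der Geer \cite[Theorem 1.5]{vdG99} (the case $r=0$ of Theorem \ref{thm:lambdarels}). This requires $g>1$.

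\textbf{The relation $\lambda_g=\frac{B_{2g}}{2^gg!}D^g$.} Pull back the equality $[e]=\sfS_g^g$ of Theorem \ref{thm:unit} along $e$. By self-intersection, the left side becomes
\[
e^*e_*1=c_g(N_e)=c_g(\EE^\vee)=(-1)^g\lambda_g.
\]
On the right side, $e^*\theta=0$ because $\Theta$ is trivialized along $e$. The image of $e$ is disjoint from the singular locus $j(C)$, so the $j_*$ term pulls back to zero.

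For the $i_*$ term, note that $e(\partial B)$ meets $\partial\barA$ transversally along the zero section of the torus. Concretely, the fibre product $B\times_{\barA}Y$ is the section $s:\X_{g-1}^\vee\to Y$ lying over the unit section of $C\to D$, mapping to $D$ by $\iota$. Hence
\[
e^*\tfrac12 i_*(\beta)=\tfrac12\iota_*(s^*\beta).
\]
We need $s^*c_1(N_i)$. On $C$ restricted to the unit section, $\ell^C=0$ and $\theta_1^C=0$, and $s$ is disjoint from $s_0$ and $\sbar_\infty$, so $s^*U=0$. Thus $s^*c_1(N_i)=-2\theta_2^C$. The step I expect to need the most care is the normal-bundle bookkeeping: checking that $-2\theta_2^C$ restricted to $D$ is the normal class $\iota^*$ of $\partial B$, so that $\frac12\iota_*((-2\theta_2^C)^{k-1})=D^k$. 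This matches the identity $\frac12\iota_*t^k=D^{k+1}$ used in Corollary \ref{cor:pushThet}.

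Substituting, only the codimension-$g$ term $k=g$ survives, giving
\[
(-1)^g\lambda_g=\frac{(-1)^gB_{2g}}{2^gg!}D^g .
\]
Dividing by $(-1)^g$ yields the claim. The earlier relations are consistent with this: $D\cdot D^{g}$ terms are killed by $\lambda_g D=0$.
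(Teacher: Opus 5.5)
Your proposal is correct and follows the paper's proof: the first two relations come from Lemma~\ref{lem:lambdacomp} together with van der Geer's vanishing of $\lambda_{g-1}$ on $\A_{g-1}$, and the last comes from pulling back the unit-section formula along $e$. The only difference is that the paper pulls back the formula of \cite[Theorem~1.6]{BMP} on the semi-abelian locus $G$ rather than Theorem~\ref{thm:unit}. This is the same computation, since $e$ factors through $G$ and the $j_*$ terms vanish there.

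The normal-bundle step you flag as delicate is in fact immediate. The map $i^{-1}(G)\to G$ is the base change of $\iota$ along the smooth morphism $G\to B$, so $s^*N_i=N_\iota$, and hence $\tfrac12\iota_*\bigl((-2\theta_2^C)^{g-1}\bigr)=D^g$.
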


\begin{proof}
    The first two follow from Lemma \ref{lem:lambdacomp} and the vanishing $\lambda_{g-1} =0 \in \CH^*(\mathcal A_{g-1})$; see \cite{vdG99}.

    Since $\mathbb E$ is the conormal bundle to the unit section, we have $(-1)^g\lambda_g = e^*([e])$. By pulling back \cite[Theorem~1.6]{BMP} along the unit section $e$ and using the fact that $e^*(\theta) = 0$, we obtain
    \begin{equation}\label{eq:lambdag}
        (-1)^g\lambda_g
        = \frac{1}{2}\iota_*\left(\frac{(-1)^gB_{2g}}{2^gg!}c_1(N_\iota)^{g-1}\right) = (-1)^g \frac{B_{2g}}{2^gg!}D^g\,,
    \end{equation}
    where the last equality follows from the fact that $\iota$ is generically $2\!:\!1$ onto its image. See also \cite{EvdG05}. This proves the last relation.
\end{proof}

\begin{corollary}\label{cor:non-zero}
    The class $\theta^g\cup D^g\cup \lambda_{g-2}\cup \cdots\cup \lambda_{1}$ is nonzero on $\barA$.
\end{corollary}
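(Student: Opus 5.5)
The plan is to push the class forward to $B$ and reduce to a known nonvanishing statement for $\lambda$-classes and the boundary on $\Abar_g$. Since $D$ and the $\lambda_i$ are pulled back from $B$, the projection formula gives
\[
\pi_*\bigl(\theta^g\cup D^g\cup\lambda_{g-2}\cup\cdots\cup\lambda_1\bigr)=\pi_*(\theta^g)\cup D^g\cup\lambda_{g-2}\cup\cdots\cup\lambda_1 .
\]
By Corollary \ref{cor:pushThet} we have $\pi_*(\exp(\theta))=\sum_k (D/8)^k/(k!(2k+1))$. Hence $\pi_*(\theta^g)/g!=1$ in codimension $0$, and the other pieces carry extra factors of $D$. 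Alternatively, Proposition \ref{pro:vanish} gives $\pi_*(\theta^g/g!)=1$ directly, since the top weight part of $\theta^g/g!$ pushes forward to $1$. Either way, it suffices to show that $D^g\cup\lambda_1\cup\cdots\cup\lambda_{g-2}\neq 0$ in $\CH^*(B)$.

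Next, Proposition \ref{pro:relB} gives $D^g=\frac{2^gg!}{B_{2g}}\lambda_g$. Since $B_{2g}\neq 0$, the target becomes $\lambda_1\cup\cdots\cup\lambda_{g-2}\cup\lambda_g\neq 0$ in $\CH^*(\A_g')$, with $\A_g'=\Abar_g^{\le 1}$. Theorem \ref{thm:lambdarels} with $r=1$ presents $\Lambda^*(\Abar_g^{\le1})$ as $\QQ[\lambda_i]/\langle c(\EE\oplus\EE^\vee)-1,\lambda_g\lambda_{g-1}\rangle$. The argument in that proof shows the monomials in \eqref{eq:Lambdamonoms} are independent, and $\lambda_1\cdots\lambda_{g-2}\lambda_g$ is one of them: it is the $j=0$ term with $u^0=(1,\dots,1,0)$.

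To make the nonvanishing concrete, I would apply the map $\epsilon_0$, i.e.\ multiply an extension by $\lambda_g$ on $\Abar_g$. This gives $\lambda_1\cdots\lambda_{g-2}\lambda_g^2$, which is zero, so $\epsilon_0$ does not detect it. Instead I would use $\epsilon_1$, multiplication by $\lambda_{g-1}$, which yields $\lambda_1\cdots\lambda_{g-1}\lambda_g$. This is the top class in $\Lambda^*(\Abar_g)$ and is nonzero by \eqref{eq:Lambdabarring}; it is a multiple of the van der Geer nonzero degree. Since $\epsilon_1$ is well defined on $\Abar_g^{\le 1}$ (Lemma \ref{lem:lambdacomp}), the class is nonzero.

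The main obstacle is this last detection step: the edge cases $g=1,2$, and checking that the Theorem \ref{thm:lambdarels} framework applies with $r=1<g$. For $g=1$ the product of $\lambda$'s is empty and the claim reduces to $\theta\cup D\neq 0$, which follows from $\pi_*(\theta D)=D\neq 0$ by \cite{mumford_kodaira}. For $g\ge 2$ the $\epsilon_1$ argument applies.
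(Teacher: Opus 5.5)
Your argument is correct and is essentially the paper's proof: you push forward to $B$, use $\pi_*(\theta^g)=g!$ and Proposition \ref{pro:relB} to obtain a nonzero multiple of $\lambda_g\cup\lambda_{g-2}\cup\cdots\cup\lambda_1$, and conclude via Theorem \ref{thm:lambdarels}; you additionally spell out its $\epsilon_1$-detection step and treat $g=1$ separately. One small indexing slip: in \eqref{eq:Lambdamonoms} this monomial is the $j=1$ term with $\underline{u}^1=(1,\dots,1)$, not a $j=0$ term, which is exactly why $\epsilon_1$ rather than $\epsilon_0$ detects it, as you then correctly observe.
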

\begin{proof}
    Since $\pi_*(\theta^g\cup D^g\cup \lambda_{g-2}\cup \cdots\cup \lambda_{1}) = \frac{2^g(g!)^2}{B_{2g}}\lambda_g\cup\lambda_{g-2}\cup\cdots\cup \lambda_1$ by Proposition \ref{pro:relB}, the result follows from Theorem \ref{thm:lambdarels}.
\end{proof}

\subsection{The kernel of \texorpdfstring{$i_*$}{i}}

Let $i : Y \to \partial \barA \subset \barA$ be the finite morphism to the boundary of $\barA$ as in Theorem \ref{thm:bdy}. The pushforward map $i_* : \CH^*(Y) \to \CH^*(\barA)$ is not injective, as we now explain. Let $\sh : C \to C$ be the shift operator \eqref{eq:sh}, with inverse $\shb$. For $\beta \in \sR^*(C)$, define $\zeta(\beta)$ by
\begin{equation}\label{eq:zeta}
    (1+\sh^*)\zeta(\beta) = (\sh^*-1)\beta .
\end{equation}
Since $\sh^*$ is unipotent, $\zeta$ is well defined. Let $\mathfrak K$ be the $\QQ$-vector subspace of $\sR^*(Y)$ generated by
\[
\ell^C\cup\beta + U\cup\zeta(\beta),
\quad
\beta \in \sR^*(C),
\]
where $U = 2h-\ell^C =[s_0] + [\overline{s}_\infty]$.

\begin{lemma}\label{lem:K-in-ker}
    $\mathfrak{K}\subseteq \ker (i_* : \sR^*(Y) \to \CH^*(\barA) )$.
\end{lemma}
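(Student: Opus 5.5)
The underlying geometry is the gluing description of Theorem \ref{thm:bdy}: $i$ identifies the zero section $s_0$ with the shifted infinity section $\sbar_\infty=s_\infty\circ\sh$, so that $i\circ s_0=i\circ\sbar_\infty=j$. Elements of $\mathfrak K$ should be exactly the classes whose contributions along the two glued sections cancel. The plan is to show that $\ell^C\cup\beta$ and $U\cup\zeta(\beta)$, viewed in $\sR^*(Y)$, push forward under $i$ to opposite classes built from $j_*$.

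First I rewrite both terms as classes supported on the sections. Example \ref{expl:pp_s_1} gives $\ell^C=[s_0]-[\sbar_\infty]$ and $U=[s_0]+[\sbar_\infty]$. For a class $\gamma\in\sR^*(C)$ pulled back to $Y$ along $\pi_{\mathbb P}$, the projection formula gives $\gamma\cup[s_0]=s_{0,*}\gamma$ and $\gamma\cup[\sbar_\infty]=\sbar_{\infty,*}\gamma$, because $\pi_{\mathbb P}\circ s_0=\pi_{\mathbb P}\circ\sbar_\infty=\mathrm{id}$. Hence
\[
i_*(\ell^C\cup\beta+U\cup\zeta(\beta))=i_*s_{0,*}(\beta+\zeta(\beta))+i_*\sbar_{\infty,*}(\zeta(\beta)-\beta).
\]

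Next I replace both $i\circ s_0$ and $i\circ\sbar_\infty$ by $j$, using \eqref{eq: j = is0=isinf}. This step needs care. Although $i\circ s_0=i\circ\sbar_\infty=j$ as maps, the classes $\beta$ on $C$ are intrinsic to the source. Under the gluing, the point $c\in C$ on $s_0$ is identified with the point $\sh(c)$-type data on $\sbar_\infty$, or the reverse depending on conventions. So the correct identity should read $i_*s_{0,*}\gamma=j_*\gamma$ and $i_*\sbar_{\infty,*}\gamma=j_*\shb^*\gamma$ (or the corresponding version with $\sh^*$). I would fix this by checking which identification makes \eqref{eq: j = is0=isinf} literally true; the definition of $\sbar_\infty$ as $s_\infty\circ\sh$ is precisely what absorbs the shift. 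With that settled, the pushforward becomes
\[
j_*\bigl(\beta+\zeta(\beta)\bigr)+j_*\bigl(\shb^*(\zeta(\beta)-\beta)\bigr)
=j_*\shb^*\bigl(\sh^*\beta+\sh^*\zeta(\beta)+\zeta(\beta)-\beta\bigr),
\]
and the bracket vanishes by the defining relation \eqref{eq:zeta}, $(1+\sh^*)\zeta(\beta)=(\sh^*-1)\beta$.

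The main obstacle is this bookkeeping of the shift: whether $\sh^*$ or $\shb^*$ appears, and on which section it appears. Getting it wrong produces $(1+\shb^*)$ instead of $(1+\sh^*)$. I would pin it down with the simplest case $\beta=1$, where $\zeta(1)=0$ and the claim reduces to $i_*\ell^C=0$, i.e.\ $j_*1-j_*1=0$, and then with $\beta=\ell^C$ using \eqref{eq:shiftQ}, $\sh^*\ell^C=\ell^C+2\theta_2^C$. I would also confirm that the signs in $\ell^C=[s_0]-[\sbar_\infty]$ are consistent with the convention in which $i$ glues $s_0$ to $\sbar_\infty$ via $\sh$.
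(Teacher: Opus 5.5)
Your strategy is the right one and is essentially the paper's. You write $\ell^C=[s_0]-[\sbar_\infty]$ and $U=[s_0]+[\sbar_\infty]$, push forward through $i\circ s_0=i\circ\sbar_\infty=j$, and cancel using $(1+\sh^*)\zeta(\beta)=(\sh^*-1)\beta$. However, the shift bookkeeping, which is the entire content of this lemma, is wrong in two places, and the vanishing you assert at the end is false.

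First, $\pi_{\mathbb P}\circ\sbar_\infty=\pi_{\mathbb P}\circ s_\infty\circ\sh=\sh$, not $\mathrm{id}$. So the projection formula gives $\pi_{\mathbb P}^*\gamma\cup[\sbar_\infty]=\sbar_{\infty,*}(\sh^*\gamma)$, not $\sbar_{\infty,*}\gamma$.

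Second, $i\circ\sbar_\infty=j$ holds literally, so $i_*\sbar_{\infty,*}\gamma=j_*\gamma$ with no shift at all. The definition $\sbar_\infty=s_\infty\circ\sh$ has already absorbed it.

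With your identifications, the bracket $\sh^*\beta+\sh^*\zeta(\beta)+\zeta(\beta)-\beta=(1+\sh^*)\zeta(\beta)+(\sh^*-1)\beta$ equals $2(\sh^*-1)\beta$ by the defining relation, not $0$. For your own test case $\beta=\ell^C$, where $\zeta(\ell^C)=\theta_2^C$, your formula outputs $j_*(4\theta_2^C)$ instead of $0$. Your alternative ``version with $\sh^*$'' would give the right total, but only because two wrong intermediate identities compensate.

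The correct statements are $i_*(\pi_{\mathbb P}^*\gamma\cup[s_0])=j_*\gamma$ and $i_*(\pi_{\mathbb P}^*\gamma\cup[\sbar_\infty])=j_*\sh^*\gamma$. Hence
\[
i_*\bigl(\ell^C\cup\beta+U\cup\zeta(\beta)\bigr)=j_*\bigl(\beta+\zeta(\beta)\bigr)+j_*\sh^*\bigl(\zeta(\beta)-\beta\bigr)=j_*\bigl((1+\sh^*)\zeta(\beta)-(\sh^*-1)\beta\bigr)=0.
\]
This is the same identity the paper uses, in the form $i_*(\alpha\cup[s_0])=i_*(\shb^*\alpha\cup[\sbar_\infty])$. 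The paper then substitutes $\alpha=\sh^*\gamma$ and uses unipotence of $\sh^*$ to write every $\beta$ as $(1+\sh^*)\gamma$. The corrected version of your computation instead applies the defining relation of $\zeta(\beta)$ directly, which is slightly shorter.
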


\begin{proof}
    The equality \eqref{eq: j = is0=isinf} implies that
$$
i_*\circ s_{0,*} = i_*\circ \overline{s}_{\infty,*} : \CH^*(C) \to \CH^*(\barA).
$$
Let $\alpha \in \sR^*(C)$. If $\pi_{\mathbb{P}} : Y \to C$ denotes the $\mathbb P^1$-bundle (see Figure \ref{fig:thm_bdy}), then
$$
s_0^* \pi_{\mathbb{P}}^*(\alpha) = \overline{s}_{\infty}^*\pi_{\mathbb{P}}^*\shb^*(\alpha).
$$
Omitting $\pi_{\mathbb{P}}^*$ from the notation, this gives
$$
i_*(\alpha\cup [s_0]) = i_*(\shb^*\alpha \cup [\sbar_{\infty}])
$$
for any class $\alpha \in \sR^*(C)$. Writing $[s_0]$ and $[\sbar_{\infty}]$ in terms of $U$ and $\ell^C$, we get
$$
i_*\left(\alpha\cup \frac{U+\ell^C}{2}\right) = i_*\left(\shb^*\alpha \cup \frac{U-\ell^C}{2}\right).
$$
Applying this to $\alpha=\sh^*\gamma$, we get
$$
i_*\left(\sh^*\gamma\cup (U+\ell^C)\right) = i_*\left(\gamma \cup (U-\ell^C)\right)\, .
$$
After rewriting the above, we get
$$
(\sh^*\gamma - \gamma) \cup U +(\sh^*\gamma + \gamma) \cup \ell^C \in \ker (i_* : \sR^*(Y) \to \CH^*(\barA) ).
$$
By the unipotence of $\sh^*$, every $\beta\in \sR^{\ast}(C)$ can be written as $\beta=\sh^*\gamma+\gamma$ for some $\gamma\in \sR^{\ast}(C)$. This proves the Lemma.
\end{proof}

In particular, we obtain the following result:

\begin{corollary}\label{cor:i_* in sR}
    Pushforward along the maps $i$ and $j$ preserves (small) tautological rings.
\end{corollary}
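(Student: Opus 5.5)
The plan is to show that $i_*$ and $j_*$ send $\sR^*(Y)$ and $\sR^*(C)$ (and their $\lambda$-decorated versions) into $\sR^*(\barA)$ (resp. $\R^*(\barA)$). The main tool is the dictionary \eqref{eq:normalbunex} from Example \ref{expl:pp_s_1}: piecewise polynomials on $\Sigma^1$ produce the classes $\frac12 i_*(c_1(N_i)^a)$ and $\frac12 j_*(\alpha_1(N_j)^{b_1}\alpha_2(N_j)^{b_2})$. We combine this with the projection formula against the classes $\theta$, $D$ and $\lambda_k$, which are global on $\barA$. The $\lambda$-classes are pulled back from $B$ and restrict to $\lambda^C_k$ on $C$ and $Y$, so the projection formula handles them. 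It therefore suffices to treat the small tautological rings.

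We first treat $j$. The ring $\sR^*(C)$ is generated by $\theta_1^C,\theta_2^C,\ell^C$. By Lemma \ref{lem:jstar}(a), $j^*\theta=\theta_1^C+\ell^C/2$. The Chern roots of $N_j$ are $\ell^C$ and $-\ell^C-2\theta_2^C$, so $\QQ[j^*\theta,\alpha_1(N_j),\alpha_2(N_j)]$ already equals $\QQ[\theta_1^C,\theta_2^C,\ell^C]$. Hence every element of $\sR^*(C)$ has the form $\sum_{a} (j^*\theta)^a\, p_a(\alpha_1,\alpha_2)$ with $p_a$ polynomial. Then $j_*$ of such an element equals $\sum_a \theta^a \cup j_*(p_a)$, and each $j_*(\alpha_1^{b_1}\alpha_2^{b_2})=2\Phi(\mathsf x^{b_1+1}\mathsf y^{b_2+1})$ is piecewise polynomial. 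So $j_*\sR^*(C)\subseteq\sR^*(\barA)$.

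For $i$, we use $\sR^*(Y)\cong\sR^*(C)[U]/(U^2-(\ell^C)^2)$. By Proposition~\ref{pro:jclosed}'s proof and \cite[Proposition 4.5]{EGH10}, $i^*\theta=\theta_1^C+U/2$, and $c_1(N_i)=-2\theta_2^C-U$. We need to reach classes involving $\ell^C$ and odd powers of $\theta_2^C$, which these two do not obviously generate. Here Lemma \ref{lem:K-in-ker} does the work: modulo $\mathfrak K$, any $\ell^C\cup\beta$ can be replaced by $-U\cup\zeta(\beta)$. Every class in $\sR^*(Y)$ has the form $\beta_0+U\beta_1$ with $\beta_i\in\sR^*(C)$. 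Products involving $[s_0]$ or $[\sbar_\infty]$, such as $U\beta_1=(s_{0*}+\sbar_{\infty*})(\beta_1|)$, push forward via $i_*s_{0*}=j_*$ into $j_*\sR^*(C)$, which is handled by the previous step. This reduces us to classes $\pi_{\mathbb P}^*\beta$ with $\beta\in\QQ[\theta_1^C,\theta_2^C]$, after using $\mathfrak K$ to eliminate $\ell^C$; a careful inductive bookkeeping on the $\ell^C$-degree should make this work. Those classes equal polynomials in $i^*\theta$ and $c_1(N_i)$ modulo multiples of $U$, and the multiples of $U$ again land in $j_*$-terms. Finally, $\frac12 i_*(c_1(N_i)^a)=\Phi(\mathsf x^{a+1}+\mathsf y^{a+1})$, and the projection formula with $\theta$ finishes the argument.

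The main obstacle is the bookkeeping in the $i$ case: verifying that the reduction modulo $\mathfrak K$ and the $U$-terms terminates, i.e.\ that the span of $\{(i^*\theta)^a c_1(N_i)^b\}+\mathfrak K+U\cdot\sR^*(C)$ is all of $\sR^*(Y)$. I would check this by a degree/filtration argument in $\ell^C$, using that $\zeta$ strictly raises the nilpotent $\sh^*-1$ filtration degree.
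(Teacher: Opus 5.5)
Your proposal is correct and is essentially the paper's proof. The $j$ case is identical, and for $i$ the paper likewise works modulo $\mathfrak K$ to reduce to $U^{0/1}$ times polynomials in $i^*\theta$ and a $\theta_2^C$-class (it uses $i^*D=-2\theta_2^C$ where you use $c_1(N_i)$), disposing of $U$-multiples via $\tfrac12 i_*(U\alpha)=\tfrac12 j_*(\alpha+\sh^*\alpha)$.

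The bookkeeping you flag as the main obstacle is a single step, not an induction. The relation $\ell^C\gamma\equiv -U\zeta(\gamma)$ is already a $U$-multiple; equivalently $i_*(\ell^C\gamma)=j_*(\gamma-\sh^*\gamma)$, since $\ell^C=[s_0]-[\sbar_\infty]$. The only further $\ell^C$-terms arise from $U^2=(\ell^C)^2$ when you substitute for $\theta_1^C$ and $\theta_2^C$, and these are disposed of the same way.
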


\begin{proof}
    By the projection formula and the compatibility of $\lambda$-classes from Lemma \ref{lem:lambdacomp}, it is enough to work with small tautological rings. For $j_*$, note that any class in $\sR^*(C)$ can be written as a polynomial in
    $$
    \theta_1^C + \ell^C/2 = j^*\theta\, , \ell^C = \alpha_1(N_j)\, ,-\ell^C - 2\theta_2^C = \alpha_2(N_j)\, ,
    $$
	and it is clear that
    $$
    \frac{1}{2}j_*(j^*(\theta^k)\cup\alpha_1(N_j)^b\cup\alpha_2(N_j)^c) = \theta^k\cup\frac{1}{2}j_*(\alpha_1(N_j)^b\cup\alpha_2(N_j)^c) \in \sR^*(\barA)\, .
    $$
    For $i_*$, it suffices to work modulo $\mathfrak K$. Then, from the shape of the generators of $\mathfrak K$, every class in $\sR^*(Y)/\mathfrak K$ can be written as $U^{0/1}$ times a polynomial in
    $$
    i^*(\theta) = \theta_1^C + \frac{1}{2}U\, ,i^*(D) = -2\theta_{2}^C.
    $$
    It is clear that for every $\alpha$, we have
    $$
    \frac{1}{2}i_*(U\cup\alpha) = \frac{1}{2}j_*(\alpha + \sh^*\alpha) \in \sR^*(\barA)\, ,
    $$
    and the other cases are dealt with the projection formula. Therefore, both $i_*$ and $j_*$ preserve the (small) tautological ring.
\end{proof}

Let
\begin{equation}\label{eq:Ypairing}
    \langle \, ,\,\rangle : \sR^d(Y) \times \sR^{2g-1-d}(Y)  \to \sR^{2g-1}(Y) = \QQ \,U \vartheta^{g-1} 
\end{equation}
denote the intersection pairing. Since $\sR^*(C)$ is Gorenstein, the pairing \eqref{eq:Ypairing} is perfect. Let $\mathfrak N = \mathfrak K ^\perp$
be the orthogonal complement with respect to the above pairing, and let $\mathfrak M = \sR^*(Y)/\mathfrak K$, viewed as a graded vector space.

\begin{lemma}\label{lem:normalization-pairing}
    The subspace $\mathfrak N$ agrees with the image of the pullback map
    $$
    i^* : \sR^*(\barA) \to \sR^*(Y).
    $$
\end{lemma}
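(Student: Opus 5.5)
We prove the two inclusions $i^*\sR^*(\barA)\subseteq \mathfrak N$ and $\mathfrak N\subseteq i^*\sR^*(\barA)$, the second by a dimension count.

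The first inclusion is formal. For $\gamma\in\sR^*(\barA)$ and $\kappa\in\mathfrak K$, the projection formula gives
\[
\langle i^*\gamma,\kappa\rangle = \deg_Y(i^*\gamma\cup\kappa),
\]
and the degree on $Y$ is computed by pushing forward to $\barA$ over $B$. More precisely, the pairing \eqref{eq:Ypairing} is the coefficient of $U\vartheta^{g-1}$. This coefficient is recovered by applying $\pi_{Y,*}$ to $D$ (equivalently, using the perfect pairing on $\sR^*(C)$). Since $\pi\circ i=\iota\circ\pi_Y$ and $\iota_*$ is injective on $\QQ\cdot 1\subset\CH^0(D)$ up to the factor $2$, the top-degree coefficient of a class on $Y$ is detected by $\pi_*i_*$. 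Hence
\[
\langle i^*\gamma,\kappa\rangle=c\cdot\pi_*\bigl(\gamma\cup i_*\kappa\bigr)
\]
for a nonzero constant $c$. By Lemma \ref{lem:K-in-ker}, $i_*\kappa=0$, so the pairing vanishes and $i^*\sR^*(\barA)\subseteq\mathfrak K^\perp=\mathfrak N$.

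For the reverse inclusion, by perfectness $\dim\mathfrak N^{d}=\dim\mathfrak M^{2g-1-d}$, so it suffices to show that $\dim i^*\sR^{d}(\barA)\ge\dim \mathfrak M^{2g-1-d}$. I would produce explicit elements of $i^*\sR^*(\barA)$ using the generators from the proof of Corollary \ref{cor:i_* in sR}. They are
\[
i^*\theta=\theta_1^C+\tfrac12U,\qquad i^*D=-2\theta_2^C,
\]
together with the pullbacks of boundary classes given by Lemma \ref{lem:self-In}. For instance, $i^*\tfrac12 i_*(\alpha)=c_1(N_i)\alpha+s_{0,*}\sbar_\infty^*\alpha+\sbar_{\infty,*}s_0^*\alpha$. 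Pulling back $\tfrac12j_*(\beta)$ gives classes of the form $U\cdot(\text{shift-symmetrized }\beta)$ plus $\ell^C$-terms. I expect $i^*\sR^*(\barA)$ to be exactly the span of
\[
\text{polynomials in } \theta_1^C+\tfrac12U \text{ and } \theta_2^C,\qquad \text{and}\qquad [s_0]\alpha+[\sbar_\infty]\shb^*\alpha \quad (\alpha\in\sR^*(C)).
\]
The second family is the image of $i^*j_*$.

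By the proof of Corollary \ref{cor:i_* in sR}, $\mathfrak M$ is spanned by $U^{0/1}$ times polynomials in $i^*\theta$ and $i^*D$. This gives an explicit spanning set of $\mathfrak M$ to count against.

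The main obstacle is the dimension comparison. It requires computing $\dim\mathfrak M^k=\dim\sR^k(Y)-\dim\mathfrak K^k$. Here $\dim\mathfrak K^k$ equals $\dim\sR^{k-1}(C)$ provided the map $\beta\mapsto\ell^C\beta+U\zeta(\beta)$ is injective. Injectivity should follow because the $U$-component is $\zeta(\beta)$, and $\zeta(\beta)=0$ forces $\sh^*\beta=\beta$, after which the $\ell^C\beta$ term still survives in $\sR^*(Y)=\sR^*(C)\oplus U\sR^*(C)$. The matching lower bound on $\dim i^*\sR^*(\barA)$ needs the independence of the exhibited classes. I would check this using the basis $E_{a,b,m}$ of Proposition \ref{pro:basis} together with the shift action \eqref{eq:shiftQ}. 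The shift is unipotent and raises the weight \eqref{eqn:weightsproof}, so leading terms can be compared. This linear-algebra verification is where I expect the real work to lie.
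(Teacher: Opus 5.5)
\textbf{There is a genuine gap: the inclusion $\mathfrak N\subseteq i^*\sR^*(\barA)$, which is the whole content of the lemma, is not proved.} Your description of $i^*\sR^*(\barA)$ and the independence of the exhibited classes are left as expectations. The needed lower bound on $\dim i^*\sR^d(\barA)$ also cannot be taken from Lemma~\ref{lem:two-exact-sequences}, since that lemma itself uses the present one for surjectivity.

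Moreover, the count you set up is wrong in two places:
\begin{enumerate}
\item The map $\beta\mapsto\ell^C\beta+U\zeta(\beta)$ is not injective. $\zeta(\beta)=0$ only says $\beta$ is shift invariant, and shift-invariant classes can be killed by $\ell^C$ in $\sR^*(C)$. For example, take $\beta=(\theta_2^C)^{g-1}$, since $g\,\ell^C(\theta_2^C)^{g-1}=[u^{2g-1}]Q(u)^g=0$. The kernel is spanned by the $\vartheta^a(\theta_2^C)^b$ with $a+b=g-1$. Hence $\dim\mathfrak M^k=\dim\sR^k(C)+1$ for $g\le k\le 2g-1$, and your target $\dim\mathfrak N^d$ would be off by one for $d\le g-1$.
\item ``$U^{0/1}$ times polynomials in $i^*\theta,i^*D$'' is not a spanning set of $\mathfrak M$. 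For $2\le k<g$ one has $\dim\mathfrak M^k\ge\dim\sR^k(C)=\binom{k+2}{2}>2k+1$. What is true is that $\mathfrak M$ is spanned by polynomials in $i^*\theta,i^*D$ together with $U\cdot\sR^*(C)$.
\end{enumerate}

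The missing idea is an intrinsic description of $\mathfrak N$. Write classes as $\alpha+U\beta$ with $\alpha,\beta\in\sR^*(C)$. Since $\sh^*$ fixes $\vartheta^{g-1}$, its adjoint is $\shb^*$, so $\zeta$ is skew-adjoint. Therefore the pairing of $\alpha+U\beta$ with $\ell^C\gamma+U\zeta(\gamma)$ is the $\vartheta^{g-1}$-coefficient of $(\ell^C\beta-\zeta(\alpha))\gamma$. Hence $\mathfrak N=\{\ell^C\beta=\zeta(\alpha)\}$, which is exactly the set of shift-invariant classes, $s_0^*x=\sbar_\infty^*x$. The easy inclusion is then immediate from $i\circ s_0=i\circ\sbar_\infty=j$.

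Your route for the easy inclusion, via $i_*\mathfrak K=0$, can also work, but the degree bookkeeping is off. It needs $i_*$ to be injective on $\sR^{2g-1}(Y)=\QQ\,U\vartheta^{g-1}$. Since $Y\to D$ has relative dimension $g$, $\pi_{Y,*}$ lands in $\CH^{g-1}(D)$, not $\CH^0(D)$. What you actually need is that $\pi_*i_*(U\vartheta^{g-1})$, which is a nonzero multiple of $D^g$, does not vanish. This follows from Proposition~\ref{pro:relB} and Theorem~\ref{thm:lambdarels}.

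For the hard inclusion the paper does not count dimensions. It computes $i^*$ on $\sPP^*(\Sigma^{(1)})[\theta]$ by an explicit map $I$. It then shows that every shift-invariant class lifts to a shift-invariant polynomial, via the identity $\mathfrak J\cap\im\Upsilon=\Upsilon(\mathfrak J\times\mathfrak J)$, proved with the basis $E_{a,b,m}$ and $\zeta=\tanh(\delta/2)$. Finally it writes down the preimage directly. Set $F=q_1+\ell^Cq_2$, $G=q_1-\ell^Cq_2$, and $r(\mathsf x,\mathsf y,\theta)=F(\theta-\mathsf x/2,-(\mathsf x+\mathsf y)/2,\mathsf x)$. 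Shift invariance $\sh^*F=G$ is precisely the gluing condition $r(\mathsf t,0,\theta)=r(0,\mathsf t,\theta)$, and $I(r)=q_1+Uq_2$.
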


\begin{proof}
    Every class in $\sR^*(Y)$ can be written uniquely as
    $\alpha+U\cup \beta$, where $\alpha,\beta\in\sR^*(C)$. Since $\sh^*$ fixes
    $\vartheta^{g-1}$, its adjoint for the perfect intersection pairing on
    $\sR^*(C)$ is $\shb^*$. It follows that $\zeta$, defined in \eqref{eq:zeta}, is skew-adjoint. Hence, for every $\gamma\in\sR^*(C)$,
    the coefficient of $U\vartheta^{g-1}$ in
    \[
    (\alpha+U\cup \beta)
    \cup\bigl(\ell^C\cup\gamma+U\cup\zeta(\gamma)\bigr)
    \]
    is the coefficient of $\vartheta^{g-1}$ in
    \[
    \bigl(\ell^C\cup\beta-\zeta(\alpha)\bigr)\cup\gamma.
    \]
    Since the pairing on $\sR^*(C)$ is perfect, we have
    \begin{equation}\label{eq:N-condition}
    \alpha+U\cup \beta\in\mathfrak N
    \quad\Longleftrightarrow\quad
    \ell^C\cup\beta=\zeta(\alpha).
    \end{equation}
    By the definition of $\zeta$, condition \eqref{eq:N-condition} is equivalent to
    \begin{equation}\label{eq:sft-inv}
        s_0^*(\alpha +U\cup \beta) = \overline{s}_\infty^*(\alpha+U\cup \beta)\,.
    \end{equation}
    Following \cite{GZ}, we call these classes \emph{shift invariant}. It is clear that $\im (i^*) \subset \mathfrak N$.

    The pullback formulas are determined by $i^*\theta = \theta_1^C + \frac{U}{2}$, the identities \eqref{eq:normalbunex}, and the self intersection formula in Lemma \ref{lem:self-In}. From this, we get a commutative diagram
    \[\begin{tikzcd}
	{\sPP^*(\Sigma^{(1)})[\theta]} & {\frac{\QQ[\theta_1^C, \theta_2^C, \ell^C, U]}{(U^2-(\ell^C)^2)}} \\
	{\sR^*(\barA)} & {\sR^*(Y)}
	\arrow["I", from=1-1, to=1-2]
	\arrow[from=1-1, to=2-1]
	\arrow[from=1-2, to=2-2]
	\arrow["{i^*}", from=2-1, to=2-2]
    \end{tikzcd}\]
    where
    $$
    I(p(\mathsf{x}, \mathsf{y}, \theta)) =  \frac{p_0 + p_{\infty}}{2} + U \frac{p_0-p_{\infty}}{2\ell^C}\, ,
    $$
    and $p \in \sPP^*(\Sigma^{(1)})[\theta]$ is a polynomial in $\mathsf{x}$, $\mathsf{y}$ and $\theta$ that satisfies the gluing condition $p(\mathsf{t}, 0, \theta) = p(0,\mathsf{t}, \theta)$. The expressions $p_0$, $p_\infty$ are given by
    $$
    p_0 = p \left(\ell^C, -2\theta_2^C -\ell^C, \theta_1^C + \frac{\ell^C}{2}\right)\, ,\quad p_\infty = p \left(\ell^C-2\theta_2^C, -\ell^C, \theta_1^C - \frac{\ell^C}{2}\right)= \shb^*p_0\,.
    $$
    Indeed, since the relation $U^2-(\ell^C)^2=0$ makes the map $I$ a homomorphism, it suffices to check that $I(\alpha)=i^*(\alpha)$ for $\alpha \in \{\theta, \mathsf{x}+\mathsf{y}, \mathsf{x}\mathsf{y}, \mathsf{x}\mathsf{y}(\mathsf{x}-\mathsf{y})\}$.

The action of the shift operator lifts to an action on $\QQ[\theta_1^C,\theta_2^C,\ell^C]$, so there is a well-defined notion of shift invariant polynomials in $\frac{\QQ[\theta_1^C, \theta_2^C, \ell^C, U]}{(U^2-(\ell^C)^2)}$, defined by the analogue of \eqref{eq:N-condition}.
    
    \textbf{Claim:} Any shift invariant class is the image of a shift invariant polynomial.
    
    The space of relations
    $$
    \mathfrak J = \ker \big(\QQ[\theta_1^C, \theta_2^C, \ell^C] \to \sR^*(C)\big)
    $$
    was described in Proposition \ref{pro:basis}. Consider the map
    $$
    \Upsilon : \QQ[\theta_1^C, \theta_2^C, \ell^C]^{\oplus 2}\to \QQ[\theta_1^C, \theta_2^C, \ell^C]\,, \quad (q_1, q_2) \mapsto \ell^Cq_2 - \zeta (q_1)\,,
    $$
    where $\zeta$ is the natural lift of \eqref{eq:zeta} to $\QQ[\theta_1^C, \theta_2^C, \ell^C]$. By \eqref{eq:N-condition}, the \emph{class} $q_1(\theta_1^C, \theta_2^C, \ell^C) + U \cup q_2(\theta_1^C, \theta_2^C, \ell^C)$ is shift invariant \eqref{eq:sft-inv} if and only if $\Upsilon(q_1, q_2) = 0$ modulo the relations in $\sR^*(C)$. Therefore, it is enough to show that
    \begin{equation}\label{eq:goallll}
    \mathfrak J \cap \im(\Upsilon) = \Upsilon(\mathfrak J \times \mathfrak J).
    \end{equation}
    Since $\sh^*+1$ is an automorphism and commutes with $\sh^{\ast}-1$, we have $\im(\zeta) = \im(\sh^* -1)$, and it is immediate to see that $\im(\sh^*-1)$ is the ideal generated by $\theta_2$, modulo $\ell^C$. Thus $\im (\Upsilon)$ is the ideal generated by $\ell^C$ and $\theta_2^C$, and looking at the definition of the $E_{a,b,m}$ it is then clear by Proposition \ref{pro:basis} that the left side of \eqref{eq:goallll} has a basis given by $E_{a,b,m}$ with $a+b \geq g$, and $m \geq 1$ if $a=0$.
    Consider the derivation
    $$
    \delta = \ell^C \frac{\partial}{\partial \theta_1^C} +2\theta_2^C \frac{\partial}{\partial \ell^C}\,.
    $$
    It is the infinitesimal generator of $\sh^*$, so $\sh^* = \exp(\delta)$, $\delta(Q(t)) = \partial_t Q(t)$ and $\delta(\vartheta) =0$, so
    $$
    \delta E_{a,b,m} =(m+1)E_{a,b,m+1}\,.
    $$
    Inserting this into the expression \eqref{eq:zeta} for $\zeta$, we get
    $$
    \zeta (E_{a,b,m}) = \tanh \left(\frac{\delta}{2}\right) (E_{a,b,m}) = \frac{m+1}{2}E_{a,b,m+1} + \text{ a linear combination of }E_{a,b,m+3}, \, E_{a,b,m+5},\ldots \,.
    $$
    It follows that each $E_{a,b,m}$ with $m>0$ lies in $\Upsilon(\mathfrak{J}\times\mathfrak{J})$. On the other hand, expanding $\vartheta = -2(b+1)(\ell^C)^2 -4\theta_1^C\cup \theta_2^C + (2b+3)(\ell^C)^2$ yields
    $$
    E_{a+1,b,0} = -\frac{4}{b+2} E_{a, b+2, 2} +  \frac{2b+3}{b+1}\ell^CE_{a, b+1,1} \in \Upsilon(\mathfrak J \times \mathfrak J)\,.
    $$
    This proves that both sides of \eqref{eq:goallll} agree.
    
    By the claim, it is enough to show that $I$ is surjective onto the space of shift invariant \emph{polynomials}. Suppose that $q_1(\theta_1^C, \theta_2^C, \ell^C) + U q_2(\theta_1^C, \theta_2^C, \ell^C)$ is a shift invariant expression, and let
    $$
    F(\theta_1^C, \theta_2^C,\ell^C) = q_1 +\ell^C q_2\, , \quad G(\theta_1^C, \theta_2^C,\ell^C) = q_1 - \ell^Cq_2 \in \QQ[\theta_1^C, \theta_2^C, \ell^C]\,.
    $$
    In these terms, shift invariance is equivalent to $\sh^*F=G$. Define
    \begin{equation}\label{eq:i^*inverse}
        r(\mathsf{x}, \mathsf{y},\theta) :=F\left(\theta-\frac{\mathsf{x}}{2}, -\frac{\mathsf{x}+\mathsf{y}}{2},\mathsf{x}\right)=G\left(\theta-\frac{\mathsf{y}}{2},-\frac{\mathsf{x}+\mathsf{y}}{2}, -\mathsf{y}\right)\,.
    \end{equation}
    Then, by shift invariance, $r(\mathsf{t},0,\theta) = r(0,\mathsf{t},\theta)$. Substituting \eqref{eq:i^*inverse} into the definitions of $p_0$, $p_\infty$, we obtain
    $$
    p_0=F(\theta_1^C,\theta_2^C,\ell^C)=q_1+\ell^Cq_2\, \quad p_\infty = G(\theta_1^C,\theta_2^C,\ell^C)=q_1 -\ell^Cq_2,
    $$
    so $I(r)=q_1 +Uq_2$, as required.
   \end{proof}

Let $\mathfrak A^*$ denote the quotient of
$\spp^*(\Sigma^{(1)})[\theta]$ by the relations in Theorem
\ref{thm:taut}, restricted to the small tautological ring; explicitly, we quotient by the ideal generated by $\widetilde{\sfS}_{g}^{g+1}$ and the pushforward of relations in $\sR^*(Y)$ by $i_*$.

\begin{lemma}\label{lem:two-exact-sequences}
For every $0\leq d\leq g$, pushforward from $Y$ and restriction to $Y$
give exact sequences
\begin{align}
 0&\longrightarrow \mathfrak M^{d-1}
 \stackrel{\frac12i_*}{\longrightarrow} \mathfrak A^d
 \longrightarrow \QQ\,\theta^d
 \longrightarrow0,\label{eq:low-exact}\\
 0&\longrightarrow
 \QQ\,\theta^{g-d}D^g
 \longrightarrow \mathfrak A^{2g-d}
 \xrightarrow{\ i^*\ }\mathfrak N^{2g-d}
 \longrightarrow0.\label{eq:high-exact}
\end{align}
Furthermore, $\mathfrak A^{>2g}=0$ and
$\mathfrak A^{2g}=\QQ\theta^gD^g$.
\end{lemma}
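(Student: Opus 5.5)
The plan is to work entirely inside the combinatorial model of $\mathfrak A^*$. Every element of $\spp^*(\Sigma^{(1)})[\theta]$ is, by \eqref{eq:normalbunex} and Example \ref{expl:pp_s_1}, a polynomial in $\theta$ plus $\frac12 i_*$ of a class on $Y$ (the $j_*$-terms are themselves of the form $\frac12 i_*(U\cup\alpha)$, see the proof of Corollary \ref{cor:i_* in sR}). So I first define the map $\mathfrak M^{d-1}\to\mathfrak A^d$ by $\frac12 i_*$. It is well defined because $\mathfrak K\subseteq\ker i_*$ (Lemma \ref{lem:K-in-ker}), and relations in $\sR^*(Y)$ are quotiented out by definition of $\mathfrak A$. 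The quotient $\mathfrak A^d/\tfrac12 i_*\mathfrak M^{d-1}$ is spanned by $\theta^d$, since any piecewise polynomial vanishing on the boundary-free part is an $i_*$-class. This gives the right-hand part of \eqref{eq:low-exact}. The class $\theta^d$ is nonzero for $d\le g$ because $\theta^d\ne0$ on $\X_g$, and this gives surjectivity onto $\QQ\theta^d$.

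Exactness in the middle and injectivity on the left are the substance. I plan to prove them by a dimension count combined with a pairing argument. The composition $i^*\circ\frac12 i_*$ is computed by Lemma \ref{lem:self-In}, and $\pi_*$ of $\theta^{g-d}D^g$-type classes is controlled by Corollary \ref{cor:pushThet} and Proposition \ref{pro:relB}. The natural strategy is to show that the intersection pairing $\mathfrak A^d\times\mathfrak A^{2g-d}\to\mathfrak A^{2g}\cong\QQ$ is compatible with the pairing \eqref{eq:Ypairing}. Concretely, for $a\in\mathfrak M^{d-1}$ and $b\in\mathfrak A^{2g-d}$ we have $\langle \frac12 i_*a,b\rangle=\frac12\langle a,i^*b\rangle_Y$ by the projection formula. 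Since $\mathfrak N=\mathfrak K^\perp$ and the pairing on $\sR^*(Y)$ is perfect, $\mathfrak M$ and $\mathfrak N$ are perfectly paired. With Lemma \ref{lem:normalization-pairing}, which identifies the image of $i^*$ with $\mathfrak N$, this shows that if $\frac12 i_*a=0$ in $\mathfrak A$ then $a$ pairs to zero with all of $\mathfrak N$, hence $a=0$ in $\mathfrak M$. To make this rigorous I need the pairing in $\mathfrak A$ to be well defined, i.e.\ to factor through $\mathfrak A^{2g}\to\QQ$ via degree. For that I will first prove the last claims: $\mathfrak A^{2g}=\QQ\theta^gD^g$ and $\mathfrak A^{>2g}=0$. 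Nonvanishing of $\theta^gD^g$ in the Chow ring follows from Corollary \ref{cor:non-zero}.

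For the top degrees, I reduce modulo $\frac12 i_*$: in codimension $c>g$, the relation $\widetilde{\sfS}_g^c=0$ (Theorem \ref{thm:rel}) rewrites $\theta^c$ as an $i_*$-class. So $\mathfrak A^c$ is spanned by $\frac12 i_*\mathfrak M^{c-1}$, and $\mathfrak M^{c-1}$ vanishes for $c-1>2g-1$ because $\sR^*(Y)$ has top degree $2g-1$. In degree $2g$, $\mathfrak M^{2g-1}$ is spanned by $U\vartheta^{g-1}$ modulo $\mathfrak K$, so $\mathfrak A^{2g}$ is at most one-dimensional and is spanned by the image of a single class. I must check that this class is proportional to $\theta^gD^g$, using $i^*D=-2\theta_2^C$ and $i^*\theta=\theta_1^C+U/2$. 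I expect this to hold up to a computable nonzero scalar.

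Sequence \eqref{eq:high-exact} then follows by duality. The map $i^*:\mathfrak A^{2g-d}\to\mathfrak N^{2g-d}$ is surjective by Lemma \ref{lem:normalization-pairing}. For the kernel, I check three things. First, $i^*(\theta^{g-d}D^g)$ equals $(\theta_1^C+U/2)^{g-d}(-2\theta_2^C)^g$, which vanishes in $\sR^*(Y)$ by Proposition \ref{pro:basis}. Second, any $b$ with $i^*b=0$ pairs to zero with $\frac12 i_*\mathfrak M^{d-1}$, so its pairing with $\mathfrak A^d$ factors through $\QQ\theta^d$. Third, the functional $b\mapsto\deg(\theta^d b)$ is realized by $\theta^{g-d}D^g$, because $\deg(\theta^gD^g)\neq0$. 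The main obstacle I anticipate is establishing that the pairing on $\mathfrak A$ is nondegenerate on the $\theta^d$ versus $\theta^{g-d}D^g$ part. This reduces to $\pi_*$ computations via Proposition \ref{pro:vanish} and Corollary \ref{cor:pushThet}, which I would carry out explicitly using $D^{g+1}$-type vanishing on $B$ from Proposition \ref{pro:relB}.
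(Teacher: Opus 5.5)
Your argument for \eqref{eq:low-exact} and for $\mathfrak A^{\ge 2g}$ is fine, but it differs from the paper's. The paper proves injectivity of $\frac12 i_*$ by counting dimensions: $\dim\mathfrak A^d=\binom{d+1}{2}+1$ because there are no relations in degree $\le g$, and $\dim\mathfrak M^{d-1}=\binom{d+1}{2}$ from the basis $E_{a,b,m}$. You instead use the perfect pairing $\mathfrak M^{d-1}\times\mathfrak N^{2g-d}\to\QQ$ together with Lemma~\ref{lem:normalization-pairing}. The proportionality of $\frac12 i_*(U\vartheta^{g-1})$ and $\theta^gD^g$ needs no computation: the former spans $\mathfrak A^{2g}$, and the latter lies in $\mathfrak A^{2g}$ and is nonzero.

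The gap is in \eqref{eq:high-exact}. Your three checks only show that, for $b\in\ker i^*$, the element $b-c\,\theta^{g-d}D^g$ pairs to zero with all of $\mathfrak A^d$. To conclude that it vanishes in $\mathfrak A^{2g-d}$, you need the pairing $\mathfrak A^d\times\mathfrak A^{2g-d}\to\mathfrak A^{2g}$ to be nondegenerate in the second slot. That is exactly the Gorenstein property of $\mathfrak A^*$, which the paper derives from this very lemma (Corollary~\ref{thm:Gor}). Since $\mathfrak A^*$ is a quotient by an explicitly given ideal, it could a priori contain nonzero elements orthogonal to everything. The obstacle you single out, pairing $\theta^d$ against $\theta^{g-d}D^g$, is not the real issue; it is just $\theta^gD^g\neq0$.

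What is missing is an upper bound $\dim\mathfrak A^{2g-d}\le 1+\dim\mathfrak N^{2g-d}$. The paper gets it from two facts: $\mathfrak A^{2g-d}$ is spanned by $\frac12 i_*\mathfrak M^{2g-d-1}$ (together with $\theta^g$ if $d=g$), and the explicit dimension formula \eqref{eq:Mdim}. In your setup you can avoid \eqref{eq:Mdim}. For $d=g$, \eqref{eq:low-exact} gives $\dim\mathfrak A^g=1+\dim\mathfrak M^{g-1}=1+\dim\mathfrak N^g$. For $d<g$,
\[
\dim\mathfrak A^{2g-d}\le\dim\mathfrak M^{2g-1-d}=\dim\mathfrak N^{d}\le\dim\mathfrak A^{d}=1+\dim\mathfrak M^{d-1}=1+\dim\mathfrak N^{2g-d}.
\]
The steps use, in order, the spanning, the duality between $\mathfrak M$ and $\mathfrak N$, the surjection $\mathfrak A^d\to\mathfrak N^d$, \eqref{eq:low-exact}, and duality again. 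Combined with surjectivity of $i^*$ and $0\neq\theta^{g-d}D^g\in\ker i^*$, this forces \eqref{eq:high-exact}.
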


\begin{proof}
There are no relations of degree at most $g$: the relations from $Y$ acquire
one additional degree after pushforward from the boundary, while the first
weight relation is $\widetilde{\sfS}_g^{g+1}$. Therefore, a simple calculation in $\QQ[\mathsf{x}, \mathsf{y}, \theta]$ gives
$$
\dim (\mathfrak A^d) = \binom{d+1}{2} +1 \text{ for } d =0,\ldots , g.
$$
Using the basis $E_{a,b,m}$ from Proposition \ref{pro:basis}, we have
$$
\dim \sR^r(C) = \begin{cases}
    \binom{r+2}{2}&\text{ if }0\leq r<g,\\[2mm]
  \binom{2g-r}{2}&\text{ if }g\leq r\leq2g-2,\\[2mm]
  0&\text{ otherwise}.
\end{cases}
$$
Therefore, since $\mathfrak M^r = \operatorname{coker} \left((\ell^C\cup -, \zeta) : \sR^{r-1}(C) \to  \sR^r(C) \oplus U \sR^{r-1}(C)\right) $, and
$$
\ker (\ell^C\cup -, \zeta) = \bigoplus_{a+b=g-1} \QQ\,\vartheta^a(\theta_2^C)^b,
$$
we obtain
\begin{equation}\label{eq:Mdim}
\dim \mathfrak M^r  = \dim \sR^r(C) + \begin{cases}
    1&\text{ if }g\leq r\leq2g-1\,,\\
 0&\text{ otherwise}.
\end{cases}
\end{equation}
Indeed, $\zeta(\beta)=0$ means that $\beta$ is fixed by the shift
operator $\sh^*$.  The invariant line in
$\vartheta^a\operatorname{Sym}^{2b}V$ ($V$ as in Proposition \ref{pro:basis}) is
$\QQ\vartheta^a(\theta_2^C)^b$, and multiplication by $\ell^C$ kills it
exactly when $a+b=g-1$.  The weight decomposition used in
\eqref{eqn:weightsproof} shows that invariant lines with different $a+b$
cannot cancel.
This proves \eqref{eq:low-exact}.

Set $e=2g-d$. Using the weight relation $\widetilde{\sfS}_g^{g+1}=0$ and \eqref{eq:Mdim} we have
\[
 \dim \mathfrak A^e\leq \dim \mathfrak M^{e-1} + \begin{cases}
     1&\text{ if }e=g\\
     0&\text{ if }e>g
 \end{cases}\, \leq1+\binom{d+1}{2}\,.
\]
On the other hand,
\[
 \dim \mathfrak N^e=\dim \mathfrak M^{d-1}=\binom{d+1}{2}\,,
\]
since they are orthogonal complements. Restriction $\mathfrak A^e\to \mathfrak N^e$ is surjective by Lemma~\ref{lem:normalization-pairing}.

The class $\theta^{g-d}D^g$ lies in the kernel because
\[
 i^*(\theta^{g-d}D^g)
 =(\theta_1^C+U/2)^{g-d}(-2\theta_2^C)^g=0\,,
\]
since $(\theta_2^C)^g = [u^{2g}]Q(u)^g$. However, $\theta^{g-d}D^g$
is nonzero by Corollary \ref{cor:non-zero}. The preceding upper bound now forces
\eqref{eq:high-exact}. The vanishing $\mathfrak A^{>2g}=0$ follows from the vanishing $\sR^{>2g-1}(Y)=0$.
\end{proof}

\begin{corollary}\label{thm:Gor}
We have $\sR^*(\barA) = \mathfrak A^*$. In particular, $\sR^*(\barA)$ is Gorenstein with socle in degree $2g$ and the ideal of relations in $\sR^*(\barA)$ is generated by $\widetilde{\sfS}_g^{g+1}$ and the pushforward of relations on the boundary.
\end{corollary}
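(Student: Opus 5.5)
The plan is to show that the natural surjection $\mathfrak A^*\twoheadrightarrow \sR^*(\barA)$ is injective, degree by degree. Surjectivity is built in, since $\sR^*(\barA)$ is by definition the image of $\spp^*(\Sigma^{(1)})[\theta]$. The map is well defined because the generating relations of $\mathfrak A^*$ really do hold in $\CH^*(\barA)$: Theorem \ref{thm:rel} gives $\widetilde{\sfS}_g^{g+1}=0$, and $i_*$ of a relation on $Y$ is zero. It then suffices to compare the two exact sequences of Lemma \ref{lem:two-exact-sequences} with their counterparts in $\sR^*(\barA)$.

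\textbf{Low degrees $d\le g$.} Take $\alpha\in\mathfrak A^d$ mapping to $0$ in $\sR^d(\barA)$.
\begin{enumerate}[label=(\roman*)]
\item Restrict to the interior $\X_g$. There the image is a multiple $c\,\theta^d$, and $\theta^d\neq 0$ on $\X_g$ for $d\le g$, so $c=0$.
\item By \eqref{eq:low-exact}, $\alpha=\frac12 i_*\beta$ for some $\beta\in\mathfrak M^{d-1}$.
\item Pair against $i^*\gamma$ for $\gamma\in\sR^{2g-d}(\barA)$, using the projection formula:
\[
\tfrac12\langle \beta, i^*\gamma\rangle=\pi'_*(\alpha\cup\gamma)=0 .
\]
Here $\pi'_*$ denotes the degree map on top-degree classes, normalized via $\sR^{2g}$. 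This pushes $\alpha\cup\gamma$ to the base, where the top class $\theta^gD^g$ (times $\lambda$'s) is nonzero by Corollary \ref{cor:non-zero}.
\item By Lemma \ref{lem:normalization-pairing}, $i^*\sR^{2g-d}(\barA)=\mathfrak N^{2g-d}=\mathfrak K^\perp$. Since the pairing on $\sR^*(Y)$ is perfect, $\beta\in\mathfrak K$, so $\beta=0$ in $\mathfrak M^{d-1}$ and hence $\alpha=0$.
\end{enumerate}

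\textbf{High degrees $e=2g-d\ge g$.} Take $\alpha\in\mathfrak A^{e}$ mapping to zero.
\begin{enumerate}[label=(\roman*)]
\item The class $i^*\alpha$ is computed by the commutative square in the proof of Lemma \ref{lem:normalization-pairing}. It is an honest class in $\sR^*(Y)$, so it vanishes, and its image in $\mathfrak N^e$ is $0$.
\item By \eqref{eq:high-exact}, $\alpha=c\,\theta^{g-d}D^g$.
\item Corollary \ref{cor:non-zero} shows $\theta^{g-d}D^g\neq 0$ in $\CH^*(\barA)$: if it vanished, so would its product with $\theta^{d}$, namely $\theta^gD^g$. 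Hence $c=0$.
\item In degrees above $2g$, $\mathfrak A^{>2g}=0$, so there is nothing to check.
\end{enumerate}

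\textbf{Gorenstein property.} The pairing $\mathfrak A^d\times\mathfrak A^{2g-d}\to\mathfrak A^{2g}=\QQ\,\theta^gD^g$ is perfect. In the two exact sequences, the subspace $\frac12 i_*\mathfrak M^{d-1}$ pairs with $\mathfrak A^{2g-d}$ through $\mathfrak M^{d-1}\times\mathfrak N^{2g-d}$, which is perfect. The complementary lines $\QQ\theta^d$ and $\QQ\theta^{g-d}D^g$ pair to the nonzero socle $\theta^gD^g$. So the pairing is block-triangular with invertible diagonal blocks, hence perfect. The statement about generators of the ideal then holds by the definition of $\mathfrak A^*$.

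\textbf{Main obstacle.} The hardest step is the identification in the low-degree case that vanishing of $\frac12 i_*\beta$ forces $\beta\in\mathfrak K$. This needs the compatibility $\int_{\barA}\frac12 i_*\beta\cup\gamma=\langle\beta,i^*\gamma\rangle$, with the socle normalizations on $\barA$ and on $Y$ matched through $\frac12 i_*(U\vartheta^{g-1})$. My first attempt would be to check directly that this generator is a nonzero multiple of $\theta^gD^g$ in $\sR^{2g}(\barA)$, computing $\pi_*$ via Proposition \ref{pro:relB}.
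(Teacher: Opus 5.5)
Your proof is correct. Its last paragraph, the block-triangular pairing on $\mathfrak A^*$ with diagonal blocks $\theta^gD^g$ and the pairing $\mathfrak M^{d-1}\times\mathfrak N^{2g-d}$, is exactly the paper's proof that $\mathfrak A^*$ is Gorenstein.

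What differs is the order of deduction. The paper proves the Gorenstein property first and then gets injectivity of $\mathfrak A^*\to\sR^*(\barA)$ in one line. The kernel is a graded ideal. By perfectness of the pairing, any nonzero graded ideal of $\mathfrak A^*$ contains the socle $\QQ\,\theta^gD^g$. And $\theta^gD^g\neq0$ by Corollary~\ref{cor:non-zero}.

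You instead verify injectivity degree by degree. For $d\le g$ you use restriction to $\X_g$ and $\mathfrak K^{\perp\perp}=\mathfrak K$; for $e\ge g$ you use restriction to $Y$ and $\theta^{g-d}D^g\neq 0$. This is the same pairing argument run inside $\CH^*(\barA)$ rather than inside $\mathfrak A^*$. It is valid but redundant: your Gorenstein paragraph plus Corollary~\ref{cor:non-zero} already forces injectivity, so the two halves of your write-up collapse into the paper's argument.

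The normalization you flag as the main obstacle, that $\frac12 i_*(U\vartheta^{g-1})$ is a nonzero multiple of $\theta^gD^g$, is also used tacitly in the paper. It is exactly what is needed when the paper says ``the projection formula identifies $P_d$ with the matrix of \eqref{eq:Ypairing}''. So it is good that you isolate it, and your proposed check goes through:
\begin{enumerate}[label=(\roman*)]
\item $\frac12 i_*(U\vartheta^{g-1})=j_*\vartheta^{g-1}$, because $\vartheta$ is shift invariant.
\item $\vartheta^{g-1}$ spans $\sR^{2g-2}(C)$, and $(\theta_1^C\theta_2^C)^{g-1}$ pushes forward to $(g-1)!\,(\theta_2^C)^{g-1}\neq 0$. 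Hence $\pi_{C*}\vartheta^{g-1}$ is a nonzero multiple of $(\theta_2^C)^{g-1}$.
\item Therefore $\pi_*\bigl(j_*\vartheta^{g-1}\cup\lambda_1\cdots\lambda_{g-2}\bigr)=\iota_*\pi_{C*}(\vartheta^{g-1})\cup\lambda_1\cdots\lambda_{g-2}$ is a nonzero multiple of $D^g\lambda_1\cdots\lambda_{g-2}$.
\item This last class is nonzero by Proposition~\ref{pro:relB} and Theorem~\ref{thm:lambdarels}.
\end{enumerate}
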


\begin{proof}
Fix $0\leq d\leq g$ and choose a vector space splitting of
 \eqref{eq:high-exact}. A canonical splitting of \eqref{eq:low-exact} is given by $\theta^d$. Then the intersection pairing on $\mathfrak A^*$ in complementary degrees has the form
\[
 \begin{array}{c|cc}
  &\theta^{g-d}D^g&\mathfrak N^{2g-d}\\ \hline
  \theta^d&\theta^gD^g&*\\
  \mathfrak M^{d-1}&0&P_d
 \end{array}.
\]
The lower-left block vanishes because the restriction of $\theta^{g-d}D^g$ to $Y$ is zero, while the projection formula identifies $P_d$ with the matrix of the bilinear map \eqref{eq:Ypairing}. Therefore, $\mathfrak A^*$ is Gorenstein. Since by Corollary \ref{cor:non-zero} the generator of the socle is nonzero in $\sR^*(\barA)$, the map $\mathfrak A^* \to \sR^*(\barA)$ is an isomorphism.
\end{proof}

\begin{proof}[Proof of Theorem \ref{thm:taut}]
	The relations in $\mathfrak I_g$ allow us to write every class in $\R^*(\barA)$ in the form
    $$
    \beta = \sum_{\substack{0\leq k\leq g,\\\underline{u}\in \{0,1\}^{g-2}}}c_{k, \underline{u}}\,\theta^k\cup \lambda_1^{u_1}\cup \cdots \cup \lambda ^{u_{g-2}}_{g-2} \cup \lambda_{g-1} + \sum_{\substack{\alpha \in \mathcal B_g,\\
    \underline{v}\in \{0,1\}^{g-2}}}d_{\alpha, \underline{v}}\,\lambda_1^{v_1}\cup \cdots \cup \lambda_{g-2}^{v_{g-2}} \cup \alpha\, ,
    $$
	where $\mathcal B_g$ is a basis of $\sR^*(\barA)$. We show that the monomials in these sums are linearly independent. Assume that $\beta=0$. By restricting to the interior, we have that the $c_{k, \underline{u}}$ are all zero. For $\alpha \in \mathcal B_g$, let $\gamma $ be its dual under the intersection pairing on $\mathfrak A^*$ which exists by Corollary \ref{thm:Gor}. Similarly, for $\underline{v}$ fixed,
	let $\gamma'$ be (some lift of) the dual of $\lambda_1^{v_1}\cup\dots\cup \lambda_{g-2}^{v_{g-2}}$ in $\Lambda^{\ast}(\overline{\mathcal{A}}_g)/(\lambda_g,\lambda_{g-1})\cong \Lambda^{\ast}(\overline{\mathcal{A}}_{g-2})$.
	Then,
    $$
    \beta \cup \gamma \cup \gamma' = d_{\alpha, \underline{v}}\theta^g\cup D^g \cup \lambda_1\cup \cdots \cup \lambda_{g-2},
    $$
    and therefore $d_{\alpha, \underline{v}}=0$ by Corollary \ref{cor:non-zero}.
\end{proof}

\appendix

\section{Bernoulli identities}\label{sec:B}
\subsection{Basic identities}
For the reader's convenience, we collect standard identities among Bernoulli numbers.
The Bernoulli polynomial $B_n(u)$ is defined by
\begin{equation*}
    \frac{xe^{ux}}{e^x-1} = \sum_{n=0}^\infty B_n(u) \frac{x^n}{n!}\,.
\end{equation*}
When $u=0$, we recover the usual definition of the Bernoulli numbers
\begin{equation}\label{eq:B}
    F(x) = \frac{x}{e^x-1} = \sum_{k=0}^\infty B_k \frac{x^k}{k!}\,.
\end{equation}
By definition, $B_n(u) = \sum_{k=0}^n \binom{n}{k}B_ku^{n-k}$.
\begin{lemma}\label{lem:1}
    \begin{enumerate}[label=(\alph*)]
        \item $\sum_{k=0}^n \binom{n}{k} B_k = B_n$ for $n\neq 1$.
        \item $\sum_{k=0}^n (-1)^k \binom{n}{k} B_k = n+(-1)^nB_n$.
        \item $\sum_{k=0}^n \binom{n}{k} (2-2^k) B_k = 2^n B_n$ for $n\neq 1$.
    \end{enumerate}
\end{lemma}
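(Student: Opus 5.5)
All three identities follow from manipulating the generating function $F(x)=x/(e^x-1)$ from \eqref{eq:B}. The standard device is that the exponential generating function of a binomial convolution $\sum_k\binom{n}{k}a_kb_{n-k}$ is the product of the two generating functions. So each identity becomes an identity between explicit power series in $x$, which we then check directly.

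For (a), the left-hand side $\sum_k\binom{n}{k}B_k$ is $B_n(1)$. Its generating function is $e^xF(x)=xe^x/(e^x-1)$. Since
\[
\frac{xe^x}{e^x-1}=\frac{x}{e^x-1}+x,
\]
comparing coefficients of $x^n/n!$ gives $B_n(1)=B_n+\delta_{n,1}$. This is (a) for $n\neq1$.

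For (b), the generating function of $\sum_k(-1)^k\binom{n}{k}B_k$ is $e^xF(-x)$. We have
\[
F(-x)=\frac{-x}{e^{-x}-1}=\frac{xe^x}{e^x-1}=F(x)+x,
\]
so $e^xF(-x)=e^xF(x)+xe^x$. The coefficient of $x^n/n!$ in $e^xF(x)$ is $B_n(1)$. Applying the relation $F(-x)=F(x)+x$ once more gives $B_n(1)=(-1)^nB_n(0)=(-1)^nB_n$ for all $n$, and the coefficient of $x^n/n!$ in $xe^x$ is $n$. Together these give $n+(-1)^nB_n$.

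For (c), split the sum as $2\sum_k\binom nkB_k-\sum_k\binom nk2^kB_k$. By (a) the first part equals $2B_n$ for $n\neq1$. The second part has generating function $e^xF(2x)$, and we compute
\[
e^xF(2x)=\frac{2xe^x}{(e^x-1)(e^x+1)}.
\]
Partial fractions give
\[
\frac{2e^x}{e^{2x}-1}=\frac{1}{e^x-1}+\frac{1}{e^x+1}.
\]
We then rewrite $\frac{x}{e^x+1}=\frac{x}{e^x-1}-\frac{2x}{e^{2x}-1}$, so that
\[
e^xF(2x)=2F(x)-F(2x).
\]
Comparing coefficients, $\sum_k\binom nk2^kB_k=2B_n-2^nB_n$. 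Hence the total is $2B_n-(2B_n-2^nB_n)=2^nB_n$ for $n\neq1$.

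None of these steps is a real obstacle. The only points needing care are the $n=1$ exceptions, which come from the extra $x$ term in the generating function, and the partial-fraction step in (c).
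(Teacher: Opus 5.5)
Your proof is correct and matches the paper's approach: (a) and (c) are exactly the generating-function identities $e^xF(x)=F(x)+x$ and $e^xF(2x)=2F(x)-F(2x)$ that the paper uses. For (b), the paper instead substitutes $x=-1$ into the difference equation $B_n(x+1)-B_n(x)=nx^{n-1}$, which amounts to the same generating-function fact as your reflection identity $F(-x)=F(x)+x$.
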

\begin{proof}
    Part~(a) follows from expanding $x= (e^x-1) F(x)$. Part~(b) follows from the standard identity for Bernoulli polynomials,
    \[
    B_n(x+1) - B_n(x) = nx^{n-1}\,,
    \]
    by substituting $x=-1$. Part~(c) follows from the identity
    \[
    F(2u) e^u
    =
    \frac{2u e^u}{(e^u-1)(e^u+1)}
    =
    \frac{2u}{e^u-1} - \frac{2u}{e^{2u}-1} = 2F(u) -F(2u)\,.
    \]
\end{proof}

\begin{lemma}\label{lem:3}
    Consider the function $S(u) = \frac{u}{\sinh u}$. Then
    \[
    S(u) = \sum_{k=0}^\infty \frac{(2-2^{2k}) B_{2k}}{(2k)!}u^{2k}\,.
    \]
\end{lemma}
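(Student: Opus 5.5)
The identity to prove is $S(u)=u/\sinh u=\sum_{k\ge0}\frac{(2-2^{2k})B_{2k}}{(2k)!}u^{2k}$. The plan is to express $u/\sinh u$ through the generating function $F(x)=x/(e^x-1)$ from \eqref{eq:B}, evaluated at $u$ and at $2u$. No earlier results are needed beyond the definition of the $B_k$ and the fact that $B_{2k+1}=0$ for $k\ge1$. That fact follows because $F(x)+x/2=\frac{x}{2}\coth(x/2)$ is even.

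The key algebraic step is the partial fraction identity already used in the proof of Lemma \ref{lem:1}(c). Writing $\sinh u=\frac{e^u-e^{-u}}{2}$ gives
\[
\frac{u}{\sinh u}=\frac{2ue^{u}}{e^{2u}-1}=\frac{2ue^{u}}{(e^{u}-1)(e^{u}+1)}.
\]
Since $\frac{e^u}{(e^u-1)(e^u+1)}=\frac{1}{e^u-1}-\frac{1}{e^{2u}-1}$, this equals
\[
\frac{2u}{e^{u}-1}-\frac{2u}{e^{2u}-1}=2F(u)-F(2u).
\]

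Next expand both terms with \eqref{eq:B}:
\[
2F(u)-F(2u)=\sum_{n\ge0}\frac{(2-2^{n})B_n}{n!}u^n .
\]
The $n=1$ term has coefficient $(2-2)B_1=0$. For odd $n\ge3$ the coefficient vanishes because $B_n=0$. Only the even terms $n=2k$ survive, which is exactly the claimed series. As a sanity check, $k=0$ gives $(2-1)B_0=1=S(0)$.

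The only point requiring care is the vanishing of the odd-index terms. This is handled by the two observations above: the factor $2-2^n$ kills $n=1$, and the evenness of $F(x)+x/2$ kills the remaining odd $n$. There is no real obstacle; the argument is a direct manipulation of generating functions.
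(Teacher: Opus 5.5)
Your proof is correct and follows essentially the same route as the paper: the paper writes $S(u)=e^uF(2u)$ and invokes the identity $e^uF(2u)=2F(u)-F(2u)$ from the proof of Lemma~\ref{lem:1}(c), which is exactly your partial-fraction step. You also spell out why the odd-index coefficients vanish (the factor $2-2^n$ kills $n=1$, and $B_n=0$ for odd $n\ge 3$); the paper leaves this detail implicit.
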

\begin{proof}
    By a simple calculation,
    \[
    e^u F(2u) = \frac{2u e^u}{e^{2u}-1} = \frac{u}{\sinh u} = S(u)\,.
    \]
    Combining this identity with Lemma \ref{lem:1}, we get the result.
\end{proof}
\begin{lemma}\label{lem:4}
    Let $f(n,m) = \sum_{k=0}^n \binom{n}{k} B_{k+m}$. Then $f(n,m) = (-1)^{n+m} f(m,n)$.
\end{lemma}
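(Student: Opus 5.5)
The plan is to prove the identity $f(n,m)=(-1)^{n+m}f(m,n)$ with generating functions, using the exponential generating function of the Bernoulli numbers in \eqref{eq:B}. Set
\[
G(x,y)=\sum_{n,m\ge 0} f(n,m)\frac{x^n}{n!}\frac{y^m}{m!}.
\]
The first step is to compute $G$ in closed form. Since $f(n,m)=\sum_k\binom{n}{k}B_{k+m}$, the sum over $n$ with fixed $m$ is an exponential convolution of the sequence $1$ with the sequence $B_{k+m}$. This gives
\[
\sum_n f(n,m)\frac{x^n}{n!}=e^x\sum_k B_{k+m}\frac{x^k}{k!}=e^x\,F^{(m)}(x).
\]
Summing over $m$ against $y^m/m!$ turns the derivatives into a Taylor shift, so
\[
G(x,y)=e^x F(x+y)=\frac{(x+y)e^x}{e^{x+y}-1}.
\]

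The second step is to compare this with the generating function of $(-1)^{n+m}f(m,n)$. That generating function is $G(-y,-x)$, because the swap $(n,m)\mapsto(m,n)$ exchanges the variables and the sign $(-1)^{n+m}$ negates both. Directly,
\[
G(-y,-x)=\frac{-(x+y)e^{-y}}{e^{-x-y}-1}.
\]
Multiplying numerator and denominator by $e^{x+y}$ gives
\[
G(-y,-x)=\frac{-(x+y)e^{x}}{1-e^{x+y}}=\frac{(x+y)e^x}{e^{x+y}-1}=G(x,y).
\]
Comparing coefficients of $x^ny^m/(n!m!)$ then gives the identity.

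There is no real obstacle. The only point needing care is the Taylor-shift step $\sum_m F^{(m)}(x)y^m/m!=F(x+y)$. This holds as an identity of formal power series because $F$ is analytic at $0$, so one can expand everything in $\QQ[[x,y]]$. One must also track the sign convention $B_1=-\tfrac12$ from \eqref{eq:B} consistently throughout.
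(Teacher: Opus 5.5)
Your proposal is correct and follows the paper's proof exactly: compute the exponential generating function $\frac{(x+y)e^x}{e^{x+y}-1}$ and observe that it is invariant under $(x,y)\mapsto(-y,-x)$. Your write-up simply fills in the ``simple calculation'' (exponential convolution plus the formal Taylor shift) that the paper leaves implicit.
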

\begin{proof}
    Consider the generating series
    \[
    F(u,v) = \sum_{n,m \geq 0} f(n,m) \frac{u^n}{n!} \frac{v^m}{m!}.
    \]
    By a simple calculation,
    \[
    F(u,v) = \frac{e^u(u+v)}{e^{u+v}-1}.
    \]
    Clearly $F(u,v) = F(-v,-u)$, so the result follows.
\end{proof}
In particular, Lemma \ref{lem:4} shows that the coefficients of $(\ell^C)^{k_1-1}\cup(-\ell^C-2\theta_2^C)^{k_2-1}$ in \eqref{eq:S} are symmetric under exchanging $k_1$ and $k_2$.

\subsection{Identity for the comparison}
We now prove the key Bernoulli identity used in the proof of Corollary~\ref{cor:gz=s}.
\begin{theorem}\label{sp:final}
    Let $E(b,c)$ be defined as in \eqref{eq:E}. For each $c$ with $2c\leq g$, we have
    \[
    \sum_{k=0}^c \binom{g-2k}{c-k} 2^{k} E(g-2k,k) = \frac{(-1)^g}{c!(g-c)!} \sum_{m=0}^{2c} \binom{2c}{m} B_{2g-2c+m}\,.
    \]
\end{theorem}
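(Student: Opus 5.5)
Both sides are coefficient extractions from explicit power series, so the plan is to prove the identity by computing generating functions in two variables and showing they agree. First, unwind $E(b,c)=2^{b+c}a!\,\eta_{a,b,c}$:
\[
E(b,c)=\frac{(-1)^{b+c}(2c+2b-1)!!}{2^{2b+2c}\,c!}\sum_{x=0}^{b}\frac{(2-2^{2c+2x})B_{2c+2x}}{(2c+2b-2x-1)!!\,(2c+2x-1)!!\,(b-x)!\,x!}.
\]
Lemma \ref{lem:3} identifies $(2-2^{2n})B_{2n}/(2n)!$ as the Taylor coefficients of $S(u)=u/\sinh u$. Rewriting double factorials as $(2n-1)!!=(2n)!/(2^n n!)$, the inner sum over $x$ becomes a Cauchy product of the coefficients of $S$ (shifted by $c$) against an elementary series. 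That series comes from $1/(2n-2x-1)!!\,(b-x)!$ and should be a Gaussian-type or $\cosh$-type function. I would encode this by an integral representation: use the beta integral (already exploited in Corollary \ref{cor:pushThet}) to turn ratios of factorials into $\int_0^1 v^{2p}(1-v)^{2q}\,dv$. The aim is a closed form
\[
\mathcal E(s,w)=\sum_{b,c}E(b,c)\,s^b w^c
\]
as an integral over $[0,1]$ of an expression in $S$ and exponentials.

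Second, compute the generating function of the left side in the variables marking $g$ and $c$. The left side is a convolution, over $k$, of $\binom{g-2k}{c-k}$ with $2^kE(g-2k,k)$. Multiplying by $x^{g-c}y^{c}$ and summing, the binomial contributes a factor $(x+y)^{g-2k}$ after the substitution $b=g-2k$. So the left generating function equals $\mathcal E$ evaluated at $s=x+y$ and $w=2xy$, up to the normalization by $1/(g-c)!\,c!$ that matches the right side.

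For the right side, use Lemma \ref{lem:4} and the generating function $F(u,v)=e^u(u+v)/(e^{u+v}-1)$ of $f(n,m)=\sum_k\binom nk B_{k+m}$. We need $f(2c,2g-2c)$ weighted by $(-1)^g/(c!(g-c)!)$, that is, the even-even part of $F$ after a "halving" transform $u^{2c}/(2c)!\mapsto u^{2c}/c!$. This transform is again realizable by a Gaussian integral, or equivalently by applying $\exp(\tfrac12 u^2\partial^2)$-type operators as in the $\mathcal L_t$ computation of Corollary \ref{cor:pushThet}.

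The main obstacle is matching these two closed forms. I would first try to reduce both to a single integral $\int_0^1$ of $\exp$ of a quadratic in $v$, divided by a $\sinh$. The identity $F(2u)e^u=S(u)$ from Lemma \ref{lem:3} should convert the even part of $F$ into $S$. Failing a clean closed form, the fallback is a direct coefficient identity: fix $c$, prove the claim by induction on $g-2c$ using Lemma \ref{lem:1}(a)--(c) to shift indices, and check the base case $c=0$ directly. When $c=0$ both sides reduce to $E(g,0)$ versus $(-1)^gB_{2g}/g!$, which follows from the $x$-sum collapsing via Lemma \ref{lem:1}(c).
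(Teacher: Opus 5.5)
Your bookkeeping is correct as far as it goes. With $b=g-2k$ the factor $\binom{g-2k}{c-k}$ sums to $(x+y)^{b}$, so the left side has ordinary generating function $\mathcal E(x+y,2xy)$. The halving of $F(u,v)$ on the right is a legitimate formal Gaussian transform. The case $c=0$ does reduce to $\sum_x\binom{2g}{2x}(2-2^{2x})B_{2x}=2^{2g}B_{2g}$, which is Lemma~\ref{lem:1}(c).

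\textbf{The gap.} The step you call ``the main obstacle'' is the whole theorem, and the proposal does not carry it out. Substitute $n=x+k$, where $x$ is the summation index in $\eta_{a,b,c}$. The left side becomes $\sum_n c_{g,c,n}(2-2^{2n})B_{2n}$, where $c_{g,c,n}$ is a terminating alternating sum over $k$. What is needed is its closed form
\[
c_{g,c,n}=\frac{(-1)^g}{2^{2g}c!\,(g-c)!}\,[z^{2n}]\,(1-z)^{2c}(1+z)^{2g-2c}.
\]
The paper proves this in Lemma~\ref{lem:Hypergeometric}. It writes the $k$-sum as a ${}_4F_3$ at $1$, applies a quadratic transformation to reduce it to a ${}_3F_2$, and evaluates that by a binomial generating-function computation. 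Only then does the Bernoulli input enter:
\[
\sum_n(2-2^{2n})B_{2n}[z^{2n}]P=\bigl(\tfrac{\partial}{\sinh\partial}P\bigr)(0)=\bigl(\tfrac{2\partial}{e^{2\partial}-1}P\bigr)(1).
\]
Expanding $P$ around $z=1$ and applying Lemma~\ref{lem:4} gives the right side. Your remark that $e^uF(2u)=S(u)$ should turn $S$ into $F$ corresponds only to this last step.

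\textbf{Why neither route fills it.} Nothing in the plan replaces the hypergeometric evaluation.
\begin{itemize}
\item If you compute $\mathcal E$ directly, then for fixed $c$ the inner sum is a Cauchy product of $\frac{1}{c!}\frac{d^c}{dt^c}S(\sqrt{2t})$ with $\sum_p t^p/\bigl(p!\,(2c+2p-1)!!\bigr)$. The latter is a ${}_0F_1(;c+\tfrac12;\tfrac t2)$ series, which is $\cosh$-type only when $c=0$. The result is then reweighted by $(2b+2c-1)!!$, which couples $b$ and $c$. Summing over $c$ in closed form is at least as hard as the quadratic transformation, and you give no route to it.
\item The fallback does not work either. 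Lemma~\ref{lem:1}(a)--(c) only encode the generating function $x/(e^x-1)$, so they can at most perform the $u/\sinh u\leftrightarrow e^uF(2u)$ conversion. The missing closed form of $c_{g,c,n}$ is a binomial identity in $g,c,n$ with no Bernoulli numbers in it. You also have not said what recurrence in $g$ the left side would satisfy.
\end{itemize}
Separately, an induction on $g-2c$ with $c$ fixed would need the base case $g=2c$, not $c=0$.
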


The following lemma will be used to simplify the binomial coefficients in Theorem~\ref{sp:final}. For a polynomial $p(x)$, we denote by $[x^m]\,p(x)$ the coefficient of the monomial $x^m$.

\begin{lemma}\label{lem:Hypergeometric}
    Let $g,c,n$ be non-negative integers such that $n \leq g$, and $c \leq g$. Consider the polynomial $\Phi_{g,c}(x) = (1-x)^{2c}(1+x)^{2g-2c}$. Then,
    \begin{equation}\label{eq:hypgeo}
        \sum_{k=0}^{\min\{c,n,g-n\}}\frac{n!(-1)^k2^{4k}}{(2n)!(2g-2n)!}\binom{g-2k}{c-k}\frac{(2(g-k))!}{(g-k)!}\frac{\binom{g-n}{k}}{(n-k)!} = \frac{1}{c!(g-c)!}[x^{2n}]\Phi_{g,c}(x)\, .
    \end{equation}
\end{lemma}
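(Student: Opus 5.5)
We treat \eqref{eq:hypgeo} as an identity of finite sums and prove it by extracting coefficients from generating functions. On the right, we expand
\[
[x^{2n}]\Phi_{g,c}(x)=\sum_{j}(-1)^j\binom{2c}{j}\binom{2g-2c}{2n-j}.
\]
This is a Krawtchouk-type coefficient, so we should rewrite the left-hand side in the same form.

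The first step is to simplify the left-hand summand. Using $\frac{(2(g-k))!}{(g-k)!}=2^{g-k}(2g-2k-1)!!$ and $\frac{(2n)!}{n!}=2^n(2n-1)!!$, the factor $\frac{n!}{(2n)!(2g-2n)!}\cdot\frac{(2g-2k)!}{(g-k)!}$ becomes a ratio of double factorials. Combined with $\binom{g-n}{k}/(n-k)!$ and $\binom{g-2k}{c-k}$, this should give a term-ratio in $k$ that is rational. So the left side is a terminating hypergeometric sum ${}_pF_q$ at argument $\pm1$ (the $2^{4k}$ is absorbed by the double factorials). The right-hand side is also hypergeometric: it is $\binom{2g-2c}{2n}\,{}_2F_1(-2c,-2n;2g-2c-2n+1;-1)$.

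The second step is to match the two. I would apply a quadratic transformation of ${}_2F_1$ at $-1$ to the right side; Kummer-type relations convert a ${}_2F_1(a,b;c;-1)$ with even lower parameters into a sum over half the index. Concretely, I would use the factorisation
\[
\Phi_{g,c}(x)=(1-x^2)^{2c}(1+x)^{2g-4c}
\]
when $2c\le g$, which is exactly the hypothesis available. Then
\[
[x^{2n}]\Phi=\sum_k(-1)^k\binom{2c}{k}\,[x^{2n-2k}](1+x)^{2g-4c}.
\]
This is not yet in the shape $\binom{g-2k}{c-k}$. A better choice is to write
\[
(1-x)^{2c}(1+x)^{2g-2c}=(1+x)^{2g}\Bigl(1-\frac{4x}{(1+x)^2}\Bigr)^{c}.
\]
We then take the even part in $x$ and substitute $y=x^2$. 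The even part of $(1+x)^{2g-2m}x^m$ gives binomials in $g-m$, and the $4^k$ factors appear naturally, matching $2^{4k}$ after the square-root in the half-index. Expanding $\bigl(1-\frac{4x}{(1+x)^2}\bigr)^c$ by the binomial theorem and reorganising the index $m$ into pairs should produce the factors $\binom{g-2k}{c-k}$ and $\binom{g-n}{k}$.

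If the direct manipulation is messy, the fallback is the Wilf--Zeilberger / Zeilberger approach. Both sides, as functions of $n$ with $g$ and $c$ fixed, satisfy a linear recurrence in $n$ of small order; it is first-order for the coefficient sequence of $\Phi$, coming from the ODE $(1-x^2)\Phi'=(2g-4c-2gx)\Phi$. We would then check initial values. The ODE gives a three-term recurrence for the coefficients of $\Phi$, and hence a two-step recurrence on the even coefficients. We would verify by creative telescoping that the left sum satisfies the same recurrence, and check the identity at $n=0$, where only $k=0$ survives and both sides equal $\frac{1}{c!(g-c)!}$.

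The main obstacle is the recurrence verification. The summand's $k$-range depends on $n$ through $\min\{c,n,g-n\}$, so boundary terms in the telescoping must be shown to vanish. They should vanish, since each boundary is a natural zero of $1/(n-k)!$ or of $\binom{g-n}{k}$, but this needs checking.
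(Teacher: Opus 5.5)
Your proposal has a genuine gap at its central step.

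\textbf{The missing step.} Your rewriting $\Phi_{g,c}(x)=(1+x)^{2g}\bigl(1-\tfrac{4x}{(1+x)^2}\bigr)^{c}$ is a good device; the paper uses the same one, with exponent $n$. By itself, however, it only gives
\[
[x^{2n}]\Phi_{g,c}(x)=\sum_{m=0}^{c}(-4)^m\binom{c}{m}\binom{2g-2m}{2n-m}=\binom{2g}{2n}\,{}_3F_2\bigl(-c,-2n,2n-2g;-g,\tfrac12-g;1\bigr).
\]
The left-hand side of \eqref{eq:hypgeo}, by contrast, equals $\frac{1}{c!(g-c)!}\binom{2g}{2n}\,{}_4F_3\bigl(-n,n-g,-c,c-g;-\tfrac g2,\tfrac{1-g}2,\tfrac12-g;1\bigr)$. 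The two sums have different lengths in general (for $g=3$, $c=2$, $n=1$ they have three and two terms) and unrelated summands. So no regrouping of $m$ ``into pairs'' will produce $\binom{g-2k}{c-k}\binom{g-n}{k}$. Equating them is a genuine quadratic transformation, and this is exactly the step you leave as ``should produce''.

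The paper supplies this step by citing Koornwinder's formula (2.31), which turns the ${}_4F_3$ into ${}_3F_2\bigl(-2c,2c-2g,-n;-g,\tfrac12-g;1\bigr)$. It then finishes with the expansion of $(1+x)^{2g}\bigl(1-\tfrac{4x}{(1+x)^2}\bigr)^{n}$ and the symmetry $\binom{N}{b}[x^a](1-x)^b(1+x)^{N-b}=\binom{N}{a}[x^b](1-x)^a(1+x)^{N-a}$.

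\textbf{The Zeilberger fallback.} This route is sound in principle but is not yet an argument. You give no recurrence or certificate. Eliminating the odd coefficients from $(j+1)a_{j+1}=(2g-4c)a_j+(j-1-2g)a_{j-1}$ leaves, in general, a second-order recurrence in $n$ for $a_{2n}$, so checking $n=0$ alone does not determine the sequence. You would also still need to show that the recurrence found for the left side annihilates the right side. Separately, the lemma assumes only $c\le g$, not $2c\le g$; the identity does hold for $2c>g$ (for example, both sides equal $-\tfrac12$ for $g=3$, $c=2$, $n=1$).

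\textbf{A self-contained fix.} Generate over $c$ instead of over $x$. One has $\sum_c\binom{g}{c}u^c\Phi_{g,c}(x)=\bigl((1+u)(1+x^2)+2(1-u)x\bigr)^g$. Its $x^{2n}$-coefficient is $(1+u)^g\sum_i\binom{g}{2i}\binom{g-2i}{n-i}4^i(1-4w)^i$ with $w=u/(1+u)^2$. On the other side, $\sum_c\binom{g-2k}{c-k}u^c=(1+u)^g w^k$. The polynomials $u^k(1+u)^{g-2k}$, $0\le k\le g/2$, are linearly independent. So multiplying \eqref{eq:hypgeo} by $g!\,u^c$, summing over $c$ and comparing coefficients of $w^k$ reduces the lemma to
\[
\sum_i\binom{g}{2i}\binom{g-2i}{n-i}\binom{i}{k}4^i=\frac{4^k\,g!\,n!\,(2g-2k)!}{(2n)!\,(2g-2n)!\,(g-k)!\,(n-k)!}\binom{g-n}{k}.
\]
Write $4^i\,i!/(2i)!=1/(\tfrac12)_i$ and substitute $i=k+j$. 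The left side becomes $\frac{g!}{k!\,(\frac12)_k\,(n-k)!\,(g-n-k)!}\,{}_2F_1\bigl(k-n,\,k+n-g;\,k+\tfrac12;\,1\bigr)$. Chu--Vandermonde gives ${}_2F_1\bigl(k-n,\,k+n-g;\,k+\tfrac12;\,1\bigr)=(g-n+\tfrac12)_{n-k}/(k+\tfrac12)_{n-k}$, and this is exactly the right side.
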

\begin{proof}
Let $\Gamma(z)$ denote the Gamma function. We use the Pochhammer symbol
$$
(a)_w = a(a+1)\ldots (a+w-1) = \frac{\Gamma(a+w)}{\Gamma(a)}\,,
$$
and the hypergeometric series
    $$
    \prescript{}{p}{F}_q\left(\begin{array}{c}
        a_1, \ldots , a_p \\
        b_1, \ldots , b_q
    \end{array};x\right) = \sum_{k \geq 0}\frac{(a_1)_k\ldots (a_p)_k}{(b_1)_k\ldots (b_q)_k}\frac{x^k}{k!}\, .
    $$
    For background on hypergeometric series, see \cite{hyperbook}. We write
    \begin{align}
    \begin{split}\label{eq:binom}
    \frac{n!\binom{g-n}{k}}{(n-k)!} &= \frac{(-n)_k(n-g)_k}{k!}\, ,\\
    \binom{g-2k}{c-k} &= \binom{g}{c}\frac{(-c)_k(c-g)_k}{(-g)_{2k}}\,, \\
    (-1)^k2^{4k}\frac{(2g-2k)!}{(g-k)!} & =2^{4k}\frac{(2g)!}{g!}\frac{(-g)_k}{(-2g)_{2k}}\,.
    \end{split}
    \end{align}

    We simplify the left-hand side of \eqref{eq:hypgeo} using the hypergeometric series. The duplication formula for the Gamma function,
    \[
    \Gamma(2z) =
    \frac{2^{2z-1}}{\sqrt{\pi}}\Gamma(z) \Gamma(z + \frac{1}{2})\,,
    \]
    implies that the Pochhammer symbol satisfies
    \[
    (a)_{2k} = 2^{2k}(a/2)_k(\frac{1}{2}+\frac{a}{2})_k\,.
    \]
    Therefore, the left-hand side of \eqref{eq:hypgeo} simplifies to
    $$
    \frac{\binom{g}{c}\frac{(2g)!}{g!}}{(2n)!(2g-2n)!}\sum_{k}\frac{(-n)_k(n-g)_k(-c)_k(c-g)_k(-g)_k}{(-\frac{g}{2})_k(\frac{1-g}{2})_k(-g)_k(-g+\frac{1}{2})_k\cdot k!} = \frac{\binom{2g}{2n}}{c!(g-c)!}\cdot{}_{4}{F}_3\left(\begin{array}{c}
        -n, n-g, -c, c-g \\
        -\frac{g}{2}, \frac{1-g}{2}, -g+\frac{1}{2}
    \end{array};1\right)\, .
    $$
    By \cite[(2.31)]{Hypergeometric}, we have
    $$
    {}_{4}{F}_3\left(\begin{array}{c}
        -n, n-g, -c, c-g \\
        -\frac{g}{2}, \frac{1-g}{2}, -g+\frac{1}{2}
    \end{array};1\right) = {}_{3}{F}_2\left(\begin{array}{c}
        -2c, 2c-2g, -n \\
        -g+\frac{1}{2}, -g
    \end{array};1\right)\, .
    $$

    Using again the duplication formula and rewriting the Pochhammer symbols in terms of factorials according to \eqref{eq:binom}, we obtain
    {\allowdisplaybreaks
    \begin{align*}
    \binom{2g}{2n}\cdot{}_{3}{F}_2\left(\begin{array}{c}
        -2c, 2c-2g, -n \\
        -g+\frac{1}{2}, -g
    \end{array};1\right) &= \binom{2g}{2n}\sum_{k}\frac{(-2c)_k(2c-2g)_k(-n)_k}{(-g+\frac{1}{2})_k (-g)_k k!}\\
    &=\binom{2g}{2n}\sum_{k}2^{2k}\frac{(-2c)_k(2c-2g)_k(-n)_k}{(-2g)_{2k} k!}\\
    &=\binom{2g}{2n}\sum_{k}(-4)^{k}\frac{(2c)!(2g-2c)!n!(2g-2k)!}{(2c-k)!(2g-2c-k)!(n-k)!(2g)! k!}\\
    &=\binom{2g}{2n}\binom{2g}{2c}^{-1}\sum_{k}(-4)^{k}\binom{2g-2k}{2c-k}\binom{n}{k}\\
    &=\binom{2g}{2n}\binom{2g}{2c}^{-1}\sum_{k}(-4)^{k}\binom{n}{k}[x^{2c-k}](1+x)^{2g-2k}\\
    &=\binom{2g}{2n}\binom{2g}{2c}^{-1}[x^{2c}]\sum_{k}\binom{n}{k}(-4)^{k}x^k(1+x)^{2g-2k}\\
    &=\binom{2g}{2n}\binom{2g}{2c}^{-1}[x^{2c}](1+x)^{2g}\left(1-4\frac{x}{(1+x)^2}\right)^n\\
    &=\binom{2g}{2n}\binom{2g}{2c}^{-1}[x^{2c}](1+x)^{2g}\frac{(1-x)^{2n}}{(1+x)^{2n}}\,.
    \end{align*}
    }
    Finally, for any $a,b,N$, one checks that
    $$
    \binom{N}{b}[x^a](1-x)^{b}(1+x)^{N-b} = \binom{N}{a}[x^b](1-x)^{a}(1+x)^{N-a}\, .
    $$
    This completes the proof.
\end{proof}

\begin{proof}[Proof of Theorem \ref{sp:final}]
    First, we rewrite $E(b,c)$ using the identity $(2x-1)!! = \frac{(2x)!}{2^xx!}$. We obtain
    \begin{align*}
        E(b,c)
        &=\frac{(-1)^{b+c}(2c+2b)!c!}{2^{2b+c}(b+c)!}\sum_{x=0}^b\frac{(2-2^{2c+2x})B_{2c+2x}\binom{c+b-x}{c}\binom{c+x}{c}}{(2c+2b-2x)!(2c+2x)!}\,.
    \end{align*}
    In particular,
    \begin{align*}
        E(g-2k,k) &=\frac{(-1)^{g+k}(2g-2k)!k!}{2^{2g-3k}(g-k)!}\sum_{x=0}^{g-2k}(2-2^{2k+2x})B_{2k+2x}\frac{\binom{g-k-x}{k}\binom{k+x}{k}}{(2(g-k-x))!(2(k+x))!}\,.
    \end{align*}

    Set
    $$
    F(g,c) = \sum_{k=0}^c \binom{g-2k}{c-k} 2^{k} E(g-2k,k)\,.
    $$
    Let $n=x+k$. Then $0 \le n \le g$ and $k \le n \le g-k$. Factoring out the Bernoulli numbers and applying Lemma~\ref{lem:Hypergeometric}, we obtain
    \begin{align*}
        F(g,c) &=\sum_{k=0}^{c}\sum_{x=0}^{g-2k}\binom{g-2k}{c-k}\frac{(-1)^{g+k}(2g-2k)!k!}{2^{2g-4k}(g-k)!}(2-2^{2k+2x})B_{2k+2x}\frac{\binom{g-k-x}{k}\binom{k+x}{k}}{(2(g-k-x))!(2(k+x))!}\\
        &=\frac{(-1)^g}{2^{2g}}\sum_{n=0}^g\frac{(2-2^{2n})B_{2n}}{(2n)!}\frac{n!}{(2g-2n)!}\sum_{k=0}^{\min\{c,n, g-n\}}(-1)^k2^{4k}\binom{g-2k}{c-k}\frac{\binom{g-n}{k}(2g-2k)!}{(n-k)!(g-k)!}\\
        &=\frac{(-1)^g}{2^{2g}c!(g-c)!}\sum_{n=0}^g(2-2^{2n})B_{2n}\sum_{m=0}^{2c}(-1)^m\binom{2c}{m}\binom{2g-2c}{2n-m}\\
        &=\frac{(-1)^g}{2^{2g}c!(g-c)!}\sum_{n=0}^g(2-2^{2n})B_{2n}[x^{2n}]\left((1-x)^{2c}(1+x)^{2g-2c}\right)\,.
    \end{align*}
    By Lemma \ref{lem:3}, and writing $\partial=\frac{d}{dx}$, we can rewrite the last expression as
    \begin{align*}
        F(g,c) &= \frac{(-1)^g}{c!(g-c)!}\frac{1}{2^{2g}}\left(\frac{\partial}{\sinh (\partial)}(1-x)^{2c}(1+x)^{2g-2c}\right)(0)\, .
    \end{align*}
    Using the identity $(e^{\partial }f)(0) = f(1)$, we compute
    \begin{align*}
        \left(\frac{\partial}{\sinh (\partial)}(1-x)^{2c}(1+x)^{2g-2c}\right)(0)&=\left(e^{\partial}\frac{2\partial}{e^{2\partial}-1}(1-x)^{2c}(1+x)^{2g-2c}\right)(0)\\
        &=\left(\frac{2\partial}{e^{2\partial}-1}(1-x)^{2c}(1+x)^{2g-2c}\right)(1)\\
        &=\sum_{m=0}^{\infty}2^{m}B_{m}[(x-1)^m]\left((1-x)^{2c}(1+x)^{2g-2c})\right)\\
        &=\sum_{m=0}^{\infty}2^mB_{m}\binom{2g-2c}{2g-m}2^{2g-m}\\
        &=2^{2g}\sum_{r=0}^{2g-2c} B_{r+2c}\binom{2g-2c}{r}\,.
    \end{align*}
    The result now follows from Lemma \ref{lem:4}.
\end{proof}

\newcommand{\etalchar}[1]{$^{#1}$}

\noindent Department of Mathematics, University of Michigan \\
\noindent younghan@umich.edu

\vspace{8pt}

\noindent Department of Mathematics, ETH Z\"urich\\
\noindent jeremy.feusi@math.ethz.ch

\vspace{8pt}

\noindent Department of Mathematics, ETH Z\"urich\\
\noindent aitor.iribarlopez@math.ethz.ch

\vspace{8pt}

\noindent Department of Mathematics, University of Rome ``La Sapienza'' \\
\noindent samouil.molcho@uniroma1.it

\vspace{8pt}


\begin{thebibliography}{BBGHdJ26}

\bibitem[ACM{\etalchar{+}}16]{SkeletonsAndFAbramo2015}
D.~Abramovich, Q.~Chen, S.~Marcus, M.~Ulirsch, and J.~Wise.
\newblock Skeletons and fans of logarithmic structures.
\newblock In {\em Nonarchimedean and tropical geometry}, Simons Symp., pages 287--336. Springer, 2016.

\bibitem[AW18]{Birational_inva_Abramo_2013}
D.~Abramovich and J.~Wise.
\newblock Birational invariance in logarithmic {G}romov-{W}itten theory.
\newblock {\em Compos. Math.}, 154(3):595--620, 2018.

\bibitem[Ale02]{Alexeev}
V.~Alexeev.
\newblock Complete moduli in the presence of semiabelian group action.
\newblock {\em Ann. of Math. (2)}, 155(3):611--708, 2002.

\bibitem[AN99]{AleNak}
V.~Alexeev and I.~Nakamura.
\newblock On {M}umford's construction of degenerating abelian varieties.
\newblock {\em Tohoku Math. J. (2)}, 51(3):399--420, 1999.

\bibitem[AF24]{AD}
G.~Ancona and D.~Fratila.
\newblock Ng\^o's support theorem and polarizability of quasi-projective commutative group schemes.
\newblock {\em \'Epijournal G\'eom. Alg\'ebrique}, 8:Art. 11, 10, 2024.

\bibitem[Ari11]{Arinkin1}
D.~Arinkin.
\newblock Cohomology of line bundles on compactified {J}acobians.
\newblock {\em Math. Res. Lett.}, 18(6):1215--1226, 2011.

\bibitem[Ari13]{Arinkin2}
D.~Arinkin.
\newblock Autoduality of compactified {J}acobians for curves with plane singularities.
\newblock {\em J. Algebraic Geom.}, 22(2):363--388, 2013.

\bibitem[AF16]{AF}
D.~Arinkin and R.~Fedorov.
\newblock Partial {F}ourier-{M}ukai transform for integrable systems with applications to {H}itchin fibration.
\newblock {\em Duke Math. J.}, 165(15):2991--3042, 2016.

\bibitem[BHP{\etalchar{+}}23]{BHPSS}
Y.~Bae, D.~Holmes, R.~Pandharipande, J.~Schmitt, and R.~Schwarz.
\newblock Pixton's formula and {A}bel-{J}acobi theory on the {P}icard stack.
\newblock {\em Acta Math.}, 230(2):205--319, 2023.

\bibitem[BMSY26]{BMSY1}
Y.~Bae, D.~Maulik, J.~Shen, and Q.~Yin.
\newblock On generalized {B}eauville decompositions.
\newblock {\em Selecta Math. (N.S.)}, 32(3):Paper No. 45, 2026.

\bibitem[BM26]{BM}
Y.~Bae and S.~Molcho.
\newblock (in preparation), 2026.

\bibitem[BMP25]{BMP}
Y.~Bae, S.~Molcho, and A.~Pixton.
\newblock Fourier transforms and {A}bel-{J}acobi theory.
\newblock {\em arXiv:2506.06116}, 2025.

\bibitem[BP26]{BP}
Y.~Bae and A.~Pixton.
\newblock (in preparation), 2026.

\bibitem[BS22]{BS1}
Y.~Bae and J.~Schmitt.
\newblock Chow rings of stacks of prestable curves {I}.
\newblock {\em Forum Math. Sigma}, 10:Paper No. e28, 47, 2022.
\newblock With an appendix by Bae, Schmitt and Jonathan Skowera.

\bibitem[Bea86]{Beauville86}
A.~Beauville.
\newblock Sur l'anneau de {C}how d'une vari\'{e}t\'{e} ab\'{e}lienne.
\newblock {\em Math. Ann.}, 273(4):647--651, 1986.

\bibitem[Bea10]{beauvilleSL2}
A.~Beauville.
\newblock The action of {$\rm SL_2$} on abelian varieties.
\newblock {\em J. Ramanujan Math. Soc.}, 25(3):253--263, 2010.

\bibitem[BBDG82]{BBDG}
A.~A. Beilinson, J.~Bernstein, P.~Deligne, and O.~Gabber.
\newblock Faisceaux pervers.
\newblock In {\em Analysis and topology on singular spaces, {I} ({L}uminy, 1981)}, volume 100 of {\em Ast\'erisque}, pages 5--171. Soc. Math. France, Paris, 1982.

\bibitem[BBGHdJ26]{BGHdJ}
A.~M. Botero, J.~I. Burgos~Gil, D.~Holmes, and R.~de~Jong.
\newblock Pure extension of the theta divisor over the moduli space of abelian varieties.
\newblock {\em arXiv:2602.22162}, 2026.

\bibitem[Bri96]{Brion}
M.~Brion.
\newblock Piecewise polynomial functions, convex polytopes and enumerative geometry.
\newblock In {\em Parameter spaces ({W}arsaw, 1994)}, volume~36 of {\em Banach Center Publ.}, pages 25--44. Polish Acad. Sci. Inst. Math., Warsaw, 1996.

\bibitem[CMOP25]{CMOP}
S.~Canning, S.~Molcho, D.~Oprea, and R.~Pandharipande.
\newblock Tautological projection for cycles on the moduli space of abelian varieties.
\newblock {\em Algebr. Geom.}, 12(6):736--768, 2025.

\bibitem[Cap94]{Caporaso94}
L.~Caporaso.
\newblock A compactification of the universal {P}icard variety over the moduli space of stable curves.
\newblock {\em J. Amer. Math. Soc.}, 7(3):589--660, 1994.

\bibitem[CCUW20]{AModuliStackCavali2017}
R.~Cavalieri, M.~Chan, M.~Ulirsch, and J.~Wise.
\newblock A moduli stack of tropical curves.
\newblock {\em Forum Math. Sigma}, 8:Paper No. e23, 93, 2020.

\bibitem[CH00]{CortiHanamura}
A.~Corti and M.~Hanamura.
\newblock Motivic decomposition and intersection {C}how groups. {I}.
\newblock {\em Duke Math. J.}, 103(3):459--522, 2000.

\bibitem[Del74]{Hodge3}
P.~Deligne.
\newblock Th\'eorie de {H}odge. {III}.
\newblock {\em Inst. Hautes \'Etudes Sci. Publ. Math.}, 44:5--77, 1974.

\bibitem[DG70]{SGA3II}
M.~Demazure and A.~Grothendieck, editors.
\newblock {\em Sch{\'e}mas en groupes. {II}: Groupes de type multiplicatif, et structure des sch{\'e}mas en groupes g{\'e}n{\'e}raux}, volume 152 of {\em Lecture Notes in Mathematics}.
\newblock Springer-Verlag, Berlin, 1970.
\newblock S{\'e}minaire de G{\'e}om{\'e}trie Alg{\'e}brique du Bois Marie 1962--64 (SGA 3).

\bibitem[DM91]{DM91}
C.~Deninger and J.~Murre.
\newblock Motivic decomposition of abelian schemes and the {F}ourier transform.
\newblock {\em J. Reine Angew. Math.}, 422:201--219, 1991.

\bibitem[EvdG05]{EvdG05}
T.~Ekedahl and G.~van~der Geer.
\newblock Cycles representing the top {C}hern class of the {H}odge bundle on the moduli space of abelian varieties.
\newblock {\em Duke Math. J.}, 129(1):187--199, 2005.

\bibitem[EdGFS26]{EdGS}
P.~Engel, O.~de~Gaay~Fortman, and S.~Schreieder.
\newblock Combinatorics and {H}odge theory of degenerations of abelian varieties: {A} survey of the {M}umford construction.
\newblock {\em arXiv:2507.15695}, 2026.

\bibitem[EGH10]{EGH10}
C.~Erdenberger, S.~Grushevsky, and K.~Hulek.
\newblock Some intersection numbers of divisors on toroidal compactifications of {$\mathcal{A}_g$}.
\newblock {\em J. Algebraic Geom.}, 19(1):99--132, 2010.

\bibitem[EV02]{EV04}
H.~Esnault and E.~Viehweg.
\newblock Chern classes of {G}auss-{M}anin bundles of weight 1 vanish.
\newblock {\em $K$-Theory}, 26(3):287--305, 2002.

\bibitem[FC90]{FC}
G.~Faltings and C.-L. Chai.
\newblock {\em Degeneration of abelian varieties}, volume~22 of {\em Ergebnisse der Mathematik und ihrer Grenzgebiete (3)}.
\newblock Springer-Verlag, Berlin, 1990.

\bibitem[Feu24]{J}
J.~Feusi.
\newblock The logarithmic picard group of a logarithmically smooth scheme, {E}{T}{H} {Z}\"urich, {M}aster thesis, 2024.

\bibitem[FIL]{troAV}
J.~Feusi and A.~Iribar~L\'{o}pez.
\newblock The moduli space of tropical abelian varieties.
\newblock in preparation.

\bibitem[Ful98]{Fulton}
W.~Fulton.
\newblock {\em Intersection theory}, volume~2 of {\em Ergebnisse der Mathematik und ihrer Grenzgebiete. 3. Folge. A}.
\newblock Springer-Verlag, Berlin, second edition, 1998.

\bibitem[GR04]{hyperbook}
G.~Gasper and M.~Rahman.
\newblock {\em Basic hypergeometric series}, volume~96 of {\em Encyclopedia of Mathematics and its Applications}.
\newblock Cambridge University Press, Cambridge, second edition, 2004.
\newblock With a foreword by Richard Askey.

\bibitem[Gro72]{grothendieck1973groupes}
A.~Grothendieck.
\newblock {\em Groupes de monodromie en g\'eom\'etrie alg\'ebrique. {I}}, volume Vol. 288 of {\em Lecture Notes in Mathematics}.
\newblock Springer-Verlag, Berlin-New York, 1972.
\newblock S\'eminaire de G\'eom\'etrie Alg\'ebrique du Bois-Marie 1967--1969 (SGA 7 I).

\bibitem[GZ14]{GZ}
S.~Grushevsky and D.~Zakharov.
\newblock The zero section of the universal semiabelian variety and the double ramification cycle.
\newblock {\em Duke Math. J.}, 163(5):953--982, 2014.

\bibitem[KKN08a]{KKN2}
T.~Kajiwara, K.~Kato, and C.~Nakayama.
\newblock Logarithmic abelian varieties.
\newblock {\em Nagoya Math. J.}, 189:63--138, 2008.

\bibitem[KKN08b]{KKN1}
T.~Kajiwara, K.~Kato, and C.~Nakayama.
\newblock Logarithmic abelian varieties. {I}. {C}omplex analytic theory.
\newblock {\em J. Math. Sci. Univ. Tokyo}, 15(1):69--193, 2008.

\bibitem[KKN15]{KKN4}
T.~Kajiwara, K.~Kato, and C.~Nakayama.
\newblock Logarithmic abelian varieties, {P}art {IV}: {P}roper models.
\newblock {\em Nagoya Math. J.}, 219:9--63, 2015.

\bibitem[KKN18]{KKN5}
T.~Kajiwara, K.~Kato, and C.~Nakayama.
\newblock Logarithmic abelian varieties, {P}art {V}: {P}rojective models.
\newblock {\em Yokohama Math. J.}, 64:21--82, 2018.

\bibitem[KKN21]{KKN7}
T.~Kajiwara, K.~Kato, and C.~Nakayama.
\newblock Logarithmic abelian varieties, part {VII}: moduli.
\newblock {\em Yokohama Math. J.}, 67:9--48, 2021.

\bibitem[KP19]{KP19}
J.~L. Kass and N.~Pagani.
\newblock The stability space of compactified universal {J}acobians.
\newblock {\em Trans. Amer. Math. Soc.}, 372(7):4851--4887, 2019.

\bibitem[Kat89]{Kato}
K.~Kato.
\newblock Logarithmic structures of {F}ontaine-{I}llusie.
\newblock In {\em Algebraic analysis, geometry, and number theory ({B}altimore, {MD}, 1988)}, pages 191--224. Johns Hopkins Univ. Press, Baltimore, MD, 1989.

\bibitem[Koo18]{Hypergeometric}
T.~H. Koornwinder.
\newblock Quadratic transformations for orthogonal polynomials in one and two variables.
\newblock In {\em Representation theory, special functions and {P}ainlev\'e{} equations---{RIMS} 2015}, volume~76 of {\em Adv. Stud. Pure Math.}, pages 419--447. Math. Soc. Japan, Tokyo, 2018.

\bibitem[KvdG10]{vdgK}
A.~Kouvidakis and G~van~der Geer.
\newblock The rank-one limit of the {F}ourier-{M}ukai transform.
\newblock {\em Doc. Math.}, 15:747--763, 2010.

\bibitem[Laz04]{Positive1}
R.~Lazarsfeld.
\newblock {\em Positivity in algebraic geometry. {I}}, volume~48 of {\em Ergebnisse der Mathematik und ihrer Grenzgebiete. 3. Folge. A}.
\newblock Springer-Verlag, Berlin, 2004.

\bibitem[MSY25]{PerverseFourierMSY25}
D.~Maulik, J.~Shen, and Q.~Yin.
\newblock Perverse filtrations and {F}ourier transforms.
\newblock {\em Acta Math.}, 234(1):1--69, 2025.

\bibitem[MSY26]{Daf}
D.~Maulik, J.~Shen, and Q.~Yin.
\newblock Dualizable abelian fibrations.
\newblock {\em arXiv:2602.19318}, 2026.

\bibitem[MRV19]{MRV1}
M.~Melo, A.~Rapagnetta, and F.~Viviani.
\newblock Fourier-{M}ukai and autoduality for compactified {J}acobians. {I}.
\newblock {\em J. Reine Angew. Math.}, 755:1--65, 2019.

\bibitem[MPS23]{MPS}
S.~Molcho, R.~Pandharipande, and J.~Schmitt.
\newblock The {H}odge bundle, the universal 0-section, and the log {C}how ring of the moduli space of curves.
\newblock {\em Compos. Math.}, 159(2):306--354, 2023.

\bibitem[MR24]{A_case_study_of_Molcho_2021}
S.~Molcho and D.~Ranganathan.
\newblock A case study of intersections on blowups of the moduli of curves.
\newblock {\em Algebra Number Theory}, 18(10):1767--1816, 2024.

\bibitem[MW22]{Molcho_Wise_2022}
Samouil Molcho and Jonathan Wise.
\newblock The logarithmic picard group and its tropicalization.
\newblock {\em Compositio Mathematica}, 158(7):1477–--1562, 2022.

\bibitem[Mum69]{Mum_ext}
D.~Mumford.
\newblock Bi-extensions of formal groups.
\newblock In {\em Algebraic {G}eometry ({I}nternat. {C}olloq., {T}ata {I}nst. {F}und. {R}es., {B}ombay, 1968)}, volume~4 of {\em Tata Inst. Fundam. Res. Stud. Math.}, pages 307--322. Tata Inst. Fund. Res., Bombay, 1969.

\bibitem[Mum72]{Mumfordanalytic}
D.~Mumford.
\newblock An analytic construction of degenerating abelian varieties over complete rings.
\newblock {\em Compositio Math.}, 24:239--272, 1972.

\bibitem[Mum83]{mumford_kodaira}
D.~Mumford.
\newblock On the {K}odaira dimension of the {S}iegel modular variety.
\newblock In {\em Algebraic geometry---open problems ({R}avello, 1982)}, volume 997 of {\em Lecture Notes in Math.}, pages 348--375. Springer, Berlin, 1983.

\bibitem[Nam76]{namikawa76}
Y.~Namikawa.
\newblock A new compactification of the {S}iegel space and degeneration of {A}belian varieties. {II}.
\newblock {\em Math. Ann.}, 221(3):201--241, 1976.

\bibitem[Nam80]{Namikawa}
Y.~Namikawa.
\newblock {\em Toroidal compactification of {S}iegel spaces}, volume 812 of {\em Lecture Notes in Mathematics}.
\newblock Springer, Berlin, 1980.

\bibitem[Ng{\^o}10]{Ngo}
B.~C. Ng{\^o}.
\newblock Le lemme fondamental pour les alg\`ebres de {L}ie.
\newblock {\em Publ. Math. Inst. Hautes \'Etudes Sci.}, 111:1--169, 2010.

\bibitem[Ogu18]{LecturesOnLogOgus2018}
A.~Ogus.
\newblock {\em Lectures on logarithmic algebraic geometry}, volume 178 of {\em Cambridge Studies in Advanced Mathematics}.
\newblock Cambridge University Press, Cambridge, 2018.

\bibitem[PRSS25]{LogarithmicTauPandha2024}
R.~Pandharipande, D.~Ranganathan, J.~Schmitt, and P.~Spelier.
\newblock Logarithmic tautological rings of the moduli spaces of curves.
\newblock {\em Adv. Math.}, 474:Paper No. 110291, 100, 2025.

\bibitem[Pin90]{Pinkthesis}
R.~Pink.
\newblock {\em Arithmetical compactification of mixed {S}himura varieties}, volume 209 of {\em Bonner Mathematische Schriften [Bonn Mathematical Publications]}.
\newblock Universit\"at Bonn, Mathematisches Institut, Bonn, 1990.
\newblock Dissertation, Rheinische Friedrich-Wilhelms-Universit\"at Bonn, Bonn, 1989.

\bibitem[{Sta}26]{stacks-project}
The {Stacks project authors}.
\newblock The stacks project.
\newblock \url{https://stacks.math.columbia.edu}, 2026.

\bibitem[vdG99]{vdG99}
G.~van~der Geer.
\newblock Cycles on the moduli space of abelian varieties.
\newblock In {\em Moduli of curves and abelian varieties}, volume E33 of {\em Aspects Math.}, pages 65--89. Friedr. Vieweg, Braunschweig, 1999.

\bibitem[vdG13]{vdGcoh}
G.~van~der Geer.
\newblock The cohomology of the moduli space of abelian varieties.
\newblock In {\em Handbook of moduli. {V}ol. {I}}, volume~24 of {\em Adv. Lect. Math. (ALM)}, pages 415--457. Int. Press, Somerville, MA, 2013.

\end{thebibliography}
\end{document}